\DeclareMathOperator{\re}{{\mathrm{Re}}}
\DeclareMathOperator{\im}{{\mathrm{Im}}}
\newcommand{\Pol}{{\mathbf{M}}}
\def\P{\mathbb{P}}
\def\E{\mathbb{E}}
\newcommand{\gE}[1]{\left\langle #1 \right \rangle}
\def\Z{\mathbb{Z}}
\def\R{\mathbb{R}}
\def\N{\mathbb{N}}
\def\11{{\mathbf{1}}}
\newcommand{\AO}{\mathrm{AO}}
\newcommand{\BA}{\mathrm{BA}}
\newcommand{\WO}{\mathrm{WO}}
\newcommand{\deq}{\mathrel{\mathop:}=}
\newcommand{\pscale}{\text{pseudo-scaling size}}
\newcommand{\rB}{\mathscr B}
\newcommand{\wE}{{\mathcal E}}
\newcommand{\bp}{\mathbf p}
\newcommand{\bq}{\mathbf q}
\newcommand{\bk}{\mathbf k}
\newcommand{\T}{\mathbb T}
\newcommand{\dist}{\mathrm{dist}}
\newcommand{\bE}{\mathbf{E}}
\newcommand{\fd}{{\mathfrak d}}
\newcommand{\fD}{{\mathfrak D}}
\newcommand{\bx}{{\bf{x}}}
\newcommand{\bu}{{\bf{u}}}
\newcommand{\bv}{{\bf{v}}}
\newcommand{\bw}{{\bf{w}}}
\newcommand{\inv}{\mathrm{inv}}
\newcommand{\bigGamma}{\Gamma}
\newcommand{\size}{{\mathsf{size}}}
\newcommand{\psize}{{\mathsf{psize}}}
\newcommand{\wsize}{{\mathsf{wsize}}}
\newcommand{\gh}{{\rm{gh}}}
\newcommand{\C}{\mathbb C}
\newcommand{\Iso}{{\mathcal I}}
\renewcommand{\bar}{\overline}
\newcommand{\wt}{\widetilde}
\newcommand{\wh}{\widehat}
\newcommand{\al}{\alpha}
\newcommand{\nonuni}{{\text{complete $T$-equation}}}
\newcommand{\qq}[1]{\langle{#1}\rangle}
\newcommand{\qqq}[1]{\llbracket{#1}\rrbracket}
\newcommand{\KM}{M^0_{L\to n}}
\newcommand{\KMp}{M^+_{L\to n}}
\newcommand{\LK}{{L\to n}}
\newcommand{\infinf}{{\infty\to \infty}}
\newcommand{\projn}{\pi_{L\to n}}
\newcommand{\Zn}{\widetilde{\mathbb Z}_n^d}
\newcommand{\Gc}{{\mathring G}}
\newcommand{\Sp}{\mathcal S^+}
\newcommand{\zT}{\mathring T}
\newcommand{\cT}{\mathcal T}
\newcommand{\czT}{\mathring{\mathcal T}}
\newcommand{\zTheta}{\mathring \Theta}
\newcommand{\dashed}{\text{diffusive}}
\newcommand{\Dashed}{\text{Diffusive}}
\newcommand{\self}{\text{self-energy}}
\newcommand{\selfs}{\text{self-energies}}
\newcommand{\Self}{\text{Self-energy}}
\newcommand{\pself}{\text{pseudo-self-energy}}
\newcommand{\pselfs}{\text{pseudo-self-energies}}
\newcommand{\pTexp}{\text{pseudo-$T$-expansion}}
\newcommand{\PTexp}{\text{Pseudo-$T$-expansion}}
\newcommand{\pTeq}{\text{pseudo-$T$-equation}}
\newcommand{\incomp}{{\text{$T$-equation}}}
\newcommand{\fa}{{\mathfrak a}}
\newcommand{\fb}{{\mathfrak b}}
\newcommand{\fc}{{\mathfrak c}}
\newcommand{\fC}{{\mathfrak C}}
\newcommand{\QGn}{\mathcal Q^{(n)}}
\newcommand{\Err}{{\mathcal Err}}
\newcommand{\J}{J}
\newcommand{\PT}{\mathcal R}
\newcommand{\QT}{\mathcal Q}
\newcommand{\AT}{\mathcal A}
\newcommand{\ATn}{\mathcal A^{(>C)}}
\DeclareMathOperator{\OO}{O}
\DeclareMathOperator{\oo}{o}
\newcommand{\WT}{\mathcal W}
\newcommand{\Sdeltak}{{\Sigma}_{T,k}}
\newcommand{\Sele}{{\mathcal E}}
\newcommand{\Selek}{{\mathcal E}}
\newcommand{\sizeself}{\boldsymbol{\psi}}
\newcommand{\cof}{\mathrm{cof}}
\newcommand{\qll}[1]{[\![{#1}]\!]}
\newcommand{\rep}[1]{({#1})}
\newcommand{\conc}{\mathfrak g}
\newcommand{\soe}{d_\eta}
\newcommand{\etas}{\lambda^{-2}W^{-4} L^{5-d}}
\newcommand{\blam}{\beta(\lambda)}
\newcommand{\Wlambda}{\frac{W}{\lambda^2}}
\newcommand{\errL}{\epsilon_0}
\newcommand{\errw}{\epsilon_1}
\newcommand{\erre}{\mathfrak d}
\newcommand{\serr}{\epsilon_0}
\newcommand{\heta}{\mathfrak h_\lambda}
\newcommand{\peta}{{\mathfrak p}_\eta}
\newcommand{\pheta}{\widetilde{\mathfrak p}_\eta}
\DeclareMathOperator{\diag}{diag}
\DeclareMathOperator{\tr}{Tr}
\DeclareMathOperator{\var}{Var}
\newcommand{\be}{\begin{equation}}
\newcommand{\ee}{\end{equation}}
\newcommand{\ii}{\mathrm{i}}
\newcommand{\dd}{\mathrm{d}}
\newcommand{\ff}{\mathrm{f}}
\newcommand{\e}{{\varepsilon}}
\newcommand{\cal}{\mathcal}
\newcommand{\cor}{\color{red}}
\newcommand{\sN}{{\mathsf N}}
\newcommand{\sT}{{\mathsf T}}
\newcommand{\sH}{{\mathsf H}}
\newcommand{\sG}{{\mathsf G}}
\newcommand{\sL}{{\mathsf L}}
\newcommand{\thn}{\vartheta}
\newcommand{\zthn}{\mathring\vartheta}
\newcommand{\zcL}{\mathring{\mathcal L}}
\theoremstyle{plain} %plain, definition, remark
\newtheorem{theorem}{Theorem}[section]
\newtheorem*{theorem*}{Theorem}
\newtheorem{lemma}[theorem]{Lemma}
\newtheorem*{lemma*}{Lemma}
\newtheorem{corollary}[theorem]{Corollary}
\newtheorem*{corollary*}{Corollary}
\newtheorem{proposition}[theorem]{Proposition}
\newtheorem*{proposition*}{Proposition}
\newtheorem*{claim*}{Claim}
\newtheorem{definition}[theorem]{Definition}
\newtheorem*{definition*}{Definition}
\theoremstyle{remark}
\newtheorem{example}[theorem]{Example}
\newtheorem*{example*}{Example}
\newtheorem{remark}[theorem]{Remark}
\newtheorem*{remark*}{Remark}
\newtheorem*{remarks*}{Remarks}
\newtheorem{strategy}[theorem]{Strategy}
\def\@setthanks{\vspace{-\baselineskip}\def\thanks##1{\@par##1\@addpunct.}\thankses}
\numberwithin{equation}{section}
\title{Delocalization of a general class of random block Schr{\"o}dinger operators}
\author{Fan Yang$^\star$}
\author{Jun Yin$^\ddagger$}
\thanks{$\star$Yau Mathematical Sciences Center, Tsinghua University, and Beijing Institute of Mathematical Sciences and Applications, \href{mailto:fyangmath@mail.tsinghua.edu.cn}{fyangmath@mail.tsinghua.edu.cn}. Supported in part by the National Key R\&D Program of China (No. 2023YFA1010400). 
}
\thanks{$\ddagger$Department of Mathematics, University of California, Los Angeles, \href{mailto:jyin@math.ucla.edu}{jyin@math.ucla.edu}. Supported in part by the Simons Fellows in Mathematics Award 85515. 
%Supported in part by the NSF grant DMS-1802861.
}
\begin{document}

\begin{abstract}

We consider a natural class of extensions of the Anderson model on $\mathbb Z^d$, called \emph{random block Schr{\"o}dinger operators} (RBSOs), defined on the $d$-dimensional torus $(\mathbb Z/L\mathbb Z)^d$. %which feature a random block potential. They are defined on the $d$-dimensional lattice $(\mathbb Z/L\mathbb Z)^d$ and 
These operators take the form $H=V + \lambda \Psi$, where $V$ is a diagonal block matrix whose diagonal blocks are i.i.d.~$W^d\times W^d$ GUE, representing a random block potential, $\Psi$ describes interactions between neighboring blocks, and $0<\lambda\ll 1$ is a small coupling parameter (making $H$ a perturbation of $V$). We focus on three specific RBSOs: (1) the \emph{block Anderson model}, where $\Psi$ is the discrete Laplacian on $(\mathbb Z/L\mathbb Z)^d$; (2) the \emph{Anderson orbital model}, where $\Psi$ is a block Laplacian operator; (3) the \emph{Wegner orbital model}, where the nearest-neighbor blocks of $\Psi$ are themselves random matrices. 
Assuming $d\ge 7$ and $W\ge L^\varepsilon$ for a small constant $\varepsilon>0$, and under a certain lower bound on $\lambda$, we establish delocalization and quantum unique ergodicity for bulk eigenvectors, along with quantum diffusion estimates for the Green's function. Combined with the localization results of \cite{Wegner}, our results rigorously demonstrate the existence of an Anderson localization-delocalization transition for RBSOs as $\lambda$ varies.

Our proof is based on the $T$-expansion method and the concept of \emph{self-energy renormalization}, originally developed in the study of random band matrices \cite{BandI,BandII,BandIII}. 
In addition, we introduce a conceptually novel idea---\emph{coupling renormalization}---which extends the notion of self-energy renormalization. While this phenomenon is well-known in quantum field theory, it is identified here for the first time in the context of random Schr{\"o}dinger operators. We expect that our methods can be extended to models with real or non-Gaussian block potentials, as well as more general forms of interactions.
%we explore a conceptually new idea called \emph{coupling renormalization} in our proof. This phenomenon extends the self-energy renormalization and is prevalent in quantum field theory, yet it is identified here for the first time within the context of random Schr{\"o}dinger operators. We anticipate that our techniques can be adapted to real or non-Gaussian block potentials and more general interactions $\Psi$.

% : (i) the bulk eigenvectors of $H$ are delocalized and satisfy quantum unique ergodicity; (ii) an almost sharp local law for the Green's function $G(z)=(H-z)^{-1}$; (iii) a quantum diffusion estimate for $\E |G_{xy}|^2$. 

% Our method is based on an  expansion of  $T_{xy}=|m|^2 \sum_{\alpha}s_{x\alpha}|G_{\alpha y}|^2$ and it requires a self-energy renormalization up to error $W^{-K}$ for any large constant  $K$ independent of $W$ and $L$. 
% We expect that this method can be extended to non-Gaussian band matrices. 

\end{abstract}

\maketitle

\vspace{-5pt}

 {
 \hypersetup{linkcolor=black}
 \tableofcontents
 }

%\newpage 

%\vspace{2in}

\section{Introduction}

The random Schr{\"o}dinger operator or, more specifically, the Anderson model \cite{Anderson}, is a significant framework for describing the spectral and transport properties of disordered media, such as the behavior of moving electrons in a metal with impurities. 
Mathematically, the $d$-dimensional Anderson model is defined by a random Hamiltonian on $\ell^2(\Z^d)$: \be\label{Anderson_orig}H=-\Delta + \delta V\ee
where $\Delta$ is the Laplacian operator on $\Z^d$, $V$ is a random potential with i.i.d.~random diagonal entries, and $\delta>0$ is a coupling parameter describing the strength of the disorder. The Anderson model exhibits a localization-delocalization transition that depends on the energy, dimension, and disorder strength, capturing the physical phenomenon of metal-insulator transition in disordered quantum systems. 
In the strong disorder regime, where $\delta$ is large, the eigenvectors of the Anderson model are expected to be exponentially localized. Conversely, in the weak disorder regime, it is conjectured that for dimensions $d\ge 3$, there exist mobility edges that separate the localized and delocalized phases; particularly, within the bulk of the spectrum, the eigenvectors are expected to be completely delocalized.

The localization phenomenon of the 1D Anderson model has long been well understood; see e.g., \cite{GMP,KunzSou,Carmona1982_Duke,Damanik2002} among many other references. Studying the Anderson model in higher dimensions ($d\ge 2$) is significantly more challenging.
The first rigorous localization result was established by  Fr{\"o}hlich and Spencer \cite{FroSpen_1983} using multi-scale analysis. A simpler alternative proof was later provided by  Aizenman and Molchanov \cite{Aizenman1993} using a fractional moment method. In dimension $d=2$, Anderson localization is expected to occur at all energies when $\delta>0$ \cite{PRL_Anderson}.  
Numerous remarkable results have been proven concerning the localization of the Anderson model in dimensions $d\ge 2$ (see, e.g., \cite{FroSpen_1985,Carmona1987,SimonWolff,Aizenman1994,Bourgain2005,Germinet2013,DingSmart2020,LiZhang2019}), but the above conjecture remains open. 
For more extensive reviews and related references, we direct readers to \cite{Kirsch2007,Stolz2011,Spencer_Anderson,Aizenman_book,Hundertmark_book}. 

On the other hand, the understanding of delocalization in the Anderson model for dimensions $d\ge 3$ is even more limited---almost nonexistent. To date, the existence of delocalized phase and mobility
edges has only been proved for the Anderson model on Bethe lattice \cite{Bethe_PRL,Bethe_JEMS,Bethe-Anderson}, but not for any finite-dimensional integer lattice $\Z^d$. In this paper, we make an effort toward addressing the Anderson delocalization conjecture by considering a natural variant of the random Schr{\"o}dinger operator---the \emph{random block Schr{\"o}dinger operator} (RBSO), which is formed by replacing the i.i.d.~potential with i.i.d.~block potential. More precisely, we define RBSO, denoted by $H$, on a $d$-dimensional lattice $\Z_L^d:=\{1,2, \cdots, L\}^d$ of linear size $L$. We decompose $\Z_L^d$ into $n^d$ disjoint boxes of side length $W$ (with $L=nW$). Correspondingly, we define $V$ as a diagonal block matrix: $V=\diag(V_1,\ldots, V_{n^d})$, where $V_i$ (for $i\in\{1,\ldots, n^d\}$) are i.i.d.~$W^d\times W^d$ random matrices, with indices labeled by the vertices in these boxes. For simplicity, we assume these random matrices are drawn from GUE (Gaussian unitary ensemble), but this assumption is not essential for us.

From a physical perspective, our new RBSO is related to the Anderson model on the lattice $\Z_n^d$ through a coarse graining transformation, and we refer to it as the ``block Anderson model" in this paper. 
Motivated by the work of Wegner \cite{Wegner1} and its continuation in  \cite{Wegner2,Wegner3}, we also consider another type of RBSO that models the motion of a quantum particle with multiple internal degrees of freedom (referred to as orbits or spin) in a disordered medium. Specifically, the quantum particle moves according to the Anderson model on $\Z_n^d$, while its spin either remains unchanged or rotates randomly as the particle hops between sites on $\Z_n^d$. In the former scenario, we call it the ``Anderson orbital model", while in the latter, it is known as the celebrated ``Wegner orbital model". 
In this paper, we replace the interaction term $-\Delta$ in \eqref{Anderson_orig} with a more general (deterministic or random) matrix $\Psi$, which models interactions between neighboring blocks, and study RBSO $H=\Psi + \delta V$ that represents the three models mentioned above. In other words, we will consider $\Psi=\Psi^{\BA},\ \Psi^{\AO},$ or $\Psi^{\WO}$, corresponding to the block Anderson, Anderson orbital, and Wegner orbital models, respectively. We have chosen to focus on these three models for the sake of clarity in presentation; however, the specific forms of $\Psi$ are not crucial to our results, as noted in \Cref{rmk:extension} below. 

The localized regime of the Anderson/Wegner orbital models has been analyzed in various settings under strong disorder \cite{Sch2009,Wegner,MaceraSodin:CMP,Shapiro_JMP,CPSS1_4,CS1_4}. In this paper, we focus on their delocalization under weak disorder. A major advantage of RBSO over the Anderson model is that it contains significantly more random entries. While approaching the delocalized regime of the Anderson model is challenging with only $N=L^d$ random entries in $V$, we will show that $N^{1+\e}$ random entries in the block potential of our RBSO are already sufficient for proving Anderson delocalization in high dimensions. More precisely, suppose $d\ge 7$ and $W\ge L^\e$ for an arbitrarily small constant $\e>0$. We prove that if $\delta$ is sufficiently ``small" (specifically, for the Anderson/Wegner orbital models, we require $\delta \ll W^{d/4}$), then the bulk eigenvectors of $H$ are delocalized and satisfy a quantum unique ergodicity (QUE) estimate. Moreover, we show that the evolution of the particle behaves diffusively up to the Thouless time (defined in \eqref{eq:tTh} below). 
It has been established in \cite{Wegner} that the Anderson/Wegner orbital models are localized when $\delta\gg W^{d/2}$. Thus, our results rigorously establish, for the first time in the literature, the existence of an Anderson localization-delocalization transition for a certain class of finite-dimensional random Schr{\"o}dinger operators as the interaction strength varies. 
We believe that if the condition $W\ge L^\e$ can be relaxed to 
$W\ge C$ for a large but finite constant $C>0$, the delocalization conjecture for the Anderson model would be (almost) within reach.

RBSOs serve as a natural interpolation between random Schr{\"o}dinger operators and Wigner random matrices \cite{Wigner} as $W$ increases from 1 to $L$. Another well-known interpolation is the celebrated \emph{random band matrix} (RBM) ensemble \cite{ConJ-Ref1, ConJ-Ref2,fy} defined on $\Z_L^d$. The RBM is a Wigner-type random matrix $(H_{xy})_{x,y\in \Z_L^d}$ such that $H_{xy}$ is non-negligible only when $|x-y|\le W$, with $W$ being the band width. %Heuristically, the RBM and the Anderson model are expected to have the same qualitative property when $\lambda \sim W^{-1}$. 
Over the past decade, significant progress has been made in understanding both the localization and delocalization of RBMs in all dimensions. We refer the reader to \cite{PB_review, PartI, CPSS1_4, BandI} for a brief review of relevant references.
A recent breakthrough \cite{Band1D} established the delocalization of 1D random band matrices under the sharp condition $W\gg L^{1/2}$ on band width. So far, the best delocalization result for higher-dimensional band matrices was achieved in a series of papers \cite{BandI,BandII,BandIII}, which proved that if $d\ge 7$ and $W\ge L^\e$ for an arbitrarily small constant $\e>0$, the bulk eigenvectors of RBM are delocalized.  

The proof in this paper builds on the ideas developed in \cite{BandI,BandII,BandIII}. In fact, the RBM can be regarded as a variant of the Wegner orbital model, where $\delta$ is of order 1 and the nearest-neighbor blocks of $\Psi$ are random upper/lower triangular matrices. 
% and $\Psi$ obtained by setting the diagonal blocks of a $d$-dimensional RBM to 0. Despite the similarities between RBSOs and RBMs, there are some technical differences: 
However, compared to RBM, RBSO lacks translation symmetry, which is a key element in the proofs presented in \cite{BandI,BandII,BandIII}. Additionally, the deterministic limit of the Green's function for RBSO takes a more complex form, which is no longer a scalar matrix (see \eqref{def_G0} below). The graphical tools and expansions associated with RBSO are also more intricate than those used for RBM. %These differences introduce technical challenges that complicate the proofs. %For instance, we will demonstrate that the block translation symmetry of our RBSO compensates for the loss of translation symmetry.
Beyond these technical complications, there is a more crucial distinction between RBSOs and RBMs that renders the proofs in \cite{BandI,BandII,BandIII} conceptually invalid. While RBM is a special case of RBSO with $\delta\asymp 1$, the interesting parameter regime for the delocalization of RBSOs is $\delta\gg 1$. 
%our focus is on the delocalization of RBSOs under block potentials that are as strong as possible, which leads us to consider large coupling parameters where $\delta\gg 1$. 
%lies in the additional parameter $\lambda$ before $\Psi$. Since we are interested in the delocalization of RBSO under interactions that are as weak as possible, the parameter $\lambda$ will be small and tends to 0 as $W\to \infty$. 
% (for instance, \eqref{eq:diffu_cons} below demonstrates that the diffusion constant is smaller than that of RBM by a small factor of  $\delta^{-2}$ for the Anderson/Wegner orbital models). 
Consequently, the Green's function of RBSO diffuses much more slowly than RBM. 
Within the context of RBM, addressing this amounts to extending the proofs in \cite{BandI,BandII,BandIII} to lower dimensions with $d<7$. More precisely, if we were to apply the methods developed there naively, some of our graphs would include additional powers of $\delta$. This would lead to a significantly worse continuity estimate, ultimately rendering our proof invalid. %(For readers who might be interested, this issue will affect the proof of the sum zero property of $\selfs$ in \Cref{cancellation property} below.)

To address the aforementioned issue, we discover a new mechanism called \emph{coupling (or vertex) renormalization}. We have borrowed this term from Quantum Field Theory (QFT).
Roughly speaking, coupling renormalization is a common phenomenon in QFT that describes a cancellation mechanism occurring when calculating the interactions of several propagators using Feynman diagrams.
In the setting of RBSO, where propagators correspond to entries of Green's function, we observe a similar cancellation mechanism when calculating the product of Green's function entries using graph expansions. More precisely, in graph expansions, the leading graphs will involve Green's function entries coupled at a specific ``vertex". Upon summing these graphs, we find that they cancel each other remarkably, resulting in a vanishing factor. For a more detailed discussion of this phenomenon and some basic ideas behind its proof, we refer readers to \Cref{sec:idea} below.
Finally, we note that the proof of coupling renormalization in this paper inspires the $\mathcal K$-loop sum zero property discovered in the joint paper by the second author and Horng-Tzer Yau \cite{Band1D} (see Section 3.3), which ultimately leads to the resolution of the delocalization conjecture for 1D random band matrices.

\subsection{The model}

For definiteness, throughout this paper, we assume that $L=nW$ for some $n,W\in 2\N+1$. Then, we choose the center of the lattice as $0$. However, our results still hold for even $n$ or $W$, as long as we choose a different center for the lattice. Consider a cube of linear size \(L\) in \(\mathbb{Z}^{d}\), i.e., 
\be\label{ZLd}
\Z_L^d:=\qll{ -(L-1)/2 , (L-1)/2} ^d. 
\ee
Hereafter, for any $a,b\in \R$, we denote $\llbracket a, b\rrbracket: = [a,b]\cap \Z$. % and $\llbracket n\rrbracket:=\llbracket1, n\rrbracket$. 
We will view $\Z_L^d$ as a torus and denote by $\rep{x-y}_L$ the representative of $x-y$ in $\Z_L^d$, i.e.,  
\be\label{representativeL}\rep{x-y}_L:= \left((x-y)+L\Z^d\right)\cap \Z_L^d.\ee
Now, we impose a block structure on $\Z_L^d$ with blocks of side length $W$.

\begin{definition}[Lattice of blocks] \label{def: BM2}
Fix any $d\in \N$. Suppose $L=nW$ for some integers $n, W\in 2\N+1$. We divide $\mathbb Z_L^d$ into $n^d$ cubic blocks of side length $W$ such that the central one is $\qll{ -(W-1)/2, (W-1)/2}^d $. Given any $x\in \Z_L^d$, denote the block containing $x$ by $[x]$. Denote the lattice of blocks $[x]$ by $\wt \Z_n^d$. We will view $\wt\Z_n^d$ as a torus and denote by $\rep{[x]-[y]}_n$ the representative of $[x]-[y]$ in $\Zn$. For convenience, we will regard $[x]$ both as a vertex of the lattice $\Zn$ and a subset of vertices on the lattice $\Z_L^d$. Denote by $\{x\}$ the representative of $x$ in the cubic block $[0]$ containing the origin, i.e.,
$\{x\}:=(x+W\Z^d)\cap [0] = x - W[x].$ 
For any $x\in \Z_L^d$, we define the projection $\projn:\Z_L^d\to \Zn$ %and $\projW:\Z_L^d\to [0]$ 
such that $ \projn(x)=[x]$. % and $\projW(x)=\{x\}$.
\end{definition}

Any $x\in \Z_L^d$ can be labelled as $([x],\{x\})$. Correspondingly, we define the tensor product of two vectors $\bu$ and $\bv$ with entries indexed by the vertices of $\wt \Z_n^d$ and $[0]$, respectively, as 
\be\label{eq:tensor} \bu\otimes \bv (x):=\bu([x])\bv(\{x\}),\quad x\in \Z_L^d.
\ee
Then, the tensor product of matrices $A_n$ and $A_W$ defined on $\wt\Z_n^d$ and $[0]$, respectively, is defined through 
\be\label{eq:tensor2} A_n\otimes A_W (\bu\otimes \bv) :=(A_n \bu)\otimes (A_W\bv).\ee
Clearly, $\|x-y\|_L:=\| \rep{x-y}_L \|$ is a {periodic} distance on $\Z_L^d$ for any norm $\|\cdot\|$ on $\Z^d$. For definiteness, we use the $\ell^1$-norm in this paper, i.e., $\|x-y\|_L:=\|\rep{x-y}_L\|_1$, which is also the graph distance on $\Z_L^d$. Similarly, we also define the periodic $\ell^1$-distance $\|\cdot\|_n$ on $\Zn$. For simplicity, throughout this paper, we will abbreviate
\begin{align}\label{Japanesebracket} |x-y|\equiv \|x-y\|_L,\quad &\langle x-y \rangle \equiv \|x-y\|_L + W,\quad \text{for} \ \ x,y \in \Z_L^d, \\
\label{Japanesebracket2} |[x]-[y]|\equiv \|[x]-[y]\|_n,\quad &\langle [x]-[y] \rangle \equiv \|[x]-[y]\|_n + 1, \quad \text{for} \ \  x,y \in \wt\Z_n^d.
\end{align}
We use $x\sim y$ to mean that $x$ and $y$ are neighbors on $\Z_L^d$, i.e., $|x-y|=1$. Similarly, $[x]\sim [y]$ means that $[x]$ and $[y]$ are neighbors on $\wt\Z_n^d$. 
%For any subset $\mathbb S\subset \Z_L^d$, we denote $\dist(x,\mathbb S):=\min_{y\in \mathbb S}|x-y|$. 

For the convenience of presentation, we rescale \eqref{Anderson_orig} by $\lambda:=\delta^{-1}$ (so a smaller $\lambda$ indicates stronger disorder and an increased tendency to localize). Additionally, as previously mentioned, we replace the interaction matrix $-\Delta$ with a more general matrix $\Psi$ that models interactions between neighboring blocks, as stated in the following definition. 
%More precisely, in this paper, we consider three specific types of interactions as in the following \Cref{def: BM}
%For the convenience of presentation, in contrast to the original Anderson model \eqref{Anderson_orig}, we have put the coupling parameter $\lambda$ before $\Psi$. Thus, $\lambda$ actually describes the reciprocal of the strength $\delta$ of the disorder---smaller $\lambda$ indicates a stronger disorder and stronger tendency to localize.

%In this paper, we consider the following classes of random Schr{\"o}dinger operators defined on the lattice of blocks, which we all refer to as \emph{block Schr{\"o}dinger operators} for the rest of this paper.

\begin{definition}[Random block Schr{\"o}dinger operators] \label{def: BM}
We define an $N\times N$ ($N=L^d$) complex Hermitian random block matrix $ V$, whose entries are independent Gaussian random variables up to the Hermitian condition $V_{xy}=\overline V_{yx}$. Specifically, the off-diagonal entries of $V$ are complex Gaussian random variables:
\be\label{bandcw0}
V_{xy}\sim {\cal N }_{\C}(0, s_{xy})\quad \text{with}\quad  s_{xy}:=W^{-d} {\bf 1}\left( [x] =[y] \right), \quad \text{for} \ \  x\ne y,
\ee
while the diagonal entries of $V$ are real Gaussian random variables distributed as ${\cal N }_{\R}(0, W^{-d})$. In other words, $V$ is a diagonal block matrix with i.i.d.~GUE blocks. 
We will call $V$ a $d$-dimensional ``block potential". Then, we define a general class of random Hamiltonian as  
\be\label{eq:H_blocka}
H\equiv H({\lambda}):=\lambda \Psi + V,
\ee
where $\lambda>0$ is a deterministic parameter and $\Psi$ is the interaction Hamiltonian introducing hopping between different blocks. For definiteness, we consider the following three types of $\Psi$ in this paper.  
\begin{itemize}
    \item[(i)] {\bf Block Anderson ($\BA$) model}. $\Psi^{\BA}:=2dI_L-\Delta_L$, where $\Delta_L$ denotes the discrete Laplacian on $\Z_L^d$ and $I_L$ denotes the $N\times N$ identity matrix. In other words, we let
    $\Psi^{\BA}_{xy}=\mathbf 1(x\sim y)$ for $x,y \in \Z_L^d$.

\item[(ii)] {\bf Anderson orbital ($\AO$) model}. $\Psi^{\AO}:=(2dI_n-\Delta_n)\otimes I_W$, where $\Delta_n$ denotes the discrete Laplacian on \smash{$\wt\Z_n^d$} and $I_W$ denotes the $W^d\times W^d$ identity matrix.

\item[(iii)] {\bf Wegner orbital ($\WO$) model}. The neighboring blocks of $ \Psi^{\WO}$ are independent blocks of $W^d\times W^d$ complex Ginibre matrices up to the Hermitian symmetry $\Psi^\WO=(\Psi^\WO)^\dag$. In other words, the entries of $\Psi$ are independent complex Gaussian random variables up to the Hermitian symmetry:
\be\label{bandcw1}
\Psi_{xy}\sim {\cal N }_{\C}(0, s'_{xy}),\quad \text{with}\quad  s'_{xy}:=W^{-d} {\bf 1}\left( [x] \sim [y] \right).
\ee
%where $[x]\sim [y]$ means $[x]$ and $[y]$ are neighboring vertices on $\wt\Z_n^d$.

\end{itemize}
We will call $H$ a $d$-dimensional block Anderson/Anderson orbital/Wegner orbital model with linear size $L$, block size $W$, and coupling parameter $\lambda$. 
For simplicity of presentation, with a slight abuse of notations, we will often use consistent notations for some quantities (e.g., $\Psi$) that are used in all three models. 
When we want to make the specific model we refer to clear, we will add the super-index $\BA$, $\AO$, or $\WO$. 
\end{definition}

\begin{remark}\label{rmk:extension}
	%In the Anderson model, the potential $V$ consists of i.i.d.~diagonal entries. In the block Anderson model, we have replaced $V$ with a block diagonal random matrix, where each block is an i.i.d.~Wigner matrix. 
	%From a physical perspective, our ``block potential" is related to the Anderson potential on lattice $\Z_n^d$ through a scaling transformation. 
	%Our Anderson orbital and Wegner orbital models are motivated by the work of Wegner \cite{Wegner1} and its continuation in  \cite{Wegner2,Wegner3}. %They are proposed to model the motion of a quantum particle with many internal degrees of freedom in a disordered medium. 
	%The Wegner orbital model is also a special case of the random band matrix ensemble mentioned before. %(see, for example, \cite{PB_review,ConJ-Ref2,ConJ-Ref1,ConJ-Ref4,fy,Spencer2}). 
	We remark that in \cite{Wegner}, the Anderson orbital model in (ii) is called the ``block Anderson model". In this paper, however, we have adopted the name ``Anderson orbital" to distinguish it from our block Anderson model in (i).  
In this paper, we have chosen three of the most classical representatives in the physics literature. In particular, they represent some general families of RBSOs that exhibit the following features: deterministic interactions that are translation-invariant (block Anderson model), deterministic interactions that are translation-invariant on the block level (Anderson orbital model), and random interactions whose \emph{distribution} is translation-invariant on the block level (Wegner orbital model). 
The results and proofs of this paper can be extended to more general random block models that exhibit the metal-insulator transition. Specifically, we expect our method can be applied to deal with almost arbitrary deterministic or random interactions that exhibit block translation symmetry. However, due to length constraints, we do not pursue such directions in the current paper. % and focus on the three models in \Cref{def: BM} instead. 
\end{remark}

\subsection{Overview of the main results}
In this subsection, we provide a brief overview of our main results. First, we state the localization of our block models. We define the Green's function (or resolvent) of the Hamiltonian $H$ as
\be\label{def_Green}
G(z):=(H-z)^{-1} = \left(\lambda \Psi + V-z\right)^{-1},  \quad z\in \C.
\ee
Note that since the entries of $H$ have continuous density, $G(z)$ is well-defined almost surely even when $z\in \R$. 
Utilizing the fractional moment method \cite{Aizenman1993}, it was proved in \cite{Wegner} that the fractional moments of the $G$ entries are exponentially localized for small enough $\lambda$. To state the result, we introduce the quantity 
$$\Lambda_{\Psi}:=\left(\E\|\Psi_{[x][y]}\|_{HS}^2\right)^{1/2},$$
for two blocks $[x]\sim[y]$ on $\wt\Z_n^d$, which describes the interaction strength between two neighboring blocks. Here, $\Psi_{[x][y]}=(\Psi_{ab}:a\in [x],b\in [y])$ denotes the $W^d\times W^d$ block of $\Psi$ with the row and column indices belonging to $[x]$ and $[y]$, respectively. 
Due to the block translation symmetry of the three models, $\Lambda_{\Psi} $ does not depend on the choice of the blocks $[x]$ and $[y]$. Through a simple calculation, we find that 
$$\Lambda_{\Psi}=\begin{cases}
 W^{(d-1)/2}, & \text{for} \ \Psi=\Psi^{\BA}\\
    W^{d/2}, & \text{for} \ \Psi\in\{\Psi^{\AO},\Psi^{\WO}\}
\end{cases}$$

\begin{theorem}[Localization]\label{thm_localization}
Consider the models $H$ in \Cref{def: BM}. Fix any $d\ge 1$ and $s\in (0,1)$, there exists a constant $c_{s,d}>0$ such that the following holds. When $\lambda \le c_{s,d}/\Lambda_{\Psi},$ 
% \be\label{eq_locallamb}
% \lambda \le c_{s,d}/\Lambda_{\Psi},
% \ee
there exists a constant $C_{s,d}>0$ such that for all $W, L\ge 1$ and $z\in \R$,
\be\label{fraction-decay}
\mathbb E |G_{xy}(z)|^s\le  C_{s,d} \Lambda_{\Psi}^s \left( \lambda C_{s,d} \Lambda_{\Psi}\right)^{s|[x]-[y]|}.
\ee
\end{theorem}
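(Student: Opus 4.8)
The plan is to run the fractional-moment (Aizenman–Molchanov) argument adapted to the block structure, using the randomness of the GUE blocks of $V$ to produce an a priori bound on $\E|G_{xy}|^s$ for fixed $x,y$ in the \emph{same} block, and then propagating decay from block to block through the interaction $\lambda\Psi$ via the resolvent expansion. I would first reduce to the case $[x]\ne[y]$, since for $[x]=[y]$ the estimate \eqref{fraction-decay} only asserts $\E|G_{xy}|^s\le C_{s,d}\Lambda_\Psi^s$, which follows from a one-block a priori bound: conditioning on all blocks $V_j$ with $j\ne [x]$ and on $\Psi$, the entry $G_{xy}(z)$ is, by the Schur complement formula, an entry of $(V_{[x]} + B - z)^{-1}$ where $B$ is the (conditionally deterministic) effective Hamiltonian coming from integrating out the other blocks; since $V_{[x]}$ is a $W^d\times W^d$ GUE with continuous density, a standard computation (e.g.\ using that for GUE the matrix elements of the resolvent have bounded $s$-th moments uniformly in the deterministic shift, for $s<1$) gives $\E\big[|G_{xy}|^s\mid \cdots\big]\le C_{s,d}$, and then taking the full expectation and absorbing the harmless factor $\Lambda_\Psi^s\ge 1$ yields the claim. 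This a priori input is exactly the place where the GUE assumption is used, and it is the analogue of the ``rank-one perturbation'' a priori bound in the classical Anderson setting, now upgraded to ``rank-$W^d$''.

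For $[x]\ne[y]$ the key step is a self-improving inequality. Write $H=\lambda\Psi + V$ and use the resolvent identity to peel off one hopping step: for the block $[y]$, the Schur complement / first-order resolvent expansion gives a relation of the schematic form
\be
G_{xy} = \sum_{[w]\sim[y]} \big(G^{([y])}\big)_{x\,\cdot}\,\lambda\Psi_{\cdot\,\cdot}\, G_{\cdot\,y},
\ee
where $G^{([y])}$ is the Green's function of $H$ with the block $[y]$ removed (hence independent of $V_{[y]}$), and the sum ranges over blocks neighboring $[y]$. Applying $|\cdot|^s$ with $s\in(0,1)$ (so that $|\sum a_k|^s\le\sum|a_k|^s$) and taking expectations, one bounds the $\Psi$-factor in $L^{1/s}$ (this is where $\Lambda_\Psi$ and the combinatorial factor $2d$ for the number of neighboring blocks enter, producing a constant $C_{s,d}$ and one power of $\lambda C_{s,d}\Lambda_\Psi$), uses the conditional independence of $G^{([y])}$ from $V_{[y]}$ together with the one-block a priori bound to control the new ``diagonal-type'' factor, and arrives at
\be
\E|G_{xy}|^s \;\le\; \big(\lambda\, C_{s,d}\,\Lambda_\Psi\big)\,\sum_{[w]\sim[y]}\E\big|G^{([y])}_{x\,w'}\big|^s \cdot (\text{bounded factor}),
\ee
for suitable sites $w'\in[w]$. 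One then needs a decoupling/comparison step to replace $G^{([y])}$ back by $G$ (again using that removing one block is a perturbation handled by the same a priori bound), so that after iterating $|[x]-[y]|$ times along a shortest path of blocks from $[y]$ to $[x]$, each step contributes a factor $\lambda C_{s,d}\Lambda_\Psi$ and the chain terminates at a same-block entry bounded by $C_{s,d}\Lambda_\Psi^s$. Collecting the powers gives exactly \eqref{fraction-decay}, with the smallness condition $\lambda\le c_{s,d}/\Lambda_\Psi$ ensuring $\lambda C_{s,d}\Lambda_\Psi<1$ so the geometric iteration is a genuine contraction.

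The main obstacle is making the iteration \emph{clean} in the block setting: each Schur-complement step introduces a modified Green's function $G^{([y])}$ (or, more carefully, a Green's function with several blocks removed, accumulating along the path), and one must show uniformly that (a) the relevant diagonal resolvent entries of these modified operators still satisfy the a priori $s$-th moment bound $\le C_{s,d}$ — which holds because each modified operator still has a full GUE block at the site being examined and the removed blocks only shift it by a conditionally deterministic Hermitian matrix — and (b) the dependence structure closes, i.e.\ at each step the newly-used block randomness is independent of everything conditioned on. In the classical i.i.d.\ Anderson model these are one-line facts (rank-one perturbations, single scalar variables); here they require the ``decoupling lemma'' for GUE blocks and a careful bookkeeping of which blocks have been integrated out, but no new idea beyond $s<1$ subadditivity and the uniform-in-shift boundedness of GUE resolvent moments. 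I would isolate (a) as a preliminary lemma (one-block a priori estimate, proved by an explicit GUE resolvent computation) and (b) as a standard conditioning argument, after which the theorem follows by the geometric iteration sketched above. Note this is the only place in the paper where $\lambda$ is required to be small rather than large; the condition $d\ge7$, $W\ge L^\varepsilon$ plays no role here and enters only in the delocalization results.
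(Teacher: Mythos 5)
You should first note that the paper does not actually reprove this theorem: its ``proof'' is a citation of the fractional-moment localization result of \cite{Wegner} for the Anderson/Wegner orbital models, plus the single observation that the off-diagonal blocks of $\Psi^{\BA}$ have rank $W^{d-1}$ (so $\Lambda_{\Psi^{\BA}}=W^{(d-1)/2}$). Your proposal tries to reconstruct that fractional-moment argument, and two of its steps fail as written. First, the one-block a priori bound you state, $\E\big[|G_{xy}|^s\mid\cdots\big]\le C_{s,d}$ uniformly in the conditionally deterministic shift $B$ and in $z\in\R$, is false with a $W$-independent constant. Since the entries of the GUE block have variance $W^{-d}$, write $V_{[x]}=W^{-d/2}\wt V$ with $\wt V$ an unnormalized GUE; then $(V_{[x]}+B-z)^{-1}=W^{d/2}\big(\wt V+W^{d/2}(B-z)\big)^{-1}$, and the worst case over Hermitian $B$ (e.g.\ $B$ effectively decoupling a single site with diagonal entry at $z$, reducing the problem to $\E|{\cal N}(0,W^{-d})|^{-s}$) forces the uniform-in-shift bound to be of order $\Lambda_\Psi^{s}=W^{ds/2}$, not $\OO(1)$. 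This matters because in the Aizenman--Molchanov iteration the block being integrated out sits inside a conditionally deterministic environment, so it is exactly this uniform bound that is available; the prefactor $\Lambda_\Psi^s$ in \eqref{fraction-decay} is this worst-case factor, and the whole point of the argument in \cite{Wegner} is that it is paid once, not once per step.

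Second, the hopping step as you describe it loses far too much. Expanding $G_{xy}=\sum_{u,v}G^{([y])}_{xu}\,\lambda\Psi_{uv}\,G_{vy}$ and applying entrywise $s$-subadditivity produces $\OO(W^{2d})$ terms, each contributing only $\E|\lambda\Psi_{uv}|^s\sim(\lambda W^{-d/2})^s$; combined with the (correct) per-step a priori factor $\Lambda_\Psi^s$ this yields a per-step coefficient of order $\lambda^s W^{2d}$ instead of $(\lambda\Lambda_\Psi)^s=\lambda^s W^{ds/2}$, so your geometric iteration contracts only when $\lambda\lesssim W^{-2d/s}$, far below the claimed threshold $\lambda\le c_{s,d}/\Lambda_\Psi$. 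The missing ingredient --- which is the substance of the proof in \cite{Wegner}, not a routine conditioning step --- is a block-level decoupling estimate: one bounds fractional moments of entire $W^d\times W^d$ resolvent sub-blocks (e.g.\ in Hilbert--Schmidt norm) jointly with the hopping block, exploiting the independence of $V_{[y]}$ (and, for the $\WO$ model, of the Ginibre block $\Psi_{[w][y]}$), so that each step costs only $(\lambda\Lambda_\Psi)^s$ while the worst-case one-block bound enters a single time. Note also that for the block Anderson model $\Psi^{\BA}$ is deterministic, so the hopping factor must be controlled through $\|\Psi^{\BA}_{[w][y]}\|_{HS}$ and its rank $W^{d-1}$ rather than any randomness of $\Psi$ --- which is precisely the remark the paper adds on top of the citation. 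Until these two points are repaired, the proposed iteration does not give \eqref{fraction-decay} under the stated condition on $\lambda$.
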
 
\begin{proof}
The bound \eqref{fraction-decay} has been established in \cite{Wegner} for the Anderson orbital and Wegner orbital models. The same argument extends to the block Anderson model by using the fact that the off-diagonal blocks of $\Psi^{\BA}$ are of rank $W^{d-1}$ (in contrast to $W^{d}$ for $\Psi^{\AO}$ and $\Psi^{\WO}$). We omit the details for brevity.
\end{proof}

By \Cref{thm_localization}, if we take $\lambda\le c/\Lambda_\Psi$ for a constant $0<c<c_{s,d}\wedge C_{s,d}^{-1}$, then the estimate \eqref{fraction-decay} gives the exponential decay of $\mathbb E |G_{xy}(z)|^s$, from which we readily derive the localization of eigenvectors by \cite[Theorem A.1]{ASFH2001}.
It is possible to relax the Gaussian assumption on the entries of $V$ to more general distributions with densities and finite high moments, but we do not pursue this direction in the current paper. On the other hand, if the entries of $V$ are discrete random variables, then proving Anderson localization seems to be much more challenging (see e.g., the Anderson-Bernoulli model considered in \cite{Bourgain2005,DingSmart2020,LiZhang2019}). %We leave this to future study.

One main result of this paper gives a counterpart of the above localization result. By extending the methods developed in the recent series \cite{BandIII,BandI,BandII}, we will establish the following results when $d\ge 7$ and $W\ge L^\e$ for a small constant $\e>0$. Roughly speaking, for the models in \Cref{def: BM}, assuming that
\be\label{eq:condW_BA}
\lambda\gg W^{d/4}/\Lambda_{\Psi} 
,\ee
we will prove that: 
\begin{itemize}
\item {\bf Local law} (\Cref{thm_locallaw}). Within the bulk of the spectrum, we establish a sharp local law for the Green's function $G(z)$ for $\im z$ down to the scale 
\be\label{eq:defeta*}
\eta_*:=\frac{1}{\blam}\frac{W^{d-5}}{L^{d-5}},
\ee
% \be\label{eq:defeta*}
% \eta_*:=\frac{\blam}{W^5L^{d-5}} + \frac{1}{\blam}\frac{W^{d-4}}{L^{d-4}},
% \ee
where the parameter $\blam$ is defined as
\be\label{eq:blam}\blam:=\frac{W^d}{\lambda^2 \Lambda_\Psi^2}=\begin{cases}
 W/\lambda^2 , & \text{for} \ \Psi=\Psi^{\BA}\\
    \lambda^{-2}, & \text{for} \ \Psi\in\{\Psi^{\AO},\Psi^{\WO}\}
\end{cases}.\ee

\item {\bf Delocalization} (\Cref{thm:supu}). For any small constant $\e>0$, most bulk eigenvectors of $H$ have localization length $\ge L^{1-\e}$ with probability $1-\oo(1)$.  

\item {\bf Quantum unique ergodicity} (QUE, \Cref{thm:QUE}). Under certain conditions, there exists a constant $\e>0$ such that with probability $1-\oo(1)$, every bulk eigenvector is almost ``flat" on \emph{all scales $\Omega(L^{1-\e})$}. As a consequence, it implies that the localization length of each bulk eigenvector is indeed equal to $L$. 

\item {\bf Quantum diffusion} (\Cref{thm_diffu}). The evolution of the particle follows a quantum diffusion up to the Thouless time. Here, the \emph{Thouless time} \cite{Edwards_1972,Thouless_1977, Spencer2} is defined to be the typical time scale for a particle to reach the boundary of the system. For our models, it is given by   \be\label{eq:tTh}
t_{Th}=\blam {L^2}/{W^2}.
\ee
%$$t_{Th}={L^2W^{d-2}}/({\lambda^2\Lambda_{\Psi}^2});$$
%see the explanation below \eqref{eq:KM}. 
\end{itemize}
We refer readers to \Cref{sec:main} below for more precise statements of these results. Together with \Cref{thm_localization}, our results imply the Anderson metal-insulator transition of the RBSOs in \Cref{def: BM} as $\lambda$ decreases from $W^{d/4}\Lambda_{\Psi}^{-1}$ to $\Lambda_{\Psi}^{-1}$. We conjecture that $\lambda_c= \Lambda_{\Psi}^{-1}$ represents the correct threshold for the Anderson transition of our RBSOs (see the discussions below \eqref{eq_rewriteE} for heuristic reasoning). %Pushing our delocalization result towards the threshold of $\lambda$ seems to be a challenging problem; see our remark at the end of \Cref{sec:idea}.  

%\subsection{Notations and tools} 
To facilitate the presentation, we introduce some necessary notations that will be used throughout this paper. We will use the set of natural numbers $\N=\{1,2,3,\ldots\}$ and the upper half complex plane $\C_+:=\{z\in \C:\im z>0\}$.  
We use superscripts `$-$' and `$\dag$' to denote the complex conjugate and Hermitian conjugate of matrices, respectively, i.e., $A^-_{xy}:=\overline A_{xy}$ and $A^\dag_{xy}:=\overline A_{yx}.$ As a convention, we denote $A^\emptyset =A$.  
In this paper, we are interested in the asymptotic regime with $L,W\to \infty$. When we refer to a constant, it will not depend on $L$, $W$, or $\lambda$. Unless otherwise noted, we will use $C$, $D$ etc.~to denote large positive constants, whose values may change from line to line. Similarly, we will use $\epsilon$, $\tau$, $c$, $\fc$, $\fd$ etc.~to denote small positive constants. 
For any two (possibly complex) sequences $a_L$ and $b_L$ depending on $L$, $a_L = \OO(b_L)$, $b_L=\Omega(a_L)$, or $a_L \lesssim b_L$ means that $|a_L| \le C|b_L|$ for some constant $C>0$, whereas $a_L=\oo(b_L)$ or $|a_L|\ll |b_L|$ means that $|a_L| /|b_L| \to 0$ as $L\to \infty$. 
We say that $a_L \asymp b_L$ if $a_L = \OO(b_L)$ and $b_L = \OO(a_L)$. For any $a,b\in\R$, $a\le b$, we denote $\llbracket a, b\rrbracket: = [a,b]\cap \Z$, $\qqq{a}:=\qqq{1,a}$, $a\vee b:=\max\{a, b\}$, and $a\wedge b:=\min\{a, b\}$. For an event $\Xi$, we let $\mathbf 1_\Xi$ or $\mathbf 1(\Xi)$ denote its indicator function.  
For any graph (or lattice), we use $x\sim y$ to mean two vertices $x,y$ are neighbors. 
Given a vector $\mathbf v$, $|\mathbf v|\equiv \|\mathbf v\|_2$ denotes the Euclidean norm and $\|\mathbf v\|_p$ denotes the $\ell^p$-norm. 
Given a matrix $\cal A = (\cal A_{ij})$, $\|\cal A\|$ and $\|\cal A\|_{\max}:=\max_{i,j}|\cal A_{ij}|$ denote the operator norm and maximum norm, respectively. We will use $\cal A_{ij}$ and $ \cal A(i,j)$ interchangeably in this paper. %Moreover, we will use the simplified notation of generalized matrix entries: given any vectors $\mathbf u,\bv$, we denote $ \cal A_{\mathbf u\mathbf v}:= \bu^* \cal A\bv$ and $\cal A_{x\mathbf v}:= \mathbf e_x^* \cal A\bv$, where $\mathbf e_x$ is the standard basis unit vector along the $x$-th direction. 

For simplicity of notation, throughout this paper, we will use the following convenient notion of stochastic domination introduced in \cite{EKY_Average}. 

\begin{definition}[Stochastic domination and high probability event]\label{stoch_domination}
	%\begin{itemize}
	%\item[(i)] 
	{\rm{(i)}} Let
	\[\xi=\left(\xi^{(W)}(u):W\in\mathbb N, u\in U^{(W)}\right),\hskip 10pt \zeta=\left(\zeta^{(W)}(u):W\in\mathbb N, u\in U^{(W)}\right),\]
	be two families of non-negative random variables, where $U^{(W)}$ is a possibly $W$-dependent parameter set. We say $\xi$ is stochastically dominated by $\zeta$, uniformly in $u$, if for any fixed (small) $\tau>0$ and (large) $D>0$, 
	\[\mathbb P\bigg[\bigcup_{u\in U^{(W)}}\left\{\xi^{(W)}(u)>W^\tau\zeta^{(W)}(u)\right\}\bigg]\le W^{-D}\]
	for large enough $W\ge W_0(\tau, D)$, and we will use the notation $\xi\prec\zeta$. 
	%Throughout this paper, the stochastic domination will always be uniform in all parameters that are not explicitly fixed (such as matrix indices and the parameter $z$). %Note that $N_0(\epsilon, D)$ may depend on quantities that are explicitly constant, such as $\tau$ in Assumption \ref{assm_big1} and \eqref{assm_gap}. 
	If for some complex family $\xi$ we have $|\xi|\prec\zeta$, then we will also write $\xi \prec \zeta$ or $\xi=\OO_\prec(\zeta)$. 
	
	\vspace{5pt}
	\noindent {\rm{(ii)}} As a convention, for two deterministic non-negative quantities $\xi$ and $\zeta$, we will write $\xi\prec\zeta$ if and only if $\xi\le W^\tau \zeta$ for any constant $\tau>0$. 
	
	%(ii) We extend the definition of $O_\prec(\cdot)$ to matrices in the weak operator sense as follows. Let $A$ be a family of random matrices and $\zeta$ be a family of nonnegative random variables. Then $A=O_\prec(\zeta)$ means that $\left|\left\langle\mathbf v, A\mathbf w\right\rangle\right|\prec\zeta \| \mathbf v\|_2 \|\mathbf w\|_2 $ uniformly in any deterministic vectors $\mathbf v$ and $\mathbf w$. Here and throughout the following, whenever we say ``uniformly in any deterministic vectors", we mean that ``uniformly in any deterministic vectors belonging to a set of cardinality $N^{\OO(1)}$".
	%\item[(iv)] 
	
	\vspace{5pt}
	\noindent {\rm{(iii)}} We say that an event $\Xi$ holds with high probability (w.h.p.) if for any constant $D>0$, $\mathbb P(\Xi)\ge 1- W^{-D}$ for large enough $W$. More generally, we say that an event $\Omega$ holds $w.h.p.$ in $\Xi$ if for any constant $D>0$,
	$\P( \Xi\setminus \Omega)\le W^{-D}$ for large enough $W$.
	%\end{itemize}
\end{definition}

The following classical Ward's identity, which follows from a simple algebraic calculation, will be used tacitly  throughout this paper.
\begin{lemma}[Ward's identity]\label{lem-Ward}
Let $\cal A$ be a Hermitian matrix. Define its resolvent as $R(z):=(\cal A-z)^{-1}$ for any $z= E+ \ii \eta\in \C_+$. Then, we have 
    \be\label{eq_Ward0}
    \begin{split}
\sum_x \overline {R_{xy'}}  R_{xy} = \frac{R_{y'y}-\overline{R_{yy'}}}{2\ii \eta},\quad
\sum_x \overline {R_{y'x}}  R_{yx} = \frac{R_{yy'}-\overline{R_{y'y}}}{2\ii \eta}.
\end{split}
\ee
As a special case, if $y=y'$, we have
\be\label{eq_Ward}
\sum_x |R_{xy}( z)|^2 =\sum_x |R_{yx}( z)|^2 = \frac{\im R_{yy}(z) }{ \eta}.
\ee
\end{lemma}

\subsection{Coupling renormalization} \label{sec:idea}

In this subsection, we briefly describe some basic ideas regarding the coupling (or vertex) renormalization mechanism utilized in our proof. When evaluating the propagator $|G_{xy}|^2$ for RBMs in \cite{BandI}, we identify a $\self$ renormalization mechanism, which also applies to the RBSOs. Roughly speaking, we will derive an expansion of $|G_{xy}|^2$ (up to some small errors) as follows: 
\be\nonumber
\parbox[c]{14cm}{\includegraphics[width=\linewidth]{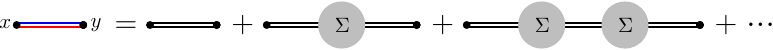}}
\ee
In this picture, the blue and red edges correspond to the $G_{xy}$ and $\bar G_{xy}$ factors, respectively. The black double-line edges represent entries of the diffusive matrix defined in \eqref{def:Theta} below, while the gray disk denotes a deterministic matrix $\Sigma$, referred to as the \emph{$\self$} in the context of QFT. The $\self$ arises from ``self-interactions" of $|G_{xy}|^2$, which is known as the dressed propagator in QFT (with the double-line edge denoting a bare propagator).    

On the other hand, coupling renormalization occurs when we evaluate the product of several Green's function entries, as illustrated in the following picture: 
\be\label{eq:diff_renorm}
\parbox[c]{14cm}{\includegraphics[width=\linewidth]{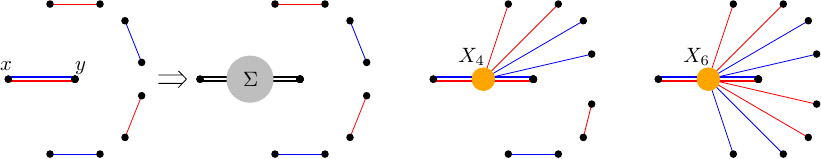}}
\ee
The left-hand side (LHS) represents the product of $|G_{xy}|^2$ with four other $G$ edges. Expanding the propagator $|G_{xy}|^2$ may lead to three possible scenarios on the right-hand side (RHS): (1) only self-interactions occur between $x$ and $y$, without involving other $G$ edges; (2) $|G_{xy}|^2$ and two other $G$ edges couple at a ``vertex" $X_4$ (the ``4" indicates that the vertex involves 4 edges in the original graph); (3) $|G_{xy}|^2$ and four other $G$ edges couple at a ``vertex" $X_6$. We remark that $X_4$ and $X_6$ are not true vertices in our graphs; rather, each represents a subgraph (often referred to as a ``vertex function" in QFT) with several vertices that vary on the local scale $W$. We will refer to $X_4$ and $X_6$ as ``molecules" in this paper, as defined in \Cref{def_poly} below. We find that summing over the graphs in scenario (2) (resp.~scenario (3)) results in the cancellation of the subgraphs within molecule $X_4$ (resp.~$X_6$). This gives the desired coupling/vertex renormalization.
%leads to a cancellation of the subgraphs inside molecule $X_4$ (resp.~$X_6$). This gives the desired coupling/vertex renormalization.  

We note that while coupling renormalization may seem like a strictly stronger generalization of $\self$ renormalization, this is actually not the case. Coupling renormalization occurs at the level of a single molecule, whereas $\self$ involves deterministic subgraphs consisting of multiple molecules that vary on the global scale $L$. Hence, coupling renormalization indeed constitutes a cancellation of vertex functions at molecules, while $\self$ renormalization involves a more intricate sum zero property (see \Cref{collection elements} below). 
It is quite possible that a more general ``coupling renormalization" mechanism involving multiple molecules could also apply to our model. However, while exploring this possibility is intriguing in its own right, it does not lead to an improvement in our results or a relaxation of the assumptions.   

To understand why coupling renormalization occurs, we consider an $N\times N$ Wigner matrix, which can be viewed as a special case of our RBSO with $W=L$. (In fact, our proof of coupling renormalization for RBSO is based on a comparison between the ``local behaviors" of RBSO and Wigner matrices.) We examine the following loop graph consisting of four edges representing the Green's function entries of a Wigner matrix.    
\be\nonumber
\parbox[c]{6cm}{\includegraphics[width=\linewidth]{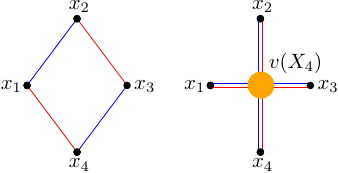}}
\ee
Consider an arbitrary $z=E+\ii \eta$ in the bulk of the spectrum, with $|E|\le 2-\kappa$ for a constant $\kappa>0$ and $\eta\gg N^{-1}$. Through a systematic expansion of the loop graph, we get a sum of graphs. Among them, a typical leading term is illustrated in the right picture, where the four $G$ edges now interact through a center molecule $X_4$. By using Ward's identity and the local law for Green's function, we can bound the sum of the left graph over the four vertices $x_i\in \qqq{N}$, $i\in\{1,2,3,4\}$, by $\OO(N\eta^{-3})$ with high probability. Conversely, applying Ward's identity, the sum of the right graph over the four external vertices and the internal molecule is of order $\Omega(N\eta^{-4})\cdot v(X_4)$ with high probability, where $v(X_4)$ denotes the vertex function associated with $X_4$. By comparing these results, we see that $v(X_4)$ must be of order $\OO(\eta)$, which is significantly smaller than its naive order of $\OO(1)$. This indicates a nontrivial cancellation for the vertex function.

However, there are significant technical challenges in implementing the above ideas. For instance, we need to show that the vertex function depends solely on the number of $G$ edges involved, but not the detailed structure of graphs; otherwise, the cancellation achieved cannot be extended to more general graphs. Additionally, different interactions may occur at different molecules, making it highly nontrivial to establish that we can achieve cancellation at each of them. Furthermore, complicated errors arise in the expansions, and we must show that these errors are negligible in proving the coupling renormalization. For full details of the proof, we refer readers to \Cref{sec:Vexpansion}. 

%Given any matrix $A$ and blocks $[x],[y]\subset \Z_L^d$, we use $A_{[x][y]}$ to denote the block of $A$ with row indices in $[x]$ and column indices in $[y]$.

%{\cor Some main difficulties: lack of translation symmetry; sum zero properties (bad continuity estimates)} Difference between the self-energy renormalization and the vertex renormalization: we have not fully used the sum zero property; related to $d=6$ case

\medskip

\noindent{\bf Organization of the remaining text}. In \Cref{sec:main}, we state the main results of this paper: the local law, \Cref{thm_locallaw}, quantum unique ergodicity, \Cref{thm:QUE}, and quantum diffusion, \Cref{thm_diffu}. \Cref{sec:preliminary} introduces preliminary notations and results that will be utilized in proving the main results. In \Cref{sec:Texp}, we present the core object of our proof---the $T$-expansion. Using this framework, we provide the proof of local law for the Wegner orbital model in \Cref{sec:pflocal}. 
This proof is based on two key components: the construction of the $T$-expansion (\Cref{completeTexp}) and the continuity estimate (\Cref{gvalue_continuity}). We detail the proofs of these two results %for the Wegner orbital model 
in Sections \ref{sec:constr}--\ref{sec:continuity}. In \Cref{sec:constr}, we prove \Cref{completeTexp}, assuming a sum zero property for the $\selfs$ (stated as \Cref{cancellation property}), with its proof postponed to \Cref{sec:sumzero}. 
%cite \cite{RBSO} here
The proof of \Cref{gvalue_continuity} will be provided in \Cref{sec:continuity}. This proof relies on a new tool called \emph{$V$-expansion} and the key coupling/vertex renormalization mechanism described earlier, which are studied in detail in \Cref{sec:Vexpansion}.

%The appendix contains supplementary material related to the main proofs. 

%cite \cite{RBSO} here
In the appendix, the extension of the proof of local law (as presented in Sections \ref{sec:constr}--\ref{sec:continuity} and \Cref{sec:sumzero}) 
to the block Anderson and Anderson orbital models will be discussed in \Cref{sec:ext}. 
The proofs of quantum diffusion (\Cref{thm_diffu}) and QUE (\Cref{thm:QUE}) will be presented in \Cref{sec:pf_Qdiff,sec:QUE}, respectively.  
Appendices \ref{appd_det}--\ref{sec:examples} include auxiliary proofs for the convenience of readers, as well as examples related to coupling and $\self$ renormalization, which may aid in understanding some of our technical proofs.

\subsection*{Acknowledgement}  
%Fan Yang is supported in part by the National Key R\&D Program of China (No. 2023YFA1010400). Jun Yin is supported in part by the Simons Fellows in Mathematics Award 85515.  
We would like to thank Changji Xu and Horng-Tzer Yau for fruitful discussions.

\section{Main results}\label{sec:main}

Since the delocalization of the model is in principle ``weaker" as $\lambda$ becomes smaller, for simplicity of notations and without loss of generality, we always assume that $\lambda\ll 1$ in this paper, i.e.,
\be\label{eq:small+lambda}
\lambda=W^{-\xi}
\ee
for a fixed constant $\xi>0$. All our proofs of should still hold when $\lambda$ is larger, but we do not pursue this direction in this paper. Our study of the delocalization is also based on the Green's function of $H$ defined in \eqref{def_Green}. In previous works (see e.g., \cite{He2018,AEK_PTRF,EKS_Forum}), it has been shown that if $W\to \infty$, $G(z)$ converges to a deterministic matrix limit $M(z)$ (defined as follows) in the sense of local laws if $\im z\gg W^{-d}$. 

%LeeSchSteYau2015,Anisotropic,LY17,
%the matrix limit $M(z)$ of $G(z)$ (in the sense of local laws) have been derived as follows. 

 \begin{definition}[Matrix limit of $G$]\label{defn_Mm}
For $\Psi\in \{\Psi^{\BA},\Psi^{\AO}\}$, define $m(z)\equiv m_N(z)$ as the unique solution to 
 \be\label{self_m}
\frac{1}{N}\tr   \frac{1}{\lambda \Psi -z- m(z)} = m(z)
\ee
 such that $\im m(z)>0$ for $z\in \C_+$. Then, we define the matrix $M(z)\equiv M_N(z)$ as 
\be\label{def_G0}
M(z):= \frac{1}{\lambda \Psi -z- m(z)}.
 \ee
For $\Psi=\Psi^{\WO}$, we define $m(z)$ and $M(z)$ as
\be\label{self_mWO}
m(z)={\big(1+2d\lambda^2\big)^{-1/2}}m_{sc}\big({z}/\sqrt{1+2d\lambda^2}\big),\quad M(z):=m(z)I_N,
\ee
where $m_{sc}$ denotes the Stieltjes transform of the Wigner semicircle law:
$$ m_{sc}(z)=\frac{-z+\sqrt{z^2-4}}{2}.$$
 \end{definition}

Note that for the block Anderson and Anderson orbital models, $M(z)$ is translationally invariant due to the translation symmetry of $\Psi$, which implies that $M_{xx}=m$ for all $x\in \Z_L^d$. 
%$M(z)$ and $m(z)$ do not depend on the variance matrix $S$ of the potential $V$, and we have $M_{xx}=m$ for all $x\in \Z_L^d$ by the translation symmetry of $\Psi$. 
It is known that $m(z)$ is the Stieltjes transform of a probability measure $\mu_{N,\lambda}$, called the free convolution of the semicircle law and the empirical measure of $\lambda\Psi$. % (for the Wegner orbital model, the limiting empirical measure of $\lambda\Psi^{\WO}$ is also a semicircle law). 
Moreover, the probability density $\rho_{N,\lambda}$ of $\mu_{N,\lambda}$ is determined from $m(z)$ by 
 $$\rho_{N,\lambda}(x)=\frac{1}{\pi}\lim_{\eta\downarrow 0}\im m(x+\ii \eta).$$
%As $N\to \infty$, $\rho_{N,\lambda}$ converges weakly to a limiting probability density. 
Under \eqref{eq:small+lambda}, this density has one single component $[-e_\lambda, e_\lambda]$ with $\pm e_\lambda\equiv \pm e_\lambda(N)$ denoting the left and right spectral edges, respectively. 
Since the semicircle law has edges $\pm 2$ and the empirical measure of $\Psi$ has compact support, we have $|e_\lambda - 2|=\OO(\lambda)$. As a consequence, we can consider our bulk eigenvalue spectrum to be $[-2+\kappa,2-\kappa]$ for a small constant $\kappa>0$. 
From \eqref{self_m} and \eqref{self_mWO}, we readily see that $m(z)=m_{sc}(z)+\oo(1)$. Furthermore, we can derive $m(z)$ as a series expansion in terms of $\lambda$:
\be\label{eq:expandm} 
m(z,\lambda)=m_{sc}(z) + m_1(z)\lambda + m_2(z)\lambda^2 +\cdots.
\ee
%the $\lambda=\Omega(1)$ case lies in the delocalized phase, where   
%where we used $\tr (\Delta^{2k+1})=0$ by symmetry. 
For \eqref{self_mWO}, this is simply the Taylor expansion of $m(z)$ around $\lambda=0$. For $\Psi\in \{\Psi^{\BA},\Psi^{\AO}\}$, we can express $m_k$ in terms of $\Psi$. For example, the first two terms are found to be 
%In particular, we can derive that
\be \label{eq_m1}
m_1(z)=\frac{-m_{sc}^2}{1-m_{sc}^2}\cdot \frac{1}{N}\tr \Psi=0,\quad m_{2}(z)=\frac{m_{sc}^3}{1-m_{sc}^2}\cdot \frac{1}{N}\tr (\Psi^2).
\ee
In fact, by the definitions of $\Psi^{\BA}$ and $\Psi^{\AO}$, it is easy to check that all odd terms vanish: $m_{2k+1}(z)=0$ since $\tr(\Psi^{l})=0$ for odd $l$.  
Let $z=E+\ii \eta$. When $\Psi\in \{\Psi^{\BA},\Psi^{\AO}\}$, using the definition of $M$ in \eqref{def_G0} and the Ward's identity in \Cref{lem-Ward}, we obtain that 
\be\label{L2M} 
 \sum_{y}|M_{xy}|^2 =\frac{\im M_{xx}}{\eta + \im m}=\frac{\im m}{\eta + \im m}.
\ee
For the Wegner orbital model, using the identity $|m_{sc}|^2/(1-|m_{sc}|^2)=\im m_{sc}/\eta$ for $m_{sc}(z)$, we obtain that 
\be\label{L2M2} 
|m(z)|^2 = \frac{\im m(z)}{\eta + (1+2d\lambda^2)\im m(z)} .
\ee

\subsection{Local law and quantum unique ergodicity}

In this subsection, we state the main results that indicate the appearance of the delocalized phase for our RBSOs as summarized below equation \eqref{eq:condW_BA}. Define the variance matrix 
$$ S= (S_{xy})_{x,y\in \Z_L^d}\quad \text{with}\quad  
S_{xy}=\var(H_{xy}).$$ 
In other words, under \Cref{def: BM}, we have 
$S_{xy}=s_{xy}$ for the block Anderson and Anderson orbital models, and $S_{xy}=s_{xy}+\lambda^2 s'_{xy}$ for the Wegner orbital model. For the block Anderson and Anderson orbital models, it is more convenient to state the local laws in terms of the $T$-variables:
\be\label{eq:TT}T_{xy}(z):=\sum_{\al}S_{x\al}|G_{\al y}(z)|^2, \quad \wt T_{xy}(z):=\sum_{\al}|G_{x\al}(z)|^2S_{\al y}, \quad x,y \in \Z_L^d.
\ee
The reason is that the behavior of $G_{xy}$ can be complicated when $x$ or $y$ vary on small scales of order 1 (due to the specific choice of $\Psi$ and the behavior of $M$). However, after taking a local average over a $W$ scale to form the $T$-variables, the details on scales of order $\OO(1)$ become irrelevant. Consequently, the local law estimates for the $T$-variables take a simpler form. 

\begin{theorem}[Local law]\label{thm_locallaw}
Consider the RBSOs in \Cref{def: BM}. Let $\kappa,\erre,\delta\in (0,1)$ be arbitrary small constants. Fix any $d\ge 7$, assume that $W\ge L^\delta$ and $\lambda$ in \eqref{eq:small+lambda} satisfies that  
\be\label{eq:cond-lambda2}
\lambda \ge W^{d/4+\fd/4}/\Lambda_{\Psi}.
%0<\xi \le \frac{d}{4} - \frac12-\frac{1}{4}\erre,
%\lambda \ge  W^{-\frac{d}{7} + \frac12+\frac{2}{7}\erre} .
\ee
%In addition, assume that $\eta_*\le 1$ (recall \eqref{eq:condW_BA}). 
Then, for any constants $\tau,D>0$, the following local law estimates hold for $z=E+\ii \eta$ and any $x,y \in \Z_{L}^d$: 
% \begin{equation}
% 	\label{locallaw}
% \P\bigg(\sup_{|E|\le 2- \kappa}\sup_{\eta_*\le \eta\le 1} |G_{xy} (z) - M_{xy}(z)\delta_{xy}|^2 \le  W^\tau \left(B_{xy}+ \frac{1}{N\eta}\right)\bigg) \ge 1- L^{-D} %+O_\prec\left(N^{-\e}{\Theta}_{xy}(z)\right)
% \end{equation}
\begin{equation}
	\label{locallaw}
\P\bigg(\sup_{|E|\le 2- \kappa}\sup_{W^{\fd}\eta_*\le \eta\le 1} T_{xy} (z)  \le  W^\tau  B_{xy}+ \frac{W^\tau  }{N\eta}\bigg) \ge 1- L^{-D}, 
\end{equation}
\begin{equation}
	\label{locallawmax}
\P\bigg(\sup_{|E|\le 2- \kappa}\sup_{W^{\fd}\eta_*\le \eta\le 1} \max_{x,y\in \Z_L^d}|G_{xy} (z) - M_{xy}(z) | \le  W^\tau \left(\frac{\blam}{W^{d}}+\frac{1}{N\eta}\right)^{1/2}\bigg) \ge 1- L^{-D} , %  
\end{equation}
for large enough $L$, where $\eta_*$ is defined in \eqref{eq:defeta*}
% \be\label{eq:defeta*}
% \eta_*:=%\etasa 
% \frac{\blam}{W^5L^{d-5}} + \frac{1}{\blam}\frac{W^{d-2}}{L^{d-2}}
% \ee
and $B$ is a matrix defined as
\be\label{defnBxy}
B_{xy}:=\frac{\blam}{W^2\left\langle x-y\right\rangle^{d-2}}.
\ee 
\end{theorem}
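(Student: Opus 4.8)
## Proof Proposal for Theorem 2.4 (Local Law)

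The plan is to follow the $T$-expansion framework developed in \cite{BandI,BandII,BandIII}, adapted to the broken translation symmetry and the non-scalar deterministic limit $M(z)$ of the RBSO. I would first treat the Wegner orbital model as the main case (where $M = m I_N$ is scalar, simplifying the graphical calculus), and then transfer to the block Anderson and Anderson orbital models in the appendix. The backbone of the argument is a bootstrap/continuity scheme in the spectral parameter $\eta = \im z$: one establishes the local law at a large scale (say $\eta \asymp 1$, where it follows from a standard resolvent expansion and concentration), and then propagates it downward to the scale $W^{\fd}\eta_*$ in multiplicative steps $\eta \to \eta/W^{\epsilon}$. Each downward step has two ingredients: (a) a \emph{self-consistent $T$-equation} (the ``complete $T$-expansion'' of \Cref{completeTexp}) that expresses $T_{xy}(z)$ in terms of the diffusive propagator $\Theta$ convolved against a renormalized self-energy $\Sigma$, plus controllable error graphs; and (b) a \emph{continuity estimate} (\Cref{gvalue_continuity}) showing that the $G$-entries cannot jump as $\eta$ decreases slightly, so the a priori bound from the previous scale feeds into the $T$-equation at the new scale.

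The key steps, in order, are: (1) \textbf{Setup of the $T$-expansion.} Using the self-energy renormalization and the resolvent identity $G = M - M(H - \lambda\Psi - z + m)G$ (or its block analogue), derive the identity $T_{xy} = \Theta \cdot (S + \text{higher-order})_{xy}$ where $\Theta := (1 - M^{(2)} \circ S)^{-1} \ldots$ is the diffusive matrix of \eqref{def:Theta}; here one must verify that $\Theta$ obeys the heat-kernel-type decay bound $\Theta_{xy} \lesssim \blam\, W^{-2}\langle x - y\rangle^{-(d-2)}$ for $d \ge 3$, which is where the constant $\blam$ and the matrix $B_{xy}$ in \eqref{defnBxy} enter. (2) \textbf{Construction of the complete $T$-expansion} (\Cref{completeTexp}), iterating the expansion until all error terms are either (i) of ``doubly diffusive'' type and hence summable, or (ii) carry enough smallness in $W/L$ and $\eta/\eta_*$ to be absorbed. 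This step relies on the sum-zero property of the self-energies (\Cref{cancellation property}/\Cref{sumzero}), which guarantees $\sum_y \Sigma_{xy} = 0$ and upgrades the naive bound on $\Sigma$ by a factor that makes $\Theta \circ \Sigma$ subleading. (3) \textbf{The continuity estimate} (\Cref{gvalue_continuity}), proved via the new $V$-expansion of \Cref{sec:Vexpansion} together with the coupling/vertex renormalization mechanism: one shows that each additional $G$-edge coupled at a molecule contributes a vanishing vertex function of order $\OO(\eta)$ rather than $\OO(1)$, which is precisely what saves the extra powers of $\lambda \gg 1$ that would otherwise destroy the continuity argument in dimensions below $7$ (equivalently, for $\delta \gg 1$). (4) \textbf{Running the bootstrap} from $\eta \asymp 1$ down to $W^{\fd}\eta_*$, at each scale combining (2) and (3) to deduce \eqref{locallaw}; then (5) \textbf{deducing the entrywise bound} \eqref{locallawmax} from the $T$-variable bound via a fluctuation-averaging / self-improving estimate, using $|G_{xy} - M_{xy}|^2 \prec$ (sum of $T$-type quantities) $+ (N\eta)^{-1}$, and Ward's identity to close.

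The main obstacle I expect is \textbf{Step (3), the continuity estimate with coupling renormalization}, for the reasons flagged in \Cref{sec:idea}: because $\lambda = W^{-\xi}$ can be much smaller than $1$ (so $\delta = \lambda^{-1} \gg 1$), a naive application of the RBM-style expansion produces graphs with surplus powers of $\lambda^{-1}$ that a crude bound cannot control, and the fix requires proving that the vertex functions at \emph{every} molecule where several $G$-edges meet exhibit the same $\OO(\eta)$ cancellation — uniformly over the combinatorial structure of the expansion graph, and robustly against the complicated error terms generated along the way. Establishing that the vertex function depends only on the number of incident $G$-edges (not the finer graph structure), so that the Wigner-case cancellation can be transplanted to general RBSO graphs via a local comparison between RBSO and an $N \times N$ Wigner matrix, is the conceptually and technically delicate heart of the proof. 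A secondary difficulty, purely technical, is that the loss of translation invariance means $M$ and $\Theta$ are genuine matrices rather than functions of $x - y$, so the Fourier-analytic estimates on $\Theta$ from \cite{BandI,BandII,BandIII} must be replaced by more hands-on operator-theoretic bounds; but this should be manageable since $\Psi$ still has block translation symmetry and the relevant operators are small perturbations of translation-invariant ones.
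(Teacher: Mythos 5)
Your proposal follows essentially the same route as the paper: reduce the local law to a high-order $T$-expansion (built via pseudo-$T$-equations and the sum zero property of self-energies), run a multiplicative bootstrap in $\eta$ from the initial scale $\eta\asymp \blam^{-1}$ down to $W^{\fd}\eta_*$, where each step combines the continuity estimate (proved through the $V$-expansion and the coupling/vertex renormalization at molecules) with a high-moment estimate of the $T$-variables, and finally transfer the $T$-bound to $\|G-M\|_{\max}$ by a self-consistent (matrix Dyson equation) argument, treating the Wegner orbital model first and the block Anderson/Anderson orbital models in the appendix. You also correctly identify the continuity estimate with its molecule sum zero property as the main technical obstacle, which matches the paper's own assessment.
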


Since $H$ is Hermitian, we have $G_{yx}(\overline z)=\overline {G_{xy}(z)}$, which gives $T_{xy}(z)=\wt T_{yx}(\overline z)$. Thus, the estimate \eqref{locallaw} also holds for \smash{$\wt T_{yx}(z)$} by using the $z\to \overline z$ symmetry.
We believe that the local laws \eqref{locallaw} and \eqref{locallawmax} are sharp and should hold for all $\eta\gg L^{-d}$. Note for $\eta\gg \eta_*$, the term $(N\eta)^{-1}$ in \eqref{locallawmax} is dominated by $\blam/W^d$. 

An immediate corollary of \eqref{locallaw} and \eqref{locallawmax} is the Anderson delocalization of the bulk eigenvectors of $H$. Denote the eigenvalues and eigenvectors of $H$ by $\{\lambda_k\}$ and $\{\bu_k\}$. 
For any constants $K>1$, $0<\gamma \le 1 $, and \emph{localization length} $W\ll \ell \ll L$, define the random subset of indices 
$$ {\mathcal B}_{\gamma, K,\ell} :=\left\{k: \lambda_k \in [-2+\kappa, 2-\kappa] \ \text{ so that  } \min_{x_0\in \Z_L^d} \sum_x |  \bu_k(x)|^2 \exp\left[\left(\frac{|x-x_0|}{\ell}\right)^\gamma\right] \le K \right\}, $$
which contains all indices associated with the bulk eigenvectors that are exponentially localized in a ball of radius $\OO(\ell)$. 
%Here we have relaxed the exponential function in \eqref{localu} to a more general family of sub-exponential functions. Then we have the following theorem for random band matrices in dimensions $d\ge 8$. 

\begin{corollary}[Weak delocalization of bulk eigenvectors]\label{thm:supu}
Under the assumptions of Theorem \ref{thm_locallaw}, suppose $W\le  \ell \le L^{1 - \e}$ for a constant $\e>0$.
Then, for any constants $\tau, D>0$, we have that 
\begin{align}
	\P\bigg(\sup_{k: |\lambda_k | \leq 2 - \kappa} \|\bu_k\|_\infty^2 \leq W^\tau \eta_* \bigg) &\ge 1- L^{-D} ,\label{eq:delocalmax}\\
\mathbb P \left[ \frac{|\mathcal B_{\gamma,K,\ell}|}{N} \le W^\tau \left(\frac{\ell^2}{L^2} + \frac{\blam^{1/2}}{W^{d/2}}\right)\right] &\ge 1-L^{-D} ,\label{uinf_locallength}  
	\end{align}
%  \begin{align}
% 	\P\bigg(\sup_{k: |\lambda_k | \leq 2 - \kappa} \|\bu_k\|_\infty^2 \leq \lambda^{-3}W^{-7/2+\tau}L^{5-d}\bigg) &\ge 1- L^{-D} ,\label{eq:delocalmax}\\
% \mathbb P \left[ \frac{|\mathcal B_{\gamma,K,\ell}|}{N} \le W^\tau \left(\frac{\ell^2}{L^2} + \frac{1}{\lambda W^{(d-1)/2}}\right)\right] &\ge 1-L^{-D} ,\label{uinf_locallength}  
%\prec N \left(  {\ell^{\frac{D}{D+4}}}/{L^{\frac{D-d}{D+4}}}\right)^2,
%	\end{align}
for large enough $L$.
\end{corollary}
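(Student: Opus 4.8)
The plan is to deduce both estimates from \Cref{thm_locallaw} via the spectral resolution $\im G_{xx}(E+\ii\eta)=\sum_k \eta\,|\bu_k(x)|^2\big/\big((\lambda_k-E)^2+\eta^2\big)$ of the resolvent, applied at the smallest scale $\eta_1:=W^{\fd}\eta_*$ allowed in \Cref{thm_locallaw}.

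For \eqref{eq:delocalmax}, fix an index $k$ with $\lambda_k\in[-2+\kappa,2-\kappa]$. Retaining only the $k$-th term in the spectral sum at $E=\lambda_k$ gives, for every site $x$ and every $\eta>0$, the a priori bound $|\bu_k(x)|^2\le \eta\,\im G_{xx}(\lambda_k+\ii\eta)$. Since $M_{xx}=m$ with $\im m\asymp 1$ throughout the bulk (for the Wegner orbital model even $M=mI_N$), and since at $\eta=\eta_1$ the error in \eqref{locallawmax} is $o(1)$ — the term $(N\eta)^{-1}$ being dominated by $\blam/W^d$, which is $\ll1$ under \eqref{eq:cond-lambda2} — the estimate \eqref{locallawmax} gives $\im G_{xx}(E+\ii\eta_1)\le C$ with probability $\ge 1-L^{-D}$, uniformly in $|E|\le 2-\kappa$ and in $x$. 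Hence $|\bu_k(x)|^2\le C\eta_1$ simultaneously for all bulk $k$ and all $x$; since the constant $\fd$ in \Cref{thm_locallaw} may be taken $\le\tau$ (which only relaxes \eqref{eq:cond-lambda2}), this gives $\sup_{k:\,|\lambda_k|\le 2-\kappa}\|\bu_k\|_\infty^2\le W^\tau\eta_*$ w.h.p., which is \eqref{eq:delocalmax}.

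For \eqref{uinf_locallength} I would follow the scheme of \cite{BandI,BandII,BandIII}. First, choose $R=R_{\gamma,K}$ with $Ke^{-R^\gamma}\le 1/2$, so that every $k\in\mathcal B_{\gamma,K,\ell}$ has $\sum_{x:\,|x-x_0(k)|\le R\ell}|\bu_k(x)|^2\ge 1/2$ at the minimizing center $x_0(k)$. Fix a positive-definite weight $w(v)=\exp(-\|v\|_1^2/(2(R\ell)^2))$ on $\Z_L^d$ and set $\mathcal L(\bv):=\sum_{x,y}w(x-y)|\bv(x)|^2|\bv(y)|^2$; then $\mathcal L(\bu_k)\ge e^{-2}\big(\sum_{x:\,|x-x_0(k)|\le R\ell}|\bu_k(x)|^2\big)^2\ge c_0>0$ for $k\in\mathcal B_{\gamma,K,\ell}$, while $\mathcal L(\bv)\ge0$ always. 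Because $w$ is positive-definite, a residue computation based on $\int_\R\big[(\lambda_k-E-\ii\eta)(\lambda_l-E+\ii\eta)\big]^{-1}\dd E=2\pi\ii/(\lambda_l-\lambda_k+2\ii\eta)$ shows that every diagonal and off-diagonal contribution to $\int_\R\sum_{x,y}w(x-y)|G_{xy}(E+\ii\eta)|^2\,\dd E$ is nonnegative, whence — after inserting a smooth bulk cutoff $f\equiv1$ on $[-2+2\kappa,2-2\kappa]$ supported in $[-2+\kappa,2-\kappa]$ to localize the energy integral to where \Cref{thm_locallaw} applies —
$$
c_0\,|\mathcal B_{\gamma,K,\ell}|\ \le\ \sum_{k}\mathcal L(\bu_k)\ \lesssim\ \eta_1\int_{|E|\le 2-\kappa}\sum_{x,y}w(x-y)\,|G_{xy}(E+\ii\eta_1)|^2\,\dd E .
$$
One then bounds the right side using \Cref{thm_locallaw}, splitting $|G_{xy}|^2$ according to $x=y$, $\{x\ne y,\ [x]=[y]\}$, or $[x]\ne[y]$: the diagonal part is $\sum_x|G_{xx}|^2\prec N$ by \eqref{locallawmax}; the within-block part is handled by $T_{yy}\prec \blam/W^d+(N\eta_1)^{-1}$; and the between-block part is handled by the $T$-estimate \eqref{locallaw} via $S_{x\alpha}\ge W^{-d}\mathbf 1([x]=[\alpha])$, which yields $\sum_{x\in[a]}|G_{xy}|^2\le W^d T_{x'y}\prec W^d\big(B_{x'y}+(N\eta_1)^{-1}\big)$, and then summing against $w$ over a ball of radius $\OO(R\ell)$ using $\sum_{y:\,|x-y|\le r}B_{xy}\asymp \blam r^2/W^2$. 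Collecting these, multiplying by $\eta_1$, dividing by $N$, and using $\eta_*=\blam^{-1}(W/L)^{d-5}$ together with $d\ge7$ to check that each resulting term is $\prec W^\tau\big(\ell^2/L^2+\blam^{1/2}W^{-d/2}\big)$, one obtains \eqref{uinf_locallength}.

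The main obstacle is this last step. The naive pointwise bounds $|G_{xy}|\prec1$ and $\im G_{xx}\prec1$ only produce $|\mathcal B_{\gamma,K,\ell}|/N\prec\eta_1(R\ell)^d$, which is far too weak — the localization length $\ell$ enters with the wrong power. Obtaining the correct power forces one to use the quantum-diffusion profile $B_{xy}\asymp\blam W^{-2}\langle x-y\rangle^{-(d-2)}$ of \eqref{locallaw}, whose mass in a ball of radius $r$ grows only like $r^2$ (a return-probability/random-walk estimate) rather than like $r^d$; this is precisely what produces the $\ell^2/L^2$ factor, the $\blam^{1/2}W^{-d/2}$ term reflecting the residual on-diagonal and within-block fluctuations. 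Making this rigorous relies on the sharp $T$-estimate \eqref{locallaw} holding all the way down to $\eta_*$ (hence on the full $T$-expansion machinery), on a clean separation of the on-diagonal, within-block and between-block contributions, and on the positivity of the spectral cross terms forced by the positive-definite choice of $w$. One further technical point: the cutoff $f$ spoils the exact cancellation of the off-diagonal terms in the spectral identity, so one must show the resulting error is negligible — this is done by a crude Cauchy--Schwarz bound on those cross terms together with the rigidity estimate $\#\{k:|\lambda_k-E|\le\eta_1\}\prec N\eta_1$, itself a consequence of \Cref{thm_locallaw}.
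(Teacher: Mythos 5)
Your proof of \eqref{eq:delocalmax} is essentially the paper's: the bound $|\bu_k(x)|^2\le \eta\,\im G_{xx}(\lambda_k+\ii\eta)$ combined with \eqref{locallawmax} at $\eta\asymp W^{\fd}\eta_*$. Likewise, your block-wise estimate $\eta\sum_{x:|x-y|\le C\ell}|G_{xy}|^2\prec \eta\blam \ell^2/W^2+\ell^d/N$ via \eqref{locallaw} is exactly the intermediate estimate the paper proves (the paper works at $\eta=W^{2+\fd}/(\blam L^2)=W^{\fd}t_{Th}^{-1}$ rather than at $W^{\fd}\eta_*$; your choice also closes numerically for $d\ge 7$). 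The genuine problem lies in the step that converts these resolvent bounds into the eigenvector count \eqref{uinf_locallength}.

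The positivity inequality you rely on, $\sum_k\mathcal L(\bu_k)\lesssim \eta\int_{\R}\sum_{x,y}w(x-y)|G_{xy}(E+\ii\eta)|^2\,\dd E$, is exact only when the energy integral runs over all of $\R$: then the $(k,l)$ cross term carries the kernel $2\pi\eta/((\lambda_k-\lambda_l)^2+4\eta^2)\ge 0$ and the spatial factor $A_{kl}=\sum_p\hat w(p)|\widehat{u_k\bar u_l}(p)|^2\ge 0$ (granting positive definiteness of $w$). Once you insert the bulk cutoff $f$ — which you must, since \Cref{thm_locallaw} gives nothing for $|E|>2-\kappa$ — the cross-term kernel $\int f(E)\,\frac{(\lambda_k-E)(\lambda_l-E)+\eta^2}{((\lambda_k-E)^2+\eta^2)((\lambda_l-E)^2+\eta^2)}\,\dd E$ is genuinely negative, of size $O(1)$ for macroscopically separated bulk pairs and up to $O(1/\eta)$ when one of $\lambda_k,\lambda_l$ lies near the support of $1-f$. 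Your proposed remedy (crude Cauchy--Schwarz plus the rigidity $\#\{k:|\lambda_k-E|\le\eta_1\}\prec N\eta_1$) does not close this gap: using $\sum_l A_{kl}=w(0)=1$, the error produced by the cutoff is only bounded by $\int(1-f)(E)\,\im\tr G(E+\ii\eta_1)\,\dd E$, and since a positive fraction of the spectrum lies within $O(1)$ of the support of $1-f$ — a region where the paper provides no local law, no rigidity and no delocalization — this bound is of order $N$, the same size as the trivial bound $|\mathcal B_{\gamma,K,\ell}|\le N$. Microscopic rigidity at scale $\eta_1$ is irrelevant to these $O(1)$-scale contributions, and $A_{kl}$ cannot be argued small because near-edge eigenvectors may themselves be localized. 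Two smaller points: with $f\equiv 1$ only on $[-2+2\kappa,2-2\kappa]$ your diagonal kernel is not $\gtrsim 1/\eta$ for the eigenvalues with $2-2\kappa<|\lambda_k|\le 2-\kappa$, which do belong to $\mathcal B_{\gamma,K,\ell}$ (you would need to apply \Cref{thm_locallaw} with a smaller $\kappa$); and positive definiteness of $\exp(-\|v\|_1^2/(2(R\ell)^2))$ on the torus is not obvious — a periodized $\ell^2$-Gaussian or a product of one-dimensional exponential kernels would be safe. This spectral-edge issue is precisely what the argument of \cite[Proposition 7.1]{delocal}, which the paper invokes, is designed to avoid: roughly, it works at fixed bulk energies $z=\lambda_k+\ii\eta$ for each $k\in\mathcal B_{\gamma,K,\ell}$ and controls cross terms through the exponential weight in the definition of $\mathcal B_{\gamma,K,\ell}$ together with the entrywise law \eqref{locallawmax} (which is also where the $\blam^{1/2}W^{-d/2}$ term in \eqref{uinf_locallength} originates), so that no information outside the bulk is ever needed. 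As written, your route to \eqref{uinf_locallength} therefore has a real gap at the spectral edges.
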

\begin{proof}
We have the bound $|\bu_k(x)|^2 \le \eta\im G_{xx}(\lambda_k + \ii \eta)$ for any $\eta>0$. Then, taking $\eta=W^\tau\eta_*$ and using the local law \eqref{locallawmax}, we conclude \eqref{eq:delocalmax}. For any $y\in \Z_L^d$, $W\le \ell \le L^{1-\e}$, and $z=E+\ii \eta$ with $|E|\le 2-\kappa$ and $\eta = W^{2+\fd}/(\blam L^2)$, using the local law \eqref{locallaw}, we obtain the following estimate with probability $1-\OO(L^{-D})$ for any constants $\tau, D>0$: 
$$ \eta \sum_{x:|x-y|\le \ell}|G_{xy}|^2 \le  \eta \sum_{x:\dist(y,[x]) \le \ell}|G_{xy}|^2 \le \eta\blam \frac{W^\tau \ell^2}{W^2} +\frac{W^\tau \ell^d}{N} \le 2W^{\tau+\fd}\frac{\ell^2}{L^2}.$$
%with probability $1-\OO(L^{-D})$. 
With this estimate and the local law \eqref{locallawmax} at $z=E+\ii \eta$, we can obtain \eqref{uinf_locallength} using the argument in the proof of \cite[Proposition 7.1]{delocal}. 
\end{proof}

The estimate \eqref{uinf_locallength} asserts that for the models in \Cref{def: BM} with block size essentially of order one ($L^{\delta}$ for an arbitrarily small constant $\delta>0$), the majority of bulk eigenvectors have localization lengths essentially of the size of the system (in the sense that they are larger than $L^{1 - \e}$ for any small constant $\e > 0$). 
If we know that the local law \eqref{locallawmax} holds for $\eta\gg L^{-d}$, then we should have the following complete delocalization of bulk eigenvectors:
$$\mathbb P \bigg(\sup_{k: |\lambda_k | \leq 2 - \kappa} \|\bu_k\|_\infty^2 \le L^{-d +\tau}\bigg)\ge  1-L^{-D} .
$$
%Such complete delocalization was first proved for Wigner matrices in \cite{ESY_local,ESY1,ErdYauYin2012Univ,ErdYauYin2012Rig} and later extended to many other classes of mean-field random matrices. %(see e.g., \cite{ALY_Levy,ADK2021,ADK2022,BHY2019,BKH2017,LP2020,isotropic,PartI,EKYY_ER1,HKM2019,HY_Regulard,LT2020,RV2016}).
The estimate \eqref{eq:delocalmax}, although not sharp, has the correct leading dependence in $L^{-d}$. As a consequence, we can derive the QUE for our models, which will imply that the localization lengths of the bulk eigenvectors are indeed equal to $L$.

\begin{theorem}[Quantum unique ergodicity]\label{thm:QUE}
In the setting of \Cref{thm_locallaw}, the following results hold. 
\begin{itemize}
\item[(i)] Given any subset $A_n\in \Z_n^d$, denote $A_L:=\projn^{-1}(A_n)$, where $\projn$ was defined in \Cref{def: BM2}. Suppose $\ell:= |A_n|^{1/d}$ satisfies the following condition for a constant $c>0$: 
\be\label{eq:cond_ell1}
(W\ell)^{d-2}  \geq  {\blam}^{-1}{L^{10}W^{2d-12+c}}. \ee 
Then, for each $\alpha$ such that $ |\lambda_\alpha| \leq 2 - \kappa$, the following event occurs with probability tending to 1:
	\begin{equation}
	\label{eq:que}
 \frac{1}{|A_N|}\sum_{x \in A_N} (N|u_\alpha(x)|^2 -1)\to 0 . 
\end{equation}

\item[(ii)] Given any $\ell>0$ satisfying 
\be\label{eq:cond_ell2}
(W\ell)^{2d-2}\ge  L^{d+5} W^{d-7+c} \ee
for a constant $c>0$, the following event occurs with probability tending to 1 for any constant $\e >0$:
\begin{equation}
	\label{eq:weakque}
   \frac{1}{N}\,\left|\left\{ \alpha: |\lambda_\alpha|<2 - \kappa,\bigg|\frac{1}{W^d|I|}\sum_{[y] \in I}\sum_{x\in [y]} (N|u_\alpha(x)|^2 -1)\bigg| \geq \e  \text{ for some $I \in \mathcal I$}\right\}\right| \to 0\,.
\end{equation}
Here, $\mathcal I: = \left\{I_{\mathbf k,\ell}: \mathbf k\in \Z^d, -n/\ell \le k_i \le n/\ell \right\}$ is a collection of boxes that covers $\wt\Z^d_n$, where 
$$I_{\mathbf k,\ell}:=\{(y)_n: y=(y_1,\ldots, y_d) \ \text{and}\  y_i \in [(k_i-1)\ell/2  ,(k_i+1)\ell/2) \cap \Z^d, i=1,\ldots, d\},$$ 
and $(y)_n$ is defined as in notation \eqref{representativeL}, with $L$ and $\Z_L^d$ replaced by $n$ and $\wt\Z_n^d$, respectively.
\end{itemize}
\end{theorem}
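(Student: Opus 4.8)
The plan is to derive both parts from the local law of \Cref{thm_locallaw} via the now-standard ``eigenvector-to-Green's-function'' reduction combined with a second-moment (variance) bound. For part (i), fix a bulk energy window and a block-constant test vector $\mathbf a = W^{-d}\mathbf 1_{A_L}$ normalized so that $\langle \mathbf a, \mathbf 1\rangle = |A_n|$. The quantity in \eqref{eq:que} is (up to normalization) $\sum_x a_x (N|u_\alpha(x)|^2 - 1)$, and the key point is that this is controlled by the fluctuation of the diagonal Green's function smeared against $\mathbf a$. Concretely, writing $\mathcal A := \diag(\mathbf a)$ and using the spectral decomposition, for $z = \lambda_\alpha + \ii\eta$ one has $\eta \sum_\beta \frac{\eta}{(\lambda_\beta - \lambda_\alpha)^2 + \eta^2}\langle \mathbf u_\beta, \mathcal A \mathbf u_\beta\rangle \approx \tr(\mathcal A M) + (\text{fluctuation})$, where the fluctuation is estimated by a high-moment bound on $\tr \mathcal A (G - M)$. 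First I would establish such an averaged local law for $\tr \mathcal A (G(z) - M(z))$ with an error gaining an extra factor from the averaging (morally $\sqrt{|A_n| W^d}$ worth of cancellation against the pointwise bound in \eqref{locallawmax}), then upgrade from the smeared/averaged eigenvalue statistic to the single-eigenvector statistic by the standard argument that replaces the Poisson kernel sum by a single term, at the cost of controlling the eigenvalue rigidity / level repulsion on scale $\eta$. The condition \eqref{eq:cond_ell1} is exactly what makes the resulting error $\oo(|A_n| W^d / N) = \oo(1)$ after dividing by $|A_N| = |A_n| W^d$; the powers of $L$ and $W$ there should come from tracking $\eta_*$, the size $\blam/W^d$ of the local-law error, and the loss incurred in passing from $\eta \gtrsim \eta_*$ down to a single eigenvector.

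For part (ii), the statement is weaker (it allows an $\oo(N)$ exceptional set of eigenvectors and asks only for $\e$-flatness rather than a limit), so I would use a first-moment/Markov argument: bound $\sum_\alpha \mathbf 1(|\lambda_\alpha| < 2-\kappa) \big|\frac{1}{W^d|I|}\sum_{[y]\in I}\sum_{x\in[y]}(N|u_\alpha(x)|^2 - 1)\big|^2$ by a trace of $|G - M|^2$ type quantity against the block-indicator of $I$, summed over the $\OO((n/\ell)^d)$ boxes $I \in \mathcal I$, and show this is $\oo(N \e^2)$. This reduces to an $L^2$-type estimate $\sum_{x\in A_L, y \in A_L'} \E |G_{xy}(z) - M_{xy}(z)|^2$ for block unions $A_L, A_L'$, which one extracts from \eqref{locallaw}–\eqref{locallawmax} together with Ward's identity (\Cref{lem-Ward}) to convert $\sum_x |G_{xy}|^2$ into $\im G_{yy}/\eta$. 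Here the optimal choice of $\eta$ balances the $\eta^{-1}$ from Ward against the decay of $B_{xy}$ in \eqref{defnBxy}, and \eqref{eq:cond_ell2} is the threshold at which the total is $\oo(N)$; the exponent $2d-2$ on $W\ell$ versus $d-2$ in part (i) reflects that we are now summing a \emph{squared} fluctuation over a box of $(W\ell)^d$ sites, gaining two powers of the averaging scale.

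The main obstacle I anticipate is the passage from the averaged/mesoscopic local law (valid only for $\eta \ge W^{\fd}\eta_*$) to a statement about an individual bulk eigenvector. This requires either (a) an eigenvalue rigidity estimate on scale $\eta_*$ together with a bound ruling out anomalously many eigenvalues in a window of size $\eta_*$, or (b) a resampling/fluctuation-averaging argument à la \cite{BandIII} that directly controls $\langle \mathbf u_\alpha, \mathcal A \mathbf u_\alpha\rangle$; in either case one must be careful that the error terms, which carry positive powers of $\blam$, do not overwhelm the gain from averaging — this is precisely why the hypotheses \eqref{eq:cond_ell1} and \eqref{eq:cond_ell2} are phrased with the large powers $L^{10}$, $L^{d+5}$ and the $\blam$ prefactors. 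A secondary technical nuisance is that $M$ is not a scalar matrix for the block Anderson and Anderson orbital models, so $\tr(\mathcal A M)$ and $\sum_{x\in A_L}(M_{xx} - m)$ must be shown to contribute negligibly; this follows from $\sum_x (M_{xx} - m) = 0$ by translation invariance for those two models, and is trivial for the Wegner orbital model where $M = mI_N$, but it needs to be stated carefully so that the left-hand sides of \eqref{eq:que} and \eqref{eq:weakque} are genuinely centered.
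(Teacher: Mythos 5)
Your overall skeleton---reduce the eigenvector statistic to a Green's-function trace against a traceless, block-constant observable and then apply Markov---matches the paper's starting point (the paper uses $\Pi_{xx}=(N/|I_N|)\mathbf 1_{x\in I_N}-1$ and the spectral inequality of \Cref{uab-tr}, which bounds $\sum_{\alpha,\beta}|\langle \mathbf u_\alpha,\Pi\mathbf u_\beta\rangle|^2$ by $\frac{4l^4}{\eta^2}\tr(\im G\,\Pi\,\im G\,\Pi)$). But there is a genuine gap at the core: you propose to bound the resulting trace by the local laws \eqref{locallaw}--\eqref{locallawmax} plus Ward's identity, and this is quantitatively insufficient. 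The local law carries an irreducible error $(N\eta)^{-1}$ per (block-averaged) entry, which contributes a term of order $N/\eta$ to $\tr(\im G\,\Pi\,\im G\,\Pi)$ since $\sum_y|\Pi_{yy}|\asymp N$. Feeding this into the single-eigenvector bound gives $\eta^2\cdot N/\eta=N\eta\gg1$ (so part (i) fails for any admissible $\eta\ge W^{\fd}\eta_*$, regardless of \eqref{eq:cond_ell1}), and into the averaged bound gives $\frac{\eta}{N}\cdot\frac{N}{\eta}=1$, i.e.\ only an $\OO(1)$ fraction of exceptional eigenvectors rather than the $\oo(1)$ needed for \eqref{eq:weakque}. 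The paper removes exactly this $(N\eta)^{-1}$ contribution in \Cref{trABAB}, which asserts $\E|\tr(\im G\,\Pi\,\im G\,\Pi)|^{2p}\prec(\sum_y|\Pi_{yy}|)^{2p}(\max_x\sum_yB_{xy}|\Pi_{yy}|)^{2p}$ with no $(N\eta)^{-1}$ term; the mechanism is the zero-trace condition $\tr\Pi=0$ combined with block translation invariance, exploited inside the graphical high-moment expansion (\Cref{mGep}, built on the complete $T$-expansion and $V$-expansions): deterministic graphs in which an external vertex is isolated---precisely the graphs producing the $(N\eta)^{-1}$-type ``zero-mode'' contributions---vanish when summed against $\Pi$, as recorded in \eqref{eq:keyobs_noniso}. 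This cancellation is not visible at the level of the local law and cannot be recovered by Ward's identity; it is the essential new input, and it is what makes the thresholds \eqref{eq:cond_ell1} and \eqref{eq:cond_ell2} achievable.

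A secondary problem is your route from the smeared statistic to a single eigenvector. Rigidity or level-repulsion ``replacing the Poisson kernel by a single term'' cannot work here: any admissible window $\eta\ge W^{\fd}\eta_*$ contains $N\eta\gg1$ eigenvalues, so one cannot isolate $\mathbf u_\alpha$ this way. The paper never needs such an isolation step: \Cref{uab-tr} controls the \emph{entire} double sum of overlaps $|\langle\mathbf u_\alpha,\Pi\mathbf u_\beta\rangle|^2$ over the window, so the diagonal term $\alpha=\beta$ is bounded directly once $\eta^2\tr(\im G\,\Pi\,\im G\,\Pi)=\oo(1)$, which is exactly what \Cref{trABAB} plus \eqref{eq:cond_ell1} delivers (and the averaged version plus Markov gives part (ii) under \eqref{eq:cond_ell2}). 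Your concern about centering $\tr(\mathcal A M)$ is moot in this formulation, since the observable $\Pi$ is traceless by construction and the trace involves $\im G$ rather than $G-M$.
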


%We consider the block models with small blocks, i.e., $L\ge W^C$ for a sufficiently large constant $C>0$.  
When $d\ge 12$, part (i) of \Cref{thm:QUE} essentially says that as long as $\blam^{-1}L^{12-d}W^{2d-12}\le L^{-\e}$ for a constant $\e>0$, there exists a constant $\tau>0$ such that the $\ell^2$-mass of every bulk eigenvector is approximately evenly distributed on all scales $\ell=\Omega(L^{1-\tau})$. 
When $d> 7$, part (ii) of \Cref{thm:QUE} says that as long as $L^{7-d}W^{d-7}\le W^{-\e}$ for a constant $\e>0$, then there exists a constant $\tau>0$ such that the $\ell^2$-mass of most bulk eigenvector is approximately evenly distributed on a pre-chosen $\ell$-covering of $\Z_L^d$ on all scales $\ell=\Omega(L^{1-\tau})$.  
% From the local laws in \Cref{thm_locallaw} and the QUE in \Cref{thm:QUE}, we can further derive the \emph{bulk universality} of eigenvalue statistics under certain conditions on $W$. The relevant proof bulk universality is similar to that in \cite{BandIII}, and, due to length constraints, we do not present it here.

\begin{remark} \label{rem:smalleta}
Our main results, including \Cref{thm_locallaw}, \Cref{thm:supu}, and \Cref{thm_diffu} below can be extended readily to smaller $\eta$ with  
\be\label{etacirc}\eta\ge W^\fd\eta_{\circ},\quad \text{where} \quad \eta_\circ:=\frac{\blam}{W^5L^{d-5}}.\ee
As a consequence, the QUE estimate \eqref{eq:que} can be proved under the weaker condition 
$$
(W\ell)^{d-2}  \geq  \blam^3  {L^{10}}{W^{-12+c}}, $$
while the estimate \eqref{eq:weakque} can be proved  under the weaker condition 
$$
(W\ell)^{2d-2}\ge  
    \blam^2 {L^{d+5}}W^{-7+c}.$$
We refer to \Cref{rem:Improveeta} below for more explanation of this extension to smaller $\eta$. While the relevant proof is straightforward, considering the length of this paper, we will not present the details here.
\end{remark}

\subsection{Quantum diffusion}
Similar to random band matrices \cite{BandI}, our RBSOs also satisfy the \emph{quantum diffusion conjecture}. To state it, we first define the $\dashed$ matrices 
\be \label{def:Theta}
\thn : =S\frac{1}{1-M^0 S}=\sum_{k=0}^\infty  S\left({M}^0 S\right)^k,\quad \zthn:=P^\perp \thn P^\perp . 
\ee
Here, the matrix ${M}^0$ is defined by ${M}^0_{xy}:= |M_{xy}|^2$ and $P^\perp:=I_N-\mathbf e\mathbf e^\top$, where $\mathbf e:=N^{-1/2}(1,\cdots, 1)^\top$ is the Perron–Frobenius eigenvector of $S$. 

\begin{theorem}[Quantum diffusion]\label{thm_diffu}
Suppose the assumptions of \Cref{thm_locallaw} hold. Fix any large constant $D>0$. There exist constants $\fc_1,\fc_2,\fc_3,C>0$ and a deterministic matrix $\Sele=\Sele(z)$ called \emph{$\self$} such that the following expansion holds for all $x,y \in \Z_{L}^d$ and $z=E+\ii \eta$ with $|E|\le 2-\kappa$ and $\eta \in [ W^\fd \eta_* , 1]$:
\begin{equation}\label{eq:Qdiff}
		\E T_{xy}(z)= \frac{\im m(z) +\OO(\eta+W^{-\fc_1})}{N\eta}+ \sum_{\al} \zthn_{x\al}(\Selek)\left(M^0_{\al y}
	+ \cal G_{\al y}\right) + \wt{\cal G}_{xy}+\OO(W^{-D}),
\end{equation}
where $\cal G$ and $\wt{\cal G}$ are deterministic matrices satisfying that  
\begin{align}\label{eq:boundGaly}
\left|\cal G_{x y}\right|&\le W^{-\fc_2} \left[W^{-d}\exp\left(-\frac{|x-y|}{CW}\right) + \langle x - y\rangle^{-d}\right],\\
|\wt{\cal G}_{x y}|&\le W^{-\fc_2} \sum_\al S_{x\al}|M_{\al y}|,\label{eq:boundGalywt} 
\end{align} 
and $\zthn(\Sele)$ denotes a \emph{renormalized $\dashed$ matrix} defined as  
\be\label{theta_renormal1}
\zthn(\Selek):= (1-\zthn \Sele )^{-1}\zthn .
\ee
%for a deterministic matrix $\Sele$ called \emph{$\self$}. 
% \begin{equation}\label{eq:Qdiff}
% 	\begin{split}
% 		\czT_{xy}&=  \sum_{\al} \zthn_{x\al}(\Selek)\left(M^0_{\al y}
% 	+ \Gc _{\al y} M^*_{ \al y}	+M_{\al y}\Gc^{*} _{ \al y}\right) \\
% 		&+ \sum_\al \zthn_{x\al}(\Selek)\left[\PT_{x,\fb_1 \fb_2} +  \AT_{x,\fb_1\fb_2}  + \WT_{x,\fb_1\fb_2}  + \QT_{x,\fb_1\fb_2}  +  (\Err_{D})_{x,\fb_1\fb_2}\right]\, ,
% 	\end{split}
% \end{equation}
% we have 
% 	\be\label{eq:Qdiff}
% 	\mathbb E T_{xy} \sim   (1+\Theta)\cdot {\bf M^0}={\bf M^0}+{\bf M^0}S\,{\bf M^0}+{\bf M^0}S\,{\bf M^0}S\,{\bf M^0}+\cdots
% 	\ee
Moreover, $\Sele=\Sele(z)$ satisfies the following properties %for $z= E+ \ii \eta$ with $|E|\le  2- \kappa$ and $\eta \in [ W^\fd \eta_* , 1]$ and 
for all $x,y\in \Z_L^d$ and $ [a]\in  \Zn$:
 \be\label{self_sym}
	\Sele(x+W[a], y + W[a]) = \Sele(x,y), \quad   \Sele(x, y) = \Sele(-x,-y), 
\ee 
% and the following two properties for $z= E+ \ii \eta$ with $|E|\le  2- \kappa$ and $\eta \in [ W^\fd \eta_* , 1]$: 
 \be\label{4th_property}
 |\Sele(x,y)| \le  \frac{W^{-\fc_3}}{\blam} \frac{W^{2}}{\langle x-y\rangle^{d+2}} , %\quad \forall \ x,y\in \Z_L^d,
\ee
\be\label{3rd_property}
\Big|W^{-d}\sum_{y\in [a]}\sum_{x\in \Z_L^d} \Sele(x,y)\Big|  \le  W^{-\fc_3}  \left(  \eta + t_{Th}^{-1} \right).
\ee
\end{theorem}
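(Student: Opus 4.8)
\textbf{Proof proposal for Theorem \ref{thm_diffu}.}

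The plan is to derive \eqref{eq:Qdiff} from the \emph{complete $T$-equation} (\Cref{completeTexp} of Section \ref{sec:Texp}), which should give an identity of the schematic form $T_{xy} = m\overline m (M^0 + \thn \cdot (\text{higher-order}))_{xy} + \text{(self-energy terms)} + \text{(negligible errors)}$, and then to take expectations and resum the self-energy insertions into the renormalized diffusive matrix $\zthn(\Sele)$. First I would recall the $T$-expansion, identify the leading term $\im m/(N\eta)$ as coming from the Ward-identity contraction of the ``backbone'' $G\bar G$ pair along the loop (using \eqref{eq_Ward} together with the local law \Cref{thm_locallaw} to control $\im G_{yy} = \im m + \OO_\prec(\cdots)$), and collect all the remaining graphs into a deterministic part plus a fluctuating part. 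The fluctuating graphs are bounded by $\OO(W^{-D})$ after taking expectation, using the stochastic-domination bounds from the $T$-expansion machinery and the fact that each ``ghost'' or light-weight edge in an irreducible graph gains a power of the small parameters $W^{-\fc}$ or $(N\eta)^{-1/2}$; the term $\OO(\eta + W^{-\fc_1})/(N\eta)$ absorbs the correction to $\im G_{yy}$ and the short-range lattice corrections near the diagonal. The deterministic part naturally splits: the genuinely diffusive backbone produces $\sum_\al \thn_{x\al}(\Sele)(M^0_{\al y} + \cal G_{\al y})$ after one resums the chain of self-energy subgraphs $\Sele$ into the geometric series \eqref{theta_renormal1} (this is exactly the self-energy renormalization of \cite{BandI}), while the ``boundary'' pieces of the expansion that do not carry the full diffusive propagator are collected into $\wt{\cal G}_{xy}$, bounded by \eqref{eq:boundGalywt} because they contain at least one bare $S|M|$ factor and a residual small coefficient $W^{-\fc_2}$. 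The matrix $\cal G$ collects the corrections to the final $M^0$ edge coming from replacing $G\bar G$ by $M^0$ plus its first expansion, and its bound \eqref{eq:boundGaly} reflects the decay of $|M_{xy}|^2$ (exponential on scale $W$ plus the power-law tail $\langle x-y\rangle^{-d}$ coming from the $\Psi$-dressing of $M$, cf.\ \eqref{defnBxy}).

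Next I would establish the three structural properties of $\Sele$. The symmetry \eqref{self_sym} is inherited directly from the block translation invariance of $\Psi$ and $S$ (hence of $M$ and $M^0$) together with the $x\mapsto -x$ parity of all these objects; since every self-energy subgraph is built from $M$-edges, $S$-weights, and summation vertices, it respects both symmetries, and one checks this inductively along the construction of $\Sele$. The decay bound \eqref{4th_property} follows from the graphical structure of $\Sele$: a self-energy subgraph is ``doubly connected'' (in the sense of \Cref{collection elements}), so it carries at least two disjoint paths of $M$-edges between its two external vertices, each contributing a factor decaying like $\langle\cdot\rangle^{-(d-2)}$ or faster via $B$, and the extra smallness $W^{-\fc_3}/\blam \cdot W^2$ comes from the normalization $\blam = W^d/(\lambda^2\Lambda_\Psi^2)$ appearing once per diffusive factor, while the improvement over the naive power $\langle x-y\rangle^{-(2d-4)}$ down to $\langle x-y\rangle^{-(d+2)}$ uses the summability $\sum_\al \langle a-\al\rangle^{-(d-2)}\langle \al - b\rangle^{-(d-2)} \lesssim \langle a-b\rangle^{-(d-4)}$ valid for $d\ge 5$, iterated once. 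The sum-zero / approximate-sum-rule property \eqref{3rd_property} is the crucial one: it says that when $\Sele$ is averaged against the constant vector over a block, the leading $\OO(1)$ contributions cancel, leaving only $\OO(\eta + t_{Th}^{-1})$ with $t_{Th} = \blam L^2/W^2$ as in \eqref{eq:tTh}. This is precisely the sum zero property of the self-energies (\Cref{cancellation property}/\Cref{collection elements}), which is what makes $(1-\zthn\Sele)^{-1}$ well-behaved and the local law hold down to $\eta_*$; I would quote that property and translate it into the stated bound using $\sum_y \zthn_{xy} \asymp t_{Th}$ on the relevant scale.

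The main obstacle I expect is \emph{not} the resummation itself but proving the sum-zero property \eqref{3rd_property} with the sharp error $\eta + t_{Th}^{-1}$, because this requires showing that the deterministic self-energy subgraphs, when summed over all their internal structure, exhibit the same cancellation that the true self-energy operator of the model must satisfy by a Ward-identity / spectral-measure consistency argument — and tracking this cancellation through the recursive construction of the $T$-expansion (where $\Sele$ is only defined order by order, with each order introducing new graphs) is delicate. Concretely, one must verify that at every step of the construction the newly added self-energy graphs either individually integrate to something of size $\OO(\eta + t_{Th}^{-1})$ or cancel in pairs/families; this is the content of \Cref{sec:sumzero} and relies on comparing with the exactly solvable $W=L$ (Wigner) case as sketched in \Cref{sec:idea}. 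A secondary difficulty is bookkeeping the errors $\cal G, \wt{\cal G}$ so that their bounds \eqref{eq:boundGaly}--\eqref{eq:boundGalywt} hold uniformly in the full range $\eta\in[W^\fd\eta_*,1]$; this needs the continuity estimate (\Cref{gvalue_continuity}) to propagate control from large $\eta$ down to $\eta_*$, and careful use of the coupling-renormalization mechanism to prevent spurious powers of $\blam$ (equivalently $\lambda$) from accumulating in the intermediate graphs. Once the sum-zero property and the continuity estimate are in hand, assembling \eqref{eq:Qdiff} is a matter of organizing the $T$-expansion output into the four displayed groups and invoking the decay estimates on $M$, $S$, and $\thn$ from Section \ref{sec:preliminary}.
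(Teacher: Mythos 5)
Your overall route is the paper's route: take the high-order $T$-expansion of \Cref{completeTexp} with $\fb_1=\fb_2$, take expectations (so the $Q$-graphs vanish exactly), read off the zero mode $\im \E G_{\fb\fb}/(N\eta)=(\im m+\OO(\cdot))/(N\eta)$, identify the leading term $\zthn(\Sele)M^0$, collect the recollision and local pieces into $\cal G$ and $\wt{\cal G}$ (the latter only arises for BA/AO via \eqref{ETab}), push the higher-order and error graphs below $W^{-D}$ by taking $\fC$ large, and import the properties \eqref{self_sym}--\eqref{3rd_property} of $\Sele$ from the self-energy construction (\Cref{collection elements}, \Cref{Teq}, \Cref{cancellation property}); this matches \Cref{sec:pf_Qdiff}. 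Two bookkeeping remarks: the recollision graphs $\PT$ and free graphs $\WT$ are \emph{not} $\OO(W^{-D})$ in expectation --- their expectations are only $W^{-\fc}$-small and must be absorbed into $\cal G$ and into the $\OO(\eta+W^{-\fc_1})/(N\eta)$ term respectively, the key input being \eqref{bound 2net1 strong} of \Cref{no dot} (your ``stochastic-domination bounds'' gloss covers this only implicitly); and the resummation into $\zthn(\Sele)$ is already built into \eqref{mlevelTgdef} by solving the $T$-equation, so no separate resummation step is needed at this stage.

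One step of your sketch would fail as written: the derivation of the decay bound \eqref{4th_property}. You propose to pass from the ``naive'' double-connectivity decay $\langle x-y\rangle^{-(2d-4)}$ to $\langle x-y\rangle^{-(d+2)}$ by the convolution estimate $\sum_\al \langle a-\al\rangle^{-(d-2)}\langle \al-b\rangle^{-(d-2)}\lesssim \langle a-b\rangle^{-(d-4)}$. That inequality goes the wrong way: convolving two $(d-2)$-decaying factors produces the \emph{weaker} exponent $d-4<d+2$, so iterating it can never yield \eqref{4th_property}; moreover $\langle x-y\rangle^{-(d+2)}$ is itself weaker than $\langle x-y\rangle^{-(2d-4)}$ for $d\ge 7$, so no ``improvement'' in spatial decay is needed at all. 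The actual mechanism (used in verifying \eqref{4th_property0} within \Cref{collection elements} and \Cref{Teq}) is \Cref{dG-bd} combined with the trivial bound $\langle x-y\rangle\ge W$: one trades the surplus spatial decay for powers of $W$, writing $W^{2d-4}\size(\Sele)\,\langle x-y\rangle^{-(2d-4)+c}\le W^{d+2}\size(\Sele)\,\langle x-y\rangle^{-(d+2)+c}$, which is exactly $\frac{\sizeself(\Sele)}{\blam}\frac{W^2}{\langle x-y\rangle^{d+2-c}}$. Since in the rest of your plan you in any case quote the self-energy properties from \Cref{collection elements} (and the sum-zero property from \Cref{cancellation property} / \Cref{sec:sumzero}, which you correctly identify as the genuinely hard ingredient), replacing the convolution argument by this citation closes the gap; as stated, though, that sub-argument does not prove \eqref{4th_property}.
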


The above theorem essentially means that the non-local and long-time behavior of the quantum evolution of the particle exhibits a diffusive feature for $t$ less than the Thouless time $t_{Th}$ (recall \eqref{eq:tTh}).  
Roughly speaking, in the context of Green's function, this amounts to saying that $\mathbb E|G_{xy}|^2$ (or equivalently, $\E T_{xy}$) can be approximated by the Green's function of a classical random walk on $\mathbb \Z_L^d$ for $|x-y|\gg W$ and $\eta=t^{-1}\gg t_{Th}^{-1}$. This random walk essentially has transition matrix $S$ (up to a normalization factor $1+2d\lambda^2$) for the Wegner orbital model and transition matrix $SM^0S$ (up to a normalization factor $\sum_x M^0_{xy}$) for the block Anderson and Anderson orbital models, but we need to introduce a proper \emph{self-energy renormalization} to it. The Thouless time is just the typical time scale required for the random walk to hit the boundary starting from the center of the lattice. After the Thouless time, the random walk will come back due to the periodic boundary condition, so the behavior of $\mathbb E|G_{xy}|^2$ should be seen as a superposition of several independent random walks at different times. 

Now, we discuss how our result, \Cref{thm_diffu}, describes the quantum diffusion picture outlined above. To facilitate this discussion and the subsequent proofs, we introduce the following ``projection" operation for matrices defined on \smash{$\Z_L^d$}.

\begin{definition}\label{def:projlift}
Define the $W^d\times W^d$ matrix $\bE$ with $\bE_{ij}\equiv W^{-d}.$ Then, for the block Anderson and Anderson orbital models, we can express the variance matrix $S$ as 
\be\label{eq:SBA}
S^{\BA}=S^{\AO} = S(0): =I_n \otimes \bE,
\ee
while for the Wegner orbital model, we have 
\be\label{eq:SWO}
S^{\WO}\equiv S(\lambda): = S(0) + \lambda^2(2dI_n - \Delta_n)\otimes \bE.
\ee
Recall that $I_n$ and $\Delta_n$ are respectively the identity and Laplacian matrices defined on $\Zn$. Now, given an $N\times N$ matrix $\cal A$ defined on $ \Z_L^d$, we define its ``projection" $\cal A^{L\to n}$ to \smash{$ \wt \Z_n^d$} as
\be\label{eq:defLton}
\cal A^{L\to n}_{[x][y]}:=  W^{-d}\, \sum_{x'\in [x]}\sum_{y'\in [y]}\cal A_{x'y'}= W^{d}\left(S(0) \cal A S(0)\right)_{xy}.
\ee
Under this definition, we have $S^{\LK}(0)=I_n$, $ S^{\LK}(\lambda)=I_n + \lambda^2(2dI_n - \Delta_n)$, and $S(0)\cal AS(0)= \cal A^{L\to n}\otimes\bE$.
% for any $\Z_L^d\times \Z_L^d$ matrix $A$. 
%  Conversely, given a matrix $\wt {\cal A}$ defined on $ \wt \Z_n^d$, we define its `lift' to $\Z_L^d$ as 
%  $
% \wt {\cal A}^{n\to L}:=\bE \otimes \wt {\cal A}.
%  $
%Note there is $S \wt {\cal A}^{n\to L} S=\wt {\cal A}^{n\to L}$ and $ (\wt {\cal A}^{n\to L} )^{L\to n}=\wt {\cal A}$. 
In this paper, we also put $L\to n$ on sub-indices sometimes. 
\end{definition}

In a detailed discussion below, we will see that $\zthn_{xy}(\Sele)$ has a typical diffusive behavior as the Fourier transform of a function of the form $f(\mathbf p)\asymp (\eta+a(\mathbf p))^{-1}$, where $a(\mathbf p)$ is quadratic in $\mathbf p$, the Fourier variable (or called the ``momentum"). It is known that the Fourier transform of $f(\mathbf p)$ behaves like $B_{xy}$ (in the sense of order) when $|x-y|\gg W$ (see \Cref{lem redundantagain} below).  Hence, when \smash{$\eta=t^{-1}\gg t_{Th}^{-1}$}, the first term on the RHS of \eqref{eq:Qdiff} satisfies
$$\frac{\im m(z) +\OO(\eta+W^{-\fc_1})}{N\eta} \ll \frac{\blam}{W^2L^{d-2}} \le B_{xy},$$
and hence does not affect the diffusive behavior of $\E T_{xy}$. However, when $\eta \le t_{Th}^{-1}$, this term is non-negligible anymore for \smash{$|x-y|\ge L(\eta t_{Th})^{1/(d-2)}$}. This is due to random walks that traverse the torus more than once.   

Next, the following lemma shows that the entries $M_{xy}$ are non-negligible only in the local regime $|x-y|=\OO(W)$. As a consequence, the term \smash{$\wt{\cal G}_{x y}$} in \eqref{eq:Qdiff} is irrelevant for the non-local behavior of $\E T_{xy}$. 

\begin{lemma} [Properties of $M(z)$]\label{lem:propM}
Given any small constant $\kappa>0$, for $z=E+\ii \eta$ with $|E|\le 2-\kappa$ and $\eta\ge 0$, there exists a constant $C>0$ such that  
\be
\label{Mbound}
%\mathbf 1_{x\ne y}
|M_{xy} (z)|\le (C\lambda)^{|x-y|},\quad \forall x\ne y \in \Z_L^d,
\ee
for the block Anderson model (i.e., $M=M^{\BA}$). For the Anderson orbital model (i.e., $M=M^{\AO}$), we have that $M=M^{\LK}\otimes I_W$, where $M^{\LK}$ satisfies that
\be
\label{Mbound_AO}
%\mathbf 1_{[x]\ne [y]}
|M_{[x][y]}^{\LK} (z)|\le (C\lambda)^{|[x]-[y]|},\quad \forall [x]\ne [y] \in \wt\Z_n^d.
\ee
%where $|x-y|:=\|[x-y]_L\|_1$ denotes the graph (i.e., $\ell^1$) distance on torus $\Z^d_L$. 
%\be
%\label{Mbound}
%\mathbf 1_{x\ne y}|M_{xy} (z)|\le \left[\frac{C\lambda}{|x-y|}\right]^{D} ,\quad x,y \in \Z_L^d.  
%\ee
 \end{lemma}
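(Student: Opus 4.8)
The plan is to prove \Cref{lem:propM} by analyzing the explicit resolvent structure of $M(z)$ given in \eqref{def_G0} and exploiting the fact that $\lambda\Psi$ contributes only nearest-neighbor (or nearest-block) off-diagonal terms. First I would treat the block Anderson model. Here $M(z) = (\lambda\Psi^{\BA} - z - m(z))^{-1}$, where $\Psi^{\BA} = 2dI_L - \Delta_L$ has entries $\Psi^{\BA}_{xy} = \mathbf 1(x\sim y)$. Writing $A(z):= -z - m(z) + 2d\lambda$, we have $M(z) = (A(z) I_L - \lambda\Delta_L)^{-1}$. The key analytic input is a lower bound on $|A(z)|$: since $m(z) = m_{sc}(z) + \oo(1)$ and $|m_{sc}(z)| = 1$ with $\im m_{sc}(z) \asymp 1$ in the bulk $|E|\le 2-\kappa$, together with $z = m_{sc}(z) + m_{sc}(z)^{-1}$, one checks that $|{-z - m(z)}| = |m_{sc}^{-1}| + \oo(1) = 1 + \oo(1)$, so $|A(z)| \ge c > 0$ uniformly (for $\lambda$ small, $2d\lambda$ is a negligible perturbation, or more carefully one tracks $A(z)$ to stay bounded away from the spectrum of $\lambda\Delta_L$, whose operator norm is $\OO(\lambda)$). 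Then I expand $M(z) = A^{-1}\sum_{k\ge 0}(\lambda A^{-1}\Delta_L)^k$; the term $(\lambda A^{-1}\Delta_L)^k$ has $(x,y)$-entry vanishing unless there is a path of length $k$ from $x$ to $y$, so it vanishes for $k < |x-y|$, and each such term is bounded by $(C\lambda)^k \cdot(\text{number of length-}k\text{ paths})$. Summing the geometric-type series over $k \ge |x-y|$ with the standard bound $\#\{\text{paths of length }k\} \le (2d)^k$ and $C\lambda(2d) < 1$ (valid once $\lambda$ is small enough, i.e. under \eqref{eq:small+lambda}) yields $|M_{xy}(z)| \le (C'\lambda)^{|x-y|}$, which is \eqref{Mbound}.

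Next, for the Anderson orbital model, the tensor structure $\Psi^{\AO} = (2dI_n - \Delta_n)\otimes I_W$ makes $\lambda\Psi^{\AO} - z - m(z) = \big((2d\lambda I_n - \lambda\Delta_n) - (z+m(z))I_n\big)\otimes I_W$, so $M(z) = M^{\LK}(z)\otimes I_W$ with $M^{\LK}(z) = \big(2d\lambda I_n - \lambda\Delta_n - (z+m(z))I_n\big)^{-1}$; here one must note that $m(z)$ for $\Psi^{\AO}$ solves the same self-consistent equation but the normalized trace is the same as for an $n^d\times n^d$ problem, so the identity \eqref{self_m} is unchanged and the bound $|{-z-m(z)}| = 1 + \oo(1)$ still holds. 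Then the identical Neumann-series argument on the lattice $\wt\Z_n^d$ (with the Laplacian $\Delta_n$ replacing $\Delta_L$ and block distance $|[x]-[y]|$ replacing $|x-y|$) gives \eqref{Mbound_AO}. The statement $M = M^{\LK}\otimes I_W$ is immediate from the tensor factorization since $I_W^{-1} = I_W$.

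I would also need to justify the claim that $M^{\AO}$ really is of the tensor form claimed—this is just that for $\Psi^{\AO}$ the matrix $\lambda\Psi - z - m$ is a tensor product with a scalar $I_W$ factor and the inverse of $B\otimes I_W$ is $B^{-1}\otimes I_W$. The only genuinely delicate point is the uniform lower bound $|{-z - m(z)}| \ge c$ (equivalently, that $A(z)$ stays outside a fixed neighborhood of the spectrum of $\lambda\Delta$), which I expect to be the main obstacle: it requires knowing that $m(z)$ stays close to $m_{sc}(z)$ with the right quantitative control throughout the bulk and for all $\eta \ge 0$ (including $\eta = 0$), and that $\im m(z)$ stays bounded below so that $z + m(z)$ cannot collide with the real spectrum of the hopping term. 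This should follow from the stability analysis of the self-consistent equation \eqref{self_m} (a standard perturbative argument treating $\lambda\Psi$ as a small, compactly supported perturbation of the semicircle problem, giving $m = m_{sc} + \OO(\lambda^2)$ via the expansion \eqref{eq:expandm} and \eqref{eq_m1}), combined with the elementary identity $|m_{sc}(z)| = 1$ in the bulk; I would cite the earlier discussion around \eqref{eq:expandm}–\eqref{L2M} for these properties of $m$. Everything else is the routine path-counting/geometric-series estimate.
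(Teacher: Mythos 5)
Your proposal is correct and follows essentially the same route as the paper: one bounds $|z+m(z)|\gtrsim 1$ uniformly in the bulk using $m=m_{sc}+\oo(1)$ (the paper does this via $\im m\ge \im m_{sc}+\oo(1)\gtrsim 1$, you via $-z-m_{sc}=1/m_{sc}$, which is equivalent for this purpose), and then expands $M$ in a convergent Neumann series around the constant diagonal, exploiting that $\Psi^{\BA}_{xy}=0$ for $|x-y|>1$ and $\Psi^{\AO}_{xy}=0$ for $|[x]-[y]|>1$, exactly as in \eqref{eq:expandM}, together with the tensor factorization $M^{\AO}=M^{\LK}\otimes I_W$. Your re-centering at $A(z)I-\lambda\Delta_L$ rather than at $-(z+m)I$ is only a cosmetic difference and does not change the path-counting/geometric-series estimate.
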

\begin{proof}
By \eqref{eq:expandm}, we have $\im m \ge \im m_{sc}+\oo(1)\gtrsim 1$, which gives $|m+z|\ge \im(m+z)\gtrsim 1$. Then, \eqref{Mbound} and \eqref{Mbound_AO} are easy consequences of the following expansion,
\be\label{eq:expandM}  M_{xy}=-\sum_{k=1}^\infty \frac{(\lambda \Psi)^{k}_{xy}}{(z+m)^{k+1}},\quad x\ne y,
\ee
and the facts that $\Psi^{\BA}_{xy}=0$ for $|x-y|>1$ and $\Psi^{\AO}_{xy}=0$ for $|[x]-[y]|>1$.
\end{proof}

Finally, for the term $\zthn(\Sele)\cal G$ in \eqref{eq:Qdiff}, the estimate \eqref{eq:boundGaly} shows that the rows of $\cal G$ are summable and their row sums are of small order \smash{$\OO(W^{-\fc_2}\log N)$}. Therefore, the Fourier transform of \smash{$[\zthn(\Sele)\cal G]_{xy}$} with respect to $y$ is of order $\oo(f(\mathbf p))$. %On the other hand, the behavior of the matrix $M^0$ is dominated by its diagonal entries, which take value $|m|^2+\OO(\lambda)$, so that the Fourier transform of $M^0_{xy}$ with respect to $y$ is $|m|^2+\OO(\lambda)$. 
Therefore, the term $\zthn(\Sele)\cal G$ also gives a small error in the momentum space.

The above discussion shows that when $t_{Th}^{-1}\ll \eta \ll 1$ and $|x-y|\gg W$, the diffusive behavior of $\E T_{xy}$ is dominated by the term \smash{$[\zthn(\Sele)M^0]_{xy}$}. In the following discussion, we focus on explaining why \smash{$\zthn$ and $\zthn(\Sele)$} are called ``diffusive matrix" and ``renormalized diffusive matrix", respectively. %To make the meanings of $\zthn$, $\Sele$, and $\zthn(\Sele)$ more transparent, we look at their ``block versions" defined on $\Zn$. 
The diffusive behaviors of $\thn^{\WO}$ and \smash{$\zthn^{\WO}$} has been discussed in \cite{BandI}. In the following discussion, we focus on the block Anderson and Anderson orbital models.  
Using the simple identity $S(0)^2=S(0)$ and adopting the notations in \Cref{def:projlift}, we can write $\thn$ as 
\be\label{exp_Theta2}
\thn= S(0)[1-S(0)M^0S(0)]^{-1}S(0) =(1-\KM)^{-1} \otimes \bE. 
\ee
$\KM$ is an $n^d\times n^d$ matrix with positive entries. It has a Perron–Frobenius eigenvector $\wt {\mathbf e}=n^{-d/2}(1,\ldots,1)^\top$ with the corresponding eigenvalue given by
\be\label{eq:a_sum} a = \sum_{[y]\in \wt \Z_n^d} (\KM)_{[x][y]} =\frac{1}{W^d}\sum_{w\in[x]}  \sum_{y\in \Z_L^d}|M_{w y}|^2 = \frac{\im m}{ \im m + \eta } \ee
due to \eqref{L2M}. 
Then, from \eqref{exp_Theta2}, we get that 
\be\label{exp_Theta3}
\zthn= \wt P^\perp (1-\KM)^{-1} \wt P^\perp\otimes\bE,\quad \text{where}\quad \wt P^\perp:= P^\perp_{\LK}= I_n -\wt{\mathbf e}\wt{\mathbf e}^\top.
\ee
%where $\wt P^\perp:=\wt I-\wt{\mathbf e}\wt{\mathbf e}^*$. is the projection onto the orthogonal complement of $\wt {\mathbf e}$.
%To study the behavior of $\Theta$, we thus need to study the matrix 
% $$\sum_{k=0}^\infty (\KM)^k = \frac{1}{1-\KM}.$$
Now, to understand the behavior of $\zthn$, we need to study the matrix $(1-\KM)^{-1}$.

%Similar to \eqref{eq:KMp0} and \eqref{eq:KMp}, we have that 
First, notice that $\KM$ is translation invariant: $\KM([x]+[a],[y]+[a])=\KM([x],[y])$, which follows from the block translation symmetry of $M$ itself. Next, by \eqref{Mbound} and \eqref{Mbound_AO}, there exists a constant $C>0$ such that the following estimates hold: for the block Anderson model, 
\be\label{eq:KM}(\KM)_{[x][y]} \lesssim \begin{cases}\left( C{\lambda^2}/{W}\right)^{|[x]-[y]|},& \ \text{if}\ 1\le |[x]-[y]| \le d\\
 \lambda^{W|[x]-[y]|/C },& \ \text{if}\ |[x]-[y]|>d
\end{cases};
\ee
for the Anderson orbital model,
\be\label{eq:KM2}(\KM)_{[x][y]} \le (C\lambda^2)^{|[x]-[y]|}, \quad \text{if}\ |[x]-[y]|\ge 1. 
\ee
Then, combining \eqref{eq:KM} and \eqref{eq:KM2} with \eqref{eq:a_sum}, we can derive that
\be\label{eq:KM0} (\KM)_{[x][x]}= \frac{\im m}{\im m + \eta} - \sum_{[y]:[y]\ne [x]} (\KM)_{[x][y]} 
=\frac{\im m}{\im m + \eta} -\OO\left(\lambda^2\Lambda_\Psi^2/W^d\right).
% W^{-d}\left[ m^2 W^d + \OO \left(d \lambda^2 W^d + d^2\lambda^4 W^d +  \cdots\right)\right] = m^2 +\OO(\lambda^2).
\ee
% If $\|[x]-[y]\|_1=2$, then
% $$ (\KM)_{[x][y]}\lesssim \frac{1}{W^d}\left( \lambda^4 W^{d-2} + d \lambda^6 W^{d-2} + \cdots \right) \lesssim \frac{\lambda^4}{W^2}.$$
% Furthermore, with \eqref{Mbound}, we obtain the following estimates. First, 
% $$ (\KM)_{[x][x]}=\frac{1}{W^d}\left[ m W^d + \OO \left(d \lambda^2 W^d + d^2\lambda^4 W^d + d^3 \lambda^6 W^d + \cdots\right)\right] = m +\OO(\lambda^2).$$
% If $\|[x]-[y]\|_1=1$, then
% $$ (\KM)_{[x][y]}\lesssim \frac{1}{W^d}\left( \lambda^2 W^{d-1} + d \lambda^4 W^d + d^2 \lambda^6 W^{d-1} + \cdots \right) \lesssim \frac{\lambda^2}{W}.$$
% If $\|[x]-[y]\|_1=2$, then
% $$ (\KM)_{[x][y]}\lesssim \frac{1}{W^d}\left( \lambda^4 W^{d-2} + d \lambda^6 W^{d-2} + \cdots \right) \lesssim \frac{\lambda^4}{W^2}.$$
% Continuing the above argument, we obtain that 
% $$(\KM)_{[x][y]} \lesssim \left( \frac{\lambda^2}{W}\right)^k,\quad \|[x]-[y]\|_1= k,\quad 0\le k \le d,$$
% and 
% $$(\KM)_{[x][y]} \lesssim \left( C\lambda^2\right)^W,\quad \|[x]-[y]\|_1>d.$$
Furthermore, with \eqref{eq:expandM}, we can get the following lower bound:
\be\label{eq:KM1}
(\KM)_{[x][y]} \gtrsim   {\lambda^2}\Lambda_\Psi^2/{W^d} ,\quad \text{when} \ \ [x]\sim [y] .
\ee
%$$(\KM)_{[x][y]} \gtrsim \left( \frac{\lambda^2}{W}\right)^k,\quad \|[x]-[y]\|_1= k,\quad 0\le k \le d.$$
The above observations show that $(1-\KM)^{-1}$ is essentially a Laplacian operator, which exhibits diffusive behavior. Its Fourier transform takes the following form when $|\mathbf p|\ll 1$:
\be\label{eq:diffu_cons}\left(\eta+\blam^{-1}\mathbf p^\top A\mathbf p\right)^{-1},\quad \mathbf p \in \left(\frac{2\pi}{n}\wt \Z_n\right)^d,\ee 
where $A$ is a positive definite matrix of diffusion coefficients. 

Similar to \eqref{exp_Theta3}, using $S(0)^2=S(0)$, we can rewrite $\zthn(\Sele)$ as 
\be\label{eq_rewriteE}
\zthn(\Selek) = (1-\zthn S(0)\Sele S(0) )^{-1}\zthn =\wt P^\perp\left(1-M^0_{\LK}-\Sele_{\LK}\right)^{-1}\wt P^\perp  \otimes \bE.\ee
We now look at the properties of $\Sele_{\LK}$. The first condition in \eqref{self_sym} means that $\Sele$ is invariant under block translations, implying that $\Sele_{\LK}$ is translation variant on \smash{$\Zn$}. The second condition in \eqref{self_sym} implies that $\Sele_{\LK}$ is a symmetric matrix on \smash{$\Zn$}: $\Sele_{\LK}(0,[x])=\Sele_{\LK}(0,-[x])=\Sele_{\LK}([x],0)$. The condition \eqref{4th_property} shows that $\Sele_{\LK}(0,[x])\le W^{-\fc_3}\blam^{-1}\langle [x]\rangle^{-(d+2)}$, so its Fourier transform is well-defined and twice differentiable. Finally, \eqref{3rd_property} gives a crucial cancellation of $\Sele_{\LK}(0,[x])$ when summing over $[x]$, which is referred to as the \emph{sum zero property} in \cite{BandI}. As seen in \cite{BandI}, this sum zero property is key to the quantum diffusion of random band matrices in dimensions $d\ge 7$; the same phenomenon also appears for our RBSOs. Combining properties \eqref{self_sym}--\eqref{3rd_property}, we see that the Fourier transform of $\Sele$ takes the form $\oo(\eta+|\mathbf p|^2/\blam)$.
%$$\oo(\eta+|\mathbf p|^2/\blam),\quad \text{for}\ \ \mathbf p \in \left(\frac{2\pi}{n}\wt \Z_n\right)^d\setminus \{0\}. $$
Hence, adding $\Sele_n$ to $\KM$ only renormalizes the diffusion coefficients in $A$ by a small perturbation, which does not affect the diffusive behavior of \smash{$\zthn(\Sele)$}. 

Following the above discussions, with the Fourier transform of $\zthn(\Sele)$, we will show that $\zthn_{xy}(\Sele)$ is typically of order $B_{xy}$ in \Cref{lem redundantagain}. Furthermore, by \Cref{lem G<T} below, $\|G-M\|_{\max}$ is controlled by the size of $\|T\|_{\max}$. 
Hence, the local laws \eqref{locallaw} and \eqref{locallawmax} should be sharp. In particular, the error bound in \eqref{locallawmax} is small only when $\eta\ge N^{-1+\e}$ and $\lambda\ge W^{\e}/\Lambda_\Psi$. Hence, we conjecture that the critical threshold of $\lambda$ for the Anderson transition is $\Lambda_\Psi^{-1}$. 
Another heuristic reasoning stems from our paper \cite{QC_YY} on the quantum chaos transition of a random block matrix model. This model can be viewed as a generalized version of our RBSO, where the nearest-neighbor interactions in $\Psi$ can take almost arbitrary forms. In \cite{QC_YY}, we analyze a 1D model with $W\sim L$ and establish a localization-delocalization transition when $\lambda\Lambda_\Psi$ crosses 1. 
We anticipate a similar phenomenon occurring for our RBSO in higher dimensions, even when $W\ll L$.

On the other hand, the condition \eqref{eq:cond-lambda2} arises from the requirement that $\sum_x B_{xy_1}B_{xy_2}$ produces a small factor for all \smash{$y_1,y_2\in \Z_L^d$}. Without this requirement, the resolvent expressions in our proof may grow increasingly large as we expand them, and the expressions in our $\self$ cannot be bounded properly and may diverge as $W\to \infty$. Relaxing the condition \eqref{eq:cond-lambda2} within our proof framework appears quite challenging; it is essentially equivalent to extending the proofs for RBM in \cite{BandI, BandII, BandIII} to dimensions $d<4$. 

%High-dimensional QC exists: it means the system has more conserved quantities.
%Explain why: relation to quantum chaos; the quantum diffusion conjecture}

%the local law \eqref{locallaw} is sharp, the error bound $\|B\|_{\max}=\lambda^{-2}W^{-(d-1)}$ will diverge if $\lambda\ll W^{-(d-1)/2}$, meaning that the fluctuations of the $T$-variables will diverge when $L\to \infty$. Hence, we expect that quantum diffusion \eqref{eq:Qdiff} cannot hold. 

\begin{remark}\label{rmk:key}
For simplicity of presentation, we have only considered the complex Gaussian case and trivial variance profile within each block. As has been explained in \cite{BandI,BandII,BandIII}, our proof can be readily adapted to non-Gaussian RBSO after some technical modifications. More precisely, in the proof, we will use Gaussian integration by parts in expanding resolvent entries, but this can be replaced by certain cumulant expansion formulas (see e.g., \cite[Proposition 3.1]{Cumulant1} and \cite[Section II]{Cumulant2}) for general distributions. 
In addition, the variance profile $\mathbf E$ within blocks can be replaced by a more general one. For example, our proof would work almost verbatim if $\mathbf E$ is replaced by a variance matrix that can be written as $\mathbf (\mathbf E')^2$ for some doubly stochastic $\mathbf E'$ whose entries are of order $W^{-d}$. We believe that there are no essential difficulties in extending it to a general block variance matrix, as long as we maintain the mean-field nature of the variances within each block. 
%In fact, we expect our method can also handle extensions to non-mean-field variance profiles, e.g., banded variance profiles within each block, in which case similar localization/delocalization results should hold if the role of $W$ is replaced by the typical scale of the band width. 
The two most essential features that make our proofs possible are \emph{block translation symmetry} and \emph{enough randomness} (i.e., $N^{1+\e}$ random entries in $H$ v.s.~$N$ random entries in the original Anderson model). 
\end{remark}

\section{Preliminaries}\label{sec:preliminary}

%In this section, we present some deterministic estimates and introduce the graphical notations that will be used in the proof. 

\subsection{Some deterministic estimates}\label{sec_diffusive}

In this subsection, we present some basic estimates regarding the deterministic matrices that will be used in the main proof. The proofs of these estimates are based on standard techniques involving the analysis of the Fourier series representations of the deterministic matrices. Readers can find the details of all the proofs in Appendix \ref{appd_det}.%cite \cite{RBSO} here

The most important deterministic matrices for our proof are the diffusive matrices $\thn$, $\zthn$, and $\zthn(\Sele)$ defined in \eqref{def:Theta} and \eqref{theta_renormal1}. The following two lemmas describe their basic behaviors. 
%\subsection{Diffusive matrices}
% in the rest of the paper.

% We further define another type of $\thn$ matrices:
% $$b:=1+\Theta=\frac{1}{1-M^0S},\quad \zb:=P^\perp b P^\perp, \quad \thn :=(M^0)^{-1}\Theta =\sum_{k=0}^\infty S(M^0S)^k,\quad \zthn:=P^\perp \thn P^\perp . $$
% where $P^\perp=I-\mathbf e\mathbf e^*$. 
% With \eqref{L2M}, we obtain that 
% \be \label{Theta-Thetacirc}
% b_{xy}=\zb_{xy} + \frac{\im m+\eta}{N\eta},\quad \thn_{xy}=\zthn_{xy}+\frac{\im m+\eta}{N\eta}.
% \ee
% The entries of $\zb$ and $\zthn$ satisfy the following estimate. 
%In this subsection, we study the behaviors of $\zthn$ and $\zthn(\Sele)$.

\begin{lemma}[Behavior of the diffusive matrices]\label{lem theta}
Assume that $d\ge 7$ and $W\ge 1$ for the RBSOs in \Cref{def: BM}. Define $\ell_{\lambda,\eta}:=[\blam\eta]^{-1/2} + 1$. Let $\kappa\in (0,1)$ be an arbitrarily small constant. If $z=E+\ii \eta$ with $|E| \le 2-\kappa$ and $\eta> 0$, the following estimates hold for any constants $\tau,D>0$ and all $x,y \in \Z_L^d$:
\begin{align}
&|\zthn_{xy}(z)| \lesssim  B_{xy} \log L;\label{thetaxy}\\
&|\thn_{xy}(z)| \lesssim  \langle x-y\rangle^{-D},\ \  \text{if}\ \  |x-y|> W^{1+\tau} \ell_{\lambda,\eta}. \label{thetaxy2}
\end{align}
The bound \eqref{thetaxy} also holds for $|\thn_{xy}(z)|$ when $\eta\ge t_{Th}^{-1}$ (recall \eqref{eq:tTh}). 
% \be\label{thetaxy2}
% |\zthn_{xy}(z)| \lesssim   %\frac{W^\tau \mathbf 1_{|x-y|\le   \eta^{-1/2}W^{1+\tau}}}{W^2\langle x-y\rangle^{d-2}}   + \frac{1}{ \langle x-y\rangle^{D}}  \le 
%   B_{xy} (\log n)\mathbf 1_{|x-y|\le   \eta^{-1/2}\lambda W^{1/2+\tau} + W^{1+\tau}} + \langle x-y\rangle^{-D},\quad \forall x,y \in \Z_L^d.
% \ee
%The entries of $\zb-P^\perp=P^\perp\Theta P^\perp$ satisfy the same bound.
\end{lemma}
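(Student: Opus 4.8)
\textbf{Proof plan for \Cref{lem theta}.}
The plan is to work in Fourier space on the torus $\wt\Z_n^d$, exploiting the block translation symmetry of $M^0$ (for the block Anderson and Anderson orbital models) or of $S$ itself (for the Wegner orbital model). First I would use the reductions \eqref{exp_Theta2}--\eqref{exp_Theta3} (and the analogous formulas for the Wegner orbital model established in \cite{BandI}) to write $\zthn = \wt P^\perp (1-\KM)^{-1}\wt P^\perp \otimes \bE$, so that the entire problem reduces to controlling the Green's function of the stochastic-like matrix $\KM$ on $\wt\Z_n^d$. The estimates \eqref{eq:KM}--\eqref{eq:KM1}, together with the Perron--Frobenius eigenvalue computation \eqref{eq:a_sum}, show that $1-\KM$ restricted to $\wt P^\perp$ behaves like $\eta + (\text{discrete Laplacian})/\blam$; more precisely its Fourier multiplier is $\eta + \blam^{-1}\mathbf p^\top A\mathbf p + \OO(|\mathbf p|^4/\blam)$ for $|\mathbf p|\ll 1$ with $A$ positive definite, as recorded in \eqref{eq:diffu_cons}. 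Hence $\zthn_{xy}$ is (up to the $\bE$ averaging within blocks, which only affects the $\OO(W)$-local structure and is harmless at the level of $B_{xy}$) the inverse Fourier transform of $(\eta + \blam^{-1}a(\mathbf p))^{-1}$ summed over $\mathbf p\in (2\pi n^{-1}\wt\Z_n)^d$.

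For \eqref{thetaxy}, the key step is the standard bound on such lattice Green's functions: for $d\ge 3$ one has $\big|\sum_{\mathbf p\ne 0}\frac{e^{\ii\mathbf p\cdot([x]-[y])}}{\eta + \blam^{-1}a(\mathbf p)}\big| \lesssim \blam\,\langle [x]-[y]\rangle_n^{2-d}$, with at most a $\log L$ loss when $\eta$ is at the Thouless scale and $|[x]-[y]|$ is comparable to $n$ (the regime where the $\eta$ term and the diffusive term are comparable throughout a dyadic shell). Translating $\langle[x]-[y]\rangle_n \asymp \langle x-y\rangle_L / W$ and multiplying by the $W^{-2}$ from the definition $B_{xy}=\blam W^{-2}\langle x-y\rangle^{2-d}$ gives exactly \eqref{thetaxy}. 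I would carry out the dyadic decomposition of the $\mathbf p$-sum into shells $|\mathbf p|\asymp 2^{-j}$, bound the contribution of each shell by $\min(2^{jd}\cdot(\blam 2^{-2j})^{?}, \ldots)$ using integration by parts in $\mathbf p$ (the numerator oscillates, the denominator is smooth away from $0$), and sum the geometric-type series; the $\log L$ appears only from the borderline shell. The details are routine and relegated to \Cref{appd_det} as promised in the text.

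For \eqref{thetaxy2}, i.e.\ the super-polynomial off-diagonal decay of the \emph{unprojected} $\thn$ beyond the length scale $W^{1+\tau}\ell_{\lambda,\eta}$, the mechanism is different: here one cannot use $\wt P^\perp$ to kill the zero mode, and instead the decay comes from the fact that the Fourier multiplier $(\eta+\blam^{-1}a(\mathbf p))^{-1}$ (now including $\mathbf p=0$) is, after rescaling $\mathbf p \mapsto \mathbf p\sqrt{\blam\eta}$, analytic in a strip of width $\asymp \min(1, (\blam\eta)^{-1/2})=\ell_{\lambda,\eta}^{-1}$ around the real axis in each variable, so shifting the contour of the (approximating) integral yields decay $\exp(-c|[x]-[y]|_n/\ell_{\lambda,\eta})$, which on the original lattice is $\exp(-c|x-y|/(W\ell_{\lambda,\eta}))$ and hence $\le \langle x-y\rangle^{-D}$ once $|x-y| > W^{1+\tau}\ell_{\lambda,\eta}$. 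The extra factor $W^\tau$ (vs.\ just $W$) absorbs the polynomial prefactors and the within-block $\bE$-averaging. The final sentence—that \eqref{thetaxy} holds for $|\thn_{xy}|$ itself when $\eta\ge t_{Th}^{-1}$—follows because in that regime the zero mode contributes $(N\eta)^{-1}\lesssim \blam W^{-2}L^{2-d}\le B_{xy}$, so dropping $\wt P^\perp$ costs nothing.

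The main obstacle I anticipate is not any single estimate but bookkeeping: one must carefully control how the within-block mean-field averaging $\bE$ and the projection onto $\wt P^\perp$ interact with the Fourier analysis, and one must verify the lower bounds \eqref{eq:KM1} and the Perron--Frobenius structure uniformly in $z$ in the bulk so that $A$ stays uniformly positive definite and the quartic remainder in $a(\mathbf p)$ is genuinely lower order. For the Wegner orbital model, where $S$ rather than $M^0$ carries the translation structure and the relevant matrix is $\KM$ with a slightly different form, one also needs the analogous computations from \cite{BandI}; the extra normalization factor $1+2d\lambda^2$ must be tracked but is harmless since $\lambda\ll 1$. Once the Fourier multiplier is pinned down in the stated form, both \eqref{thetaxy} and \eqref{thetaxy2} are instances of classical estimates for massive/massless lattice Green's functions in $d\ge 3$ (indeed $d\ge 7$ gives ample room), so I expect no essential difficulty beyond the uniformity and bookkeeping just mentioned.
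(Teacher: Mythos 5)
Your overall route is the same as the paper's: both reduce, via \eqref{exp_Theta2}--\eqref{exp_Theta3} (and \eqref{exp_Theta4} for the Wegner orbital model), to estimating the Fourier series of the multiplier $f_\eta(\mathbf p)\asymp\big(\eta+\blam^{-1}a(\mathbf p)\big)^{-1}$ on $\wt\Z_n^d$, using the lower bound $\lambda_1(\mathbf p)\gtrsim|\mathbf p|^2$ and the upper bounds $\lambda_k(\mathbf p)\lesssim k^3|\mathbf p|^2$ on the symbol, and for \eqref{thetaxy} your dyadic-shell-plus-oscillation argument is just a repackaging of the paper's $(d-2)$-fold summation by parts in $p_1$, producing the same $\log L$ loss from the borderline scale. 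The genuine difference is in \eqref{thetaxy2}: the paper never proves exponential decay, but instead performs $k$-fold discrete summation by parts while retaining the mass term, obtaining $|A(x_1,\mathbf q)|\lesssim \blam|x_1|^{-k}(\sqrt{\blam\eta}+|\mathbf p|)^{-(k+2)}$ and then choosing $k$ large to conclude $\langle x-y\rangle^{-D}$ from $|x-y|>W^{1+\tau}\ell_{\lambda,\eta}$; you instead propose analytic continuation of the symbol into a strip of width $\asymp\ell_{\lambda,\eta}^{-1}$ and a contour-shift (Poisson-summation) argument yielding genuine exponential decay $e^{-c|x-y|/(W\ell_{\lambda,\eta})}$. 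Your version gives a stronger conclusion but requires more care for the block Anderson and Anderson orbital models, where the symbol is an infinite series $\sum_k a_k\blam^{-k}\lambda_k(\mathbf p)$ with $|a_k|\le C^k$ and $\lambda_k$ growing like $e^{k|\Im p|}$ under complexification, so you must verify both convergence and a uniform lower bound on the denominator in the strip (uniformly in $E$ in the bulk); the paper's purely real, finite-difference argument avoids this analyticity bookkeeping at the cost of only polynomial (but arbitrary-order) decay, which is all the lemma claims. Your treatment of the zero mode for $\eta\ge t_{Th}^{-1}$ matches the paper's \eqref{eq:diffzthn-thn1}--\eqref{eq:diffzthn-thn2} exactly.
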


When $t\ge t_{Th}^{-1}$, using \eqref{L2M} and \eqref{L2M2}, we obtain that for the block Anderson and Anderson orbital models,  
\be\label{eq:diffzthn-thn1} 
\thn_{xy}(z)-\zthn_{xy}(z) = \frac{1}{N}\frac{1}{1- \sum_y|M_{xy}(z)|^2} \lesssim \frac{1}{N\eta} \lesssim B_{xy},
\ee
and for the Wegner orbital model,  
\be\label{eq:diffzthn-thn2} 
\thn_{xy}(z)-\zthn_{xy}(z) = \frac{1}{N}\frac{1+2d\lambda^2}{1-(1+2d\lambda^2)|m(z)|^2} \lesssim \frac{1}{N\eta}\lesssim B_{xy}.
\ee
Hence, the bound $|\thn_{xy}|\lesssim B_{xy}\log L$ when $\eta\ge t_{Th}^{-1}$ follows directly from \eqref{thetaxy} for $|\zthn_{xy}(z)|$.

\begin{lemma}\label{lem:label_diffusive}
Suppose the assumptions of \Cref{lem theta} hold and $z=E+\ii \eta$ with $|E|\le 2-\kappa$ and $\eta>0$. Suppose ${\cal E}$ is a deterministic $\Z_L^d\times \Z_L^d$ matrix satisfying the following properties: there exists a deterministic parameter $0<\psi<1$ (which may depend on $W$ and $L$) such that 
	\be\label{self_sym1}
	\Sele_{\LK}([x], [x]+[a]) = \Sele_{\LK}(0,[a]), \quad   \Sele_{\LK}(0, [a]) = \Sele_{\LK}(0,-[a]), \quad \forall \ [x],[a]\in \wt\Z_n^d,
	\ee 
	\be\label{self_decay}
	  |\Sele_{\LK}(0,[x])| \le \frac{\psi}{\langle [x]\rangle^{d+2}} , \quad \forall \ [x]\in \wt\Z_n^d.
	\ee
Then, we have 
\be\label{thetaxy_renorm0}
(\zthn^{\LK} \cal E_{\LK})_{[x][y]} \prec \frac{\psi\blam}{\qq{[x]-[y]}^{d-2}}  ,
\quad \forall\ [x],[y] \in \wt\Z_n^d.
%\sum_{\al\in \Z_L^d} \zthn_{[0][\al]}^{\LK} \cal E_{\LK}([\al],[x]) \prec \frac{\psi\blam}{\qq{[x]}^{d-2}}  ,\quad \forall\ [x] \in \wt\Z_n^d. 
\ee
In addition, if there exists a deterministic parameter  $0<\psi_0<1$ such that  
\be\label{self_zero}
	 \Big|\sum_{[x]\in \Zn} \Sele_{\LK}(0,[x]) \Big|  \le \psi_0  ,
	\ee
then we have a better bound
\be\label{thetaxy_renorm}
 (\zthn^{\LK} \cal E_{\LK})_{[x][y]} \prec \frac{1}{\langle [x]-[y] \rangle^{d}}\left( \frac{\psi}{\langle [x]-[y]\rangle^{2}} + \psi_0 \right)\min\left(\eta^{-1},\blam\langle [x]-[y]\rangle^{2}\right),\quad \forall\ [x] ,[y]\in \wt\Z_n^d.
\ee
The bounds \eqref{thetaxy_renorm0} and \eqref{thetaxy_renorm} also hold if $\zthn$ is replaced by $\thn$ when $\eta\ge t_{Th}^{-1}$. 
\end{lemma}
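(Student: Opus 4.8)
The plan is to pass to Fourier space on the torus $\Zn$, where both matrices become multiplication operators, and then read off the spatial decay from the standard inverse-Fourier estimates already used for $\zthn$ itself. Note first that $\zthn^{\LK}$ is translation invariant on $\Zn$ (this follows from the block translation symmetry of $M$, hence of $M^{0}$, $\thn$ and $\zthn$), and that $\Sele_{\LK}$ is translation invariant by the first identity in \eqref{self_sym1}; thus both are circulant on $\Zn$ and are simultaneously diagonalized by the discrete Fourier transform. Writing $\widehat{\zthn}(\mathbf p)$ and $\widehat{\Sele}(\mathbf p)$ for the corresponding symbols,
\be\nonumber
\big(\zthn^{\LK}\Sele_{\LK}\big)_{[x][y]}=\frac{1}{n^{d}}\sum_{\mathbf p}e^{\ii\,\mathbf p\cdot([x]-[y])}\,\widehat{\zthn}(\mathbf p)\,\widehat{\Sele}(\mathbf p),
\ee
the sum running over the $n^{d}$ Fourier frequencies of $\Zn$.

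It remains to estimate the two symbols. For $\widehat{\Sele}(\mathbf p)=\sum_{[a]}\Sele_{\LK}(0,[a])\,e^{-\ii\,\mathbf p\cdot[a]}$: the decay \eqref{self_decay} gives $|\widehat{\Sele}(\mathbf p)|\prec\psi$ uniformly, and since $\sum_{[a]}\qq{[a]}^{2}|\Sele_{\LK}(0,[a])|\lesssim\psi\log n\prec\psi$ it lies in $C^{2}$ with second derivatives $\prec\psi$; the evenness in \eqref{self_sym1} forces $\nabla\widehat{\Sele}(0)=0$, hence $|\widehat{\Sele}(\mathbf p)-\widehat{\Sele}(0)|\prec\psi|\mathbf p|^{2}$, and under \eqref{self_zero} additionally $|\widehat{\Sele}(0)|=\big|\sum_{[a]}\Sele_{\LK}(0,[a])\big|\le\psi_{0}$, so $|\widehat{\Sele}(\mathbf p)|\prec\psi_{0}+\psi|\mathbf p|^{2}$. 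For $\widehat{\zthn}$: by \eqref{exp_Theta3} one has $\zthn^{\LK}=\wt P^{\perp}(1-\KM)^{-1}\wt P^{\perp}$, so $\widehat{\zthn}(\mathbf 0)=0$ and $\widehat{\zthn}(\mathbf p)=(1-\widehat{\KM}(\mathbf p))^{-1}$ for $\mathbf p\ne\mathbf 0$. Combining the translation invariance of $\KM$ with the row sum \eqref{eq:a_sum}, the near-diagonal mass \eqref{eq:KM0}, the neighbour lower bound \eqref{eq:KM1} and the rapid off-diagonal decay \eqref{eq:KM}--\eqref{eq:KM2}, I would establish the two-sided bound $1-\widehat{\KM}(\mathbf p)\asymp\eta+\blam^{-1}\theta(\mathbf p)$ over the whole Brillouin zone, with $\theta(\mathbf p)\asymp|\mathbf p|^{2}$ the periodic dispersion relation --- exactly the diffusive profile recorded in \eqref{eq:diffu_cons} --- so that $|\widehat{\zthn}(\mathbf p)|\asymp\min\{\eta^{-1},\ \blam\,\theta(\mathbf p)^{-1}\}$ for $\mathbf p\ne\mathbf 0$.

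To conclude, for \eqref{thetaxy_renorm0} I would insert $|\widehat{\zthn}\widehat{\Sele}|\prec\psi\min\{\eta^{-1},\blam\theta(\mathbf p)^{-1}\}$ into the Fourier sum: this is $\psi$ times a symbol comparable to that of the lattice resolvent $(\eta-\blam^{-1}\Delta_{n})^{-1}$ on $\Zn$, whose inverse transform is $\prec\blam\,\qq{[x]-[y]}^{-(d-2)}$ by the discrete Green's-function estimate used for $\zthn$ in \Cref{lem theta} (proven in \Cref{appd_det}), which yields \eqref{thetaxy_renorm0}. For \eqref{thetaxy_renorm} I would split $\widehat{\Sele}=\widehat{\Sele}(\mathbf 0)+\big(\widehat{\Sele}-\widehat{\Sele}(\mathbf 0)\big)$: the constant piece contributes $\widehat{\Sele}(\mathbf 0)\,\zthn^{\LK}_{[x][y]}$, bounded by $\psi_{0}\,|\zthn^{\LK}_{[x][y]}|\prec\psi_{0}\,\qq{[x]-[y]}^{-d}\min\{\eta^{-1},\blam\qq{[x]-[y]}^{2}\}$ via the sharp form of \Cref{lem theta}; for the remaining piece, $|\widehat{\Sele}-\widehat{\Sele}(\mathbf 0)|\prec\psi\,\theta(\mathbf p)$ cancels the $\theta(\mathbf p)^{-1}$ singularity of $\widehat{\zthn}$, leaving a bounded symbol with one extra order of vanishing at $\mathbf p=\mathbf 0$, and the same Fourier machinery gives an inverse transform $\prec\psi\,\qq{[x]-[y]}^{-(d+2)}\min\{\eta^{-1},\blam\qq{[x]-[y]}^{2}\}$; adding the two pieces gives \eqref{thetaxy_renorm}. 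Finally, when $\eta\ge t_{Th}^{-1}$, \eqref{eq:diffzthn-thn1}--\eqref{eq:diffzthn-thn2} show $(\thn-\zthn)^{\LK}$ is a constant multiple of the all-ones matrix of size $\OO((n^{d}\eta)^{-1})$, so $\big((\thn-\zthn)^{\LK}\Sele_{\LK}\big)_{[x][y]}=\OO\big((n^{d}\eta)^{-1}\big)\sum_{[a]}\Sele_{\LK}(0,[a])$, which is $\prec\psi(n^{d}\eta)^{-1}$ in general and $\le\psi_{0}(n^{d}\eta)^{-1}$ under \eqref{self_zero}; since $\eta\ge t_{Th}^{-1}$ forces $(n^{d}\eta)^{-1}\lesssim\blam\,\qq{[x]-[y]}^{-(d-2)}$ (and likewise for the refined bound), the versions with $\zthn$ replaced by $\thn$ follow.

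The main obstacle is the uniform two-sided bound $1-\widehat{\KM}(\mathbf p)\asymp\eta+\blam^{-1}\theta(\mathbf p)$ over the full Brillouin zone, in particular the lower bound in the region $|\mathbf p|\asymp1$: one must combine the near-diagonal mass \eqref{eq:KM0}, the neighbour lower bound \eqref{eq:KM1} and the super-exponential tail of $\KM$ from \eqref{eq:KM}--\eqref{eq:KM2} to rule out that $\widehat{\KM}(\mathbf p)$ returns close to $a=\im m/(\im m+\eta)$ away from $\mathbf p=\mathbf 0$. The second delicate point is converting these symbol bounds into the sharp spatial decay carrying the $\min\{\eta^{-1},\blam\qq{\cdot}^{2}\}$ dichotomy, which requires carefully separating the infrared contribution $|\mathbf p|\lesssim\ell_{\lambda,\eta}^{-1}$ from the ultraviolet one $|\mathbf p|\asymp1$ and from the discreteness of the $\mathbf p$-sum on $\Zn$; this is the same computation already performed for $\zthn$ in \Cref{appd_det}, which I would package as a single Fourier-transform lemma and invoke here.
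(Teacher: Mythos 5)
Your Fourier-space strategy breaks down at the inversion step, and this is a genuine gap affecting both bounds. A pointwise bound on a symbol does not yield pointwise spatial decay of its inverse transform: to extract decay $\qq{[x]-[y]}^{-(d-2)}$ for \eqref{thetaxy_renorm0}, or $\qq{[x]-[y]}^{-(d+2)}\min(\eta^{-1},\blam\qq{[x]-[y]}^{2})$ for the $\psi$-part of \eqref{thetaxy_renorm}, you need roughly $d-2$, respectively $d+2$, summations by parts in $\mathbf p$, and each difference must land on one of the two factors. The factor $\widehat{\zthn}$ can absorb arbitrarily many differences (its kernel $\KM$ decays super-polynomially, which is exactly what the paper exploits in \Cref{appd_det}), but under \eqref{self_decay} alone the symbol $\widehat\Sele$ has only \emph{two} controlled derivatives up to a $\log n$: $\sum_{[a]}|[a]|^{k}|\Sele_{\LK}(0,[a])|$ already diverges (linearly in $n$) for $k=3$. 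In the Leibniz expansion the terms where three or more differences fall on $\widehat\Sele$ are therefore out of control, and with $d\ge 7$ two integrations by parts only buy $\qq{\cdot}^{-2}$-type decay, far short of the claim; the ``single Fourier-transform lemma'' you would like to invoke does not exist for symbols of this limited regularity. (By contrast, the obstacle you flag as the main one is not a problem: only the upper bound on $\widehat{\zthn}$ is needed, and since the coefficients in \eqref{eq:fetap} are nonnegative with nearest-neighbor coefficient $\gtrsim 1$, the denominator is $\gtrsim \eta+\blam^{-1}|\mathbf p|^{2}$ on the whole Brillouin zone — this is precisely \eqref{eq:feta_est}.)

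The paper sidesteps this by never taking the Fourier transform of $\Sele$. The bound \eqref{thetaxy_renorm0} is a one-line real-space convolution of the kernel bound \eqref{thetaxy} with the decay \eqref{self_decay}. For \eqref{thetaxy_renorm} the sum over the intermediate index is decomposed dyadically around $[y]$: on far scales one uses \eqref{self_decay} together with the summed kernel bound \eqref{eq:sum_B}; on the near region one writes $\Sele_{\LK}(\cdot,[y])$ as its average (controlled through the sum-zero input \eqref{self_zero}) plus a symmetric, mean-zero fluctuation, which is beaten by a second-difference estimate on the diffusive kernel (\Cref{lem cancelTheta}, estimate \eqref{est_second_diff:B}); the Fourier summation-by-parts machinery is applied only to the smooth symbol of $\zthn$, never to $\widehat\Sele$. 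Your Fourier-side cancellation heuristic — evenness plus sum zero force $\widehat\Sele(\mathbf p)=\OO(\psi_0+\psi|\mathbf p|^{2})$, cancelling the diffusive singularity — is exactly the right mechanism (it is the paper's own informal explanation below \eqref{eq_rewriteE}), but to make it quantitative you must truncate $\Sele$ at scale $\sim\qq{[x]-[y]}$ and treat the tail in real space, which brings you back to essentially the paper's argument.
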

\begin{remark}
By \eqref{self_decay}, we have $\sum_{[x]} {\cal E}_{\LK}(0,[x])\lesssim \psi,$ so we can always take $\psi_0=\OO(\psi)$, in which case \eqref{thetaxy_renorm} is always stronger than \eqref{thetaxy_renorm0}. In addition, if $\Sele$ satisfies \eqref{self_sym}--\eqref{3rd_property} for a constant $\fc_3>0$, then as discussed below \eqref{eq_rewriteE}, $\Sele_{\LK}$ satisfies \eqref{self_sym1}, \eqref{self_decay}, and \eqref{self_zero} with 
$$\psi=W^{-\fc_3}\blam^{-1},\quad \psi_0=W^{-\fc_3}  \left(  \eta + t_{Th}^{-1}  \right).$$
Then, \eqref{thetaxy_renorm} gives that $ (\zthn^{\LK} \cal E_{\LK})_{[x][y]} \prec W^{-\fc_3}/\langle [x]-[y]\rangle^{d}$, which will be used in \Cref{lem redundantagain} below to bound $\zthn(\Sele)$.
\end{remark}
% The key idea is that if we can choose our self-energy corrections such that they take the form $\cal E=S\cal E' S$ for some deterministic matrices $\wt{\cal E}$, then we have
% $$\cal E= (\cal E^{\LK})^{\KL},$$
% and $\cal E^{\LK}$ is symmetric and translation invariant on $\wt\Z_K^d$.

% $\cal E_{\cob new}$ and $\Theta_{\cob new}$ has the following form
%\be\label{asfz}
%A=S\cdot A\cdot S, \quad i.e., \quad\left(A^{L\to K}\right)^{K\to L}=A,\quad \quad A= \cal E_{\cob new}, \quad \Theta_{\cob new}
%\ee
% 
% then we automatically have translation invariant, and symmetric (which are the main troubles). 

%\subsection{$S^{\pm}$ matrices}

In our proof, we will encounter another two deterministic matrices $S^\pm$. Denote the matrix ${M}^+$ as ${M}^+_{xy}:= (M_{xy})^2$. We then define $S^\pm$ as  
\be \label{def:S+}
  S^+\equiv S^+(z,\lambda) := S[1-M^+(z,\lambda) S]^{-1}=\sum_{k=0}^\infty S\left({M}^+(z,\lambda)S\right)^k, \quad S^-:= ( S^+ )^\dagger.
\ee
%Note for the Wegner orbital model, $S$ will also depend on $\lambda$. 
These matrices satisfy the following estimates. 

\begin{lemma} \label{lem deter}
Under the assumptions of \Cref{lem theta}, for any $z=E+\ii \eta$ with $|E| \le 2-\kappa$ and $\eta> 0$, there exists a constant $C_0>0$ such that the following estimates hold:
\begin{align}\label{S+xy}
|S^\pm_{xy}(z)| &\le   C_0 W^{-d}\exp\left({-\frac{|x-y|}{C_0W}}\right), \\
|S^\pm_{xy}(z,\lambda)-S^\pm_{xy}(z,0)| &\le   C_0\blam^{-1} W^{-d}\exp\left({-\frac{|x-y|}{C_0W}}\right). \label{S+xy-0}
\end{align} 
%Furthermore, for the Wegner orbital model, we have the better estimate:
%\be\label{S+xy-0WO}
%|S^\pm_{xy}(z,\lambda)-S^\pm_{xy}(z,0)| \le   C_0\lambda^2 W^{-d}\exp\left({-\frac{|x-y|}{C_0W}}\right). 
%\ee 
\end{lemma}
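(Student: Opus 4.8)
\textbf{Proof proposal for Lemma \ref{lem deter}.}

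The plan is to analyze the matrices $S^\pm$ through their Fourier series representations on $\Z_L^d$ (and, in the block-structured cases, on the two-scale lattice $\Zn\times[0]$), exactly in the spirit of the standard techniques referenced in Section \ref{sec_diffusive}. First I would treat the bound \eqref{S+xy}. The key point is that $M^+$ is a ``small'' perturbation in the relevant operator sense: since $|E|\le 2-\kappa$ we have, by \eqref{eq:expandm}, $\im m\gtrsim 1$ and $|m+z|\gtrsim 1$, so from the expansion \eqref{eq:expandM} the off-diagonal entries of $M$ decay like $(C\lambda)^{|x-y|}$ (this is Lemma \ref{lem:propM}), and $M^+_{xy}=(M_{xy})^2$ decays even faster on the same length scale $\OO(1)$. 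Consequently $M^+$ is a matrix whose entries are exponentially localized on scale $\OO(1)\ll W$, and $\|M^+ S\|\le \|M^+\|\cdot\|S\|$. Since $S$ is (up to the $\lambda^2$ correction in the Wegner case) the block-averaging projection $I_n\otimes\mathbf E$ with $\|S\|\le 1+\OO(\lambda^2)$, and since $\|M^+\|$ is bounded away from $1$ — one has $\|M^+\|\le \max_{\mathbf p}|\widehat{M^+}(\mathbf p)|$, which is $|m_{sc}|^2+\oo(1)<1$ in the bulk because $|m_{sc}(E+\ii\eta)|<1$ for $|E|\le 2-\kappa$ — the Neumann series $\sum_k S(M^+ S)^k$ converges geometrically in operator norm. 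To upgrade the norm bound to the pointwise exponential decay \eqref{S+xy}, I would note that each term $S(M^+S)^k$ has a kernel supported (effectively, up to exponentially small tails) within distance $\OO(kW)$ of the diagonal in the block metric — because $S$ couples only points within the same block (Anderson orbital / block Anderson) or nearest-neighbor blocks (Wegner) and $M^+$ is $\OO(1)$-localized — and carries a geometric prefactor $r^k$ with $r<1$; summing the geometric series of ``width-$kW$'' kernels produces precisely an exponential of the form $C_0 W^{-d}\exp(-|x-y|/(C_0W))$, where the $W^{-d}$ normalization comes from the $\mathbf E$-factor in $S$. Making the ``effectively supported'' statement precise is cleanest via a contour/Combes–Thomas-type estimate: conjugating $1-M^+S$ by $e^{\mu\cdot x}$ for $|\mu|\lesssim 1/W$ keeps it invertible with a uniformly bounded inverse, which yields the stated decay.

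Next, for the difference bound \eqref{S+xy-0}, the strategy is a resolvent-type identity. Write $S(\lambda)=S(0)+\lambda^2 R$ with $R=(2dI_n-\Delta_n)\otimes\mathbf E$ (this is \eqref{eq:SWO}; for the block Anderson and Anderson orbital models $S$ does not depend on $\lambda$ and the only $\lambda$-dependence is through $M^+(z,\lambda)$). Similarly $M^+(z,\lambda)=M^+(z,0)+\OO(\lambda)$ entrywise on scale $\OO(1)$, from \eqref{eq:expandm}. Then $S^+(z,\lambda)-S^+(z,0)=A^{-1}\big[(1-M^+(z,0)S(0)) - (1-M^+(z,\lambda)S(\lambda))\big]B^{-1}\cdot(\text{with the right factors})$ — more precisely, using $X^{-1}-Y^{-1}=X^{-1}(Y-X)Y^{-1}$ with $X=1-M^+(z,\lambda)S(\lambda)$, $Y=1-M^+(z,0)S(0)$, one gets $S^+(\lambda)-S^+(0)=S(\lambda)X^{-1}(Y-X)... $, and the difference $Y-X = M^+(z,\lambda)S(\lambda)-M^+(z,0)S(0)=\OO(\lambda^2)$ in operator norm (the $\lambda^2$ from $S(\lambda)-S(0)$, and $\OO(\lambda)$ from $M^+(\lambda)-M^+(0)$ times $\OO(\lambda)=\OO(\lambda^2)$ after noting $m_1=0$ by \eqref{eq_m1}, so actually the $M^+$ difference is $\OO(\lambda^2)$ too). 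Since $\lambda^2\asymp W^d/(\blam\Lambda_\Psi^2)$ and $\Lambda_\Psi^2\asymp W^{d}$ or $W^{d-1}$, one has $\lambda^2\lesssim \blam^{-1}$ (indeed $\lambda^2=W^d/(\blam\Lambda_\Psi^2)$, which is $1/\blam$ for the orbital models and $W^{-1}/\blam\cdot W = $ wait — $W^d/(\blam W^{d-1})=W/\blam$ for block Anderson, but in all cases $\lambda^2\le \blam^{-1}\cdot\OO(W)$, and since also $\lambda\ll 1$ the cleanest route is to bound the difference by $C\blam^{-1}$ directly using $\lambda^2\Lambda_\Psi^2 = W^d/\blam$); plugging this into the bounded inverses $X^{-1},Y^{-1}$ (bounded by the geometric-series argument of the first paragraph) and re-running the Combes–Thomas decay estimate on the product gives \eqref{S+xy-0} with the extra $\blam^{-1}$ gain.

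The main obstacle, I expect, is not the operator-norm convergence — that is routine given $|m_{sc}|<1$ in the bulk — but rather making the \emph{pointwise} exponential-decay bounds \eqref{S+xy}–\eqref{S+xy-0} rigorous uniformly in $W,L,\eta$, and in particular getting the correct $W^{-d}$ prefactor and the correct $W$-scale in the exponent simultaneously. The subtlety is that $S$ itself is not small — $\|S\|\approx 1$ — so one cannot simply bound $\|S^+\|$ by a geometric series of small terms; one must exploit that $M^+S$ (not $S$ alone) is the contracting operator, and that each application of $S$ only spreads mass by one block-width. I would handle this by working in the mixed representation: Fourier transform in the block variable $[x]\in\Zn$ (diagonalizing the translation-invariant part) while keeping the within-block variable $\{x\}\in[0]$ explicit, reducing $1-M^+S$ to a family of $W^d\times W^d$ matrices indexed by block-momentum $\mathbf p$, each uniformly invertible with inverse of norm $\OO(1)$ and entries decaying exponentially on scale $\OO(1)$ in $\{x\}$; the exponential decay in the block variable then comes from analyticity of these $W^d\times W^d$ inverses in $\mathbf p$ on a strip of width $\OO(1)$ (equivalently, the Combes–Thomas shift $\mathbf p\mapsto\mathbf p+\ii\mu$ with $|\mu|\lesssim 1/W$, since a block-distance $1$ corresponds to lattice-distance $W$). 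Assembling the two scales yields the product form $W^{-d}e^{-|x-y|/(C_0W)}$. For the block Anderson model, where $M$ is only block-translation-invariant and the off-diagonal structure of $\Psi^{\BA}$ mixes blocks at their boundaries, the same argument goes through because the relevant decay is still governed by $\|M^+\|<1$ and the range of $S$; the rank-$W^{d-1}$ nature of the off-diagonal $\Psi^{\BA}$ blocks only helps. The $\eta$-uniformity is automatic since all bounds used ($|m_{sc}|<1$, $\im m\gtrsim 1$) hold uniformly for $\eta\ge 0$ in the bulk.
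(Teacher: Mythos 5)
Your overall architecture is close to the paper's: both reduce to the block lattice via the tensor structure $S^\pm=(1-F_n^+)^{-1}\otimes \bE$ (cf.\ \eqref{eq:S+}), so that the $W^{-d}$ prefactor comes from $\bE$ and the exponential decay on scale $W$ is really exponential decay in the block distance. However, the step that actually carries the lemma --- the \emph{uniform-in-$\eta$} invertibility of $1-M^+S$ with a single constant $C_0$ for all $\eta>0$ --- is justified by the wrong mechanism in your proposal. You argue that $\|M^+S\|$ is bounded away from $1$ because $|m_{sc}(E+\ii\eta)|<1$ in the bulk, and you assert at the end that this holds uniformly for $\eta\ge 0$. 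That is false: $|m_{sc}(E)|=1$ for every $E\in[-2,2]$, so for instance in the Wegner orbital case $\|M^+S\|\le |m_{sc}(z_\lambda)|^2=1-\OO(\eta)$, and your Neumann series / Combes--Thomas scheme then only yields $\|(1-M^+S)^{-1}\|\lesssim \eta^{-1}$ with a decay length that degenerates as $\eta\to 0$. The correct input, and the one the paper uses, is not $|m_{sc}|<1$ but $|1-m_{sc}(z)^2|\gtrsim 1$ for $|E|\le 2-\kappa$: in the bulk, $m_{sc}^2$ stays away from the \emph{point} $1$ even though its modulus approaches the unit circle. Concretely, the paper records $(F_n^+)_{[x][x]}=m_{sc}(z)^2+\OO(\lambda^2)$ and $(F_n^+)_{[x][y]}\le (C/\blam)^{|[x]-[y]|}$ (see \eqref{eq:KMp}) and inverts $1-F_n^+$ by the expansion \eqref{eq:expMLn} \emph{around its diagonal part}, so the small parameter is the off-diagonal $(C/\blam)$-decay divided by $|1-m_{sc}^2|\gtrsim 1$, not the norm of $M^+S$. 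Your mixed block-Fourier/Combes--Thomas picture would go through once this input replaces the contraction claim, but as written the proof does not establish \eqref{S+xy} uniformly in $\eta$.

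A second, smaller gap concerns \eqref{S+xy-0}. Your resolvent-identity argument bounds the perturbation in operator norm by $\OO(\lambda^2)$, which matches the claimed $\blam^{-1}$ gain only when $\lambda^2=\blam^{-1}$, i.e.\ for the Anderson/Wegner orbital models where $\Lambda_\Psi^2=W^d$. For the block Anderson model one has $\blam^{-1}=\lambda^2/W$, and --- as you notice mid-argument but do not resolve --- the identity $\lambda^2\Lambda_\Psi^2=W^d/\blam$ does not give $\lambda^2\lesssim\blam^{-1}$ there. The extra factor of $W^{-1}$ cannot be seen at the level of an operator-norm $\OO(\lambda^2)$ difference; it has to be extracted from the block-averaged entrywise structure of $F_n^+$ (the paper deduces \eqref{S+xy-0} from \eqref{eq:KMp} together with \eqref{Mbound}, \eqref{Mbound_AO} through the same diagonal expansion), so this case is left open in your proposal.
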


To prove the sum-zero property for the $\selfs$, we will need to use the \emph{infinite space limits} of the deterministic matrices discussed above. 

\begin{definition}[Infinite space limits]\label{def infspace0}
Fix any $d\ge 7$ and $|E|\le 2-\kappa,$ we define the following infinite space limits of $M$, $S$, $S^\pm$, and $\zthn$ obtained by keeping $W$ (and hence $\lambda$) fixed and taking $L \to \infty$ and $\eta \to 0$: for any $x,y\in \Z^d$,
\begin{equation}
    \begin{split}
        M^{(\infty)}_{xy}(E): = \lim_{L\to \infty}M^{(L)}_{xy}(E+\ii 0_+),\quad & S^{(W,\infty)}_{xy}:= \lim_{L\to \infty} S^{(W,L)}_{xy},\\
         [S_{(W,\infty)}^{\pm}(E)]_{xy}:= \lim_{L\to \infty} [S_{(W,L)}^{\pm}(E+\ii 0_+)]_{xy} , \quad &\thn^{(W,\infty)}_{xy}:= \lim_{L\to \infty}\zthn^{(W,L)}_{xy}(E+\ii 0_+). %sum_{k=0}^\infty S_{\infty}[M_\infty^0(E)S_\infty]^{k}\,. \frac{M_\infty^+(E)S_\infty}{1-M_\infty^+(E)S_\infty}
    \end{split}
\end{equation}
Here, the superscripts/subscripts $(L)$ and $(W,L)$ indicate the dependence on $W$ or $L$. The diagonal entries of $M^{(L)}$ and $M^{(\infty)}$ are denoted by $m^{(L)}$ and $m^{(\infty)}$, respectively, and we  also define $M_{(\infty)}^0$ and $M_{(\infty)}^+$ as:
$$[M_{(\infty)}^0(E)]_{xy} :=|M^{(\infty)}_{xy}(E)|^2,\quad [M_{(\infty)}^+(E)]_{xy}:= [M^{(\infty)}_{xy}(E)]^2.$$ 
The operator $\thn^{(W,\infty)}$ can also be obtained from the infinite space limit of  $\thn^{(W,L)}$:
$$\thn^{(W,\infty)}_{xy}= \lim_{L\to \infty}\thn^{(W,L)}_{xy}(E+\ii t_{Th}^{-1}).$$
% as $(M_{\infty}^0)_{xy}=|M^{(\infty)}_{xy}|^2$ and $(M_{\infty}^+)_{xy}=[M^{(\infty)}_{xy}]^2$. 
We use $\Delta$ to denote the Laplacian operator on $\Z^d$, which is the infinite space limit of $\Delta_L$.  
Finally, we denote the infinite space limit of the renormalized lattice as \smash{$\wt \Z^d$}. Then, given any linear operator $\cal A:\ell^2(\Z^d)\to \ell^2(\Z^d)$, we define its `projection' $\cal A^{\infinf}$ to  $\wt \Z^d$ as in \eqref{eq:defLton} for $[x],[y]\in \wt\Z^d$.
\end{definition}

With the tools of Fourier transforms, we can easily bound the differences between these deterministic matrices and their infinite space limits. As long as $W$ is sufficiently large, all the following results hold uniformly in $L\ge W$ without assuming the condition $W\ge L^\delta$. %Their proofs are also presented in Appendix \ref{appd_det}.
First, the following lemma bounds the difference between $M^{(L)}$ and $M^{(\infty)}$. 

\begin{lemma}\label{lem:estM}
Suppose \eqref{eq:small+lambda} and the assumptions of \Cref{lem theta} hold. Let $\al(W):=0$ for the Wegner orbital model, $\al(W):=1$ for the block Anderson model, and $\al(W):=W^2$ for the Anderson orbital model. For any $z=E+\ii \eta$ with $|E|\le 2-\kappa$ and $0<\eta\le 1$, %Suppose \eqref{eq:small+lambda} holds.
%Fix any $d\ge 6$, $L\ge W$, and the assumption $z=E+\ii \eta$ with $|E|\le 2-\kappa$ and $0<\eta\le 1$. 
there exists a sufficiently large $W_0\in \N$ and a constant $C>0$ (independent of $W$ or $L$) such that the following statements hold uniformly for all $L\ge W\ge W_0$: 
\be\label{eq:mL_minf}
\big|m^{(L)}(z)-m^{(\infty)}(E)\big|\le C\left(\eta+ \lambda^2 \al(W)/L^2\right); 
\ee
for the block Anderson model, we have that for all $x\ne y\in \Z_L^d$, % and any large constant $D>0$,
\be\label{eq:ML-Minf}
|M^{(L)}_{xy}(z)-M^{(\infty)}_{xy}(E)|\lesssim \left(\eta + \lambda^2 \al(W)/ L^{2}\right)(C\lambda)^{|x-y|} +(C\lambda)^{L/C};
\ee
for the Anderson orbital model, we have that for all $[x]\ne [y]\in \wt\Z_n^d$, % and any large constant $D>0$,
\be\label{eq:ML-MinfAO}
|M^{(L)}_{xy}(z)-M^{(\infty)}_{xy}(E)|\lesssim \left(\eta + \lambda^2 \al(W)/L^{2}\right)(C\lambda)^{|[x]-[y]|} +(C\lambda)^{n/C}.
\ee
\end{lemma}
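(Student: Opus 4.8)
The plan is to work entirely on the Fourier side, where all the relevant operators become multiplication operators and the difference $M^{(L)}-M^{(\infty)}$ reduces to comparing a discrete torus sum with a continuum integral (for the Anderson orbital model, a sum over $\wt\Z_n^d$ with a sum over $\wt\Z^d$). First I would establish \eqref{eq:mL_minf}. For the Wegner orbital model this is immediate from \eqref{self_mWO} and the analyticity of $m_{sc}$ near bulk points away from the edges $\pm 2$: since $|E|\le 2-\kappa$, $m_{sc}$ is Lipschitz there, so $|m^{(L)}(z)-m^{(\infty)}(E)|\lesssim |z-E|+|(\text{edge shift terms})|\lesssim \eta$, and $\al(W)=0$ absorbs the rest. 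For the block Anderson and Anderson orbital models I would use the self-consistent equation \eqref{self_m}: writing $F_L(m,z):=\frac1N\tr(\lambda\Psi-z-m)^{-1}-m$, one has $F_L(m^{(L)},z)=0$ and $F_\infty(m^{(\infty)},E)=0$, where $F_\infty$ replaces the normalized trace with the corresponding integral over the Brillouin zone of the symbol of $\lambda\Psi$. Standard stability of the Dyson equation in the bulk (the derivative $\partial_m F$ is bounded away from $0$, which follows from $\im m\gtrsim 1$ established in \Cref{lem:propM}) reduces the bound to estimating $|F_\infty(m^{(L)},E)-F_L(m^{(L)},z)|$, which splits into a term $\lesssim\eta$ (from $z$ vs.\ $E$) plus the Riemann-sum error $|\frac1N\tr g - \int g|$ for the smooth function $g(\bp)=(\lambda\psi(\bp)-E-m^{(L)})^{-1}$ on the torus. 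Because $g$ is smooth with all derivatives controlled (denominator bounded below by $\im m\gtrsim 1$), Poisson summation / Euler–Maclaurin gives a Riemann-sum error of size $\lesssim \|\nabla^2 g\|_\infty / (\text{number of points per direction})^2$. For the block Anderson model the symbol $\lambda\psi^{\BA}(\bp)$ lives on an $L$-point grid, so this is $\OO(\lambda^2/L^2)$, matching $\al(W)=1$; for the Anderson orbital model the symbol $\lambda\psi^{\AO}$ depends only on the block momentum living on an $n$-point grid, and $n=L/W$, giving $\OO(\lambda^2/n^2)=\OO(\lambda^2 W^2/L^2)$, matching $\al(W)=W^2$. (For a cleaner argument one can instead note that higher derivatives of $\psi^{\AO}$ in the $L$-momentum carry factors of $W$, again producing the $W^2$.)

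Next I would prove \eqref{eq:ML-Minf} and \eqref{eq:ML-MinfAO} for the off-diagonal entries. Here I would use the geometric series representation \eqref{eq:expandM}: $M^{(L)}_{xy}=-\sum_{k\ge 1}(\lambda\Psi)^k_{xy}/(z+m^{(L)})^{k+1}$ and similarly for $M^{(\infty)}$ with $(z,m^{(L)})$ replaced by $(E,m^{(\infty)})$ and $\Psi$ replaced by its infinite-volume analogue. The difference naturally splits into two pieces. The first piece keeps the same $k$-th power of $\lambda\Psi$ but changes the denominator from $(z+m^{(L)})^{k+1}$ to $(E+m^{(\infty)})^{k+1}$; using $|z+m^{(L)}|\gtrsim 1$, $|E+m^{(\infty)}|\gtrsim 1$, the telescoping bound $|a^{-(k+1)}-b^{-(k+1)}|\lesssim (k+1)|a-b|$ under these lower bounds, and \eqref{eq:mL_minf}, this contributes $\lesssim(\eta+\lambda^2\al(W)/L^2)\sum_k(k+1)|(\lambda\Psi)^k_{xy}|$. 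For the block Anderson model $(\Psi^{\BA})^k_{xy}=0$ unless $k\ge|x-y|$ and then $(\Psi^{\BA})^k_{xy}$ counts lattice paths, which is bounded by $(2d)^k$, so the sum is $\lesssim(C\lambda)^{|x-y|}$; for the Anderson orbital model the same reasoning on $\wt\Z_n^d$ gives $(C\lambda)^{|[x]-[y]|}$, using that $M^{\AO}=M^{\LK}\otimes I_W$ (from \Cref{lem:propM}) so only block-distance matters. The second piece is the difference between the torus hopping operator $\Psi^{(L)}$ and its infinite-volume version $\Psi^{(\infty)}$; these agree on all paths not wrapping around the torus, so the difference only involves paths of length $\ge L$ (resp.\ $\ge n$ block-steps), which after the same geometric bound contributes $\lesssim(C\lambda)^{L/C}$ (resp.\ $(C\lambda)^{n/C}$). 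Combining the two pieces yields the stated bounds.

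The main obstacle I anticipate is getting the Riemann-sum (finite-size) error in \eqref{eq:mL_minf} to come out with the precise $W$-dependence $\al(W)$ for the Anderson orbital model, rather than a cruder $\lambda^2/L^2$ or $\lambda^2 W^4/L^2$: one must carefully track how derivatives of the symbol $\psi^{\AO}$ scale in the $L$-Brillouin-zone variable versus the $n$-variable, and exploit that $\psi^{\AO}=(2dI_n-\Delta_n)\otimes I_W$ is constant in the "internal" momentum, so the effective grid spacing is $1/n$ and each derivative costs a factor $W$. A secondary technical point is handling the case $|x-y|$ comparable to $L$ (resp.\ $|[x]-[y]|$ comparable to $n$), where the "wrapping" term $(C\lambda)^{L/C}$ and the "leading" term $(C\lambda)^{|x-y|}$ are of the same order and one should just take the max; this is why both appear additively in the statement. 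Everything else is routine: uniformity in $L\ge W\ge W_0$ follows because all constants in the Fourier estimates depend only on $\kappa$ and $d$ (the bulk lower bounds $\im m\gtrsim1$, $|z+m|\gtrsim1$ hold uniformly), and the smallness of $\lambda=W^{-\xi}$ guarantees convergence of all the geometric series.
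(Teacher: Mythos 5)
Your overall route is the same as the paper's: for \eqref{eq:mL_minf} you rewrite the self-consistent equation \eqref{self_m} in Fourier form, compare the torus sum with the Brillouin-zone integral, and invoke stability of the self-consistent equation (the Wegner orbital case being immediate from \eqref{self_mWO}); for \eqref{eq:ML-Minf} and \eqref{eq:ML-MinfAO} you use the expansion \eqref{eq:expandM}, feed in the bound on $m^{(L)}-m^{(\infty)}$ through the denominators, and isolate the torus-wrapping paths of length $\gtrsim L$ (resp.\ $\gtrsim n$) to produce the $(C\lambda)^{L/C}$ (resp.\ $(C\lambda)^{n/C}$) term. The reduction of the Anderson orbital model to an $n$-point problem via $M^{\AO}=M^{\LK}\otimes I_W$ is also exactly what the paper does.

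One quantitative step in your argument does not deliver what you claim. For $g(\bp)=(\lambda e(\bp)-z-m)^{-1}$ one has $\partial_j\partial_i g=-\lambda\,\partial_j\partial_i e\cdot g^2+2\lambda^2\,\partial_i e\,\partial_j e\cdot g^3$, so $\|\nabla^2 g\|_\infty\asymp\lambda$, not $\lambda^2$; the naive second-derivative Euler--Maclaurin/Poisson bound therefore yields a sum-versus-integral error of order $\lambda\,\al(W)/L^2$, one power of $\lambda$ short of the $\lambda^2\al(W)/L^2$ asserted in \eqref{eq:mL_minf} (and this power matters downstream, e.g.\ when the error must be dominated by $t_{Th}^{-1}$ for $\xi>1$). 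The fix is easy and strengthens the bound: expand $g(\bp)=-\frac{1}{z+m}\sum_{j\ge0}\bigl(\lambda e(\bp)/(z+m)\bigr)^j$ and note that $e(\bp)^j$ is a trigonometric polynomial of degree $j$, so the equispaced $L$-point grid (resp.\ the $n$-point grid in the AO case) integrates it exactly for $j<L$ (resp.\ $j<n$); the sum--integral discrepancy is thus $\OO\bigl((C\lambda)^{L}\bigr)$ (resp.\ $\OO\bigl((C\lambda)^{n}\bigr)$), which is far below $\lambda^2\al(W)/L^2$, and the remaining contributions to \eqref{eq:mL_minf} are the $\OO(\eta)$ shift from $z$ versus $E$ handled by your stability argument. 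With this replacement the rest of your proof goes through as written.
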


Next, we present two lemmas that control the difference between \smash{$S^{\pm}_{(W,L)}(z)$ and $S_{(W,\infty)}^{\pm}(E)$}, as well as the difference between \smash{$\zthn^{(W,L)}(z)$ and $\thn^{(W,\infty)}(E)$}. 

\begin{lemma}\label{lem:estS+}
In the setting of \Cref{lem:estM}, there exists a constant $C>0$ such that for all $x, y\in \Z_L^d$,
    \be\label{eq:S+-Sinf} 
    \left|[S^+_{(W,L)}(z)]_{xy}-[S_{(W,\infty)}^{+}(E)]_{xy}\right|\lesssim \frac{\eta+\lambda^2 \al(W)/L^{2}}{W^{d}}e^{-{|[x]-[y]|}/{C}} + e^{-n/C}.
    \ee
\end{lemma}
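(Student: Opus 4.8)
The proof will proceed by Fourier analysis, comparing the symbols of $S^+_{(W,L)}$ and $S^+_{(W,\infty)}$ on the respective tori/lattices. Recall from \eqref{def:S+} that $S^+ = S(1-M^+S)^{-1}$, so after diagonalizing by the Fourier transform we have, for momenta $\mathbf p$ in the dual lattice,
\[
\widehat{S^+_{(W,L)}}(\mathbf p) = \frac{\widehat S(\mathbf p)}{1-\widehat{M^+S}(\mathbf p)},
\]
and similarly for the infinite-volume object with the discrete sum over $\mathbf p\in(2\pi n^{-1}\widetilde{\mathbb Z}_n)^d$ replaced by an integral over the continuum torus $[-\pi,\pi)^d$ (using that $S$ has the block form in \Cref{def:projlift}, the relevant Fourier variable lives on the block lattice $\widetilde{\mathbb Z}_n^d$, which is why the exponents in \eqref{eq:S+-Sinf} involve $[x]-[y]$ and $n$). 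The difference $[S^+_{(W,L)}]_{xy}-[S^+_{(W,\infty)}]_{xy}$ is then the difference between a Riemann sum and the corresponding integral of the function $\mathbf p\mapsto \widehat{S^+}(\mathbf p) e^{\ii \mathbf p\cdot([x]-[y])}$, plus the contribution of replacing $M^{+,(L)}$ by $M^{+,(\infty)}$ (which is itself estimated in \Cref{lem:estM}) and the contribution of $\eta\neq 0$ versus $\eta=0$.

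First I would establish the key analytic input: the symbol $\widehat{S^+}(\mathbf p)$ is bounded and has controlled derivatives. Since $M^+_{xy}=(M_{xy})^2$ decays on scale $W$ by \Cref{lem:propM} (for $\BA$, $\AO$) and $S$ is a block-averaging matrix, one shows $1-\widehat{M^+S}(\mathbf p)$ stays bounded away from $0$ \emph{except} possibly near $\mathbf p=0$; but near $\mathbf p = 0$ the relevant small-denominator behavior is exactly the diffusive one, $1-\widehat{M^+S}(\mathbf p)\asymp \eta + |\mathbf p|^2/\beta(\lambda)$ plus a gap of order $W^{-d}$ or so --- crucially, this is the content already encoded in Lemmas \ref{lem theta}--\ref{lem deter}, and in fact \eqref{S+xy} tells us $S^\pm$ decays exponentially on scale $W$, i.e. its symbol is \emph{smooth} (analytic in a strip of width $\sim 1/W$), with $\|\partial^k \widehat{S^+}\|_\infty \lesssim W^{-d}W^{|k|}$ type bounds after the appropriate rescaling. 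This smoothness, together with the standard Poisson-summation / Euler–Maclaurin estimate for the error between $\frac1{n^d}\sum_{\mathbf p}$ and $\frac1{(2\pi)^d}\int$ of a smooth periodic function, yields an error bounded by $e^{-cn}$ (exponentially small because a smooth, exponentially-localized-in-real-space kernel has Fourier coefficients decaying past $|\mathbf k|\gtrsim 1$). Combined with the exponential spatial decay $e^{-|[x]-[y]|/C}$ of $S^+$ itself, this gives the two terms on the right side of \eqref{eq:S+-Sinf} coming from finite volume.

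Next I would handle the dependence on $\eta$ and on $\lambda^2\alpha(W)/L^2$. Writing $S^+_{(W,L)}(z) - S^+_{(W,L)}(E+\ii 0_+)$ as $S^+\big((1-M^+S)|_z - (1-M^+S)|_{E}\big)(1-M^+S)^{-1}|_E$ (resolvent identity at the level of the $(1-M^+S)^{-1}$ factor), the middle difference is controlled by $\partial_z(M^+S)$; since $\partial_z M = M^2$ is again exponentially localized on scale $W$ and uniformly bounded, one gets $\|(1-M^+S)|_z-(1-M^+S)|_E\| \lesssim \eta$ in the relevant operator sense, and the resolvents are controlled by the diffusive bounds. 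The $\alpha(W)/L^2$ term enters because $m^{(L)}(z)-m^{(\infty)}(E)$ already carries such a term by \eqref{eq:mL_minf}, and $M^+$ depends on $m$; propagating \eqref{eq:mL_minf} and \eqref{eq:ML-Minf}/\eqref{eq:ML-MinfAO} through the geometric series for $S^+$, using the exponential decay to sum the series, produces exactly the prefactor $(\eta+\lambda^2\alpha(W)/L^2)/W^d$ against $e^{-|[x]-[y]|/C}$.

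\textbf{Main obstacle.} The delicate point is the small-momentum regime $\mathbf p\approx 0$, where $1-\widehat{M^+S}(\mathbf p)$ is small and the naive bound $\|(1-M^+S)^{-1}\|$ blows up; one must check that the \emph{difference} between the finite- and infinite-volume resolvents there is still controlled, i.e. that the diffusive pole is captured correctly in both settings and the discrepancy is genuinely of size $\eta + \lambda^2\alpha(W)/L^2$ (the $\eta$ regularizes the pole; the $\alpha(W)/L^2$ accounts for the coarsest Fourier mode spacing $1/n$ feeding into a $|\mathbf p|^2/\beta(\lambda)$ denominator, since $\beta(\lambda)^{-1}n^{-2}\sim W^{d}\beta(\lambda)^{-1}L^{-2}$ up to the model-dependent $\alpha(W)$). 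This requires the quantitative lower bound $\mathrm{Re}\,(1-\widehat{M^+S}(\mathbf p)) \gtrsim \eta + |\mathbf p|^2\beta(\lambda)^{-1}$, which I would extract from the already-established behavior of $\widetilde\Theta$ and from the explicit small-$\lambda$ expansion \eqref{eq:expandm} of $m$; the rest of the argument is a fairly routine, if bookkeeping-heavy, Fourier-analytic comparison of the type carried out in Appendix \ref{appd_det}.
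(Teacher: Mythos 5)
Your overall plan (express $S^+$ via \eqref{def:S+}, compare finite and infinite volume, and propagate \eqref{eq:mL_minf} and \eqref{eq:ML-Minf}/\eqref{eq:ML-MinfAO} through a convergent series, plus an exponentially small boundary term) is the right route and is essentially what the paper does, but the step you single out as the ``main obstacle'' rests on a misconception that would derail the argument if you pursued it. There is no diffusive pole in $1-\widehat{M^+S}(\mathbf p)$ near $\mathbf p=0$. The near-singularity $1-\widehat{M^0S}(\mathbf p)\asymp \eta+|\mathbf p|^2/\blam$ is a consequence of the Ward identity \eqref{L2M}, i.e.\ of $\sum_y|M_{xy}|^2=\im m/(\eta+\im m)\approx 1$; it pertains to $M^0_{xy}=|M_{xy}|^2$ and hence to $\thn$, not to $M^+_{xy}=(M_{xy})^2$. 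For $S^+$ the relevant quantity at zero momentum is $(F_n^+)_{[0][0]}=m_{sc}(z)^2+\OO(\lambda^2)$ (see \eqref{eq:KMp}), and in the bulk $|E|\le 2-\kappa$ one has $|1-m_{sc}^2(z)|\gtrsim 1$, so $1-\widehat{M^+S}(\mathbf p)$ is bounded below by a constant \emph{uniformly in $\mathbf p$, including $\mathbf p=0$}. This is precisely why \eqref{S+xy} gives uniform exponential decay of $S^\pm$ on scale $W$ (a genuine pole regularized only by $\eta$ would force polynomial, $B_{xy}$-type decay and would be incompatible with the target bound $(\eta+\lambda^2\al(W)/L^2)W^{-d}e^{-|[x]-[y]|/C}$). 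Your proposal to extract a lower bound $\mathrm{Re}\,(1-\widehat{M^+S})\gtrsim \eta+|\mathbf p|^2/\blam$ ``from the behavior of $\wt\thn$'' would therefore fail: $\wt\thn$ carries information about $M^0$, not $M^+$, and the asymptotic you assert near $\mathbf p=0$ is simply false (the true lower bound is order one, which makes the lemma much easier, not harder). Relatedly, your attribution of the $\lambda^2\al(W)/L^2$ term to the Fourier mode spacing $1/n$ hitting a $|\mathbf p|^2/\blam$ denominator is incorrect and contradicts your own (correct) earlier sentence: that term comes solely from $m^{(L)}(z)-m^{(\infty)}(E)$ and $M^{(L)}-M^{(\infty)}$ via \Cref{lem:estM}, entering $F_n^+$ through $M^+$.

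Once the uniform invertibility is in hand, no Fourier-side pole analysis or careful Poisson-summation argument near $\mathbf p=0$ is needed at all: the paper simply writes $S^+=(1-F_n^+)^{-1}\otimes \bE$ as in \eqref{eq:S+}, expands $(1-F_n^+)^{-1}$ around its (order-one) diagonal value as in \eqref{eq:expMLn}, and compares the resulting absolutely convergent, exponentially decaying series entrywise between $(z,W,L)$ and $(E,W,L')$ using \Cref{lem:estM}, then lets $L'\to\infty$; the $e^{-n/C}$ term absorbs the finite-volume tails. So keep the part of your plan that propagates the $M$-estimates through the series, and discard the pole/diffusive-regularization analysis entirely --- as written, that step is not just unnecessary but based on a claim that is false for $M^+$.
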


\begin{lemma}\label{lem esthatTheta}
In the setting of \Cref{lem:estM}, we have that for all $x\in [0]$ and $y\in \Z^d$,
\be\label{Theta-wh1}
|\thn^{(W,\infty)}_{xy}(E)|\lesssim   \frac{\blam \log (|y|+W)}{W^2 \left(|y|+W\right)^{ d-2}},
\ee
and for all $x\in [0]$ and $y\in \Z_L^d$, %and any large constant $D>0$,
\be\label{Theta-wh}
\left|\zthn^{(W,L)}_{xy}(z)-\thn^{(W,\infty)}_{xy}(E)\right|\lesssim \blam \left(\eta+ t_{Th}^{-1}\right)\cdot \frac{\blam \log L }{ W^4(\|y\|_L+W)^{d-4}} .  
\ee
When $\eta\ge t_{Th}^{-1}$, by \eqref{eq:diffzthn-thn1} and \eqref{eq:diffzthn-thn2}, the bound \eqref{Theta-wh} also holds if we replace $\zthn^{(W,L)}_{xy}(z)$ with $\thn^{(W,L)}_{xy}(z)$. 
\end{lemma}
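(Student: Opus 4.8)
The plan is to work entirely in Fourier space on the renormalized torus $\wt\Z_n^d$ (and its infinite-space limit $\wt\Z^d$), following the scheme already used for Lemmas \ref{lem:estM} and \ref{lem:estS+}. Recall from \eqref{exp_Theta3} that $\zthn^{(W,L)} = \wt P^\perp(1-\KM)^{-1}\wt P^\perp\otimes \bE$, where $\KM = M^0_{\LK}$ is translation-invariant on $\wt\Z_n^d$; hence its Fourier multiplier $\wh\KM(\mathbf p)$ is a smooth function of $\mathbf p\in(2\pi\wt\Z_n/n)^d$, and the estimates \eqref{eq:KM}--\eqref{eq:KM1} together with \eqref{eq:a_sum} show that $1-\wh\KM(\mathbf p) = \blam^{-1}\mathbf p^\top A\mathbf p + \eta\cdot(\text{const}) + O(\blam^{-1}|\mathbf p|^4)$ for a positive-definite diffusion matrix $A$. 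So $\zthn^{(W,L)}_{xy}$ is, up to the local factor $\bE$, the inverse Fourier transform of $(1-\wh\KM(\mathbf p))^{-1}$ with the zero mode removed. The first bound \eqref{Theta-wh1}: I would pass to $L\to\infty,\eta\to0$, so that $\thn^{(W,\infty)}_{xy}$ becomes the inverse Fourier integral over $[-\pi,\pi]^d$ of $(1-\wh\KM_{(\infty)}(\mathbf p))^{-1}$; the standard estimate for the Green's function of a (discrete) Laplacian-type operator in $d\ge 3$—split the integral into $|\mathbf p|\le 1/\qq{y}$ and $|\mathbf p|\ge 1/\qq{y}$, use $|1-\wh\KM_{(\infty)}(\mathbf p)|\gtrsim \blam^{-1}|\mathbf p|^2$ on the whole cube—yields decay $\blam/(W^2\qq{y}^{d-2})$, with the logarithm coming either from the zero-mode subtraction $\wt P^\perp$ or from the borderline behavior; the $W^2$ and the shift $|y|+W$ in the denominator come from the relation between $\wt\Z_n^d$-distance $|[x]-[y]|$ and $\Z_L^d$-distance (dividing by $W$) and from the factor $\bE$ smearing on the $W$-scale. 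This is essentially \Cref{lem theta} carried to the infinite-space limit, so I would cite that computation and just track the limit.

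For the difference estimate \eqref{Theta-wh}, the key point is a resolvent-type identity: writing $R_L(\mathbf p):=(1-\wh\KM^{(L)}(\mathbf p))^{-1}$ and $R_\infty(\mathbf p):=(1-\wh\KM_{(\infty)}(\mathbf p))^{-1}$ (with appropriate zero-mode handling), one has
\[
R_L - R_\infty = R_L\,\big(\wh\KM^{(L)} - \wh\KM_{(\infty)}\big)\,R_\infty .
\]
By \Cref{lem:estM} (estimates \eqref{eq:ML-Minf}, \eqref{eq:ML-MinfAO}) the difference $\KM^{(L)}-\KM_{(\infty)}$ is bounded entrywise by $O(\eta+\lambda^2\al(W)/L^2)$ times an exponentially-decaying kernel, plus an exponentially small tail $e^{-n/C}$; translating $\lambda^2\al(W)/L^2$ into $\blam^{-1}W^{-2}\asymp t_{Th}^{-1}\cdot(\text{model-dependent})$, this gives $\|\wh\KM^{(L)}-\wh\KM_{(\infty)}\|\lesssim \blam(\eta+t_{Th}^{-1})\cdot$(const). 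I would also need to account for the difference between the finite torus sum over $\mathbf p\in(2\pi\wt\Z_n/n)^d$ and the integral over $[-\pi,\pi]^d$ (a standard Poisson-summation / quadrature error, controlled by the smoothness of the integrand away from $\mathbf p=0$ and absorbed into the $e^{-n/C}$-type terms once the $\eta$-regularization at scale $t_{Th}^{-1}$ is in place), and the effect of replacing $\zthn$ by $\thn$ when $\eta\ge t_{Th}^{-1}$, which is exactly \eqref{eq:diffzthn-thn1}--\eqref{eq:diffzthn-thn2}. Then convolving the two resolvent kernels: $R_\infty$ contributes a factor $\blam/(W^2\qq{\cdot}^{d-2})$ by \eqref{Theta-wh1}, and $R_L$ contributes another $\blam/(W^2\qq{\cdot}^{d-2})$; convolving a kernel decaying like $\qq{\cdot}^{-(d-2)}$ with another like $\qq{\cdot}^{-(d-2)}$ against a fast-decaying kernel in the middle produces a kernel decaying like $\qq{\cdot}^{-(d-4)}$ (valid since $d\ge 7>4$), which accounts for the $W^{-4}(\|y\|_L+W)^{-(d-4)}$ in \eqref{Theta-wh} and the extra $\log L$ from the torus sum / zero-mode.

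The main obstacle I anticipate is bookkeeping the zero-mode subtraction $\wt P^\perp(\cdot)\wt P^\perp$ consistently through the resolvent identity while keeping the logarithmic factors sharp: near $\mathbf p=0$ the unprojected multiplier $(1-\wh\KM(\mathbf p))^{-1}$ blows up like $\blam/(\eta\blam+|\mathbf p|^2)$, and the cancellation of this singularity by $\wt P^\perp$ (equivalently, by subtracting the $\mathbf p=0$ value) must be handled carefully so that the small-$\mathbf p$ contribution to the difference $R_L-R_\infty$ is genuinely controlled by $(\eta+t_{Th}^{-1})$ rather than by $\eta^{-1}$; this is where the regularization of $\thn^{(W,\infty)}$ at $\eta=t_{Th}^{-1}$ (noted in \Cref{def infspace0}) does the essential work, and getting the powers of $\blam$ and the $W$-scalings to match \eqref{Theta-wh} precisely will require some care. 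The routine-but-tedious part—splitting Fourier integrals at $|\mathbf p|\sim 1/\qq{y}$ and estimating each piece—I would relegate to the appendix, as the excerpt indicates.
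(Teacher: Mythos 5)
Your overall strategy---representing $\zthn^{(W,L)}$ through the Fourier multiplier \eqref{eq:fetap2} on the renormalized torus, passing to the continuum limit to get \eqref{Theta-wh1} via oscillatory-integral estimates, and controlling the multiplier difference through the convergence of the expansion coefficients supplied by \Cref{lem:estM}---is the same as the paper's, and your power counting for that contribution (a factor of order $\eta+\lambda^2\al(W)/L^2$ hit by two resolvent kernels, degrading the decay from $d-2$ to $d-4$ and producing the prefactor $\blam^2$) matches the paper's corresponding term ($I_3$ in its decomposition).

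The genuine gap is your treatment of the discretization error. You propose to absorb the discrepancy between the Riemann sum over $(2\pi\wt\Z_n/n)^d$ and the integral over $[-\pi,\pi]^d$ into ``$e^{-n/C}$-type terms''. This is not correct: the multiplier is not uniformly smooth---its $k$-th derivatives blow up like $\blam|\bp|^{-(2+k)}$ near $\bp=0$ even after the zero mode is removed---so the quadrature error is only polynomially small in $n$. The paper estimates it explicitly (its terms $I_1$, $I_2$, $I_4$: a second-order Taylor expansion of a truncated multiplier on each Fourier cell, the truncation of the $\blam^{-k}$ series at a finite order, and the zero-mode cell), obtaining a contribution of size $\blam n^{-2}\log n\,(|x|+1)^{-(d-4)}$; since $n^{-2}=\blam t_{Th}^{-1}$, this is precisely the origin of the $t_{Th}^{-1}$ term in \eqref{Theta-wh}. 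The point matters most for the Wegner orbital model, where $\al(W)=0$, so the multiplier difference alone yields only the $\eta$ contribution and the bound cannot be closed without a genuine estimate of the quadrature error. Relatedly, your worry about the zero mode and the appeal to an ``$\eta$-regularization at scale $t_{Th}^{-1}$'' are misplaced: $\thn^{(W,\infty)}$ is defined with $\eta\to 0_+$, and excluding $\bp=0$ from the lattice sum while bounding the corresponding continuum cell only costs $\OO(\blam\, n^{-(d-2)})$. Finally, your resolvent identity should be read at the level of multipliers evaluated on the common lattice (the two resolvents act on different spaces); with that reading, your convolution heuristic gives the correct exponents, but the rigorous bounds are obtained by summation by parts as in the proof of \Cref{lem theta}.
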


Finally, we prove a similar result as \Cref{lem:label_diffusive} in the infinite space limit, under the assumption that the $\self$ satisfies an exact sum zero property.

\begin{lemma}\label{lem redundantagain2}
In the setting of \Cref{lem:estM}, suppose $\Sele:\ell^2(\wt\Z^d)\to \ell^2(\wt\Z^d)$ is a deterministic linear operator satisfying \eqref{self_sym1} and \eqref{self_decay} (with $[x],[a]\in \wt \Z^d$), and 
%for $x,y\in \Z^d$ (with $\Sele\equiv \Seleinf$ and $\langle x-y\rangle\equiv |x-y|+W$): \eqref{self_sym1} holds for $a\in \Z^d$, \eqref{self_decay} holds, and
% and the following properties:
% \be\label{two_properties0V}
% \Seleinf_{l}(x, x+a) = \Seleinf_{l}(0,a), \quad  \Seleinf_{l}(0, a) =\Seleinf_{l} (0,-a), \quad \forall \ x,a\in \Z^d,
% \ee 
%and the following {\bf sum zero property} \nc 
% \be\label{4th_property0V}
% \left| (\Seleinf_{l})_{0x}(E) \right| \le W^{-ld/2} \frac{W^{2d-4} }{\langle x\rangle^{2d-4-\tau}}, \quad \forall \ x\in \Z^d, 
% \ee
\begin{equation}\label{weaz}
\sum_{[x]\in \wt\Z^d} \Sele_{[0][x]}(E)=0.
\end{equation} 
% \be\label{two_properties0V}
% \Seleinf_{2l}(x, x+a) = \Seleinf_{2l}(0,a), \quad  \Seleinf_{2l}(0, a) =\Seleinf_{2l} (0,-a), \quad \forall x,a\in \Z^d,
% \ee 
% and that for any constant $\tau>0$,
%  \be\label{4th_property0V}
% \left| \Seleinf_{2l} (x,y) \right| \le W^{- (l-1)d} \frac{W^2 }{\langle x-y\rangle^{d+2-\tau}}, \quad \forall x,y\in \Z^d.
% \ee
% for any (small) constant $\tau>0$, and
% %{\cob Under some conjectures, we have}
% \be\label{3rd_property0V}
% \Big|\sum_{x\in \Z^d} \cal V_{2l}^{(k)}(x,y)\Big| = 0 .
% \ee
%For any $2l\le k \le n-1$, suppose properties \eqref{two_properties0V}-\eqref{3rd_property0V} hold. 
%Suppose \eqref{two_properties0}-\eqref{weaz} hold for $\Sinfty_{2l}$. 
% Then, we have that for all $x,y \in \Z^d$, 
%If $\psi =\oo_W(1)$, 
Then, we have that for all $[x],[y] \in \wt\Z^d,$
\begin{align}
&\left(\thn^{(W,\infty)}_{\infinf} \Sele\right)_{[x][y]} \lesssim \blam\frac{ \psi \log (|[x]-[y]|+2)}{(|[x]-[y]|+1)^{d}}. \label{redundant again2}
\end{align}
% \begin{align}
% &\left(\zthn^{(W,\infty)} S^{(W,\infty)}\Sele S^{(W,\infty)}\right)_{xy} \lesssim \frac{\psi\log (|x-y|+W)}{W\lambda^2(|x-y|+W)^{d}}. \label{redundant again2}
% \end{align}
% \be \label{BRB2} 
% \thn^{(W,\infty)}_{xy}(\Sele)\lesssim \frac{\log (|x-y|+W)}{W \lambda^2(|x-y|+W)^{d-2}} , 
% \ee
%where $\thn^{(W,\infty)}(\Sele):=\sum_{k=0}^\infty \thn^{(W,\infty)} (\Sele \thn^{(W,\infty)})^k$.
%$(1-S^{(W,\infty)}(M^0_\infty+\Sele)  S^{(W,\infty)} )^{-1}=\bE\otimes (1-  M^0_{\infinf}-\Sele^{\infinf})^{-1}$. %$\zthn^{(W,\infty)}(\Sele):=S^{(W,\infty)} (1- S^{(W,\infty)}(M^0_\infty+\Sele)  S^{(W,\infty)} )^{-1}$. 
% for any constant $\tau>0$. If $\Sele^{\infty}_{2k_1},  \cdots ,  \Sele^{\infty}_{2k_l} $ satisfy \eqref{two_properties0V}, \eqref{4th_property0V} and \eqref{weaz}, then we have that 
% \be \label{BRB2} 
% \left|\left(\Theta_\infty \Sele^{\infty}_{2k_1}\Theta_\infty  \Sele^{\infty}_{2k_2}\Theta_\infty \cdots \Theta_\infty  \Sele^{\infty}_{2k_l}\Theta_\infty\right)_{xy}\right|\le \frac{W^{-(k-2)d/2}}{W^2(|x-y|+W)^{d-2-\tau}} , \quad \forall \ x,y \in \Z^d,  \ee
%  for any constant $\tau>0$, where $k:=\sum_{i=1}^l  2k_i -2(l-1)$. 
 \end{lemma}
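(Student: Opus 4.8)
The plan is to work entirely in Fourier space on the renormalized lattice $\wt\Z^d$, exploiting the fact that both $\thn^{(W,\infty)}_{\infinf}$ and $\Sele$ are translation-invariant convolution operators (by \eqref{self_sym1} and the block translation symmetry recorded in \Cref{def infspace0}). Write $\widehat\thn(\mathbf p)$ and $\widehat\Sele(\mathbf p)$ for the Fourier symbols, so that $(\thn^{(W,\infty)}_{\infinf}\Sele)$ has symbol $\widehat\thn(\mathbf p)\widehat\Sele(\mathbf p)$ and the claimed estimate \eqref{redundant again2} amounts to a decay bound on the inverse Fourier transform of this product. From the structure of $\thn^{(W,\infty)}$ obtained via \eqref{exp_Theta2}--\eqref{exp_Theta3} and the bounds \eqref{eq:KM}--\eqref{eq:KM1}, the symbol $\widehat\thn(\mathbf p)$ behaves like $\left(\blam^{-1}\mathbf p^\top A\mathbf p + \OO(|\mathbf p|^4/\blam)\right)^{-1}\asymp \blam |\mathbf p|^{-2}$ for $|\mathbf p|$ small (here I use that the $\eta\to 0$, $L\to\infty$ limit kills the $\eta$ term and the projection $\wt P^\perp$ removes the zero mode), and is bounded on the rest of the torus (now all of $[-\pi,\pi]^d$ in the infinite-volume limit). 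The key input about $\Sele$ is the exact sum zero property \eqref{weaz}, which says precisely that $\widehat\Sele(0)=0$; combined with the decay \eqref{self_decay} and the symmetry $\Sele_{[0][x]}=\Sele_{[0][-x]}$ (so no linear term), a second-order Taylor expansion gives $\widehat\Sele(\mathbf p) = \OO(\psi|\mathbf p|^2)$ near $\mathbf p=0$, with $\widehat\Sele$ twice continuously differentiable and bounded by $\OO(\psi)$ globally since $\sum_{[x]}|\Sele_{[0][x]}|\lesssim\psi$.

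The heart of the argument is then: the product symbol satisfies $\widehat\thn(\mathbf p)\widehat\Sele(\mathbf p) = \OO(\blam\psi)$ uniformly in $\mathbf p$ — the $|\mathbf p|^{-2}$ singularity of $\widehat\thn$ is exactly cancelled by the $|\mathbf p|^2$ vanishing of $\widehat\Sele$. To extract the spatial decay rate $(|[x]-[y]|+1)^{-d}\log(|[x]-[y]|+2)$ I would not merely use the sup bound but run the standard integration-by-parts / dyadic-decomposition argument on $\int e^{\ii \mathbf p\cdot([x]-[y])}\widehat\thn(\mathbf p)\widehat\Sele(\mathbf p)\,d\mathbf p$: split into $|\mathbf p|\le r$ and $|\mathbf p|>r$ for $r\asymp 1/|[x]-[y]|$, bound the low-frequency part directly by $\OO(\blam\psi r^d)$, and for the high-frequency part integrate by parts $d$ times, each derivative of the product symbol costing at worst a factor $|\mathbf p|^{-1}$ (because $\widehat\thn$ is a rational function with a double zero of its denominator at the origin and $\widehat\Sele\in C^2$ with the prefactor $\psi$), which produces the $d$-th power decay together with a logarithmic factor from the borderline region — this is exactly the mechanism already used to prove \eqref{thetaxy} and \eqref{Theta-wh1}, and I would cite \Cref{lem theta}, \Cref{lem:label_diffusive} and the Fourier-analytic lemmas of Appendix \ref{appd_det} wherever possible rather than redo the estimate. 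In fact the cleanest route is to observe that $\thn^{(W,\infty)}_{\infinf}\Sele$ is the infinite-space limit of the finite-volume object $\zthn^{\LK}\Sele_{\LK}$ and that \Cref{lem:label_diffusive}, specifically the bound \eqref{thetaxy_renorm} with $\psi_0=0$ (permitted here precisely because \eqref{weaz} is the exact, not approximate, sum zero property) combined with $\min(\eta^{-1},\blam\langle[x]-[y]\rangle^2)\to \blam\langle[x]-[y]\rangle^2$ as $\eta\to 0$, already yields $\prec \blam\psi\langle[x]-[y]\rangle^{-d}$; one then upgrades the $\prec$ to an honest $\lesssim$ with the stated logarithm by the same Fourier estimate used in \Cref{lem esthatTheta}, taking the $L\to\infty$ limit with the help of \Cref{lem:estS+} and \Cref{lem esthatTheta} to control the convergence of the finite-volume symbols to the infinite-volume ones.

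The main obstacle I anticipate is not the singularity cancellation itself — that is forced by \eqref{weaz} — but rather justifying the interchange of the $L\to\infty$ limit with the Fourier integral and ensuring the decay constants are genuinely uniform in $L$ and $W$ as claimed in \Cref{lem:estM}. One has to check that the Taylor remainder estimate $\widehat\Sele(\mathbf p)=\OO(\psi|\mathbf p|^2)$ is uniform (this uses the summability $\sum_{[x]}|[x]|^2|\Sele_{[0][x]}|\lesssim\psi$ coming from the $d+2$ decay in \eqref{self_decay}, valid since $d\ge 7>2$), and that the low-frequency expansion of $\widehat\thn^{(W,\infty)}$ has a diffusion matrix $A$ that is positive-definite with constants depending only on $d$ (not on $W$), which follows from \eqref{eq:KM1} and a bit of care with the $W$-scaling. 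Everything else is routine once the symbol bound $\widehat\thn\widehat\Sele=\OO(\blam\psi)$ together with its $C^d$ control away from the origin is in hand.
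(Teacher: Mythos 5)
You have the right cancellation mechanism in mind---\eqref{weaz} and the symmetry \eqref{self_sym1} make the Fourier symbol of $\Sele$ vanish to second order at $\mathbf p=0$, which is what kills the $\blam|\mathbf p|^{-2}$ singularity of the symbol of $\thn^{(W,\infty)}$---but the technical execution of your primary route has a genuine gap. Under \eqref{self_decay} alone, $\widehat\Sele$ is \emph{not} $C^2$ with bound $\OO(\psi)$, let alone $C^d$: one has $\sum_{[x]}|[x]|^2\,|\Sele_{[0][x]}|\le\psi\sum_{[x]}\langle [x]\rangle^{-d}$, which diverges logarithmically for every $d$ (your justification ``valid since $d\ge 7>2$'' is a miscount; the relevant exponent comparison is $d+2-2$ versus $d$, which is borderline), and $\sum_{[x]}|[x]|^k|\Sele_{[0][x]}|$ diverges for all $k\ge 2$. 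The correct near-zero statement is only $\widehat\Sele(\mathbf p)=\OO\big(\psi|\mathbf p|^2\log(2+|\mathbf p|^{-1})\big)$, and---more seriously---your plan to integrate by parts $d$ times on the product symbol, with ``each derivative costing at worst $|\mathbf p|^{-1}$,'' is not available: any derivative of order $\ge 2$ falling on $\widehat\Sele$ is not controlled by an absolutely convergent sum, so the $C^d$ control of $\widehat\thn\,\widehat\Sele$ away from the origin that your high-frequency estimate needs does not follow from the hypotheses.

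The repair is to keep $\Sele$ in real space and place all Fourier smoothness on the kernel $\thn^{(W,\infty)}$ alone, which is exactly what the paper does: it reduces \eqref{redundant again2} to bounding $\sum_{x}I_{0,W,\infty}(x)\,\Sele(x,y)$ and then runs the argument of \Cref{lem:label_diffusive} and \Cref{lem cancelTheta} in infinite volume---dyadic decomposition around $y$, the decay \eqref{self_decay} plus \eqref{Theta-wh1} in the far region, and in the near region subtraction of the average of $\Sele$ (which, by the exact sum zero \eqref{weaz}, is controlled purely by the tail, of size $\OO(\psi\langle x-y\rangle^{-2})$) together with a second-difference estimate for $\thn^{(W,\infty)}$ obtained by integration by parts on the smooth symbol $f_{0,W,\infty}$ only. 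Your alternative ``cleanest route'' through \eqref{thetaxy_renorm} is closer in spirit, but as written it only yields a $\prec$-bound (a $W^\tau$ loss rather than the stated logarithm), and it requires periodizing the infinite-lattice $\Sele$ onto $\wt\Z_n^d$ and justifying the $L\to\infty$, $\eta\to 0$ limits---steps you defer to ``the same Fourier estimate used in \Cref{lem esthatTheta}'' without carrying them out---so it does not yet prove the lemma as stated either.
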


\subsection{Graph, scaling sizes and doubly connected property}\label{sec:graph_sizes}

Similar to \cite{BandI,BandII,BandIII}, we will organize our proofs using graphs. In this subsection, we introduce the basic concepts of graph, atomic graph, molecular graph, scaling size, and the doubly connected property. Our graphs will consist of matrix indices as vertices and various types of edges representing matrix entries. In particular, our graph will contain the entries of the following matrices:  $
I, \ \Psi^{\BA} \text{ or } \Psi^{\AO}, \ M, \  S, \ S^\pm,\ \thn,\ \zthn,\ G,$ and $\Gc$, where we abbreviate \[\Gc:=G-M.\]
We denote by $\E_x$ the partial expectation with respect to the $x$-th row and column of $H$: $\E_x(\cdot)= \E(\cdot|H^{(x)}),$ where $H^{(x)}$ is the $(N-1)\times (N-1)$ minor of $H$ obtained by removing the row and column of $H$ indexed by $x$. Following \cite{BandI}, we will use the simplified notations $P_x :=\E_x$ and $Q_x := 1-\E_x$ in this paper. 
We remark that while most graphical notations are defined similarly as in \cite{BandI,BandII,BandIII}, some other notations may take slightly different meanings (e.g., the notions of atoms and various types of edges). We first introduce the most basic concept of (vertex-level) graphs. 

\begin{definition}[Graphs] \label{def_graph1} 
Given a graph with vertices and edges, we assign the following structures and call it a vertex-level graph.  
	
	\begin{itemize}
				
		\item {\bf Vertices:} Vertices represent matrix indices in our expressions. Every graph has some external or internal vertices: external vertices represent indices whose values are fixed, while internal vertices represent summation indices that will be summed over. 
				
		% \item \noindent{\bf Weights}: A regular weight on an atom $x $ represents a $G_{xx}$ factor, drawn as a blue solid $\Delta$, or a $\overline G_{xx}$ factor, drawn as a red solid $\Delta$. %Every regular weight has a charge, where ``$+$" charge indicates that the weight is a $G$ factor, represented by a blue solid $\Delta$, and ``$-$" charge indicates that the weight is a $\overline G$ factor, represented by a red solid $\Delta$. 
		% A light weight on atom $x$ represents a $G_{xx}-m$ factor, drawn as a blue hollow $\Delta$, or a $\overline G_{xx}-\overline m$ factor, drawn as a red hollow $\Delta$. 
		% %They are drawn as blue or red hollow $\Delta$ in graphs depending on their charges. 

		\item {\bf Edges:} We have the following types of edges. 
		
		\begin{enumerate}

			\item{\bf  Solid edges and weights:} 
			% A solid edge represents a $G$ factor. More precisely, 
\begin{itemize}
\item A blue (resp.~red) oriented solid edge from $x$ to $y$ represents a $G_{xy}$ (resp.~$\bar G_{xy}$) factor. 
    
\item A blue (resp.~red) oriented solid edge with a circle ($\circ$) from $x$ to $y$ represents a $\Gc_{xy}$ (resp.~$\Gc^-_{xy}$) factor.

\item $G_{xx}$, $\bar G_{xx}$, $\Gc_{xx}$, and $\Gc^-_{xx}$ factors will be represented by self-loops on the vertex $x$. Following the convention in \cite{BandI}, we will also call $G_{xx}$ and $\bar G_{xx}$ as blue and red {\bf weights} and call $\Gc_{xx}$ and $\Gc^-_{xx}$ as blue and red {\bf light weights}.

%\item a red oriented solid edge from $x$ to $y$ represents a $\overline G_{xy}$ factor;

%\item a red oriented solid edge with a circle ($\circ$) from $x$ to $y$ represents a $\Gc^*_{xy}$ factor.
\end{itemize}

\item {\bf Dotted edges:} 
\begin{itemize}
    \item A black dotted edge between $x$ and $y$ represents a factor $\mathbf 1_{x=y}$, and a $\times$-dotted edge represents a factor $\mathbf 1_{x\ne y}$. There is at most one dotted or $\times$-dotted edge between every pair of vertices. 

    \item For the \emph{block Anderson model}, a black dotted edge with a triangle ($\Delta$, which stands for ``Laplacian") between $x$ and $y$ represents a $\Psi^{\BA}_{xy}$ factor.

    \item For the \emph{block Anderson model}, a blue (resp. red) dotted edge between $x$ and $y$ represents an $M_{xy}$ (resp. $\bar M_{xy}$) factor.
\end{itemize}

\item {\bf Waved edges:}
			\begin{itemize}
				\item A black waved edge between $x$ and $y$ represents an $S_{xy}$ factor. 
				
				\item A blue (resp.~red) waved edge between $x$ and $y$ represents an $S^+_{xy}$ (resp.~$S^-_{xy}$) factor.
				
				\item For the \emph{Anderson orbital model}, a blue (resp.~red) waved edge between $x$ and $y$ with label $M$ represents an $M_{xy}$ (resp.~$\bar M_{xy}$) factor.

    \item For the \emph{Anderson orbital model}, a black waved edge with a triangle ($\Delta$) between $x$ and $y$ represents a $\Psi^{\AO}_{xy}$ factor.
			\end{itemize}

\item {\bf $\Dashed$ edges:} A $\dashed$ edge between $x$ and $y$ represents a ${\thn}_{xy}$ factor, and we draw it as a black double-line edge between $x$ and $y$. A $\dashed$ edge between $x$ and $y$ with a $\circ$ represents a \smash{${\zthn}_{xy}$} factor, and we draw it as a black double-line edge with a $\circ$ between $x$ and $y$. 
			
\item {\bf Free edges:} A black dashed edge between $x$ and $y$ represents an $(N \eta)^{-1}$ factor. We will call it a \emph{free edge}. 
   
%    We call the following two types of dashed edges free edges. 
%    \begin{itemize}
%        \item A black dashed edge between atoms $x$ and $y$ represents an $(N \eta)^{-1}$ factor. We will call it a \emph{normal free edge}.

% \item For the block Anderson and Anderson orbital models, a black dashed edge with label $\mathbf S$ (which stands for ``small") between atoms $x$ and $y$ represents an $N^{-1}$ factor. We will call it an \emph{$\mathbf S$ free edge}.
%    \end{itemize}
		
% By definition, a $\times$-dotted edge between the two ending atoms of a $G$ edge indicates that this $G$ edge is off-diagonal. We also allow for dotted edges between external atoms.   
			
\end{enumerate}
Edges between internal vertices are called {internal edges}, while edges with at least one end at an external vertex are called {external edges}. The orientations of non-solid edges do not matter because they all represent entries of symmetric matrices. Furthermore, we will say that a black edge has a neutral (zero) charge, while blue and red edges have $+$ and $-$ charges.

		\item{\bf $P$ and $Q$ labels:} A solid edge may have a label $P_x$ or $Q_x$ for some vertex $x$ in the graph. Moreover, every solid edge has at most one $P$ or $Q$ label.

		\item{\bf Coefficients:} There is a coefficient associated with every graph.

	\end{itemize}
	
\end{definition}

During the proof, we will introduce more types of edges. Next, we introduce the concept of subgraphs. 
\begin{definition}[Subgraphs]\label{def_sub}
	A graph $\cal G_1$ is said to be a subgraph of $\cal G_2$, denoted by $\cal G_1\subset \cal G_2$, if every graphical component (except the coefficient) of $\cal G_1$ is also in $\cal G_2$. Moreover, $\cal G_1 $ is a proper subgraph of $\cal G_2$ if  $\cal G_1\subset \cal G_2$ and $\cal G_1\ne \cal G_2$. Given a subset $\cal S$ of vertices in a graph $\cal G$, the subgraph $\cal G|_{\cal S}$ induced on $\cal S$ refers to the subgraph of $\cal G$ consisting of vertices in $\cal S$ and the edges between them. Given a subgraph $\cal G$, its closure $\overline{\cal G}$ refers to $\cal G$ plus its external edges. 
\end{definition}

To each graph or subgraph, we assign a \emph{value} as follows. 

\begin{definition}[Values of graphs]\label{ValG} 
Given a graph $\mathcal G$, we define its value as an expression obtained as follows. We first take the product of all the edges and the coefficient of the graph $\cal G$. Then, for the edges with the same $P_x$ or $Q_x$ label, we apply $P_x$ or $Q_x$ to their product. Finally, we sum over all the internal indices represented by the internal vertices, while the external indices are fixed by their given values. The value of a linear combination of graphs  $\sum_i c_i \cal G_i$ is naturally defined as the linear combination of the graph values of $\cal G_i$.
\end{definition}

For simplicity, throughout this paper, we will always abuse the notation by identifying a graph (a geometric object) with its value (an analytic expression). In this sense, noticing that a free edge represents an $(N\eta)^{-1}$ factor without indices, two graphs are equivalent if they have the same number of free edges and all other graph components are the same. In other words, we can move a free edge freely to another place without changing the graph.

% Our graphs consist of vertices and the following types of edges: dotted (and $\times$-dotted edges), $G$ edges, and
% $$
% M, \quad  S, \quad S^\pm,\quad \Theta,\quad  \Gc:=G-M.
% $$
%(note:  No $G$ edge) where 
% $$
%\left[ \left(1+S^+\right)^{-1}\right]_{ab}=\delta_{ab}-T^+_{ab},\quad T^+_{ab}:=\sum_cM_{ac}M_{ca}S_{cb}
% $$
% and
% $$
%\left[ \left(1+\Theta\right)^{-1}\right]_{ab}=\delta_{ab}-T^0_{ab},\quad T^0_{ab}:=\sum_c |M_{ac}|^2 S_{cb}
% $$
% more precisely
%\be\label{Spmexp}
%  S^+ =\sum_{m=1}^\infty \left(T^+\right)^m, \quad  \Theta  =\sum_{m=1}^\infty \left(T^0\right)^m
%\ee

Our graphs will have a three-level structure: some microscopic structures varying on scales of order 1, called \emph{atoms}, which are the equivalence classes of vertices connected through dotted edges; some mesoscopic structures varying on scales of order $W$, called \emph{molecules}, which are the equivalence classes of atoms connected through waved edges; a global (macroscopic) structure varying on scales up to $L$. Given a graph defined in \Cref{def_graph1}, if we ignore its inner structure within each atom, we will get an atomic graph with vertices being atoms; if we ignore the inner structure within each molecule, we will get a molecular graph  with vertices being molecules. The atomic and molecular graphs will be very useful in organizing the graphs and understanding their mesoscopic and macroscopic structures. We now give their formal definitions.

\begin{definition}[Atoms and atomic graphs]\label{def_atom}

We partition the set of all vertices into a union of disjoint subsets called atoms. Two vertices belong to the same atom if and only if they are connected by a path of dotted edges (i.e., dotted edges, dotted edges with $\Delta$, and blue/red dotted edges). Every atom containing at least one external vertex is called an external atom; otherwise, it is an internal atom.  
% \end{definition}

%\medskip
% \begin{definition}[Atomic graphs] \label{def atomg}  
 Given a graph $\cal G$, we obtain its atomic graph as follows:
	\begin{itemize}
		\item merge all vertices in the same atom and represent them by a vertex;
		
		\item keep solid, waved, $\dashed$, and free edges between different atoms;
		
		\item discard all the other components in $\cal G$ (including $\times$-dotted edges, edges between vertices in the same atom, coefficients, and $P/Q$ labels).
	\end{itemize}

% atomic graphs are graphs consisting of 
% 	\begin{itemize}
% 		\item external and internal molecules;
% 		\item solid,  $\dashed$ and free edges between molecules.
% 		%\item dotted edges between external molecules or an external molecule and an internal molecule.
% 	\end{itemize}
	
\end{definition}

Note that for the Anderson orbital and Wegner orbital models, there is only one type of dotted edge, i.e., those representing the $\mathbf 1_{x=y}$ factors. In particular, after merging vertices connected by dotted edges as we have done for random band matrices, the atoms are just vertices of our vertex-level graphs. Therefore, following \cite{BandI,BandII,BandIII}, we can use the notions of atoms and vertices interchangeably for the Anderson orbital and Wegner orbital models. 
On the other hand, for the block Anderson model, the notion of atoms above takes a different meaning from that in \cite{BandIII,BandI,BandII} (the notion of molecules in \Cref{def_poly} below, however, is unchanged) due to the two new types of dotted edges. 
%(in fact, atoms there correspond to our vertices in \Cref{def_graph1}), . 
The current notion of atoms is convenient and has a great advantage: the atomic graphs in the current paper indeed have the same structures as the atomic graphs in \cite{BandIII,BandI,BandII}. 
%and molecular graphs in the current paper indeed have the same mesoscopic and macroscopic structures as the atomic and molecular graphs in \cite{BandIII,BandI,BandII}.
Hence, many graphical tools and results developed in these works apply verbatim to all our three settings of RBSO.

By the definitions of dotted edges, the definition of $\Psi^{\BA}$, and \eqref{Mbound} for the block Anderson model, we see that up to an error (in the sense of graph values) of order $\OO(W^{-D})$, 
\be\label{scaleatom}
 \hbox{$x$, $y$ are in the same atom} \implies  \|x-y\|\le C_{D,\xi},
\ee
for a constant $C_{D,\xi}$ depending on $D$ and $\xi$ in \eqref{eq:small+lambda}.

%We first introduce the concept of molecules for the graphs defined in Section \ref{unisec}.

\begin{definition}[Molecules and molecular graphs]\label{def_poly}
%(i) 
We partition the set of all atoms into a union of disjoint subsets called molecules. Two atoms belong to the same molecule if and only if they are connected by a path of waved edges. Every molecule containing at least one external atom is called an external molecule; otherwise, it is an internal molecule. With a slight abuse of notation, when we say a vertex $x$ belongs to a molecule $\cal M$, we mean that $x$ belongs to an atom in $\cal M$. 
% 	An edge is said to be inside a molecule if its ending atoms belong to this molecule.

% Molecular graphs are graphs consisting of 
% 	\begin{itemize}
% 		\item external and internal molecules;
% 		\item solid,  $\dashed$ and free edges between molecules.
% 		%\item dotted edges between external molecules or an external molecule and an internal molecule.
% 	\end{itemize}

% \medskip\noindent (ii) 
Given a graph $\cal G$, we define its molecular graph as follows. %, denoted by $ \cal G_{\cal M}$, as follows:
	\begin{itemize}
		\item merge all vertices in the same molecule and represent them by a vertex;
		
		\item keep solid, $\dashed$, and free edges between molecules;

		\item discard all the other components in $\cal G$ (including $\times$-dotted edges, edges between vertices in the same molecule, coefficients, and $P/Q$ labels).
	\end{itemize}
\end{definition}

By the estimates on the waved edges---derived from the definitions of $S$ and $\Psi^{\BA}$, the bound \eqref{Mbound_AO} for the Anderson orbital model, and the estimate \eqref{S+xy}---we see that up to an error of order $\OO(W^{-D})$,  
\be\label{scalemole}
 \hbox{$x$, $y$ are in the same molecule} \implies  \|x-y\|\le C_{D}W\log W,
\ee
for a constant $C_D$ depending on $D$.
% if two atoms $x$ and $y$ are in the same molecule, then we essentially have $|x-y|\le C W \log W$ up to a negligible error $\OO(N^{-D})$. 
In this paper, atomic and molecular graphs are used solely to help with the analysis of graph structures, with all graph expansions applied exclusively to vertex-level graphs. Following the notion \cite{BandIII,BandI,BandII}, we shall refer to the structure of the molecular graph as the {\it global structure}.  

\begin{definition}[Normal graphs]  \label{defnlvl0}  
	We say a graph $\cal G$ is \emph{normal} if it contains at most $\OO(1)$ many vertices and edges, and all its internal vertices are connected together or to external vertices through paths of dotted, waved, or $\dashed$ edges.
%it satisfies the following properties:
% \begin{itemize}
% 	\item[(i)] it contains at most $\OO(1)$ many vertices and edges;
		%\item[(ii)] it may have some external atoms whose values are fixed; % (e.g. $\otimes$, $\oplus$ and $\ominus$ atoms are external atoms);
		%\item it has no dotted (i.e. $h$) egdes;
		%\item[(ii)] each internal atom is connected with an even number of $G$ edges;
		%\item[(ii)] all its internal vertices are connected together or to external vertices through paths of dotted, waved, or $\dashed$ edges.
		%\item[(iii)] there are no dotted edges between internal atoms;
		%\item[(iii)] every solid edge in the graph has a $\circ$ (i.e., it represents a $\Gc$ factor). 
  %connected by a $\times$-dotted edge \emph{if and only if} they are connected by a $G$ edge.
	%\end{itemize}
\end{definition}

All graphs appearing in our proof are normal. Given a normal graph, we can define its scaling size as in \Cref{def scaling} below. For the Anderson orbital model, to define the scaling size,  we introduce one more definition, called ``\emph{big atoms}", which is an intermediate structure between atom and molecule. 

\begin{definition}[Big atoms for AO]
We partition the set of all vertices into a union of disjoint subsets called big atoms. Two vertices belong to the same big atom if and only if they are connected by a path of dotted edges, $M$-waved edges, and $\Delta$-waved edges. Every big atom containing at least one external vertex is called an external big atom; otherwise, it is an internal big atom.
\end{definition}
 
%By this definition, every $G$ edge in a normal graph is off-diagonal, while all diagonal $G$ factors will be represented by weights. Given a normal graph, we define its scaling order as follows. 

%    \begin{definition} [Scaling order] \label{def scaling}
% 	Given a normal graph $\cal G$, we define its scaling order as 
% 	\begin{align}
% 		\mathrm{ord}(\cal G): = \ & \#\{\text{off-diagonal }  G  \text{ edges}\} + \#\{\text{light weights}\} + 2\#\{ \text{waved edges}\} + 2 \#\{\text{free edges}\} \nonumber\\
% 		& + 2 \#\{\text{$\dashed$ edges}\}    - 2\left[ \#\{\text{internal atoms}\}- \#\{\text{dotted edges}\} \right]. \label{eq_deforderrandom2}
% 	\end{align}
% 	Here, every dotted edge in a normal graph means that an internal atom is equal to an external atom, so we lose one free summation index. The concept of scaling order can also be defined for subgraphs. 
% \end{definition} 

% \cob  Order (or size) \nc: {\cor (this need to defined carefully since the sizes of our graphs will depend on $\lambda$) With \eqref{scaleatom}, the order of the graph should be defined as the previous ones. 
% $$
% {\cob  Order (\cal G)}:= 2\cdot \#\left(S,\; S^\pm, \;\Theta\right)\,+ \, \#\left(\Gc, \overline {\Gc}\right)-2\cdot \# (Atoms)+2
% $$
% }

\begin{definition}[Scaling size]\label{def scaling} 
    Given a normal graph $\cal G$, suppose every solid edge in the graph has a $\circ$ (i.e., it represents a $\Gc$ factor). Then, we define the scaling size of $\cal G$ as follows for the three models:
    \begin{itemize}

\item For the block Anderson and Wegner orbital models,
 \begin{align}
		\size(\cal G): =&|\cof(\cal G)|\times \left( \frac{1}{N\eta}\frac{L^2}{W^2}\right)^{\#\{\text{free edges}\}}\heta^{\#\{\text{diffusive edges}\}+\frac{1}{2}\#\{\text{solid edges}\}} \nonumber\\
  &\times  W^{-d(\#\{ \text{waved edges}\}  -  \#\{\text{internal atoms}\})}.
  \label{eq_defsize}
	\end{align}

 % \begin{align}
	% 	\size(\cal G): =&|\cof(\cal G)|\times \left( \frac{1}{N\eta}\frac{L^2}{W^2}\right)^{\#\{\text{free edges}\}}\left( \frac{\blam}{W^d}\right)^{\#\{\text{diffusive edges}\}} \heta^{ \frac{1}{2}\#\{\text{solid edges}\}} \nonumber\\
 %  &\times  W^{-d(\#\{ \text{waved edges}\}  -  \#\{\text{internal atoms}\})}.
 %  \label{eq_defsize}
	% \end{align}

 \item For the Anderson orbital model, 
\begin{align}\
\size(\cal G): =&|\cof(\cal G)|\times \left( \frac{1}{N\eta}\frac{L^2}{W^2}\right)^{\#\{\text{free edges}\}}\heta^{\#\{\text{diffusive edges}\}+\frac{1}{2}\#\{\text{solid edges}\}} \nonumber\\
  &\times  W^{-d(\#\{ \text{normal waved edges}\}  -  \#\{\text{internal big atoms}\})}, \label{eq_defsize_AO}
	\end{align}
where \emph{normal waved edges} refer to waved edges without the labels $M$ or $\Delta$.

% \begin{align}\
% \size(\cal G): =&|\cof(\cal G)|\times \left( \frac{1}{N\eta}\frac{L^2}{W^2}\right)^{\#\{\text{free edges}\}}\left( \frac{\blam}{W^d}\right)^{\#\{\text{diffusive edges}\}} \heta^{ \frac{1}{2}\#\{\text{solid edges}\}} \nonumber\\
%   &\times  W^{-d(\#\{ \text{normal waved edges}\}  -  \#\{\text{internal big atoms}\})}, \label{eq_defsize_AO}
% 	\end{align}
 
  \iffalse       \item For the Wegner orbital model, 
    \begin{align}
		\size(\cal G): =&|\cof(\cal G)|\times \left( \frac{1}{N\eta}\frac{L^2}{W^2}\right)^{\#\{\text{free edges}\}}\left( \frac{\blam}{W^d}\right)^{\#\{\text{diffusive edges}\}} \heta^{ \frac{1}{2}\#\{\text{solid edges}\}} \nonumber\\
  &\times  W^{-d(\#\{ \text{waved edges}\} -  \#\{\text{internal atoms}\})}. \label{eq_defsize}
	\end{align}
 \fi
    \end{itemize}
\iffalse	\begin{align}
		\size(\cal G): =&|\cof(\cal G)|\times \left( \frac{1}{N\eta}\frac{L^2}{W^2}\right)^{\#\{L\text{ free edges}\}}\left( \frac{1}{N}\right)^{\#\{S\text{ free edges}\}} \heta^{ \frac{1}{2}\#\{\text{solid edges}\}} \nonumber\\
  &\times \blam^{\#\{\text{diffusive edges}\}} W^{-d(\#\{ \text{waved edges}\}   +  \#\{\text{$\dashed$ edges}\} -  \#\{\text{internal atoms}\})}, \label{eq_defsize}
	\end{align}
\fi
Here, $\cof(\cal G)$ denotes the coefficient of $\cal G$, and for simplicity, we have adopted the notation  
 \be\label{eq:heta}
 \heta:={\blam}/{W^{d}}. %+\frac{1}{N\eta}\frac{L^4}{W^4}.
 \ee
%Note that if $\eta\ge \eta_*$ for $\eta_*$ defined in \eqref{eq:defeta*}, then we have $\heta\le 2  \blam W^{-d}$. 
If $\cal G=\sum_k \cal G_k$ for a sequence of normal graphs $\cal G_k$ whose solid edges all have label $\circ$, then we define 
$$\size(\cal G):=\max_k \size(\cal G_k).$$ 
With this convention, given a normal graph $\cal G$ that has $G$ edges, $\size(\cal G)$ is defined by treating $\cal G$ as a sum of graphs by expanding each $G$ or $G^-$ edge as $\Gc+M$ or $\Gc^- + M^-$. 
\end{definition}

Assuming an important graphical property, the \emph{doubly connected property} (see \Cref{def 2net}), for a graph $\cal G$, we will see that $\size(\cal G)$ indeed accurately reflects the order of the value of $\cal G$ (as shown in \Cref{no dot} below) up to a $W^\tau$ factor. Take the block Anderson/Wegner orbital model as an example, the motivation behind the factors in the definition \eqref{eq_defsize} is as follows: 
\begin{itemize}
    \item every waved edge provides a factor $W^{-d}$ due to the definition of $S$ and \Cref{lem deter}; 

    \item every internal atom corresponds to an equivalence class of summation indices: by \eqref{scaleatom}, each atom contains only one free vertex, and all other vertices are constrained in its $\OO(1)$ neighborhood.

\end{itemize}    
In a doubly connected graph, every summation over an internal molecule actually involves the summation of at least one diffusive edge and at least one solid or free edge. (Here, the summation over an internal molecule is roughly understood as the summation over a vertex in the \emph{molecular graph}, which only involves diffusive, solid, and free edges.) We can bound the summation over a diffusive edge and a free edge by  
$$ \sum_{y}B_{xy}\frac{1}{N\eta} \prec W^d \cdot \frac{\blam}{W^d} \cdot \left(\frac{1}{N\eta}\frac{L^2}{W^2}\right).$$
If we know that \eqref{locallaw} holds, then we can bound the summation over a diffusive edge and a solid edge by  
\be\label{eq:diffu+solid}
\sum_{y}B_{xy}\left( B_{yw}+ \frac{1}{N\eta}\right)^{1/2} \prec W^d \cdot \frac{\blam}{W^d} \cdot \left(\frac{\blam}{W^d}+ \frac{1}{N\eta}\frac{L^4}{W^4}\right)^{1/2} \lesssim W^d \heta^{3/2}\quad \text{for}\quad \eta\ge W^\fd\eta_* .
\ee
As a consequence, for $\eta\ge W^\fd\eta_*$,
\begin{itemize}
\item every diffusive edge provides a factor of $\heta=\max_{x,y}B_{xy}$ due to \eqref{thetaxy}; 

\item every free edge provides a factor of $\frac{1}{N\eta}\frac{L^2}{W^2}$; 
    
\item every $\Gc$ edge contributes a factor of \smash{$ \heta^{1/2}$} due to the local law \eqref{locallawmax} and the estimate \eqref{eq:diffu+solid}.

\end{itemize}

We now introduce the crucial \emph{doubly connect property} introduced in \cite{BandI}. All graphs in our $T$-expansions will satisfy this property. 

\begin{definition} [Doubly connected property]\label{def 2net}  %\cob details are needed \nc. 
An graph $\cal G$ without external molecules is said to be doubly connected if its molecular graph satisfies the following property. There exists a collection, say $\cal B_{black}$, of $\dashed$ edges, and another collection, say $\cal B_{blue}$, of blue solid, $\dashed$, and free edges such that 
\begin{itemize}
    \item[(a)] $\cal B_{black}\cap \cal B_{blue}=\emptyset$;
    \item[(b)] each of $\cal B_{black}$ and $\cal B_{blue}$ contains a spanning tree that connects all molecules. 
\end{itemize}
(Note that red solid edges are not used in either collection.) For simplicity of notations, we will call the edges in $\cal B_{black}$ as black edges and the edges in $\cal B_{blue}$ as blue edges. Correspondingly, $\cal B_{black}$ and $\cal B_{blue}$ are referred to as black and blue nets, respectively.
%, where a ``net" refers to a subset of edges that contains a spanning tree.  

A graph $\cal G$ with external molecules is called doubly connected if its subgraph with all external molecules removed is doubly connected.
\end{definition}

Finally, we introduce the following two types of graphs that will appear in our $T$-expansions. 

\begin{definition}[Recollision and $Q$-graphs]\label{Def_recoll}
{(i)} Given a subset of vertices, say $\cal V$, we say a graph is a \emph{recollision graph} with respect to $\cal V$ if there is at least one dotted or waved edge connecting an internal vertex to a vertex in $\cal V$. The subset of vertices $\cal V$ we are referring to will be clear from the context.  

%By this definition, a recollision graph in $\PT_{x,\fb_1\fb_2}$ represents an expression where we set at least one summation index to be equal to $\fb_1$ or $\fb_2$.
 	
\medskip
\noindent{(ii)} We say a graph is a \emph{$Q$-graph} if all $G$ edges and weights in the graph have the same $Q_x$ label for a specific vertex $x$. % i.e., all $Q$ labels in the graph are the same $Q_x$ for some atom $x$. 
In other words, the value of a $Q$-graph can be written as $Q_x(\Gamma)$ for an external vertex $x$ or $\sum_x Q_x(\Gamma)$ for an internal vertex $x$, where $\Gamma$ is a graph without $P/Q$ labels. 
\end{definition}

\subsection{Self-energy}\label{sec_selfc}

%Following \cite{BandI}, a $2l$-th order $\self$ is a linear combination of deterministic graphs satisfying the properties in the following definition.
We now introduce the graphical component to represent the $\self$ in \Cref{thm_diffu}. 

\begin{definition}[$\Self$]\label{collection elements}
Given $z=E+\ii\eta$ and some $\eta_0\in [\eta_*,1]$, a {\bf$\self$} $\cal E\equiv \cal E(z,W,L)$ with $\eta\ge \eta_0$ is a deterministic matrix satisfying the following properties: % for all $L\ge W$: % and any large constant $D>0$: 

%Fix any $l\in \N$. $\Sele_{2l}(z)$ is a deterministic matrix %depending on $m(z)$, $S$, $S^\pm(z)$ and $\Theta^{\circ}(z)$ only, and satisfying the following properties.
	
\begin{itemize}
\item[(i)] For any $x,y \in \Z_L^d$, $\Selek_{xy}$ is a sum of $\OO(1)$ many deterministic graphs with external vertices $x$ and $y$. These graphs consist of dotted edges (there may also be a dotted edge between $x$ and $y$), waved edges, and diffusive edges. Every graph in $\cal E$ is doubly connected, and each diffusive edge in it is redundant, that is, the graph obtained by removing the diffusive edge still obeys the doubly connected property.  
%, and renormalized diffusive edges with $2\le k \le n \le 2l-1$ (defined in \eqref{chain S2k}), but do not contain free edges or $\Theta^{(n)}$ edges with $n \geq 3$ (defined in \eqref{theta_renormal1}). 
		
% 		Some graphs, say $\cal G$, in $\Sele_{2l}$ can be diagonal matrices satisfying $  \cal G_{xy}= \cal G_{xx} \delta_{xy}$, i.e. there is a dotted edge between the atoms $x$ and $y$.  
		
\item[(ii)] $\Sele$ satisfies \eqref{self_sym} and the following two properties for $z= E+ \ii \eta$ with $|E|\le  2- \kappa$ and $\eta \in [\eta_0, 1]$: 
  %$ \Selek_{2l} (z)$ satisfies the following properties %\eqref{two_properties0}--\eqref{3rd_property0} 
		%for $z= E+ \ii \eta$ with $|E|\le  2- \kappa$ and $\eta \in [ W^{-5}L^{5-d}, 1]$:
	% 		\be\label{two_properties0}
	% \Selek_{2l} (x, x+a) =  \Selek_{2l} (0,a), \quad   \Selek_{2l} (0, a) = \Selek_{2l}(0,-a), \quad \forall \ x,a\in \Z_L^d,
	% \ee 
	%and for any constant $\tau>0$ and $L^{2-d+\e}\le \eta\le 1$,
	%where we used the notation $R_\Delta^{(2n)} (x, y)\equiv (R_\Delta^{(2n)})_{xy}$; 
	%\item[(ii)] %From the doubly connected property, we get
	% \be\label{4th_property0}
	% \left|  \wt\Selek_{0x}(z) \right| \prec  W^{-(l-2)d} B_{0x}^2 , \quad \forall \ x\in \Z_L^d,
	% \ee
 \be\label{4th_property0}
 |{\cal E}(x,y)| \le \frac{\sizeself(\Selek)}{\blam} \frac{W^2}{ \langle x-y\rangle^{d+2-c}} , \quad \forall \ x,y\in \Z_L^d,
\ee
% \be\label{4th_property0}
%  |{\cal E}(x,y)| \le \sizeself(\Selek) \lambda^2 W \langle x-y\rangle^{-(d+2)} , \quad \forall \ x,y\in \Z_L^d,
% \ee
%for a deterministic parameter $0\le \sizeself(\Selek)\ll 1$, and
    \be\label{3rd_property0}
	W^{-d}\sum_{y\in[0]}\sum_{x\in \Z_L^d}  {\cal E}(x,y)  \le   L^c \sizeself(\Selek)   \left(   \eta + t_{Th}^{-1}  \right),   
	\ee
for a deterministic parameter $0\le \sizeself(\Selek)\lesssim \blam^2 /W^d$ defined as 
\be\label{eq:psiSele_size}\psi(\Sele):=\blam W^d\size(\Sele),\ee
and for any small constant $c>0$. 
%$\lambda^{-4} W^{-(d-2)}$.	
Here, we have abbreviated that $\size(\Sele)\equiv \size( \Sele_{xy})$ (since $\Sele$ consists of deterministic edges only, $\size(\Sele_{xy})$ does not depend on $x, y \in \Z_L^d$). 

\item[(iii)] For the Wegner orbital model, $\Sele$ can be written as 
$\Sele=\Sele_{\LK}\otimes \mathbf E.$
 \end{itemize} 
If $\cal E$ satisfies all the above properties except  \eqref{3rd_property0}, then we call it a {\bf$\pself$}. 
\end{definition}
\begin{remark}

We remark that the above properties \eqref{4th_property0} and \eqref{3rd_property0} are defined for all $L\ge W$ \emph{without} the constraint $W\ge L^\delta$. In particular, if we let $L\to \infty$ and $\eta\equiv \eta_L\to 0$, the property \eqref{3rd_property0} becomes 
\be\label{3rd_propertyinf}
W^{-d}\sum_{y\in[0]}\sum_{x\in \Z^d}  {\cal E}^\infty (x,y) =0 ,
\ee
where ${\cal E}^\infty$ denotes the infinite space limit of $\cal E$ (see \Cref{def infspace} below). Hence, we will refer to \eqref{3rd_property0} as the \emph{sum zero property}.
% Then we call $\Selek_l$ the  $l$-th order \emph{$\self$} and graphically we will use a square, $\square$, with a label $l$ and between  $x$ and $y$ to represent $(\Sele_l)_{xy}$. 
    
\end{remark}

Given a $\self$ $\Sele$, we have defined $\zthn(\Sele)$ in \eqref{theta_renormal1}. Similarly, we can also define $\thn(\Sele)$ as 
\be\label{theta_renormal10}
\thn(\Selek):= (1-\thn \Sele )^{-1}\thn .
\ee
We will also use another type of renormalized $\dashed$ matrices which appear from the Taylor expansion of $\zthn(\Sele)$ or $\thn(\Sele)$: $\zthn\Sele_1\zthn  \cdots \zthn \Sele_k \zthn$ or $\thn\Sele_1\thn \cdots \thn \Sele_k \thn$, where $\Sele_1,\ldots, \Sele_k$ is a sequence of $\selfs$.  
The following lemma demonstrates that these matrices satisfy a similar bound as in \eqref{thetaxy}. Its proof is a straightforward application of \Cref{lem:label_diffusive} and will be provided in Appendix \ref{sec:pf_redundantagain}. %cite \cite{RBSO} here

\begin{lemma}\label{lem redundantagain}
Under the assumptions of \Cref{thm_locallaw}, let $\Sele_0,\Sele_1,\ldots, \Sele_k$ be a sequence of $\selfs$ satisfying Definition \ref{collection elements}. Then, for $z=E+\ii \eta$ with $|E|\le  2- \kappa$ and $\eta \in [ \eta_0, 1]$, we have  
% 	\begin{align}\label{redundant again}
% 		\Big| \sum_{\al}\Theta_{x \al} (\Selek_{2l})_{\al y}\Big| \prec \frac{W^\tau}{W^{(l-1)d}\langle x-y\rangle^d } .
% 	\end{align}
% 	Let $\Sele_{2k_1}, \ \Sele_{2k_2},\ \cdots , \ \Sele_{2k_l} $ be a sequence of $\selfs$ satisfying Definition \ref{collection elements}. We have that for any constant $\tau>0$ and $x,y \in \Z_L^d$,
	% \be\label{BRB} 
	% |\Theta^{(n)}_{xy}(z)|\prec  B_{xy},\quad  \left|(\Theta^{(n)}_k)_{xy}(z)\right|\prec W^{-(k-2)d/2}B_{xy}  , \ee
 	\be\label{BRB} 
\zthn_{xy}(\Sele_0)\prec  B_{xy}, \quad \left(\zthn\Sele_1\zthn\Sele_2\zthn \cdots \zthn \Sele_k \zthn\right)_{xy} \prec B_{xy}\prod_{i=1}^k \sizeself(\Sele_i), \quad \forall \ x,y \in \Z_L^d.
 \ee
If $\eta\ge t_{Th}^{-1}$, the first bound in \eqref{BRB} also holds for $\thn(\Sele)$, and the second one holds if we replace some $\zthn$ by $\thn$. 
%	and
%	\be\label{sumBRB0} 
%	\Big|\sum_y \left(\Theta \Sele_{2k_1}\Theta  \Sele_{2k_2}\Theta \cdots \Theta  \Sele_{2k_l}\Theta\right)_{xy}\Big|\le {\eta}^{-1}{W^{-(k-2)d/2+\tau}} . \ee
	%where $k:=\sum_{i=1}^l  2k_i -2(l-1)$ is the scaling order of $\left(\Theta \Sele_{2k_1}\Theta  \Sele_{2k_2}\Theta \cdots \Theta  \Sele_{2k_l}\Theta\right)_{xy}$.
\end{lemma}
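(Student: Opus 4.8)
\textbf{Proof proposal for Lemma~\ref{lem redundantagain}.}
The plan is to reduce both bounds in \eqref{BRB} to the estimates of \Cref{lem:label_diffusive} by exploiting the tensor/projection structure described around \eqref{eq_rewriteE}. First recall that for all three models the diffusive matrix factors through the block lattice: $\zthn = \zthn^{\LK}\otimes \mathbf E$ (using $S(0)^2=S(0)$ as in \eqref{exp_Theta3}), and by Definition~\ref{collection elements}(iii) together with the general discussion below \eqref{eq_rewriteE}, each $\self$ satisfies $S(0)\Sele_i S(0) = \Sele_{i,\LK}\otimes\mathbf E$ with $\Sele_{i,\LK}$ obeying the symmetry \eqref{self_sym1}, the decay \eqref{self_decay} with $\psi = \psi_i := W^{-\fc_3}\blam^{-1}$ (this is exactly \eqref{4th_property0} rewritten on $\Zn$, absorbing the $\langle\cdot\rangle^{c}$ loss into the $\prec$), and the sum-zero bound \eqref{self_zero} with $\psi_{0} = \psi_{0,i} := W^{-\fc_3}(\eta + t_{Th}^{-1})$ (from \eqref{3rd_property0}). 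Inserting a redundant $S(0)$ between consecutive $\zthn$ and $\Sele_i$ factors — which is legitimate since $\zthn = \zthn S(0) = S(0)\zthn$ — we rewrite
\be\nonumber
\left(\zthn\Sele_1\zthn\cdots\zthn\Sele_k\zthn\right)_{xy} = \left(\zthn^{\LK}\,\Sele_{1,\LK}\,\zthn^{\LK}\cdots\zthn^{\LK}\,\Sele_{k,\LK}\,\zthn^{\LK}\right)_{[x][y]}\otimes \mathbf E\,(x,y),
\ee
and $\mathbf E(x,y)=W^{-d}$, while $B_{xy}\asymp \blam W^{-2}\langle [x]-[y]\rangle^{-(d-2)}\cdot W^{0}$ up to the local-scale ambiguity absorbed by $\prec$ (here using $W^d\cdot W^{-d}=1$ and $\langle x-y\rangle \asymp W\langle[x]-[y]\rangle$ when $[x]\ne[y]$, and the trivial bound when $[x]=[y]$).

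The first bound, $\zthn_{xy}(\Sele_0)\prec B_{xy}$, then follows directly: expand $\zthn(\Sele_0) = \sum_{\ell\ge 0}\zthn(\Sele_0\zthn)^{\ell}$; the $\ell=0$ term is $\zthn_{xy}\prec B_{xy}$ by \eqref{thetaxy}, and each higher term is of the form $(\zthn^{\LK}\Sele_{0,\LK}\zthn^{\LK}\cdots)_{[x][y]}\otimes\mathbf E$ with $\ell$ copies of $\Sele_{0,\LK}$. Applying \eqref{thetaxy_renorm} of \Cref{lem:label_diffusive} iteratively (or in one shot, bounding the $\ell$-fold product), each insertion of $\Sele_{0,\LK}$ costs a factor $\prec \psi_0\blam \le W^{-\fc_3}(\eta t_{Th}^{-1}\cdot t_{Th} + 1)$; since $\eta\le 1$ and $t_{Th}^{-1}\blam \le W^{-\fd}$ under \eqref{eq:cond-lambda2}, one gets $\psi_0\blam\langle[x]-[y]\rangle^2 \cdot \langle[x]-[y]\rangle^{-2}\lesssim W^{-\fc_3}$ from the $\min$ in \eqref{thetaxy_renorm}, so the geometric series converges and is dominated by its first term. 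For the second bound we argue analogously on the finite chain: by \eqref{thetaxy_renorm} applied to the leftmost $\zthn^{\LK}\Sele_{1,\LK}$, then iterating, each $\Sele_{i,\LK}$ factor contributes a multiplicative $\prec \sizeself(\Sele_i)$ — here one checks that $\psi_i\blam = W^{-\fc_3}$ and $\psi_{0,i}\blam\langle[x]-[y]\rangle^2\cdot\langle[x]-[y]\rangle^{-(d)}\cdot\min(\eta^{-1},\blam\langle\cdot\rangle^2)$ collapses to a bound of order $\sizeself(\Sele_i) = \blam W^d\size(\Sele_i)$, using $\sizeself(\Sele_i)\lesssim \blam^2/W^d$ and the definition \eqref{eq:psiSele_size} — leaving the final $B_{xy}$ from one surplus $\zthn$. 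The case $\eta\ge t_{Th}^{-1}$ for $\thn$ in place of $\zthn$ follows from the last sentence of \Cref{lem:label_diffusive}, since $\thn = \zthn$ up to the rank-one correction \eqref{eq:diffzthn-thn1}--\eqref{eq:diffzthn-thn2} which is itself $\lesssim B_{xy}$ and commutes appropriately through the chain.

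The main obstacle is bookkeeping the iteration cleanly: \Cref{lem:label_diffusive} bounds a single product $\zthn^{\LK}\Sele_{\LK}$, so to handle a length-$k$ chain one must verify that the output bound \eqref{thetaxy_renorm} — which is not simply a constant times $B$, but carries the extra $\langle[x]-[y]\rangle$-dependent weights and the $\min(\eta^{-1},\blam\langle\cdot\rangle^2)$ factor — can be fed back in as input to the next application without degrading the decay exponent. The point is that \eqref{thetaxy_renorm0} gives the crude bound $(\zthn^{\LK}\Sele_{\LK})_{[x][y]}\prec \psi\blam\langle[x]-[y]\rangle^{-(d-2)}$, which is of the same shape as $\blam\langle[x]-[y]\rangle^{-(d-2)}$ (i.e.\ like $B$ on $\Zn$) times the small factor $\psi\blam = W^{-\fc_3}$; so at each step we lose only $W^{-\fc_3}$ (or $\sizeself(\Sele_i)$ once the sharper sum-zero input is tracked) and retain a $B$-type kernel, and after $k$ steps one surplus $\zthn^{\LK}$ restores the final $B_{xy}$. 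One should also double-check the $d\ge 7$ convolution stability — $\sum_{[w]}\langle[x]-[w]\rangle^{-(d-2)}\langle[w]-[y]\rangle^{-(d-2)}\lesssim \langle[x]-[y]\rangle^{-(d-2)}\log$ requires $d-2 > d/2$, i.e.\ $d>4$, which holds — and that the logarithmic factors accumulated over the $\OO(1)$-length chain are absorbed by $\prec$. I expect the remaining details to be routine given \Cref{lem:label_diffusive,lem theta} and the projection identities of \Cref{def:projlift}.
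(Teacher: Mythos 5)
Your overall strategy (reduce to \Cref{lem:label_diffusive} via the tensor/projection structure $\zthn=\zthn^{\LK}\otimes\mathbf E$, $S(0)\Sele_i S(0)=\Sele_{i,\LK}\otimes\mathbf E$, and then treat the chain by convolving block-wise bounds) is the same as the paper's, but the linchpin of your bookkeeping is a false inequality. You resolve the iteration by claiming that the crude bound \eqref{thetaxy_renorm0}, i.e.\ a kernel of shape $\psi\blam\,\langle[x]-[y]\rangle^{-(d-2)}$, is stable under convolution on $\Zn$, asserting that $\sum_{[w]}\langle[x]-[w]\rangle^{-(d-2)}\langle[w]-[y]\rangle^{-(d-2)}\lesssim \langle[x]-[y]\rangle^{-(d-2)}\log$ holds as soon as $d-2>d/2$. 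That condition only guarantees convergence of the sum; the output decays like $\langle[x]-[y]\rangle^{-\min(d-2,\,2(d-2)-d)}=\langle[x]-[y]\rangle^{-(d-4)}$, i.e.\ the decay degrades by two with every convolution (this loss is precisely why condition \eqref{eq:cond-lambda2} exists in the first place, cf.\ the discussion of $\sum_x B_{xy_1}B_{xy_2}$ below \eqref{eq_rewriteE}). So a chain treated with the crude bound does not retain a $B$-type kernel, and your argument for the second estimate in \eqref{BRB} collapses at exactly the step you flagged as "routine."

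The fix is to use the sum-zero-improved bound \eqref{thetaxy_renorm} as the block estimate, not merely as a refinement: with the correct parameters $\psi=\sizeself(\Sele_i)/\blam$ and $\psi_0=\sizeself(\Sele_i)(\eta+t_{Th}^{-1})$ (note you wrote $\psi_i=W^{-\fc_3}\blam^{-1}$, which is \eqref{4th_property}, not the Definition~\ref{collection elements} bound \eqref{4th_property0} you actually need to produce the factor $\prod_i\sizeself(\Sele_i)$), the $\min(\eta^{-1},\blam\langle\cdot\rangle^2)$ factor collapses both terms and yields $(\zthn^{\LK}\Sele_{i,\LK})_{[x][y]}\prec \sizeself(\Sele_i)\,\qq{[x]-[y]}^{-d}$. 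This $\langle\cdot\rangle^{-d}$ kernel is summable, so convolving $k$ such blocks against the single final $\zthn^{\LK}$ kernel $\blam\langle\cdot\rangle^{-(d-2)}$ does preserve the $(d-2)$-decay up to logs, giving the second bound; there is no need to "feed the output back into" \Cref{lem:label_diffusive}, whose hypothesis requires the left factor to be a diffusive edge. The same entrywise block bound, upgraded to an $\ell^\infty\to\ell^\infty$ operator-norm bound $\|\zthn^{\LK}\Sele_{0,\LK}\|_{\ell^\infty\to\ell^\infty}\prec\sizeself(\Sele_0)\le W^{-\fd}$, also replaces your infinite geometric series for $\zthn(\Sele_0)$: a fixed finite expansion $\zthn(\Sele_0)=\big(1-(\zthn\Sele_0)^{K+1}\big)^{-1}\sum_{k=0}^K(\zthn\Sele_0)^k\zthn$ with $K$ chosen so that $W^{-K\fd}\le \blam/(W^2L^{d-2})$ avoids both the unproved summability of the infinite tail and the accumulation of logarithmic losses over infinitely many convolutions.
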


%These edges have the same behavior as $\Theta^{\circ}$, but `renormalized by $\selfs$' in a sense which we will explain below. 
Due to \eqref{BRB}, to simplify the graphical structures, we will regard the renormalized diffusive matrices as a new type of diffusive edges. 

\begin{definition}[Labeled diffusive edges] \label{def_graph comp} 
We represent $\thn_{xy}^s(\Selek)$, $s\in \{\emptyset,\circ\}$, by a diffusive edge between vertices $x$ and $y$ with label $(s,\Selek)$, and represent  
\be\label{eq:labeldiff}
\thn^{s_1}\Sele_1\thn^{s_2}\Sele_2\thn^{s_3} \cdots \thn^{s_k} \Sele_k \thn^{s_{k+1}},\quad (s_1,\ldots, s_{k+1})\in \{\emptyset,\circ\}^{k+1},\ee 
by a diffusive edge between vertices $x$ and $y$ with label $[s_1,\Sele_1,s_2,\Sele_2, \ldots, s_k,\Sele_k,s_{k+1}]$. Here, as a convention, $\thn^{\emptyset}$ and $\thn^{\circ}$ represent $\thn$ and \smash{$\zthn$}, respectively. A labeled $\dashed$ edge is drawn as a double-line edge with a label but without any internal structure. 
When calculating scaling sizes, an $(s,\Selek)$-labeled diffusive edge is counted as an edge of scaling size $\heta$, and an $[s_1,\Sele_1,s_2,\Sele_2, \ldots, s_k,\Sele_k,s_{k+1}]$-labeled diffusive edge is counted as an edge of scaling size $\heta \prod_{i=1}^k \sizeself(\Sele_i)$. 
% and a $[\Sele_1 ,\Sele_2, \ldots, \Sele_k]$-labeled pseudo-diffusive edge is counted as an edge of scaling size 
% $$\Wlambda W^{-d}\prod_{i=1}^k \sizeself(\Sele_i) \min\left(\eta^{-1}, \frac{L^2}{\lambda^2 W }\right).$$
Labeled diffusive edges are used as diffusive edges in the doubly connected property in \Cref{def 2net}, i.e., they can appear in both black and blue nets. 
\end{definition}

\begin{remark}
The diffusive edges in (i) of \Cref{collection elements} may also be labeled diffusive edges. More precisely, suppose we know that $\Sele_0,\Sele_1,\ldots, \Sele_k$ are $\selfs$ with $\eta\ge \eta_0$. Then, when constructing or defining a new $\self$ with $\eta\ge \eta_0$, such as $\Sele_{k+1}$, it may contain labeled diffusive edges of the forms shown in \eqref{BRB}. 
\end{remark}	

The properties (i) and (iii) in \Cref{collection elements} are due to our construction of the $\selfs$; see \Cref{sec:constr} below. The property \eqref{self_sym} is due to the parity and block translation symmetries of our models. The property \eqref{4th_property0} is a consequence of the following estimate on deterministic doubly connected graphs.

\begin{lemma}[Corollary 6.12 of \cite{BandI}]\label{dG-bd}
%Suppose $d \geq 6$. 
Let $\mathcal G$ be a deterministic doubly connected graph without external vertices. Pick any two vertices of $\mathcal G$ and fix their values as $x,y \in \Z^d_L$. The resulting graph $\mathcal G_{xy}$ satisfies that
\begin{equation}\label{det_double}
\begin{split}
|\mathcal G_{xy}|\le \mathcal G^{abs}_{xy} \le  W^{2d-4}\size(\cal G_{xy}) \cdot \langle x-y\rangle^{-(2d-4)+c}
%    \, \Big( \frac{L^2}{W^2}\Big)^{k_\gh}\Big( \frac{1}{N\eta}\frac{L^2}{W^2}\Big)^{k_\fr}W^{-(n_{xy} - 4 - 2 k_{\fr})d/2}\left(B_{xy}^2 + \mathbf 1_{k_\gh+k_\fr\ge 1} B_{xy} \frac{W^{2-d}}{L^2}\right),
\end{split}
\end{equation}
for any small constant $c>0$, where $ {\cal G}_{{\rm abs}}$ is obtained by replacing each component (i.e., edges and coefficients) in $\cal G$ with its absolute value. 
\end{lemma}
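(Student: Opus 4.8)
\textbf{Proof plan for Lemma \ref{dG-bd} (Corollary 6.12 of \cite{BandI}).}
The plan is to reduce the bound \eqref{det_double} entirely to the combinatorial structure encoded by the doubly connected property, by passing to the molecular graph and running an inductive contraction of molecules. First I would replace every edge and coefficient of $\mathcal G$ by its absolute value, which only increases the value; since $|M_{xy}|$, $S_{xy}$, $S^\pm_{xy}$ all decay exponentially on the scale $W$ (by \Cref{lem:propM} and \Cref{lem deter}) and $\thn_{xy}$, $\zthn_{xy}$ are bounded by $B_{xy}\log L$ (by \Cref{lem theta}), the absolute-value graph $\mathcal G^{\mathrm{abs}}_{xy}$ is a sum over internal indices of a product of nonnegative kernels, each of which is either (i) supported up to an $\OO(W\log W)$ neighborhood within its molecule, or (ii) a diffusive kernel $B$ linking two molecules. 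Thus $\mathcal G^{\mathrm{abs}}_{xy}$ factors, up to the within-molecule sums, as a sum over the \emph{molecular graph} of a product of $B$-kernels between molecule-representatives.

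Next I would carry out the molecule sums using the two spanning trees guaranteed by \Cref{def 2net}. The key mechanism is exactly the one sketched around \eqref{eq:diffu+solid}: each internal molecule, when summed, consumes (at least) one edge from the black net and one edge from the blue net, and the convolution estimate
\be\nonumber
\sum_{[u]} B_{[x][u]} B_{[u][y]} \lesssim \blam W^{-d} \cdot W^{2}\langle [x]-[y]\rangle^{-(d-2)} \cdot (\text{log factor})
\ee
(together with $d\ge 7$ and the polynomial summability $\sum_{[u]}\langle[u]\rangle^{-(d-2)}\langle [u]\rangle^{-(d-2)}<\infty$) shows that contracting one internal molecule produces the right power of $\heta=\blam/W^d$ per diffusive edge and preserves a single residual diffusive kernel between the remaining molecules. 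Peeling leaves of the black spanning tree one at a time (an internal leaf molecule has exactly one black-tree edge and, by double connectivity, at least one blue-net edge into it, so both net-edges at that leaf get absorbed), the induction terminates with the two external molecules containing $x$ and $y$, leaving a single diffusive kernel $B_{xy}\lesssim \blam W^{-d}\langle x-y\rangle^{-(d-2)}$ times the accumulated scaling-size factors from all the other edges, waved edges, free edges, and internal atoms. Multiplying the two surviving powers of $\langle x-y\rangle^{-(d-2)}$ (one from each net that reaches the external molecules) gives the exponent $-(2d-4)$, and the prefactor $W^{2d-4}$ comes from the two factors of $W^d/\blam$ that were \emph{not} cancelled because the final kernels are not summed; bookkeeping these against the definition \eqref{eq_defsize}/\eqref{eq_defsize_AO} of $\size(\mathcal G)$ yields precisely $W^{2d-4}\size(\mathcal G_{xy})\langle x-y\rangle^{-(2d-4)+c}$, where the harmless $W^{c}$, $\langle x-y\rangle^{c}$ slack absorbs all the $\log L$ and $\log W$ factors and the $\OO(1)$-many choices in the expansion.

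The main obstacle is the careful bookkeeping in the inductive step to make sure the power-counting is tight: one must verify that (a) each internal molecule really does have one black-net and one disjoint blue-net edge available at the moment it is removed — this is where disjointness of $\cal B_{black}$ and $\cal B_{blue}$ and the spanning-tree property are used crucially, and one has to choose the removal order (leaves of the black tree) so that blue connectivity to the rest is never destroyed; (b) the within-molecule sums over atoms only cost $W^{d(\#\{\text{waved edges}\}-\#\{\text{internal atoms}\})}$, matching the waved-edge/internal-atom bookkeeping in \eqref{eq_defsize} (for the block Anderson and Wegner models) or \eqref{eq_defsize_AO} with big atoms (for the Anderson orbital model), which requires knowing that each atom carries exactly one free summation index and the rest are pinned within $\OO(1)$ by \eqref{scaleatom}; and (c) solid edges and free edges, which are not used to connect molecules in the black net but may sit in the blue net or dangle, contribute their scaling-size factors $\heta^{1/2}$ and $(N\eta)^{-1}L^2/W^2$ respectively without spoiling the tree structure. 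Since all of this is purely structural and the analytic inputs (\Cref{lem:propM}, \Cref{lem theta}, \Cref{lem deter}) are already in hand, the proof is a finite induction on the number of internal molecules; I would present it as such, and for the detailed edge-counting simply cite the corresponding argument in \cite{BandI} since the molecular graphs here are identical in structure to those there (as noted below \Cref{def_atom}).
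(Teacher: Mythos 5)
Your overall architecture is the same as that of the cited proof: the paper itself does not reprove this lemma (it imports it as Corollary 6.12 of \cite{BandI}), and both the original argument and the sketch this paper gives for the stochastic analogue, \Cref{no dot}, proceed exactly as you propose — take absolute values, use the locality of dotted/waved edges and $|\thn_{xy}|,|\zthn_{xy}|\lesssim B_{xy}\log L$ to reduce to a molecular graph of $B$-kernels, then sum out internal molecules along spanning trees furnished by \Cref{def 2net} and match the outcome against \eqref{eq_defsize}.

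There is, however, a concrete quantitative gap in the step you present as the key mechanism. The displayed convolution estimate is false: for $B_{xy}=\blam W^{-2}\langle x-y\rangle^{-(d-2)}$ one has $\sum_u B_{xu}B_{uy}\asymp \blam^2W^{-4}\langle x-y\rangle^{-(d-4)}=B_{xy}\cdot \blam\,\langle x-y\rangle^{2}/W^{2}$, so convolving two diffusive kernels does \emph{not} return a kernel with exponent $-(d-2)$ times a small factor; it degrades to the ``silent'' exponent $-(d-4)$, losing up to $L^2/W^2$. Consequently the assertion that contracting an internal molecule ``preserves a single residual diffusive kernel'' fails exactly in the situation your leaf-peeling scheme must handle: a black-tree leaf whose single black edge and single blue edge go to \emph{different} molecules (e.g.\ molecules $x,u,y$ with black edges $\{x\text{--}y,\,x\text{--}u\}$ and blue edges $\{x\text{--}y,\,u\text{--}y\}$); summing $u$ there produces the weaker kernel, and the stated bound only survives because the other edges of the doubly connected graph absorb the loss. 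Making this compensation systematic is the actual content of the proof in \cite{BandI}, mirrored in this paper by the $\Gamma(y_1,\dots,y_k)$-type bounds of \Cref{w_s} and by the silent kernels $\wt B$ of \eqref{eq:sil_diff} together with \eqref{keyobs4.3}--\eqref{keyobs4.3_add}: one sums in an order adapted to the blue net, keeps track of which neighbor inherits the surviving kernel, and allows some kernels to become silent, rather than relying on local domination at each contraction. Since you defer the detailed edge-counting to \cite{BandI} anyway, the plan is repairable, but as written the inductive step is the part that would not close.
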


Hence, the major nontrivial property in \Cref{collection elements} is the sum zero property \eqref{3rd_property0}, which indicates a delicate cancellation occurring within the $\self$. The next lemma shows that exchanging $\thn$ and \smash{$\zthn$} edges in a $\self$ still results in a $\self$ that satisfies \Cref{collection elements} when $\eta\ge t_{Th}^{-1}$. Its proof will be provided in \Cref{subsect:pfthn-zthn}. %cite \cite{RBSO} here

\begin{lemma}\label{lem:thn-zthn}
In the setting of \Cref{thm_locallaw}, fix any $z=E+\ii \eta$ with $|E|\le 2-\kappa$ and $\eta\ge t_{Th}^{-1}$. Given a $\self$ $\Sele$, if we replace an arbitrary \smash{$\zthn$} edge in it with a $\thn$ edge or vice versa, the new term $\Sele^{new}$ is still a $\self$ in the sense of \Cref{collection elements}. 
% \be\label{eq:Enew-E}
% \sum_{x}(\Sele-\Sele^{new})_{xy} \prec \sizeself(\Sele)\eta,\quad \forall y\in \Z_L^d.
% \ee
\end{lemma}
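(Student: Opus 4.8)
The plan is to reduce the lemma to one algebraic fact about $\thn-\zthn$ together with a scaling-size count. The fact is that $\thn-\zthn$ is a deterministic \emph{constant} matrix of the form $\mu\,\mathbf e\mathbf e^\top$ with $|\mu|\asymp\eta^{-1}$, so that all its entries equal $\mu/N=\OO((N\eta)^{-1})$. Indeed, for the block Anderson and Anderson orbital models $S\mathbf e=\mathbf e$ and, by \eqref{L2M}, $M^0\mathbf e=\frac{\im m}{\eta+\im m}\mathbf e$, hence $\thn\mathbf e$ and $\mathbf e^\top\thn$ are parallel to $\mathbf e$ and $\mathbf e^\top$; from $\zthn=P^\perp\thn P^\perp$ in \eqref{def:Theta} it then follows that $\zthn=\thn-(\mathbf e^\top\thn\mathbf e)\,\mathbf e\mathbf e^\top$, with $\mathbf e^\top\thn\mathbf e=\frac{\eta+\im m}{\eta}$ — consistent with \eqref{eq:diffzthn-thn1}; for the Wegner orbital model the analogous computation using \eqref{L2M2} gives $\mu=\frac{1+2d\lambda^2}{1-(1+2d\lambda^2)|m|^2}$, consistent with \eqref{eq:diffzthn-thn2}. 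The same structure holds for the difference obtained by flipping one $\thn^s$ factor inside a labeled diffusive edge \eqref{eq:labeldiff} (the rank-one identity then factors the labeled edge through $\mathbf e\mathbf e^\top$), or inside a renormalized edge $\thn^s(\Sele_i)$ (one first expands $(1-\thn\Sele_i)^{-1}\thn-(1-\zthn\Sele_i)^{-1}\zthn$ by a resolvent identity in the rank-one perturbation $\thn-\zthn$).

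Given this, replacing one diffusive edge $e$ in a self-energy graph by the other label turns $\Sele$ into $\Sele^{new}=\Sele\mp(\eta+\im m)\,\Sele''$, where $\Sele''$ is the graph obtained from $\Sele$ by deleting $e$ and multiplying by $(N\eta)^{-1}$, i.e., by replacing $e$ with a free edge (for labeled or renormalized $e$, $\Sele''$ is a finite linear combination of such graphs, each obtained by a rank-one cut of $e$ into two pieces joined by a free edge). Most conditions of \Cref{collection elements} are then immediate. Property (i) holds for $\Sele^{new}$ itself, since as a graph it differs from $\Sele$ only by one edge label and $\thn$- and $\zthn$-edges are both diffusive edges, so its molecular graph, double connectedness, and the redundancy of all its diffusive edges are inherited verbatim. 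Property (iii) for the Wegner orbital model and the symmetries \eqref{self_sym} are immediate, as $\thn$, $\zthn$ and constant matrices all carry the $\otimes\mathbf E$ structure and are block-translation- and parity-invariant. The bound \eqref{4th_property0} holds for $\Sele^{new}$ because it holds for any deterministic doubly connected graph by \Cref{dG-bd}.

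It remains to check the sum-zero property \eqref{3rd_property0} for $\Sele^{new}=\Sele\mp(\eta+\im m)\Sele''$. For the $\Sele$ part it holds by hypothesis, so I only need a \emph{crude} bound on the correction. Since $e$ is a redundant diffusive edge of $\Sele$, $\Sele\setminus\{e\}$ — hence $\Sele''$ — is a deterministic doubly connected graph, so \Cref{dG-bd} gives $|\Sele''(x,y)|\lesssim W^{2d-4}\size(\Sele'')\langle x-y\rangle^{-(2d-4)+c}$; summing over $x\in\Z_L^d$, $y\in[0]$ and dividing by $W^d$ yields $W^{-d}\sum_{y\in[0]}\sum_x|\Sele''(x,y)|\lesssim W^{d+c}\,\size(\Sele'')$. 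Since a free edge has scaling size $(N\eta)^{-1}L^2/W^2$ and a diffusive edge has scaling size $\heta=\blam/W^d$, we get $\size(\Sele'')=\size(\Sele)\cdot\frac{L^{2-d}W^{d-2}}{\eta\blam^2}$, which for $\eta\ge t_{Th}^{-1}=W^2/(\blam L^2)$ and $d\ge7$, $W\le L$ is $\le\size(\Sele)\,(W/L)^{d-4}\le\size(\Sele)$; hence $\size(\Sele^{new})=\size(\Sele)$ and $\psi(\Sele^{new})=\psi(\Sele)$ by \eqref{eq:psiSele_size}. Plugging in $\size(\Sele)=\psi(\Sele)/(\blam W^d)$ and using $\eta\ge t_{Th}^{-1}$ once more, $W^{d+c}\size(\Sele'')\lesssim L^c\psi(\Sele)\,t_{Th}^{-1}\le L^c\psi(\Sele^{new})(\eta+t_{Th}^{-1})$; the inequality reduces to $2\le d-4+c$, which holds for $d\ge7$. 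This is \eqref{3rd_property0} for $\Sele^{new}$.

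The hypothesis $\eta\ge t_{Th}^{-1}$ is used precisely here: it is what lets a free-edge-scale perturbation fit inside the $t_{Th}^{-1}$ part of the sum-zero budget, and it cannot be dropped (bounding $\Sele^{new}$ directly by \Cref{dG-bd} is too lossy — one genuinely needs the splitting into $\Sele$ plus a small correction, which in turn uses the sum-zero property of $\Sele$). I expect the main technical work to be the bookkeeping for labeled and renormalized diffusive edges — propagating the rank-one difference $\thn-\zthn$ through products and resolvents of $\thn$'s and self-energies while verifying that the cut graphs $\Sele''$ are deterministic and doubly connected — which is why \Cref{subsect:pfthn-zthn} is devoted to the proof; the plain $\thn\leftrightarrow\zthn$ swap is exactly the computation above.
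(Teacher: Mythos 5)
Your proposal is correct and follows essentially the same route as the paper's own proof: the key input is that $\thn-\zthn$ is a constant matrix of size $\OO((N\eta)^{-1})$ (equations \eqref{eq:diffzthn-thn1}--\eqref{eq:diffzthn-thn2}; your explicit rank-one form $\mu\,\mathbf e\mathbf e^\top$ is the same fact), so flipping one edge changes $\Sele$ by an $\OO(1)$ multiple of the graph with that edge cut out, which remains doubly connected by redundancy; every property of \Cref{collection elements} except the sum-zero one is inherited directly, and the correction to \eqref{3rd_property0} is controlled via \Cref{dG-bd} together with $\eta\ge t_{Th}^{-1}$, with edges inside labeled/renormalized diffusive edges handled by propagating the entrywise $(N\eta)^{-1}$ difference (the paper organizes this last step as an induction over the constituent self-energies, and like you it leaves the bookkeeping brief). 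The minor slip in your intermediate formula for $\size(\Sele'')$ (an extra factor $\blam^{-1}$) does not affect the argument, since your final reduction matches the correct computation and the required inequality holds for $d\ge 7$.
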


\section{$T$-expansion}\label{sec:Texp}

%cite \cite{RBSO} here
To facilitate clarity of presentation, we focus in the remainder of the main text on the proof of the most important and technically involved result---the local law, \Cref{thm_locallaw}---for the Wegner orbital model. 
%we first focus on the Wegner orbital model in Sections \ref{sec:constr}--\ref{sec:sumzero}, 
We choose this model as our primary focus because its proof involves simpler notation, and its various concepts most closely resemble those used for the RBM  \cite{BandI,BandII,BandIII}, 
while still capturing the main innovations of this work. 
Specifically, the proof for the Wegner orbital model already highlights the two core novelties of our approach: the proof of the continuity estimate (cf.~\Cref{sec:continuity}), and the coupling renormalization mechanism (cf.~\Cref{sec:Vexpansion}). 
The extension of our arguments to the block Anderson and Anderson orbital models is straightforward, requiring only minor modifications.
%, requiring only minor modifications to some arguments. 
This extension will be discussed in detail in \Cref{sec:ext}. Furthermore, the quantum diffusion and QUE estimates follow as consequences of the local law and the technical tools developed in its proof; these will be presented in \Cref{sec:pf_Qdiff,sec:QUE}. 
Finally, we emphasize that unless a lemma explicitly refers to the Wegner orbital model in its statement, it should be understood to apply to all three models (although the corresponding proofs are deferred to the appendix).

%We also emphasize that unless we specifically mention the Wegner orbital model in the statement of a lemma, it should be understood to hold for all three models, although the relevant proof will be discussed in the appendix instead.} 

Similar to \cite{BandI,BandII,BandIII}, our proof starts from the \emph{$T$-expansions} of $T$-variables, which are defined as 
\be\label{general_T}
\begin{split}
T_{x,yy'}:=\sum_{\al}S_{x\al}G_{\al y}G^-_{\al y'},\quad &T_{yy',x}:=\sum_{\al}G_{y\al}G^-_{y' \al}S_{\al x},\\
 \zT_{x,yy'}:=\sum_w P^\perp_{xw}T_{w,yy'},\quad &\zT_{yy',x}:=\sum_w T_{yy',w}P^\perp_{xw}. 
 \end{split}
\ee
Note that they extend the $T$-variables in \eqref{eq:TT} with $T_{x,yy}=T_{xy}$ and $T_{yy,x}=\wt T_{yx}$. These $T$-variables take the same form as the $T$-variables for random band matrices \cite{BandI,BandII,BandIII}, and we will use them in the proofs concerning the Wegner orbital model.

\subsection{Preliminary $T$-expansion: Wegner orbital model}\label{sec_second_WO}

We first present the preliminary $T$-expansion of the $T$-variables in \eqref{general_T}. We start with the following expansion that has been derived in \cite{BandIII,BandI} in the context of random band matrices. 

\begin{lemma}[Lemma 2.5 of \cite{BandI}]\label{2nd3rd T}
For the Wegner orbital model in \Cref{def: BM}, we have that
\begin{align}\label{seconduniversal}
& T_{\fa,\fb_1\fb_2}= m \thn_{\fa\fb_1} \overline G_{\fb_1\fb_2}  + \sum_x \thn_{\fa x}\left( \mathcal A^{(>2)}_{x,\fb_1\fb_2}  + \mathcal Q^{(2)}_{x,\fb_1\fb_2}  \right),
%\AT^{(>2)}_{\fa,\fb_1\fb_2} + \QT^{(2)}_{\fa,\fb_1\fb_2}  
\end{align}
for any $\fa,\fb_1,\fb_2\in \Z_L^d$, where
 \begin{align}
		\mathcal A^{(>2)}_{x,\fb_1\fb_2}&:= m \sum_{y} S_{xy}\left[   (G_{yy}-m) G_{x \fb_1} \overline G_{x \fb_2} \nonumber+  ( \overline G_{xx} -\overline m)G_{y\fb_1}\overline G_{y\fb_2}\right]\,,\nonumber\\
		\mathcal Q^{(2)}_{x,\fb_1\fb_2}&:= Q_x \left(G_{x\fb_1} \overline G_{x\fb_2} \right)- |m|^2\sum_{y}  S_{xy}  Q_x\left( G_{y\fb_1}\overline G_{y\fb_2}  \right) - m  \delta_{x\fb_1}  Q_{\fb_1}\left(  \overline G_{\fb_1\fb_2} \right) \label{eq:Q2} \\
		&\quad\ - m\sum_{y}  S_{xy}  Q_x \left[   (G_{yy}-m) G_{x\fb_1} \overline G_{x\fb_2}  + ( \overline G_{xx}-\bar m) G_{y\fb_1}\overline G_{y\fb_2}  \right]\,.\nonumber
	\end{align}
 % \begin{align}
 %  \AT^{(>2)}_{\fa,\fb_1\fb_2}& :  =  m\sum_{x,y} \thn_{\fa x} S_{xy} (G_{yy}-m) G_{x \fb_1}\overline G_{x \fb_2}  + m \sum_{x,y} \thn_{\fa x} S_{xy}  ( \overline G_{xx} - \overline m) G_{y \fb_1}\overline G_{y \fb_2}, \label{Aho>2}  \\
 %  \QT^{(2)}_{\fa,\fb_1\fb_2}& := \sum_x \thn_{\fa x} Q_x\left( G_{x \fb_1}   \overline G_{x \fb_2} \right)- |m|^2 \sum_{x,y} \thn_{\fa x} S_{xy}Q_x\left( G_{y\fb_1}\overline G_{y\fb_2} \right)  - m \thn_{\fa\fb_1} Q_{\fb_1} \left( \overline G_{\fb_1 \fb_2}\right)  \nonumber\\
 %    & -  m\sum_{x,y} \thn_{\fa x} S_{xy} Q_x\left[(G_{yy}-m) G_{x \fb_1} \overline G_{x \fb_2} \right] - m \sum_{x,y}  \thn_{\fa x} S_{xy} Q_x\left[ (\overline G_{xx}-\overline m) G_{y\fb_1}\overline G_{y\fb_2} \right] .\label{QT>2}
 % \end{align}
%There exists a second order $T$-expansion satisfying Definition \ref{defn genuni} with $\Sigma^{(2)}=0$.
\end{lemma}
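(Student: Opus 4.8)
\textbf{Proof proposal for Lemma \ref{2nd3rd T} (the preliminary $T$-expansion for the Wegner orbital model).}

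The plan is to reproduce the derivation of the second-order $T$-expansion exactly as done in \cite{BandI} for random band matrices, observing that the argument uses only the Gaussianity of $H$, the Hermitian symmetry, and the variance profile $S$ — all of which hold for the Wegner orbital model with $S_{xy}=s_{xy}+\lambda^2 s'_{xy}$. First I would start from the trivial identity $T_{\fa,\fb_1\fb_2}=\sum_\al S_{\fa\al} G_{\al\fb_1}\overline G_{\al\fb_2}$ and apply Gaussian integration by parts (the complex Gaussian Stein identity) to one of the Green's function factors, say $G_{\al\fb_1}$, in the $H$-entries of the $\al$-th row. Since $\partial G_{\al\fb_1}/\partial H_{\al w}=-G_{\al\al}G_{w\fb_1}$ and the pairing of $H_{\al w}$ produces a factor $S_{\al w}$, this generates the leading term $-\sum_{\al,w} S_{\fa\al}S_{\al w}\,\E_\ldots[G_{\al\al}G_{w\fb_1}\overline G_{\al\fb_2}]$ together with the conjugate-side term coming from differentiating $\overline G_{\al\fb_2}$; the diagonal Green's function entries are then replaced by $m$ plus the fluctuation $G_{\al\al}-m$, which feeds into $\mathcal A^{(>2)}$. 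The purely "$m$-part" of this manipulation, after resumming the geometric-type series in $S$, is precisely what produces the $\thn$ matrix: one recognizes $\sum_k S(M^0 S)^k$ from iterating the self-consistent relation, yielding $m\,\thn_{\fa\fb_1}\overline G_{\fb_1\fb_2}$ as the main term (the $\overline G_{\fb_1\fb_2}$ arising from the coincidence $w=\fb_1$, i.e.\ the $\delta_{w\fb_1}$ contribution carried through the resolvent identity).

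Next I would carefully collect the remaining terms. The key bookkeeping device, again as in \cite{BandI}, is to insert $Q_x = 1 - P_x$ where needed so that every term that is not already extracted into the explicit $m\thn \overline G$ main term or into $\mathcal A^{(>2)}$ is written as a $Q_x$-graph; the defining feature is that $P_x$ applied to a product of $G$-entries all carrying index $x$ can be computed and reabsorbed, while the $Q_x$-part is the genuine fluctuation that we keep as $\mathcal Q^{(2)}$. Tracking this gives the four listed contributions to $\mathcal Q^{(2)}_{x,\fb_1\fb_2}$: the bare $Q_x(G_{x\fb_1}\overline G_{x\fb_2})$; the $-|m|^2 \sum_y S_{xy} Q_x(G_{y\fb_1}\overline G_{y\fb_2})$ term coming from the second-order IBP contraction on the conjugate side; the $-m\delta_{x\fb_1}Q_{\fb_1}(\overline G_{\fb_1\fb_2})$ term from the coincidence of summation indices (this is the "recollision at $\fb_1$" piece); and the last line, which is just the $Q_x$-dressed version of the diagonal-fluctuation terms that otherwise sit in $\mathcal A^{(>2)}$. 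Throughout one uses the resolvent identity $G_{\al\fb} = m\sum_w \ldots$ and the relation $M = (\lambda\Psi - z - m)^{-1}$ for the Wegner orbital model, where $M = mI_N$, so that $M^0_{xy}=|m|^2\mathbf 1_{x=y}$ and the matrix $\thn$ simplifies to $S(1-|m|^2 S)^{-1}$.

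Since the statement is quoted verbatim as "Lemma 2.5 of \cite{BandI}", the honest approach is to verify that every step of that lemma's proof only invokes: (i) the complex Gaussian IBP formula, valid because the off-diagonal entries of $H$ are complex Gaussian (with real Gaussian diagonal, which contributes a negligible correction absorbed into the errors, exactly as in the band matrix case); (ii) the variance matrix $S$ and the fact that $\sum_y S_{xy}$ is close to $1+2d\lambda^2$, which is what makes the geometric series $\sum_k S(M^0 S)^k$ converge and defines $\thn$; and (iii) the first-order local structure of $M$, which for this model is trivial since $M=mI_N$. None of these requires translation invariance of $\Psi$ itself — only of its \emph{distribution}, which holds. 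The main (and essentially only) obstacle is the careful combinatorial tracking of which contractions land in the explicit main term versus $\mathcal A^{(>2)}$ versus $\mathcal Q^{(2)}$, and ensuring the $\delta_{x\fb_1}Q_{\fb_1}$ boundary term is correctly accounted for; but this is exactly the computation carried out in \cite[Lemma 2.5]{BandI}, so in the write-up I would simply note that the derivation there applies mutatis mutandis, with $\Lambda_\Psi$-dependent constants, and refer the reader to that source for the detailed bookkeeping.
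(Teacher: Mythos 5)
Your proposal is correct and matches the paper's treatment: the paper does not reprove this identity but imports it verbatim as Lemma 2.5 of \cite{BandI}, which applies directly because the Wegner orbital model is a Gaussian Hermitian ensemble with variance profile $S=s+\lambda^2 s'$ and $M(z)=m(z)I_N$, exactly the setting in which that lemma's integration-by-parts bookkeeping was carried out. Your sketch of the IBP mechanics and the $P_x/Q_x$ splitting is consistent with that derivation, so deferring the detailed combinatorics to \cite{BandI} is precisely what the paper does.
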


\begin{lemma}[Lemma 2.1 of \cite{BandIII}]
	\label{2ndExp}
For the Wegner orbital model in \Cref{def: BM}, we have that
	\begin{equation}
		\label{eq:2nd}
		\begin{split}
			\zT_{\fa,\fb_1 \fb_2}&= m  \zthn_{\fa \fb_1}\overline G_{\fb_1\fb_2} + \sum_x \zthn_{\fa x}\left( \mathcal A^{(>2)}_{x,\fb_1\fb_2}  + \mathcal Q^{(2)}_{x,\fb_1\fb_2}  \right)\,,
		\end{split}
	\end{equation}
	for any $\fa,\fb_1,\fb_2\in \Z_L^d$. 
 %    where
 % \begin{align}
	% 	\mathcal A^{(>2)}_{x,\fb_1\fb_2}&:= m \sum_{y} S_{xy}\left[   (G_{yy}-m) G_{x \fb_1} \overline G_{x \fb_2} \nonumber+  ( \overline G_{xx} -\overline m)G_{y\fb_1}\overline G_{y\fb_2}\right]\,,\nonumber\\
	% 	\mathcal Q^{(2)}_{x,\fb_1\fb_2}&:= Q_x \left(G_{x\fb_1} \overline G_{x\fb_2} \right)- |m|^2\sum_{y}  S_{xy}  Q_x\left( G_{y\fb_1}\overline G_{y\fb_2}  \right) - m  \delta_{x\fb_1}  Q_{\fb_1}\left(  \overline G_{\fb_1\fb_2} \right) \label{eq:Q2} \\
	% 	&\quad\ - m\sum_{y}  S_{xy}  Q_x \left[   (G_{yy}-m) G_{x\fb_1} \overline G_{x\fb_2}  + ( \overline G_{xx}-\bar m) G_{y\fb_1}\overline G_{y\fb_2}  \right]\,.\nonumber
	% \end{align}
	% \begin{align*}
	% 	\mathcal A^{(>2)}_{x,\fb_1\fb_2}&:= m \sum_{y} S_{xy}   (G_{yy}-m) G_{x \fb_1} \overline G_{x \fb_2} \nonumber+m \sum_{y} S_{xy} ( \overline G_{xx} -\overline m)G_{y\fb_1}\overline G_{y\fb_2}\,,\\
	% 	\mathcal Q^{(2)}_{x,\fb_1\fb_2}&:= Q_x \left(G_{x\fb_1} \overline G_{x\fb_2} \right)- |m|^2\sum_{y}  S_{xy}  Q_x\left( G_{y\fb_1}\overline G_{y\fb_2}  \right) - m  \delta_{x\fb_1}  Q_{\fb_1}\left(  \overline G_{\fb_1\fb_2} \right)  \\
	% 	&\quad\ - m\sum_{y}  S_{xy}  Q_x \left[   (G_{yy}-m) G_{x\fb_1} \overline G_{x\fb_2} \right]  - m\sum_{y}  S_{xy}  Q_x\left[( \overline G_{xx}-\bar m) G_{y\fb_1}\overline G_{y\fb_2}  \right]\,.
	% \end{align*}
\end{lemma}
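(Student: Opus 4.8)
## Proof plan for Lemma (Lemma 2.1 of \cite{BandIII})

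The statement to be proven is the $\zT$-version of the preliminary $T$-expansion, namely \eqref{eq:2nd}, which asserts that
$$
\zT_{\fa,\fb_1\fb_2} = m\,\zthn_{\fa\fb_1}\overline G_{\fb_1\fb_2} + \sum_x \zthn_{\fa x}\big(\mathcal A^{(>2)}_{x,\fb_1\fb_2} + \mathcal Q^{(2)}_{x,\fb_1\fb_2}\big),
$$
given that the analogous identity \eqref{seconduniversal} for the unprojected $T$-variable $T_{\fa,\fb_1\fb_2}$ has already been established in \Cref{2nd3rd T}. The plan is essentially to apply the projection $P^\perp = I_N - \mathbf e\mathbf e^\top$ to both sides of \eqref{seconduniversal} in the first index $\fa$ and to track how $P^\perp$ interacts with each term.

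\textbf{Step 1: Apply $P^\perp$ to the identity of \Cref{2nd3rd T}.} By the definition $\zT_{\fa,\fb_1\fb_2} = \sum_w P^\perp_{\fa w} T_{w,\fb_1\fb_2}$ in \eqref{general_T}, I would left-multiply \eqref{seconduniversal} by $P^\perp$ (acting on the label $\fa$). This immediately gives
$$
\zT_{\fa,\fb_1\fb_2} = m \sum_w P^\perp_{\fa w}\thn_{w\fb_1}\overline G_{\fb_1\fb_2} + \sum_w P^\perp_{\fa w}\sum_x \thn_{wx}\big(\mathcal A^{(>2)}_{x,\fb_1\fb_2} + \mathcal Q^{(2)}_{x,\fb_1\fb_2}\big).
$$
The first term contains $\sum_w P^\perp_{\fa w}\thn_{w\fb_1} = (P^\perp\thn)_{\fa\fb_1}$, and the second contains $(P^\perp\thn)_{\fa x}$.

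\textbf{Step 2: Reconcile $P^\perp\thn$ with $\zthn = P^\perp\thn P^\perp$.} The discrepancy between $P^\perp\thn$ and $\zthn$ is the missing right projection $P^\perp$. Here I would use that $\thn$ is a function of $S$ (via $\thn = S(1-M^0 S)^{-1}$) and that $\mathbf e$ is the Perron--Frobenius eigenvector of $S$; consequently $\mathbf e$ is also an eigenvector of $\thn$, so $\thn$ commutes with $P^\perp$ and $P^\perp\thn = \thn P^\perp = P^\perp\thn P^\perp = \zthn$ whenever the other factor already lies in the range of $P^\perp$ or when the eigenvalue contribution cancels. More carefully: $(P^\perp\thn)_{\fa\fb_1} = \zthn_{\fa\fb_1} + (\text{rank-one piece along } \mathbf e)$, and the rank-one correction is $N^{-1}\sum_{w}\thn_{w\fb_1}\cdot(\text{something independent of }\fb_1)$ times $\overline G_{\fb_1\fb_2}$; one checks this rank-one piece is exactly cancelled by the corresponding rank-one pieces arising in $\sum_x (P^\perp\thn)_{\fa x}\mathcal A^{(>2)}_{x,\fb_1\fb_2}$ and $\sum_x(P^\perp\thn)_{\fa x}\mathcal Q^{(2)}_{x,\fb_1\fb_2}$, because the original identity \eqref{seconduniversal} with $\thn$ (not $\zthn$) already holds as an exact algebraic identity. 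Alternatively — and this is cleaner — since \eqref{eq:2nd} is claimed as an \emph{exact} identity just like \eqref{seconduniversal}, one simply observes that $\sum_x \thn_{wx}\big(\mathcal A^{(>2)}_{x,\fb_1\fb_2}+\mathcal Q^{(2)}_{x,\fb_1\fb_2}\big) = T_{w,\fb_1\fb_2} - m\thn_{w\fb_1}\overline G_{\fb_1\fb_2}$, so the bracketed quantity $\mathcal A^{(>2)} + \mathcal Q^{(2)}$ expressed through $\thn^{-1}$ is well-defined, and applying $P^\perp$ throughout and using $P^\perp\thn = \zthn P^\perp{}^{-1}$-type manipulations (interpreting via $\zthn = P^\perp \thn P^\perp$) yields \eqref{eq:2nd} directly. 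In practice the cleanest route is to verify that the combination $\sum_x\thn_{wx}(\cdots)$ lies in the range of $P^\perp$ up to the explicit $m\thn_{w\fb_1}\overline G_{\fb_1\fb_2}$ term, then absorb the projections.

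\textbf{Step 3: Bookkeeping and conclusion.} After Step 2 I would collect terms: the leading term becomes $m\,\zthn_{\fa\fb_1}\overline G_{\fb_1\fb_2}$ and the remainder becomes $\sum_x\zthn_{\fa x}\big(\mathcal A^{(>2)}_{x,\fb_1\fb_2}+\mathcal Q^{(2)}_{x,\fb_1\fb_2}\big)$, which is exactly \eqref{eq:2nd}. The main (and really only) obstacle is Step 2: one must be careful that replacing $\thn$ by $\zthn$ in the remainder sum is legitimate, i.e., that the $\mathbf e$-direction contributions conspire to leave precisely $\zthn$ and nothing else. This is where one uses the structural fact that $\mathbf e$ is a common eigenvector of $S$, $M^0$-related operators, and hence $\thn$, together with the observation that the full identity \eqref{seconduniversal} is an algebraic consequence of the resolvent expansion (self-consistent equation for $G$), so projecting it term-by-term cannot introduce spurious contributions. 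Since this is the $\zT$-analogue of an already-proven identity and the argument is a routine projection manipulation, I expect the proof in \cite{BandIII} to be short — essentially a one-paragraph reduction to \Cref{2nd3rd T}.
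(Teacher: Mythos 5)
Your plan is correct, and in fact simpler than you make it sound. The paper itself does not prove this lemma (it is quoted from \cite{BandIII}, with the remark afterwards only recording the zero-mode identity $T_{\fa,\fb_1\fb_2}-\zT_{\fa,\fb_1\fb_2}=(G_{\fb_2\fb_1}-\overline G_{\fb_1\fb_2})/(2\ii N\eta)$), so deriving \eqref{eq:2nd} by left-multiplying \eqref{seconduniversal} by $P^\perp$ in the index $\fa$ is a legitimate route. The only issue is your Step 2, which is needlessly hedged: for the Wegner orbital model $M^0=|m|^2 I$, so $\thn=S(1-|m|^2S)^{-1}$ is a symmetric function of $S$ alone, and since $\mathbf e$ is an eigenvector of $S$ (constant row sums $1+2d\lambda^2$) it is an eigenvector of $\thn$; hence $P^\perp\thn=\thn P^\perp=P^\perp\thn P^\perp=\zthn$ \emph{exactly}, with no rank-one remainder whatsoever. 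Consequently applying $\sum_w P^\perp_{\fa w}(\cdot)$ to \eqref{seconduniversal} yields \eqref{eq:2nd} term by term, and all the talk of cancelling rank-one pieces between the leading term and the $\mathcal A^{(>2)}$, $\mathcal Q^{(2)}$ sums (and the rather murky ``$\thn^{-1}$'' manipulation) can be deleted. With that simplification your argument is a complete one-line reduction to \Cref{2nd3rd T}, which matches how the result should be understood; the original proof in \cite{BandIII} runs the expansion directly with the zero mode removed, but the content is the same.
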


\begin{remark}
The difference between \eqref{eq:2nd} and \eqref{seconduniversal} is that we have picked out the zero mode, i.e., \be\label{eq:T-cT}T_{\fa,\fb_1\fb_2}-\zT_{\fa,\fb_1\fb_2}=N^{-1}\sum_{\fa}T_{\fa,\fb_1\fb_2}=\frac{ G_{\fb_2 \fb_1} - \overline G_{\fb_1 \fb_2}}{2\ii N\eta},\ee 
in the expansion \eqref{eq:2nd}. In particular, \eqref{seconduniversal} will be used to study the ``large $\eta$ case" with $\eta\ge t_{Th}^{-1}$, in which case the zero mode is negligible compared to the diffusive part, while \eqref{eq:2nd} aims to deal with the ``small $\eta$ case" with $\eta\le t_{Th}^{-1}$. 
\end{remark}

The above expansions \eqref{seconduniversal} and \eqref{eq:2nd} were used as the preliminary $T$-expansions in \cite{BandI} and \cite{BandIII}, respectively, and are referred to as the \emph{second order $T$-expansions} in those works. 
%(Note the corresponding $T$-expansion of $T_{\fb_1\fb_2,\fa}$ or \smash{$\zT_{\fb_1\fb_2,\fa}$} can be obtained by considering the transposition of $T_{\fa,\fb_1\fb_2}$ or $\zT_{\fa,\fb_1\fb_2}$.)  
However, for our purpose, we will employ a different preliminary $T$-expansion obtained by performing further expansions with respect to the light weights (i.e., self-loops) $G_{yy}-m$ and $\bar G_{xx}-\bar m$ in \smash{$\mathcal A^{(>2)}_{x,\fb_1\fb_2}$}, which corresponds to the \emph{third order $T$-expansion} in \cite{BandI,BandIII}. 

\begin{lemma}[Preliminary $T$-expansion for $\WO$]
    \label{3rdExp}
For the Wegner orbital model, we have that
\begin{align}
		\label{eq:3rd0}
	T_{\fa,\fb_1 \fb_2}&= m  \thn_{\fa \fb_1}\overline G_{\fb_1\fb_2} + \sum_x \thn_{\fa x}\left( \mathcal A^{(>3)}_{x,\fb_1\fb_2}  + \mathcal Q^{(3)}_{x,\fb_1\fb_2}  \right)\, , \\
		\label{eq:3rd}
		\zT_{\fa,\fb_1 \fb_2}&= m  \zthn_{\fa \fb_1}\overline G_{\fb_1\fb_2} + \sum_x \zthn_{\fa x}\left( \mathcal A^{(>3)}_{x,\fb_1\fb_2}  + \mathcal Q^{(3)}_{x,\fb_1\fb_2}  \right)\,, %+ \frac{ G_{\fb_2 \fb_1} - \overline G_{\fb_1 \fb_2}}{2\ii N\eta} 
	\end{align}
for any $\fa,\fb_1,\fb_2\in \Z_L^d$, where 
 \begin{align*}
\mathcal A^{(>3)}_{x,\fb_1\fb_2}:=&~ m^2\sum_{\al,\beta}  S^+_{x\al}S_{\al\beta} \left[\Gc_{\al\al} \Gc_{\beta\beta} G_{x \fb_1} \overline G_{x \fb_2} +   G_{x\beta}G_{\beta\al}G_{\al \fb_1} \overline G_{x \fb_2} +  G_{\beta\al}\overline G_{x \al}  G_{x \fb_1} \bar G_{\beta\fb_2}\right]\,\\
+&~|m|^2\sum_{y,\al} S_{xy}S_{x\al} \Gc^-_{xx}  \Gc^-_{\al\al} G_{y\fb_1}\overline G_{y\fb_2} + |m|^2\bar m^2 \sum_{y,\al,\beta} S_{xy}S^-_{x\al}S_{\al\beta} \Gc^-_{\al\al} \Gc^-_{\beta\beta} G_{y\fb_1}\overline G_{y\fb_2} \\
+&~|m|^2 \sum_{y,\al} S_{xy}S_{x\al}\left[ \bar G_{\al x} G_{yx} G_{\al \fb_1}\overline G_{y\fb_2} +   G_{y\fb_1}\overline G_{y\al}\bar G_{\al x} \bar G_{x\fb_2} \right]\\
+&~|m|^2\bar m^2 \sum_{y,\al,\beta} S_{xy}S^-_{x\al}S_{\al\beta} \bar G_{\beta\al}  G_{y\al}G_{\beta\fb_1}\overline G_{y\fb_2} + |m|^2\bar m^2 \sum_{y,\al,\beta} S_{xy}S^-_{x\al}S_{\al\beta}  G_{y\fb_1}\overline G_{y\beta}\bar G_{\beta\al}\bar G_{\al\fb_2} ,\\
\mathcal Q^{(3)}_{x,\fb_1\fb_2}:=&~\mathcal Q^{(2)}_{x,\fb_1\fb_2} + m \sum_{\al} S^+_{x\al}  Q_\al\left[ \Gc_{\al\al} G_{x \fb_1} \overline G_{x \fb_2}\right] - m^3 \sum_{\al,\beta} S^+_{x\al}S_{\al\beta} Q_\al\left[\Gc_{\beta\beta} G_{x \fb_1} \overline G_{x \fb_2}\right]\\
+&~ m \sum_{y} S_{xy} Q_x\left[\Gc^-_{xx} G_{y\fb_1}\overline G_{y\fb_2}\right] +  |m|^2\bar m \sum_{y,\al} S_{xy} S^-_{x\al}Q_x\left[\Gc^-_{\al\al} G_{y\fb_1}\overline G_{y\fb_2}\right]\\
-&~|m|^2 \bar m \sum_{y,\al} S_{xy}S_{x\al} Q_x\left[\Gc^-_{\al\al}  G_{y\fb_1}\overline G_{y\fb_2}\right]-|m|^2 \bar m^3 \sum_{y,\al,\beta} S_{xy}S^-_{x\al}S_{\al\beta} Q_\al\left[\Gc^-_{\beta\beta} G_{y\fb_1}\overline G_{y\fb_2}\right] \\
-&~m^2 \sum_{\al,\beta} S^+_{x\al}S_{\al\beta} Q_\al\left[\Gc_{\beta\beta}\Gc_{\al\al} G_{x \fb_1} \overline G_{x \fb_2}+G_{x\beta}G_{\beta\al} G_{\al\fb_1} \overline G_{x \fb_2}+G_{\beta\al}\overline G_{x\al} G_{x \fb_1} \bar G_{\beta\fb_2}\right] \\
-&~|m|^2  \sum_{y,\al} S_{xy}S_{x\al} Q_x\left[\Gc^-_{\al\al}  \Gc^-_{xx} G_{y\fb_1}\overline G_{y\fb_2}\right]-|m|^2 \bar m^2 \sum_{y,\al,\beta} S_{xy}S^-_{x\al}S_{\al\beta} Q_\al\left[\Gc^-_{\beta\beta} \Gc^-_{\al\al} G_{y\fb_1}\overline G_{y\fb_2}\right] \\
-&~|m|^2 \sum_{y,\al} S_{xy}S_{x\al}Q_x\left[ \bar G_{\al x} G_{yx} G_{\al \fb_1}\overline G_{y\fb_2}+G_{y\fb_1}\overline G_{y\al}\bar G_{\al x} \bar G_{x\fb_2}\right]\\
-&~|m|^2\bar m^2 \sum_{y,\al,\beta} S_{xy}S^-_{x\al}S_{\al\beta} Q_\al\left[\bar G_{\beta\al}  G_{y\al}G_{\beta\fb_1}\overline G_{y\fb_2}+G_{y\fb_1}\overline G_{y\beta}\bar G_{\beta\al}\bar G_{\al\fb_2}\right] .
\end{align*}

\end{lemma}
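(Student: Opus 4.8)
\textbf{Proof strategy for the preliminary $T$-expansion (\Cref{3rdExp}).}

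The plan is to start from the second order $T$-expansions \eqref{seconduniversal} and \eqref{eq:2nd} and perform one more round of expansion on the light weights appearing in $\mathcal A^{(>2)}_{x,\fb_1\fb_2}$. Recall that
$$\mathcal A^{(>2)}_{x,\fb_1\fb_2}= m \sum_{y} S_{xy}\left[ \Gc_{yy} G_{x \fb_1} \overline G_{x \fb_2} + \Gc^-_{xx}G_{y\fb_1}\overline G_{y\fb_2}\right],$$
where I have written $\Gc_{yy}=G_{yy}-m$ and $\Gc^-_{xx}=\bar G_{xx}-\bar m$. The first step is to expand the light weight $\Gc_{yy}$ using the standard resolvent identity / Gaussian integration by parts machinery already set up for the Wegner orbital model: one writes $\Gc_{yy}$ in terms of an expansion involving $S^+$, an extra $Q_y$-term, and higher-order resolvent products. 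Concretely, a self-loop $\Gc_{yy}$ generates (i) a ``ladder'' term $m^2\sum_\al S^+_{y\al}\Gc_{\al\al}$ (after resumming the geometric series of $S$-insertions into $S^+$), (ii) terms of the form $m\sum_\al S^+_{y\al}(G_{\cdot\al}G_{\al\cdot})$ coming from the off-diagonal resolvent expansion, and (iii) a $Q_y$-remainder $m\sum_\al S^+_{y\al}Q_\al[\Gc_{\al\al}\,\cdots]$ plus analogous $Q$-terms. The same is done for $\Gc^-_{xx}$ with $S^-$ in place of $S^+$. After substituting these expansions and collecting terms, the genuinely-higher-order pieces (those with at least three $\Gc$ factors or two off-diagonal $G$'s beyond the leading $G_{x\fb_1}\overline G_{x\fb_2}$ or $G_{y\fb_1}\overline G_{y\fb_2}$) get absorbed into $\mathcal A^{(>3)}_{x,\fb_1\fb_2}$, while everything carrying a $Q_x$ or $Q_\al$ label gets absorbed into $\mathcal Q^{(3)}_{x,\fb_1\fb_2}$ (together with the old $\mathcal Q^{(2)}_{x,\fb_1\fb_2}$). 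The leading term $m\thn_{\fa\fb_1}\overline G_{\fb_1\fb_2}$ is untouched, and the prefactor $\sum_x \thn_{\fa x}$ (resp. $\sum_x \zthn_{\fa x}$) is carried along verbatim.

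The key bookkeeping steps, in order, are: (1) derive the single-weight expansion identities for $\Gc_{yy}$ and $\Gc^-_{xx}$ by one application of the Wegner-orbital resolvent expansion (integration by parts in the Ginibre/GUE entries, then resumming $S$-chains into $S^+$ and $S^-$); (2) substitute into $\mathcal A^{(>2)}$, using $S_{xy}S^+_{y\al}$-type products and relabeling the inner Laplacian/variance indices as $\al,\beta$ to match the stated form; (3) separate the output into the ``$A$-type'' deterministic-weight-dressed part and the ``$Q$-type'' part, checking that each term in $\mathcal A^{(>3)}$ indeed has scaling size strictly smaller than that of $\mathcal A^{(>2)}$ (so the expansion is genuinely an order-three expansion), and that each $Q$-term in $\mathcal Q^{(3)}$ retains a $Q_x$ or $Q_\al$ label so the $Q$-graph structure of \Cref{Def_recoll}(ii) is preserved; (4) verify that no error terms beyond $\OO(W^{-D})$ have been dropped — every truncation in the weight expansion is either kept explicitly or is of negligible order by the a priori bounds on $\|G-M\|_{\max}$. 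Finally, the identity \eqref{eq:3rd} follows from \eqref{eq:3rd0} by the same zero-mode subtraction that produced \eqref{eq:2nd} from \eqref{seconduniversal}, i.e. replacing $\thn_{\fa x}$ by $\zthn_{\fa x}=\sum_w P^\perp_{\fa w}\thn_{wx}$ throughout, since all the expansion identities are purely algebraic in the resolvent and do not interact with the projection $P^\perp$ acting on the external index $\fa$.

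The main obstacle I expect is purely organizational rather than conceptual: one must track a fairly large number of terms generated by expanding two self-loops, each producing several descendants, and correctly sort them into the three buckets (leading term, $\mathcal A^{(>3)}$, $\mathcal Q^{(3)}$) while making sure the coefficients $m$, $|m|^2$, $\bar m$, $m^2$, $\bar m^2$, $|m|^2\bar m^2$ etc. come out exactly as displayed. In particular, care is needed with the terms where the self-loop expansion of $\Gc_{yy}$ or $\Gc^-_{xx}$ produces a further $S$-insertion that must itself be resummed (giving the $S^+S$ or $S^-S$ double-variance structures $\sum_{\al,\beta}S^+_{x\al}S_{\al\beta}$), and with the term $-m\,\delta_{x\fb_1}Q_{\fb_1}(\overline G_{\fb_1\fb_2})$ inherited from $\mathcal Q^{(2)}$, which does not get expanded further. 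Since \Cref{3rdExp} is stated as ``Lemma 2.1 of \cite{BandI} / Lemma 2.1 of \cite{BandIII}''-type result adapted to the Wegner orbital model, the cleanest route is to invoke the random band matrix derivation verbatim — the Wegner orbital model with $\delta\asymp 1$ being a special case — and then observe that the derivation used only the variance profile $S$ and the self-consistent equation for $m$, both of which carry over to general $\lambda$ with $S$, $S^\pm$, $m$, $\thn$ as redefined in \Cref{sec:preliminary}. Thus no new estimate is required; the proof is a direct transcription, and I would present it as such, displaying the one-weight expansion identity and then the substitution, and refer to \cite[Section 2]{BandI} for the routine verification of term-by-term scaling-size bounds.
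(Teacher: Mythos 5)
Your proposal follows essentially the same route as the paper: the paper's proof simply applies the weight expansion of \Cref{ssl} to the two light weights $\Gc_{yy}$ and $\Gc^-_{xx}$ in $\mathcal A^{(>2)}_{x,\fb_1\fb_2}$ and resums the $S$-chains via the identity $S(1+m^2S^+)=S^+$, sorting the resulting terms into $\mathcal A^{(>3)}$ and $\mathcal Q^{(3)}$ exactly as you describe. The only small correction is that \Cref{ssl} is an exact algebraic identity, so the expansion produces no truncation remainder and requires no a priori bound on $\|G-M\|_{\max}$ (your step (4) is vacuous), and the $\zthn$ version \eqref{eq:3rd} indeed follows verbatim since the weight expansion acts only on the inner indices and not on the prefactor $\thn_{\fa x}$ or $\zthn_{\fa x}$.
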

\begin{proof}
This is obtained by further expanding the light weights $G_{yy}-m$ and $\bar G_{xx}-\bar m$ in $\mathcal A^{(>2)}_{x,\fb_1\fb_2}$ using \Cref{ssl} below. We also used the identity $ S(1+m^2S^+)=S^+$ in the derivation.
\end{proof}

\subsection{$T$-expansion and $\pTexp$}

We are ready to define the concepts of $T$-expansions and $\pTexp$s for the Wegner orbital model, which are the main tools for our proof. 

\begin{definition} [$T$-expansion]\label{defn genuni}
In the setting of \Cref{thm_locallaw}, let $z=E+\ii \eta$ with $|E|\le 2-\kappa$ and $\eta\ge \eta_0$ for some $\eta_0\in [\eta_*,1]$. Suppose we have a sequence of $\selfs$ $\cal E_k$, $k=1, \ldots, k_0$, satisfying \Cref{collection elements} with $\eta\ge \eta_0$ and the following properties:
\begin{itemize}
\item[(i)] %\eqref{4th_property0} and \eqref{3rd_property0} hols for a sequence of  $\sizeself(\Sele_k)$ such that that 
$ \sizeself(\Sele_k) \prec W^{-\fc_k} $ for a sequence of strictly increasing constants $0< \fc_k\le \fC$. (In our construction, our $\selfs$ actually satisfy that $c_k-c_{k-1}\ge \min\{d-2\log_W \blam,2\xi\}$ with the convention $c_0=0$.)

\item[(ii)] The diffusive edges in them may be labeled diffusive edges. Moreover, for each labeled diffusive edge in $\Sele_k$, its labels only involve $\selfs$ $\Sele_1, \ldots, \Sele_{k-1}$.

%for a graph $\cal G$ in $\Sele_k$, if $\wt\Sele$ is in the label of some labeled diffusive edge in $\cal G$, then there exists a sequence $\{i_1,\cdots, i_a\}\subset \{1,\cdots, k-1\}$ such that $\wt\Sele=\sum_{j=1}^a \Sele_{i_j}$. In other words, the labeled diffusive edges in a $\self$ only involve \emph{$\selfs$ of lower order}.

\item[(iii)] The (unlabeled) diffusive edges in $\Sele_k$, $k=1,\ldots, k_0$, are all $\zthn$ edges
(here, we consider each labeled diffusive edge as a subgraph comprising the components in \Cref{def_graph1} rather than a single double-line edge with a label). 
\end{itemize}
Let $\Sele=\sum_{k=1}^{k_0} \Sele_k$, and denote by $\Sele_k'$ the matrix obtained by replacing each $\zthn$ edge in $\Sele_k$ by a $\thn$ edge. (Note that by \Cref{lem:thn-zthn}, $\Sele_k'$ is still a $\self$ when $\eta\ge t_{Th}^{-1}$.) 
% For $\fa,\fb_1,\fb_2 \in \Z_L^d$, we define the following types of $T$-expansions:
% \medskip 
% \noindent (a) 
Then, for the Wegner orbital model, a $T$-expansion of \smash{$\zT_{\fa,\fb_1\fb_2}$} up to order $\fC$ (with $\eta\ge \eta_0$ and $D$-th order error) is an expression of the form 
\begin{equation}\label{mlevelTgdef}
\zT_{\fa,\fb_1 \fb_2}=  m  \zthn(\Sele)_{\fa \fb_1}\overline G_{\fb_1\fb_2}  + \sum_x \zthn(\Sele)_{\fa x} \left[\PT_{x,\fb_1 \fb_2} +  \AT_{x,\fb_1 \fb_2}  + \WT_{x,\fb_1 \fb_2}  + \QT_{x,\fb_1 \fb_2}  +  (\Err_{D})_{x,\fb_1 \fb_2}\right]\, ;
\end{equation}
a $T$-expansion of $T_{\fa,\fb_1\fb_2}$ up to order $\fC$ (with $\eta\ge \eta_0\vee t_{Th}^{-1}$ and $D$-th order error) is an expression of the form
\begin{equation}\label{mlevelTgdef_orig}
T_{\fa,\fb_1 \fb_2}=  m  \thn(\Sele')_{\fa \fb_1}\overline G_{\fb_1\fb_2} + \sum_x \thn(\Sele')_{\fa x} \left[\PT'_{x,\fb_1 \fb_2} +  \AT'_{x,\fb_1 \fb_2} + \QT'_{x,\fb_1 \fb_2}  +  (\Err_{D}')_{x,\fb_1 \fb_2}\right]\, .
\end{equation}
Here, $\PT',$ $  \AT'$, $ \QT'$, and $\Err_{D}'$ are obtained by replacing each $\zthn$ edge in $\PT,$ $  \AT$, $ \QT$, and $\Err_{D}$ by a $\thn$ edge. 
% \medskip 
% \noindent (b) For the band Anderson and Anderson orbital models, a $T$-expansion of $\czT_{\fa,\fb_1\fb_2}$ up to order $\fC$ (with $\eta\ge \eta_0$ and $D$-th order error) is an expression of the form
% \begin{equation}
% 	\label{mlevelTgdef_BA}
% 	\begin{split}
% 		\czT_{\fa,\fb_1 \fb_2}&=  \sum_{x} [\zthn(\Selek) M^0 S]_{\fa x}\left(M _{x \fb_1 }\bar M_{x \fb_2}
% 	+ \Gc _{x \fb_1 } \bar M_{ x \fb_2}	+M_{x \fb_1 }\Gc^{-} _{ x \fb_2}\right) \\
% 		&+ \sum_x \zthn(\Selek)_{\fa x} \left[\PT_{x,\fb_1 \fb_2} +  \AT_{x,\fb_1 \fb_2}  + \WT_{x,\fb_1 \fb_2}  + \QT_{x,\fb_1 \fb_2}  +  (\Err_{D})_{x,\fb_1 \fb_2}\right]\, ,
% 	\end{split}
% \end{equation}
% and a $T$-expansion of $\cT_{\fa,\fb_1\fb_2}$ up to order $\fC$ (with $\eta\ge t_{Th}^{-1}$ and $D$-th order error) is an expression of the form
% \begin{equation}
% 	\label{mlevelTgdef_BA_orig}
% 	\begin{split}
% 		\cT_{\fa,\fb_1 \fb_2}&=  \sum_{x} [\thn(\Selek') M^0 S]_{\fa x}\left(M _{x \fb_1 }\bar M_{x \fb_2}
% 	+ \Gc _{x \fb_1 } \bar M_{ x \fb_2}	+M_{x \fb_1 }\Gc^{-} _{ x \fb_2}\right) \\
% 		&+ \sum_x \thn(\Sele')_{\fa x} \left[\PT'_{x,\fb_1 \fb_2} +  \AT'_{x,\fb_1 \fb_2}    + \QT'_{x,\fb_1 \fb_2}  +  (\Err_{D}')_{x,\fb_1 \fb_2}\right]\, .
% 	\end{split}
% \end{equation}
% Here, $\PT',$ $  \AT'$, $ \QT'$, and $\Err_{D}'$ are obtained by replacing each $\zthn$ edge in $\PT,$ $  \AT$, $ \QT$, and $\Err_{D}$ by a $\thn$ edge. 
Moreover, we require the graphs in $\PT,$ $  \AT$, $\WT$, $ \QT$, and $\Err_{D}$ to satisfy the following properties. 
%each of $\PT_{x,\fb_1 \fb_2},$ $  \AT_{x,\fb_1 \fb_2}$, $\WT_{x,\fb_1\fb_2}$, $ \QT_{x,\fb_1 \fb_2}$, $(\Err_{D})_{x,\fb_1 \fb_2}$, and $(\Err_{D}')_{x,\fb_1 \fb_2}$ is a sum of $\OO(1)$ many normal graphs satisfying the following properties. %for some small constant $\fc>0$ that depends only on $\fd$ and $\xi$. 
\begin{enumerate}

\item 
Each of 
$\PT_{x,\fb_1 \fb_2},$ $  \AT_{x,\fb_1 \fb_2}$, $\WT_{x,\fb_1\fb_2}$, $ \QT_{x,\fb_1 \fb_2}$, and $(\Err_{D})_{x,\fb_1 \fb_2}$ is a sum of $\OO(1)$ many normal graphs (recall Definition \ref{defnlvl0}) 
%Every graph in $\PT_{x,\fb_1\fb_2}$, $\AT_{x,\fb_1\fb_2}$, $\WT_{x,\fb_1\fb_2}$, $\QT_{x,\fb_1\fb_2} $ and $(\Err_{D})_{x,\fb_1\fb_2}$ is a normal graph (recall Definition \ref{defnlvl0}) 
with external vertices $x,\fb_1,\fb_2$. Furthermore, in every graph,
\begin{itemize}
\item every (unlabeled) diffusive edge is a $\zthn$ edge; 

\item for each labeled diffusive edge in it, the labels only involve $\selfs$ $\Sele_1, \ldots, \Sele_{k_0-1}$;

\item there is an edge, blue solid, waved, $\dashed$, or dotted, connected to $\fb_1$;

\item there is an edge, red solid, waved, $\dashed$, or dotted, connected to $\fb_2$.
\end{itemize} 

%\item For each labeled diffusive edge in $\PT,$ $  \AT$, $\WT$, $ \QT$, and $(\Err_{D})_{x,\fb_1 \fb_2}$, its labels only involve $\selfs$ $\Sele_1, \ldots, \Sele_{k_0-1}$. 

%for each $\self$, say $\wt\Sele$, in the labels, there exists a sequence $\{i_1,\cdots, i_a\}\subset \{1,\cdots, k_0-1\}$ such that $\wt\Sele=\sum_{j=1}^a \Sele_{i_j}.$ 
%Moreover, if we are looking at a graph in $\Sele_k$ and $\Sele'$ is in the label of some labeled diffusive edge in it, then there exists a sequence $\{i_1,\cdots, i_a\}\subset \{1,\cdots, k-1\}$ such that $\Sele'=\sum_{j=1}^a \Sele_{i_j}$. (In other words, the labeled diffusive edges in a $\self$ only involve $\selfs$ of lower order.)
		
\item $\PT_{x,\fb_1\fb_2}$ is a sum of $\{\fb_1,\fb_2\}$-recollision graphs (recall Definition \ref{Def_recoll}) without any $P/Q$ label or free edge, and 
\be\label{eq:smallR}
\size\Big(\sum_{x} \zthn(\Selek)_{\fa x}\PT_{x,\fb_1\fb_2}\Big) \lesssim [\heta^{1/2}\vee (\blam \heta)]\cdot  \heta \le W^{-\fd} \heta ,\quad \forall \eta\ge \eta_0.
\ee

\item  $\AT_{x,\fb_1\fb_2}$ is a sum of ``higher order graphs" without any $P/Q$ label or free edge, and 
\be\label{eq:smallA}
\size\Big(\sum_{x} \zthn(\Selek)_{\fa x}\AT_{x,\fb_1\fb_2}\Big) \lesssim W^{-\fC-\mathfrak c}\heta,\quad \forall \eta\ge \eta_0,
\ee
for a constant $\fc$ that does not depend on $\fC$ (in our construction, we can actually take $\fc:=\min\{d-2\log_W \blam,(d-\log_W \blam)/2,2\xi\}$).

\item  $\WT_{x,\fb_1\fb_2}$ is a sum of ``free graphs" without any $P/Q$ label, with exactly one free edge, and satisfying 
\be\label{eq:smallW}
\size\Big(\sum_{x} \zthn(\Selek)_{\fa x}\WT'_{x,\fb_1\fb_2}\Big) \lesssim  \blam \heta \le W^{-\fd}, \quad \forall \eta\ge \eta_0,
\ee
where $\WT'_{x,\fb_1\fb_2}$ refers to the expression obtained by removing the free edges from the graphs in $\WT_{x,\fb_1\fb_2}$. 

\item 
% For the block Anderson and Anderson orbital models, we denote the expression obtained by removing the 10 graphs in \eqref{eq_largeQ} and \eqref{eq_largeQ2} from $\cal Q_{x,\fb_1\fb_2}$ as $\cal Q'_{x,\fb_1\fb_2}$. 
%For the Wegner orbital model, 
We denote the expression obtained by removing the following graphs from $\cal Q_{x,\fb_1\fb_2}$ as $\cal Q'_{x,\fb_1\fb_2}$:
 \begin{align}
&~\mathcal Q^{(2)}_{x,\fb_1\fb_2} + m \sum_{\al} S^+_{x\al}  Q_\al\left[ \Gc_{\al\al} G_{x \fb_1} \overline G_{x \fb_2}\right] - m^3 \sum_{\al,\beta} S^+_{x\al}S_{\al\beta} Q_\al\left[\Gc_{\beta\beta} G_{x \fb_1} \overline G_{x \fb_2}\right]\nonumber\\
+&~ m \sum_{y} S_{xy} Q_x\left[\Gc^-_{xx} G_{y\fb_1}\overline G_{y\fb_2}\right] +  |m|^2\bar m \sum_{y,\al} S_{xy} S^-_{x\al}Q_x\left[\Gc^-_{\al\al} G_{y\fb_1}\overline G_{y\fb_2}\right]\nonumber\\
-&~|m|^2 \bar m \sum_{y,\al} S_{xy}S_{x\al} Q_x\left[\Gc^-_{\al\al}  G_{y\fb_1}\overline G_{y\fb_2}\right]-|m|^2 \bar m^3 \sum_{y,\al,\beta} S_{xy}S^-_{x\al}S_{\al\beta} Q_\al\left[\Gc^-_{\beta\beta} G_{y\fb_1}\overline G_{y\fb_2}\right]  .\label{eq:largeQW}
\end{align}
%removing the $\QT_{\fa,\fb_1\fb_2} = \QT'_{\fa,\fb_1\fb_2}+\sum_{\beta}M_{\al\beta}Q_\beta[(M^{-1}\Gc)_{\beta y} \Gc^*_{\al y'}] - \sum_{\beta,\gamma} M^0_{\al\beta}  S_{\beta\gamma}Q_\beta( G_{\gamma y}G^*_{\gamma y'})$, where 
Then, $\QT'_{\fa,\fb_1\fb_2}$ is a sum of $Q$-graphs %(cf. Definition \ref{Def_recoll}) 
% \begin{align}
% &\size\bigg(\sum_{x,y}\zthn_{\fa x}M_{xy}Q_y \left[(M^{-1}\Gc)_{y \fb_1} \Gc^-_{x\fb_2}\right]- \sum_{x,y,\beta}\zthn_{\fa x} M^0_{xy}S_{y \beta}  Q_y \left( G _{\beta \fb_1}G^-_{\beta \fb_2}\right) \bigg) \lesssim \blam \heta, \end{align}
without any free edge and satisfying 
\be\label{eq:smallQ} \size\Big(\sum_{x} \zthn(\Selek)_{\fa x}\QT'_{x,\fb_1\fb_2}\Big)\lesssim [\heta^{1/2}\vee (\blam \heta)]\cdot  \heta \le W^{-\fd} \heta ,\quad \forall \eta\ge \eta_0. %\size(\czT_{\fa,\fb_1 \fb_2}).
\ee
 	
\item $(\Err_{D})_{z,\fb_1\fb_2}$ is a sum of error graphs such that 
\be\label{eq:smallRerr} \size\Big(\sum_x \zthn(\Selek)_{\fa x} (\Err_{D})_{x,\fb_1\fb_2} \Big)\le  W^{-D} \heta,\quad \forall \eta\ge \eta_0.
\ee
These graphs may contain $P/Q$ labels and hence are not included in $\AT_{x,\fb_1\fb_2}$.

\item The graphs in $\PT_{x,\fb_1\fb_2}$, $\AT_{x,\fb_1\fb_2}$, $\WT_{x,\fb_1\fb_2}$, $\QT_{x,\fb_1\fb_2}$ and $(\Err_{D})_{x,\fb_1\fb_2}$ are doubly connected in the sense of Definition \ref{def 2net} (with $x$ regarded as an internal vertex and $\fb_1,\fb_2$ as external vertices). Moreover, the free edge in every graph of $\WT_{x,\fb_1\fb_2}$ is redundant, that is, the graphs in $\WT_{x,\fb_1\fb_2}'$ obtained by removing the free edge still satisfy the doubly connected property.

\end{enumerate}
%The graphs on the right-hand side of \eqref{mlevelTgdef} satisfy some additional properties given in Definition \ref{def genuni2} below. 
%The graphs in $\PTn$, $\ATn$, $\Wn$ and $\QTn$ actually satisfy some additional graphical properties, which will be given later in Definition \ref{def genuni2}.
\end{definition}

To establish the local law, \Cref{thm_locallaw}, and the quantum diffusion, \Cref{thm_diffu}, we need to construct a sequence of $T$-expansions up to arbitrarily high order.  

\begin{theorem}[Construction of $T$-expansions]   \label{completeTexp} 
In the setting of \Cref{thm_locallaw}, let $ \eta_0=W^{\fd}\eta_*$. 
For any fixed constants $D>\fC>0$, we can construct $T$-expansions for \smash{$\zT_{\fa,\fb_1\fb_2}$} and $T_{\fa,\fb_1\fb_2}$ that satisfy Definition \ref{defn genuni} up to order $\fC$ (with $\eta\ge \eta_0$ and $D$-th order error).
\end{theorem}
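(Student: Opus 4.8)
The plan is to construct the $T$-expansion by induction on the order, following the strategy of \cite{BandIII,BandI,BandII} but incorporating the coupling renormalization mechanism. The base case is the preliminary $T$-expansion of \Cref{3rdExp}: the terms $\mathcal A^{(>3)}$ and $\mathcal Q^{(3)}$ already furnish a decomposition into a higher-order part $\mathcal A$, a $Q$-graph part $\mathcal Q$, and (after a single expansion producing a free edge via Ward's identity) a free-graph part $\mathcal W$, while $\mathcal R$ is initially empty and $\mathcal E_1$ is extracted as the leading deterministic doubly connected self-interaction graph inside $\mathcal A^{(>3)}$. One checks that all the structural requirements (1)--(7) of \Cref{defn genuni} hold at this stage, with $\mathcal E_1$ of scaling size $\prec W^{-\fc_1}$ by \Cref{dG-bd} and the doubly connected property preserved by construction.

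For the inductive step, assume a $T$-expansion up to order $\fC'<\fC$ with self-energies $\mathcal E_1,\dots,\mathcal E_{k_0}$ has been built. The higher-order term $\mathcal A_{x,\fb_1\fb_2}$ has scaling size $\lesssim W^{-\fC'-\fc}\heta$, but individual graphs inside it may still be of order only $W^{-\fc_{k_0}}\heta$ rather than $W^{-\fC}\heta$; we must expand these further. The key operations are: (a) expand a light weight or a $Q$-labeled $G$-edge using the integration-by-parts / resolvent-expansion identities (the analogue of the lemma used to derive \Cref{3rdExp}, cited as ``\Cref{ssl}'' in the excerpt), generating new graphs of higher scaling size plus new $Q$-graphs and error graphs; (b) whenever a newly generated deterministic doubly connected subgraph with the redundant-diffusive-edge property appears that does \emph{not} already decay fast enough, pull it out as a new self-energy $\mathcal E_{k_0+1}$, absorbing it into $\zthn(\mathcal E)$ via the identity $\zthn(\mathcal E) = \zthn + \zthn\,\mathcal E\,\zthn(\mathcal E)$ so that the expansion retains its form \eqref{mlevelTgdef}; (c) apply the coupling renormalization of \Cref{sec:Vexpansion} to the leading graphs where $|G_{xy}|^2$ couples with other $G$-edges at a molecule, gaining the extra vanishing factor that is needed to push the scaling size below $W^{-\fC}\heta$. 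At each step one verifies that the doubly connected property is preserved (this is where one uses that each extracted diffusive edge in $\mathcal E$ is redundant, so inserting $\zthn\mathcal E\zthn$ does not destroy either spanning tree), that the new self-energies satisfy \Cref{collection elements} — in particular the sum zero property \eqref{3rd_property0}, which is deferred to \Cref{cancellation property} / \Cref{sec:sumzero} — and that free edges remain redundant. The recursion terminates because each expansion step strictly increases the minimal scaling-size exponent among the ``not-yet-small-enough'' graphs by at least $\fc$, so after finitely many steps all remaining graphs that are not $Q$-graphs, free graphs, or recollision graphs have scaling size $\le W^{-\fC-\fc}\heta$ and get absorbed into $\mathcal A$, while the genuinely negligible ones ($\le W^{-D}\heta$) go into $\Err_D$.

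Finally, the version for the unprojected $T_{\fa,\fb_1\fb_2}$ in \eqref{mlevelTgdef_orig} is obtained from the $\zT$-version by the same expansions carried out with $\thn$ in place of $\zthn$ throughout, using \eqref{eq:3rd0} as the starting point instead of \eqref{eq:3rd}; this is legitimate when $\eta\ge t_{Th}^{-1}$ because \Cref{lem:thn-zthn} guarantees that replacing $\zthn$ edges by $\thn$ edges preserves the self-energy property, and the scaling-size bounds \eqref{eq:smallR}--\eqref{eq:smallRerr} transfer verbatim since $\thn_{xy}\prec B_{xy}$ in that regime by \Cref{lem redundantagain}. The main obstacle, and the part requiring the most new work relative to \cite{BandI,BandII,BandIII}, is step (c): one must show that the coupling renormalization cancellation is robust, i.e.\ that the vanishing factor extracted at a molecule depends only on the number of $G$-edges meeting there and not on the detailed internal structure of the vertex function, and that simultaneous cancellations at several different molecules can all be harvested; without this, the naive bookkeeping would leave graphs carrying extra powers of $\blam$ (equivalently, behaving as if $d<7$) and the scaling sizes in (2)--(5) would fail. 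This is precisely why the proof of \Cref{completeTexp} must be interleaved with the $V$-expansion of \Cref{sec:Vexpansion} and the continuity estimate \Cref{gvalue_continuity}, and it is the reason \Cref{cancellation property} (the sum zero property for the self-energies) is isolated as a separate input whose proof occupies \Cref{sec:sumzero}.
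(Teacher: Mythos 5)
Your high-level skeleton is broadly the one the paper uses: induction on the order, local expansions of the preliminary $T$-expansion, extraction of the deterministic non-expandable parts as a new (pseudo-)self-energy, absorption of it into the renormalized propagator by solving the resulting $T$-equation (your identity $\zthn(\Sele)=\zthn+\zthn\Sele\,\zthn(\Sele)$ is exactly the solving step around \eqref{eq:solve-Teq-0}), deferral of the sum zero property to \Cref{cancellation property}, and the $\thn$ versus $\zthn$ dichotomy for $T_{\fa,\fb_1\fb_2}$ versus $\zT_{\fa,\fb_1\fb_2}$ via \Cref{lem:thn-zthn}. However, two of your key mechanisms do not match what actually makes the construction work. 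First, you never invoke the \emph{global expansion}: the engine of \Cref{Teq} is \Cref{strat_global}, in which one locates a redundant $T$-variable (the one containing the first blue solid edge in a pre-deterministic order of the minimal isolated subgraph) and substitutes the previously constructed $\fC$-th order $T$-expansion into it, with the SPD/globally standard bookkeeping of \Cref{def genuni2} and \Cref{lem globalgood} guaranteeing both that such a variable always exists in a non-stopped graph and that double connectivity survives the substitution. Local and $Q$-expansions alone, which is all your steps (a)--(b) contain, terminate at locally standard graphs and cannot raise the order; moreover your termination claim (``each step strictly increases the minimal scaling-size exponent by at least $\fc$'') is false as stated — the paper's termination argument allows steps that do not decrease the size but strictly reduce the number of $G$ edges, and it is the combination of the two that bounds the number of iterations by $C(\fC/\fc)^2$.

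Second, your step (c) assigns the coupling (vertex) renormalization a role it does not have here. In the paper the construction of the $T$-expansion gains smallness purely from the expansion structure (each pulled edge costs $\heta^{1/2}$, each global expansion either shrinks the size by $W^{-\fc}$ or removes a $G$ edge); no cancellation at molecules is used, and the scaling-size requirements (2)--(6) of \Cref{defn genuni} would not ``fail'' without it. The coupling renormalization and the $V$-expansion of \Cref{sec:Vexpansion} enter only in the continuity estimate \Cref{gvalue_continuity} (where graphs with many external vertices would otherwise carry extra $\blam^{q/2}$ factors); they touch \Cref{completeTexp} only indirectly, through \Cref{locallaw-fix} being needed in the proof of the sum zero property (\Cref{lem maincancel}, at $\eta_0=t_{Th}^{-1}$ and with $L$ in the window \eqref{Lcondition}, via the infinite-space-limit comparison of \Cref{lem V-R wt} and \Cref{lem FT0}). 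As written, your inductive step neither supplies the actual order-raising mechanism nor correctly identifies where the hard cancellation is consumed, so the proposal has a genuine gap rather than being an alternative route.
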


In the proof, we will first construct a $T$-equation as outlined in \Cref{def incompgenuni} below, and then solve this $T$-equation to get a $T$-expansion.

\begin{definition}[$\incomp$]\label{def incompgenuni}
In the setting of \Cref{defn genuni}, a $T$-equation of $\zT_{\fa,\fb_1\fb_2}$ for the Wegner orbital model up to order $\fC$ (with $\eta\ge \eta_0$ and $D$-th order error) is an expression of the following form that corresponds to \eqref{mlevelTgdef}:
\begin{equation}
\label{mlevelT incomplete}
\begin{split}	
\zT_{\fa,\fb_1 \fb_2}&=  m  \zthn_{\fa \fb_1}\overline G_{\fb_1\fb_2} %+ \frac{ G_{\fb_2 \fb_1} - \overline G_{\fb_1 \fb_2}}{2\ii N\eta} 
+ \sum_x (\zthn\Sele)_{\fa x}\zT_{x,\fb_1\fb_2} \\
&+ \sum_x \zthn_{\fa x} \left[\PT_{x,\fb_1 \fb_2} +  \AT_{x,\fb_1 \fb_2}  + \WT_{x,\fb_1 \fb_2}  + \QT_{x,\fb_1 \fb_2}  +  (\Err_{D})_{x,\fb_1 \fb_2}\right]\, ,
	\end{split}
\end{equation}
% while a $T$-equations of $\czT_{\fa,\fb_1\fb_2}$ for the band Anderson and Anderson orbital models up to order $\fC$ (with $\eta\ge \eta_0$ and $D$-th order error) is an expression of the following form corresponding to \eqref{mlevelTgdef_BA}: 
% \begin{equation}\label{mlevelT incomplete_BA}
% \begin{split}
% 	\czT_{\fa,\fb_1\fb_2}&=  \sum_{ x} (\zthn M^0 S)_{\fa x}\left(M _{ x \fb_1}\bar M_{ x \fb_2}
% 	+ \Gc _{ x \fb_1} \bar M_{  x \fb_2}	+M_{ x \fb_1}\Gc^{-} _{  x \fb_2}\right) +\sum_x (\zthn \Sele)_{\fa x} \czT_{ x,\fb_1\fb_2} \\
% 		&+ \sum_ x \zthn_{\fa x} \left[\PT_{ x,\fb_1\fb_2} +  \AT_{ x,\fb_1\fb_2}  + \WT_{ x,\fb_1\fb_2}  + \QT_{ x,\fb_1\fb_2}  +  (\Err_{D})_{ x,\fb_1\fb_2}\right]\,,
% 		\end{split}
% \end{equation}
where $\Sele,\, \PT,\, \AT,\, \WT,\, \QT,\, \Err_{\fC,D}$ are the same expressions as in Definition \ref{defn genuni}. The $T$-equation of $T_{\fa,\fb_1\fb_2}$ corresponding to \eqref{mlevelTgdef_orig} can be defined in a similar way. 
\end{definition}

% \begin{remark}
% To extend our results to the case with $W=\OO(1)$, the key is to establish a $T$-expansion up to order $\Omega(\log L)$. Then, the main difficulty is to control the number of terms in the $T$-expansion when the order is large; in our current proof, the number of terms is always finite under the condition $W\ge L^\delta$, but its dependence on $\delta$ is bad ($\OO( e^{\delta^{-C}})$ for some large constant $C>0$).
% \end{remark}

%\subsection{Non-universal $T$-expansions}

%During the proof, the continuity estimate (see \Cref{lem: ini bound} below) we proved on $\|\Gc\|_{\max}$ is not good enough. Hence, when we use the continuity estimate to bound a graph $\cal G$ containing $\Gc$ edges, we can only bound it by the $\pscale$, $\psize(\cal G)$. This causes a technical issue that does not appear in the study of random band matrices, i.e., the sizes of the higher-order terms in the $T$-equation \eqref{mlevelT incomplete} are not strong enough in the proof of sum zero properties of the $\selfs$. 

During the proof, there are steps where some $\Sele_k$ in the $\self$ $\Sele$ may not satisfy \eqref{3rd_property0}, i.e., they are $\pselfs$. In this case, we call the expansions in \Cref{defn genuni} and \Cref{def incompgenuni} as \emph{$\pTexp$s} and \emph{$\pTeq$s}, respectively.

\begin{definition} [$\PTexp$/equation]\label{defn pseudoT} 
Fix constants $\fC'\ge \fC>0$ and a large constant $D>\mathfrak C'$. In the setting of \Cref{defn genuni}, suppose we have a sequence of $\selfs$ $\cal E_k$, $k=1,\ldots, k_0$, and $\pselfs$ $\Sele_{k'}$, $k' =k_0+1, \ldots, k_1$, satisfying the following properties:
\begin{itemize}
\item[(i)] $ \sizeself(\Sele_k) \prec W^{-\fc_k} $ for a sequence of increasing constants $0<\fc_1< \cdots < \fc_{k_0}\le \fC < \fc_{k_0+1} < \cdots < \fc_{k_1}$.   %(which depend only on $\fd$, $\xi$, $\fC$, and $\fC'$). 

\item[(ii)] For each labeled diffusive edge in $\Sele_k$, its labels only involve $\selfs$ $\Sele_1, \ldots, \Sele_{(k-1)\wedge k_0}$.

\item[(iii)] The (unlabeled) diffusive edges in $\Sele_k$, $k=1,\ldots, k_1$, are all \smash{$\zthn$} edges. 
%(where we regard each labeled diffusive edge as a subgraph consisting of the components in \Cref{def_graph1} instead of a single double-line edge with label). 
\end{itemize}
Let $\Sele=\sum_{k=1}^{k_1} \Sele_k$, and denote by $\Sele_k'$ the matrix obtained by replacing each $\zthn$ edge in $\Sele_k$ by a $\thn$ edge. 
For the Wegner orbital model, a $\pTexp$ of \smash{$\zT_{\fa,\fb_1\fb_2}$} (resp.~$T_{\fa,\fb_1\fb_2}$) with real order $\fC$, pseudo-order $\fC'$, and error order $D$ is still an expression of the form \eqref{mlevelTgdef} (resp.~\eqref{mlevelTgdef_orig}). 
% for the block Anderson and Anderson orbital models, a $\pTexp$ of \smash{$\czT_{\fa,\fb_1\fb_2}$} (resp.~$\cT_{\fa,\fb_1\fb_2}$) with real order $\fC>0$, pseudo-order $\fC'\ge \fC$, and error order $D>\fC'$ is still an expression of the form \eqref{mlevelTgdef_BA} (resp.~\eqref{mlevelTgdef_BA_orig}).
The graphs in these expansions still satisfy all the properties in \Cref{defn genuni} except for the following change in the properties (1) and (3):   
$$\text{for each labeled diffusive edge in $\PT,$ $  \AT$, $\WT$, $ \QT$, and $\Err_{D}$, its labels only involve $\selfs$ $\Sele_1, \ldots, \Sele_{k_0}$},$$
and the higher order graphs satisfy that 
\be\label{eq:condhigher}
\size\Big(\sum_{x} \zthn(\Selek)_{\fa x}\AT_{x,\fb_1\fb_2}\Big) \lesssim W^{-\fC'-\mathfrak c}\heta,\quad \forall \eta\ge \eta_0.
\ee
Similarly, we can also define the $\pTeq$s of real order $\fC$, pseudo-order $\fC'$, and error order $D$ for the Wegner orbital model as in \Cref{def incompgenuni}, where some $\selfs$ become $\pselfs$. 
\end{definition}

%With the above preparations, in the subsequent sections, we will proceed with the proofs of our main results, Theorems \ref{thm_locallaw}, \ref{thm:QUE}, and \ref{thm_diffu}.  

\section{Proof of local law}\label{sec:pflocal}

%cite \cite{RBSO} here
In this section, we present the proof of the local law, \Cref{thm_locallaw}. The proof relies crucially on the $T$-expansion constructed in \Cref{completeTexp}, which will be proved specifically for the Wegner orbital model in \Cref{sec:constr}. 
Our proof is built upon several key tools that will be developed in Sections \ref{sec:Vexpansion}, \ref{sec:continuity}, and \ref{sec:sumzero}, including the $V$-expansions, coupling renormalization, and the sum zero property for $\selfs$. Extensions of the relevant arguments to the block Anderson and Anderson models will be discussed in \Cref{sec:ext}.

%From the local law and the $T$-expansions, we can easily derive the delocalization result, \Cref{thm:supu}, and the quantum diffusion result, \Cref{thm_diffu}.  
% Note that under the condition \eqref{eq:cond-lambda2}, we have 
% \be\label{eq:cond-lambda2-small}
% \frac{W}{\lambda^2} \le W^{\frac{2d}{7}-2\fd}.
% \ee

%\subsection{Local law: Proof of \Cref{thm_locallaw}}

First, the following lemma shows that $\|G-M\|_{\max}$ is controlled by the size of $\|T\|_{\max}$, and hence explains the connection between the local laws \eqref{locallaw} and  \eqref{locallawmax}.  
Its proof is based on some standard arguments in the random matrix theory literature and will be presented in \Cref{appd:MDE} for the convenience of readers. %cite \cite{RBSO} here 

\begin{lemma}\label{lem G<T}
Suppose for a constant $\delta_0>0$ and some deterministic parameter $W^{-d/2}\le \Phi\le W^{-\delta_0}$, 
\be\label{initialGT} 
	\|G(z)-M(z)\|_{\max}\prec W^{-\delta_0},\quad %\max_{x,y}\sum_{\al}s_{x\al}|G_{\al y}|^2 
\max_{x,y}(T_{xy}+\wt T_{xy})  \prec \Phi^2 ,
\ee
	uniformly in $z\in \mathbf D$ for a subset $\mathbf D\subset \C_+$. Then, we have that uniformly in $z\in \mathbf D$,
	% \be\label{offG largedev} \mathbf 1_{x\ne y} |G_{xy}(z)|^2  \prec T_{xy}(z)\ee
	% uniformly in $x\ne y \in \Z_L^d$ and $z\in \mathbf D$, and
	\be\label{diagG largedev} 
 \|G(z)-M(z)\|_{\max} \prec \Phi.
	\ee	
% For the Wegner orbital model, we have the following stronger bound uniformly in $z\in \mathbf D$ and $x\ne y \in \Z_L^d$:
% \begin{align}
% 		\mathbf 1_{x\ne y} |G_{xy}(z)|^2  \prec T_{xy}(z) .	\label{offG largedev}
% 	\end{align}
\end{lemma}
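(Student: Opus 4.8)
The plan is to derive a self-improving (bootstrap) bound on $\|G-M\|_{\max}$ by feeding the $T$-bound back into a suitable self-consistent equation for $G-M$. First I would write the defining identity $HG = I + zG$ in resolvent form and subtract the matrix Dyson equation satisfied by $M$. Using the form of $M$ in \Cref{defn_Mm}, this yields an equation of the schematic shape $G - M = M \, \Upsilon \, G$, where $\Upsilon$ collects the fluctuating part of the ``self-energy'' operator, namely $\Upsilon_{xy} \approx (V - \E V)\text{-terms} + \big(\mathcal S[G] - \mathcal S[M]\big)$ with $\mathcal S[A]_{xy} = \sum_\al S_{x\al} A_{\al\al}\,\mathbf 1_{x=y}$ the covariance operator of $H$. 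The point is that the diagonal fluctuation $G_{\al\al} - M_{\al\al}$ is controlled by $\|G-M\|_{\max}$, while the genuinely random part $Q_x$-type terms, after one step of Gaussian integration by parts (or a cumulant expansion), produce exactly sums of the form $\sum_\al S_{x\al}|G_{\al y}|^2 = T_{xy}$ and $\wt T_{xy}$, together with higher-order terms carrying extra smallness.

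Concretely, the key steps, in order, are: (1) Expand $G_{xy}$ for $x\ne y$ and $G_{xx}-M_{xx}$ using the identity $G_{xy} = -M_{xx}\sum_\al H_{x\al}^{(\text{or resolvent})}\cdots$ — i.e., the standard one-step resolvent/IBP expansion — writing the result as a leading fluctuation term plus a remainder. (2) Identify the leading fluctuation term: after taking $Q_x$, its square expectation (or, more precisely, its typical size via the fluctuation averaging / large deviation bounds for quadratic forms in Gaussians) is governed by $\sum_\al S_{x\al}|G_{\al y}|^2 = T_{xy} \prec \Phi^2$, giving a contribution $\prec \Phi$. (3) Bound the remaining terms: diagonal resolvent fluctuations contribute $\|G-M\|_{\max}\cdot(\text{small})$, which by the a priori bound $\|G-M\|_{\max}\prec W^{-\delta_0}$ and $\|M\|_{\max} = O(1)$ is absorbed as a self-improving term with small coefficient; higher cumulant / higher order terms are even smaller because each extra off-diagonal $G$ factor or extra $S$-vertex gains a power of $W^{-\delta_0}$ or $\Phi$. (4) Collect: one obtains $\|G-M\|_{\max} \prec \Phi + o(1)\|G-M\|_{\max} + (\text{negligible})$, and since $\Phi \le W^{-\delta_0}$ ensures $\Phi \gg$ the negligible pieces, absorbing the $o(1)\|G-M\|_{\max}$ term on the left yields $\|G-M\|_{\max}\prec\Phi$, uniformly in $z\in\mathbf D$ (uniformity follows from the uniformity of the hypotheses and a standard grid argument in $z$ using Lipschitz continuity of $G$ in $z$).

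I expect the main obstacle to be step (2)–(3): carefully organizing the one-step expansion so that the \emph{only} term of order $\Phi$ is the $T$-controlled quadratic form, while \emph{every} other term is either self-improving (proportional to $\|G-M\|_{\max}$ with an $o(1)$ prefactor, using $\im m \gtrsim 1$ and $\eta$-independent lower bounds on $|z+m|$ from \Cref{lem:propM}-type estimates) or genuinely negligible. In particular one must handle the diagonal entries $G_{xx}$ separately from the off-diagonal ones (the self-consistent equation couples them), and one must check that the ``bad'' direction — the near-zero mode of $1 - M^0 S$ responsible for the diffusive slowdown — does not spoil the argument; here it does not, because we only need the $\max$-norm bound and the relevant stability operator $1 - \mathcal S[M]\cdot$ acting diagonally has a gap (its instability is confined to the off-diagonal/long-range sector already encoded in $T$). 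The rest is routine: Gaussian integration by parts, Ward's identity (\Cref{lem-Ward}) to reduce $\sum_\al |G_{\al y}|^2$-type sums when they appear, and the large-deviation bound for quadratic forms, all of which are standard in this literature.
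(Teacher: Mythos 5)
Your proposal is correct and follows essentially the same route as the paper's proof: subtract the Dyson-type self-consistent equation for $M$ from that for $G$, use Gaussian integration by parts in a high-moment estimate so that the leading fluctuation is exactly the quadratic form controlled by $T_{xy}+\wt T_{xy}\prec\Phi^2$ (hence $\prec\Phi$), absorb the self-improving diagonal terms using the a priori bound $\|G-M\|_{\max}\prec W^{-\delta_0}$ together with the $\ell^\infty$-boundedness of the stability operator $(1-SM^+)^{-1}=1+S^+M^+$ in the bulk (the gap you invoke), and bootstrap. The only minor slip is bookkeeping: it is the lower bound $\Phi\ge W^{-d/2}$, not the upper bound $\Phi\le W^{-\delta_0}$, that guarantees the genuinely negligible $W^{-d/2}$-size contributions are dominated by $\Phi$.
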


%The rest of the proof is devoted to proving \eqref{locallawmax}. the estimate \eqref{locallawmax} follows from \eqref{locallaw} and the following result

With \Cref{completeTexp}, \Cref{thm_locallaw} follows immediately from the following proposition, which states that given a $T$-expansion up to order $\fC$ and a sufficiently high error order $D$, the local laws \eqref{locallaw} and \eqref{locallawmax} hold as long as $L$ is not too large, depending on $\fC$. 
 
% With \Cref{completeTexp}, \Cref{thm_locallaw} is an immediate consequence of the following proposition, which states that given a $\pTexp$ up to real order $\fC$, pseudo-order $\fC'>\fC$, and sufficiently high error order $D$, then the local laws \eqref{locallaw} and \eqref{locallawmax} hold as long as $L$ is not too large depending on $\fC$. We remark that stated \Cref{locallaw-fix} is stated in a seemingly strange way because it is not only used in the proof \Cref{thm_locallaw} given the arbitrarily high order $T$-expansion constructed in \Cref{completeTexp}, but it is also a crucial ingredient for the proof of \Cref{completeTexp} where $\pTexp$s are used.

\begin{proposition}\label{locallaw-fix}
In the setting of \Cref{thm_locallaw}, suppose we have a $T$-expansion of order $\fC>0$ and a sufficiently high error order $D>\fC$ for $z= E+\ii\eta$, where $|E|\le  2-\kappa $ and $\eta \in [\eta_0,1]$ for some $W^{\fd}\eta_*\le \eta_0\le 1$. Additionally, assume there exists a constant $\errL>0$ that 
\begin{equation}		\label{Lcondition1}  
\frac{L^2}{W^2} \cdot W^{-\fC-\fc} \le W^{-\errL}, 
%\quad  \frac{W}{\lambda^2} \frac{L^2}{W^2} \cdot  W^{-\fC'} \le W^{-\errL}.
\end{equation}
where $\fc$ is the constant in \eqref{eq:smallA}. Then, the following local laws hold uniformly in all $z= E+\ii\eta$ with $|E|\le  2-\kappa $ and $\eta \in [\eta_0,1]$:
\begin{equation}\label{locallaw1}
T_{xy}(z) \prec B_{xy} + \frac{1}{N\eta},\quad \forall \ x,y \in \Z_L^d, 
\end{equation}
\begin{equation}\label{locallaw2}
\|G(z)-M(z)\|_{\max} \prec  \heta^{1/2} . %\frac{1}{\lambda W^{(d-1)/2}} +\frac{1}{\sqrt{N\eta}} .
\end{equation}
% For the Wegner orbital model, we actually have the better bound 
% \begin{equation}\label{locallaw2_WO}
% |G_{xy}(z)-M_{xy}(z)|^2 \prec B_{xy} +\frac{1}{N\eta} \, .
% \end{equation}
% \begin{itemize}
%     \item[(i)] 
%     Suppose for some constant $c_0>0$,
% 	\begin{equation}
% 		\label{Lcondition1}  
% 		{L^2}/{W^2}  \le W^{(n-1)d/2-c_0} .
% 	\end{equation}
% Then, the following local law holds uniformly in all $z= E+\ii\eta$ with $|E|\le  2-\kappa $ and $\eta \in [W^{2}/L^2,1]$:
% \begin{equation}\label{locallaw1}
% |G_{xy} (z) -m(z)\delta_{xy}|^2 \prec B_{xy} + \frac{1}{N\eta},\quad \forall \ x,y \in \Z_L^d. 
% \end{equation}

% \item[(ii)] Suppose for some constant $c_0>0$,
% 	\begin{equation}
% 		\label{Lcondition2}  
% 		{L^2}/{W^2}  \le W^{(n-1)\delta_0/2-c_0} .
% 	\end{equation}
% Then, the local law \eqref{locallaw1} holds uniformly in all $z= E+\ii\eta$ with $|E|\le  2-\kappa $ and $\eta \in [\etas,1]$. 
% \end{itemize}
\end{proposition}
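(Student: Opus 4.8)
The plan is to run a bootstrap (continuity) argument in $\eta$, starting from the trivial bound at $\eta=1$ and decreasing $\eta$ down to $\eta_0$, where at each scale one uses the $T$-expansion to upgrade a weak a priori bound into the sharp bound. Concretely, I would fix the grid of $z$-values (a polynomially fine net in $[-2+\kappa,2-\kappa]\times[\eta_0,1]$, using that $G$ and $M$ are Lipschitz in $z$ with polynomial constant, so it suffices to control the net). Let $\Phi(\eta)$ be the smallest quantity such that $\|G-M\|_{\max}\prec\Phi(\eta)$ holds; by \Cref{lem G<T} it is enough to control $\max_{x,y}(T_{xy}+\wt T_{xy})$, since that lemma converts a $T$-bound of size $\Phi^2$ into $\|G-M\|_{\max}\prec\Phi$ (given a rough a priori bound $\|G-M\|_{\max}\prec W^{-\delta_0}$, which at $\eta=1$ is classical and propagates down the scales). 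So the whole game reduces to proving the $T$-estimate \eqref{locallaw1}.

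To prove \eqref{locallaw1} I would feed the $T$-expansion \eqref{mlevelTgdef} into itself. Write $\zT_{\fa,\fb_1\fb_2} = m\,\zthn(\Sele)_{\fa\fb_1}\overline G_{\fb_1\fb_2} + \sum_x \zthn(\Sele)_{\fa x}\big[\PT + \AT + \WT + \QT + \Err_D\big]_{x,\fb_1\fb_2}$ and specialize $\fb_1=\fb_2=y$, so the left side is $\zT_{\fa y} = T_{\fa y} - (N\eta)^{-1}(\ldots)$; the zero-mode term $T-\zT$ contributes the $(N\eta)^{-1}$ in \eqref{locallaw1} via \eqref{eq:T-cT} and $\|G-M\|_{\max}\prec\heta^{1/2}\ll 1$. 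For the leading term, $|m\,\zthn(\Sele)_{\fa y}\overline G_{yy}|\prec B_{\fa y}$ by \Cref{lem redundantagain} (the bound $\zthn(\Sele_0)_{xy}\prec B_{xy}$) together with $|G_{yy}|\prec 1$. The error terms $\AT$, $\WT$, $\QT$, $\Err_D$ are all controlled by their scaling sizes: by the defining properties \eqref{eq:smallA}, \eqref{eq:smallW}, \eqref{eq:smallQ}, \eqref{eq:smallRerr}, together with condition \eqref{Lcondition1} which turns $W^{-\fC-\fc}L^2/W^2$ into $W^{-\errL}$, each of $\sum_x\zthn(\Sele)_{\fa x}[\cdots]_{x,\fb_1\fb_2}$ is $\prec \heta \le B_{\fa y} + (N\eta)^{-1}$ — here I would invoke the master estimate (the analogue of ``\Cref{no dot}'' referenced in the text) that the value of a doubly connected normal graph is bounded by its scaling size up to $W^\tau$, and the $Q$-graph bound for $\QT$ (a $Q$-graph gains an extra $\heta^{1/2}$ via a martingale/large-deviation estimate, i.e.\ $Q_x(\Gamma)\prec$ (scaling size)$\cdot\heta^{1/2}$). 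The recollision term $\PT$ needs a touch more care: its scaling size bound \eqref{eq:smallR} already gives $\prec W^{-\fd}\heta$, so it is harmless. Assembling, $T_{\fa y}\prec B_{\fa y} + (N\eta)^{-1} + (\text{self-improving terms})$, and a standard iteration (the self-consistent loop: plug the a priori $T\prec\Phi^2$ back into $\WT$ and into $\|G-M\|$ estimates, shrink, repeat $O(1)$ times) closes to the sharp \eqref{locallaw1}. Then \eqref{locallaw2} follows from \eqref{locallaw1} and \Cref{lem G<T} with $\Phi=\heta^{1/2}$, since $\max_{x,y}(B_{xy}+(N\eta)^{-1})\lesssim\heta$ for $\eta\ge W^\fd\eta_*$.

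The main obstacle — and the step I would spend the most care on — is the \emph{continuity / bootstrap mechanics} at the transition between scales: one must show that the a priori input needed to even make sense of the $T$-expansion bounds (namely a bound like $T_{xy}\prec W^{\tau_k}(B_{xy}+(N\eta)^{-1})$ with a slowly growing exponent, plus $\|G-M\|_{\max}\prec W^{-\delta_0}$) propagates from scale $2\eta$ to scale $\eta$ without loss, and that the constants in the scaling-size estimates do not accumulate. This is exactly where \eqref{Lcondition1} is used: the factor $L^2/W^2$ is the cost of summing a diffusive edge against a free edge over an internal molecule, and it must be beaten by the gain $W^{-\fC-\fc}$ coming from the order-$\fC$ truncation of the $T$-expansion, so that higher-order graphs are genuinely negligible rather than merely formally small. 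A secondary subtlety is that the estimates for $\WT$ involve $\thn$ (not $\zthn$) edges (see \eqref{eq:smallW}, stated for $\WT'$), valid only for $\eta\ge t_{Th}^{-1}$, so the regime $\eta< t_{Th}^{-1}$ must be handled using the $\zthn$-version throughout and the zero-mode bookkeeping in \eqref{eq:T-cT}; I would treat the two regimes $\eta\gtrless t_{Th}^{-1}$ separately and check compatibility at the interface. All of this is routine given the tools in the excerpt, but it is where an honest proof must be careful.
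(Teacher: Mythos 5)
Your high-level skeleton matches the paper's: a multi-scale bootstrap in $\eta$ starting from an initial estimate at large $\eta$ (the paper's \Cref{eta1case0}, valid for $\eta\ge\blam^{-1}$), conversion of $T$-bounds into $G$-bounds via \Cref{lem G<T}, and control of the graphs in the $T$-expansion by their scaling sizes with \eqref{Lcondition1} absorbing the $L^2/W^2$ cost. However, the two quantitative steps on which the paper's proof actually rests are missing or replaced by claims that do not hold as stated. First, the continuity step, which you flag as the main obstacle but then call ``routine,'' is precisely the non-routine core: the paper does not propagate an entrywise bound $T_{xy}\prec W^{\tau_k}(B_{xy}+(N\eta)^{-1})$ across scales. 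Instead it introduces the weak norm $\|G-M\|_w$ of \Cref{def_swnorms}, whose content is control of box averages $K^{-d}\sum_{|y-x_0|\le K}|G_{xy}|$ at \emph{all} intermediate scales $K\in[2W,L/2]$ in terms of $\conc(K,\lambda,\eta)$, and proves \Cref{lem: ini bound} via the high-moment estimate $\E\,\mathrm{Tr}(\mathcal A_{\cal I}^p)\prec|\cal I|^p\conc^{p-1}$ (\Cref{gvalue_continuity}), which requires the $V$-expansion and the coupling (molecule sum zero) renormalization of \Cref{sec:Vexpansion}. An entrywise a priori bound, even with a slowly growing exponent, is insufficient: the graph-summation estimates (\Cref{w_s}, used inside \Cref{no dot}) consume exactly the box-sum information encoded in $\|\cdot\|_w$, and without the coupling renormalization one only gets the weaker $\wt\conc=\blam^{1/2}\conc$, which destroys the estimate in the stated range of $\lambda$. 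Your proposal never identifies this mechanism, so the bootstrap as you describe it does not close.

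Second, your derivation of the sharp bound \eqref{locallaw1} by ``feeding the $T$-expansion into itself'' pointwise, with the $Q$-graphs dispatched by a claimed bound $Q_x(\Gamma)\prec(\text{scaling size})\cdot\heta^{1/2}$, is not an argument available at this stage: a $Q$-graph is a fluctuation and such a pointwise stochastic-domination bound is exactly what would need proving, and proving it requires high moments. The paper instead establishes \Cref{lemma ptree} through the $p$-th moment estimate $\E T_{\fa\fb}^p$ (\Cref{lem highp1}): the $T$-expansion is inserted into $\E T_{\fa\fb}^{p-1}\,\zT_{\fa,\fb\fb}$, and the $Q$-graphs are handled by $Q$-expansions that ``break'' some of the remaining $T_{\fa\fb}$ factors, each breaking costing at least $\heta^{1/2}$ (the analysis around \eqref{eq:Tp}--\eqref{estimates_ptreeQ}); the conclusion then follows by Markov plus an iteration of \eqref{locallawptree}. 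So while your self-improving iteration is in the right spirit, both of its key inputs (the $Q$-graph gain and the cross-scale a priori bound) must be obtained through the moment method with the weak/strong-norm machinery, and the continuity input additionally needs the coupling renormalization; as written, these are genuine gaps rather than routine bookkeeping.
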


%The two conditions \eqref{Lcondition1} and \eqref{Lcondition2} come from \eqref{Lcondition10} due to the two values of $\soe$. Moreover, the two parts of Proposition \ref{locallaw-fix} are used at different stages of the proof. Part (i) will be used in the proof of Theorem \ref{completeTexp} (see the proof of Proposition \ref{sum0:L} in Section \ref{sec_pf_sumzero}). Once we have a sequence of $T$-expansions up to any order by Theorem \ref{completeTexp}, taking $n$ sufficiently large so that \eqref{Lcondition2} holds, we conclude Theorem \ref{thm: locallaw} by part (ii).

Similar to many previous proofs of local laws in the literature, we prove \Cref{locallaw-fix} through a multi-scale argument in $\eta$, that is, we gradually transfer the local law at a larger scale of $\eta$ to a multiplicative smaller scale of $\eta$. We first have an initial estimate at $\eta=\blam^{-1}$, which holds in all dimensions $d\ge 1$. 

\begin{lemma}[Initial estimate] \label{eta1case0} 
Under the assumptions of \Cref{locallaw-fix}, for any $z=E+\ii\eta$ with $|E|\le  2-\kappa$ and $\blam^{-1}\le \eta \le 1$, we have that 
\be\label{locallaw eta1}
\|G-M\|_{\max} \prec  \heta^{1/2},\quad \text{and}\quad T_{xy}\prec  B_{xy}  ,\quad \forall \ x,y \in \Z_L^d  .  
\ee
\end{lemma}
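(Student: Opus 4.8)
The plan is to run a self-consistent (fixed-point / continuity) argument at the single scale $\eta = \blam^{-1}$, seeded by a crude a priori bound that holds at $\eta \asymp 1$ and does not require the dimension restriction. First I would establish the trivial bound $\|G(z)\|_{\max} \le (\im z)^{-1} = \eta^{-1}$ and, more importantly, the rough bound $\|G-M\|_{\max} \prec (N\eta)^{-1/2}$ at $\eta \asymp 1$, which follows from the standard entrywise local law for generalized Wigner-type matrices (the variance profile $S$ is stochastic, $\|S\|_{\infty\to\infty}=1$, and $M$ is the solution of the matrix Dyson equation; see the references \cite{He2018,AEK_PTRF,EKS_Forum} cited in the text). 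From $\|G-M\|_{\max}\prec (N\eta)^{-1/2}$ and Ward's identity \eqref{eq_Ward}, one gets $T_{xy} = \sum_\alpha S_{x\alpha}|G_{\alpha y}|^2 \prec \sum_\alpha S_{x\alpha}(|M_{\alpha y}|^2 + \heta + (N\eta)^{-1}) \lesssim \heta + (N\eta)^{-1}$ at $\eta\asymp 1$ after using \eqref{L2M}, \eqref{L2M2} and \Cref{lem:propM}; since $\eta \asymp 1$ forces $N\eta \gg W^d \gg \blam = W^d/(\lambda^2\Lambda_\Psi^2)$ under \eqref{eq:cond-lambda2}, both error terms are $\lesssim \heta = \blam/W^d$, and $\heta \lesssim B_{xy}$ always. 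This gives \eqref{locallaw eta1} at $\eta \asymp 1$.

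Next I would propagate this down to $\eta = \blam^{-1}$. One option is a short continuity chain in $\eta$ over the (bounded, since $\blam^{-1} \le \eta \le 1$ and $\blam = W^{O(1)}$) range, but in fact the cleanest route is to feed the preliminary $T$-expansion of \Cref{3rdExp} (valid for the Wegner orbital model at any $z \in \C_+$) together with the deterministic bounds \eqref{thetaxy}--\eqref{thetaxy2} on $\thn$, and bound each term of $\mathcal A^{(>3)}$ and $\mathcal Q^{(3)}$ by its scaling size. The key point is that for $\eta \ge \blam^{-1}$ we have $\heta = \blam/W^d$ and $(N\eta)^{-1}(L^2/W^2) = \blam^{-1}\cdot(L^d)^{-1}(L^2/W^2)\cdot\blam \le \heta$ is harmless, so that a bootstrap assumption $\|G-M\|_{\max}\prec W^{-\delta_0}$ for a small $\delta_0 > 0$ (available by continuity from the $\eta\asymp 1$ estimate, since the spectral parameter moves a bounded distance and $G$ is Lipschitz in $z$ with constant $\eta^{-2} = W^{O(1)}$) upgrades, via the expansion plus \eqref{S+xy} and \eqref{thetaxy}, to $T_{xy} \prec B_{xy} + (N\eta)^{-1} \lesssim B_{xy}$ and then, via \Cref{lem G<T} with $\Phi = \heta^{1/2}$, to $\|G-M\|_{\max}\prec\heta^{1/2}$. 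I would phrase this as a self-improving estimate on the (monotone, in $\eta$) quantity $\Lambda(\eta) := \max_{x,y}\langle x-y\rangle^{d-2}W^2\blam^{-1}T_{xy}(E+\ii\eta) + W^{\delta_0}\|G(E+\ii\eta)-M\|_{\max}$ and close the gap by a standard dichotomy / connectedness argument in $\eta$.

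The main obstacle, and the only place care is genuinely needed, is verifying that the error terms in the preliminary $T$-expansion \eqref{eq:3rd0} are actually small \emph{at this relatively large $\eta$} without yet having the full machinery of \Cref{completeTexp} — i.e., one must check that every graph in $\mathcal A^{(>3)}_{x,\fb_1\fb_2}$ and in $\mathcal Q^{(3)}_{x,\fb_1\fb_2}$, after summing against $\thn_{\fa x}$, is bounded by $W^{-\fc}\heta$ for some $\fc > 0$, using only Ward's identity, the crude bound $\|G-M\|_{\max}\prec W^{-\delta_0}$, and the deterministic estimates of \Cref{sec_diffusive}; the $Q$-graphs additionally require the standard martingale/large-deviation bound for $Q_x$-expectations. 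This is essentially a baby version of the continuity estimate of \Cref{gvalue_continuity} restricted to $\eta \ge \blam^{-1}$, where no dimensional obstruction arises because a single diffusive edge already costs $\heta$ and the loss from an unpaired solid edge is only $\heta^{1/2}$, so three or more such factors (as in $\mathcal A^{(>3)}$) comfortably beat the $L^2/W^2 \le \blam$ volume factor. I expect no use of $d \ge 7$ here — consistent with the lemma's claim that it holds in all dimensions $d \ge 1$ — which is itself a useful sanity check on the bookkeeping.
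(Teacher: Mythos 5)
Your route is essentially the paper's: seed with an a priori entrywise law from the literature and then upgrade at $\eta\ge\blam^{-1}$ via a preliminary $T$-expansion together with \Cref{lem G<T}; the paper does exactly this, citing $\|G-m\|_{\max}\prec(W^d\eta)^{-1/2}$ (from \cite{Semicircle} for $\WO$, standard MDE arguments for $\BA/\AO$) as the seed and then running the second-order expansion of \Cref{2ndExp} through the high-moment scheme of \Cref{lemma ptree}, which simplifies here because $\ell_{\lambda,\eta}\le 2$, so by \eqref{thetaxy2} the diffusive edges decay beyond scale $W^{1+\tau}$. Your use of the third-order expansion \Cref{3rdExp} instead of \Cref{2ndExp}, and of a dichotomy in $\eta$ instead of an iterated moment bound, are cosmetic differences.

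Two concrete slips, however. First, the a priori bound $\|G-M\|_{\max}\prec(N\eta)^{-1/2}$ at $\eta\asymp 1$ is not available: the variance profile here is block/band-like, not mean-field, and the correct entrywise scale is $(W^d\eta)^{-1/2}$; this is harmless for your strategy since $(W^d\eta)^{-1/2}\le\heta^{1/2}$ for $\eta\ge\blam^{-1}$, but the claim as written is wrong. Second, and more substantively, your assertion ``$\heta\lesssim B_{xy}$ always'' is backwards: since $\langle x-y\rangle\ge W$, one has $B_{xy}=\blam W^{-2}\langle x-y\rangle^{-(d-2)}\le\blam W^{-d}=\heta$, with equality only at local distances. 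Consequently your Step 1 derivation of $T_{xy}\prec B_{xy}$ at $\eta\asymp1$ from the max-norm bound alone fails — a max-norm estimate only yields $T_{xy}\prec\heta$, and the spatial decay encoded in $B_{xy}$ is precisely the nontrivial content of the lemma. That decay must come from the $T$-expansion itself (the leading term $m\,\thn_{\fa\fb}\overline G_{\fb\fb}$ is localized on scale $W^{1+\tau}$ at these $\eta$, and the error terms are beaten down by scaling size), which is what your second paragraph does; so the argument is repaired by deleting the erroneous Step 1 claim and seeding the bootstrap with the max-norm bound only, exactly as in the paper.
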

\begin{proof}
The proof of this lemma closely follows that of \cite[Lemma 7.2]{BandII} by using \Cref{lem G<T} and the second order $T$-expansion. This expansion is provided by \Cref{2ndExp} for the Wegner orbital model and by \Cref{2ndExp0} in the appendix for the block Anderson and Anderson orbital models. %cite \cite{RBSO} here 
It can be regarded as a special case of \Cref{lemma ptree} below, but the proof is much easier since $\ell_{\lambda,\eta}\le 2$ for $\eta\ge \blam^{-1}$ and \Cref{lem theta} shows that the diffusive edges essentially vary on the local scale $W$. Hence, we omit the details of the proof.  

As a preliminary estimate for the proof, we first need to show that the first bound in \eqref{initialGT} holds for some $\delta_0>0$ at $\eta \in [\blam^{-1},1]$. For the Wegner orbital model, it follows from \cite[Theorem 2.3]{Semicircle} that
\be\label{eq:locallaw_weak} \|G(z)-m(z)I_N\|_{\max}\prec (W^d\eta)^{-1/2} \quad \text{for}\  \  W^{-d+\fd}\le \eta \le 1,\ee
which gives the desired estimate. For the block Anderson and Anderson orbital models, the local law \eqref{eq:locallaw_weak} can be established using methods from the literature, such as  \cite{AEK_PTRF,EKS_Forum,He2018}. That is, we first establish a local law at $\eta=C$ for a large enough constant $C>0$, and then combine \Cref{lem G<T} with a multi-scale argument in $\eta$ to conclude the proof of \eqref{eq:locallaw_weak}. Since the argument is standard, we omit the details. 
\end{proof}

Next, starting with a large $\eta$, Proposition \ref{lem: ini bound} below gives a key continuity estimate, which says that if the local law holds at $\eta$, then a weaker local law will hold at a multiplicative smaller scale than $\eta$.  To state it, we introduce the following definition. 

\begin{definition}\label{def_swnorms}
Given a $\Z_L^d\times \Z_L^d$ matrix $\mathcal A(z)$ with argument $z=E+\ii \eta$, we define its ``weak norm" as
\begin{align}\label{def_weakA}
	\| \mathcal A (z)\|_{w} & := \peta^{-1/2} \max_{x,y \in \Z_L^d} |\mathcal A_{xy}| +  \sup_{K\in [2W,L/2]}\max_{x,x_0 \in  \Z_L^d}\frac{1}{K^d\sqrt{\conc(K,\lambda, \eta)}}\sum_{y:|y-x_0| \leq K} (|\mathcal A_{xy}|+|\mathcal A_{yx}|),
\end{align}
where $\conc$ and $\peta$ are $\eta$-denepdent parameters defined as
\begin{align}
\label{eq:def-conc}
    \conc(K,\lambda,\eta)&:=\left[\left( \frac{\blam}{W^2 K^{d-2}}+ \frac{1}{N\eta}+ \frac{1}{K^{d/2}} \sqrt{\frac{ \blam}{N\eta}\frac{ L^2}{W^2} }\right)\left(  \frac{\blam}{W^4 K^{d-4}} + \frac{1}{N\eta}\frac{ L^2}{W^2} \right)\right]^{\frac{1}{2}}\, ,\\
    \peta&:= \frac{\blam}{W^{d}}+\frac{1}{N\eta}\frac{L^4}{W^4}\, .\label{eq:peta}
\end{align}
(Note that for $\eta\ge \eta_*$, $\peta$ is dominated by $\heta={\blam}/{W^{d}}$. Here, we have introduced it for our later discussion about the extension to the $\eta\le \eta_*$ setting.) We also define the ``strong norm":  
\begin{align}\label{def_strongA}
	\| \mathcal A(z) \|_{s} := \peta^{-1/2}\max_{x,y \in \Z_L^d} |\mathcal A_{xy}| +   \max_{x,y\in  \Z_L^d}\frac{1}{(B_{xy}+(N\eta)^{-1})^{1/2}} \frac{1}{W^d}\sum_{y':|y'-y| \leq 2W} (|\mathcal A_{xy'}|+|\mathcal A_{y'x}|)\, .
\end{align}
% and a slightly stronger norm that will be used for the Wegner orbital model and does not involve the local average:
% \begin{align}\label{def_strongWO}
% 	\| \mathcal A(z) \|_{\WO} := \max_{x,y\in  \Z_L^d}\frac{|\mathcal A_{xy}|}{(B_{xy}+(N\eta)^{-1})^{1/2}} \, .
% \end{align}
We remark that these ``norms" are mainly utilized to simplify notations, and we do not require them to satisfy the triangle inequality. 
\end{definition}

The above definition of the weak norm is inspired by the following \emph{continuity estimate}. 

\begin{proposition}[Continuity estimate]\label{lem: ini bound}
In the setting of Proposition \ref{locallaw-fix}, suppose for all $x,y\in \Z_L^d$,
\be\label{eq_cont_ini}
T_{xy}(\wt z)\prec B_{xy} +(N\wt\eta)^{-1},\quad \|G(\wt z)-M(\wt z)\|_{\max}\prec \heta^{1/2},%\frac{1}{\lambda W^{(d-1)/2}}+\frac{1}{\sqrt{N\wt \eta}},
\ee
for $\widetilde z =E+ \ii \widetilde\eta$ with $|E|\le  2-\kappa$ and $\widetilde\eta\in [\eta_0,\blam^{-1}]$. Then, we have that  $$\|G (z) -M(z)\|_w \prec  \wt \eta/\eta ,$$
uniformly in $z=E+\ii \eta$ with $\max(\eta_0,W^{-\erre/20}\wt\eta) \le \eta \le \wt \eta$.
 \end{proposition}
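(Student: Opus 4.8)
\textbf{Proof proposal for Proposition \ref{lem: ini bound} (continuity estimate).}

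The plan is to run a standard resolvent continuity argument in $\eta$, but organized so that it produces bounds in the weak norm $\|\cdot\|_w$ rather than just the maximum norm. First I would fix $\wt z = E+\ii\wt\eta$ and $z = E+\ii\eta$ with the same real part and $\max(\eta_0, W^{-\erre/20}\wt\eta)\le \eta\le \wt\eta$, so that the multiplicative gap $\wt\eta/\eta$ is at most $W^{\erre/20}$, i.e.\ bounded by a small power of $W$. The basic mechanism is the identity $G(z) = G(\wt z) + \ii(\wt\eta - \eta)G(z)G(\wt z)$ (equivalently the resolvent/Dyson expansion in the spectral parameter), together with the elementary monotonicity bound $\eta \sum_x |G_{xy}(z)|^2 = \im G_{yy}(z)\cdot(\text{Ward}) $ and the fact that $\eta\mapsto \eta\, \im G_{yy}(E+\ii\eta)$ and $\eta\mapsto \eta \sum_x|G_{xy}|^2$ are monotone increasing; this is what lets one transfer an $L^2$-type control at scale $\wt\eta$ down to scale $\eta$ at the cost of a factor $\wt\eta/\eta$. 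Concretely, from $\sum_x |G_{xy}(z)|^2 = \im G_{yy}(z)/\eta \le (\wt\eta/\eta)\,\im G_{yy}(\wt z)/\wt\eta = (\wt\eta/\eta)\sum_x|G_{xy}(\wt z)|^2$, and the input $T_{xy}(\wt z)\prec B_{xy}+(N\wt\eta)^{-1}$ (which via Ward controls the relevant $\ell^2$ sums over $W$-blocks), one gets control of the local averages $W^{-d}\sum_{y':|y'-y|\le 2W}|G_{xy'}(z)|$ and the larger-box sums $K^{-d}\sum_{y:|y-x_0|\le K}|G_{xy}(z)|$ appearing in the two terms of $\|\cdot\|_w$.

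The key steps, in order, would be: (1) Use Ward's identity (Lemma \ref{lem-Ward}) to rewrite $\sum_\al S_{x\al}|G_{\al y}(z)|^2$ and the block sums $W^{-d}\sum_{\al\in[x]}\sum_y |G_{\al y}(z)|^2$ in terms of $\im G$ over the torus, and use the monotonicity of $\eta\,\im G$ to bound these at scale $\eta$ by $(\wt\eta/\eta)$ times the corresponding quantities at scale $\wt\eta$; then feed in the hypothesis $T_{xy}(\wt z)\prec B_{xy}+(N\wt\eta)^{-1}$. (2) Convert these $L^2$ bounds into $L^1$-over-a-box bounds by Cauchy--Schwarz: $\sum_{y:|y-x_0|\le K}|G_{xy}(z)| \le (CK^d)^{1/2}(\sum_y |G_{xy}(z)|^2)^{1/2}$, and similarly after first partitioning the $K$-box into $W$-blocks and summing the block-averaged quantities $T$; this is where the precise form of $\conc(K,\lambda,\eta)$ in \eqref{eq:def-conc} must come out, so I would match terms carefully against the definition. (3) Handle the maximum-norm term $\peta^{-1/2}\max_{x,y}|G_{xy}(z)-M_{xy}(z)|$: here use the resolvent expansion $G(z)-G(\wt z) = \ii(\wt\eta-\eta)G(z)G(\wt z)$, bound $|(G(z)G(\wt z))_{xy}|$ by $\sum_\al|G_{x\al}(z)||G_{\al y}(\wt z)|$, split by $|x-\al|$ vs.\ $|\al-y|$, apply Cauchy--Schwarz plus Ward again, and combine with $\|M(z)-M(\wt z)\|_{\max}\lesssim |\wt\eta-\eta|$ (from Lemma \ref{lem:propM}-type analyticity of $M$, together with $\|M\|_{\max}\lesssim 1$). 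The $\wt\eta-\eta \le \wt\eta$ factor together with $\heta^{1/2}$ input and the definition $\peta\le 2\heta$ for $\eta\ge\eta_*$ should yield the claimed $\wt\eta/\eta$ bound after dividing by $\peta^{1/2}$. (4) Assemble the two pieces of $\|\cdot\|_w$ and verify uniformity in $K\in[2W,L/2]$ and in $x,x_0$, which is automatic since all bounds above were uniform.

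The main obstacle I anticipate is step (2)--getting the box sums to match the specific weight $K^d\sqrt{\conc(K,\lambda,\eta)}$ rather than a cruder bound. A naive Cauchy--Schwarz gives $K^{d/2}(\sum_y|G_{xy}|^2)^{1/2} \asymp K^{d/2}(\im G_{xx}/\eta)^{1/2}$, which is of order $K^{d/2}(N\eta)^{-1/2}\cdot(\ldots)$ and does \emph{not} by itself produce the factor $\conc$; one needs the sharper input that over a $K$-box the averaged $T$-variables are not just $\OO((N\eta)^{-1})$ but genuinely of size $B_{xy}$ at distance $\gtrsim K$, and then a second application of Ward (or an iteration over dyadic scales between $W$ and $K$) to extract the product structure $\sqrt{(\ldots)(\ldots)}$ in \eqref{eq:def-conc}. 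Getting this two-scale estimate with the right powers of $\blam$, $W$, $L$, $N\eta$ is the delicate bookkeeping part; the rest is routine. A secondary technical point is ensuring all the $\prec$ estimates hold \emph{uniformly} in the continuum of $z$'s, which requires the usual grid-and-Lipschitz argument (the map $z\mapsto G(z)$ is Lipschitz with constant $\eta^{-2}\le \eta_0^{-2}$, polynomial in $L$, so a polynomially fine net suffices), but this is standard and I would only remark on it.
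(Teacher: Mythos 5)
There is a genuine gap, and it sits exactly at the step you flag as ``delicate bookkeeping.'' The weak-norm bound requires controlling the box sums $K^{-d}\sum_{y:|y-x_0|\le K}|G_{xy}(z)|$ by $\sqrt{\conc(K,\lambda,\eta)}$ uniformly down to $K\sim W$, and this cannot be extracted from Ward's identity, the monotonicity of $\eta\,\im G$, and Cauchy--Schwarz alone: the transfer $\im G_{xx}(z)\le(\wt\eta/\eta)\im G_{xx}(\wt z)$ only controls full-space $\ell^2$ sums, and a Cauchy--Schwarz on the box then yields $K^{d/2}\eta^{-1/2}$, which for $K\ll L$ exceeds the target $K^{d}\sqrt{\conc}$ by large powers of $L/K$; there is also no pointwise monotonicity in $\eta$ for box-restricted sums $\sum_{y\in\mathcal I}|G_{xy}(z)|^2$. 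The paper's route is different in kind: one first reduces the box sum at scale $\eta$ to the operator norm of $\mathcal A_{\mathcal I}$, the restriction of $\im G(\wt z)$ to the box (the spectral-decomposition inequality quoted as (5.25) of \cite{BandI}, producing the $(\wt\eta/\eta)^2$ factor), and then the operator norm is controlled through the high-moment estimate $\E\,\mathrm{Tr}(\mathcal A_{\mathcal I}^p)\prec|\mathcal I|^p\conc^{p-1}$ of \Cref{gvalue_continuity}. That trace estimate is the real content of the proposition; its proof occupies \Cref{sec:Vexpansion,sec:continuity} and hinges on the complete $T$-expansion, the $V$-expansion, and the coupling (molecule sum zero) renormalization --- without that cancellation one only reaches the weaker parameter $\blam^{1/2}\conc$ (cf.\ \eqref{mG-fe-size-weak} and \eqref{eq:def-conc_old}), which would not close the argument. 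Your proposed fix (``a second application of Ward or an iteration over dyadic scales'') does not supply this input.

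A secondary problem is your step (3) for the max-norm component. The identity $G(z)-G(\wt z)=\ii(\wt\eta-\eta)G(z)G(\wt z)$ with Cauchy--Schwarz and Ward gives $|(G(z)G(\wt z))_{xy}|\lesssim(\eta\wt\eta)^{-1/2}$, hence $|G_{xy}(z)-G_{xy}(\wt z)|\lesssim(\wt\eta/\eta)^{1/2}$, which is $\OO(1)$ at best and hopeless against the normalization $\peta^{-1/2}\sim(W^d/\blam)^{1/2}$ in \eqref{def_weakA}. The paper instead deduces $T_{xy}(z)+\wt T_{yx}(z)\prec(\wt\eta/\eta)\sqrt{\conc(W,\lambda,\eta)}\lesssim(\wt\eta/\eta)\heta^{1/2}$ from the box estimate at $K\sim W$ and then upgrades to $\|G(z)-M(z)\|_{\max}\prec(\wt\eta/\eta)\heta^{1/2}$ via the self-consistent-equation lemma \Cref{lem G<T} together with the standard net-and-Lipschitz argument; you would need to invoke that lemma (or an equivalent self-improvement mechanism) rather than the raw resolvent identity.
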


\begin{proof} %[\bf Proof of Proposition \ref{lem: ini bound}]
Fix any $x_0\in \Z_L^d$ and let $\mathcal I = \{y:|y-x_0| \leq K\}$. Using (5.25) of \cite{BandI} and \eqref{eq_cont_ini}, we get that
\be\label{cont_lem1}
\sum_{y \in \mathcal I} \left(|G_{yx}(z)|^2 + |G_{xy}(z)|^2\right) \prec \blam\frac{K^2}{W^2} + \frac{K^d}{N\eta}+ \left(\frac{\wt \eta}{\eta} \right)^2 \| \mathcal A_{\mathcal I} \|_{\ell^2 \to \ell^2},
\ee
where $\mathcal A_{\mathcal I}$ is the submatrix of $\im G:=\frac{1}{2\ii}(G- G^*)$ with row and column indices in ${\mathcal I}$. To control $\| \mathcal A_{\mathcal I} \|_{\ell^2 \to \ell^2}$, we establish the  following high-moment estimate.

\begin{lemma}\label{gvalue_continuity}
In the setting of \Cref{lem: ini bound}, for any fixed $p\in 2\N$, we have that 
\begin{equation}\label{eq_bound_Gx0}
\E \mathrm{Tr}(\mathcal A_{\mathcal I}^{p}) \prec |\cal I|^p\conc(K,\lambda,\wt\eta)^{{p-1}} \,,
\end{equation} 
% consider a $p$-gon graph 
% \be\label{eq_pgons}
% \cal G_{\bx}(z) = \prod_{i=1}^{p}G^{s_i}_{x_i x_{i+1}}(z),
% \ee
% where $\bx:=(x_1,\cdots, x_{p})$, $x_{p+1}\equiv x_1$, and $s_i \in \{\pm\}$ with the conventions $ G_{xy}^{+}:= G_{xy}$ and $G_{xy}^{-}:= G^*_{xy}$. Let $\cal I\subset \Z_L^d$ be a subset with $|\cal I|\gtrsim W^d$ and denote $K:=|\cal I|^{1/d}$. Then, we have that
% 	\begin{equation}\label{eq_bound_Gx}
% 	\begin{split}
% 	  &\frac{1}{|\cal I|^p}\sum_{x_i \in \cal I, i \in [p]}\mathcal G_{\mathbf x}(z) \prec \conc(K,W,\eta)^{{p-1}} \,,
% 	\end{split}
% 	\end{equation} 
% 	where
% 	\begin{equation}
% \label{eq:def-conc}
%     \conc(K,W,\eta):=\left[\left(\frac{1}{W^2 K^{d-2}}+ \frac{1}{N\eta}+ \frac{1}{K^{d/2}} \sqrt{\frac{1}{N\eta}\frac{L^2}{W^2}}\right)\left(\frac{1}{W^4 K^{d-4}} + \frac{1}{N\eta}\frac{L^2}{W^2}\right)\right]^{\frac{1}{2}}\,.
% \end{equation}
\end{lemma}

Using Lemma \ref{gvalue_continuity}, we get that for any fixed $p\in 2\N$, 
$$ \E \| \mathcal A_{\mathcal I} \|_{\ell^2 \to \ell^2} ^{p} \leq \E \mathrm{Tr}(\mathcal A_{\mathcal I}^{p})  \prec (K^d)^p \conc(K,\lambda,\eta)^{p-1}\,.$$
%Together with the simple inequality $\E \| \mathcal A \|_{\ell^2 \to \ell^2} ^{2p} \leq \E \mathrm{Tr}(\mathcal A^{2p}) $, 
Together with Markov's inequality, it yields that $\| \mathcal A_{\mathcal I}\|_{\ell^2 \to \ell^2} \prec K^d\conc(K,\lambda,\eta)$ since $p$ can be arbitrarily large. Plugging this estimate into \eqref{cont_lem1} gives that
\begin{equation}\label{eq:Tl2}
\max_{x,x_0}\frac{1}{K^d}\sum_{y:|y-x_0| \leq K} \left(|G_{yx}(z)|^2 + |G_{xy}(z)|^2\right) \prec \left(\frac{\wt \eta}{\eta} \right)^2 \conc(K,\lambda,\eta)\,.
\end{equation}
Combining \eqref{eq:Tl2} with the Cauchy-Schwarz inequality and using $M(z)=m(z)I_N$ for $\WO$, \eqref{Mbound} for $\BA$, or \eqref{Mbound_AO} for $\AO$, we obtain that  
$$
\max_{x,x_0}\frac{1}{K^d}\sum_{y:|y-x_0| \leq K} \left(|G_{yx}(z)| + |G_{xy}(z)| +|M_{yx}(z)|+|M_{xy}(z)|\right) \prec \frac{\wt \eta}{\eta} \sqrt{\conc(K,\lambda,\eta)}\, .
$$
This implies that 
$$\sup_{k\in [2W,L/2]}\max_{x,x_0 \in  \Z_L^d}\frac{1}{K^d\sqrt{\conc(K,\lambda, \eta)}}\sum_{y:|y-x_0| \leq K} \left(|(G-M)_{xy}|+|(G-M)_{yx}|\right) \prec \frac{\wt\eta}{\eta}.$$
It remains to prove that
$
\|G(z)-M(z)\|_{\max}\prec  (\wt \eta/\eta)\cdot  \heta^{1/2}. 
$
Note \eqref{eq:Tl2} implies that 
$$T_{xy}+\wt T_{yx}=\sum_{\al}S_{x\al}(|G_{\al y}(z)|^2+|G_{y\al}(z)|^2)\prec \frac{\wt \eta}{\eta} \sqrt{\conc(W,\lambda,\eta)} \lesssim \frac{\wt \eta}{\eta}  \heta^{1/2},\quad \forall x,y\in \Z_L^d.$$
Then, we can conclude the proof using \Cref{lem G<T} together with a standard $\e$-net and perturbation argument, see e.g., the proof of equation (5.8) in \cite{BandI}. We omit the details.
\end{proof}

Finally, with \Cref{lemma ptree}, we can improve the weak continuity estimate in \Cref{lem: ini bound} to the stronger local law.

\begin{lemma}[Entrywise bounds on $T$-variables]\label{lemma ptree} 
In the setting of Proposition \ref{locallaw-fix}, fix any $z=E+ \ii\eta$ with $|E|\le 2-\kappa$ and $\eta\in [\eta_0,\blam^{-1}]$. Suppose 
\begin{equation}
\label{eq:cond-ewb}
	\|G(z) - M(z) \|_{w} \prec W^{\errw}
\end{equation} 
for some constant $\errw>0$ sufficiently small depending on $d$, $\erre$, $\fC$ and $\errL$. Then,  we have that %for any fixed $p\in 2\N$ and small constant $\e>0$, 
\begin{equation}\label{pth T}
 T_{xy}(z) \prec B_{xy} + \frac{1}{N\eta},\quad \forall \ x,y\in \Z_L^d.
\end{equation}
% \be\label{local pth}
% \|G( z)-M( z)\|_{\max}\prec \heta^{1/2}.
% \ee
\end{lemma}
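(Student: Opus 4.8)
\textbf{Proof proposal for \Cref{lemma ptree}.}

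The plan is to upgrade the weak norm control \eqref{eq:cond-ewb} into the sharp entrywise bound \eqref{pth T} by feeding it into the $T$-equation provided by \Cref{completeTexp}, and then running a high-moment (self-improving) estimate on $T_{xy}$. First I would apply the $T$-equation \eqref{mlevelT incomplete} for $\zT_{x,yy}$ (together with \eqref{eq:T-cT} to recover $T_{xy}$ from $\zT_{xy}$ plus the zero-mode term $\OO((N\eta)^{-1})$): this writes $\zT_{x,yy}$ as the leading diffusive term $m\zthn(\Sele)_{xy}\bar G_{yy}$ plus $\sum_w \zthn(\Sele)_{xw}$ acting on $\PT + \AT + \WT + \QT + \Err_D$. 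Using \Cref{lem redundantagain}, $\zthn(\Sele)_{xy}\prec B_{xy}$ and $|\bar G_{yy}|\prec 1$, so the leading term is $\prec B_{xy}$. The error graphs $\Err_D$ and the higher-order graphs $\AT$ contribute at most $W^{-D}\heta$ and (using \eqref{Lcondition1}) $W^{-\errL}\heta$ after summation — both negligible against $B_{xy}+(N\eta)^{-1}$. The recollision graphs $\PT$ and the free graphs $\WT$ likewise satisfy the scaling-size bounds \eqref{eq:smallR}, \eqref{eq:smallW}, so the content of the argument is to show that the \emph{value} of each of these $\OO(1)$-many doubly connected graphs is bounded by its scaling size times $W^\tau$, \emph{given only the weak norm input} \eqref{eq:cond-ewb}.

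The key step is therefore a graph-value estimate. I would evaluate each doubly connected graph $\cal G_{x,yy}$ in $\PT,\WT,\QT,\AT,\Err_D$ by the standard procedure: bound every $G$-edge and weight crudely using $\|G-M\|_{\max}\prec \heta^{1/2}$ deduced from \eqref{eq:cond-ewb} (note $\|\cdot\|_w$ controls $\peta^{-1/2}\|\cdot\|_{\max}$, hence $\|G-M\|_{\max}\prec W^{\errw}\peta^{1/2}\lesssim W^{\errw}\heta^{1/2}$ for $\eta\ge\eta_*$), then sum over internal vertices molecule by molecule, using the doubly connected property so that each internal-molecule sum picks up one diffusive edge from the black net and one solid/free edge from the blue net. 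Crucially, for the solid edges I cannot yet use the sharp local law \eqref{eq:diffu+solid}; instead I use the $\ell^2$-type bound encoded in the second term of the weak norm \eqref{def_weakA}, i.e., $K^{-d}\sum_{|y-x_0|\le K}(|G_{xy}|^2+|G_{yx}|^2)\prec (W^{\errw})^2\conc(K,\lambda,\eta)$, summed dyadically over scales $K\in[2W,L/2]$. A careful bookkeeping (as in the corresponding estimates of \cite{BandI,BandIII}) shows that each such summation produces exactly the factors appearing in the scaling size $\size(\cal G)$ up to $W^{O(\errw)+\tau}$; choosing $\errw$ small enough depending on $d,\erre,\fC,\errL$ absorbs these powers. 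Summing the $\OO(1)$ graphs and using \Cref{lem redundantagain} for the outer $\zthn(\Sele)_{xw}$ factor, one obtains $T_{xy}\prec (B_{xy}+(N\eta)^{-1})(1+W^{O(\errw)}\,o(1)) + (\text{negligible})$, which closes to \eqref{pth T}.

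The main obstacle I anticipate is controlling the $Q$-graphs $\QT_{x,yy}$: for these the naive molecule-by-molecule bound is not good enough, because the $Q_x$ projection must be exploited to gain an extra factor. The standard device is to raise $\sum_x \zthn(\Sele)_{xw}\QT_{w,yy}$ to a high even power $p$, expand $\E|\cdot|^p$, and use that $Q_x$-labeled factors force pairings (a ``$Q$-vertex'' must be hit by another expansion), so that each $Q$-graph effectively gains a factor comparable to one more $\heta^{1/2}$ or a free edge; combined with the scaling-size bound \eqref{eq:smallQ} this yields the claim by Markov's inequality. The remaining technical point is merely to verify that the weak-norm hypothesis is strong enough to run this $p$-th moment bound — i.e., that all intermediate $G$-edges appearing after the $Q$-expansion can still be estimated by $\|G-M\|_{\max}$ and the $\ell^2$-norm bound above — which is exactly the reason \eqref{def_weakA} is defined with both a maximum-norm and an averaged ($\ell^2$) component. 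Since all of these steps are direct adaptations of the arguments in \cite[Sections 7--8]{BandI} and \cite[Section 4]{BandIII}, with the diffusive profile $B_{xy}$ and the parameter $\heta$ in place of their band-matrix analogues, I would present the details there and here only indicate the modifications forced by the non-scalar $M$ and the extra powers of $\blam$.
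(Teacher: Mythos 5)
Your overall strategy (plug the constructed expansion into $T_{xy}$, bound the doubly connected graphs via the weak-norm input, treat the $Q$-graphs by a high-moment argument plus Markov) is aligned with the paper, but there is a genuine gap in how you claim the estimate closes. You assert that a single pass yields $T_{xy}\prec (B_{xy}+(N\eta)^{-1})(1+\oo(1))$. With only the weak-norm input \eqref{eq:cond-ewb}, the solid edges attached to the external vertex $\fb$ can only be controlled through $\|G-M\|_{\max}\prec W^{\errw}\heta^{1/2}$ (equivalently, through the a priori bound $T\prec W^{2\errw}\heta$), so the free graphs $\WT$, the higher-order graphs $\AT$, and the broken-$Q$ contributions come out bounded by $W^{-c}\cdot W^{2\errw}\heta$, \emph{not} by $W^{-c}(B_{xy}+(N\eta)^{-1})$. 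Since $\heta=\blam/W^d$ is much larger than $B_{xy}+(N\eta)^{-1}$ for $|x-y|\gg W$ and $\eta$ near $\eta_*$, the bound does not close in one pass. This is exactly why the paper's proof is organized as a self-improving scheme: \Cref{lem highp1} takes an explicit a priori parameter $\wt\Phi$ via \eqref{initial_p}, this parameter enters the graph estimates (the factors $(B_{\fa\fb}+\wt\Phi^2)$ in \eqref{estimates_ptree} and \eqref{estimates_ptreeQ}), and one iterates Markov's inequality $\lceil D/c\rceil$ times, each step replacing $\wt\Phi^2$ by $W^{-c}\wt\Phi^2+(N\eta)^{-1}$. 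You mention ``self-improving'' in passing but never set up the induction parameter or explain where it feeds into the graph bounds, and your concrete conclusion contradicts the need for it.

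The $Q$-graph treatment also needs repair. The scaling-size bound \eqref{eq:smallQ} only holds for $\QT'$, i.e.\ after removing the leading $Q$-graphs \eqref{eq:largeQW}; those leading ones are of size $\asymp\blam\heta$ and are not small by themselves. Moreover, raising only the $Q$-part to a high power does not exploit the expectation correctly: in the paper one takes moments of $T_{\fa\fb}$ itself, writing $\E T_{\fa\fb}^p=\E T_{\fa\fb}^{p-1}(\text{expansion})$, so that the $Q$-expansion is performed against the other $T_{\fa\fb}$ factors; the gain comes from the ``broken $T$'' mechanism \eqref{eq:explainQsize} (each pulled solid edge costs $\heta^{1/2}$, and the special terms \eqref{eq:leadingQs} force two extra pulled edges), after which H\"older and Young close the moment inequality. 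Without multiplying by $T_{\fa\fb}^{p-1}$ there is nothing for the $Q$-projections to act against, and the extra smallness you need for the leading $Q$-graphs is not available. The graph-value machinery you invoke (\Cref{no dot}, \Cref{w_s}) is indeed the right tool and matches the paper, but the proof as proposed is incomplete without the explicit iteration and the moment-of-$T$ structure for the $Q$-graphs.
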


\begin{proof}[\bf Proof of Proposition \ref{locallaw-fix}]
First, Lemma \ref{eta1case0} shows that the local laws \eqref{locallaw1} and \eqref{locallaw2} hold for $z_0 = E +\ii \eta$ with $\eta_0 \le \eta \le 1$. Next, given $\eta\in [\eta_0,\blam^{-1}]$, we define a sequence of $z_k=E+\ii \eta_k$ with decreasing imaginary parts $\eta_k:= \max\left(W^{-k \errw/3}\blam^{-1}, \eta\right)$ for a sufficiently small constant $\errw>0$. If \eqref{locallaw1} and \eqref{locallaw2} hold for $z_k = E + \ii\eta_k$, then Proposition \ref{lem: ini bound} yields that $\|G(z) - M(z) \|_w \prec W^{\errw}$ uniformly in $z=E+\ii \eta$ with $\eta_{k+1}\le \eta\le \eta_k$. Therefore, the condition \eqref{eq:cond-ewb} holds, and we get \eqref{pth T} by \Cref{lemma ptree}. Combining \eqref{pth T} with Lemma \ref{lem G<T}, we see that the local laws \eqref{locallaw1} and \eqref{locallaw2} hold at $z_{k+1}$. By induction in $k$, the above arguments show that the local laws \eqref{locallaw1} and \eqref{locallaw2} hold at any fixed $z=E+\ii \eta$ with $\eta \in [ \eta_0,\blam^{-1}]$. The uniformity in $z$ follows from a standard $\e$-net and perturbation argument; see e.g., the proof of Theorem 2.16 in \cite{BandI}; we omit the relevant details. %Finally, the estimate \eqref{locallaw2_WO} is a consequence of \eqref{locallaw1} and \eqref{offG largedev}.
\end{proof}

We still need to prove \Cref{gvalue_continuity,lemma ptree}. 
\Cref{gvalue_continuity} is one of the most important and challenging results in this paper, with its proof presented in \Cref{sec:continuity}. In particular, the proof involves the key coupling renormalization mechanism mentioned in the introduction, which will be the focus of \Cref{sec:Vexpansion}. 
The proof of \Cref{lemma ptree} will be presented in \Cref{sec:ptree} after we introduce the $Q$-expansions.

%cite \cite{RBSO} here 
%{\cor The next few sections are devoted to the proof of the remaining results---\Cref{completeTexp}, \Cref{gvalue_continuity}, and \Cref{lemma ptree}. %To facilitate clarity of presentation, we first focus on the Wegner orbital model in Sections \ref{sec:constr}--\ref{sec:sumzero}, as its proofs involve simpler notations and the relevant concepts are similar to those for RBM \cite{BandI,BandII,BandIII}. The extension to the block Anderson and Anderson orbital models is straightforward, requiring only minor modifications to some arguments. This will be discussed in detail in \Cref{sec:ext}.} 

\section{Construction of the $T$-expansion}\label{sec:constr} 

\subsection{Proof of \Cref{completeTexp}}

In this section, we construct a sequence of $T$-expansions satisfying Definition \ref{defn genuni} order by order, thereby completing the proof of \Cref{completeTexp}. First, it is straightforward to see that the $T$-expansion can be obtained by solving the corresponding $T$-equation defined in \Cref{def incompgenuni}. Second, given a $T$-expansion up to order $\fC$, we will follow the expansion strategy described in \cite{BandII} to construct a $\incomp$ of order strictly greater than $\fC$. 
For our proof, besides the properties in \Cref{collection elements}, we require more properties for our $\selfs$ and their \emph{infinite space limits}, defined as follows.  

\begin{definition}[Infinite space limits]\label{def infspace}
% We first define the infinite space limits of $M, \Delta, S,S^\pm$, and $\Theta^\circ$ by keeping $W$ fixed and taking $L \to \infty$ and $\eta \to 0$:
% \begin{equation}
%     \begin{split}
%         M(E)&: = \lim_{\eta \to 0_+}M(E + i\eta),\quad (\Delta_\infty)_{\al\beta}:= \lim_{L\to \infty} \Delta_{W,L}(\al-\beta),\quad(S_\infty)_{\al\beta}:= \lim_{L\to \infty} S_{W,L}(\al-\beta),\\
%          S_\infty^{+}(E)&:= \frac{M^+(E)S_\infty}{1-M^+(E)S_\infty},\quad  S_\infty^{-}(E):=\overline S_\infty^{+}(E),\quad (\zTheta_\infty)_{\al\beta}:=\sum_{k\geq 1}[M^0(E)S_\infty]^{k}\,.
%     \end{split}
% \end{equation}
Given an arbitrary deterministic doubly connected graph $\cal G(z,W,L)$ with $ z=E+\ii \eta$, we define its infinite space (and zero $\eta$) limit $\cal G^\infty(E,W,\infty)$ as the graph obtained by letting $L\to \infty$ and $\eta\equiv \eta_L \to 0$. 
\end{definition}

Given a sequence of $\selfs$, $\Sele_1,\ldots, \Sele_k$, in the $T$-expansion/$T$-equation, their infinite space limits can be defined iteratively as follows. For a deterministic doubly connected graph \smash{$\cal G\equiv \cal G(m(z), S , S^{\pm}(z), \zthn(z))$} in $\Sele_1$ formed with waved edges, diffusive edges, and coefficients in terms of $m(z)$, we define $\cal G^\infty \equiv  \cal G^\infty \left( E,W,\infty\right)$ as follows. We first replace $m(z)$, \smash{$S\equiv S^{(W,L)}$, $ S^{\pm}(z)\equiv S_{(W,L)}^{\pm}(z)$, and $\zthn(z)\equiv  \zthn^{(W,L)}(z)$} in $\cal G $ with $m(E)$, $S^{(W,\infty)}$, $S_{(W,\infty)}^{\pm}(E)$, and $\thn^{(W,\infty)}(E)$, respectively (recall \Cref{def infspace0}). We denote the resulting graph by \be\nonumber 
\cal G^{(L)}(E,W,\infty)\equiv\cal G \left( m(E), S^{(W,\infty)} ,  S_{(W,\infty)}^{\pm}(E),  \thn^{(W,\infty)}(E)\right).\ee
Next, we let all internal vertices take values over $\Z^d$ and denote the resulting graph by $\cal G^\infty$. In this way, we obtain the infinite space limit $\Sele_1^\infty(E,W,\infty)$. Suppose we have defined the infinite space limits of $\Sele_1,\ldots, \Sele_j$. Then, given a graph \smash{$\cal G\equiv \cal G(m(z), S , S^{\pm}(z), \zthn(z), \{\Sele_l(z)\}_{l=1}^j)$} in $\Sele_{j+1}$, we first define 
\be\label{eq:semi_inf0} 
\cal G^{(L)}(E,W,\infty)\equiv \cal G \left( m(E), S^{(W,\infty)} ,  S_{(W,\infty)}^{\pm}(E),  \thn^{(W,\infty)}(E),\{\Sele_l^\infty(E)\}_{l=1}^j\right),\ee
and then let all internal vertices take values over $\Z^d$ to get $\cal G^\infty$.

By the definition of $S$, \Cref{lem:estS+}, and \Cref{lem esthatTheta}, we know that in the infinite space limit, the waved edges have exponential decay on the scale of $W$ and the diffusive edges decay like the $B_{xy}$ factor on $\Z^d$. Hence, our definitions of molecular graphs in \Cref{def_poly} and the doubly connected property in \Cref{def 2net} carry over to the infinite space limits of graphs. Furthermore, if the infinite space limits \smash{$\Sele_1^\infty,\ldots, \Sele_k^\infty$} of our $\selfs$ satisfy \eqref{4th_property0} and the sum zero property \eqref{3rd_propertyinf}, then using \eqref{redundant again2}, we can show that the bounds in \eqref{BRB} still hold in the infinite space limit. Specifically, for any $i,i_1,\ldots, i_l\in \qqq{k}$ and any constant $\tau>0$, we have 
 \be\label{BRB_inf} 
\thn^\infty_{xy}(\Sele_i^\infty)\lesssim \frac{\blam}{W^2(|x-y|+W)^{d-2-\tau}}, \quad \left(\thn^\infty\Sele_{i_1}^\infty\thn^\infty \cdots \thn^\infty \Sele_{i_k}^\infty \thn^\infty\right)_{xy} \lesssim \frac{\blam \prod_{j=1}^l \sizeself(\Sele_{i_j})}{W^2(|x-y|+W)^{d-2-\tau}},
 \ee
for large enough $W$. In other words, the labeled diffusive edges still behave like diffusive edges in the infinite space limit. Hence, as an induction hypothesis for our proof, we will assume that the $\selfs$ constructed in our $T$-expansions satisfy \eqref{4th_property0} and \eqref{3rd_propertyinf} in the infinite space limit. In addition, for the technical proof of \Cref{cancellation property}, we will impose further technical assumptions on the infinite space limits of the $\selfs$. 

% {\cor By definition of $S$ and Lemmas \ref{lem:estM}--\ref{lem esthatTheta}, $m^{(\infty)}(E)$, $S^{(W,\infty)}$, $S_{(W,\infty)}^{\pm}(E)$, and $\thn^{(W,\infty)}$ exist. Furthermore, the $\selfs$ $\Sele_k$, $k\in \qqq{k_0}$, satisfy the sum zero property \eqref{3rd_propertyinf} with $\eta=0$ and $L=\infty$. Then, using \eqref{redundant again2}, }
% Note that $\cal G^\infty$ depends only on $E$ and $W$, but does not depend on $L$ and $\eta$. In the following proof, we will denote $\cal G$ and $\cal G^\infty$ as $\cal G(z,W,L)$ and $\cal G^\infty(E,W,\infty)$, respectively, to stress their dependence on $z$, $W$, and $L$. Furthermore, we will abbreviate the graph \eqref{eq:semi_inf} as $\cal G(E,W,\infty)$. 

\begin{proposition}[Construction of the pseudo-$\incomp$] \label{Teq}
Fix any large constants $D>\fC>0$ and $W^\fd \eta_*\le \eta_0 \le 1$. Suppose we have constructed a $T$-expansion up to order $\fC$ (with $\eta\ge \eta_0$ and $D$-th order error) that satisfies Definition \ref{defn genuni} for a sequence of $\selfs$ $\Sele_k$, $k \in \qqq{k_0}$, satisfying \Cref{collection elements} and the following additional properties:
\begin{itemize}
\item[(i)] $ \sizeself(\Sele_k) \lesssim W^{-\fc_k}$ for a sequence of strictly increasing constants $0< \fc_1 < \cdots < \fc_{k_0} = \fC$ such that 
\be\label{eq:increasing_const}
\fc_k - \fc_{k-1} \ge c_{\xi},
\ee
where $c_{\xi}:=\max\{d-2\log_W \blam, 2\xi\}$ is a positive constant and we adopt the convention that $\fc_0=0$. 

\item[(ii)] For each $k\in \qqq{k_0}$, the infinite space limit ${\cal E}_k^\infty$ can be written as 
\be\label{eq:inf_form_Ej}
{\cal E}_k^\infty = \frac{W^{-\fc_k}}{\blam} (\cal A_k \otimes \mathbf E),
\ee
where $\cal A_k(\cdot,\cdot):\ell^2(\wt\Z^d)\to \ell^2(\wt\Z^d)$ is a fixed operator that does not depend on $W$ and satisfies the following properties for a constant $C>0$ and any small constant $c>0$:
\be\label{eq:inf_form_symm}
\cal A_k([x]+[a],[y]+[a])=\cal A_k([x],[y]),\quad \cal A_k([x],[y])=\cal A_k(-[x],-[y]),\quad \forall [x],[y],[a]\in \wt\Z^d;
\ee
\be\label{eq:inf_form_decay}
\cal A_k([x],[y]) \le C(|[x]-[y]|+1)^{-(d+2)+c},\quad \forall [x],[y]\in \wt\Z^d;
\ee
\be\label{eq:inf_form_zero}
\sum_{[x]\in \wt\Z^d}\cal A_k([0],[x]) =0.
\ee

\item[(iii)]  For each $k\in \qqq{k_0}$, the following estimate holds for any small constant $c>0$:
%For any fixed $k\in \N$, suppose we have a sequence of $\selfs$ $\Sele_1,\ldots, \Sele_k$ satisfying \Cref{collection elements} and that
\be\label{4th_property_substrac}
 \left|({\cal E}_k(z,W,L))_{xy}-({\cal E}_k^\infty(E,W,\infty))_{xy}\right| \le \left(\eta+t_{Th}^{-1}\right)\frac{\sizeself(\Sele_k)}{ \langle x-y\rangle^{d-c}}, \quad \forall \ x,y\in \Z_L^d.
\ee
%where we let $\Sele_0=0$ as a convention.
\end{itemize}
Then, for the Wegner orbital model, we can construct pseudo-$T$-equations of $\zT_{\fa,\fb_1\fb_2}$ and $T_{\fa,\fb_1\fb_2}$ with real order $\fC$, pseudo-order $\fC'$, and error order $D$ in the sense of \Cref{defn pseudoT} for $\selfs$ $\cal E_k$, $k \in \qqq{k_0}$, and a $\pself$ $\wt\Sele_{k_0+1}$ with
\be\label{eq:psiSele}
\sizeself(\wt \Sele_{k_0+1}) : =\blam W^d\size(\wt\Sele_{k_0+1}) \lesssim W^{-c_{k_0+1}},%,\quad k' \in \qqq{k_0+1,  k_1},
\ee
where $c_{k_0+1}$ is a constant satisfying $c_{k_0+1}\ge c_{k_0}+c_\xi$ and $\fC'$ is a constant satisfying that $\fC'\ge c_{k_0+1}+\fc$ with \be\label{eq:fc}
\fc:=\min\{c_\xi,(d-\log_W \blam)/2\}.
\ee

%$$\psi(\Sele_{k_0+1})=\blam W^d\size(\Sele_{k_0+1}) \prec W^{-\fc_{k_0}-\fc},$$
%where $\fc$ is some constant depending on $\fd$ and $\xi$ only ({\cor $\fc=\min\{\fd, d,\xi\}$?}), 
%and we have abbreviated that $\size(\wt \Sele_{k'})\equiv \size((\wt \Sele_{k'})_{xy})$ (since $\wt \Sele_{k'}$ consists of deterministic edges only, $\size(\Sele_{xy})$ does not depend on the choice of $x, y \in \Z_L^d$).

%$\fC'\ge \fC+\fc$ for a constant $c>0$ depending on $\fd$ and $\xi$ only.
\iffalse 
\begin{enumerate}

\item $ \sizeself(\Sele_k) \prec W^{-\fc_k}$, $k\in \qqq{k_0+1,k_1(\fC')}$, for a sequence of strictly increasing constants $\fC < \fc_{k_0+1} < \cdots < \fc_{k_1} < \fC'$ (which only depend on $\fd$, $\xi$, $\fC$, and $\fC'$).

\item We have
    \be\label{sizeE1}
    \sizeself(\Sele_1)\le \left(\Wlambda\right)^2 W^{-d},
    \ee
    and
    \be\label{sizeE2}
    \sizeself(\Sele_k)\le \left(\Wlambda\right)^2 W^{-d}\sizeself(\Sele_{k-1}),\quad 2\le k \le k_1(\fC').
    \ee

    \item The diffusive edges in the graphs $\Sele_k$, $1\le k \le k_1$, $\PT_{x,\fb_1\fb_2}$, $\AT_{x,\fb_1\fb_2}$, $\WT_{x,\fb_1\fb_2}$, $\QT_{x,\fb_1\fb_2} $ and $(\Err_{D})_{x,\fb_1\fb_2}$ can be pseudo-diffusive $\zTheta(\mathscr E)$ edges, where for each $\Sele'$, there exists $\{i_1,\cdots, i_a\}\subset \{1,\cdots, k_1\}$ and $\{i_1',\cdots, i_{a'}'\}\subset \{1,\cdots, k_0\}$ such that 
    \smash{$\mathscr E=\sum_{k=1}^{a'} \Sele_{i_k'}+\sum_{k=1}^a \Sele_{i_k}.$} 
    %Moreover, for each $\zTheta(\mathscr E)$ edge in $\Sele_k$, there there exists a sequence $\{i_1,\cdots, i_a\}\subset \{1,\cdots, k-1\}$ and $\{i_1',\cdots, i_{a'}'\}\subset \{1,\cdots, k_0\}$ such that \smash{$\mathscr E=\sum_{k=1}^{a'} \Sele_{i_k'}+\sum_{k=1}^a \Sele_{i_k}$}.
\end{enumerate}
\fi

\end{proposition}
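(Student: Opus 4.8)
\textbf{Proof proposal for \Cref{Teq}.}

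The plan is to follow the expansion strategy of \cite{BandII} applied to the $T$-expansion of order $\fC$ that we assume as input. The starting point is the given expansion \eqref{mlevelTgdef}: to push past order $\fC$ we must further expand the ``higher order'' term $\sum_x \zthn(\Sele)_{\fa x}\AT_{x,\fb_1\fb_2}$, since by \eqref{eq:smallA} this is the only term currently carrying a size bound of order $W^{-\fC-\fc}\heta$ that needs refinement; all other terms ($\PT$, $\WT$, $\QT$, $\Err_D$) are already small enough to be absorbed into the corresponding blocks of the new expansion. First I would take each graph in $\AT_{x,\fb_1\fb_2}$ and apply the standard sequence of operations: (a) a $Q$-expansion to rewrite non-$Q$ graphs in terms of $Q$-graphs plus lower-complexity remainders, (b) Gaussian integration by parts / the global expansion to expand $Q_x$-labelled light weights and off-diagonal $G$ factors, generating new waved edges, new $\Gc$ edges, and new $S^\pm$ edges, and (c) re-summation using the identity $S(1+m^2 S^+)=S^+$ and its analogues to collect the deterministic pieces. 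The deterministic doubly connected subgraphs that emerge with two designated external vertices and no free/$P/Q$ components, and which are ``locally irreducible'' in the sense needed, are collected into a new term $\wt\Sele_{k_0+1}$; everything else is sorted into the updated $\PT,\AT,\WT,\QT,\Err_D$ according to whether it is a recollision graph, a higher-order graph, a one-free-edge graph, a $Q$-graph, or a genuine error.

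The bookkeeping steps then are: first, verify that $\wt\Sele_{k_0+1}$ satisfies conditions (i), (ii), (iii) of \Cref{collection elements} \emph{except possibly the sum zero property} \eqref{3rd_property0} — that is, it is a $\pself$. Properties (i) and (iii) (doubly connectedness, redundancy of diffusive edges, tensor-product structure for $\WO$) are structural and follow by inspection of how the graphs were generated, exactly as in \cite{BandII}; property \eqref{4th_property0} follows from \Cref{dG-bd} applied to the resulting deterministic doubly connected graphs. Second, establish the size bound \eqref{eq:psiSele}: each expansion step that produces a new deterministic graph introduces at least one extra waved edge not compensated by an internal atom, or an extra $\Gc$/diffusive edge, hence gains at least a factor $\heta^{1/2}$ or $\heta$ or $W^{-d}$ in scaling size; by the input hypothesis \eqref{eq:increasing_const} and the fact that $c_\xi=\max\{d-2\log_W\blam,2\xi\}$ governs the ``per-step gain'', one checks $\size(\wt\Sele_{k_0+1})\lesssim W^{-d}\blam^{-1}W^{-c_{k_0+1}}$ with $c_{k_0+1}\ge c_{k_0}+c_\xi$, which is \eqref{eq:psiSele}. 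Third, verify that the residual higher-order term in the new expansion obeys \eqref{eq:condhigher} with the new pseudo-order $\fC'\ge c_{k_0+1}+\fc$, where $\fc$ is as in \eqref{eq:fc}; this again comes from counting the minimal size gain in the graphs that remain ``higher order'' after one round of expansion. Fourth, confirm that the infinite space limit $\wt\Sele_{k_0+1}^\infty$ is well-defined and inherits the symmetry \eqref{eq:inf_form_symm} and decay \eqref{eq:inf_form_decay} from the input $\selfs$ via \Cref{lem:estM}, \Cref{lem:estS+}, \Cref{lem esthatTheta}, and the form \eqref{eq:inf_form_Ej} — the tensor factorization \eqref{eq:inf_form_Ej} for the new term follows because in the infinite-space limit every diffusive and waved edge factorizes through $\mathbf E$, and the $\WO$-specific structure $\Sele=\Sele_{\LK}\otimes\mathbf E$ is preserved. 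Finally, assemble the equation into the form \eqref{mlevelT incomplete}: the term $\sum_x(\zthn\Sele)_{\fa x}\zT_{x,\fb_1\fb_2}$ is produced by recognizing, among the newly expanded deterministic pieces attached to a $\zthn$ edge and a $T$-variable structure, the ``self-energy insertion'' $\zthn\wt\Sele_{k_0+1}$ factor, which is exactly what converts an expansion into an equation.

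The main obstacle I expect is \emph{not} any single estimate but the combinatorial control of the expansion: one must show that after finitely many expansion steps the process terminates with all non-collected graphs either of size $\le W^{-D}\heta$ (absorbed into $\Err_D$) or already of one of the admissible forms, and crucially that the newly generated deterministic graph $\wt\Sele_{k_0+1}$ depends only on the \emph{structure} dictated by the expansion rules and not on artefacts of the order in which edges were expanded — otherwise the induction hypotheses (ii)–(iii) on infinite-space limits cannot be propagated. A secondary but genuine difficulty is verifying \eqref{4th_property_substrac}-type comparability between $\wt\Sele_{k_0+1}(z,W,L)$ and its infinite space limit, i.e.\ that replacing $m(z),S^\pm(z),\zthn(z)$ by their $E$-limits costs only $(\eta+t_{Th}^{-1})$ times the size — this requires carefully combining the difference estimates \eqref{eq:mL_minf}, \eqref{eq:S+-Sinf}, \eqref{Theta-wh} over a doubly connected graph, using that at least one edge in the graph ``absorbs'' the difference while the rest are bounded by \Cref{dG-bd}. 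I emphasize that the sum zero property \eqref{3rd_property0} (equivalently \eqref{3rd_propertyinf} in the limit) is deliberately \emph{not} proved here; it is the content of \Cref{cancellation property}, proved separately in \Cref{sec:sumzero}, and is the reason the output of this proposition is only a \emph{pseudo}-$T$-equation.
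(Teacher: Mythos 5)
Your overall architecture (defer the sum zero property to \Cref{cancellation property}, extract the new deterministic graphs as a $\pself$, bound sizes via \Cref{dG-bd}) matches the paper's in spirit, but the central mechanism you propose does not work as stated and is not what the paper does. You plan to take the order-$\fC$ expansion \eqref{mlevelTgdef} as given and ``further expand'' the graphs in $\AT_{x,\fb_1\fb_2}$. By \Cref{def genuni2}, however, the higher-order graphs are only guaranteed to be \emph{SPD}, not globally standard: they may have no redundant $T$-variable in their MIS, and a global expansion of a pivotal $T$-variable breaks the doubly connected property and costs a factor $L^2/W^2$ (this is exactly the obstruction the paper points out when motivating the complete $T$-expansion, which is why \Cref{def nonuni-T} needs ghost edges and the extra hypothesis \eqref{Lcondition1} on $L$ — a hypothesis that is \emph{not} available in \Cref{Teq}). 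Moreover, the abstract definition of $\AT$ retains only size and SPD information, which is not enough structure to re-expand those graphs and still land in the admissible classes of \Cref{defn pseudoT}. The paper circumvents this entirely: it re-runs the whole construction from the preliminary $T$-expansion \eqref{eq:3rd} via \Cref{strat_global}, making \emph{the same} local-expansion choices and expanding \emph{the same} $T$-variables as in the order-$\fC$ construction, with only two changes — a larger threshold in stopping rule (S2) and the use of the $\fC$-th order $T$-expansion (instead of a lower-order one) in each global step. This guarantees every intermediate graph stays globally standard (via \Cref{lem globalgood}), that all graphs of size down to $W^{-\fC}\heta$ and all self-energy graphs of $\sizeself$ down to $W^{-\fC}$ coincide with those of the old expansion, and that the gain per step is quantified by $\heta^{1/2}\vee(\blam\heta)$, which yields termination and the pseudo-order $\fC'\ge c_{k_0+1}+\fc$ with $\fc$ as in \eqref{eq:fc}.

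Two further points of divergence. First, the term $\sum_x(\zthn\wt\Sele_{k_0+1})_{\fa x}\zT_{x,\fb_1\fb_2}$ in the pseudo-$T$-equation does not arise by ``recognizing a self-energy insertion'' inside expanded $\AT$ graphs; in the paper it comes from the \emph{non-expandable} outputs of the strategy, i.e.\ globally standard graphs whose subgraph on internal vertices is deterministic and which end in a standard neutral vertex attached to $\fb_1,\fb_2$ (the weakly-isolated property is what rules out other non-expandable configurations). Second, the properties \eqref{self_sym}, the tensor structure $\wt\Sele_{k_0+1}=(\wt\Sele_{k_0+1})_{\LK}\otimes\bE$, and the size bound \eqref{eq:psiSele} are verified directly from the structure of the Wegner-orbital matrices \eqref{eq:SSLK} and from the fact that all self-energy graphs descend from $\sum_x\zthn_{\fa x}\mathcal A^{(>3)}_{x,\fb_1\fb_2}$; the infinite-space-limit comparisons \eqref{eq:inf_form_Ej}--\eqref{4th_property_substrac} that you propose to check here are not part of \Cref{Teq} at all — they are established for the extracted $\Sele_{k_0+1}$ in \Cref{cancellation property} (via \Cref{lem V-R wt} and the construction in \Cref{sec:sumzero}), so including them here is harmless but misplaced.
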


Next, we show that the leading terms in the $\pself$ $\wt\Sele_{k_0+1}$ form a $\self$ that satisfies the sum zero property \eqref{3rd_property0} and propeprties \eqref{eq:inf_form_Ej}--\eqref{4th_property_substrac}.

\begin{proposition}\label{cancellation property}
%Under the assumptions of Theorem \ref{completeTexp},
In the setting of \Cref{Teq}, if the pseudo-$\incomp$ is constructed with $\eta\ge \eta_0$ for some $\eta_0\le t_{Th}^{-1}$, then we can find $\Sele_{k_0+1}$ satisfying  \Cref{collection elements} and the properties \eqref{eq:inf_form_Ej}--\eqref{4th_property_substrac}, such that 
\be\label{eq:remain_self_error}
\sizeself(\wt \Sele_{k_0+1}-\Sele_{k_0+1}) :=\blam W^d\size(\wt \Sele_{k_0+1}-\Sele_{k_0+1}) \lesssim W^{-c_{k_0+1}-c_\xi}. 
\ee

%$\pself$ $\Sele_{k_0+1}$ is a $\self$ in the sense of \Cref{collection elements}. 
%$\pselfs$ $\Sele_{k'}$, $k' \in \qqq{k_0+1,  k_1}$, are  $\selfs$ in the sense of \Cref{collection elements}. 
\end{proposition}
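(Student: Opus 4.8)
The plan is to isolate, from the pseudo-self-energy $\wt\Sele_{k_0+1}$ constructed in \Cref{Teq}, the ``leading part'' whose infinite space limit carries the essential cancellation, and to show that this leading part is a genuine $\self$ satisfying \Cref{collection elements} together with \eqref{eq:inf_form_Ej}--\eqref{4th_property_substrac}, while the remainder is suppressed by an extra $W^{-c_\xi}$. The first step is to analyze the structure of $\wt\Sele_{k_0+1}$ produced by the expansion algorithm of \cite{BandII}: it is a finite sum of deterministic doubly connected graphs built from waved edges, labeled diffusive edges (whose labels only involve $\Sele_1,\dots,\Sele_{k_0}$, which by induction hypothesis satisfy \eqref{4th_property0} and \eqref{3rd_propertyinf} in the infinite space limit), and coefficients in $m(z)$. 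Since by \eqref{eq:psiSele} we have $\sizeself(\wt\Sele_{k_0+1})\lesssim W^{-c_{k_0+1}}$, each graph has scaling size of order $W^{-c_{k_0+1}}\heta/\blam$ up to $W^\tau$; I would partition these graphs by whether, after passing to the infinite space limit and using \Cref{lem:estM,lem:estS+,lem esthatTheta} plus \eqref{4th_property_substrac} for the lower-order selfs, their $L,\eta\to\infty,0$ limit is nonzero. Define $\Sele_{k_0+1}$ to be the sum of the infinite-space limits (re-expressed back on $\Z_L^d$ via the projection of \Cref{def infspace0}, i.e.\ as $\frac{W^{-c_{k_0+1}}}{\blam}(\cal A_{k_0+1}\otimes\mathbf E)$ plus the genuine finite-$L$ graph whose limit this is); the remainder $\wt\Sele_{k_0+1}-\Sele_{k_0+1}$ then consists of the ``difference'' graphs, whose values are controlled by the error estimates in \Cref{lem:estM,lem:estS+,lem esthatTheta} and \eqref{4th_property_substrac}, each contributing a gain of $(\eta+t_{Th}^{-1})\blam$ or $e^{-n/C}$ type factors, which by the choice $\eta_0\le t_{Th}^{-1}$ and $\eta\le t_{Th}^{-1}$ translates into the extra factor in \eqref{eq:remain_self_error}.

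The second step is to verify the defining properties of a $\self$ for $\Sele_{k_0+1}$. Properties (i) (doubly connected, redundant diffusive edges) and (iii) (tensor form for $\WO$) are inherited structurally from $\wt\Sele_{k_0+1}$, since removing the difference graphs does not destroy double-connectivity and the $\WO$ variance matrix factorizes as $S^{\WO}=S^{\LK}\otimes\mathbf E$ throughout the construction. The symmetry \eqref{self_sym} follows from the parity and block-translation symmetries of $m$, $S$, $S^\pm$, $\zthn$ that are preserved under all graph operations; this simultaneously gives \eqref{eq:inf_form_symm}. The decay bound \eqref{4th_property0}, equivalently \eqref{eq:inf_form_decay}, follows from \Cref{dG-bd} (Corollary 6.12 of \cite{BandI}) applied to each deterministic doubly connected graph, noting that the labeled diffusive edges behave like genuine diffusive edges in the infinite space limit by \eqref{BRB_inf}. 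The bound $\sizeself(\Sele_{k_0+1})\lesssim W^{-c_{k_0+1}}$ with $c_{k_0+1}\ge c_{k_0}+c_\xi$ comes directly from the scaling-size bookkeeping of \Cref{Teq} and \eqref{eq:psiSele}. The genuinely nontrivial property is the sum zero property \eqref{3rd_property0} — equivalently, its infinite-space form \eqref{eq:inf_form_zero}, $\sum_{[x]}\cal A_{k_0+1}([0],[x])=0$.

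The third and hardest step is establishing \eqref{eq:inf_form_zero}. The strategy, following \cite{BandI}, is \emph{not} to compute the sum of each graph but to use the following self-consistency argument: the full (all-order) $T$-equation, when specialized to $\fb_1=\fb_2$ and $\fa$ arbitrary and then summed appropriately, must be consistent with Ward's identity $\sum_x T_{x,\fb_1\fb_1}= \im G_{\fb_1\fb_1}/\eta$ and with the exact identity \eqref{eq:T-cT}. Taking the infinite space limit $L\to\infty,\eta\to0$ kills the free-edge and zero-mode contributions, and the diffusive operator $\thn^\infty$ applied to a function with nonzero total mass would produce a term of order $\blam/(|x-y|+W)^{d-2}$ that is \emph{not} present on the other side of the consistency relation; hence the total mass of the newly generated self-energy $\Sele_{k_0+1}^\infty$ must vanish. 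Concretely, one writes the infinite-space $T$-equation as $\zT^\infty = m\thn^\infty(\Sele^\infty)_{\cdot\fb_1}\bar G_{\fb_1\fb_1} + \thn^\infty(\dots)$, observes that the operator $1-\thn^\infty(M^0+\Sele^\infty)$ must annihilate the constant vector (this is the infinite-space analogue of the Perron--Frobenius eigenvalue $a=\im m/(\im m+\eta)\to1$ computation in \eqref{eq:a_sum}), and reads off that $\sum_{[x]}\cal A_{k_0+1}([0],[x])=0$ order by order in the self-energy expansion. I expect this self-consistency/Ward-identity argument — making rigorous that the expansions can be manipulated in the infinite-space limit despite only being defined for $\eta\ge\eta_0$, and that the error graphs $\Err_D$ and the difference graphs do not contaminate the leading identity — to be the main obstacle; it is precisely the content deferred to \Cref{sec:sumzero} in the paper, and it requires careful control of the interchange of limits together with the induction hypotheses \eqref{eq:inf_form_Ej}--\eqref{4th_property_substrac} on all lower-order selfs.
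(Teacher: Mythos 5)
Your high-level architecture (split $\wt\Sele_{k_0+1}$ into a leading part plus a small remainder, check the structural properties via the doubly connected bounds, and obtain the sum zero property from a Ward-identity/self-consistency argument with the $T$-equation, transferring to the infinite-space limit) is the same as the paper's, but the two steps that carry the actual content are not supplied, and the routes you sketch for them would not go through as stated. First, your definition of $\Sele_{k_0+1}$ as ``the infinite-space limits re-projected to $\Z_L^d$'' does not produce a decomposition into graphs whose scaling size can be estimated, and the error factors you invoke ($(\eta+t_{Th}^{-1})\blam$ or $e^{-n/C}$ from the finite-$L$ versus infinite-space comparison) are neither attached to graphs nor uniformly small for $\eta\in[\eta_0,1]$, so they cannot yield \eqref{eq:remain_self_error}. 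In the paper the gain comes from a different mechanism: inside each finite-$L$ graph of $\wt\Sele_{k_0+1}$ one replaces $m$, $S$, $S^\pm$, $\zthn$ by their $\lambda$-independent (or $\lambda$-simplified) leading versions, so that the leading part is again a sum of genuine graphs whose infinite-space limit has the exact $W$-independent tensor form \eqref{eq:inf_form_Ej}, while the remainder is a sum of graphs each carrying an explicit small coupling factor, cf.\ \eqref{eq:remain_self_error2}; the finite-$L$ versus infinite-space comparison \eqref{property V-R wt} is used only later, to relate the sum rules, not to define the split.

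Second, and more importantly, your proof of the sum zero property \eqref{eq:inf_form_zero} hinges on taking $L\to\infty$, $\eta\to 0$ in the stochastic $T$-equation and arguing that $1-\thn^\infty(M^0+\Sele^\infty)$ annihilates constants. This step would fail: the pseudo-$T$-equation, the local laws, and the bounds on the recollision/higher-order/$Q$/error terms are only valid for $\eta\ge\eta_0\ge W^{\fd}\eta_*$ and for $L$ not too large relative to the expansion order (the higher-order terms cost factors of $L^2/W^2$), so there is no stochastic identity surviving the limit at fixed $W$ to anchor the annihilation claim. The paper circumvents exactly this by never taking the limit in the stochastic relation: it solves the pseudo-$T$-equation to get a $T$-expansion of order $c_{k_0+1}-\fc$, invokes \Cref{locallaw-fix} at the Thouless scale $z=E+\ii t_{Th}^{-1}$ with $L$ chosen in the window \eqref{Lcondition}, sums the expectation of \eqref{mlevelT incomplete_1.1} with $\fb_1=\fb_2$ over $\fa,\fb$, matches the Ward-identity side against the $\thn$ side using $1-(1+2d\lambda^2)|m|^2\sim t_{Th}^{-1}$, and bounds all remaining terms by the local laws to obtain the \emph{quantitative} almost-vanishing $|\sum_{[x]}(\Sele_{k_0+1}')^{\LK}_{[0][x]}|\le W^{-c_{k_0+1}-\e}/\blam$ (plus a $\thn$-versus-$\zthn$ comparison). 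Only then does the deterministic comparison \eqref{property V-R wt}, together with the fact that $\cal A_{k_0+1}$ is $W$-independent while $W$ can be taken arbitrarily large, force the exact identity \eqref{eq:inf_form_zero} and hence \eqref{3rd_property0}. This finite-scale evaluation, the specific $c_{k_0+1}$-dependent choice of $L$, and the $W$-independence rigidity argument are the missing ideas; acknowledging that the interchange of limits is delicate does not substitute for them.
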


The proof of \Cref{cancellation property} is similar to that of \cite[Lemma 5.8]{BandI} and will be deferred to \Cref{sec:sumzero}. %cite \cite{RBSO} here 
By combining the above two propositions, we can conclude Theorem \ref{completeTexp} by induction.  

\begin{proof}[\bf Proof of Theorem \ref{completeTexp}] 
Suppose we have constructed a $T$-expansion up to order $\fC$. Combining Propositions \ref{Teq} and \ref{cancellation property}, %we can obtain a $T$-equation up to order $c_{k_0+1} \ge \fC + c_\xi$. 
%Take the Wegner orbital model as an example,
we can obtain a pseudo-$T$-equation \eqref{mlevelT incomplete} of the form 
\begin{equation}
\label{mlevelT incomplete_1.0}
\begin{split}	\zT_{\fa,\fb_1 \fb_2}&=  m  \zthn_{\fa \fb_1}\overline G_{\fb_1\fb_2} %+ \frac{ G_{\fb_2 \fb_1} - \overline G_{\fb_1 \fb_2}}{2\ii N\eta} 
	+ \sum_x (\zthn\Sele)_{\fa x}\zT_{x,\fb_1\fb_2}+ \sum_x (\zthn\cdot  \delta\Sele_{k_0+1})_{\fa x}\zT_{x,\fb_1\fb_2} \\
&+ \sum_x \zthn_{\fa x} \left[\PT_{x,\fb_1 \fb_2} +  \AT_{x,\fb_1 \fb_2}  + \WT_{x,\fb_1 \fb_2}  + \QT_{x,\fb_1 \fb_2}  +  (\Err_{D})_{x,\fb_1 \fb_2}\right]\, ,
	\end{split}
\end{equation}
where $\Sele:=\sum_{k=1}^{k_0+1}\Sele_k$ and $\delta\Sele_{k_0+1}:=\wt \Sele_{k_0+1}-\Sele_{k_0+1}$.  Due to \eqref{eq:remain_self_error}, we have that 
$$ \size\Big(\sum_x (\zthn\cdot \delta\Sele_{k_0+1})_{\fa x}\zT_{x,\fb_1\fb_2}\Big) \lesssim W^{2d}\cdot  \frac{\sizeself(\delta\Sele_{k_0+1})}{\blam W^d}\cdot \heta^2 \prec W^{-c_{k_0+1}-c_\xi}\heta.$$
%and a similar bound holds for the $\psize$ with $\heta$ replaced by $\pheta$. 
Hence, we can include $\sum_x (\zthn\cdot \delta\Sele_{k_0+1})_{\fa x}T_{x,\fb_1\fb_2}$ into the higher-order graphs $\sum_x \zthn_{\fa x} \AT_{x,\fb_1 \fb_2}$ and get a $T$-equation \eqref{mlevelT incomplete} up to order $\fC'=c_{k_0+1} \ge \fC + c_\xi$. 
\iffalse Now, we introduce the notation
\be\label{def_mathringT}\mathring T_{\fa,\fb_1\fb_2}:=\sum_x P^\perp_{\fa x}T_{x,\fb_1\fb_2}= T_{\fa,\fb_1\fb_2} - \frac{ G_{\fb_2 \fb_1} - \overline G_{\fb_1 \fb_2}}{2\ii N\eta}.\ee
Recall that $\Sele$ can be written as $\Sele=\Sele_{\LK}\otimes \mathbf E$, where $\Sele_{\LK}$ is translation invariant on $\wt\Z_n^d$ by \eqref{self_sym}. As a consequence, we have $P^\perp \Sele=\Sele P^\perp$, which implies that 
$$ \sum_x (\zthn\Sele)_{\fa x}T_{x,\fb_1\fb_2}= \sum_x (\zthn P^\perp\Sele)_{\fa x}T_{x,\fb_1\fb_2}= \sum_x (\zthn \Sele P^\perp)_{\fa x}T_{x,\fb_1\fb_2}=\sum_x (\zthn \Sele )_{\fa x}\mathring T_{x,\fb_1\fb_2}.$$\fi
Then, we get from \eqref{mlevelT incomplete} that 
\begin{equation}
	\label{eq:solve-Teq-0}
	\begin{split}
		 \sum_x (I-\zthn\Sele )_{\fa x}& \mathring T_{x,\fb_1\fb_2}  = m  \zthn_{\fa \fb_1}\overline G_{\fb_1\fb_2}  \\
		 &+ \sum_x \zthn_{\fa x}\left[\PT_{x,\fb_1 \fb_2} +  \AT_{x,\fb_1 \fb_2}  + \WT_{x,\fb_1 \fb_2}  + \QT_{x,\fb_1 \fb_2}  +  (\Err_{D})_{x,\fb_1 \fb_2}\right]\,.
	\end{split}
\end{equation}
%Using the translation invariance \eqref{self_sym} for the $\selfs$, we get that 
%	\begin{equation}\label{sum_TE0}
%		\sum_{y} (\Theta^{\circ} \Sigma^{(n)})_{xy} = \sum_{\alpha,\beta} \Theta^{\circ}_{x\al} \Sigma^{(n)}_{\al\beta} =  \sum_{\alpha} \Theta^{\circ}_{x\al}\cdot \sum_{\beta}  \Sigma^{(n)}_{0\beta}  = 0.  
%	\end{equation}
%Using \eqref{sum_TE0} and \eqref{eq:TTC}, we can rewrite \eqref{mlevelT incomplete} as
%%		\sum_{\alpha} (\Theta^{(n)} \mathcal E)_{x,\alpha} (T_{\alpha,yy'} - T_{\alpha,yy'}^\circ) = 0$.
%%Hence \eqref{mlevelT incomplete} implies
%		\begin{equation}
%		    \label{eq:solve-Teq-0}
%\begin{split}
%	 &\sum_x \left(I-\Theta^{\circ} \Sigma^{(n)}\right)_{\fa x} T^\circ_{x,\fb_1\fb_2}= m  \Theta^{\circ}_{\fa \fb_1}\overline G_{\fb_1\fb_2}   \\
%		&+ \sum_x \Theta^{\circ}_{\fa x}\left[\PTn_{x,\fb_1 \fb_2} +  \ATn_{x,\fb_1\fb_2}  + \Wn_{x,\fb_1\fb_2}  + \QTn_{x,\fb_1\fb_2}  +  (\Err_{n,D})_{x,\fb_1\fb_2}\right]\,.
%		\end{split}
%\end{equation}
Solving \eqref{eq:solve-Teq-0} and recalling the definition \eqref{theta_renormal1}, we obtain that
$$\mathring T _{\fa,\fb_1\fb_2} = m  \zthn(\Sele)_{\fa \fb_1}\overline G_{\fb_1\fb_2} + \sum_x \zthn(\Sele)_{\fa x}\left[\PT_{x,\fb_1 \fb_2} +  \AT_{x,\fb_1\fb_2}  + \WT_{x,\fb_1\fb_2}  + \QT_{x,\fb_1\fb_2}  +  (\Err_{n,D})_{x,\fb_1\fb_2}\right], $$
%Substituting it back to the definition \eqref{def_mathringT} 
which gives the $\fC'$-th order $T$-expansion \eqref{mlevelTgdef}. Similarly, solving the constructed $T$-equation for $T_{\fa,\fb_1\fb_2}$ yields the $\fC'$-th order $T$-expansion \eqref{mlevelTgdef_orig}. 

Repeating the above argument, we can construct the $T$-expansions order by order---each time the order of the $T$-expansion increases at least by $\fc$. After at most $\lceil\fC/\fc\rceil$ many inductions, we obtain a $\fC$-th order $T$-expansion and hence conclude Theorem \ref{completeTexp}.   
%For the block Anderson and Anderson orbital models, the proof is the same given Propositions \ref{Teq} and \ref{cancellation property}.
\end{proof}

The proof of Proposition \ref{Teq} is similar to that of Theorem 3.7 in \cite{BandII}. In the remainder of this section, we outline its proof by explaining the expansion strategy and stating several key lemmas, without providing all details of the proof. %On the other hand, the proof of \Cref{cancellation property} is postponed to \Cref{sec:sumzero}. %cite \cite{RBSO} here 
%Recall that we focus on the Wegner orbital model here, while the extension to the block Anderson and Anderson orbital models will be discussed later in \Cref{sec:ext}. 

\subsection{Local expansions}\label{sec:local}

In this subsection, we state the local expansion rules for the Wegner orbital model. They have been proved in \cite{BandI} for random band matrices and also apply to the Wegner orbital model with $m(z)$ defined in \eqref{self_mWO} and $S\equiv S(\lambda)$ defined in \Cref{def:projlift}.

\begin{lemma}[Weight expansions, Lemma 3.5 of \cite{BandI}] \label{ssl} %[Self-energy expansion]
Suppose the Wegner orbital model holds, and $f$ is a differentiable function of $G$. Then,
\begin{align} 
  \Gc_{xx}  f(G)&=  m \sum_{  \al} S_{x\al}  \Gc_{xx}\Gc_{\al\al} f (G)  +m^3 \sum_{  \al,\beta}S^{+}_{x\al} S_{\al \beta}  \Gc_{\al\al}\Gc_{\beta\beta} f (G) \nonumber\\
 &-  m  \sum_{ \al} S_{x \al} G_{\al x}\partial_{ h_{ \al x}} f (G) -  m^3 \sum_{ \al,\beta} S^{+}_{x\al}S_{\al \beta} G_{\beta \al}\partial_{ h_{ \beta\al}} f(G) + \cal Q_w \, ,\label{Owx}\end{align}
 where %the first two terms on the right hand side contain one more light weight and are of higher order, and 
 $\cal Q_w$ is a sum of $Q$-graphs defined as
 \begin{align} 
  \cal Q_w &=  Q_x \left[\Gc_{xx}  f(G)\right] + \sum_{\al} Q_\al \left[  S^{+}_{x\al}\Gc_{\al\al} f(G)\right] \nonumber\\
 & - m  Q_x\Big[ \sum_{\al} S_{x\al}  \Gc_{\al\al} G_{xx} f(G) \Big]  - m^3 \sum_\al Q_\al\Big[ \sum_{ \beta}S_{x\al}^{+}S_{\al\beta}\Gc_{\beta\beta}G_{\al\al} f(G) \Big] \nonumber\\
  &+  m  Q_x  \Big[  \sum_\al S_{x \al}  G_{\al x}\partial_{ h_{\al x}} f(G)\Big]+ m^3 \sum_\al Q_\al \Big[  \sum_{ \beta} S^{+}_{x\al}S_{\al \beta} G_{\beta \al}\partial_{ h_{\beta \al}} f(G)\Big]. \nonumber
\end{align} 
\end{lemma}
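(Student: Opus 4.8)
\textbf{Proof proposal for Lemma \ref{ssl} (weight expansion).}

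The plan is to use Gaussian integration by parts to expand the factor $\Gc_{xx} = G_{xx} - m$ in terms of the randomness in the $x$-th row and column of $H$. Since the Wegner orbital model has $H = \lambda\Psi + V$ with all relevant randomness in the $x$-th row/column being complex Gaussian (entries $h_{x\al}$ with variance $S_{x\al} = s_{x\al} + \lambda^2 s'_{x\al}$, up to the Hermitian constraint and a real diagonal), we may write $Q_x(G_{xx}) = G_{xx} - P_x(G_{xx})$ and use the resolvent identity to relate $G_{xx}$ to the minor $G^{(x)}$. First I would record the standard identity $1/G_{xx} = -z + h_{xx} - \sum_{\al,\beta}h_{x\al}G^{(x)}_{\al\beta}h_{\beta x}$ and the self-consistent equation defining $m$, namely $m = 1/(-z - m)$ for $\WO$ (after the rescaling in \eqref{self_mWO}, with the variance normalization $\sum_\al S_{x\al} = 1 + 2d\lambda^2$ absorbed appropriately), so that $\Gc_{xx}$ can be written as $G_{xx}$ times a ``fluctuation'' term whose expectation vanishes.

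Concretely, I would start from $\Gc_{xx} f(G) = m\big(m^{-1} - G_{xx}^{-1}\big)G_{xx}f(G) + (\text{error})$ and substitute the expression for $G_{xx}^{-1}$, producing a term $m\sum_{\al,\beta} h_{x\al}G^{(x)}_{\al\beta}h_{\beta x} G_{xx} f(G)$ plus the diagonal term $m\, h_{xx}G_{xx}f(G)$. The core of the argument is then to apply Gaussian integration by parts to the $h_{x\al}$ (and $h_{\beta x}$) factors: each contraction $\partial_{h_{\al x}}$ hitting a $G^{(x)}$ entry, a $G_{xx}$, or the test function $f(G)$ produces, via $\partial_{h_{\al x}}G_{pq} = -G_{p\al}G_{xq}$, a family of terms. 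Those contractions that land on $G$ or $f$ give the explicit terms on the right-hand side of \eqref{Owx}: iterating the weight expansion once more on the newly-created $\Gc_{\al\al}$ or $\Gc_{xx}$ weights generates the second-order terms with $S^+ = S(1-m^2 S)^{-1}$ (using the resummation identity $\sum_{k\ge 0} S(m^2 S)^k = S^+$ to organize the geometric series, exactly as in the analogous RBM computation), while the $\partial_{h_{\al x}}f(G)$ term is kept as is. The leftover pieces — where no contraction occurs, i.e.\ the $Q_x$-part of each product, together with the $Q_\al$-parts arising from switching $G^{(x)}$ back to $G$ — are collected into $\cal Q_w$; one must check that each summand there genuinely carries a $Q_x$ or $Q_\al$ label, which follows because $P_x$ of the relevant fluctuation vanishes by the self-consistent equation and because $P_\al(S^+_{x\al}\Gc_{\al\al}f) $-type terms are subtracted by construction.

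Since this is precisely Lemma 3.5 of \cite{BandI} with the substitutions $m \to m^{\WO}$, $S \to S(\lambda)$, $S^+ \to S^+(\lambda)$, the cleanest route is to invoke that lemma verbatim: the only model-dependent inputs are (i) that the entries of $H$ in each row are complex Gaussian up to the Hermitian symmetry with variance profile $S$, which holds here by \Cref{def: BM}; (ii) the self-consistent equation $m = 1/(-z - \mathfrak{m})$ where $\mathfrak{m}$ is the appropriate weighted mean, which holds by \eqref{self_mWO} after noting $M^{\WO} = m I_N$; and (iii) the resummation identity $S(1 + m^2 S^+) = S^+$, which is purely algebraic. Thus the proof reduces to checking these three points and then citing \cite[Lemma 3.5]{BandI}. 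The main obstacle — such as it is — is bookkeeping: verifying that the diagonal entry $h_{xx}$ (real Gaussian, different variance) contributes only to lower-order or $Q$-labeled terms, and that the switch from $G^{(x)}$ to $G$ via $G^{(x)}_{\al\beta} = G_{\al\beta} - G_{\al x}G_{x\beta}/G_{xx}$ does not spoil the stated structure; both are handled exactly as in \cite{BandI} and introduce no new phenomena for the Wegner orbital model, so I would not reproduce the details.
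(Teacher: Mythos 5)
Your proposal is correct and follows essentially the same route as the paper: the paper gives no independent proof of \Cref{ssl}, but simply notes that the local expansions of \cite{BandI} (Gaussian integration by parts with variance profile $S$, the self-consistent equation for $m$, and the resummation $S(1+m^2S^+)=S^+$) apply verbatim to the Wegner orbital model with $m(z)$ from \eqref{self_mWO} and $S\equiv S(\lambda)$, which is exactly the reduction you carry out. Your preliminary integration-by-parts sketch is consistent with that derivation, so no further comment is needed.
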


\begin{lemma}[Edge expansions, Lemma 3.10 of \cite{BandI}] \label{Oe14}
Suppose the Wegner orbital model holds, and $f$ is a differentiable function of $G$. Consider a graph
\be\label{multi setting}
\cal G := \prod_{i=1}^{k_1}G_{x y_i}  \cdot  \prod_{i=1}^{k_2}\overline G_{x y'_i} \cdot \prod_{i=1}^{k_3} G_{ w_i x} \cdot \prod_{i=1}^{k_4}\overline G_{ w'_i x} \cdot f(G).
\ee
%where the atoms $y_i,$ $y'_i$, $w_i$ and $w'_i$ are all not equal to $x$. 
If $k_1\ge 1$, then we have that 
\begin{align} 
  \cal G & =m\delta_{xy_1} {\cal G}/{G_{xy_1}} + m   \sum_\al S_{x\al }\Gc_{\al \al} \cal G \nonumber\\
  &+\sum_{i=1}^{k_2} |m|^2  \left( \sum_\al S_{x\al }G_{\al y_1} \overline G_{\al y'_i}\right)\frac{\cal G}{G_{x y_1} \overline G_{xy_i'}}+ \sum_{i=1}^{k_3} m^2 \left(\sum_\al S_{x\al }G_{\al y_1} G_{w_i \al} \right)\frac{\cal G}{G_{xy_1}G_{w_i x}}      \nonumber \\
& + \sum_{i=1}^{k_2}m  \Gc^-_{xx} \left( \sum_\al S_{x\al }G_{\al y_1} \overline G_{\al y'_i}\right)\frac{\cal G}{G_{x y_1} \overline G_{xy_i'}}  + \sum_{i=1}^{k_3} m  \Gc_{xx}  \left(\sum_\al S_{x\al }G_{\al y_1} G_{w_i \al} \right)\frac{\cal G}{G_{xy_1}G_{w_i x}}   \nonumber\\
 &+(k_1-1) m  \sum_\al S_{x\al } G_{x \al} G_{\al y_1}\frac{ \cal G}{G_{xy_1}}   + k_4 m   \sum_\al S_{x\al }\overline G_{\al x} G_{\al y_1}  \frac{\cal G}{G_{xy_1}} \nonumber\\
 &- m    \sum_\al S_{x\al } \frac{\cal G}{G_{x y_1}f(G)}G_{\al y_1}\partial_{ h_{\al x}}f (G)  +\cal Q_{e} \, ,\label{Oe1x}
%  - m    \sum_\al s_{x\al }G_{\al y_1}\cdot \prod_{i=2}^{k_1}G_{x y_i}  \cdot  \prod_{i=1}^{k_2}\overline G_{x y'_i} \cdot \prod_{i=1}^{k_3} G_{ w_i x} \cdot \prod_{i=1}^{k_4}\overline G_{ w'_i x} \partial_{\overline h_{x\al}}f(G)  +\cal Q_{multi-e} 
\end{align}
where $\cal Q_{e} $ is a sum of $Q$-graphs, 
\begin{align*}
  \cal Q_{e} &= Q_x \left( \cal G\right) -  \sum_{i=1}^{k_2}m Q_x \left[ \overline G_{xx}  \left( \sum_\al S_{x\al }G_{\al y_1} \overline G_{\al y'_i}\right)\frac{\cal G}{G_{x y_1} \overline G_{xy_i'}} \right] \\
& - \sum_{i=1}^{k_3} m Q_x \left[ G_{xx} \left(\sum_\al S_{x\al }G_{\al y_1} G_{w_i \al} \right)\frac{\cal G}{G_{xy_1}G_{w_i x}}\right] -  m Q_x \left[  \sum_\al S_{x\al }\Gc_{\al \al}  \cal G\right]\\
& -(k_1-1) m Q_x \left[  \sum_\al S_{x\al }G_{x \al} G_{\al y_1}  \frac{ \cal G}{G_{xy_1}}\right]  - k_4 m Q_x \left[  \sum_\al S_{x\al }\overline G_{\al x} G_{\al y_1}  \frac{\cal G}{G_{xy_1}}\right] \\
& +m Q_x \left[  \sum_\al S_{x\al } \frac{\cal G}{G_{x y_1}f(G)}G_{\al y_1}\partial_{h_{\al x}}f(G)  \right].
% & +m Q_x \left[  \sum_\al s_{x\al }G_{\al y_1}\cdot \prod_{i=2}^{k_1}G_{x y_i}  \cdot  \prod_{i=1}^{k_2}\overline G_{x y'_i} \cdot \prod_{i=1}^{k_3} G_{ w_i x} \cdot \prod_{i=1}^{k_4}\overline G_{ w'_i x} \cdot \partial_{\overline h_{x\al}}f(G)\right] .%\label{Oe1x}
 \end{align*} 
%Here the fractions are used to simplify the expression. For example, the fraction ${\cal G}/({G_{x y_1} \overline G_{xy_i'}})$ is the  graph obtained by removing the factor ${G_{x y_1} \overline G_{xy_i'}}$ from the product in \eqref{multi setting}.
We refer to the above expansion as the \emph{edge expansion with respect to $G_{xy_1}$}. The edge expansion with respect to other $G_{xy_i}$ can be defined in the same way. The edge expansions with respect to $\overline G_{x y'_i}$, $ G_{ w_i x}$, and $ \overline G_{ w'_i x}$ can be defined similarly by taking complex conjugates or matrix transpositions of \eqref{Oe1x}.   
 \end{lemma}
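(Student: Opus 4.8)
\textbf{Proof proposal for the edge expansion (Lemma~\ref{Oe14}).} The statement is a self-contained algebraic identity for Green's functions, so the plan is to prove it by Gaussian integration by parts applied to one $G$-entry factor, exactly as in the corresponding lemma for random band matrices \cite{BandI}. The only genuinely new input here is to check that the computation is insensitive to the precise form of $m(z)$ and the variance matrix $S$, so that the band-matrix proof carries over verbatim once we plug in $m(z)$ from \eqref{self_mWO} and $S\equiv S(\lambda)$ from \Cref{def:projlift}. I would organize the argument as follows.

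First I would write the resolvent identity $zG = HG - I$ in the entry form $z G_{xy_1} = \sum_\beta H_{x\beta}G_{\beta y_1} - \delta_{xy_1}$, and use $z = -1/m - m - (\text{const})$ together with the self-consistent equation defining $m$ (i.e., \eqref{self_m} or \eqref{self_mWO}) to replace the deterministic part of $\sum_\beta H_{x\beta}G_{\beta y_1}$ by $-G_{xy_1}/m$ up to fluctuation terms. Concretely, one multiplies the graph $\cal G$ by $G_{xy_1}$ on one side and isolates the factor $\sum_\beta H_{x\beta}(\cal G\, G_{\beta y_1}/G_{xy_1})$. Here the $H_{x\beta}$-dependence splits according to \Cref{def: BM}: the $V$-part contributes $\sum_\beta s_{x\beta}(\cdots)$ and the $\Psi$-part contributes $\lambda^2\sum_\beta s'_{x\beta}(\cdots)$, and $s_{x\beta}+\lambda^2 s'_{x\beta}=S_{x\beta}$ is exactly the variance entry appearing in the statement. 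This is why the final identity only sees $S$, not the individual randomness.

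Second, for the Gaussian family $\{H_{x\beta}\}_\beta$ I would apply the integration-by-parts formula $\E[H_{x\beta}F] = \sum_{\gamma}\E[|H_{x\beta}|^2]\,\E[\partial_{h_{\beta\gamma}}F]$-type identity --- here in its deterministic (non-expectation) avatar that produces the operators $Q_x = 1-\E_x$ and the derivative terms $\partial_{h_{\al x}} f(G)$ and $\partial_{h_{\al x}}G_{\cdots}$. Differentiating each of the $k_1$ factors $G_{xy_i}$, the $k_2$ factors $\overline G_{xy_i'}$, the $k_3$ factors $G_{w_ix}$, the $k_4$ factors $\overline G_{w_i'x}$, and the function $f(G)$ produces precisely the eight explicit sums on the right-hand side of \eqref{Oe1x} (one derivative hitting $G_{xy_1}$ itself gives the $m\sum_\al S_{x\al}\Gc_{\al\al}\cal G$ term after resumming via $S(1+m^2S^+)=S^+$ and the $\Gc_{xx},\Gc^-_{xx}$ light-weight terms; derivatives hitting the other factors give the remaining lines with combinatorial coefficients $k_1-1$, $k_4$, etc.), while the ``non-contracted'' remainder collects into the $Q_x$-graphs listed as $\cal Q_e$. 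The bookkeeping of which derivative produces which term, and the resummation identity $S/(1-m^2 S) = \ldots$ relating $S$ and $S^+$ (analogous to $S(1+m^2S^+)=S^+$ noted in the proof of \Cref{3rdExp}), is the only place one must be careful; everything else is mechanical.

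The main obstacle is bookkeeping rather than conceptual: one must verify that every coefficient ($m$, $m^2$, $m^3$, $|m|^2$, the integer multiplicities) comes out exactly as stated, and that the light-weight insertions $\Gc_{xx}$, $\Gc^-_{xx}$ are correctly separated from the deterministic $m$, $\bar m$ parts. Since this identity and its proof are essentially identical to \cite[Lemma 3.10]{BandI} --- only the scalar $m$ and the variance profile $S$ have changed, and both changes are absorbed transparently --- I would simply invoke that reference: the Gaussian structure of $V$ (and of $\Psi$ in the Wegner orbital case) is all that is used, and the algebra does not depend on translation invariance or on $\lambda\asymp 1$. Hence the proof is complete by the same computation as in \cite{BandI}, with $m(z)$ and $S(\lambda)$ substituted throughout.
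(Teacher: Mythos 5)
Your proposal takes essentially the same route as the paper: the paper supplies no proof of this lemma, but imports it directly from \cite{BandI} with exactly your observation, namely that the Gaussian integration-by-parts derivation only sees the scalar $m(z)$ and the variance matrix $S(\lambda)$ (which for the Wegner orbital model absorbs both the $V$- and $\lambda\Psi$-randomness), so the band-matrix computation carries over verbatim. One small aside: the resummation $S(1+m^2S^+)=S^+$ you invoke plays no role in this particular identity — no $S^{\pm}$ edges appear in \eqref{Oe1x}; it enters only the weight/$GG$ expansions and the assembly of the preliminary $T$-expansion — but this does not affect the validity of your argument.
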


\begin{lemma} [$GG$ expansion, Lemma 3.14 of \cite{BandI}]\label{T eq0}
Suppose the Wegner orbital model holds. 
Consider a graph $\cal G= G_{xy}   G_{y' x }  f (G)$, where $f$ is a differentiable function of $G$. We have that 
\begin{align}
 \cal G& =m\delta_{xy}G_{y' x}f(G)+  m^3  S^{+}_{xy} G_{y' y} f(G)  + m \sum_\al  S_{x\al} \Gc_{\al \al} \cal G + m^3 \sum_{\al,\beta}  S^{+}_{x\al}  S_{\al\beta} \Gc_{\beta \beta} G_{\al y}   G_{y'\al} f(G) \nonumber \\
    &  + m\Gc_{xx }    \sum_\al S_{x\al}G_{\al y}   G_{y'\al} f(G) + m^3 \sum_{\al,\beta}  S^+_{x\al} S_{\al\beta}  \Gc_{\al\al }  G_{\beta y}   G_{y'\beta} f(G)  \nonumber\\
    & - m \sum_{ \al}  S_{x\al}G_{\al y} G_{y' x} \partial_{ h_{\al x}}f(G) - m^3 \sum_{\al,\beta} S^{+}_{x\al} S_{\al\beta}G_{\beta y} G_{y' \al} \partial_{ h_{\beta \al}}f(G) + \cal Q_{GG} ,\label{Oe2x}
    \end{align}
where $\cal Q_{GG} $ is a sum of $Q$-graphs,
    \begin{align}
  \cal Q_{GG}&=  Q_x \left(\cal G\right)- m\delta_{xy}Q_x\left(G_{y' x}f(G)\right)+  m^2 \sum_\al Q_\al\Big[ S^+_{x\al}  G_{\al y}   G_{y'\al} f(G) \Big]  - m^3 Q_y\Big[S^{+}_{xy}  G_{y' y} f(G)\Big]  \nonumber\\
   &- m Q_x\Big[\sum_\al  S_{x\al} \Gc_{\al\al} \cal G\Big] - m^3 \sum_\al Q_\al\Big[\sum_{ \beta}  S^{+}_{x\al}  S_{\al\beta} \Gc_{\beta \beta} G_{\al y}   G_{y'\al} f(G)\Big] - mQ_x\Big[ G_{xx }  \sum_\al S_{x\al} G_{\al y}   G_{y'\al} f(G)\Big] \nonumber\\
&- m^3 \sum_\al Q_\al\Big[ \sum_{ \beta}  S^{+}_{x\al}  S_{\al\beta} G_{\al\al }  G_{\beta y}   G_{y'\beta} f(G)\Big]  + m Q_x\Big[\sum_{ \al} S_{x\al} G_{\al y} G_{y' x} \partial_{ h_{ \al x}}f(G)\Big] \nonumber\\
& + m^3\sum_\al Q_\al\Big[ \sum_{\beta} S^{+}_{x\al} S_{\al\beta}G_{\beta y} G_{y' \al} \partial_{h_{\beta \al}}f (G)\Big] .\nonumber 
 \end{align}
\end{lemma}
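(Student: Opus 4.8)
\textbf{Proof proposal for the $GG$ expansion (\Cref{T eq0}).}

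The plan is to derive the identity by a single Gaussian integration by parts applied to the edge $G_{xy}$, treating $G_{y'x}f(G)$ as the remaining factor, and then bookkeeping the resulting terms. Concretely, one writes $G_{xy}=\sum_\alpha (H-z)^{-1}_{x\alpha}\delta_{\alpha y}$ and uses the resolvent identity together with $(\lambda\Psi+V-z)G=I$ to express $G_{xy}$ in terms of $V_{x\cdot}$. Since $V$ has independent complex Gaussian entries with variance profile $S_{x\alpha}=s_{x\alpha}$ (and for $\WO$ the additional $\lambda^2 s'_{x\alpha}$ piece is already absorbed into $S=S(\lambda)$ as in \Cref{def:projlift}, which is why the statement uses $S\equiv S(\lambda)$), the cumulant/integration-by-parts formula for complex Gaussians gives $\E[V_{x\alpha}\, F]=\sum_\beta S_{x\beta}\,\E[\partial_{h_{\beta\alpha}}F]$ up to the Hermitian-conjugate derivative. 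Applying this to $F=G_{\alpha y}G_{y'x}f(G)$ and using $\partial_{h_{ab}}G_{cd}=-G_{ca}G_{bd}$ produces exactly the structure on the right-hand side of \eqref{Oe2x}: the $\delta_{xy}$ term from the $\partial$ hitting $G_{xy}$ itself (after accounting for the $m\delta_{xy}$ normalization coming from the leading term $M_{xx}=m$), the $S^+$ terms from resumming geometric series in $m^2S$, the weight terms $\Gc_{xx}$ and $\Gc_{\alpha\alpha}$, and the derivative term $\partial_{h_{\alpha x}}f$.

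The key bookkeeping step is the resummation that converts bare $S$-chains into $S^+$ edges. After the first integration by parts one obtains terms with a single $S_{x\alpha}$ and a $G_{\alpha\alpha}$ (or $G_{\alpha y}G_{y'\alpha}$) factor; replacing $G_{\alpha\alpha}$ by $m+\Gc_{\alpha\alpha}$ and iterating the expansion on the resulting $m\sum_\alpha S_{x\alpha}(\cdots)$ term generates the geometric series $\sum_{k\ge 0}(m^2 S)^k$, whose sum is $S^+$ by the definition \eqref{def:S+} (using $S(1+m^2S^+)=S^+$, precisely the identity invoked in the proof of \Cref{3rdExp}). One must track that each such iteration contributes a $Q$-term, and collect all of them into $\cal Q_{GG}$; the $Q$-structure is preserved because $Q_x=1-\E_x$ commutes with the relevant partial expectations and because $\E_x$ of the leading (deterministic) part vanishes by construction. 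The terms $m^3 S^+_{xy}G_{y'y}f(G)$ and the $\delta_{xy}$ terms arise from the ``diagonal'' contribution where the derivative $\partial_{h_{\beta\alpha}}$ acts to close the loop at $y$ rather than opening a new $G$ edge.

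I expect the main obstacle to be purely organizational rather than conceptual: correctly matching the combinatorial coefficients (the factors like $(k_1-1)$, $k_4$ in the edge expansion, and here the various $m$, $m^2$, $m^3$ prefactors) and ensuring that every non-$Q$ error term is either a named leading term or has been explicitly moved into $\cal Q_{GG}$, with no term dropped or double-counted. Since \Cref{T eq0} is quoted verbatim as Lemma 3.14 of \cite{BandI}, and the Wegner orbital model differs from the random band matrix case only through the value of $m(z)$ (given by \eqref{self_mWO}) and the variance matrix $S=S(\lambda)$ — both of which enter the argument only through the abstract relations $\frac{1}{N}\tr M = m$, $M=(\lambda\Psi-z-m)^{-1}$, $S(1+m^2S^+)=S^+$, and the Gaussian integration-by-parts formula — the proof goes through \emph{mutatis mutandis}. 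Accordingly, the cleanest route is to cite the proof of \cite[Lemma 3.14]{BandI} and verify that each of its steps uses only these model-independent inputs; the only genuinely new check is that the $\WO$ interaction term $\lambda\Psi^{\WO}$, being itself Gaussian, contributes additional integration-by-parts terms that are exactly the ones already encoded in the enlarged variance $S(\lambda)=S(0)+\lambda^2(2dI_n-\Delta_n)\otimes\mathbf E$, so no new graphical structures appear.
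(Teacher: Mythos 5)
Your proposal is correct and matches the paper's route: the paper offers no independent proof of \Cref{T eq0}, but simply quotes \cite[Lemma 3.14]{BandI} and observes (as you do) that the Wegner orbital model is itself a Gaussian ensemble with variance profile $S(\lambda)$, so the band-matrix derivation applies verbatim once $m(z)$ is taken from \eqref{self_mWO} and $S\equiv S(\lambda)$, with the resummation identity $S(1+m^2S^+)=S^+$ supplying the $S^+$ edges. The only blemish is a notational slip in your Gaussian integration-by-parts formula (it should read $\E[h_{x\al}F]=S_{x\al}\,\E[\partial_{h_{\al x}}F]$, with no extra sum over $\beta$), which does not affect the argument.
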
 

Corresponding to the above three lemmas, we can define graph operations that represent the weight, edge, and $GG$ expansions. All these operations are called \emph{local expansions on the vertex $x$}, in the sense that they do not create new molecules---all new vertices created in these expansions connect to $x$ through paths of dotted or waved edges---in contrast to the global expansion that will be defined in Section \ref{sec:global_WO} below. 
By repeatedly applying the local expansions in Lemmas \ref{ssl}--\ref{T eq0}, we can expand an arbitrary normal graph into a sum of \emph{locally standard graphs}, defined as follows.

 \begin{definition} [Locally standard graphs] \label{deflvl1}
A graph is \emph{locally standard} if 
\begin{itemize}
\item[(i)] it is a normal graph without $P/Q$ labels; % (recall Definition \ref{defnlvl0});  

\item[(ii)] it has no self-loops (i.e., weights or light weights) on vertices;

\item[(iii)] any internal vertex is either standard neutral or connected with no solid edge (i.e., $G$ or $\Gc$ edges). 
\end{itemize}
Here, a vertex is said to be \emph{standard neutral} if it satisfies the following two properties:
\begin{itemize}
\item it has a neutral charge, where the charge of a vertex is defined by counting the incoming and outgoing blue solid (i.e., $G$ or $\Gc$) edges and red solid (i.e., $G^-$ or $\Gc^-$) edges: 
$$\#\{\text{incoming $+$ and outgoing $-$ solid edges}\}- \#\{\text{outgoing $+$ and incoming $-$ solid edges}\} ;$$

\item it is only connected with three edges: a $G$ edge, a $\overline G$ edge, and an $S$-waved edge. 
\end{itemize}
In other words, the edges connected with a standard neutral vertex $x$ take the form of a $T$-variable:
\begin{equation}\label{12in4T}
T_{x_0,y_1 y_2}=\sum_\al S_{x_0 x}G_{ x y_1}\overline G_{ x y_2} \ \ \text{or}\ \ T_{y_1y_2, x_0}=\sum_ x G_{ y_1 x }\overline G_{y_2  x}S_{ x x_0} .
\end{equation}
\end{definition}

% By definition, the edges connected with a standard neutral atom, say $\al$, take the form 
% \begin{equation}\label{12in4T}
% t_{x,y_1 y_2}=\sum_\al s_{x\al}G_{\al y_1}\overline G_{\al y_2}\mathbf 1_{\al\ne y_1}\mathbf 1_{\al\ne y_2}\ \ \text{or}\ \ t_{y_1y_2, x}=\sum_\al G_{ y_1\al }\overline G_{y_2 \al}s_{\al x}\mathbf 1_{\al\ne y_1}\mathbf 1_{\al\ne y_2} ,
% \end{equation}
% which are essentially the $T$-variables (except for the two $\times$-dotted edges and the missing coefficient $|m|^2$). 
Roughly speaking, \emph{locally standard graphs} are graphs where all $G$ edges are paired into $T$-variables on internal vertices. As discussed in Section 3.4 of \cite{BandI}, by applying local expansions repeatedly, we can expand any normal graph into a linear combination of locally standard, recollision, higher order, or $Q$-graphs.

\begin{lemma} \label{lvl1 lemma}
Let $\mathcal G$ be a normal graph. For any large constant $C>0$, we can expand $\cal G$ into a sum of $\OO(1)$ many graphs:
\begin{align}\label{expand lvl1}
\mathcal G =\mathcal G_{local} + \ATn  + \cal Q,
\end{align}
where $\mathcal G_{local} $ is a sum of locally standard graphs,  %\PGn_{\fa,\fb_1\fb_2} $ is a sum of $O(1)$ many $\oplus$/$\ominus$-recollision graphs, 
$\ATn$ is a sum of graphs of scaling size $\OO(W^{-C})$, and $\QGn$ is a sum of $Q$-graphs of scaling size $\lesssim \size(\cal G)$. Some of the graphs on the RHS may be recollision graphs with respect to the external vertices, i.e., there is at least one dotted or waved edge between a pair of internal and external vertices (recall Definition \ref{Def_recoll}). 
In addition, if $\cal G$ is doubly connected, then all graphs on the RHS of \eqref{expand lvl1} are also doubly connected.

\end{lemma}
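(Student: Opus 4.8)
\textbf{Proof proposal for Lemma \ref{lvl1 lemma}.}

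The plan is to run the standard iterative \emph{local expansion} procedure of \cite{BandI}, tracking the scaling size at every step and verifying that the doubly connected property is preserved. First I would dispose of the case where $\cal G$ already contains no self-loop and no solid edge on any internal vertex that is not standard neutral: then $\cal G$ is itself locally standard and there is nothing to do. Otherwise, I would pick a ``bad'' internal vertex $x$---one carrying a weight/light weight (self-loop), or connected to solid edges in a way that does not pair into a $T$-variable---and apply one of the three local expansion lemmas (\Cref{ssl}, \Cref{Oe14}, \Cref{T eq0}) at $x$. Concretely: if $x$ carries a light weight $\Gc_{xx}$, apply the weight expansion \Cref{ssl} to remove it; if $x$ has a solid edge but does not yet look like \eqref{12in4T}, apply the edge expansion \Cref{Oe14} (or the $GG$ expansion \Cref{T eq0} when exactly two solid edges meet at $x$) to either close off $x$ into a $T$-variable or push the solid edges outward. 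Each application produces $\OO(1)$ many new graphs: one or more ``leading'' graphs with strictly fewer bad vertices (or a bad vertex of strictly lower complexity, measured by number of solid edges plus number of self-loops attached), a collection of $Q$-graphs $\cal Q_w$, $\cal Q_e$, or $\cal Q_{GG}$, and possibly graphs that one designates as higher-order because repeated expansion of the same vertex forces down the scaling size.

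The key bookkeeping points are these. (1) \emph{Termination}: assign to each graph the lexicographic complexity (number of internal bad vertices, total number of solid half-edges at bad vertices); each local expansion step strictly decreases this for the leading descendants, so after $\OO(1)$ steps---bounded in terms of the fixed size of $\cal G$---every leading descendant is locally standard. The $Q$-graphs produced along the way are set aside into $\cal Q$ and never re-expanded, and any graph whose scaling size has already dropped below $W^{-C}$ is set aside into $\ATn$. This is exactly the argument sketched in Section 3.4 of \cite{BandI}, and I would invoke it essentially verbatim. (2) \emph{Scaling size control}: each local expansion in Lemmas \ref{ssl}--\ref{T eq0} either leaves the scaling size unchanged or improves it---the new $S$ or $S^+$ waved edges contribute a $W^{-d}$ that is exactly compensated by the new internal atom they create (cf.~the motivation below \eqref{eq_defsize}), and the new light weights/$\Gc$ factors contribute $\heta^{1/2}$ factors. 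Hence every graph produced has scaling size $\lesssim\size(\cal G)$; this gives the bound on $\QGn$, and the graphs collected into $\ATn$ are precisely those where we keep expanding a recurring structure until the accumulated $\heta^{1/2}$ (or $W^{-d}$-without-new-atom) factors push the size below $W^{-C}$. (3) \emph{Preservation of double connectivity}: this is where the local nature of the expansions is essential---every new vertex created in \Cref{ssl}, \Cref{Oe14}, or \Cref{T eq0} is attached to $x$ through dotted or waved edges only, so it lies in the same molecule as $x$; the molecular graph is therefore unchanged (up to relabeling the molecule containing $x$), and any spanning black/blue nets witnessing double connectivity of $\cal G$ transport directly to each descendant graph. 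One has to check that the new solid edges ($G_{\al y_1}$, $\Gc_{\al\al}$, etc.) do not disconnect a net: since they are added \emph{inside} an existing molecule they never appear in the molecular graph at all, so this is immediate. Recollision graphs appear exactly when an expansion forces a new dotted or waved edge between an internal and an external vertex, which is allowed by the statement.

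The main obstacle---really the only nontrivial point---is (2): verifying that no local expansion step can \emph{increase} the scaling size, so that the final $\QGn$ is genuinely bounded by $\size(\cal G)$ rather than something larger, and simultaneously that the ``set aside into $\ATn$'' mechanism actually terminates (i.e., that iterating a local expansion on a fixed structure cannot cycle without gaining scaling-size decay). For the Wegner orbital model this is inherited from \cite{BandI} because $m(z)$ and $S=S(\lambda)$ enter the expansion formulas \eqref{Owx}, \eqref{Oe1x}, \eqref{Oe2x} in exactly the same algebraic positions as for RBM, and the scaling-size bookkeeping in \Cref{def scaling} was designed to match; so I would state this as ``the proof is identical to that of the corresponding lemma in \cite{BandI}, with $m$ and $S$ in \eqref{self_mWO} and \Cref{def:projlift} replacing their RBM counterparts,'' and only spell out the (routine) check that the new definition \eqref{eq_defsize} of $\size$ behaves additively under these operations. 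The remaining assertions---$\OO(1)$ many graphs, $\cal Q$ being $Q$-graphs, double connectivity---follow structurally as above and require no model-specific input.
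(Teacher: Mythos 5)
Your proposal follows exactly the paper's route: the paper proves this lemma by repeatedly applying the local expansions of Lemmas \ref{ssl}--\ref{T eq0} until every resulting graph is locally standard, of sufficiently small scaling size, or a $Q$-graph, and then refers to \cite[Lemma 3.22]{BandI} for the details (termination, scaling-size bookkeeping, and preservation of the doubly connected property via the locality of the expansions), which is precisely the structure and the fallback you describe. No gap; your extra remarks on termination and molecular-graph invariance are the same points the cited argument supplies.
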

\begin{proof}
The equation \eqref{expand lvl1} is obtained by repeatedly applying the expansions in Lemmas \ref{ssl}--\ref{T eq0} to $\cal G$ until every resulting graph is locally standard, of sufficiently small scaling size, or a $Q$-graph. Since the proof is almost identical to that for \cite[Lemma 3.22]{BandI}, we omit the details here.
% This lemma is a generalization of Lemma 3.22 in \cite{BandI}, which only considered graphs with three external atoms $\fa,\fb_1,\fb_2$ (i.e., graphs coming from expansions of $T_{\fa,\fb_1\fb_2}$), while the graph $\cal G$ in our lemma is more general. But the proof for \cite[Lemma 3.22]{BandI} can be applied to our case almost verbatim, so we omit the details. 
\end{proof}

\subsection{Global expansions}\label{sec:global_WO}

In this subsection, we define the key \emph{global expansions}, which are called ``global" because they may create new molecules in contrast to local expansions. Suppose we have constructed a $T$-expansion up to order $\fC$ by the induction hypothesis. Now, given a locally standard graph, say $\cal G$, produced from the local expansions. 
A global expansion involves replacing the $T$-variable containing a standard neutral vertex by a $\fC$-th order $T$-expansion. More precisely, picking a standard neutral vertex $\al$ in a locally standard graph, so that the edges connected to it take the form of one of the $T$-variables in \eqref{12in4T}. Without loss of generality, suppose this $T$-variable is $T_{x_0,y_1y_2}$ (the expansion of $T_{y_1y_2, x_0}$ can be obtained by switching the order of matrix indices). If we want to derive the $T$-expansion of $T_{\fa,\fb_1\fb_2}$, then can we apply the $\fC$-th order $T$-expansion in \eqref{mlevelTgdef_orig} to $T_{x_0,y_1y_2}$; otherwise, to derive the $T$-expansion of \smash{$\zT_{\fa,\fb_1\fb_2}$}, we can write $T_{x_0,y_1y_2}$ as
\be\label{eq:T-zT}
T_{x_0,y_1y_2}=\zT_{x_0,y_1y_2}+\frac{ G_{y_2 y_1} - \overline G_{y_1 y_2}}{2\ii N\eta}\ee
using \eqref{eq:T-cT}, and then apply the $\fC$-th order $T$-expansion in \eqref{mlevelTgdef} to $\zT_{x_0,y_1y_2}$.

In a global expansion, for each graph obtained by replacing $\zT_{x_0,y_1y_2}$ (or $T_{x_0,y_1y_2}$) with a graph on the RHS of \eqref{mlevelTgdef} (or \eqref{mlevelTgdef_orig}) that is not a $Q$-graph, we either stop expanding it (according to some stopping rules that will be defined later) or continue the local and global expansions of it. On the other hand, if we have replaced \smash{$\zT_{x_0,y_1y_2}$} (or $T_{x_0,y_1y_2}$) by a $Q$-graph, we will get a graph of the form  
%After a global expansion, we will get a graph of the form  
\be\label{QG}\cal G_0=\sum_x \Gamma_0 Q_x (\wt\Gamma_0) ,\ee
where both the graphs $\Gamma_0$ and $\wt\Gamma_0$ do not contain $P/Q$ labels. Then, we need to perform the so-called \emph{$Q$-expansion} to expand \eqref{QG} into a sum of $Q$-graphs and graphs without $P/Q$ labels (so that we can conduct further local or global expansions on them).

\subsubsection{$Q$-expansions}\label{subsec_Qexp}

A complete process of $Q$-expansions is divided into the following three steps.

%which aim to expand the above graph into a sum of $Q$-graphs and graphs without $P/Q$ labels.  Recall that by Definition \ref{Def_recoll}, $Q$-graphs refer to graphs where \emph{all edges and weights} have the same $Q$-label. %The graph $\cal G_0$ does not satisfy this property, so we will not call it a $Q$-graph to avoid confusion. 

\medskip
\noindent{\bf Step 1:} Recall that $H^{(x)}$ is the $(N-1)\times(N-1)$ minor of $H$ obtained by removing the $x$-th row and column of $H$. Define the resolvent minor 
$G^{(x)}(z):=(H^{(x)}-z)^{-1}.$ 
By the Schur complement formula, we have the following resolvent identity:
\be\label{resol_exp0}G_{x_1x_2}=G_{x_1x_2}^{(x)}+\frac{G_{x_1 x}G_{xx_2}}{G_{xx}},\quad \forall x, x_1,x_2\in \Z_L^d.
\ee
Correspondingly, we introduce a new type of weights on $x$, $(G_{xx})^{-1}$ and $(\overline G_{xx})^{-1}$, and a new label $(x)$ to $G$-edges. More precisely, if a weight on $x$ has a label ``$(-1)$", then it represents a $(G_{xx})^{-1}$ or $(\overline G_{xx})^{-1}$ factor depending on its charge; if an edge/weight has a label $(x)$, then it represents a $G^{(x)}$ entry. 
Applying \eqref{resol_exp0} to expand the resolvent entries in $\Gamma$ one by one, we can write that
\be\label{decompose_gamma} \Gamma_0=\Gamma_0^{(x)}+\sum_{\omega} \Gamma_\omega,\ee 
where $\Gamma^{(x)}$ is a graph whose weights and solid edges all have the $(x)$ label, and hence is independent of the $x$-th row and column of $H$. The second term on the RHS of \eqref{decompose_gamma} is a sum of $\OO(1)$ many graphs. 
%Moreover, every $\Gamma_\omega$ has a strictly higher scaling order than $\Gamma$, at least two new solid edges connected with $x$, and some weights with label $(-1)$ on $x$. 
Using \eqref{decompose_gamma}, we can expand $\cal G$ as
\be\label{QG2}
\cal G_0=\sum_x \Gamma_0 Q_x (\wt \Gamma_0)=\sum_{\omega}\sum_x \Gamma_\omega Q_x(\wt \Gamma_0) + \sum_x Q_x ( \Gamma_0 \wt\Gamma_0 )- \sum_{\omega}\sum_x Q_x (\Gamma_\omega \wt\Gamma_0 ),
\ee
where the second and third terms are sums of $Q$-graphs.

Next, we expand graphs $\Gamma_\omega$ in \eqref{QG2} into graphs without $G^{(x)}$, $(G_{xx})^{-1}$, or $(\overline G_{xx})^{-1}$ entries. First, we apply the following expansion to the $G^{(x)}$ entries in $\Gamma_{\omega}$: 
\be\label{resol_reverse}G_{x_1x_2}^{(x)}= G_{x_1x_2} - \frac{G_{x_1 x}G_{xx_2}}{G_{xx}},\quad \forall x, x_1,x_2\in \Z_L^d. \ee
In this way, we can write $\Gamma_{\omega}$ as a sum of graphs \smash{$\sum_{\zeta} \Gamma_{\omega,\zeta}$}, where each graph $\Gamma_{\omega,\zeta}$ does not contain any $G^{(x)}$ entries. Second, we expand the $(G_{xx})^{-1}$ and $(\overline G_{xx})^{-1}$ weights in $\Gamma_{\omega,\zeta}$ using the Taylor expansion
\be\label{weight_taylor}\frac{1}{G_{xx}}=\frac{1}{m} + \sum_{k=1}^{D}\frac1m\left(-\frac{G_{xx}-m}{m}\right)^k + \cal W^{(x)}_{D},\quad \cal W^{(x)}_{D}:=\sum_{k>D}\left(-\frac{G_{xx}-m}{m}\right)^k . \ee
We will regard $\cal W^{(x)}_{D}$ and $\overline{\cal W}^{(x)}_{D}$ as a new type of weights on $x$ of scaling size $\heta^{D+1}$. Expanding the product of all Taylor expansions of the \smash{$(G_{xx})^{-1}$ and $(\overline G_{xx})^{-1}$} weights, we can expand every $\Gamma_{\omega,\zeta}$ into a sum of graphs that do not contain weights with label $(-1)$. To summarize, in this step, we obtain the following lemma.
%Note that the graphs containing {$\cal W^{(x)}_{D}$ or $\overline{\cal W}^{(x)}_{D}$} can be included into the error term $\Err_{n,D}$ in \eqref{mlevelTgdef}. 
%Finally, applying the weight expansion in \Cref{2nd3rd T} again, we will get a sum of normal regular graphs without regular weights on $x$. 

\begin{lemma}[Lemma 4.11 of \cite{BandII}]\label{Q_lemma1}
Given a normal graph of the form \eqref{QG}, for any large constant $D>0$, we can expand $\cal G_0$ into a sum of $\OO(1)$ many graphs through the expansions in Step 1: 
\be\label{Q_eq1}\cal G_0 = \sum_{\omega} \sum_x \Gamma_\omega Q_x(\wt \Gamma_{\omega}) + \sum_{\zeta} \sum_ x Q_x \left( \cal G_\zeta \right) + \cal G_{err},\ee
where $\Gamma_{\omega}$, $\wt \Gamma_{\omega}$, and $\cal G_\zeta $ are normal graphs without $P/Q$, ${(x)}$, or $(-1)$ labels, and $\cal G_{err}$ is a sum of error graphs of scaling size $\OO(W^{-D})$. Furthermore, if $\Gamma_0$ does not contain any edges or weights attached to $x$, then each term $\sum_x \Gamma_\omega Q_x(\wt \Gamma_{\omega})$ has a strictly smaller scaling size than $\cal G_0$: 
\be\label{eq:smallerscalingQ} \size\bigg(\sum_x \Gamma_\omega Q_x(\wt \Gamma_{\omega})\bigg)\lesssim \heta^{1/2} \size(\cal G_0),
\ee
and $\Gamma_\omega$ contains at least one vertex connected to $x$ through a solid or dotted edge. 
\end{lemma}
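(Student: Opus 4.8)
\textbf{Proof plan for Lemma \ref{Q_lemma1}.}

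The strategy is to carry out the three-part decomposition exactly as sketched in the text surrounding equations \eqref{decompose_gamma}--\eqref{weight_taylor}, and then to verify bookkeeping on scaling sizes and the doubly connected property. First I would apply the Schur complement identity \eqref{resol_exp0} to each solid edge of $\Gamma_0$ in turn. Since $\Gamma_0$ has $\OO(1)$ many solid edges, iterating produces the finite sum \eqref{decompose_gamma}: one term $\Gamma_0^{(x)}$ in which every resolvent entry carries the superscript $(x)$ (and is therefore independent of the $x$-th row/column of $H$, so that $Q_x$ can be moved onto the product with $\wt\Gamma_0$), plus $\OO(1)$ many terms $\Gamma_\omega$ each of which has gained a pair of solid edges attached to $x$ together with one inverse weight $(G_{xx})^{-1}$ or $(\overline G_{xx})^{-1}$ on $x$. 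Substituting into $\cal G_0 = \sum_x\Gamma_0\,Q_x(\wt\Gamma_0)$ and using $Q_x(\Gamma_0^{(x)}\wt\Gamma_0) = \Gamma_0^{(x)}Q_x(\wt\Gamma_0)$ together with the trivial identity $A = Q_x(A) + P_x(A)$ applied to each $\Gamma_\omega\wt\Gamma_0$, I obtain the intermediate expansion \eqref{QG2}, whose last two sums are manifestly $Q$-graphs.

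Next I would eliminate the $G^{(x)}$ entries that were introduced. Each such entry is rewritten via \eqref{resol_reverse} as a genuine resolvent entry minus a term with two extra solid edges at $x$; iterating over the $\OO(1)$ many $G^{(x)}$ factors in every $\Gamma_\omega$ (and in the $Q_x(\,\cdot\,)$ pieces, where the same expansion is legitimate since the subtracted term is again of the form $Q_x(\text{something})$ after one more application of $A = Q_x A + P_x A$ — note $P_x$ acting on a graph with a solid edge at $x$ produces, by Gaussian integration by parts, lower-order graphs which get absorbed into $\cal G_{err}$ or into new $Q$-graphs) yields a finite sum of graphs free of $(x)$-labels. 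Then I would expand each inverse weight by the Taylor formula \eqref{weight_taylor}: the main terms replace $(G_{xx})^{-1}$ by a polynomial in the light weight $\Gc_{xx}$ of degree $\le D$, and the remainder $\cal W^{(x)}_D$ is declared a weight of scaling size $\heta^{D+1}$, so (since there are $\OO(1)$ such weights and the rest of the graph has bounded scaling size) any graph retaining a $\cal W^{(x)}_D$ is of size $\OO(W^{-D})$ after choosing $D$ large, hence goes into $\cal G_{err}$. Multiplying out the finitely many Taylor polynomials gives \eqref{Q_eq1}. Throughout, the doubly connected property is preserved because every new edge created in \eqref{resol_exp0}, \eqref{resol_reverse} is a solid edge incident to $x$ and we may route it through an already-present net, or, more simply, because these are all \emph{local expansions on $x$} in the sense of Section \ref{sec:local}, for which preservation of double connectivity was already established (cf.\ Lemma \ref{lvl1 lemma}).

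Finally, the quantitative claims \eqref{eq:smallerscalingQ} and the statement ``$\Gamma_\omega$ contains at least one vertex connected to $x$ through a solid or dotted edge'' follow from tracking what the first identity \eqref{resol_exp0} does under the hypothesis that $\Gamma_0$ has no edge or weight attached to $x$: in that case the original $Q_x$ in \eqref{QG} was acting on $\wt\Gamma_0$ with $x$ an \emph{internal} vertex that in $\Gamma_0$ carries nothing, so every surviving $\Gamma_\omega$ must have acquired at least one solid edge at $x$ from the expansion; such a solid edge contributes a factor $\heta^{1/2}$ to the scaling size while the $(G_{xx})^{-1}$-type weight contributes only an $\OO(1)$ Taylor coefficient, yielding \eqref{eq:smallerscalingQ} after comparing with $\size(\cal G_0)$. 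The main obstacle I anticipate is not any single identity but the combinatorial/bookkeeping burden of checking that (i) \emph{every} term produced either lands in one of the three advertised categories or is genuinely of order $W^{-D}$, and in particular that the $P_x$-parts discarded along the way really do collapse (via Gaussian integration by parts / the cumulant structure) into already-counted $Q$-graphs plus controllable errors, and (ii) that scaling sizes never increase in the process; since this is essentially identical to the proof of \cite[Lemma 4.11]{BandII}, I would cite that argument and only indicate the (minor) modifications needed for the Wegner orbital normalization of $m$ and $S$.
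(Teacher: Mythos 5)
Your proposal is correct and follows essentially the same route as the paper, which simply records the algebraic Step-1 procedure built from \eqref{resol_exp0}, \eqref{decompose_gamma}, \eqref{QG2}, \eqref{resol_reverse}, and \eqref{weight_taylor} (with the $\cal W^{(x)}_D$ remainders sent to $\cal G_{err}$) and cites \cite[Lemma 4.11]{BandII} for the bookkeeping. The only remark is that your parenthetical invoking $A=Q_xA+P_xA$ and Gaussian integration by parts inside the $Q_x$-pieces is unnecessary and slightly misleading: removing the $(x)$ and $(-1)$ labels there is a purely deterministic identity applied under $Q_x$ by linearity, so no $P_x$ terms or cumulant expansions arise at this stage (those belong to Steps 2--3).
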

% \begin{proof} 
% We only need to prove the second statement. Suppose $\Gamma_0$ does not contain any weights or edges attached to $x$. Then there exists at least one $G_{x_1x_2}$ or $\overline G_{x_1x_2}$ entry in $\Gamma_0$ that has been replaced by two solid edges $G_{x_1 x}G_{xx_2}$ or $\overline G_{x_1 x}\overline G_{x x_2}$ (together with a $(G_{xx})^{-1}$ or $(\overline G_{xx})^{-1}$ weight). These two solid edges will either appear in $\Gamma_{\omega}$ or become weights if $x_1$ or $x_2$ is identified with $x$ due to newly added dotted edges in $\cal O_{dot}$ operations. 
% \end{proof}
% If $\Gamma_0$ does not contain any weights or edges attached to $x$, then after Step 1 the structures of $\Gamma_\omega$ in \eqref{Q_eq1} are already good enough for our purpose. In Steps 2 and 3, we aim to remove the $Q_x$ label in $Q_x(\wt \Gamma_{\omega})$.

%\medskip   

\noindent{\bf Step 2:} In this step and Step 3, we will remove the $Q_x$ label in  
\be\label{eq:PGammaw} \Gamma_\omega Q_x(\wt \Gamma_{\omega})=\Gamma_\omega \wt \Gamma_{\omega} - \Gamma_\omega P_x(\wt \Gamma_{\omega}).\ee
It suffices to write $P_x (\wt \Gamma_{\omega})$ into a sum of graphs without the $P_x/Q_x$ label. In Step 2, we use the following lemma to remove the weights or solid edges attached to vertex $x$.

\begin{lemma}[Lemma 4.12 of \cite{BandII}]\label{lemm:removewe}
Let $f$ be a differentiable function of $G$. 
We have that
\begin{align}
	  P_x \left[(G_{xx}  -  m) f(G)\right] &=  P_x\Big[ m^2 \sum_{\al} S_{x\al} (G_{\al\al}-m)  f(G) + m \sum_{\al} S_{x\al} (G_{\al\al}-m) (G_{xx}-m)f(G)\Big] \nonumber\\
	& -  P_x  \Big[  m \sum_\al S_{x\al} G_{\al x}\partial_{  h_{\al x}} f(G)\Big]. \label{gHw}
\end{align}
Given a graph $\cal G$ taking the form \eqref{multi setting}, we have the following identity if $k_1\ge 1$:
\begin{align} 
	  P_x[\cal G]   &= m\delta_{xy_1}P_x[{\cal G}/{G_{xy_1}}] + \sum_{i=1}^{k_2}|m|^2 P_x \left[  \left( \sum_\al S_{x\al }G_{\al y_1} \overline G_{\al y'_i}\right)\frac{\cal G}{G_{x y_1} \overline G_{xy_i'}} \right] \nonumber\\
   &+ \sum_{i=1}^{k_3} m^2 P_x \left[ \left(\sum_\al S_{x\al }G_{\al y_1} G_{w_i \al} \right)\frac{\cal G}{G_{xy_1}G_{w_i x}}\right] \nonumber \\
	 & + \sum_{i=1}^{k_2}m P_x \left[ \left(\overline G_{xx} -\overline m\right) \left( \sum_\al S_{x\al }G_{\al y_1} \overline G_{\al y'_i}\right)\frac{\cal G}{G_{x y_1} \overline G_{xy_i'}} \right] \nonumber\\
	 &+ \sum_{i=1}^{k_3} m P_x \left[ (G_{xx}-m) \left(\sum_\al S_{x\al }G_{\al y_1} G_{w_i \al} \right)\frac{\cal G}{G_{xy_1}G_{w_i x}}\right] \nonumber \\
	&+  m P_x \left[  \sum_\al S_{x\al }\left(G_{\al \al}-m\right) \cal G\right] +(k_1-1) m P_x \left[  \sum_\al S_{x\al }G_{\al y_1} G_{x \al} \frac{ \cal G}{G_{xy_1}}\right] \nonumber\\
	&+ k_4 m P_x \left[  \sum_\al S_{x\al }G_{\al y_1} \overline G_{\al x} \frac{\cal G}{G_{xy_1}}\right]  - m P_x \left[  \sum_\al S_{x\al }\frac{\cal G}{G_{x y_1}f(G)}G_{\al y_1}\partial_{ h_{\al x}}f (G)\right].\label{gH0} 
	% & - m P_x \left[  \sum_\al s_{x\al }G_{\al y_1}\cdot \prod_{i=2}^{k_1}G_{x y_i}  \cdot  \prod_{i=1}^{k_2}\overline G_{x y'_i} \cdot \prod_{i=1}^{k_3} G_{ w_i x} \cdot \prod_{i=1}^{k_4}\overline G_{ w'_i x}\cdot \partial_{ h_{ \al x}}f(G)\right]+ Q_x \left(  \cal G\right)  .
\end{align} 
\end{lemma}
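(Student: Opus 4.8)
\textbf{Proof proposal for Lemma \ref{lemm:removewe}.}

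The plan is to derive both identities from a single source: Gaussian integration by parts applied to the entries of $H$, exploiting that in the Wegner orbital model the entries $H_{xy}$ are (complex, up to Hermitian symmetry) Gaussian with variance $S_{xy}$. The starting point is the standard cumulant/Gaussian formula $\E[H_{x\al} f(H)] = \sum_{\beta} \E[H_{x\al}\overline{H_{x\beta}}]\,\E[\partial_{h_{x\beta}} f(H)] = S_{x\al}\,\E[\partial_{\overline h_{x\al}} f(H)]$ (with the appropriate complex-conjugate bookkeeping), together with the resolvent derivative rule $\partial_{h_{\al x}} G_{ab} = -G_{a\al}G_{xb}$. First I would recall that $P_x = \E_x$ acts as a conditional expectation given $H^{(x)}$, so applying $P_x$ to a product of a Green's function factor and a differentiable $f(G)$ is exactly the setting where one can integrate by parts in the $x$-th row/column variables. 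Since the off-diagonal block entries and the diagonal entries are independent Gaussians, the cumulant expansion truncates and produces precisely the terms on the right-hand sides.

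For the weight identity \eqref{gHw}: I would write $G_{xx}-m$ using the Schur/self-consistent structure, or more directly expand $P_x[(G_{xx}-m)f(G)]$ by noting $G_{xx}-m$ can be generated from one integration by parts on a factor $H_{x\al}G_{\al x}$ hidden in the resolvent expansion of $G_{xx}$. Concretely, use $G_{xx} = (-z - m + \text{error})^{-1} + \ldots$; better, start from the identity $\sum_\al H_{x\al} G_{\al x} = 1 + z G_{xx}$ (the $x$-th diagonal of $(H-z)G = I$), take $P_x$, integrate by parts in each $H_{x\al}$, and collect terms. This yields a self-consistent equation for $P_x[(G_{xx}-m)f(G)]$ whose leading term is $m^2\sum_\al S_{x\al}P_x[(G_{\al\al}-m)f(G)]$, with the quadratic correction $m\sum_\al S_{x\al}P_x[(G_{\al\al}-m)(G_{xx}-m)f(G)]$ and the derivative term $-m\sum_\al S_{x\al}P_x[G_{\al x}\partial_{h_{\al x}}f(G)]$ coming from $\partial_{h_{\al x}}$ hitting $f$. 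The coefficients $m, m^2$ arise exactly as in the edge/weight expansions of Lemma \ref{ssl}; in fact \eqref{gHw} is the ``$P_x$-version'' of the weight expansion, obtained by the same algebra but keeping the $P_x$ label instead of splitting into $P_x + Q_x$. This is a routine translation of \cite[Lemma 3.5]{BandI}.

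For the edge identity \eqref{gH0}: given $\cal G$ of the form \eqref{multi setting} with $k_1\ge 1$, I would isolate the factor $G_{xy_1}$ and use the expansion $G_{xy_1} = m\delta_{xy_1} + m\sum_\al H_{x\al}G_{\al y_1} \cdot(\ldots)$ — more precisely, from $(H-z)G=I$ restricted to row $x$, $G_{xy_1} = -z^{-1}\delta_{xy_1} - z^{-1}\sum_{\al} H_{x\al} G_{\al y_1}$ is not the cleanest; instead use the standard device of writing $z G_{xy_1} = \sum_\al H_{x\al}G_{\al y_1} - \delta_{xy_1}$ and substituting $m \approx -1/(z+m)$-type relations. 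Then apply $P_x$ and integrate by parts in each $H_{x\al}$: the derivative $\partial_{h_{\al x}}$ can land on (i) $G_{\al y_1}$ itself, producing the $m\sum_\al S_{x\al}(G_{\al\al}-m)\cal G$ term and the $(k_1-1)$-fold and $k_4$-fold self-interaction terms (derivative hitting the other $G_{xy_i}$ and $\overline G_{w'_i x}$ factors sharing the index $x$); (ii) the other blue external edges $G_{x y'_i}^-$ and $G_{w_i x}$, producing the $|m|^2$ and $m^2$ sums with the weight corrections $(\overline G_{xx}-\overline m)$ and $(G_{xx}-m)$; (iii) on $f(G)$ directly, giving the last term $-mP_x[\sum_\al S_{x\al}\cal G/(G_{xy_1}f(G))\,G_{\al y_1}\partial_{h_{\al x}}f(G)]$; and (iv) the $\delta_{xy_1}$ contact term $m\delta_{xy_1}P_x[\cal G/G_{xy_1}]$. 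Matching coefficients and using $S(1+m^2 S^+) = S^+$ where $S^+$ would appear only after a further resummation (not needed here since \eqref{gH0} is the one-step version) gives exactly \eqref{gH0}.

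The main obstacle — though it is bookkeeping rather than conceptual — is tracking the combinatorics of which factors the derivative $\partial_{h_{\al x}}$ can hit: all $k_1$ factors $G_{xy_i}$, all $k_4$ factors $\overline G_{w'_i x}$, all $k_2$ factors $\overline G_{xy'_i}$, all $k_3$ factors $G_{w_i x}$, and $f(G)$, while carefully distinguishing the distinguished factor $G_{xy_1}$ (which gives the $\delta_{xy_1}$ term and the $(k_1-1)$ count) from the rest, and getting the right charge bookkeeping for complex Gaussians (so that $\partial_{h_{\al x}}$ versus $\partial_{\overline h_{\al x}}$ is applied consistently). Since these are precisely Lemmas 3.5, 3.10 of \cite{BandI} with $P_x$ in place of the identity, I would simply verify that the $Q_x$-parts there (which vanish under $P_x \circ$ nothing, i.e., are absorbed) combine correctly, and cite \cite[Lemma 4.12]{BandII} for the final form; the proof is then a one-paragraph reduction. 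We omit the routine verification.
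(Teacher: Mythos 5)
Your proposal is correct and follows essentially the same route as the paper's source for this lemma: the paper gives no proof of its own (it cites \cite[Lemma 4.12]{BandII}), and the cited argument is precisely what you outline — partial expectation $P_x$ combined with Gaussian integration by parts on row $x$ of $(H-z)G=I$, the derivative $\partial_{h_{\al x}}$ distributed over the distinguished factor, the other solid edges attached to $x$, and $f(G)$, followed by the splittings $G_{\al\al}=m+(G_{\al\al}-m)$, $G_{xx}=m+(G_{xx}-m)$, $\overline G_{xx}=\bar m+(\overline G_{xx}-\bar m)$. The one point where your "routine" bookkeeping needs care is the self-consistent input: for the Wegner orbital model one must use $z+(1+2d\lambda^2)m=-1/m$ together with $\sum_\al S_{x\al}=1+2d\lambda^2$ (not a bare $m=-1/(z+m)$), since it is exactly this cancellation of the $m\sum_\al S_{x\al}$ contribution that produces the coefficients $m$, $m^2$, $|m|^2$ appearing in \eqref{gHw} and \eqref{gH0}.
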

% \begin{proof}
% These two identities both follow from Gaussian integration by parts, and have been proved in Lemma 3.5 and Lemma 3.10 of \cite{PartI_high}.
% \end{proof}

% Similar to Definition \ref{Ow-def}, we can define an operator $\wt{\cal O}^{(x)}_{weight}$ acting on a graph $P_x(\cal G)$ as 
% $$	\wt{\cal O}_{weight}^{(x)}[ P_x(\cal G)]:={\cal O}^{(x),1}_{weight} \circ\cal O_{dot} \circ  \wt{\cal O}^{(x),2}_{weight} \circ {\cal O}^{(x),1}_{weight}  [P_x(\cal G)],$$
% where $\wt{\cal O}^{(x),2}_{weight}$ is an operator defined from \eqref{gHw}. Also similar to Definition \ref{multi-def}, we can define an operator \smash{$\wt{\cal O}^{(x)}_{multi-e}$} acting on a graph $P_x(\cal G)$ as 
% $$\wt{\cal O}_{multi-e}^{(x)}[ P_x(\cal G) ]:= \cal O_{dot} \circ  \wt{\cal O}_{multi-e}^{(x),1} [P_x(\cal G)], $$ 
% where $\wt{\cal O}_{multi-e}^{(x),1}$ is an operator defined from \eqref{gH0} (the $k_1=0$ case can be handled in the same way as cases (ii)--(iv) of Definition \ref{multi-def} and we omit the details).

%We will apply the $\wt{\cal O}_{weight}^{(x)}$ and $\wt{\cal O}^{(x),2}_{weight}$ repeatedly to expand $P_x (\wt \Gamma_{\omega})$. 

If a normal graph $P_x(\cal G)$ contains weights or solid edges attached to $x$, then we will apply the expansions in \Cref{lemm:removewe} to it. We repeat these operations until every graph does not contain any weight or solid edge attached to $x$, and hence obtain the following lemma.

\begin{lemma}[Lemma 4.13 of \cite{BandII}]\label{Q_lemma2}
Suppose $\cal G$ is a normal graph without $P/Q$, $(x)$, or $(-1)$ labels. % $G^{(x)}$ entries, and $(G_{xx})^{-1}$ or $(\overline G_{xx})^{-1}$ weights. 
Then, given any large constant $D>0$, we can expand $P_x(\cal G)$ into a sum of $\OO(1)$ many graphs: 
$$P_x(\cal G) = \sum_{\omega}  P_x(\cal G_{\omega}) +  \cal G_{err}\ ,$$
where $\cal G_{\omega}$ are normal graphs without weights or solid edges attached to $x$ (and without $P/Q$, $(x)$, or $(-1)$ labels), and $\cal G_{err}$ is a sum of error graphs of scaling size $\OO(W^{-D})$. 
\end{lemma}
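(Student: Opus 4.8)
\textbf{Proof proposal for Lemma \ref{Q_lemma2}.}

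The plan is to iterate the identities in \Cref{lemm:removewe} until no weight or solid edge remains attached to the vertex $x$, exactly in the spirit of the proof of \cite[Lemma 4.13]{BandII}. First I would set up a \emph{complexity measure} on graphs $P_x(\mathcal G)$ that is strictly decreased by each application of \eqref{gHw} or \eqref{gH0}: the natural choice is the lexicographically ordered pair $(n_x^{\rm solid}, n_x^{\rm weight})$, where $n_x^{\rm solid}$ counts the blue/red solid edges with an endpoint at $x$ (and its light-weight variants) and $n_x^{\rm weight}$ counts the self-loops $G_{xx}-m$, $\overline G_{xx}-\overline m$ on $x$. Inspecting the right-hand sides of \eqref{gHw} and \eqref{gH0}: every term either (i) removes a solid edge or light weight attached to $x$ and possibly introduces new solid edges attached to a \emph{fresh summation vertex} $\alpha$ (linked to $x$ only through a waved edge $S_{x\alpha}$, hence \emph{not} attached to $x$), or (ii) in the $\partial_{h_{\alpha x}}f(G)$ terms, differentiating a resolvent entry $G_{uv}$ produces $-G_{u\alpha}G_{xv}$ or $-G_{ux}G_{\alpha v}$, which again attaches at most one new solid edge to $x$ while the differentiated edge is consumed — so the net count at $x$ does not increase, and by tracking which term is the ``leading'' reduction one sees the measure strictly drops. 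The Kronecker-delta terms $m\delta_{xy_1}P_x[\mathcal G/G_{xy_1}]$ merge $x$ with another vertex and only reduce complexity. Thus the iteration terminates after $\OO(1)$ steps (the number of edges and weights in $\mathcal G$ is $\OO(1)$ to begin with), producing a sum of $\OO(1)$ many graphs $P_x(\mathcal G_\omega)$ with no weight or solid edge at $x$.

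Next I would handle the \emph{truncation of the tails}. The Taylor-type expansions hidden in \eqref{gHw}–\eqref{gH0} are exact (finite) identities, so no truncation enters here directly; however, to keep the number of generated graphs finite one stops the recursion once a graph already has no weight/solid edge at $x$, and separately collects any graph whose scaling size has dropped below $W^{-D}$ into $\mathcal G_{err}$. Since each reduction step at $x$ multiplies the scaling size by a bounded factor and there are only $\OO(1)$ steps, all surviving graphs have scaling size $\lesssim \size(\mathcal G)$, and the discarded ones are genuinely $\OO(W^{-D})$; this is where the hypothesis that $D$ is an arbitrary large constant is used. I would also check that normality is preserved: each new vertex $\alpha$ is connected to the rest of the graph through the waved edge $S_{x\alpha}$ and through the solid edges inherited from the differentiated/expanded factor, so no isolated components are created, and the graphs remain normal in the sense of \Cref{defnlvl0}. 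The absence of $(x)$ and $(-1)$ labels is automatic because \Cref{lemm:removewe} never introduces resolvent minors or inverse weights.

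The main obstacle — and the only place requiring genuine care rather than bookkeeping — is verifying that the chosen complexity measure is \emph{monotone under every branch} of \eqref{gH0}, in particular for the derivative term $-mP_x[\sum_\alpha S_{x\alpha}(\mathcal G/(G_{xy_1}f(G)))G_{\alpha y_1}\partial_{h_{\alpha x}}f(G)]$ when $f(G)$ itself already contains many solid edges touching $x$: differentiating such an $f$ can in principle reattach an edge to $x$, so one must argue that the factor $G_{xy_1}$ that was \emph{removed} from the product is not immediately replaced, i.e. that the measure still decreases on balance. This is resolved exactly as in \cite{BandII}: one orders the expansion so that $\partial_{h_{\alpha x}}$ acts on the $f(G)$ part only after all explicit $G$-edges at $x$ have been exhausted, and one notes that each differentiation strictly lowers the number of $h_{\alpha x}$-dependent factors in $f$, giving a secondary strictly-decreasing counter. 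Since the argument is essentially identical to that of \cite[Lemma 4.13]{BandII}, I would state the reduction, record the termination and the scaling-size/normality bookkeeping, and refer to \cite{BandII} for the remaining routine details.
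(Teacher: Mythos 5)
Your overall plan (iterate the identities of \Cref{lemm:removewe} at the vertex $x$) is the same as the paper's, but your termination argument contains a genuine gap: the lexicographic measure $(n_x^{\rm solid},n_x^{\rm weight})$ does \emph{not} strictly decrease on every branch of \eqref{gHw} and \eqref{gH0}, and no ordering of that pair can fix this. Concretely, the second term on the right of \eqref{gHw} retains the weight $(G_{xx}-m)$ at $x$ unchanged and merely adds a light weight at the fresh vertex $\al$; the term $mP_x[\sum_\al S_{x\al}(G_{\al\al}-m)\,\cal G]$ in \eqref{gH0} leaves the whole graph $\cal G$, and hence every attachment at $x$, untouched; the terms $(k_1-1)m P_x[\sum_\al S_{x\al}G_{\al y_1}G_{x\al}\,\cal G/G_{xy_1}]$ and $k_4 m P_x[\sum_\al S_{x\al}G_{\al y_1}\overline G_{\al x}\,\cal G/G_{xy_1}]$ trade one solid edge at $x$ for another, and the derivative term reattaches an edge $G_{xv}$ to $x$ for each edge it removes. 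Moreover, the last term of \eqref{gHw} removes the weight but adds two solid edges at $x$, so ``solid first'' fails there while ``weight first'' fails on the second term. Your ordering trick for the $\partial_{h_{\al x}}f$ branch does not repair this either, since each newly created edge $G_{xv}$ can again spawn a derivative term at the next step, so there is no a priori combinatorial bound on the recursion depth. Consequently your claim that the identities are exact and ``no truncation enters here directly'' is inconsistent with the presence of $\cal G_{err}$ in the statement: if a purely combinatorial measure terminated the recursion, no error term would be needed at all.

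The correct mechanism, which is what the cited proof in \cite{BandII} uses, is a mixed combinatorial/analytic one: every term produced by \eqref{gHw} or \eqref{gH0} either strictly reduces the number of weights and solid edges attached to $x$ without increasing the scaling size, or leaves that number non-increasing while reducing the scaling size by at least a factor $\heta^{1/2}\le W^{-c}$ (a new light weight, an extra solid edge from a derivative or an edge splitting). One therefore iterates, stopping a branch as soon as it has no attachment at $x$ (these give the $P_x(\cal G_\omega)$) or as soon as its scaling size has fallen below $W^{-D}$ (these are dumped into $\cal G_{err}$). The recursion depth is then $\OO(D/c)$, which yields $\OO(1)$ graphs for fixed $D$ and explains why the constant $D$ enters the statement. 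Your scaling-size and normality bookkeeping, and the observation that no $(x)$ or $(-1)$ labels are created, are fine; it is only the termination argument that needs to be replaced as above.
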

% \begin{proof}
% We only describe the main argument for the proof without writing down all the details. Given a normal regular graph $P_x(\cal G)$, it is easy to check that every graph in \smash{$\wt{\cal O}_{weight}^{(x)}[P_x(\cal G)]$ or $\wt{\cal O}^{(x)}_{multi-e}[P_x(\cal G)]$}, say $P_x(\cal G_1)$, satisfies one of the following properties:
% \begin{itemize}
% 	\item $P_x(\cal G_1)$ has a strictly higher scaling order than $P_x(\cal G_0)$;
% 	\item $P_x(\cal G_1)$ contains strictly fewer weights or edges attached to $x$. 
% \end{itemize}
% Hence applying the operations \smash{$\wt{\cal O}_{weight}^{(x)}$ or $\wt{\cal O}^{(x)}_{multi-e}$} for $C_{\cal G, D}$ many times,  each resulting graph either is of scaling order $>D$, or has no weights or edges attached to $x$. Here $C_{\cal G, D}\in \N$ is a large constant depending only on $D$ and the number of weights and edges attached to atom $x$ in $\cal G$.
% \end{proof}

With Lemma \ref{Q_lemma2}, in Step 2 we can expand $Q_x(\wt \Gamma_{\omega})$ as
\be\label{PxGamma} Q_x(\wt \Gamma_{\omega})=   \wt \Gamma_{\omega} + \sum_{\zeta}P_x(\wt \Gamma_{\omega,\zeta}) + \cal G_{err},\ee
where $\wt\Gamma_{\omega,\xi}$ are normal graphs without weights or solid edges attached to $x$, and $\cal G_{err}$ is a sum of error graphs. % of scaling order $>D$.  

\medskip
\noindent{\bf Step 3:} In this step, we further expand the graphs $P_x(\wt \Gamma_{\omega,\zeta})$ in \eqref{PxGamma}. Suppose $\cal G$ is a normal graph without weights or solid edges attached to $x$. Using the resolvent identity \eqref{resol_exp0}, we can write $\cal G$ into a similar form as in \eqref{decompose_gamma}:
$$\cal G = \cal G^{(x)} + \sum_\omega \cal G_{\omega}.$$
Here, $\cal G^{(x)}$ is a graph whose weights and solid edges all have the $(x)$ label, while every graph $\cal G_{\omega}$ has a strictly smaller scaling size than $\cal G$ (by a factor of \smash{$\heta^{1/2}$}), at least two new solid edges connected with $x$, and some weights with label $(-1)$ on $x$. Then, we can expand $P_x(\cal G)$ as 
$$P_x(\cal G) = \cal G- \sum_\omega \cal G_{\omega} + \sum_\omega P_x(\cal G_{\omega}).$$
Next, as in Step 1, we apply \eqref{resol_reverse} and \eqref{weight_taylor} to remove all $G^{(x)}$, $(G_{xx})^{-1}$, and $(\overline G_{xx})^{-1}$ entries from $\cal G_{\omega}$. %Finally, we apply \Cref{ssl} to all resulting graphs to get a sum of normal regular graphs without regular weights on $x$. In sum, we obtain 
This gives the following result. 

\begin{lemma}[Lemma 4.14 of \cite{BandII}]\label{Q_lemma3}
Suppose $\cal G$ is a normal graph without $P/Q$, $(x)$, or $(-1)$ labels. Moreover, suppose $\cal G$ has no weights or solid edges attached to $x$. Then, for any large constant $D>0$, we can expand $P_x(\cal G)$ into a sum of $\OO(1)$ many graphs: 
$$P_x(\cal G) = \cal G+ \sum_\xi  \cal G_{\xi}  + \sum_\gamma  P_x({\cal G}'_{\gamma}) + \cal G_{err} \, ,$$
where \smash{$\cal G_{\xi}$ and ${\cal G}'_{\gamma}$} are normal graphs without $P/Q$, $(x)$, or $(-1)$ labels, and $\cal G_{err}$ is a sum of error graphs of scaling size $\OO(W^{-D})$. Moreover, $\cal G_\xi$ and ${\cal G}'_{\gamma}$ have strictly smaller scaling sizes than $\cal G$: 
$$\size\left(\cal G_\xi\right)+\size ({\cal G}'_{\gamma}) \lesssim \heta^{1/2} \size(\cal G),$$
and each of them has either new light weights or new solid edges connected to the vertex $x$. 
\end{lemma}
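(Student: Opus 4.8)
The plan is to treat this as a purely algebraic expansion identity: it records the \emph{structure} of $P_x(\cal G)$ after finitely many resolvent manipulations, and the scaling sizes in it are formal bookkeeping quantities, so neither the local law nor the restriction $d\ge 7$ is needed for its proof. I would run the same three-move scheme used for the earlier parts of the $Q$-expansion (cf.\ Lemmas~\ref{Q_lemma1}--\ref{Q_lemma2}), specialized to a graph $\cal G$ that, by hypothesis, carries no solid edge or weight on $x$. The only model-dependent inputs are $M=mI_N$ and $S\equiv S(\lambda)$ for the Wegner orbital model together with the decay estimates for $S$, $S^\pm$, $\thn$, $\zthn$ from Lemmas~\ref{lem deter} and~\ref{lem theta}; since these have the same structural form as for random band matrices, the notions of atoms, molecules, scaling size, and the doubly-connected property carry over, and the combinatorics transfers essentially verbatim from \cite[Section~4]{BandII}.

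First I would decouple $\cal G$ from row and column $x$. Applying the Schur identity \eqref{resol_exp0}, $G_{x_1x_2}=G^{(x)}_{x_1x_2}+G_{x_1x}G_{xx_2}/G_{xx}$, to the resolvent entries of $\cal G$ one at a time and isolating the single term $\cal G^{(x)}$ in which every solid edge and weight has acquired the superscript $(x)$, one gets $\cal G=\cal G^{(x)}+\sum_\omega \cal G_\omega$, where $\cal G^{(x)}$ is a function of $H^{(x)}$ alone (hence $P_x$-invariant) and each $\cal G_\omega$ contains at least two new solid edges meeting $x$ together with $(G_{xx})^{\pm1}$-type weights on $x$ and possibly leftover $G^{(x)}$ entries. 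Since $P_x\cal G^{(x)}=\cal G^{(x)}=\cal G-\sum_\omega\cal G_\omega$, this gives
\[
P_x(\cal G)=\cal G-\sum_\omega \cal G_\omega+\sum_\omega P_x(\cal G_\omega).
\]
Next I would clean up the auxiliary labels inside each $\cal G_\omega$: remove the leftover $G^{(x)}$ entries with \eqref{resol_reverse} and the $(G_{xx})^{\pm1}$, $(\overline G_{xx})^{\pm1}$ weights with the truncated Taylor expansion \eqref{weight_taylor}, absorbing the tail $\cal W^{(x)}_D$ (of scaling size $\heta^{D+1}$) into the error term. Expanding the products, every resulting term is either a normal graph with no $P/Q$, $(x)$, or $(-1)$ label (these are the $\cal G_\xi$), or a graph $P_x(\cal G'_\gamma)$ with $\cal G'_\gamma$ normal and label-free but carrying new light weights or solid edges on $x$, or an error graph of scaling size $\OO(W^{-D})$. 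Because \eqref{weight_taylor} is truncated at order $D$ and each use of \eqref{resol_exp0}/\eqref{resol_reverse} strictly lowers the scaling size, only $\OO(1)$ graphs are produced.

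The main obstacle — the step that is genuine work rather than manipulation of identities — is establishing the scaling-size gain $\size(\cal G_\xi)+\size(\cal G'_\gamma)\lesssim \heta^{1/2}\size(\cal G)$ while preserving the doubly-connected property. The mechanism is that replacing an edge $G_{x_1x_2}$ by the chain $G_{x_1x}G_{xx_2}(G_{xx})^{-1}$ adds solid edges but, because the Kronecker deltas from $M=mI_N$ pin indices to $x$ and because $x$ is fixed (not summed) under $P_x$, creates no new internal atom and no uncompensated waved edge; the net cost per inserted solid edge is a factor $\heta^{1/2}$. One has to track this uniformly across all cross terms generated in the two moves above, and verify that the new solid edges at $x$ can be assigned to the blue net so that both spanning trees required by Definition~\ref{def 2net} survive. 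This is precisely the accounting carried out in \cite[Section~4]{BandII}; since the atomic/molecular structure and the estimates on $M$, $S$, $S^\pm$, $\thn$, $\zthn$ for the Wegner orbital model match those used there, the argument carries over with only cosmetic changes, the harmless normalization constant $1+2d\lambda^2$ being the single Wegner-specific quantity to be carried along.
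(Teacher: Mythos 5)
Your proposal is correct and follows essentially the same route as the paper: the Schur decomposition $\cal G=\cal G^{(x)}+\sum_\omega\cal G_\omega$ with $P_x\cal G^{(x)}=\cal G^{(x)}$ giving $P_x(\cal G)=\cal G-\sum_\omega\cal G_\omega+\sum_\omega P_x(\cal G_\omega)$, followed by removal of the $(x)$ and $(-1)$ labels via \eqref{resol_reverse} and the truncated expansion \eqref{weight_taylor}, exactly as in Step~3 of the paper's $Q$-expansion (which defers the detailed bookkeeping to \cite[Section 4]{BandII}). Your justification of the $\heta^{1/2}$ gain is also the right one — the new solid edges attach to the fixed vertex $x$, so no new internal atom or waved edge is created and the extra off-diagonal edge costs $\heta^{1/2}$ — though the aside about Kronecker deltas from $M=mI_N$ is inessential to this mechanism.
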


With this lemma, we can expand $P_x(\wt \Gamma_{\omega,\zeta}) $ in \eqref{PxGamma} as
$$P_x(\wt \Gamma_{\omega,\zeta}) =\wt \Gamma_{\omega,\zeta}+ \sum_\xi  \wt\Gamma_{\omega,\zeta,\xi} + \sum_\gamma  P_x (\wt \Gamma_{\omega,\zeta,\gamma}' ) +\cal G_{err},$$
where $\wt\Gamma_{\omega,\zeta,\xi}$ and $\wt \Gamma_{\omega,\zeta,\gamma}' $ satisfy the same properties as $ \cal G_{\xi}$ and ${\cal G}'_{\gamma} $ in Lemma \ref{Q_lemma3}, respectively. 
Next, we apply Step 2 again to the graphs \smash{$P_x(\wt \Gamma_{\omega,\zeta,\gamma}' )$}. Repeating Steps 2 and 3 for $\OO(1)$ many times, we can finally expand {$P_x (\wt \Gamma_{\omega})$} in \eqref{eq:PGammaw} into a sum of graphs without $P/Q$ label plus error graphs of sufficiently small scaling size. This completes the process of a single $Q$-expansion. 
\begin{remark}
We emphasize that $Q$-expansions, although complicated, are also \emph{local expansions on the vertex $x$}: all new vertices created in these expansions connect to $x$ through paths of dotted or waved edges. 
\end{remark}

\subsubsection{Pre-deterministic and globally standard property}

A locally regular graph may contain multiple $T$-variables that can be expanded. 
The choice of which variable to expand is a central aspect of our expansion strategy, which will be the primary focus of this subsection. 
Unlike local expansions, a global expansion can break the doubly connected property, i.e., a global expansion of a doubly connected graph may produce new graphs that are not doubly connected anymore. To maintain the doubly connected property, we are only allowed to expand a ``redundant" $T$-variable whose blue solid edge is ``redundant" in the following sense.   

\begin{definition}[Redundant edges]\label{def-redundant}
In a doubly connected graph, an edge is said to be redundant if after removing it, the resulting graph is still doubly connected. Otherwise, the edge is said to be pivotal. (Note red solid edges and edges inside molecules must be redundant, as they are not used in the black and blue nets.)
\end{definition}

To ensure the doubly connected property of all graphs in a $T$-expansion, it is essential that each locally regular graph from our expansions contains at least one redundant $T$-variable. 
Unfortunately, this property (i.e., doubly connected property with one redundant $T$-variable) is not preserved in global expansions. Following \cite{BandII,BandIII}, we define a slightly stronger \emph{sequentially pre-deterministic property} that is preserved in both local and global expansions. We first introduce the concept of isolated subgraphs.

\begin{definition}[Isolated subgraphs]\label{defn iso}
Let $\cal G$ be a doubly connected graph and $\cal G_{\cal M}$ be its molecular graph with all red solid edges removed. A subset of internal molecules in $\cal G$, say $\Pol$, is called \emph{isolated} if and only if $\Pol$ connects to its complement $\Pol^c$ exactly by two edges in $\cal G_{\cal M}$---a $\dashed$ edge in the black net and a blue solid, free, or $\dashed$ edge in the blue net. An isolated subgraph of $\cal G$ is a subgraph induced on an isolated subset of molecules.
 \end{definition}
 
An isolated subgraph of $\cal G$ is said to be \emph{proper} if it is induced on a proper subset of internal molecules. An isolated subgraph is said to be \emph{minimal} if it contains no proper isolated subgraphs. As a convention, if a graph $\cal G$ does not contain any proper isolated subgraph, then the \emph{minimal isolated subgraph} (MIS) refers to the subgraph induced on all internal molecules. On the other hand, given a doubly connected graph $\cal G$, an isolated subgraph is said to be \emph{maximal} if it is \emph{not} a proper isolated subgraph of any isolated subgraph of $\cal G$.  

\begin{definition}[Pre-deterministic property]\label{def PDG}
A doubly connected graph $\mathcal G$ is said to be {\bf pre-deterministic} if there exists an order of all internal blue solid edges, say $b_1 \preceq b_2 \preceq ... \preceq b_k$, such that 
\begin{itemize}
\item[(i)] $b_1$ is redundant;
\item[(ii)] for $i \in \qqq{k-1}$, if we replace each of $b_1,...,b_i$ by a $\dashed$ or free edge, then $b_{i+1}$ becomes redundant.
\end{itemize}
We will refer to the order $b_1 \preceq b_2 \preceq ... \preceq b_k$ as a pre-deterministic order. 
\end{definition}

With the above two definitions, we are ready to define the \emph{sequentially pre-deterministic} (SPD) property.

\begin{definition}[SPD property]\label{def seqPDG}
A doubly connected graph $\mathcal G$ %(with free edges) 
is said to be {\bf sequentially pre-deterministic} if it satisfies the following properties.
\begin{itemize}
%\item[(i)] $\mathcal G_{\mathcal M}$ is doubly connected in the sense of Definition \ref{def 2net}.

\item[(i)] All isolated subgraphs of $\mathcal G$ that have non-deterministic closure (recall Definition \ref{def_sub}) forms a sequence $(\Iso_{j})_{j=0}^k$ such that $\Iso_0 \supset \Iso_{1} \supset \cdots \supset \Iso_{k},$ where $\Iso_0$ is the maximal isolated subgraph and $\Iso_k$ is the MIS.

\item[(ii)] %The maximal subgraph $\mathcal G_{\max}$ or an isolated subgraph $\Iso$ of $\mathcal G$ is pre-deterministic if it has no proper isolated subgraph. 
The MIS $\Iso_k$ is pre-deterministic. Let $\mathcal G_{\mathcal M}$ be the molecular graph without red solid edges.  For any $0 \leq j \leq k-1$, if we replace $\Iso_{j+1}$ and its two external edges in $\cal G_{\mathcal M}$ by a single $\dashed$ or free edge, then $\Iso_j$ becomes pre-deterministic. 
\end{itemize}

\end{definition}

By definition, a subgraph has a non-deterministic closure if it contains solid edges and weights inside it, or if it is connected with some external solid edges. Moreover, by \Cref{defn iso}, the two external edges in the above property (ii) are exactly the black and blue external edges in the black and blue nets, respectively. Note that an SPD graph $\cal G$ with a non-deterministic MIS has at least one redundant blue solid edge, that is, the first blue solid edge $b_1$ in a pre-deterministic order of the MIS. As proved in \cite{BandII}, the SPD property is preserved not only in local expansions but also in a global expansion of the $T$-variable containing $b_1$, provided we replace the $T$-variable also with an SPD graph---in \Cref{def genuni2} below, we will impose the SPD property on the graphs in the $T$-expansions \eqref{mlevelTgdef} and \eqref{mlevelTgdef_orig} that are used in the global expansion.

\begin{lemma}[Lemma A.17 of \cite{BandIII}]\label{lem_SPD_preserve}
Let $\cal G$ be an SPD normal graph. 
\begin{enumerate}
\item[(i)] Applying the local expansions in \Cref{sec:local} and the $Q$-expansions in \Cref{subsec_Qexp} to $\cal G$, all the resulting graphs are still SPD.
    
\item[(ii)] Replacing a $T$-variable containing the first blue solid edge in a pre-deterministic order of the non-deterministic MIS by an SPD graph, we still get an SPD graph.
\end{enumerate}

\end{lemma}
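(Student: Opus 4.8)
\textbf{Proof proposal for \Cref{lem_SPD_preserve}.}

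The plan is to treat the two parts separately, and in each case to reduce the claim to the structural effect that a single elementary expansion step has on the molecular graph $\cal G_{\cal M}$. The key observation throughout is that the SPD property of \Cref{def seqPDG} is entirely a statement about $\cal G_{\cal M}$ (with red solid edges deleted): it concerns the nesting chain of isolated subgraphs $\Iso_0 \supset \cdots \supset \Iso_k$, the pre-determinacy of the MIS $\Iso_k$, and the pre-determinacy of each $\Iso_j$ after collapsing $\Iso_{j+1}$. So it suffices to understand how each expansion step modifies $\cal G_{\cal M}$ and the collections $\cal B_{black}$, $\cal B_{blue}$ of \Cref{def 2net}. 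For part (i), local expansions (weight/edge/$GG$ expansions of \Cref{ssl,Oe14,T eq0}) and $Q$-expansions are \emph{local on a vertex $x$}: every newly created vertex attaches to $x$ through paths of dotted or waved edges, hence lies in the same molecule as $x$. Therefore these steps do not change $\cal G_{\cal M}$ at all (beyond possibly turning a solid edge into a $\dashed$ or free edge, or replacing $\thn$ by $\zthn$, which are already allowed in both nets), and one only has to check that the new solid edges produced inside a molecule do not spoil the existing black/blue nets --- which follows because solid edges inside molecules are always redundant (\Cref{def-redundant}) and play no role in the nets. This is exactly the content of \cite[Lemma A.17]{BandIII}, whose proof carries over verbatim since the atomic/molecular graph structure of our RBSO (as emphasized after \Cref{def_atom,def_poly}) coincides with that of RBM; I would cite it and only indicate the places where the slightly different dotted-edge conventions for the block Anderson model (recall \eqref{scaleatom}) require one extra line --- namely, that the new dotted/waved edges still keep all newly created vertices within the $\OO(W\log W)$-neighborhood, hence inside the molecule of $x$.

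For part (ii), let $b_1$ be the first blue solid edge in a pre-deterministic order of the non-deterministic MIS $\Iso_k$ of $\cal G$, and suppose $b_1$ belongs to a $T$-variable $T_{x_0,y_1y_2}$ (or $\zT_{x_0,y_1y_2}$) sitting on a standard neutral internal vertex. The global expansion replaces this $T$-variable by one of the graphs on the RHS of \eqref{mlevelTgdef}/\eqref{mlevelTgdef_orig}; by the requirement to be imposed in \Cref{def genuni2}, each such replacement graph is itself SPD, doubly connected, and has $x_0$ as an internal vertex with the prescribed blue/red solid (or waved/diffusive/dotted) edges at $y_1,y_2$. The argument is then: (1) $b_1$ is redundant in $\cal G$, so removing it (equivalently, removing the blue solid edge in the $T$-variable) leaves a doubly connected graph; the ``stub'' left where $T_{x_0,y_1y_2}$ was can be viewed in $\cal G_{\cal M}$ as a single $\dashed$/free edge connecting the molecule of $x_0$ to that of $y_1$ (via the black net) together with the appropriate blue-net edge to $y_1$ or $y_2$; this matches precisely the two external edges of an isolated subgraph in \Cref{defn iso}. (2) Splicing in the SPD replacement graph $\cal G'$ in place of this stub: the internal molecules of $\cal G'$ become a new isolated subgraph of the spliced graph, nested strictly inside $\Iso_k$ (hence extending the chain $\Iso_0\supset\cdots\supset\Iso_k\supset\Iso_{new}$), and the pre-deterministic order on $\Iso_k$ can be concatenated with the pre-deterministic order on $\cal G'$ --- one checks that replacing $b_1,\dots,b_i$ by $\dashed$/free edges and then entering the block $\cal G'$ makes the first blue solid edge of $\cal G'$ redundant, because $\cal G'$ was SPD and its MIS had a redundant first blue edge, while the black/blue nets of $\cal G$ restricted outside $\cal G'$ remain intact. (3) Finally, one verifies that the new maximal isolated subgraph and each $\Iso_j$ for $j<k$ still become pre-deterministic after collapsing the next member of the (now longer) chain, which is immediate because collapsing the whole new block $\cal G'$ to a single $\dashed$/free edge returns exactly the situation of $\cal G$ with $T_{x_0,y_1y_2}$ removed, for which pre-determinacy was assumed.

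The main obstacle I anticipate is bookkeeping the case distinctions in step (2): the replacement graphs on the RHS of \eqref{mlevelTgdef} come in several flavors ($\PT$, $\AT$, $\WT$, $\QT$, $\Err_D$, and the leading $m\,\zthn(\Sele)_{x_0y_1}\bar G_{y_1y_2}$ term), and for each one must confirm that (a) it attaches to the ambient graph through exactly the two net edges that the removed $T$-variable provided, so that the isolated-subgraph structure is preserved, and (b) in the $Q$-graph case $\QT$, the subsequent $Q$-expansion of \Cref{subsec_Qexp} (which is itself a local expansion, so part (i) applies) does not retroactively destroy the chain. One must also be careful that a global expansion performed on a graph in $\WT$, whose free edge must remain redundant, does not force the free edge to become pivotal --- but this is guaranteed by the redundancy clause in property (4) of \Cref{defn genuni} combined with the fact that the spliced block $\cal G'$ contributes its own independent blue net. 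Once these cases are organized, the verification is routine and parallels \cite[Lemma A.17]{BandIII} line by line; I would present the common skeleton in detail and dispatch the remaining flavors with a sentence each.
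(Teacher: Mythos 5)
Your proposal is correct and follows essentially the same route as the paper: the paper gives no independent proof but quotes Lemma A.17 of \cite{BandIII} verbatim, justified exactly by your observation that the atomic/molecular graph structure of the RBSO graphs coincides with that of random band matrices, so the cited proof applies without change. One small inaccuracy in your sketch (harmless for the citation-based argument): local expansions do \emph{not} leave $\cal G_{\cal M}$ untouched, since the $\partial_{h_{\al x}}$ terms pull an existing solid edge between two other molecules into two solid edges passing through the molecule of $x$, which is precisely why the preserved-SPD verification in \cite{BandIII} requires a genuine case check rather than the "no change" shortcut.
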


The above lemma shows that as long as every locally regular graph in our expansion process has a non-deterministic MIS, we can find a redundant $T$-variable to expand. However, in some steps, we may get graphs with a deterministic MIS that is connected with an external blue solid edge. This blue solid edge is pivotal, and the SPD property cannot guarantee the existence of a redundant blue solid edge in the graph. Thus, if the graph is locally regular, we cannot expand it anymore using either the local or global expansions. However, this issue actually does not occur for our graphs from the expansions, since they satisfy an even  stronger property than SPD:  

\begin{definition}[Globally standard graphs]\label{defn gs}
A doubly connected graph $\cal G$ is said to be \emph{globally standard} if it is SPD and its proper isolated subgraphs are all weakly isolated. Here, an isolated subgraph is said to be {\bf weakly isolated} if it has at least two external red solid edges; otherwise, it is said to be strongly isolated. 
\end{definition}

By this definition, if a globally standard graph $\cal G$ has a deterministic MIS whose closure is non-deterministic, then the MIS is connected with only one external blue solid edge (since it is isolated) and at least two external red solid edges. Thus, the graph cannot be locally regular and we can apply local expansions to it.

%We refer the reader to Sections 5 and 6 of \cite{BandII} for the motivations and intuitions behind these definitions and some detailed explanations about their meanings. 

Now, we are ready to ``complete" the definitions of the $T$-expansion (\Cref{defn genuni}) and $T$-equation (\Cref{def incompgenuni}) by imposing the SPD or globally standard property on the graphs in the $T$-expansion/equation. We remark that the definition below is not restricted to the Wegner orbital model, since as explained below \Cref{def_atom}, the molecular graphs for the block Anderson and Anderson orbital models have the same global structure as those for the Wegner orbital model.  

\begin{definition} [$T$-expansion and $\incomp$: additional properties]\label{def genuni2}
The graphs in Definitions \ref{defn genuni}, \ref{def incompgenuni}, and \ref{defn pseudoT} satisfy the following additional properties:
\begin{enumerate}
%		\item Graphs of scaling order $k$ in $\PTn_{x,\fb_1 \fb_2}$, $\Wn_{x,\fb_1\fb_2}$ and $\QTn_{x,\fb_1\fb_2}$ do not depend on $n$.
		%are the same for all $n \geq k$. 
		
\item $\PT_{x,\fb_1\fb_2}$ is a sum of globally standard $\{\fb_1,\fb_2\}$-recollision graphs. %without free edges. % of scaling order $\le n$ and without any $P/Q$ label or free edge. 
		
\item  $\AT_{x,\fb_1\fb_2}$ is a sum of SPD graphs. % without free edges. % of scaling order $> n$ and without any  $P/Q$ label or free edge. 
		
\item  $\WT_{x,\fb_1\fb_2}$ is a sum of SPD graphs, each of which has exactly one redundant free edge in its MIS. % of scaling order $\leq n$ and without any  $P/Q$ labels and has exactly one free edge in its MIS.  
		
\item  $\QT_{x,\fb_1\fb_2} $ is a sum of SPD $Q$-graphs. Moreover, the vertex in the $Q$-label of every $Q$-graph belongs to the MIS, i.e., all solid edges and weights have the same $Q$-label $Q_x$ for a vertex $x$ inside the MIS. 
		
\item Each $\self$ or $\pself$ $\Sele_k$ is a sum of globally standard deterministic graphs without free edges. %Moreover, the labelled diffusive edges in $\Sele_l$ are all $\Theta^{(i)}_k$ edges (recall \eqref{chain S2k}) with $2\le k\le i\le l-1$.
	
%\item  $(\Err_{n,D})_{\fa,\fb_1\fb_2}$ is a sum of doubly connected graphs of scaling order $> D$ which may contain some $P/Q$ labels but has no free edge. 
\end{enumerate}
\end{definition}

In a global expansion, if we replace the $T$-variable containing the first blue solid edge in a pre-deterministic order of the MIS with a $T$-expansion satisfying \Cref{def genuni2}, the resulting graphs also satisfy the corresponding properties in  \Cref{def genuni2} as shown in the following lemma.

\begin{lemma}[Lemma A.18 of \cite{BandIII}]\label{lem globalgood}
Let $\cal G$ be a globally standard graph without $P/Q$ labels, and let $\Iso_k$ be its MIS with non-deterministic closure. For local expansions, we have that:
\begin{itemize}
\item[(i)] Applying the local expansions to the graph $\cal G$ still gives a sum of globally standard graphs. Furthermore, if we apply the local expansions in Lemmas \ref{ssl}--\ref{T eq0} on a vertex $x$ in $\Iso_k$, then in every new $Q$-graph, the vertex in the $Q$-label still belongs to the MIS with non-deterministic closure. 
\end{itemize}
Let $T_{x,y_1y_2}$ be a $T$-variable that contains the first blue solid edge in a pre-deterministic order of $\Iso_k$. Then, corresponding to the terms on the RHS of \eqref{mlevelTgdef} and \eqref{eq:T-zT} (or \eqref{mlevelTgdef_orig}), we have: 
\begin{itemize}
\item[(ii)] If we replace $T_{x,y_1 y_2}$ by a $\dashed$ edge between $x$ and $y_1$ and a red solid edge between $y_1$ and $y_2$, the new graph is still globally standard and has one fewer blue solid edge.

%If we apply the operation (G1), then every new graph has no $P/Q$ label or free edge, is globally standard, and has one fewer blue solid edge.

\item[(iii)] If we replace $T_{x,y_1 y_2}$ by a free edge between $x$ and $y_1$ and a solid edge between $y_1$ and $y_2$, the new graph is still globally standard and has exactly one redundant free edge in its MIS. 

\item[(iv)] If replace $T_{x,y_1y_2}$ by a graph in $ \sum_\al \zthn(\Sele)_{x\al} \WT_{\al,y_1y_2} $, the new graph is SPD and has exactly one redundant free edge in its MIS. After removing the redundant free edge, the resulting graph has scaling size at most $\OO[\blam \size(\cal G)]$. % (resp.~$\OO[\blam \psize(\cal G)]$).

%replace $t_{x,y_1y_2}$ with a graph in the first two terms on the right-hand side of \eqref{replaceT}, then the resulting graph, say \smash{$\wt{\cal G}$}, has no $P/Q$ labels and is globally standard. Moreover, \smash{$\wt{\cal G}$} either has a strictly higher scaling order than $\cal G$, or is obtained by replacing $t_{x,y_1y_2}$ with $ m\Theta_{xy_1}\overline G_{y_1y_2}$.

\item[(v)] If we replace $T_{x,y_1y_2}$ by a graph in $\sum_\al \zthn_{x\al} (\Selek)\PT_{\al,y_1 y_2}$ or $\sum_\al \zthn_{x\al} (\Selek')\PT'_{\al,y_1 y_2}$, the new graph is still globally standard and has a strictly smaller scaling size than $\size(\cal G)$ by a factor \smash{$\OO(\heta^{1/2}\vee (\blam \heta))$}.

%replace $t_{x,y_1y_2}$ with a graph $(\cal G_{R})_{x,y_1 y_2}$ in $(\wtPT^{(n-1)})_{x,y_1 y_2}$, then the resulting graph has no $P/Q$ labels, is globally standard and has a scaling order $\ge \ord(\cal G) +1 $. % and has a MIS containing all non-standard neutral atoms. 

\item[(vi)] If we replace $T_{x,y_1y_2}$ by a graph in $ \sum_\al \zthn(\Sele)_{x\al} \AT_{\al,y_1y_2} $ or $ \sum_\al \zthn(\Sele')_{x\al} \AT'_{\al,y_1y_2} $, the new graph is SPD and has a strictly smaller scaling size than $\size(\cal G)$ by a factor $\OO(W^{-\fC-\fc})$.

%replace $t_{x,y_1y_2}$ with a graph $(\cal G_{A})_{x,y_1y_2}$ in $(\AT^{(>n-1)})_{x,y_1 y_2}$, then the resulting graph  has no $P/Q$ labels, is SPD and has a scaling order $\ge \ord(\cal G)+n-2$. % and has a MIS containing all non-standard neutral atoms. 

\item[(vii)] If we replace $T_{x,y_1y_2}$ by a graph in $ \sum_\al \zthn(\Sele)_{x\al} \QT_{\al,y_1y_2} $ or $ \sum_\al \zthn(\Sele')_{x\al} \QT'_{\al,y_1y_2} $ and apply $Q$-expansions, we get a sum of $\OO(1)$ many new graphs: 
%Suppose we replace $t_{x,y_1y_2}$ with a graph $(\cal G_{Q})_{x,y_1y_2}$ in $(\QT^{(n-1)})_{x,y_1 y_2}$ and get a graph $\wt{ \cal G} $. Then applying the $Q$-expansions, we can expand it into a sum of $\OO(1)$ many graphs: %for any fixed $D>0$,
\be\label{mlevelTgdef3_Q}
\sum_\omega \cal G_\omega  + \cal Q   + \cal G_{err} ,
\ee
where every $\cal G_\omega$ is globally standard and has no $P/Q$ label or free edge; %and has a scaling order $\ge \ord(\cal G) +1 $; 
$\cal Q$ is a sum of $Q$-graphs, each of which has no free edge, is SPD, and has an MIS containing the vertex in the $Q$-label; $\cal G_{err}$ is a sum of doubly connected graphs whose scaling sizes are at most $\OO[W^{-D} \size(\cal G)]$. 

\item[(viii)] If we replace $T_{x,y_1y_2}$ by a graph in $ \sum_\al \zthn(\Sele)_{x\al} (\Err_{D})_{\al,y_1y_2}$ or $ \sum_\al \zthn(\Sele')_{x\al} (\Err_{D}')_{\al,y_1y_2}$, the new graph is doubly connected and has scaling size at most $\OO[W^{-D} \size(\cal G)]$.  

%If we replace $t_{x,y_1y_2}$ with a graph $(\cal G_{err})_{x,y_1y_2}$ in $(\Err_{n-1,D})_{x,y_1 y_2}$, then the resulting graph is doubly connected and has a scaling order $\ge \ord(\cal G)+D-1$. 
\end{itemize}
\end{lemma}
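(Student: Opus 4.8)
The plan is to view every item as the outcome of a single \emph{gluing} operation and to verify, case by case, that the glued graph inherits (a) double connectivity, (b) the SPD / globally standard structure together with the bookkeeping of $Q$-labels, and (c) the stated scaling-size bound. Recall the $T$-variable to be replaced, $T_{x,y_1y_2}=\sum_\al S_{x\al}G_{\al y_1}\overline G_{\al y_2}$, consists of an $S$-waved edge $x$--$\al$, the blue solid edge $b_1=G_{\al y_1}$ (the first edge in a pre-deterministic order of the non-deterministic MIS $\Iso_k$), and the red solid edge $\overline G_{\al y_2}$, with $\al$ a standard neutral internal vertex. Each term on the right-hand side of \eqref{mlevelTgdef}/\eqref{mlevelTgdef_orig} has the form $\sum_\al \zthn(\Sele)_{x\al}\,\Gamma_{\al,y_1y_2}$ (or the leading term $m\,\zthn(\Sele)_{xy_1}\overline G_{y_1y_2}$), where $\Gamma_{\al,y_1y_2}$ is one of $\PT,\AT,\WT,\QT,\Err_D$. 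By property (1) of \Cref{defn genuni}, $\Gamma_{\al,y_1y_2}$ is doubly connected with a blue edge (solid, waved, diffusive, or dotted) at $y_1$ and a red one at $y_2$; by \Cref{def genuni2} it is moreover globally standard for $\PT$, SPD for $\AT$, SPD with a redundant free edge in its MIS for $\WT$, and an SPD $Q$-graph whose $Q$-vertex lies in its MIS for $\QT$.

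For double connectivity I would argue as follows. Since $b_1$ is redundant, the molecular graph of $\cal G$ admits black and blue nets in which $b_1$ appears only in the blue net and only as a rerouteable edge, and the molecule of $\al$ is attached to the rest of $\cal G$ exactly through the local $S$-waved edge, the blue edge $b_1$, and the red edge to the molecule of $y_2$. After the replacement, the new $\zthn(\Sele)$ edge out of $x$ takes over the former blue connection while the internal blue/black nets of $\Gamma_{\al,y_1y_2}$ supply the connectivity of the inserted molecules; the blue-at-$y_1$ and red-at-$y_2$ conditions are exactly what is needed to splice the two pairs of nets at $y_1$ and $y_2$. This gives double connectivity in all cases. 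For item (ii) the replacement removes $b_1$ and adds no new blue solid edge, so the count of blue solid edges drops by one; for item (iii) the single inserted free edge is, by construction, the unique redundant free edge of the updated MIS.

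For the SPD / globally standard structure the core input is \Cref{lem_SPD_preserve}(ii): replacing a $T$-variable containing the first blue solid edge of the non-deterministic MIS by an SPD graph yields an SPD graph. Combined with the finer properties collected in \Cref{def genuni2}, this gives items (ii)--(vi): in (v) the $\{y_1,y_2\}$-recollision property of $\PT$ forces any proper isolated subgraph of the glued graph originating from $\PT$ to carry at least two external red solid edges, hence to be only weakly isolated, so the glued graph is globally standard; in (iv) the redundant free edge of $\WT$'s MIS stays redundant and inside the MIS of the glued graph. For item (vii) I would first perform the replacement by $\sum_\al \zthn(\Sele)_{x\al}\QT_{\al,y_1y_2}$, obtaining an SPD $Q$-graph with $Q$-vertex in the MIS, and then run the $Q$-expansions of \Cref{subsec_Qexp}; these are local expansions on the $Q$-vertex, so by \Cref{lem_SPD_preserve}(i) and item (i) of the present lemma all outputs are SPD and the $Q$-vertex stays in the (updated) MIS, which produces the decomposition \eqref{mlevelTgdef3_Q}. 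Item (viii) requires only double connectivity. Item (i) is the combination of \Cref{lem_SPD_preserve}(i) with the observation that local and $Q$-expansions neither merge molecules nor destroy external red edges, so the weakly-isolated property and the $Q$-vertex location are preserved.

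Finally, the scaling-size bounds follow by comparing, via \Cref{def scaling}, the scaling size of the removed piece $\sum_\al S_{x\al}\Gc_{\al y_1}\Gc^-_{\al y_2}$---a doubly-connected graph that contributes a factor comparable to $\heta$---with that of the inserted expression: $\OO(\heta^{1/2}\vee(\blam\heta))$ for $\PT$ and $\QT$ by \eqref{eq:smallR} and \eqref{eq:smallQ}, $\OO(W^{-\fC-\fc})$ for $\AT$ by \eqref{eq:smallA}, $\OO(\blam)$ for $\WT$ after removing its free edge by \eqref{eq:smallW}, and $\OO(W^{-D})$ for $\Err_D$ by \eqref{eq:smallRerr}. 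The main obstacle will be the SPD bookkeeping in the gluing step: one must check that the (possibly several) isolated subgraphs contributed by $\Gamma_{\al,y_1y_2}$ interleave correctly with the chain $\Iso_0\supset\cdots\supset\Iso_k$ of the host graph so that the whole sequence is again a decreasing chain with each link pre-deterministic, and that a full round of $Q$-expansions in item (vii) does not expand the $Q$-vertex out of the current MIS. Since the molecular graphs of all three RBSO models share the same global structure as those of RBM (cf.\ the discussion below \Cref{def_atom}), this bookkeeping is identical to that in \cite[Lemma~A.17 and Lemma~A.18]{BandIII}, and I would follow it verbatim.
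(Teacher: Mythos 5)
Your proposal is correct and follows essentially the same route as the paper, which proves nothing new here but cites Lemma A.18 of \cite{BandIII} on the grounds (noted below \Cref{def_atom}) that the atomic/molecular structure of the graphs is identical for all three RBSO models; your gluing sketch, the use of \Cref{lem_SPD_preserve}, and the scaling-size comparisons via \eqref{eq:smallR}--\eqref{eq:smallRerr} are the right ingredients, and your concluding deferral of the SPD/weak-isolation and $Q$-expansion bookkeeping (in particular, why the non-$Q$ outputs $\cal G_\omega$ in (vii) are globally standard rather than merely SPD) to the verbatim argument of \cite{BandIII} matches exactly what the paper does.
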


To summarize, cases (ii) and (v) give globally standard graphs, which can be expanded further; cases (iii), (iv), and (vi) produce SPD graphs, which can be included in the higher order or free graphs in the new $T$-expansion; case (vii) produces globally standard graphs along with additional $Q$-graphs and error graphs in the new $T$-expansion; case (viii) leads to error graphs in the new $T$-expansion. We also remark that some globally standard graphs can be $\{\fb_1,\fb_2\}$-recollision graphs, which need not be expanded further.

\subsection{Proof of \Cref{Teq}} \label{sec:expanstrat}

With the above preparations, we are ready to complete the proof of \Cref{Teq}. Similar to the proofs of \cite[Theorem 3.7]{BandII} and \cite[Proposition 4.13]{BandIII}, our proof is based on the following expansion \Cref{strat_global}. To state it, we first define the stopping rules. We will stop expanding a graph if it is a normal graph and satisfies at least one of the following properties: 
\begin{itemize}
\item[(S1)] it is a $\{\fb_1,\fb_2\}$-recollision graph; 

\item[(S2)] its scaling size is at most $\OO(W^{-C}\heta)$ for some constant $C>\fC+2\fc$; 

\item[(S3)] it contains a redundant free edge;

\item[(S4)] it is a $Q$-graph; 

\item[(S5)] it is \emph{non-expandable}, that is, its MIS with non-deterministic closure is locally standard and has no redundant blue solid edge. 
\end{itemize}
Note that in a non-expandable graph, there is no redundant $T$-variable in the MIS for us to expand, thus explaining the name ``non-expandable". If a locally regular graph $\cal G$ satisfies that
\be\label{det_eq}
\text{the subgraph induced on all internal vertices is deterministic,}
\ee
then $\cal G$ is non-expandable. On the other hand, we will see that all non-expandable graphs from our expansions must satisfy \eqref{det_eq} due to the globally standard property of our graphs. These non-expandable graphs will contribute to $\sum_x (\thn\Sele)_{\fa x}T_{x,\fb_1\fb_2}$ or $\sum_x (\zthn\Sele')_{\fa x}T_{x,\fb_1\fb_2}$. For the term $\sum_x (\zthn\Sele)_{\fa x}T_{x,\fb_1\fb_2}$, recall that $\Sele$ can be written as $\Sele=\Sele_{\LK}\otimes \mathbf E$, where $\Sele_{\LK}$ is translation invariant on \smash{$\wt\Z_n^d$} by \eqref{self_sym}. As a consequence, we have $P^\perp \Sele=\Sele P^\perp$, which implies that 
	$$ \sum_x (\zthn\Sele)_{\fa x}T_{x,\fb_1\fb_2}= \sum_x (\zthn P^\perp\Sele)_{\fa x}T_{x,\fb_1\fb_2}= \sum_x (\zthn \Sele P^\perp)_{\fa x}T_{x,\fb_1\fb_2}=\sum_x (\zthn \Sele )_{\fa x}\mathring T_{x,\fb_1\fb_2}.$$
This contributes to the second term on the RHS of \eqref{mlevelT incomplete}.

Without loss of generality, we will focus on deriving the $T$-expansion of $\zT_{\fa,\fb_1\fb_2}$. The only difference in deriving the $T$-expansion of $T_{\fa,\fb_1\fb_2}$ is that, in the global expansion of a $T_{x,y_1y_2}$-variable, we will use the $\fC$-th order $T$-expansion \eqref{mlevelTgdef_orig} instead of \eqref{mlevelTgdef} and \eqref{eq:T-zT}. 

\begin{strategy}[Global expansion strategy]\label{strat_global}

Given the above stopping rules (S1)--(S5), we expand $\zT_{\fa,\fb_1\fb_2}$ according to the following strategy.

 \vspace{5pt}

\noindent{\bf Step 0}: We start with the premilinary $T$-expansion \eqref{eq:3rd}, where we only need to expand $\sum_x \zthn_{\fa x} \mathcal A^{(>3)}_{x,\fb_1\fb_2}$ since all other terms already satisfy the stopping rules. We apply local expansions to it and obtain a sum of new graphs, each of which either satisfies the stopping rules or is locally standard. At this step, there is only one internal molecule in every graph. Hence, the graphs are trivially globally standard.  

\vspace{5pt}
\noindent{\bf Step 1}: Given a globally standard input graph, we perform local expansions on vertices in the MIS with non-deterministic closure. We send the resulting graphs that already satisfy the stopping rules (S1)--(S5) to the outputs. Every remaining graph is still globally standard by Lemma \ref{lem globalgood}, and its MIS is locally standard (i.e., the MIS contains no weight and every vertex in it is either standard neutral or connected with no solid edge).

\vspace{5pt}
\noindent{\bf Step 2}: Given a globally standard input graph $\cal G$ with a locally standard MIS, say $\Iso_k$, we find a $T_{x,y_1y_2}$ or $T_{y_1y_2,x}$ variable that contains the first blue solid edge in a pre-deterministic order of $\Iso_k$. If we cannot find such a $T$-variable, then we stop expanding $\cal G$.

\vspace{5pt}
\noindent{\bf Step 3}: We apply the global expansion (as stated in (ii)--(viii) of \Cref{lem globalgood}) to the $T_{x,y_1y_2}$ or $T_{y_1y_2,x}$ variable chosen in Step 2. (If we aim to derive the $T$-expansion of $T_{\fa,\fb_1\fb_2}$, we will expand $T_{x,y_1y_2}$ with \eqref{mlevelTgdef_orig}, while if we want to derive the $T$-expansion of \smash{$\zT_{\fa,\fb_1\fb_2}$}, we will expand $T_{x,y_1y_2}$ with \eqref{mlevelTgdef} and \eqref{eq:T-zT}.)  
We send the resulting graphs that satisfy the stopping rules (S1)--(S5) to the outputs. The remaining graphs are all globally standard by Lemma \ref{lem globalgood}, and we send them back to Step 1. 
\end{strategy} 

We iterate Steps 1--3 in the above strategy until all outputs satisfy the stopping rules (S1)--(S5). Suppose this iteration will finally stop.  
% Applying the above expansion strategy, we can expand $T_{\fa,\fb_1\fb_2}$ into 
% $$m  \Theta^{\circ}_{\fa \fb_1}\overline G_{\fb_1\fb_2} +  \frac{|m|^2}{2\ii N\eta} (G_{\fb_2 \fb_1} - \overline G_{\fb_1 \fb_2}) $$ plus a sum of $\OO(1)$ many graphs satisfying the stopping rules (S1)--(S5). 
Then, the graphs satisfying stopping rules (S1)--(S4) can be included into 
\be\label{RAWQ}
\sum_x \zthn_{\fa x}\left[\PT_{x,\fb_1 \fb_2} + \AT_{x,\fb_1\fb_2}  + \WT_{x,\fb_1\fb_2}  + \QT_{x,\fb_1\fb_2}  +  (\Err_{D})_{x,\fb_1\fb_2}\right].
\ee
If a graph, denoted as $\cal G_{\fa,\fb_1\fb_2}$, is an output of Strategy \ref{strat_global} and fails to meet conditions (S1)--(S4), then it is either non-expandable or lacks a $T$-variable required by Step 2 of Strategy \ref{strat_global}. In either case, $\cal G_{\fa,\fb_1\fb_2}$ contains a locally standard MIS $\Iso_k$, which, by the pre-deterministic property of $\Iso_k$, does not contain any internal blue solid edge. If $\Iso_k$ is a proper isolated subgraph, then due to the weakly isolated property, $\Iso_k$ has at least two external red solid edges, at most one external blue solid edge, and no internal blue solid edge. Hence, $\Iso_k$ cannot be locally standard, which gives a contradiction. This means $\Iso_k$ must be the subgraph of $\cal G_{\fa,\fb_1\fb_2}$ induced on all internal vertices, and $\Iso_k$ is locally standard and does not contain any internal blue solid edge. Thus, the graph $\cal G_{\fa,\fb_1\fb_2}$ satisfies \eqref{det_eq} and contains a standard neutral vertex connected with $\fb_1$ and $\fb_2$, showing that it can be included into the term \smash{$\sum_x (\zthn\Sele)_{\fa x}T_{x,\fb_1\fb_2}=\sum_x (\zthn \Sele )_{\fa x}\mathring T_{x,\fb_1\fb_2}$}. In sum, the outputs from \Cref{strat_global} can be written into the form \eqref{mlevelT incomplete}.  

Now, as an induction hypothesis, suppose we have constructed a $T$-expansion \eqref{mlevelTgdef} up to order $\fC$ following \Cref{strat_global}. To conclude \Cref{Teq}, we need to establish the following two facts:
\begin{itemize}
    \item[(a)] The expansion process will finally stop after $\OO(1)$ many iterations of Steps 1--3 in \Cref{strat_global}.
    
    \item[(b)] The outputs from \Cref{strat_global} indeed form a $\pTeq$ of real order $\fC$ and pseudo-order $\fC'\ge \fC+\fc$.

    \item[(c)] $\wt\Sele_{k_0+1} = \Sele-\sum_{k=1}^{k_0}\Sele_k$ is a $\pself$ satisfying \eqref{eq:psiSele}.
\end{itemize}

To prove (a), we first notice that by \Cref{lvl1 lemma}, each round of Step 1 will stop after $\OO(1)$ many local expansions. Hence, we focus on the global expansion process. By the argument in the proof of \cite[Lemma 6.8]{BandII}, it suffices to show that after each round of global expansions (as given in (ii)--(viii) of \Cref{lem globalgood}) of a globally standard graph, say $\cal G$, every new graph from the expansions will get closer to the stopping rules:   
\begin{itemize}
\item[(1)] it satisfies the stopping rules already;
\item[(2)] it has strictly smaller scaling size than $\cal G$ by a factor $\OO(\heta^{1/2}\vee (\blam\heta))=\OO(W^{-\fc})$; 
\item[(3)] it becomes more deterministic, i.e., it has fewer blue solid edges than $\cal G$.    
\end{itemize}
Note that (iii), (iv), (vi), and (viii) satisfy (1), (v) satisfies (1) or (2), and (ii) satisfies (2) or (3). For the case (vii), the graphs $\cal Q$ and $\cal G_{err}$ in \eqref{mlevelTgdef3_Q} satisfies (1). It remains to discuss the graphs $\cal G_{\omega}$ in \eqref{mlevelTgdef3_Q}. If we have replaced $T_{x,y_1y_2}$ with a $Q$-graph in $\cal Q'_{x,y_1y_2}$, then the graphs $\cal G_{\omega}$ trivially satisfy (2) by \eqref{eq:smallQ}. Suppose we have replaced $T_{x,y_1y_2}$ with a $Q$-graph that is not in $\cal Q'_{x,y_1y_2}$, i.e., a graph of the form \smash{$\sum_{w} \zthn(\Sele)_{xw}\cal G_{w,y_1y_2}$} with $\cal G_{w,y_1y_2}$ taking the form \eqref{eq:largeQW} with $x=w$ and $\fb_{1,2}=y_{1,2}$ (recall that \smash{$\cal Q^{(2)}$} was defined in \eqref{eq:Q2}). 
If $\cal G_{w,y_1y_2}$ is one of the graphs in \eqref{eq:largeQW}, except for the expression 
\be\label{eq:leadingQs} - m  \delta_{wy_1}  Q_{y_1}(\overline G_{y_1y_2})+Q_w \left(G_{wy_1} \overline G_{wy_2} \right)- |m|^2\sum_{\al}  S_{w\al}  Q_w\left( G_{\al y_1}\overline G_{\al y_2}  \right),\ee
we have 
$$\size\Big(\sum_{w} \zthn(\Sele)_{xw}\cal G_{w,y_1y_2}\Big) \lesssim  \blam \heta^{1/2}\size(T_{x,y_1y_2}).$$
Then, \eqref{eq:smallerscalingQ} tells us that the graphs $\cal G_{\omega}$ satisfy property (2): 
\be\label{eq:Gomega1}\size(\cal G_\omega) \lesssim \blam \heta \size(\cal G) %\lesssim W^{-\fd} \size(\cal G)
. %\quad \psize(\cal G_\omega) \lesssim \blam \pheta \psize(\cal G) \lesssim W^{-\fd} \psize(\cal G). 
\ee
Next, if $\cal G_{w,y_1y_2}=- m  \delta_{wy_1}  Q_{y_1}(\overline G_{y_1y_2})$ , then we can write $$\sum_{w} \zthn(\Sele)_{xw}\cal G_{w,y_1y_2}= - m  \zthn(\Sele)_{xy_1}Q_{y_1}(\overline G_{y_1y_2})  = - m  \zthn(\Sele)_{xy_1}Q_{y_1} ( \Gc^-_{y_1y_2}),$$
whose scaling sizes satisfy that
\begin{align*}
    \size\left[\zthn(\Sele)_{xy_1}Q_{y_1} ( \Gc^-_{y_1y_2})\right] &\lesssim \frac{\blam}{W^d}\heta^{1/2} \lesssim \heta^{1/2}\size(T_{x,y_1y_2}). %\psize\left[\zthn(\Sele)_{xy_1}Q_{y_1} ( \Gc^-_{y_1y_2})\right] &\lesssim \frac{\blam}{W^d}\pheta^{1/2} \lesssim \left(\frac{\blam}{W^d}\right)^{1/2}\psize(T_{x,y_1y_2}). 
\end{align*} 
Since $Q$-expansions do not increase scaling sizes, the graphs $\cal G_{\omega}$ also satisfy property (2). Finally, consider 
$$\cal G_{w,y_1y_2}=Q_w \left(G_{wy_1} \overline G_{wy_2} \right) \quad \text{or}\quad - |m|^2\sum_{\al}  S_{w\al}  Q_w\left( G_{\al y_1}\overline G_{\al y_2}  \right).$$
Note the subgraph obtained by removing $T_{x,y_1y_2}=\sum_w S_{xw}G_{wy_1}\overline G_{wy_2}$ from $\cal G$ has no edges or weights attached to the vertex $w$. 
Thus, in Step 1 of the $Q$-expansion, we must have applied either \eqref{resol_exp0} or \eqref{resol_reverse} at least once and kept the second term in this step. Each such expansion decreases the scaling size by \smash{$\heta^{1/2}$}. 
Thus, if this process occurs at least twice, then $\cal G_\omega$ will satisfy \eqref{eq:Gomega1}. Otherwise, the blue and red solid edges attached to vertex $x$ remain unmatched: there are either two additional blue solid edges or two additional red solid edges connected to $x$. Consequently, the subsequent Steps 2 and 3 of the $Q$-expansion, or the local expansions on $x$, will further reduce the scaling size by another factor of \smash{$\heta^{1/2}$}. 

In sum, we have shown that after each round of global expansion, the new graphs satisfy the above properties (1)--(3). Thus, following the argument in the proof of \cite[Lemma 6.8]{BandII}, we can show that the expansion process will finally stop after at most $C(\fC/\fc)^2$ many iterations of Steps 1–3 in \Cref{strat_global} with $C$ being an absolute constant. This concludes fact (a). 

To show (b), we perform the same expansions as those used in the construction of the $\fC$-th order $T$-expansion; that is, we apply the same local expansions and choose the same $T$-variables to expand whenever possible. There are only two key differences: (i) in the stopping rule (S2), we choose a larger threshold $C$ compared to that in the construction of the $\fC$-th order $T$-expansion; (ii) in the global expansion step, we replace the $T$-variable with a higher order $T$-expansion than that used in the $\fC$-th order case. It is straightforward to see that the expansions carried out in \Cref{strat_global} yield an expression of the same form as \eqref{mlevelT incomplete}, where all graphs with scaling size up to order $W^{-\fC}\heta$, as well as all deterministic graphs $\cal G$ in $\Sele$ with scaling size up to $\blam W^d\size(\cal G_{xy})\prec W^{-\fC}$ coincide with those appearing in the $\fC$-th order $T$-expansion.  (For a formal justification of this claim, see the discussion in \cite[Section 6.4]{BandII}.) The property \eqref{eq:condhigher} for the $\pTeq$ follows from property (2) above, which ensures that the higher order graphs in the $\pTexp$ have scaling sizes at least $\OO(W^{-\fc})$ smaller than those in the $\fC$-th order $T$-expansion.

To conclude the proof, it remains to show that the deterministic graphs in $\wt\Sele_{k_0+1}=\Sele-\sum_{k=1}^{k_0}\Sele_k$ are $\pselfs$. By the above construction, we know that the graphs in \smash{$\wt\Sele_{k_0+1}$} are globally standard, indicating that every graph is doubly connected and each diffusive edge in it is redundant. 
Now, recalling \Cref{collection elements}, we still need to prove that any such deterministic graph $\cal G$ satisfies \eqref{self_sym} and \eqref{4th_property0} with 
$\sizeself(\cal G)=\blam W^d \size(\cal G_{xy})$, $\cal G$ can be written as $\cal G_{\LK}\otimes \bE$, and 
\be\label{eq:sizeselfG}
\sizeself(\cal G) \lesssim \blam^2 /W^d
\ee
First, we notice that for the Wegner orbital model, the matrices $S$, $S^\pm,$ and $\zthn$ can be written as 
\be\label{eq:SSLK} S=S_{\LK}\otimes \bE,\quad S^\pm=S^\pm_{\LK}\otimes \bE,\quad \zthn=\zthn_{\LK}\otimes \bE, \ee
with the matrices $S_{\LK}$, $S_{\LK}^\pm$, and $\zthn_{\LK}$ being symmetric and translation invariant on $\wt \Z_n^d$. Since $\cal G$ is a deterministic graph formed with these matrices, it can be written as $\cal G_{\LK}\otimes \bE$, where $\cal G_{\LK}$ is a deterministic graph formed with the matrices \smash{$S_{\LK}$, $S_{\LK}^\pm$, and $\zthn_{\LK}$}. Using the symmetry and translation invariance of these matrices, the argument in the proof of \cite[Lemma A.1]{BandI} tells us that $\cal G_{\LK}$ is also symmetric and translation invariant. This verifies the property \eqref{self_sym} for $\cal G$.  
Next, the property \eqref{4th_property0} is an immediate consequence of \Cref{dG-bd}. 
Finally, for \eqref{eq:sizeselfG}, we know that the graphs in $\Sele$ must come from the expansion of $\sum_x \zthn_{\fa x}\mathcal A^{(>3)}_{x,\fb_1\fb_2}$ in \eqref{eq:3rd}. Thus, we have the following bound, which yields \eqref{eq:sizeselfG}: 
$$\size\Big(\sum_x (\zthn\cal G)_{\fa x}T_{x,\fb_1\fb_2}\Big)= \blam  W^d \size(\cal G_{xy})\heta = \sizeself(\cal G) \heta \lesssim \size \Big(\sum_x \zthn_{\fa x}\mathcal A^{(>3)}_{x,\fb_1\fb_2}\Big)\lesssim \blam  \heta^2 .$$

\subsection{Proof of \Cref{lemma ptree}} \label{sec:ptree}

%\subsection{Quantum diffusion: Proof of \Cref{thm_diffu}}
%The proofs of \Cref{thm_diffu} and the two key lemmas above, 

The proof of \Cref{lemma ptree} is based on the following \Cref{no dot} for doubly connected graphs. It can be regarded as a non-deterministic generalization of \Cref{dG-bd}. To state it, we introduce the following $\eta$-denepdent parameter:
$$\pheta:= \frac{\blam}{W^d}+ \frac{1}{N\eta} \frac{L^5}{W^5}.$$
%However, when $\eta_\circ \ll \eta\le \eta_*$ (recall \eqref{etacirc}), the two factors $\frac{1}{N\eta}\frac{L^4}{W^4}$ and $\frac{1}{N\eta}\frac{L^5}{W^5}$ are non-negligible anymore. 
Then, we introduce two variants of the scaling size defined in \Cref{def scaling}, called the \emph{pseudo-scaling sizes}: 
%In this case, we use the $\pscale$.
%\begin{definition}[Pseudo-scaling size]\label{def pseudoscaling}
given a normal graph $\cal G$ where every solid edge has a $\circ$, we define its two types of $\pscale$s as 
	\begin{align}
		\psize_1(\cal G): =\size(\cal G) \left({\peta}/{\heta}\right)^{ \frac{1}{2}\#\{\text{solid edges}\}},\quad  \psize_2(\cal G): =\size(\cal G) \left({\pheta}/{\heta}\right)^{ \frac{1}{2}\#\{\text{solid edges}\}}, \label{eq_defpsize}
	\end{align}
    where we recall that $\peta$ was defined in \eqref{eq:peta}. 
	If $\cal G$ is a sum of graphs $\cal G=\sum_k \cal G_k$, then we define $\psize_s(\cal G):=\max_k \psize_s(\cal G_k)$ for $s\in\{1,2\}$. Note that if $\eta\ge \eta_*$, then both $\peta$ and $\pheta$ are dominated by $\blam$, so the two types of $\pscale$s match the scaling size up to constant factors. %{\cor this corresponds to the solid edges}
%\end{definition}

%Later, we will use $\peta$ and $\pheta$ to explain the idea for the extension of our proof to smaller $\eta$ with $\eta\gg \eta_\circ$.
%and it is also the motivation behind the definition of scaling size in \Cref{def scaling}.  and pseduo-scaling size in \Cref{def pseudoscaling}.  
% Sometimes, we only have a weaker $\ell^\infty$ bound on the $\Gc$ entries given by (see \Cref{w_s} below for the reason of introducing this $\pheta$ factor)
% $$\peta:= \frac{\blam}{W^{d}}+\frac{1}{N\eta}\frac{L^4}{W^4},\quad \pheta:= \frac{\blam}{W^d}+ \frac{1}{N\eta} \frac{L^5}{W^5}.$$

\begin{lemma}\label{no dot}
Suppose $d\ge 7$ and $\|G(z)-M(z)\|_{w}\prec 1$ for $\eta\ge N^\fd \eta_\circ$ (recall \eqref{etacirc}). Let $\cal G$ be a doubly connected normal graph without external vertices. Pick any two vertices of $\cal G$ and fix their values $x , y\in \Z_L^d$ (where $x$ and $y$ can be the same vertex). Then, the resulting graph $\cal G_{xy}$ satisfies that 
\be\label{bound 2net1}
\left|\cal G_{xy}\right| \prec {\psize_2\left(\cal G_{xy}\right)} \frac{W^{d-2}}{\langle x-y\rangle^{d-2}} \frac{\cal A_{xy}}{\pheta^{1/2}},
\ee
where  $\cal A_{xy}$ are non-negative random variables satisfying $\|\cal A(z)\|_{w}\prec 1$.  
Furthermore, if $\|G(z)-M(z)\|_{s}\prec 1$,  then we have %(resp.~$\|G(z)-M(z)\|_{\WO}\prec 1$), then we have 
% \be\label{eq_cont_ini_z}
% T_{xy}( z)\prec B_{xy} +\frac{1}{N\eta},\quad \|G( z)-M( z)\|_{\max}\prec \heta,
% \ee
\be\label{bound 2net1 strong}
\left|\cal G_{xy}\right| \prec {\psize_1\left(\cal G_{xy}\right)} \frac{W^{d-2}}{\langle x-y\rangle^{d-2}} \frac{\cal A_{xy}}{\peta^{1/2}},
\ee
where $\cal A_{xy}$ are non-negative random variables satisfying $\|\cal A(z)\|_{s}\prec 1$. % (resp.~$\|\cal A(z)\|_{\WO}\prec 1$). 
% where $\cal A_{xy}$ are non-negative  random variables such that $\|\cal A(z)\|_{s}\prec 1$.
% \be\label{eq_cont_ini_a}
% \|\cal A\|_{\max}\prec \heta,\quad W^{-d}\sum_{x':|x'-x|\le 2W} \cal A_{x'y} +W^{-d} \sum_{y':|y'-y|\le 2W} \cal A_{xy'} \prec B_{xy} +\frac{1}{N\eta}. 
% \ee
The above bounds also hold for the graph $ {\cal G}^{{\rm abs}}$, which is obtained by replacing each component (including edges, weights, and the coefficient) in $\cal G$ with its absolute value and ignoring all the $P$ or $Q$ labels (if any). 
 \end{lemma}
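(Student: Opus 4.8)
\textbf{Proof proposal for Lemma \ref{no dot}.} The plan is to reduce both bounds \eqref{bound 2net1} and \eqref{bound 2net1 strong} to the deterministic estimate of \Cref{dG-bd} by first replacing every $G$ or $\Gc$ edge by its deterministic limit $M$ up to a controlled non-deterministic remainder, and then carrying out the summation over internal vertices along the black and blue spanning trees guaranteed by the doubly connected property. Concretely, I would proceed as follows. First, since every solid edge carries a $\circ$, write each solid edge as $\Gc_{xy} = G_{xy} - M_{xy}$ and use the hypothesis $\|G-M\|_w \prec 1$ (resp.\ $\|G-M\|_s\prec 1$) to absorb the size of each $\Gc$ edge into a factor $\peta^{1/2}$ (resp.\ $\pheta^{1/2}$) times a bounded weak/strong norm contribution; this is exactly the mechanism by which a solid edge is assigned scaling size $\heta^{1/2}$ in \Cref{def scaling}, and it explains why the two types of pseudo-scaling sizes $\psize_1,\psize_2$ differ from $\size$ precisely by $(\peta/\heta)^{\frac12\#\{\text{solid edges}\}}$ and $(\pheta/\heta)^{\frac12\#\{\text{solid edges}\}}$.

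Second, I would handle the internal summations using the doubly connected structure. By \Cref{def 2net} the molecular graph of $\cal G$ admits two edge-disjoint spanning trees, $\cal B_{black}$ (diffusive edges) and $\cal B_{blue}$ (blue solid, diffusive, or free edges). The black net provides, for each internal molecule, one diffusive edge; summing a diffusive edge over an internal vertex yields a factor $W^d$ against $\max_{x,y}B_{xy}=\heta$, i.e.\ a net factor $W^d\heta=\blam$. The blue net provides a second independent summable edge at each internal molecule — either another diffusive edge, a free edge (worth $(N\eta)^{-1}$ and contributing the $L^2/W^2$ enhancement per free edge when summed), or a solid edge, which after the replacement of Step~1 is either an $M$-edge (summable on scale $W$ by \eqref{S+xy}, \eqref{Mbound}, \eqref{Mbound_AO}) or a $\Gc$-edge bounded via $\|G-M\|_w$. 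Iterating the summation from the leaves of the two trees inward, each internal molecule contributes exactly its allotment of $W^{-d}$ (waved edges), $\heta$ (diffusive edges), and $(N\eta)^{-1}L^2/W^2$ (free edges) recorded in $\size(\cal G)$, and the two-point decay $B_{xy}\asymp \blam W^{-2}\langle x-y\rangle^{-(d-2)}$ along the black tree path from $x$ to $y$ produces the global factor $W^{d-2}\langle x-y\rangle^{-(d-2)}$. The residual non-deterministic content — the product of bounded weak-norm (resp.\ strong-norm) contributions of the $\Gc$ edges that were not summed away — is collected into the random variable $\cal A_{xy}$, whose weak (resp.\ strong) norm is then $\prec 1$ by the hypothesis and the definitions \eqref{def_weakA}, \eqref{def_strongA}; this is where one uses that the weak/strong norms are designed to be stable under the local averaging that appears in $T$-variables. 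For the strong-norm version one additionally exploits that \eqref{def_strongA} controls $W$-scale averages by $(B_{xy}+(N\eta)^{-1})^{1/2}$ rather than the cruder $\peta^{1/2}$, which is what upgrades $\psize_2$ to $\psize_1$ and $\pheta$ to $\peta$. The statement for $\cal G^{abs}$ follows verbatim since all the above bounds are in terms of absolute values of edges and ignore $P/Q$ labels.

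The main obstacle, and the step I expect to require the most care, is the bookkeeping that shows the summation along the two spanning trees is genuinely \emph{simultaneous and non-overlapping} — i.e.\ that one can assign to each internal molecule a black edge and a blue edge from the respective trees so that peeling leaves off both trees never forces a molecule to be summed before one of its two summable edges is available. This is exactly the combinatorial heart of the argument in \cite{BandI} (where the analogous statement is proved for random band matrices) and it is non-trivial because $\dashed$ edges, free edges, and labeled diffusive edges can appear in \emph{both} nets, and because labeled diffusive edges must be treated as subgraphs with internal structure (cf.\ \Cref{def_graph comp}) when one checks that their own internal summations close. I would organize this via an induction on the number of internal molecules: remove a leaf molecule of the black tree that is also a leaf of the blue tree (such a molecule exists for a graph with $\ge 2$ internal molecules, by a standard argument on edge-disjoint spanning trees), perform its two summations, verify the resulting graph is still doubly connected with the pseudo-scaling size only decreased by the recorded per-molecule factor, and recurse. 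The base case — a graph with the two fixed vertices $x,y$ in a single molecule, possibly coinciding — reduces to \Cref{dG-bd} together with the trivial bound on solid edges from the weak/strong norm hypothesis. I would also need to check the edge case $x=y$, where $\langle x-y\rangle = W$ and the claimed bound degenerates to $\psize\cdot \cal A_{xy}/\peta^{1/2}$ (resp.\ $/\pheta^{1/2}$), which is consistent with summing the diffusive edge path of length zero.
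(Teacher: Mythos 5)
Your overall plan—use the norm hypothesis on $G-M$ together with the two disjoint nets, peel internal vertices off spanning trees, and land on \Cref{dG-bd} as the base case—is the same in spirit as the paper's argument (which itself defers most details to \cite[Lemma 6.10]{BandI} after collapsing each molecule to a chosen center and bounding its interior by its scaling size). But your induction has a genuine gap at its combinatorial core. You propose to peel ``a leaf molecule of the black tree that is also a leaf of the blue tree,'' asserting its existence ``by a standard argument on edge-disjoint spanning trees.'' No such molecule need exist: for two edge-disjoint spanning trees on the same vertex set the leaf sets can be disjoint (on four vertices take the path $1\text{--}2\text{--}3\text{--}4$ and the edge-disjoint path $3\text{--}1\text{--}4\text{--}2$; the leaf sets are $\{1,4\}$ and $\{2,3\}$), and declaring two of the vertices to be the fixed ones $x,y$ does not restore the claim. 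The paper's proof never needs a common leaf: it peels leaves of the \emph{blue} tree only (rooted at $y$), and the summed vertex is allowed to carry arbitrarily many black diffusive edges. The mechanism that makes this work is \Cref{w_s}: the estimate \eqref{keyobs3} (resp.\ \eqref{keyobs3s}) bounds $\sum_{x_i}\cal A_{x_i\al}\prod_{j}B_{x_iy_j}$ by $\blam\pheta^{1/2}$ (resp.\ $\blam\peta^{1/2}$) times $\Gamma(y_1,\ldots,y_k)=\sum_i\prod_{j\ne i}B_{y_iy_j}$, so the black edges incident to the summed vertex are \emph{redistributed} among its black neighbors, keeping the remaining graph doubly connected and preserving the $x$--$y$ decay, while \eqref{keyobs2} is used when the summed vertex touches $x$ so that $x$ stays attached to the blue tree and the leftover randomness is repackaged into a single $\cal A_{xy}$ with controlled weak/strong norm. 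Without either the (false) common-leaf claim or this $\Gamma$-redistribution, your per-molecule accounting breaks down: ``summing a diffusive edge over an internal vertex'' in isolation gives $\sum_y B_{xy}\asymp \blam L^2/W^2$, not the budgeted $\blam$, and a molecule whose blue attachment was already peeled away cannot be summed at all.

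Two smaller points. First, you have the norms and parameters swapped in Step~1: the weak norm goes with $\pheta^{1/2}$ and $\psize_2$, the strong norm with $\peta^{1/2}$ and $\psize_1$ (your later sentence about upgrading $\pheta$ to $\peta$ has it right, but the first assignment is reversed). Second, solid edges cannot be ``absorbed'' entrywise via the max-norm part of $\|G-M\|_{w/s}$ before the tree argument: doing so removes them from the blue net and can disconnect it, which is exactly the failure mode above; the averaged (second) part of the weak/strong norm must be invoked at the moment each internal summation is performed, which is what \Cref{w_s} encodes and what produces both the per-edge factor $\pheta^{1/2}$ or $\peta^{1/2}$ and the final random variable $\cal A_{xy}$ with $\|\cal A\|_{w}\prec 1$ or $\|\cal A\|_{s}\prec 1$.
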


To prove the main results in the current paper, it suffices to state \Cref{no dot} for $\eta\ge N^\fd\eta_*$. We have chosen to state it for a wider range of $\eta$ because we will use it in \Cref{rem:Improveeta} below to explain the extension of our proof to the smaller $\eta$ case with $ N^\fd\eta_\circ \le \eta\le \eta_*$, as promised in \Cref{rem:smalleta}. 
%The parameters $\peta$ and $\pheta$ will be useful when we extend the proof to smaller $\eta$ with $\eta\gg \eta_\circ$, as explained in \Cref{rem:Improveeta} below.
The proof of \Cref{no dot} relies on the following basic estimates. 

\begin{lemma}\label{w_s}
Suppose $d \geq 7$ and $\eta \geq W^{\fd}\eta_\circ$. Given any matrices $ \mathcal A^{(1)}(z)$ and $ \mathcal A^{(2)}(z)$ with non-negative entries and argument $z=E+\ii \eta$, we have the following estimates:
	\begin{align}
	\label{keyobs3}
&\sum_{x_i}\mathcal A^{(1)}_{x_i \al}(z)\cdot \prod_{j=1}^k B_{x_i y_j } \prec \blam \pheta^{1/2} \cdot \Gamma(y_1,\cdots, y_k)\|\mathcal A^{(1)}(z)\|_{w}\, ,\\
&\sum_{x_i}\mathcal A^{(1)}_{x_i \al}(z)\cdot \prod_{j=1}^k B_{x_i y_j } \prec \blam \peta^{1/2} \cdot \Gamma(y_1,\cdots, y_k)\|\mathcal A^{(1)}(z)\|_{s}\,  ,\label{keyobs3s}
\end{align}
where $k\ge 1$ and $\Gamma(y_1,\cdots, y_k)$ is a sum of $k$ different products of $(k-1)$ $B$ entries:  
\begin{equation}
	\label{defn_Gamma}\Gamma(y_1,\cdots, y_k):= \sum_{i=1}^k \prod_{j\ne i}B_{y_{i}y_j}.
\end{equation}
% and $\pheta$ is defined as 
% $$\pheta:= \frac{W^{3/2}}{\lambda^{3} W^d}+ \frac{W^{1/2}}{\lambda}\frac{L^5}{W^5}\frac{1}{N\eta}.$$
In addition, if $ \|\mathcal A^{(i)}(z)\|_w \le 1$ (resp.~$ \|\mathcal A^{(i)}(z)\|_s \le 1$) for $i\in \{1,2\}$ and define $\cal A(z)$ as 
\begin{equation}\label{keyobs2}
{\mathcal A}_{\alpha \beta}: = \frac{\blam^{-1}}{\Gamma(y_1,\cdots, y_k)}\sum_{x_i}\mathcal A^{(1)}_{x_i \alpha }\mathcal A^{(2)} _{x_i \beta}\cdot \prod_{j=1}^k B_{x_i y_j },
\end{equation}
then we have
$\pheta^{-1/2}\|\cal A(z)\|_w\prec 1$ (resp.~$\peta^{-1/2}\|\cal A(z)\|_s\prec 1$).
\end{lemma}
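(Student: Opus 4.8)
The plan is to reduce the multi-leg estimate to the single-leg case $k=1$, and then prove the latter by a dyadic decomposition tailored to the two norms. For the reduction, when $k\ge 2$ I would partition $\Z_L^d$ into the periodic Voronoi cells $V_1,\dots,V_k$ of $y_1,\dots,y_k$ (ties broken arbitrarily), so that on $V_i$ one has $\langle x-y_j\rangle\gtrsim\langle y_i-y_j\rangle$ for every $j$ by the triangle inequality, hence $B_{xy_j}\lesssim B_{y_iy_j}$ and $\prod_{j\ne i}B_{xy_j}\lesssim\prod_{j\ne i}B_{y_iy_j}$ there. Summing over cells,
\[
\sum_{x}\mathcal A^{(1)}_{x\al}\prod_{j=1}^kB_{xy_j}\ \lesssim\ \sum_{i=1}^k\Big(\prod_{j\ne i}B_{y_iy_j}\Big)\sum_{x}\mathcal A^{(1)}_{x\al}B_{xy_i}\ \le\ \Gamma(y_1,\dots,y_k)\max_i\sum_{x}\mathcal A^{(1)}_{x\al}B_{xy_i},
\]
so \eqref{keyobs3} and \eqref{keyobs3s} follow once we establish the $k=1$ bounds $\sum_x\mathcal A^{(1)}_{x\al}B_{xy}\prec\blam\pheta^{1/2}\|\mathcal A^{(1)}\|_w$ and $\sum_x\mathcal A^{(1)}_{x\al}B_{xy}\prec\blam\peta^{1/2}\|\mathcal A^{(1)}\|_s$.

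For the weak-norm case $k=1$, I would split the $x$-sum into $\{|x-y|\le 2W\}$ and the dyadic annuli $2^{m-1}<|x-y|\le 2^m$ with $W\lesssim 2^m\le L/2$; on the $m$-th annulus $B_{xy}\asymp\blam W^{-2}2^{-m(d-2)}$ while $\sum_{|x-y|\le 2^m}|\mathcal A^{(1)}_{x\al}|\lesssim 2^{md}\sqrt{\conc(2^m,\lambda,\eta)}\,\|\mathcal A^{(1)}\|_w$ by \eqref{def_weakA} (and $B_{xy}\asymp\heta$ together with $\sqrt{\conc(2W,\lambda,\eta)}$ on the ball). This reduces the bound to the deterministic estimate $\sum_{W\le 2^m\le L}\blam W^{-2}2^{2m}\sqrt{\conc(2^m,\lambda,\eta)}\lesssim\blam\pheta^{1/2}$, which I would verify by expanding $\conc$ in \eqref{eq:def-conc} into the monomials of its two bracketed factors and bounding each resulting geometric series over $m$ by one of its endpoints; $d\ge 7$ is exactly what forces each such series to be dominated by a scale-endpoint matching $\pheta^{1/2}$, the competition being between the decaying-in-$K$ terms of $\conc$ and the $K$-independent ones ($\propto(N\eta)^{-1}$ and $(N\eta)^{-1}L^2W^{-2}$). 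For the strong-norm case I would instead group $x$ into $W$-blocks $[z]$, use $\tfrac1{W^d}\sum_{x\in[z]}|\mathcal A^{(1)}_{x\al}|\lesssim(B_{z\al}+(N\eta)^{-1})^{1/2}\|\mathcal A^{(1)}\|_s$ from \eqref{def_strongA} and $B_{xy}\asymp B_{zy}$ for $x\in[z]$, reducing the claim to the lattice convolution bound $W^d\sum_{[z]}(B_{z\al}+(N\eta)^{-1})^{1/2}B_{zy}\lesssim\blam\peta^{1/2}$, again a direct computation using $d\ge 7$.

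Finally, for the composite bound \eqref{keyobs2} I would apply the single-leg estimate separately in the two indices: bounding $|\mathcal A^{(2)}_{x\beta}|\le\peta^{1/2}\|\mathcal A^{(2)}\|_w$ (or, for a ball of $\beta$'s, summing it to a factor $K^d\sqrt{\conc(K,\lambda,\eta)}\,\|\mathcal A^{(2)}\|_w$), pulling it out, and then invoking \eqref{keyobs3} on the sum over $x$; with $\|\mathcal A^{(i)}\|_w\le1$ this yields $\peta^{-1/2}|\mathcal A_{\alpha\beta}|\lesssim\pheta^{1/2}$ and $(K^d\sqrt{\conc})^{-1}\sum_{|\beta-\beta_0|\le K}|\mathcal A_{\alpha\beta}|\lesssim\pheta^{1/2}$, i.e.\ $\pheta^{-1/2}\|\mathcal A\|_w\prec1$; the strong-norm statement is identical with $\conc$ replaced by the weight $(B+(N\eta)^{-1})^{1/2}$ and $\pheta$ by $\peta$. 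The main obstacle is purely the bookkeeping in the $k=1$ step: one must enumerate the mixed monomials produced by the two factors of $\conc$ (resp.\ by the distance weight), show that each dyadic series over $2^m\in[W,L]$ is dominated by one of its endpoints, and confirm that the worst endpoint is always $\lesssim\pheta^{1/2}$ (resp.\ $\peta^{1/2}$) — this is the same computation performed in \cite{BandI} for the random-band-matrix parameter $\heta$, now redone with the extra factor $\blam$, powers of $L/W$, and over the extended range $\eta\ge W^{\fd}\eta_\circ$.
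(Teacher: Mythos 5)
Your proposal is correct and follows essentially the same route as the paper: the nearest-point (Voronoi) partition in the $y_j$'s to reduce to $k=1$, a dyadic decomposition in $|x-y|$ combined with the ball/block controls built into the weak and strong norms, and then the composite bound \eqref{keyobs2} by pulling out one factor via the norm of $\mathcal A^{(2)}$ and reapplying the one-leg estimate. The only differences are cosmetic (summing the dyadic series versus taking the maximum over scales, and phrasing the strong-norm case as a $W$-block convolution rather than dyadic annuli), and the deterministic verification you defer is carried out at the same level of detail in the paper.
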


\begin{proof}
Define the subsets $\mathcal I_l = \{ x \in \Z^d_L: \gE{x - y_l} \leq \min_{j\ne l}\gE{x- y_j}   \}$. 
Then, we have $B_{x y_l}=\max_{j}B_{xy_j}$ for $x \in \mathcal I_l$, and for $j\ne l$, $\gE{y_l-y_j} \le \gE{x- y_j}+\gE{x-y_l} \le 2\gE{x- y_j}$, implying that $B_{x_i y_j }\lesssim B_{y_l y_j }$. 
Thus, we can bound that 
	\begin{equation}
	\label{eq:1140223}
		\sum_{x_i \in I_l}\mathcal A^{(1)}_{x_i \al}\cdot \prod_{j=1}^k B_{x_i y_j } \lesssim \sum_{x_i \in I_l}\mathcal A^{(1)}_{x_i \al} B_{x_i y_l } \cdot \prod_{j \not = l} B_{y_l y_j } \le  \sum_{x_i \in \Z_L^d}\mathcal A^{(1)}_{x_i \al} B_{x_i y_l } \cdot \prod_{j \not = l} B_{y_l y_j }\,.
	\end{equation}
% 	and similarly 
% 	\begin{equation}
% 		\sum_{x_i \in \mathcal I_l}\mathcal A^{(1)}_{x_i \alpha }\mathcal A^{(2)} _{x_i \beta}\cdot \prod_{j=1}^k B_{x_i y_j } \prec\Big( \sum_{x_i \in \Z_L^d}\mathcal A^{(1)}_{x_i \alpha} \mathcal A^{(2)}_{x_i \beta} B_{x_i y_l } \Big)\cdot \prod_{j \not = l} B_{y_l y_j }\,.
% 	\end{equation}
% 	Therefore, it suffices to prove the following two statement
% 	\begin{align}
% % 		&\label{eq:ob-proof-1}\|\mathcal A^{(2)}\|_{w;(a,b)}\prec 1 \implies 
% 		&\sum_{x_i \in \Z_L^d}\mathcal A^{(2)}_{x_i \beta} B_{x_i y_l }  \prec W^{-2}\,,\\
% 				% &\label{eq:ob-proof-1.5}\|\mathcal A^{(2)}\|_{s;(a,b)}\prec 1 \implies \sum_{x_i \in \Z_L^d}\mathcal A^{(2)}_{x_i \beta} B_{x_i y_l }  \prec W^{-2}\,,\\
% % 		&\label{eq:ob-proof-2}\|\mathcal A^{(1)}\|_{w;(a,b)} + \|\mathcal A^{(2)}\|_{w;(a,b)}\prec 1 \implies
% 		&\|\sum_{x_i \in \Z_L^d}\mathcal A^{(1)}_{x_i \alpha} \mathcal A^{(2)}_{x_i \beta} B_{x_i y_l }\|_{w;(a,b)} \prec W^{-ad/2} + (L/W)^2 (N \eta)^{-a/2}\,.
% 	\end{align}
Without loss of generality, assume that $\mathcal \|\cal A^{(1)}\|_w=1$. With $K_n := 2^n W$ and the definition of the weak norm, we have that
	\begin{align*}
		 &\sum_{x_i \in \Z_L^d}\mathcal |\mathcal A^{(1)}_{x_i \al}| B_{x_i y_l }   \leq \sum_{ 1 \leq n \leq \log_2 \frac L W}\sum_{K_{n-1} \leq \gE{x_i - y_l} \leq K_n}\mathcal |\mathcal A^{(1)}_{x_i \al}| B_{x_i y_l } \\
		&\leq  \sum_{ 1 \leq n \leq \log_2 \frac L W}\max_{K_{n-1} \leq \gE{x_i - y_l} \leq K_n} B_{x_i y_l}\cdot  \sum_{K_{n-1} \leq \gE{x_i - y_l} \leq K_n} |\mathcal A^{(1)}_{x_i \al}|  \\
		& \prec \max_{ 1 \leq n \leq \log_2 \frac L W}\frac{\blam}{W^2K_n^{d-2}}\cdot K_n^d \sqrt{\conc(K_n,\lambda, \eta)}\\
  %&\le \max_{ 1 \leq n \leq \log_2 \frac L W}\blam \frac{K_n^2}{W^2}\left[\left( \frac{\blam}{W^2 K_n^{d-2}}+ \frac{1}{N\eta}+ \frac{\sqrt{ \blam}}{K_n^{d/2}} \sqrt{\frac{1}{N\eta}\frac{ L^2}{W^2} }\right)\left(  \frac{\blam}{W^4 K_n^{d-4}} + \frac{1}{N\eta}\frac{ L^2}{W^2} \right)\right]^{\frac{1}{4}}\\
		& \leq \blam \max_{ 1 \leq n \leq \log_2 \frac L W}   \left[ \left(\frac{\blam }{W^7 K_n^{d-7}}+ \frac{K_n^5}{W^5 N\eta} + \sqrt{\frac{\blam K_n^{10-d}}{W^{10}N\eta}\frac{L^2}{W^2}}\right)\left(\frac{\blam }{W^7 K_n^{d-7}} + \frac{K_n^3}{W^3N\eta}\frac{L^2}{W^2}\right)\right]^{\frac{1}{4}}\\
  &\lesssim  \blam \pheta^{1/2}.
	\end{align*}
% which implies that 
% \begin{equation}
%     \sum_{x_i \in \Z_L^d}\mathcal |\mathcal A^{(1)}_{x_i \beta}| B_{x_i y_l }  \prec \pheta .
    % \begin{cases}
    % W^{-\delta_0/2}, &\text{if}\ d \geq 7, \eta \geq \etas\\
    % W^{- {d}/{2}}, &\text{if}\ d \geq 7, \eta \geq W^2/L^2
    % \end{cases}\,.
%\end{equation}
% $$\conc(K,\eta):=\left[\left(\frac{W}{\lambda^2}\frac{1}{W^2 K^{d-2}}+ \frac{1}{N\eta}+ \frac{1}{K^{d/2}} \sqrt{\frac{W}{\lambda^2}\frac{L^2}{W^2N\eta} }\right)\left( \frac{W^2}{\lambda^4} \frac{1}{W^4 K^{d-4}} +\frac{W}{\lambda^2} \frac{L^2}{W^2N\eta} \right)\right]^{\frac{1}{2}}\,.$$
Combined with \eqref{eq:1140223}, it yields \eqref{keyobs3}. %(and also explains why we use $\pheta$ in \Cref{def pseudoscaling}). 
The estimate \eqref{keyobs3s} follows from a similar argument---we only need to replace the third step of the above derivation with the following estimate:
$$\sum_{K_{n-1} \leq \gE{x_i - y_l} \leq K_n} |\mathcal A^{(1)}_{x_i \al}| \prec K_n^d \sqrt{\frac{\blam}{W^2K_n^{d-2}}+\frac{1}{N\eta}}.$$
The estimates $\pheta^{-1/2}\|\cal A(z)\|_w\prec 1$ and $\peta^{-1/2}\|\cal A(z)\|_s\prec 1$ in the second statement follow by controlling the right-hand sides of \eqref{def_weakA} and \eqref{def_strongA} using \eqref{keyobs3} and \eqref{keyobs3s}, respectively. Since the derivation of them %and the estimate $\heta^{-1/2}\|\cal A(z)\|_{\WO}\prec 1$ in the last statement 
is straightforward, we omit the details.  
\end{proof}

\begin{proof}[\bf Proof of \Cref{no dot}] 
The proof of \Cref{no dot} is almost the same as that of \cite[Lemma 6.10]{BandI}. Hence, we only outline the proof strategy without giving all details. 
Roughly speaking, the proof proceeds as follows: we first choose a ``center" for each molecule, which can be any vertex inside the molecule. Next, we bound the internal structure of each molecule $\cal M$ with center $\al$ (i.e., the subgraph induced on $\cal M$ with $\al$ regarded as an external vertex; recall \Cref{def_sub}) using its scaling size. Now, the problem is reduced to bounding the global structure, denoted as a graph $\cal G^{\rm{global}}_{xy}$, that consists of $x$, $y$, the centers of molecules, double-line edges, free edges, and solid edges between them.
These double-line edges represent $B_{\al\beta}$ factors between vertices $\al$ and $\beta$, and the solid edges represent $\cal A_{\al\beta}$ factors whose weak norm (for the proof of estimate \eqref{bound 2net1}) or strong norm (for the proof of estimate \eqref{bound 2net1 strong}) is bounded by $1$. 
Furthermore, $\cal G^{\rm{global}}_{xy}$ is doubly connected with a black net consisting of double-line edges and a blue net consisting of double-line, free, and solid edges. 

To bound \smash{$\cal G^{\rm{global}}_{xy}$}, we choose a blue spanning tree consisting of the internal vertices and the external vertex $y$, which serves as the root of the tree. By the doubly connected property of $\cal G$, the external vertex $x$ must be connected to the tree by at least one blue edge. Then, we sum over an internal vertex located at a leaf of the spanning tree. 
By \Cref{w_s}, we can bound this summation by a linear combination of new graphs, which \emph{have one fewer internal vertex and are still doubly connected}. In applying \Cref{w_s}, we will use \eqref{keyobs3} or \eqref{keyobs3s} if the chosen vertex is not connected with $x$; otherwise, we will use \eqref{keyobs2}. In particular, \eqref{keyobs2} guarantees that in each new graph, $x$ still connects to the blue spanning tree through a blue edge. 
We iterate the above argument: we select a leaf vertex of the blue spanning tree in each new graph and bound the sum over this vertex using \Cref{no dot}. Continuing this process, we can finally bound $\cal G^{\rm{global}}_{xy}$ by a doubly connected graph consisting of vertices $x$ and $y$ only. This graph provides at least a factor $B_{xy}\cal A_{xy}$ times a coefficient denoted by $c(\cal G)$. Keeping track of the above argument carefully, we see that    
$$ c(W)\heta\pheta^{1/2} \lesssim \psize_2(\cal G_{xy}),\quad \text{or}\quad  c(W)\heta\peta^{1/2} \lesssim \psize_1(\cal G_{xy}),$$
if we count the scaling size of $B_{xy}$ as $\heta$ and the scaling size of $\cal A_{xy}$ as $\pheta^{1/2}$ or $\peta^{1/2}$. This concludes the proof of \eqref{bound 2net1} and \eqref{bound 2net1 strong}.
\end{proof}

\begin{remark}\label{rem:Improveeta}
\Cref{w_s} above shows that when $N^\fd\eta_\circ\le \eta\le \eta_*$ (recall \eqref{eq:defeta*} and \eqref{etacirc}), each solid edge only contributes a factor of \smash{$\peta^{1/2}$ or $\pheta^{1/2}$} to the size of a doubly connected graph. Consequently, as shown by \Cref{no dot}, we should use pseudo-scaling sizes to bound the non-deterministic doubly connected graphs. When $\eta\le \eta_*$, the pseudo-scaling sizes of the graphs in $T$-expansions can be much larger than their true scaling sizes.  In particular, the pseudo-scaling sizes of the higher order graphs do not satisfy the bound \eqref{eq:smallA} anymore. As a consequence, \Cref{locallaw-fix} may not apply, since the condition \eqref{Lcondition1} is not strong enough to control the higher order graphs (to see why, one can refer to the proof of \Cref{lemma ptree} below). 

However, this technical issue can be easily addressed. In fact, if the pseudo-scaling sizes of the higher order graphs satisfy the bound 
$$\psize_2\Big(\sum_{x} \zthn(\Selek)_{\fa x}\AT_{x,\fb_1\fb_2}\Big) \lesssim W^{-\fC'}\heta $$
for some constant $\fC'>0$ smaller than $\fC+\fc$ in  \eqref{eq:smallA}, we can still prove \Cref{locallaw-fix} as long as the condition \eqref{Lcondition1} is replaced by $L^2/W^2\cdot  W^{-\fC'} \le W^{-\errL}.$
In this paper, we have constructed $T$-expansions up to arbitrarily high order, where the scaling sizes of the higher order graphs can be made smaller than $\OO(W^{-D})$ for any large constant $D>0$. The pseudo-scaling sizes of these graphs, although worse than the scaling sizes, can be made as small as possible if we use $T$-expansions of sufficiently high order.  This allows us to prove the local law, \Cref{thm_locallaw}, for all $\eta\ge N^\fd\eta_\circ$. With the local law, we can improve other main results, including delocalization, QUE, and quantum diffusion as mentioned in \Cref{rem:smalleta}. 
\end{remark}

% \begin{proof}[Proof of \Cref{lemma ptree}]
%     \cor The proof uses the $T$-expansion and the following lemma:

% Given any two matrices $ \mathcal A^{(1)}(z)$ and $ \mathcal A^{(2)}(z)$ with non-negative entries and argument $z=E+\ii \eta$, we have that
% 	\begin{equation*}
% \sum_{x_i}\mathcal A^{(1)}_{x_i \al}(z)\mathcal A^{(2)}_{x_i \beta}(z)\cdot \prod_{j=1}^k B_{x_i y_j } \prec \pheta \cdot \Gamma(y_1,\cdots, y_k)\|\mathcal A^{(1)}(z)\|_{w}\,,
% \end{equation*}

% Assume $\lambda \ge  W^{-\frac{2d-7}{14}+\frac{2}{7}\fd_1}$ and $\eta \ge \lambda^{-5} W^{-5/2+2\fd_2}L^{-(d-5)}$. Then, we have 
% \begin{align}
%     \sum_{x}\Theta_{xy}G_{xy}\prec \left(\frac{W}{\lambda^2}\right)^{7/4}W^{-d/2} + \sqrt{\left(\frac{W}{\lambda^2}\right)^{5/2}\frac{L^5}{W^5}\frac{1}{N\eta}} \le W^{-\fd_1}+W^{-\fd_2}.
% \end{align}
% \end{proof}

\Cref{lemma ptree} is a simple consequence of the following lemma.  
\begin{lemma}\label{lem highp1}
Suppose the assumptions of \Cref{lemma ptree} hold. Assume that
\begin{equation}
	\label{initial_p}
T_{xy} \prec B_{xy}+\widetilde\Phi^2,\quad \forall \ x,y \in \Z_L^d,
\end{equation}
for a deterministic parameter $\widetilde\Phi $ satisfying $0\le \widetilde\Phi \le W^{-\e}$ for some constant $\e>0$. 
Then, there exists a constant $c>0$ such that for any fixed $p\in \N$, 
\begin{equation}
	\label{locallawptree}
\E T_{xy} (z) ^p \prec  \big(B_{xy} +W^{-c} \widetilde\Phi^2+(N\eta)^{-1}\big)^p ,
\end{equation}
as long as the constant $\errw$ in \eqref{eq:cond-ewb} is chosen sufficiently small depending on $d$, $\fc$, $\fC$, and $\errL$. 
\end{lemma}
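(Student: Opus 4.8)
\textbf{Proof proposal for \Cref{lem highp1}.}
The plan is to bound the $p$-th moment $\E T_{xy}^p$ by expanding one factor of $T_{xy}$ using the $\fC$-th order $T$-expansion \eqref{mlevelTgdef} (applied to $\zT_{x,yy}$ together with the zero-mode correction from \eqref{eq:T-cT}), multiplying by $(p-1)$ unexpanded copies of $T_{xy}$ (and, as is standard, by the conjugate structure needed to form an absolute $p$-th moment), and then estimating the resulting graph sum. Each term on the right-hand side of \eqref{mlevelTgdef} is a sum of $\OO(1)$ many doubly connected normal graphs with external vertices $x, \fb_1=\fb_2=y$, so after multiplying by the extra $T$-factors and applying \Cref{no dot} (with the weak-norm hypothesis \eqref{eq:cond-ewb} providing $\|G-M\|_w\prec W^{\errw}$, hence $\|G-M\|_w\prec 1$ up to harmless powers) we reduce the whole estimate to bookkeeping of pseudo-scaling sizes. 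The leading term $m\,\zthn(\Sele)_{xy}\overline G_{yy}$ contributes, via \Cref{lem redundantagain} and $\overline G_{yy}=\bar m+\OO_\prec(\heta^{1/2})$, a factor $\prec B_{xy}$; the recollision part $\PT$, the higher-order part $\AT$, the free part $\WT$, and the $Q$-part $\QT$ are controlled by \eqref{eq:smallR}, \eqref{eq:smallA}, \eqref{eq:smallW}, \eqref{eq:smallQ} respectively, and the error $\Err_D$ by \eqref{eq:smallRerr}. The key point is that each of these is smaller than $\heta=\max_{x,y}B_{xy}$ by a factor $W^{-\fc}$, $W^{-\fC-\fc}$, etc., which is exactly what we need to close the bound at the improved level $B_{xy}+W^{-c}\wt\Phi^2+(N\eta)^{-1}$.

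The steps, in order, are as follows. First I would record the deterministic input: by \Cref{lem redundantagain}, $\zthn(\Sele_0)_{xy}\prec B_{xy}$ and more generally labeled diffusive edges behave like diffusive edges of scaling size $\heta\prod\sizeself(\Sele_i)$, so all the diffusive structures appearing in the $T$-expansion obey the $B$-type bounds used in \Cref{no dot} and \Cref{w_s}. Second, I would set up the $p$-th moment expansion: write $T_{xy}^p = T_{xy}\cdot T_{xy}^{p-1}$, substitute \eqref{mlevelTgdef} for the first factor, and observe that multiplying a doubly connected graph by the $(p-1)$ further $T$-variables (each of which is itself doubly connected in the sense appropriate to \Cref{no dot}, since $T_{xy}=\sum_\al S_{x\al}|G_{\al y}|^2$ glues in nicely) preserves double connectivity of the product graph after we insert the unexpanded $T$-factors as additional subgraphs — this is the same bookkeeping as in \cite{BandI,BandII}. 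Third, I would apply \Cref{no dot} (the pseudo-scaling-size version, estimate \eqref{bound 2net1}, since we only have the weak norm under control) to each resulting graph, getting a bound of the form $\psize_2(\cdot)\cdot B_{xy}^{\,\cdots}\cdot(\text{weak-norm-one random variables})$; then take expectations, using that products of weak-norm-bounded random variables are themselves stochastically dominated by $1$ after the high-moment/Markov argument sketched in the proof of \Cref{no dot}. Fourth, I would add up the scaling-size contributions: the main term gives $(B_{xy})^p$ up to the $\OO(1/(N\eta))$ zero-mode term; the $\PT,\WT,\QT$ terms give a factor $\prec W^{-\fc}\heta$ per expanded $T$, hence after combining with the input bound \eqref{initial_p} on the unexpanded $T$'s (which supply at most $\wt\Phi^2$ each) they produce $W^{-c}\wt\Phi^2$ with $c$ depending on $\fc$; the $\AT$ and $\Err_D$ terms are even smaller thanks to \eqref{eq:smallA}, \eqref{eq:smallRerr} and the condition \eqref{Lcondition1} (via $L^2/W^2\cdot W^{-\fC-\fc}\le W^{-\errL}$), so they are absorbed. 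Finally I would note that \eqref{locallawptree} implies \eqref{pth T} by Markov's inequality (taking $p$ arbitrarily large and $\wt\Phi^2$ to start from, say, the trivial bound $T_{xy}\prec 1$, then iterating a bounded number of times to drive $\wt\Phi$ down to $(N\eta)^{-1/2}$), which completes \Cref{lemma ptree}.

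The main obstacle I anticipate is the double-connectivity bookkeeping when multiplying the expanded factor by the $p-1$ unexpanded $T$-variables: one must verify that the product graph remains doubly connected (so that \Cref{no dot} applies with the right power counting) and that the $\errw$ in \eqref{eq:cond-ewb} can be chosen small enough — depending on $d,\fc,\fC,\errL$ — that the $W^{\errw}$ losses from each solid edge estimated via the weak norm do not overwhelm the $W^{-\fc}$ gains. Controlling how many solid edges can appear (hence how many powers of $W^{\errw}$ accumulate) in terms of the fixed number of expansions is the crux; this is handled in \cite{BandI,BandII} by noting that the total number of graphs and edges produced is $\OO_p(1)$ and that each global expansion strictly decreases a scaling-size/complexity measure, so $\errw$ can be taken smaller than any fixed negative power needed. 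A secondary subtlety is the interplay between $\peta$, $\pheta$, and $\heta$ in the regime $\eta\ge W^\fd\eta_*$, where they all coincide up to constants (as remarked after \eqref{eq_defpsize}), so the pseudo-scaling-size bound \eqref{bound 2net1} is as good as the scaling-size bound; one must simply cite this coincidence rather than re-derive it. Once these points are in place, the rest is the routine graph-value estimation already developed in the preceding sections.
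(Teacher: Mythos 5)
Your overall plan (expand one $T$-factor with the $\fC$-th order $T$-expansion, control the leading diffusive term by $B_{\fa\fb}+(N\eta)^{-1}$, and beat the recollision/higher-order/free/error parts by their scaling sizes via \Cref{no dot} and \Cref{w_s}, with $\errw$ chosen small so that the $W^{\errw}$ losses per solid edge do not eat the $W^{-\fc}$ gains) matches the paper's first half. But there is a genuine gap in your treatment of the $Q$-part $\QT$. The small-size bound \eqref{eq:smallQ} controls only $\QT'$, i.e.\ $\QT$ with the graphs in \eqref{eq:largeQW} removed; the removed graphs — in particular $\sum_x\zthn(\Sele)_{\fa x}Q_x\!\left(G_{x\fb}\overline G_{x\fb}\right)$, $-|m|^2\sum_{x,\al}\zthn(\Sele)_{\fa x}S_{x\al}Q_x\!\left(G_{\al\fb}\overline G_{\al\fb}\right)$ and the other "large" $Q$-graphs — do \emph{not} have small scaling size, and a deterministic bound \`a la \Cref{no dot} on them (which is what your step "the $Q$-part $\QT$ is controlled by \eqref{eq:smallQ}" amounts to) gives something like $\sum_x B_{\fa x}(B_{x\fb}+(N\eta)^{-1})$, which is far larger than $B_{\fa\fb}+(N\eta)^{-1}$. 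Their smallness comes only from the $Q_x$-cancellation, and since the $(p-1)$ unexpanded factors $T_{\fa\fb}^{p-1}$ are \emph{not} independent of the $x$-th row and column of $H$, you cannot use $\E Q_x(\cdot)=0$ directly. The paper's proof performs the $Q$-expansions of the full product $T_{\fa\fb}^{p-1}\,\cal G_{x\fb}$ (followed by local expansions at the $Q$-vertex), producing graphs in which $k$ of the spectator $T_{\fa\fb}$'s have been broken, each pulling contributing a factor $\heta^{1/2}$ (with an extra $\heta^{1/2}$ when only one edge is pulled), see \eqref{eq:Tp}--\eqref{estimates_ptreeQ}; this is the step your proposal is missing, and without it the bound does not close at the level $B_{xy}+W^{-c}\wt\Phi^2+(N\eta)^{-1}$.

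A related structural point: because of these broken-$T$ terms, the paper does not bound a single product graph but derives a moment recursion
\begin{equation*}
\E T_{\fa\fb}^p \prec \Big(B_{\fa\fb}+\tfrac{1}{N\eta}+W^{-c'}\big(B_{\fa\fb}+\wt\Phi^2\big)\Big)\E T_{\fa\fb}^{p-1}
 + \sum_{k=1}^{p-1}\Big[W^{-\fc/2+\errw}\big(B_{\fa\fb}+\wt\Phi^2\big)\Big]^{k+1}\E T_{\fa\fb}^{p-1-k},
\end{equation*}
which is then closed with H\"older's and Young's inequalities. Your proposal instead tries to absorb the $p-1$ unexpanded $T$-factors into one doubly connected graph and apply \Cref{no dot} once; this both skips the recursion (so the passage from the one-factor expansion to the $p$-th moment bound is unjustified) and would in any case founder on the $Q$-graphs described above, whose contribution is intrinsically tied to how many of the spectator $T$'s get broken. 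Fixing the proof requires inserting the $Q$-expansion step for the product and the H\"older--Young closure; the rest of your size bookkeeping (including the remark that $\peta,\pheta,\heta$ coincide for $\eta\ge W^{\fd}\eta_*$) is fine.
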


\begin{proof}[\bf Proof of \Cref{lemma ptree}]
We start with $T_{xy}\prec B_{xy} +\wt\Phi_0^2$, where $\wt\Phi_0:= W^{\errw}\heta^{1/2}$ due to \eqref{eq:cond-ewb}. Then, combining \eqref{locallawptree} with Markov's inequality, we obtain that
$$T_{xy} (z) \prec  B_{xy} +W^{-c} \wt\Phi_0^2 +(N\eta)^{-1} \ .$$
Hence, \eqref{initial_p} holds with a smaller parameter $\wt\Phi=\wt\Phi_1:=W^{-c/2}\wt\Phi_0 + (N\eta)^{-1/2}$. For any fixed $D>0$, repeating this argument for $\left\lceil D/c\right\rceil$ many times gives that 
$$T_{xy} (z) \prec  B_{xy} +(N\eta)^{-1}+W^{-D} .$$
This concludes \eqref{pth T} as long as we choose $D$ sufficiently large.
\end{proof}

The proof of \Cref{lem highp1} is very similar to those for \cite[Lemma 8.1]{BandII} and \cite[Lemma 4.7]{BandIII}, so we only describe an outline of it without presenting all details. 

\begin{proof}[\bf Proof of Lemma \ref{lem highp1}] 
Using the $\fC$-th order $T$-expansion \eqref{mlevelTgdef} and \eqref{eq:T-zT} with $\fb_1=\fb_2=\fb$, we get  
\begin{equation}
\label{eq:ETp-Texp}
\begin{split}
        \E T_{\fa \fb}^p =\E T_{\fa \fb}^{p-1} \Big\{ & m \zthn(\Sele)_{\fa \fb} \overline{G}_{\fb\fb} + \frac{\im G_{\fb\fb}}{N\eta} \\
        &+\sum_x \zthn(\Sele)_{\fa x}\left[\PT_{x,\fb \fb} +  \AT_{x,\fb \fb}  + \WT_{x,\fb \fb}  + \QT_{x,\fb \fb}  +  (\Err_{D})_{x,\fb \fb}\right] \Big\}.
\end{split}
\end{equation}
By \eqref{BRB} and the condition \eqref{eq:cond-ewb}, the first two terms on the RHS of \eqref{eq:ETp-Texp} can be bounded by 
\be\label{estimates_ptree1}
m \zthn(\Sele)_{\fa \fb} \overline{G}_{\fb\fb} + \frac{\im G_{\fb\fb}}{N\eta} \prec B_{\fa\fb}+\frac{1}{N\eta}.
\ee
As stated in \Cref{defn genuni}, the graphs in $\PT,$ $\AT$, $\WT'$ (recall that $\WT=\WT'/(N\eta)$), and $\Err_{D}$ satisfy the doubly connected property and the scaling size conditions \eqref{eq:smallR}, \eqref{eq:smallA}, \eqref{eq:smallW}, and \eqref{eq:smallRerr}. Then, with Lemmas \ref{no dot} and \ref{w_s} as inputs, we can show that 
\begin{equation}\label{estimates_ptree}
\begin{split}
& \sum_x \zthn(\Sele)_{\fa x}\PT_{x,\fb \fb}  \prec W^{-\fc +2\errw} B_{\fa \fb}\,,\quad  \sum_x \zthn(\Sele)_{\fa x}\WT_{x,\fb \fb} \prec \frac{W^{-\fc+2\errw}}{N\eta}\frac{L^2}{W^2} (B_{\fa \fb} + \wt \Phi^2)\,,  \\
&\sum_x \zthn(\Sele)_{\fa x}\AT_{x,\fb \fb}  \prec W^{-(\fC +\fc)(1- C\errw)}\cdot \frac{L^2}{W^2} (B_{\fa \fb} + \wt \Phi^2)\,, \\ 
&\sum_x \zthn(\Sele)_{\fa x}\Err_{x,\fb \fb}  \prec W^{-D(1- C\errw)}\cdot \frac{L^2}{W^2} (B_{\fa \fb} + \wt \Phi^2)\,, 
\end{split}
\end{equation}
where $C>0$ is a constant that does not depend on $\errL$ and $\errw$. Here, the $W^{\errw}$ factors are due to the bound \eqref{eq:cond-ewb} on $G$ edges. By keeping track of the scaling sizes of the graphs from our \Cref{strat_global}, it is evident that each solid edge provides at least a scaling size of order $(\blam \heta)^{1/2}\lesssim W^{-\fc/2}$. Thus, a graph of scaling size $\Omega(W^{-A})$ ($A=\fc,\fC+\fc,D$) has at most $2A/\fd$ solid edges, which lead to a factor $\OO(W^{A C\errw})$ for $C= 2/\fc$.

Given any graph in $\QT_{x,\fb\fb}$, say $\cal G_{x\fb}$ with label $Q_y$, we first apply the $Q$-expansions to $T^{p-1}_{\fa\fb}\cal G_{x\fb}$ and then apply local expansions on $y$. This yields the following expansion for any large constant $D>0$: 
\begin{equation}\label{eq:Tp}
\E T^{p-1}_{\fa\fb}\sum_x \zthn(\Sele)_{\fa x}\cal G_{x\fb} = \sum_{k=1}^{p-1}\sum_{\omega_k}  \E \left[T^{p-1-k}_{\fa\fb} (\cal G_{\omega_k})_{\fa\fb}\right] + \OO(W^{-D}) \, ,
\end{equation}
where the RHS is a sum of $\OO(1)$ many new graphs. For each $k \in \qqq{p-1}$ and $\omega_k$, $(\cal G_{\omega_k})_{\fa\fb}$ represents a doubly connected graph without $P/Q$ labels, where $k$ $T_{\fa\fb}$ variables have been ``broken" during the $Q$-expansion. Specifically, each of the $k$ $T_{\fa\fb}$ variables has at least one (blue or red) solid edge that has been pulled by the $Q$ or local expansions at $y$. (Here, the ``pulling" of a solid edge refers to its replacement by two new solid edges due to partial derivatives $\partial_{h_{\al\beta}}$ or the operations described in \eqref{resol_exp0} and \eqref{resol_reverse}.) 
Furthermore, the scaling size of $(\cal G_{\omega_k})_{\fa\fb}$ satisfies that 
\be\label{eq:explainQsize}\size((\cal G_{\omega_k})_{\fa\fb}) \lesssim \blam\heta^{k+1} \cdot  \heta^{[k+\mathbf 1(k=1)]/2}. \ee
To see why this holds, notice that the scaling size of $ T^{k}_{\fa\fb}\sum_x \zthn(\Sele)_{\fa x}\cal G_{x\fb}$ is at most $\blam \heta^{k+1}$ (the worst case occurs when $\cal G$ takes one of the forms in \eqref{eq:leadingQs} with $w=x$ and $y_1=y_2=\fb$). Each pulling of a solid edge decreases the scaling size at least by \smash{$\heta^{1/2}$}, so $k$ broken $T_{\fa\fb}$ variables lead to a factor \smash{$\OO(\heta^{k/2})$}. 
Finally, if only one solid edge is pulled to vertex $y$, then the arguments below \eqref{eq:leadingQs} tell that the $Q$ and local expansions on $y$ provide at least one additional \smash{$\heta^{1/2}$} factor. This explains \eqref{eq:explainQsize}. Now, using the doubly connected property and the scaling size condition \eqref{eq:explainQsize} for $(\cal G_{\omega_k})_{\fa\fb}$, we can show that  
\begin{equation}\label{estimates_ptreeQ}
\begin{split}
&(\cal G_{\omega_k})_{\fa\fb} \prec  \left[W^{-\fc/2+\errw}(B_{\fa\fb} + \wt \Phi^2)\right]^{k+1}  \, .
\end{split}
\end{equation}
In the derivation of the estimates in \eqref{estimates_ptree} and \eqref{estimates_ptreeQ}, the inequalities in Lemma \ref{w_s} will take the place of inequalities (8.10)--(8.12) from \cite{BandII}. With these inequalities and the (doubly connected and scaling size) properties of $\PT,$ $\AT$, $\WT'$, $\Err_{D}$, and $\cal G_{\omega_k}$, we can prove \eqref{estimates_ptree} and \eqref{estimates_ptreeQ} using exactly the same arguments as in \cite[Section 8.2]{BandII}. So we omit the details of the proof.

Combining \eqref{estimates_ptree1}, \eqref{estimates_ptree}, \eqref{eq:Tp}, and \eqref{estimates_ptreeQ}, using the condition \eqref{Lcondition1}, we obtain that 
\begin{align*}
    \E T_{\fa \fb}^p &\prec \left[B_{\fa\fb}+ \frac{1}{N\eta}+\left(W^{-\fc/2+\errw}+W^{-\errL+(\fC+\fc) C\errw}\right)\left(B_{\fa\fb}+\wt\Phi^2\right)\right]\E T_{\fa\fb}^{p-1}\\
    &+ \sum_{k=1}^{p-1}\left[ W^{-\fc/2+\errw} \left(B_{\fa\fb}+\wt\Phi^2\right) \right]^{k+1}\E T_{\fa\fb}^{p-1-k},
\end{align*}
as long as we take $\errw<\min\{\fc/2,\errL/((\fC+\fc) C)\}$. Then, the desired result \eqref{locallawptree} follows from an application of H\"older's and Young's inequalities.  
\end{proof}

\section{$V$-expansion and coupling renormalization}\label{sec:Vexpansion}

In this section, we establish a generalization of the $T$-expansion, called \emph{$V$-expansion} (where $V$ stands for ``vertex"); see \Cref{thm:Vexp} below. It describes the expansion of a graph of the form $T_{\fa,\fb_1\fb_2}\Gamma$, where $\Gamma$ is an arbitrary locally regular graph. In general terms, a $V$-expansion can be regarded a general form of the global expansion, where we replace $T_{\fa,\fb_1\fb_2}$ with a $T$-expansion and apply subsequent $Q$-expansions if necessary. 
What differentiates our $V$-expansions from the global expansion defined in \Cref{sec:global_WO} is the key coupling (or vertex) renormalization mechanism---as discussed in \Cref{sec:idea}. This cancellation property is stated as \eqref{eq:vertex_sum_zero} below, whose proof is the main focus of this section. The $V$-expansion and coupling renormalization mechanism will be the key tools for our proof of \Cref{gvalue_continuity} in \Cref{sec:continuity} below.

\subsection{Complete $T$-expansion}

In defining the $V$-expansions, we will replace the $T$-variable $T_{\fa,\fb_1\fb_2}$ with a \emph{complete} $T$-expansion instead of the \emph{incomplete} $T$-expansion as stated in \Cref{defn genuni}. Roughly speaking, a complete $T$-expansion is an expansion of the $T$-variable that includes only deterministic, $Q$, and error graphs. It is obtained by further expanding the graphs in $\PT$, $\AT$, and $\WT$ into deterministic, $Q$, and error graphs. 
One advantage of the complete $T$-expansion is that it takes a much simpler form than the $T$-expansion so that we do not need to handle non-deterministic recollision, higher order, and free graphs in the global expansion of a $T$-variable. However, in contrast to $T$-expansions, the complete $T$-expansions are not ``universal". This is because some graphs in $\AT$ and $\WT$ are only SPD and may be non-expandable (recall (S5) in \Cref{sec:expanstrat}). Hence, expanding the $T$-variables in them can break the doubly connected property and therefore lose a factor $L^2/W^2$ (due to the summation over a diffusive edge). As a consequence, the scaling size of our graphs can increase during the expansions if $L$ is too large. To address this issue, we really need to assume that $L$ satisfies the upper bound \eqref{Lcondition1}, under which the $L^2/W^2$ factor can be compensated by the decrease in the scaling size.        

Since the doubly connected property is a convenient tool to estimate graphs, to maintain it, we introduce a new type of ``ghost" edges to our graphs. %Like free edges, a ghost edge represents a $W^2/(\blam L^2)$ factor that have no indices. 

%Although some graphs in the complete $T$-expansion may not satisfy the doubly connected property, we still want to another type of edge.  
	
\begin{definition}[Ghost edges]
We use a dashed edge between vertices $x$ and $y$ to represent a $W^2/(\blam L^2)$ factor and call it a \emph{ghost edge}. We do not count ghost edges when calculating the scaling size of a graph, i.e., the scaling size of a ghost edge is $1$. Moreover, the doubly connected property in Definition \ref{def 2net} is extended to graphs with ghost edges by including these edges in the blue net.   
\end{definition}
 %Both ghost and free edges are introduced to maintain the doubly connected property and some other graphical properties (e.g., the SPD property in Definition \ref{def seqPDG} and the generalized doubly connected property in Definition \ref{def 2netex}). 

For each ghost edge added to a graph, we multiply the coefficient of the graph by a $\blam L^2/W^2$ factor. In other words, given any graph $\mathcal G$, suppose we introduce $k_\gh$ many ghost edges to it. Then, the new graph, denoted by \smash{$\wt{\cal G}$}, has a coefficient \smash{$ \cof(\wt{\cal G})= \cof (\cal G)\cdot ( {L^2}/{W^2})^{k_\gh}.$} 

Given a doubly connected graph $\cal G$ where every solid edge has a $\circ$, we define its \emph{weak scaling size} as 
\begin{align}
\wsize(\mathcal G): =&~|\cof(\cal G)|  \heta^{\#\{\text{free edges}\}+\#\{\text{diffusive edges}\}+\frac{1}{2}\#\{\text{solid edges}\}}  W^{-d(\#\{ \text{waved edges}\} -  \#\{\text{internal atoms}\})}.\label{eq:def-size}
	\end{align}
% if we do not include the $( {L^2}/{W^2})^{k_\gh(\mathcal G)}$ factor into $\cof(\mathcal G)$; otherwise, we remove $( {L^2}/{W^2})^{k_\gh(\mathcal G)}$ factor from the above definition. Which definition we are using will be clear from the context. (recall \eqref{eq_defsize})
Compared with \eqref{eq_defsize}, the only difference is that every free edge is associated with a factor $\heta$ instead of $({N\eta})^{-1}\cdot {L^2}/{W^2}$.  
\iffalse	where (recall that $\delta_0$ is the constant introduced in \Cref{thm: locallaw})
	\begin{equation}
		\label{eq:def-soe}
		\soe:= \begin{cases}
			\delta_0/2,& \etas\le \eta <(W/L)^2\,,\\
			d/2, & \eta \geq (W/L)^2\,.
		\end{cases}
	\end{equation}
	Roughly speaking, $\soe$ is chosen such that $B_{xy} + (N\eta)^{-1} =\OO(W^{-2\soe}) $ for $\eta \ge \etas$. \fi
The following lemma shows that as long as $L$ is not too large, we can construct a $\nonuni$ whose graphs are properly bounded.
	
\begin{lemma}[Complete $T$-expansion]\label{def nonuni-T}
Under the assumptions of Theorem \ref{thm_locallaw}, suppose the local laws 
\begin{equation}\label{locallaw0}
T_{xy}\prec B_{xy}+(N\eta)^{-1},\quad 		\|G (z) -M(z)\|_{\max}^2 \prec \heta,
\end{equation}
hold for a fixed $z= E+\ii\eta$ with $|E|\le  2-\kappa$ and $\eta \in [\eta_0,\blam^{-1}]$ for some $W^\fd \eta_*\le \eta_0\le \blam^{-1}$. Fix any constants $D>\fC>0$ such that \eqref{Lcondition1} holds. Suppose that there is a $\fC$-th order $T$-expansion (with $\eta\ge \eta_0$ and $D$-th order error) satisfying Definition \ref{defn genuni}. 
% Fix any $n \in \N$ sufficiently large such that 
% \begin{equation}\label{Lcondition10}  
% {L^2}/{W^2}  \le W^{(n-1) \soe -c_0} 
% \end{equation}
% for some constant $c_0>0$. 
% Then, for any large constant $D>0$,  
Then, for the Wegner orbital model, $T_{\fa,\fb_1 \fb_2}$ can be expanded into a sum of $\OO(1)$ many normal graphs (which may contain ghost and free edges):  
\begin{align}
T_{\fa,\fb_1 \fb_2} & =   m  \wt \thn_{\fa \fb_1}\overline G_{\fb_1\fb_2}  + \frac{G_{\fb_2 \fb_1} - \overline G_{\fb_1 \fb_2}}{2\ii N\eta} +\sum_{\mu} \sum_x \wt \thn_{\fa x}\mathcal D^{(\mu)}_{x \fb_1}f_{\mu;x,\fb_1\fb_2}(G)+   \sum_\nu \sum_{x} \wt \thn_{\fa x}\mathcal D^{(\nu)}_{x \fb_2} \wt f_{\nu;x,\fb_1\fb_2}(G) \nonumber\\
& +  \sum_{\gamma} \sum_{x} \wt \thn_{\fa x}\mathcal D^{(\gamma)}_{x , \fb_1 \fb_2}g_{\gamma;x,\fb_1\fb_2}(G) + \sum_{x}\wt \thn_{\fa x} \mathcal Q_{x,\fb_1\fb_2}   + \Err_{\fa,\fb_1 \fb_2}. \label{mlevelTgdef weak}
\end{align} 
% For the block Anderson and Anderson orbital models, $T_{\fa,\fb_1 \fb_2}$ can be expanded into a sum of $\OO(1)$ many normal graphs:  
% \begin{align}
% T_{\fa,\fb_1 \fb_2} & = \sum_{x}S_{\fa x}\left(M_{x\fb_1}\Gc^-_{x\fb_1}+\Gc_{x\fb_1}\bar M_{x\fb_1}+M_{x\fb_1}\bar M_{x\fb_2}\right)  +  \frac{G_{\fb_2 \fb_1} - \overline G_{\fb_1 \fb_2}}{2\ii N\eta}  + \frac{1}{N}\sum_x M_{x\fb_1}\bar M_{x\fb_2} \nonumber\\
% & -\frac1{N}\sum_x (M_{x\fb_1}\Gc^-_{x\fb_2}+\Gc_{x\fb_1}\bar M_{x\fb_2})+ \sum_{x}(\wt\thn M^0 S)_{\fa x}\left(M _{x \fb_1 }\bar M_{x \fb_2}+ \Gc _{x \fb_1 } \bar M_{ x \fb_2}	+M_{x \fb_1 }\Gc^{-} _{ x \fb_2}\right)  \nonumber\\ 
% & +\sum_{\mu} \sum_x \wt \thn_{\fa x}\mathcal D^{(\mu)}_{x \fb_1}f_{\mu;x,\fb_1\fb_2} (G) +   \sum_\nu \sum_{x} \wt \thn_{\fa x}\mathcal D^{(\nu)}_{x \fb_2} \wt f_{\nu;x,\fb_1\fb_2}(G)\nonumber\\
% & +  \sum_{\gamma} \sum_{x} \wt \thn_{\fa x}\mathcal D^{(\gamma)}_{x , \fb_1 \fb_2}g_{\gamma;x,\fb_1\fb_2}(G) + \sum_{x}\wt \thn_{\fa x} \mathcal Q_{x,\fb_1\fb_2}   + \Err_{\fa,\fb_1 \fb_2}.\label{mlevelTgdef weak_BA}
% \end{align}
%Above, $\Err_{\fa,\fb_1 \fb_2}$, $\wt\thn$, and $\mathcal Q_{x,\fb_1\fb_2}$ satisfies the same properties as in \Cref{def nonuni-T}, while $f_{\mu;x,\fb_1,\fb_2} $, $\wt f_{\nu;x,\fb_1,\fb_2} $, and $g_{\gamma;x,\fb_1,\fb_2}$ satisfy the following properties. 
The graphs on the RHS of \eqref{mlevelTgdef weak} satisfy the following properties. 
\begin{enumerate}
\item $\Err_{\fa,\fb_1 \fb_2}$ is an error term satisfying $\Err_{\fa,\fb_1 \fb_2}\prec W^{-D}.$ 
			
\item $\wt\thn$ is a deterministic matrix satisfying $|\wt \thn_{xy}| \prec B_{xy}$. $\mathcal D^{(\mu)}_{x \fb_1}$, $\mathcal D^{(\nu)}_{x \fb_2}$, and $\mathcal D^{(\gamma)}_{x , \fb_1 \fb_2}$ are deterministic doubly connected graphs when treating $x, \fb_1, \fb_2$ as internal vertices, and they have weak scaling sizes of order $\OO(W^{-d-c})$ for a constant $c>0$ when treating $x, \fb_1, \fb_2$ as external vertices. 

\item If we denote the molecules of $\fb_1$ and $\fb_2$ by $\cal M_1$ and $\cal M_2$, then \smash{$f_{\mu;x,\fb_1\fb_2} $, $\wt f_{\nu;x,\fb_1\fb_2} $, and $g_{\gamma;x,\fb_1\fb_2}$} are all graphs consisting solely of solid edges within $\cal M_1$ and $\cal M_2$, as well as solid edges between $\cal M_1$ and $\cal M_2$, with coefficients of order $\OO(1)$. Furthermore, $f_{\mu;x,\fb_1\fb_2}$ contains one solid edge with $-$ charge between $\fb_1$ and $\fb_2$, while \smash{$\wt f_{\nu;x,\fb_1\fb_2}$} contains one solid edge with $+$ charge between $\fb_1$ and $\fb_2$.

\item $\mathcal Q_{x,\fb_1\fb_2}$ is a sum of $Q$-graphs. %For the block Anderson and Anderson orbital models, we denote the expression obtained by removing the 10 graphs in \eqref{eq_largeQ} and \eqref{eq_largeQ2} from $\cal Q_{x,\fb_1\fb_2}$ as $\cal Q'_{x,\fb_1\fb_2}$. 
For the Wegner orbital model, we denote the expression obtained by removing the graphs in \eqref{eq:largeQW} from $\cal Q_{x,\fb_1\fb_2}$ as $\cal Q'_{x,\fb_1\fb_2}$. Then, $\QT'_{x,\fb_1\fb_2}$ is a sum of $Q$-graphs satisfying the following properties:
\begin{enumerate}

\item[(a)] They are SPD graphs (with ghost edges included in the blue net). 

\item[(b)] The vertex in the $Q$-label of each of them belongs to the MIS with non-deterministic closure. 

\item[(c)] Its weak scaling size is of order $\OO(W^{-c}\heta)$ for a constant $c>0$. 
%The size of $\mathcal Q^{(\omega)}_{x,\fb_1\fb_2}$ satisfies that $\size ( \mathcal Q^{(\omega)}_{x,\fb_1\fb_2} ) \le W^{-2\soe}$.

\item[(d)] There is an edge, blue solid, waved, $\dashed$, or dotted, connected to $\fb_1$; there is an edge, red solid, waved, $\dashed$, or dotted, connected to $\fb_2$. 
\end{enumerate}
\end{enumerate}
\end{lemma}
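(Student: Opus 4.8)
\textbf{Proof strategy for Lemma \ref{def nonuni-T}.}

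The plan is to start from the $\fC$-th order $T$-expansion \eqref{mlevelTgdef_orig} for $T_{\fa,\fb_1\fb_2}$ (with $\fb_1,\fb_2$ in place of the external indices, and using \eqref{mlevelTgdef} together with \eqref{eq:T-zT} to handle the zero mode, which produces the term $(G_{\fb_2\fb_1}-\overline G_{\fb_1\fb_2})/(2\ii N\eta)$), and then \emph{fully expand} the three non-deterministic blocks $\PT_{x,\fb_1\fb_2}$, $\AT_{x,\fb_1\fb_2}$, and $\WT_{x,\fb_1\fb_2}$ into deterministic, $Q$-, and error graphs. The engine for this is the expansion machinery already developed in Sections \ref{sec:local} and \ref{sec:global_WO}: repeatedly apply the local expansions of Lemmas \ref{ssl}--\ref{T eq0} to reach locally standard graphs, and then apply the global expansion of \Cref{lem globalgood} (replacing redundant $T$-variables by a $\fC$-th order $T$-expansion) and $Q$-expansions of \Cref{subsec_Qexp}, iterating until every graph that is not deterministic, not a $Q$-graph, and not an error graph has been eliminated. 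The termination of this loop is exactly the issue flagged in the text: once a graph is only SPD (not globally standard) it may be non-expandable, and expanding the $T$-variable inside an isolated subgraph can break double-connectedness and cost a factor $L^2/W^2$. This is where ghost edges enter—whenever the double-connected property would be lost, one inserts a ghost edge (a $W^2/(\blam L^2)$ factor, compensated by $\blam L^2/W^2$ in the coefficient) into the blue net to restore it. The key bookkeeping point is that each such expansion step strictly decreases the scaling size by a factor $\OO(W^{-\fc})$ (by the properties (1)--(3) recorded in \Cref{sec:expanstrat} and the scaling size bounds of \Cref{def genuni2}), so after inserting the ghost edge the \emph{weak scaling size} picks up only a factor $\OO((L^2/W^2)W^{-\fc})$, which is $\le W^{-\errL+2\errw}<1$ by the hypothesis \eqref{Lcondition1}. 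Hence the process terminates after $\OO(1)$ rounds, and all error graphs can be driven below $\OO(W^{-D})$ by taking the error order $D$ of the input $T$-expansion large enough.

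Next I would organize the resulting deterministic graphs into the three families displayed on the RHS of \eqref{mlevelTgdef weak}. Each deterministic graph carries a leading diffusive edge $\wt\thn_{\fa x}$ from the outermost $\zthn(\Sele)$ (absorb the renormalized diffusive edges and their labels into a single matrix $\wt\thn$; property (2) then follows from \Cref{lem redundantagain}, i.e.\ $|\wt\thn_{xy}|\prec B_{xy}$). The remaining deterministic part $\mathcal D$ attached between $x$ and the molecules $\cal M_1,\cal M_2$ of $\fb_1,\fb_2$ is itself doubly connected when $x,\fb_1,\fb_2$ are treated as internal vertices, and its weak scaling size when they are external is $\OO(W^{-d-c})$: this comes from tracking that we started from a block of scaling size $\OO(\heta)$ and that peeling off the external vertices $\fb_1,\fb_2$ and $x$ from a doubly connected deterministic graph costs a factor $\OO(W^{-d})$ per internal atom removed, exactly as in the proof of \Cref{dG-bd}, combined with the extra $W^{-c}$ gained from the expansions. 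The trichotomy $\mathcal D^{(\mu)}/\mathcal D^{(\nu)}/\mathcal D^{(\gamma)}$ according to whether the residual solid-edge structure $f_\mu$ / $\wt f_\nu$ / $g_\gamma$ carried on $\cal M_1\cup\cal M_2$ contains a $-$-charged solid edge $\overline G_{\fb_1\fb_2}$, a $+$-charged solid edge $G_{\fb_1\fb_2}$, or neither (only solid edges internal to $\cal M_1,\cal M_2$) is just a classification of the possible "endpoints" of the $G$-expansion at the external vertices; since all $G$ edges live on scales $\OO(W)$ inside $\cal M_1,\cal M_2$ (or connect $\cal M_1$ to $\cal M_2$), property (3) is read off directly. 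The $Q$-graphs are collected into $\mathcal Q_{x,\fb_1\fb_2}$; the subleading ones (after removing the explicitly listed leading $Q$-graphs \eqref{eq:largeQW}) inherit the SPD property, the "$Q$-label vertex in the non-deterministic MIS" property, and the weak scaling size bound $\OO(W^{-c}\heta)$ from \Cref{lem globalgood}(vii) and the scaling size decrease (2), while the edges touching $\fb_1$ (blue side) and $\fb_2$ (red side) are present because the input $T$-expansion already had them (property (1) of \Cref{defn genuni}) and expansions never disconnect the external vertices.

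The main obstacle I expect is precisely the \emph{termination and uniform-boundedness argument under the constraint \eqref{Lcondition1}}: one must show that every branch of the expansion tree, including the branches where ghost edges have been inserted, reaches a stopping configuration after a bounded number of steps \emph{and} that the accumulated weak scaling size never exceeds $\OO(\heta)$ (for the $\mathcal D$-type and $Q$-type terms) or $\OO(W^{-D})$ (for errors). This requires a careful potential/induction argument on (scaling size, number of blue solid edges, number of molecules), essentially the analogue of \cite[Lemma 6.8]{BandII}, but now tracking ghost edges and weak scaling sizes rather than ordinary scaling sizes—each ghost insertion must be "paid for" by a simultaneous $W^{-\fc}$ drop, and one must verify this pairing is maintained in the delicate $Q$-expansion branches (Steps 1--3 of \Cref{subsec_Qexp}) and in the case analysis around \eqref{eq:leadingQs}, exactly as in the proof of \Cref{lem highp1}. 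Everything else—the algebraic identities, the charge/molecule bookkeeping, and the final estimates on $\mathcal D$ via \Cref{dG-bd}—is routine given the tools already assembled in the preceding sections.
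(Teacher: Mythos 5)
Your overall route is the same one the paper takes: the paper simply invokes the construction of the complete $T$-expansion from the proof of Lemma 3.2 in \cite{BandIII}, which is exactly the procedure you describe — start from the $\fC$-th order $T$-expansion (with the zero mode split off via \eqref{eq:T-zT}), keep expanding the non-deterministic blocks $\PT,\AT,\WT$ by the local, global and $Q$-expansions until only deterministic, $Q$-, and error graphs remain, insert ghost edges whenever expanding a pivotal $T$-variable inside an SPD-but-not-globally-standard piece would destroy double connectedness, and control everything through weak scaling sizes under the side condition \eqref{Lcondition1}. The classification of the surviving deterministic graphs into the $\mathcal D^{(\mu)},\mathcal D^{(\nu)},\mathcal D^{(\gamma)}$ families and the verification of properties (2)--(4) via \Cref{lem redundantagain}, \Cref{dG-bd} and \Cref{lem globalgood} is also as intended.

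There is, however, one concrete slip in the quantitative bookkeeping, and it sits precisely at the point you yourself single out as the main obstacle. You claim that each ghost-edge insertion is paid for by a per-step size drop of $\OO(W^{-\fc})$, so that the weak scaling size only picks up $\OO\bigl((L^2/W^2)\,W^{-\fc}\bigr)$, ``which is $\le W^{-\errL+2\errw}<1$ by \eqref{Lcondition1}''. This does not follow: \eqref{Lcondition1} reads $(L^2/W^2)\,W^{-\fC-\fc}\le W^{-\errL}$, so $(L^2/W^2)\,W^{-\fc}\le W^{\fC-\errL}$, which is typically large. The correct pairing is that ghost edges are only attached when one is forced to expand the higher-order or free outputs, i.e.\ graphs whose (weak) scaling sizes have already dropped to $\OO(W^{-\fC-\fc}\heta)$ (for $\AT$-type graphs) or $\OO(\blam\heta^{2})$ (for $\WT$-type graphs after removing the free edge); only then does the factor $\blam L^2/W^2$ carried by a ghost edge get compensated, since $(L^2/W^2)\,W^{-\fC-\fc}\le W^{-\errL}$ is exactly what \eqref{Lcondition1} provides. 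In other words, the potential/induction argument must track ``ghost edge $\leftrightarrow$ accumulated $W^{-\fC-\fc}$ smallness'' rather than ``ghost edge $\leftrightarrow$ one $W^{-\fc}$ drop''; with the accounting as you wrote it, the termination/uniform-boundedness step would fail. This is fixable within your framework (and is how the cited construction actually proceeds), but as stated it is a genuine gap.
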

 
\begin{proof}
For the Wegner orbital model, under the condition \eqref{Lcondition1}, the construction of the complete $T$-expansion using the $\fC$-th order $T$-expansion has been performed in the proof of \cite[Lemma 3.2]{BandIII}. %For the block Anderson and Anderson orbital models, the construction follows exactly the same strategy (based on the expansions described in \Cref{sec:graphs}). %cite \cite{RBSO} here
We omit the details. 
\end{proof}
	
The construction of the complete $T$-expansion gives that $\wt \thn=\zthn(\Sele)$, where $\Sele$ is a sum of $\selfs$ $\Sele_k$, $k=1,\ldots, k_0$, and a $\pself$ $\Sele_{k'}$ with $\sizeself(\Sele_{k'})\prec W^{-\fC}$ (with $\Sele_{k'}$ potentially containing free or ghost edges). Under the condition \eqref{Lcondition1}, this $\pself$ plays the same role as a $\self$ with parameter $\sizeself(\Sele_{k'})\cdot L^2/W^2$. Then, applying \eqref{BRB} yields the estimate \smash{$|\wt \thn_{xy}| \prec B_{xy}$}. 
Furthermore, by combining this estimate with \Cref{dG-bd} for $ \cal D^{(\mu)}$ and $\cal D^{(\nu)}$, we can show that 
$$ (\wt\thn \cal D^{(\mu)})_{x\fb_1}\prec W^d\size(\cal D^{(\mu)}_{x\fb_1})\cdot B_{x\fb_1},\quad (\wt\thn \cal D^{(\nu)})_{x\fb_2}\prec W^d\size(\cal D^{(\nu)}_{x\fb_2})\cdot B_{x\fb_2}.$$
Thus, we will regard $\wt \thn$, $\wt\thn \cal D^{(\mu)}$, and $\wt\thn \cal D^{(\nu)}$ as new types of diffusive edges.

\begin{definition}[New diffusive edges]
In applying the complete $T$-expansion, we will treat $\wt \thn_{xy}$, $(\wt\thn \cal D^{(\mu)})_{xy}$, and \smash{$(\wt\thn \cal D^{(\nu)})_{xy}$} as new types of diffusive edges between $x$ and $y$. Their (weak) scaling sizes are counted as $\heta$, \smash{$\blam \size(\cal D^{(\mu)}_{xy})$}, and {$\blam \size(\cal D^{(\nu)}_{xy})$}, respectively. Moreover, the doubly connected property in Definition \ref{def 2net} extends to graphs with these new diffusive edges (which can belong to either the black net or the blue net). 
\end{definition}

\subsection{$V$-expansions} 

We now present the key tool for our proof, the $V$-expansion, in \Cref{thm:Vexp}. This is a global expansion of a $T$-variable, which involves substituting with a complete $T$-expansion followed by subsequent $Q$-expansions and local expansions. The \emph{vertex renormalization mechanism} (or equivalently, \emph{molecule sum zero property}) for the $V$-expansion is stated as part of the theorem in \eqref{eq:vertex_sum_zero}.

\begin{theorem}[$V$-expansion for the Wegner orbital model]\label{thm:Vexp}
Under the assumptions of \Cref{def nonuni-T}, consider the Wegner orbital model. Suppose ${\cal G}$ is a normal graph of the form 
   \be\label{eq:regularG}
   {\cal G}=T_{\fa,\fb_1\fb_2}\cdot \Gamma,
   \ee
where $\fa,\fb_1,\fb_2$ are external vertices and $\Gamma$ is an arbitrary normal subgraph without $P/Q$ labels. Let ${\cal E}_G(\Gamma)$ and ${\cal E}_{\overline G}(\Gamma)$ denote the collections of $G$ and $\bar G$ edges in $\Gamma$, respectively, and denote $\ell(\Gamma):=|{\cal E}_G(\Gamma)|\wedge |{\cal E}_{\overline G}(\Gamma)|$. 
Plugging the complete $T$-expansion \eqref{mlevelTgdef weak} into $T_{\fa,\fb_1\fb_2}$, applying the $Q$ and local expansions to the resulting graphs if necessary, 
%to the non-$Q$ graphs with $Q$-labels, and performing the local expansions to all graphs that are not locally regular, 
we can expand \eqref{eq:regularG} as follows for any large constant $D>0$:
\begin{equation}
   \begin{split}\label{eq_Vexp}
       & {\cal G}=m  \wt \thn_{\fa \fb_1}\overline G_{\fb_1\fb_2}  \Gamma + \frac{G_{\fb_2 \fb_1} - \overline G_{\fb_1 \fb_2}}{2\ii N\eta} \Gamma + \sum_x \wt \thn_{\fa x}\mathcal D (x, \fb_1,\fb_2) \Gamma \\
    &+\sum_{p=1}^{\ell(\Gamma)}\sum_{x} \wt \thn_{\fa x}
   \Gamma^{\star}_{p}(x,\fb_1,\fb_2) + \sum_x \wt \thn_{\fa x}\mathcal A_{x,\fb_1\fb_2} + \sum_x \wt \thn_{\fa x}\mathcal R_{x,\fb_1\fb_2} +  \mathcal Q_{\fa,\fb_1\fb_2}  + \Err_D(\fa,\fb_1,\fb_2) ,
   % \\              &  m\Theta_{xy}\overline G_{yy'}\cdot \Gamma
   %  +\sum_{p=1}^\infty
   % \Gamma^{(star)}_{p} +{\cal Q}+{\cal R}+{\cal A}+{\cal E_D}+ \hbox{others terms}
   \end{split}
   \end{equation}
   where $\cal D(x,\fb_1,\fb_2)$ is defined as
   $$\cal D(x,\fb_1,\fb_2):=\sum_{\mu} \mathcal D^{(\mu)}_{x \fb_1}f_{\mu;x,\fb_1\fb_2}(G)+   \sum_\nu \mathcal D^{(\nu)}_{x \fb_2} \wt f_{\nu;x,\fb_1\fb_2}(G)  +  \sum_{\gamma} \mathcal D^{(\gamma)}_{x , \fb_1 \fb_2}g_{\gamma;x,\fb_1\fb_2}(G),$$
   $\mathcal Q_{\fa,\fb_1\fb_2}$ is a sum of $\OO(1)$ many $Q$-graphs, and $\Err_D(\fa,\fb_1,\fb_2)$ is a sum of error graphs with $\Err_D(\fa,\fb_1,\fb_2)\prec W^{-D}$. $\mathcal A_{x,\fb_1\fb_2}$, $\mathcal R_{x,\fb_1\fb_2}$, and $\Gamma^{\star}_{p}(x,\fb_1,\fb_2)$ are sums of $\OO(1)$ many \emph{locally standard graphs} without $P/Q$ labels, which satisfy the following properties. 
   For any graph $\cal G'$, we denote by $k_{\mathsf p}(\cal G')$ the number of $G$ or \smash{$\overline G$} edges in ${\cal E}_G(\Gamma)\cup {\cal E}_{\overline G}(\Gamma)$ that have been broken (or pulled) during the expansions (i.e., solid edges that have disappeared in the final expansion \eqref{eq_Vexp}).  
\begin{itemize}
\item[(i)] $\mathcal R_{x,\fb_1\fb_2}$ is a sum of recollision graphs $\cal G'(x,\fb_1,\fb_2)$ with respect to the vertices in $\cal G$. That is, during the expansions, one of the following two cases occurs: (1) no new internal molecules are created, or (2) some existing molecules in $\cal G$ are merged with newly created internal molecules. Moreover, the weak scaling size of $\cal G'(x,\fb_1,\fb_2)$ satisfies 
\be\label{eq:sizesR} 
\wsize\Big(\sum_x\wt\thn_{\fa x}\cal G'(x,\fb_1,\fb_2)\Big) \lesssim \wsize(\cal G)\cdot \heta^{\frac{1}{2}k_{\mathsf p}(\cal G'(x,\fb_1,\fb_2))}.\ee

\item[(ii)] $\mathcal A_{x,\fb_1\fb_2}$ is a sum of non-recollision graphs $\cal G'(x,\fb_1,\fb_2)$, whose weak scaling size satisfies
\be\label{eq:sizesA} 
\wsize\Big(\sum_x\wt\thn_{\fa x}\cal G'(x,\fb_1,\fb_2)\Big) \lesssim \blam\wsize(\cal G)\cdot \heta^{\frac{1}{2}[1+k_{\mathsf p}(\cal G'(x,\fb_1,\fb_2))]}.
\ee

\item[(iii)] $\Gamma^{\star}_{p}(x,\fb_1,\fb_2)$ is a sum of locally regular \emph{star graphs} $\cal G_{p,\gamma}^\star(x,\fb_1,\fb_2)$ (with $\gamma$ denoting the label for our graphs), where $k_{\mathsf p}(\cal G_{p,\gamma}^\star)=2p$ and its weak scaling size satisfies 
\be\label{eq:starsize} \wsize\Big(\sum_{x} \wt \thn_{\fa x} \cal G_{p,\gamma}^\star(x,\fb_1,\fb_2)\Big) \lesssim \blam\wsize(\cal G)\cdot \heta^{p}.\ee
Moreover, these graphs $\cal G_{p,\gamma}^\star(x,\fb_1,\fb_2)$ have the following structure. 
\begin{itemize}
\item[(iii.1)] Only one new internal molecule $\cal M_x\ni x$ containing $x$ appears in the graphs $\cal G_{p,\gamma}^\star$. 
\item[(iii.2)] Some solid edges in ${\cal E}_G(\Gamma)\cup {\cal E}_{\overline G}(\Gamma)$ have been pulled to $\cal M_x$, and we denote them by
\be\label{eq:eiei} e_i=(a_i, b_i)\in {\cal E}_G (\Gamma), \quad \text{and}\quad \overline e_i=(\overline a_i, \overline b_i)\in {\cal E}_{\overline G} (\Gamma),\quad i=1,\ldots, p,\ee
where every $e_i$ (resp.~$\overline e_i$) represents a $G_{a_ib_i}$ (resp.~$\overline G_{\overline a_i \overline b_i}$) edge. Denote $b_0=\fb_1$, $\bar b_0=\fb_2$, and \smash{$E_p:=\{e_1,\ldots, e_p\}$}, \smash{$\overline E_p:=\{\overline e_1,\ldots, \overline e_p\}$}. Let 
\be\label{eq:pairingP}
\qquad\qquad \cal P(E_p,\overline E_p)\equiv \cal P_{b_0\bar b_0}(E_p,\overline E_p)=\{(a_{k},\overline a_{\pi(k)}): k=1,\ldots, p\}\cup \{(b_{l},\overline b_{\sigma(l)}): l=0,\ldots, p\}
\ee
denote a pairing of vertices such that each $a_k$ (resp.~$b_l$) vertex is paired with a $\overline a_{\pi(k)}$ (resp.~$\overline b_{\sigma(l)}$) vertex, under the restriction that $b_0$ is not paired with \smash{$\bar b_0$}. Here, $\pi:\{1,\ldots,p\}\to \{1,\ldots,p\}$ and $\sigma:\{0,\ldots,p\}\to \{0,\ldots,p\}$ represent bijections such that $\sigma(0)\ne 0$. 

\item[(iii.3)] Given $E_p$, $\overline E_p$, and $\cal P(E_p,\overline E_p)$, there exists a collections of internal vertices $\vec x(\ii):=(x_k(\ii):k=1,\ldots, p)$ and $\vec x(\ff):=(x_k(\ff):k=0,1,\ldots, p)$ belonging to $\cal M_x\setminus \{x\}$ such that 
\begin{align}
\cal G_{p,\gamma}^\star(x,\fb_1,\fb_2)&=\sum_{\omega} \sum_{\substack{x_k(\ii):k=1,\ldots, p; \\ x_k(\ff):k=0,\ldots, p}}  \mathfrak D_\omega^{\lambda}(x, \vec x(\ii),\vec x(\ff))\prod_{k=1}^p \left(G_{a_k x_k(\ii)}\overline G_{\overline a_{\pi(k)} x_k(\ii)}\right)  \nonumber\\
    & \times  \prod_{k=0}^p \left(G_{x_k(\ff)b_k}\overline G_{x_k(\ff)\bar b_{\sigma(k)} }\right) \cdot \frac{\Gamma}{\prod_{k=1}^p \left(G_{a_k b_k} \overline G_{\overline a_{k} \bar b_{k} }\right) }.\label{eq:Gpgamma}
\end{align} 
Here, the RHS involves $\OO(1)$ many deterministic graphs $\mathfrak D_\omega^{\lambda}(x, \vec x(\ii),\vec x(\ff))$ (where $\omega$ denotes the labels of these graphs). Each graph consists of external vertices $x, \vec x(\ii),\vec x(\ff)$, an order $\OO(1)$ coefficient, and waved (i.e., $S$ and $S^\pm$) edges connecting internal vertices. 
Moreover, $\mathfrak D_\omega^{\lambda}(x, \vec x(\ii),\vec x(\ff))$ satisfies that 
\be\label{eq:wavednumber} \# \left\{\text{waved edges in } \mathfrak D_\omega^{\lambda}\right\} = \# \left\{\text{vertices in } \cal M_x\setminus\{x\}\right\},\ee
and we have explicitly indicated the dependence of $ \fD_\omega^{\lambda}$ on $\lambda$.
\end{itemize}
Let $\gamma=(E_p,\overline E_p,\cal P(E_p,\overline E_p),\omega)$ label all graphs of the above forms. 
 Given $E_p$, $\overline E_p$, and $\cal P(E_p,\overline E_p)$, when $\lambda=0$, each $\fD_\omega^{\lambda=0}$ can be expressed as 
\be\label{eq:vertex_sum_zero_reduce}
\fD_\omega^{\lambda=0}(x,\vec x(\ii),\vec x(\ff))=\Delta_\omega \cdot  \prod_{k=1}^p  S_{xx_k(\ii)}(\lambda=0)\cdot \prod_{k=0}^p S_{xx_k(\ff)}(\lambda=0)
\ee
for a deterministic coefficient $\Delta_\omega\equiv \Delta_\omega(E_p, \overline E_p,\cal P(E_p,\overline E_p))=\OO(1)$. These coefficients are polynomials in $m, \bar m, (1-m^2)^{-1}, (1-\bar m^2)^{-1}$ (with $\OO(1)$ coefficients) and satisfy the {\bf molecule sum zero property} (or the {\bf coupling or $(2p+2)$-vertex renormalization}):
\be\label{eq:vertex_sum_zero}
\sum_\omega \Delta_\omega(E_p, \overline E_p,\cal P(E_p,\overline E_p)) =\OO(\eta).
\ee
The remainder part satisfies that 
\be\label{eq:vertex_sum_zero_diff}
\size\left[\fD_\omega^{\lambda}(x,\vec x(\ii),\vec x(\ff))- \fD_\omega^{\lambda=0}(x,\vec x(\ii),\vec x(\ff))\right] \lesssim \blam^{-1} W^{-(2p+1)d} .
\ee
\end{itemize}
\end{theorem}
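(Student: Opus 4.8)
The plan is to obtain the $V$-expansion \eqref{eq_Vexp} by a three-stage argument: (1) substitute the complete $T$-expansion \eqref{mlevelTgdef weak} into the factor $T_{\fa,\fb_1\fb_2}$ of $\mathcal G = T_{\fa,\fb_1\fb_2}\cdot\Gamma$; (2) run the $Q$-expansions (\Cref{subsec_Qexp}) and local expansions (\Cref{sec:local}) to convert the resulting terms into locally standard graphs and $Q$-graphs, keeping careful track of how the $G$/$\overline G$ edges of $\Gamma$ get ``pulled'' onto the newly created molecule $\cal M_x$; and (3) identify the ``star graph'' part $\Gamma^\star_p$, isolate its $\lambda=0$ leading coefficients $\fD_\omega^{\lambda=0}$, and prove the molecule sum zero property \eqref{eq:vertex_sum_zero}. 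For stage (1)--(2), since the complete $T$-expansion already has the form ``$\wt\thn$ times deterministic $\times$ local $G$-factors plus $Q$-graphs plus error'', most terms fall directly into $m\wt\thn_{\fa\fb_1}\overline G_{\fb_1\fb_2}\Gamma$, the zero-mode term, $\sum_x\wt\thn_{\fa x}\cal D(x,\fb_1,\fb_2)\Gamma$, $\mathcal Q_{\fa,\fb_1\fb_2}$, or $\Err_D$; the only nontrivial work is handling the $Q$-graphs $\sum_x\wt\thn_{\fa x}\mathcal Q_{x,\fb_1\fb_2}$, where the $Q_y$ label must be removed via \Cref{Q_lemma1,Q_lemma2,Q_lemma3}, and each removal step either pulls solid edges of $\Gamma$ toward the vertex $y\in\cal M_x$ (creating recollisions, contributing to $\mathcal R$) or creates genuinely new structure. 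Graphs where new internal molecules are created without recollision go into $\mathcal A_{x,\fb_1\fb_2}$ (with the extra $\blam\heta^{1/2}$ per the scaling-size bookkeeping of \Cref{def scaling} and the $Q$-expansion scaling gain \eqref{eq:smallerscalingQ}), and graphs where exactly $2p$ solid edges of $\Gamma$ have been pulled to a single new molecule $\cal M_x$, pairing up into $p$ ``incoming'' and $p+1$ ``outgoing'' $G\overline G$ pairs, become the star graphs $\cal G^\star_{p,\gamma}$; the estimates \eqref{eq:sizesR}, \eqref{eq:sizesA}, \eqref{eq:starsize} then follow from counting $\heta^{1/2}$ per broken solid edge exactly as in the proof of \Cref{lem highp1} (cf.~\eqref{eq:explainQsize}).

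The structural claims (iii.1)--(iii.3) are essentially a bookkeeping description of what a single round of $Q$/local expansion on a vertex of $\cal M_x$ produces: the waved-edge count identity \eqref{eq:wavednumber} is the statement that each internal vertex of $\cal M_x\setminus\{x\}$ was born attached to exactly one waved ($S$ or $S^\pm$) edge (this is how vertices are created in Lemmas \ref{ssl}--\ref{T eq0} and in the resolvent expansions \eqref{resol_exp0}--\eqref{weight_taylor}), and the pairing structure \eqref{eq:pairingP} simply records that after pulling, every remaining $G$ edge has its two endpoints relabelled to lie in $\cal M_x$. The form \eqref{eq:vertex_sum_zero_reduce} at $\lambda=0$ comes from the fact that when $\lambda=0$ the model is a direct sum of independent GUE blocks, $S=S(0)=I_n\otimes\mathbf E$, $S^\pm=S^\pm(0)$, and every waved edge in $\fD_\omega^{\lambda=0}$ degenerates: summing over the internal vertices of $\cal M_x$ collapses each $\fD_\omega^{\lambda=0}$ to a product of $S_{xx_k}(0)$ factors times a scalar polynomial $\Delta_\omega$ in $m,\bar m,(1-m^2)^{-1},(1-\bar m^2)^{-1}$ (these being the only scalars that appear as coefficients in the local expansion rules, with $m\to m_{sc}$ at $\lambda=0$). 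The perturbative bound \eqref{eq:vertex_sum_zero_diff} then follows from \Cref{lem:estM} and \Cref{lem:estS+}: the differences $m(\lambda)-m(0)$, $S(\lambda)-S(0)$, $S^\pm(\lambda)-S^\pm(0)$ are each $\OO(\blam^{-1})$ relative to their $\lambda=0$ values (up to exponentially decaying tails), and there are $2p+1$ factors of $S$ carrying a $W^{-d}$ each plus the waved-edge normalization, giving the stated $\blam^{-1}W^{-(2p+1)d}$.

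The heart of the proof, and the main obstacle, is \eqref{eq:vertex_sum_zero}: that $\sum_\omega\Delta_\omega(E_p,\overline E_p,\cal P)=\OO(\eta)$. The strategy, as indicated in \Cref{sec:idea}, is a comparison argument with the $W=L$ (GUE) case. At $\lambda=0$ the problem reduces to a single GUE block, so $\Delta_\omega$ is exactly the leading coefficient one obtains when expanding the corresponding ``loop of $2p+2$ $G$-edges'' graph for an $N\times N$ GUE matrix. One considers the $\Z_L^d$-free version: take the graph $\sum_{x_1,\dots}\prod(G\overline G\text{-pairs})$ that would arise on the LHS of the schematic picture \eqref{eq:diff_renorm}, sum over all internal vertices, and evaluate it two ways. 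On one hand, by Ward's identity (\Cref{lem-Ward}) and the GUE local law, the sum of the ``un-renormalized'' loop is $\OO(N\eta^{-(2p+1)})$ with high probability. On the other hand, expanding systematically produces precisely the terms $\Omega(N\eta^{-(2p+2)})\cdot v(X_{2p+2})$ where $v(X_{2p+2})=\sum_\omega\Delta_\omega$ is the vertex function at the central molecule. Matching the two orders forces $\sum_\omega\Delta_\omega=\OO(\eta)$. Making this rigorous requires: (a) showing the vertex function depends only on $(E_p,\overline E_p,\cal P)$ and not on the ambient graph $\Gamma$ (otherwise the cancellation cannot be transplanted) — this is handled by the locally-standard normal form, which ``factorizes'' the star subgraph from the rest; (b) controlling the error graphs in the GUE expansion so they do not contaminate the order-matching; and (c) verifying that the coefficients $\Delta_\omega$ computed in the GUE model literally coincide with those in the $\lambda=0$ RBSO, which holds because the $\lambda=0$ RBSO \emph{is} a block-diagonal GUE and the relevant expansion is entirely local to one block. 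I expect step (a) — proving the vertex function is a universal function of the combinatorial data $(E_p,\overline E_p,\cal P)$ alone — to be the most delicate point, since a priori the detailed topology of how the $2p$ pulled edges reconnect through $\Gamma$ could matter; the resolution is to observe that $\Gamma/\prod(G\overline G)$ has been completely detached from $\cal M_x$ in \eqref{eq:Gpgamma}, so $\fD_\omega$ genuinely only sees the star. Full details will follow the template of \cite[Lemma 5.8]{BandI} and the proof in \Cref{sec:sumzero}; the new ingredient is the reduction of the molecule-level cancellation to the $\lambda=0$ GUE computation via \Cref{lem:estM} and \Cref{lem:estS+}.
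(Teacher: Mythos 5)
Your overall architecture is the same as the paper's: substitute the complete $T$-expansion and run $Q$/local expansions while tracking pulled edges (the star graphs indeed arise from $\sum_x\wt\thn_{\fa x}Q_x(G_{x\fb_1}\overline G_{x\fb_2})\Gamma$); reduce to $\lambda=0$ where the block structure makes $S,S^\pm$ explicit, so that $\fD_\omega^{\lambda=0}$ collapses to \eqref{eq:vertex_sum_zero_reduce} and the remainder is controlled perturbatively (the relevant comparison is \eqref{S+xy-0} of \Cref{lem deter} rather than \Cref{lem:estS+}, a minor slip); establish that the vertex coefficient depends only on the combinatorial pairing data (the paper does this via the bijection argument \eqref{eq:permu_inv} and \Cref{lem:aLG}, showing dependence only on the loop structure of $\Pi$); and finally prove the sum zero property by evaluating a GUE loop-graph sum two ways and matching orders, exactly the mechanism of \Cref{lem:sameindices} and \Cref{lem_Lp1_St}. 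One small technical point: for GUE the renormalized edge $\wt\thn$ (and $\zthn$) degenerates since $S_{xy}\equiv\sN^{-1}$, so the GUE side must be run with the $T$-expansion \eqref{seconduniversal} in which the zero mode is not removed.

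There is, however, one concrete gap in the order-matching step as you sketch it. You attribute the needed control to ``(b) controlling the error graphs in the GUE expansion,'' but the dangerous contributions are not error graphs: they are star graphs in which only $q<p$ pairs of external edges have been pulled, and star graphs at level $p$ whose pairing generates more than one loop. By the size counting \eqref{eq:sizeloopsstar}, such a term contributes $(\sN\eta)^{-1}\cdot\sN^{2p+2}(\sN\eta)^{-(2p+1)}=\eta^{-(2p+2)}$ — the \emph{same} order as the target term $\Delta(\Pi)(\im m/\eta)^{2p+2}$ — unless one already knows that its coefficient $\Delta$ is $\OO(\eta)$. So the matching argument is circular at fixed $p$ and must be organized as an induction: first an induction on $p$, using the hypothesis $\Delta(\Pi')=\OO(\eta)$ for all pairings with fewer pulled pairs (this is the hypothesis in \Cref{lem_For1} and is what makes \eqref{eq;Vstar1} small), and second a reduction of multi-loop pairings to the single-loop case via the factorization $\Delta(\Pi)=C_{\Pi\setminus\Pi'}\Delta(\Pi')$ of \Cref{lem_F1T2}, which again invokes the induction hypothesis. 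Without these two reductions the two-way evaluation only yields that a certain \emph{linear combination} of the $\Delta$'s at level $p$ is $\OO(\eta)$, not the individual statement \eqref{eq:vertex_sum_zero}. Supplying this induction (together with the distinct-index bookkeeping of \Cref{lem:sameindices}, which needs the multi-resolvent local law and not just Ward's identity) closes the argument along the paper's lines.
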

To help the reader understand the structure of star graphs, we draw an example with $p=2$: 
\be\label{eq:star1}
% \parbox[c]{0.30\linewidth}{\includegraphics[width=\linewidth]{star5.pdf}}
\parbox[c]{14cm}{\includegraphics[width=\linewidth]{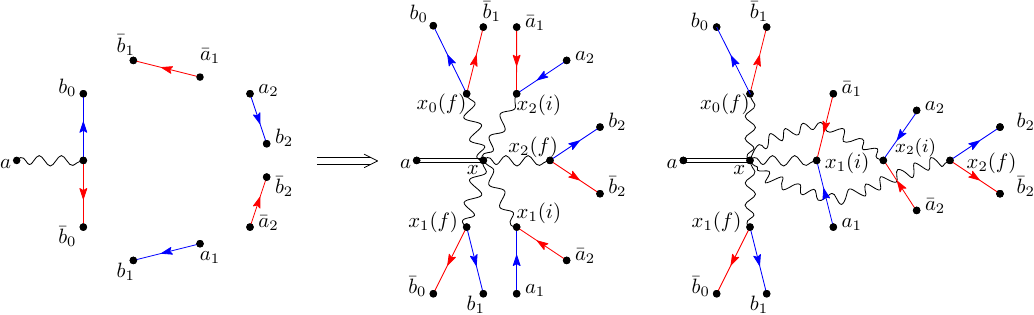}}
\ee
The first graph corresponds to the original graph $\cal G$, where we only draw the $T$-variable $T_{\fa,\fb_1\fb_2}\equiv T_{a,b_0\bar b_0}$ and the 4 solid edges pulled in the star graphs. It may lead to the two star graphs on the RHS that correspond to the following two pairings: 
\begin{align}
&\cal P(E_2,\bar E_2)=\{ \{b_0,\bar b_1\}, \{a_2,\bar a_1\}, \{b_2,\bar b_2\}, \{a_1,\bar a_2\},\{b_1,\bar b_0\}\} ,\quad \text{or} \ \label{eq:6-loop}\\
&\cal P(E_2,\bar E_2)=\{ \{b_0,\bar b_1\},\{b_1,\bar b_0\}, \{a_1,\bar a_1\}, \{a_2,\bar a_2\}, \{b_2,\bar b_2\}\}.\label{eq:4+2-loop}
\end{align} 
In these two star graphs, it is easy to observe their star structures where $x$ is the center and the waved edges may represent $S$ or $S^\pm$ edges. We also note that, unlike these two examples, there may be additional vertices along the path between $x$ and a vertex in $\vec x(\ii)$ or $\vec x(\ff)$.  

\begin{proof}[\bf Proof of \Cref{thm:Vexp}]
For simplicity of presentation, in the following proof, we regard all vertices in the original graph $\cal G$ as external vertices, while the internal vertices are the newly produced ones during the expansions. The first three graphs on the RHS of \eqref{eq_Vexp} are obtained by replacing $T_{\fa,\fb_1\fb_2}$ with the first five terms on the RHS of \eqref{mlevelTgdef weak}. $\cal Q_{\fa,\fb_1\fb_2}$ contains all the $Q$-graphs obtained from the expansion process, and $\Err_D(\fa,\fb_1,\fb_2) $ contains all the error graphs of order $\OO_\prec (W^{-D})$. We include all recollision graphs into $\mathcal R_{x,\fb_1\fb_2}$, where the bound \eqref{eq:sizesR} is due to the fact that every pulling of an edge in ${\cal E}_G(\Gamma)\cup {\cal E}_{\overline G}(\Gamma)$ decreases the weak scaling size at least by \smash{$\heta^{1/2}$}. We include all the other graphs satisfying \eqref{eq:sizesA} into $\cal A_{x,\fb_1\fb_2}$. For the rest of the proof, we focus on explaining how the star graphs $\Gamma^{\star}_p$ are obtained and proving the molecule sum zero property \eqref{eq:vertex_sum_zero}, which is the core of the proof of \Cref{thm:Vexp}. 

We claim that all the star graphs come from the expansions of the following term: 
\be\label{eq:prior_star}
\cal G_0=\sum_x \wt \thn_{\fa x} Q_x\left(G_{x \fb_1}   \overline G_{x \fb_2}\right)\Gamma.\ee
First, suppose we have replaced $T_{\fa,\fb_1\fb_2}$ by a $Q$-graph, denoted by $\sum_{x}\wt \thn_{\fa x} \mathcal Q^{(\omega)}_{x,\fb_1\fb_2}$, that is {\bf not} in 
\be\label{eq:largeQs}\sum_x \wt\thn_{\fa x}Q_x\left(G_{x \fb_1}   \overline G_{x \fb_2}\right) - |m|^2\sum_x \wt\thn_{\fa x}Q_x\left(T_{x, \fb_1\fb_2}\right) -   m\wt \thn_{\fa\fb_1} Q_{\fb_1} ( \overline G_{\fb_1 \fb_2}),\ee
where these graphs correspond to the first three terms on the RHS of \eqref{eq:Q2}. Then, we have 
$$ \wsize\Big( \sum_{x}\wt \thn_{\fa x} \mathcal Q^{(\omega)}_{x,\fb_1\fb_2}\Big)\le \blam \wsize (T_{\fa,\fb_1\fb_2})\cdot \heta^{1/2}.$$
Thus, applying the $Q$ and local expansions to 
$\sum_{x}\wt \Theta_{\fa x} \mathcal Q^{(\omega)}_{x,\fb_1\fb_2} \Gamma$ will produce graphs satisfying \eqref{eq:sizesA} and can hence be included into the last four terms on the RHS of \eqref{eq_Vexp}. 
The third graph in \eqref{eq:largeQs} does not contain any new internal vertex, so the expansion of \smash{$ - m\wt \thn_{\fa\fb_1}  Q_{\fb_1} ( \overline G_{\fb_1 \fb_2}) \Gamma$} will only produce recollision, $Q$, and error graphs belonging to the last three terms on the RHS of \eqref{eq_Vexp}.
Next, for the second term in \eqref{eq:largeQs}, using \eqref{resol_exp0} and \smash{$Q_x(G^{(x)}_{y\fb_1}\overline G^{(x)}_{y\fb_2})=0$}, we can write  
\begin{align}
Q_x\left(T_{x,\fb_1\fb_2}\right)&=\sum_y S_{xy} Q_x\left(\frac{G_{yx}G_{x\fb_1}}{G_{xx}}\overline G_{y\fb_2}\right) + \sum_y S_{xy} Q_x\left(G_{y \fb_1} \frac{\bar G_{yx}\bar G_{x\fb_2}}{\bar G_{xx}}\right)\nonumber\\
    &-\sum_y S_{xy} Q_x\left(\frac{G_{yx}G_{x\fb_1}}{G_{xx}}\frac{\bar G_{yx}\bar G_{x\fb_2}}{\bar G_{xx}}\right).\label{eq:QT}
\end{align} 
All three terms on the RHS have weak scaling size at most $ \wsize(T_{x,\fb_1\fb_2})\heta^{1/2}$, so $\sum_x \wt\thn_{\fa x}Q_x\left(T_{x,\fb_1\fb_2}\right)\Gamma$ will also only produce graphs belonging to the last four terms on the RHS of \eqref{eq_Vexp}.

Now, we focus on the expansion of \eqref{eq:prior_star}. Following \eqref{eq:eiei}, we denote the edges in $\cal E_G$ and $\cal E_{\overline G}$ as 
$$ \cal E_G=\{e_k=(a_k, b_k): k=1,\ldots, |\cal E_G|\},\quad \cal E_{\overline G}=\{\overline e_k=(\overline a_k, \overline b_k): k=1,\ldots, |\cal E_{\overline G}|\},$$
where every $e_k$ (resp.~$\overline e_k$) represents a $G_{a_k b_k}$ (resp.~$\overline G_{\overline a_k \overline b_k}$) edge.
%\begin{enumerate}

%\item 
\medskip

\noindent I. {\bf $Q$-expansions.} In Step 1 of the $Q$-expansion, using \eqref{QG2}, we can write $ \cal G_0$ as a sum of $Q$-graphs plus 
$\sum_x \wt\thn_{\fa x} Q_x\left(G_{x \fb_1}   \overline G_{x \fb_2}\right)  \Gamma_1$, where $\Gamma_1$ is defined as
\be\label{eq:Qstep1}
\Gamma_1:=  \sum_{\substack{1\le k\le |\cal E_G|,0\le l\le |\cal E_{\overline G}| \\ k+l\ge 1}} \sum_{\substack{\vec {\mathbf i}_k=(i_1,\ldots, i_k)\\ \subset \{1,\ldots,|\cal E_G|\} } }\sum_{\substack{\vec {\mathbf j}_l=(j_1,\ldots, j_l)\\     \subset \{1,\ldots,|\cal E_{\overline G}|\}}} (-1)^{k+l-1} \prod_{r=1}^k \frac{G_{a_{i_r}x}G_{xb_{i_r}}}{G_{xx}} \cdot \prod_{s=1}^l\frac{\overline G_{\overline a_{j_s}x}\overline G_{x\overline b_{j_s}}}{\overline G_{xx} }\cdot \Gamma_{- \vec {\mathbf i}_k , \vec {\mathbf j}_l} \ ,
\ee
where $\Gamma_{- \vec {\mathbf i}_k , \vec {\mathbf j}_l}$ is the graph obtained by removing the edges labeled by $\vec {\mathbf i}_k $ and $ \vec {\mathbf j}_l$ from $\Gamma$:
$$\Gamma_{- \vec {\mathbf i}_k , \vec {\mathbf j}_l}:=\frac{\Gamma}{\prod_{r=1}^k G_{a_{i_r}b_{i_r}}\cdot \prod_{s=1}^l G_{\overline a_{j_s}\overline b_{j_s}} }.$$
In the above expressions, if we replace some $1/G_{xx}$ by $1/G_{xx}-1/m$, then the resulting graph will satisfy \eqref{eq:sizesA} and its expansions will produce graphs in the last four terms on the RHS of \eqref{eq_Vexp}. Hence, to get the star graphs, it remains to expand the following expression:  
\be\label{eq:Qstep1_2}
\Gamma_2:=  \sum_{\substack{1\le k\le |\cal E_G|,0\le l\le |\cal E_{\overline G}| \\ k+l\ge 1}} \sum_{\substack{\vec {\mathbf i}_k=(i_1,\ldots, i_k)\\ \subset \{1,\ldots,|\cal E_G|\} } }\sum_{\substack{\vec {\mathbf j}_l=(j_1,\ldots, j_l)\\     \subset \{1,\ldots,|\cal E_{\overline G}|\}}} \frac{(-1)^{k+l-1}}{m^{k}\overline m^{l}} \prod_{r=1}^k  G_{a_{i_r}x}G_{xb_{i_r}}  \cdot \prod_{s=1}^l\overline G_{\overline a_{j_s}x}\overline G_{x\overline b_{j_s}} \cdot \Gamma_{- \vec {\mathbf i}_k , \vec {\mathbf j}_l}  \ .
\ee

%\item 

Given $k,\ l,\ \vec {\mathbf i}_k,$ and $\vec {\mathbf j}_l$, we need to further expand 
$$\cal G_{\vec {\mathbf i}_k,\vec {\mathbf j}_l}=\frac{(-1)^{k+l-1}}{m^{k}\overline m^{l}} \sum_x \wt\thn_{\fa x} Q_x\left(G_{x \fb_1}   \overline G_{x \fb_2}\right)  \prod_{r=1}^k  G_{a_{i_r}x}G_{xb_{i_r}}  \cdot \prod_{s=1}^l \overline G_{\overline a_{j_s}x}\overline G_{x\overline b_{j_s}} \cdot \Gamma_{- \vec {\mathbf i}_k , \vec {\mathbf j}_l} .$$
In Step 2 of the $Q$-expansion, using the expansion \eqref{gH0}, we can write that 
\begin{align*}
    Q_x\left(G_{x \fb_1}   \overline G_{x \fb_2}\right) =&~ G_{x \fb_1}   \overline G_{x \fb_2} - m\delta_{x\fb_1}P_x( \overline G_{x \fb_2})- |m|^2 T_{x, \fb_1  \fb_2} +  |m|^2Q_x\left(T_{x, \fb_1  \fb_2}\right)\\
    %\sum_y S_{xy}G_{y \fb_1}   \overline G_{y \fb_2}+  Q_x\left(T_{x, \fb_1  \fb_2}\right)\\
    &~- m  \sum_y S_{xy}P_x\left[(\overline G_{xx}-\overline m)G_{y \fb_1}   \overline G_{y \fb_2}\right]  - m\sum_y S_{xy} P_x\left[(G_{yy}-m) G_{x \fb_1}   \overline G_{x \fb_2}\right].
\end{align*}
The second term on the RHS is a $\{\fb_1\}$-recollision graph, the last two terms contain light weights, and the fourth term can be written as \eqref{eq:QT}. Hence, expansions involving them will produce recollision graphs or graphs satisfying \eqref{eq:sizesA}, which can be included in the last four terms on the RHS of \eqref{eq_Vexp}. To get star graphs, we only need to consider the expansions of  
\be\label{eq:G'ij}
\cal G'_{\vec {\mathbf i}_k,\vec {\mathbf j}_l}=\frac{(-1)^{k+l-1}}{m^{k}\overline m^{l}} \sum_x \wt\thn_{\fa x}  \left(G_{x \fb_1}   \overline G_{x \fb_2}-|m|^2 T_{x, \fb_1 \fb_2}\right) \cal F_{x,\vec {\mathbf i}_k , \vec {\mathbf j}_l},  
%\prod_{r=1}^k  G_{a_{i_r}x}G_{xb_{i_r}}  \cdot \prod_{s=1}^l \overline G_{\overline a_{j_s}x}\overline G_{x\overline b_{j_s}} \cdot \Gamma_{- \vec {\mathbf i}_k , \vec {\mathbf j}_l} ,
\ee
where we have abbreviated that 
$$ \cal F_{x,\vec {\mathbf i}_k , \vec {\mathbf j}_l}:= \prod_{r=1}^k  G_{a_{i_r}x}G_{xb_{i_r}}  \cdot \prod_{s=1}^l \overline G_{\overline a_{j_s}x}\overline G_{x\overline b_{j_s}} \cdot \Gamma_{- \vec {\mathbf i}_k , \vec {\mathbf j}_l}\ .$$

\noindent II. {\bf Edge expansions.} Next, we apply the edge expansions in \Cref{Oe14} (with $y_1=\fb_1$) to the graph
$G_{x \fb_1}   \overline G_{x \fb_2} \cdot \cal F_{x,\vec {\mathbf i}_k , \vec {\mathbf j}_l}$. 
In the resulting graphs, there is one term (i.e., the pairing between $G_{x \fb_1}$ and $\overline G_{x \fb_2}$) that cancels the term \smash{$ -|m|^2 T_{x, \fb_1 \fb_2}  \cdot \cal F_{x,\vec {\mathbf i}_k , \vec {\mathbf j}_l}.$}
Some graphs contain light weights, so expansions involving them will produce graphs in the last four terms on the RHS of \eqref{eq_Vexp}. Furthermore, terms involving pairings between $G_{x\fb_1}$ and $G_{xb_{i_r}}$ or $\overline G_{\overline a_{j_s}x}$ contains one more internal solid edge $G_{x\al}$ or $\overline G_{\al x}$. Hence, their expansions will also lead to graphs in the last four terms on the RHS of \eqref{eq_Vexp}. Then, we only need to expand the following term to get the star graphs:
\begin{align*}
\cal G''_{\vec {\mathbf i}_k,\vec {\mathbf j}_l}=&~\frac{(-1)^{k+l-1}}{m^{k}\overline m^{l}}  \sum_{x,\al} \wt\thn_{\fa x}  S_{x\al} \left[m^2\sum_{r=1}^k\left(G_{\al \fb_1} G_{a_{i_r}\al}\right)  \frac{\cal F_{x,\vec {\mathbf i}_k , \vec {\mathbf j}_l}}{G_{a_{i_r}x}} +  |m|^2 \sum_{s=1}^l \left(G_{\al \fb_1} \overline G_{\al\overline b_{j_s}}\right)  \frac{\cal F_{x,\vec {\mathbf i}_k , \vec {\mathbf j}_l}}{\overline G_{x\overline b_{j_s}}} \right]\\
&~ - \frac{(-1)^{k+l-1}}{m^{k}\overline m^{l}}  \sum_{x,\al} \wt\thn_{\fa x}  S_{x\al}   \overline G_{x \fb_2}   \prod_{r=1}^k  G_{a_{i_r}x}G_{xb_{i_r}}  \cdot \prod_{s=1}^l \overline G_{\overline a_{j_s}x}\overline G_{x\overline b_{j_s}} \cdot \left(G_{\al \fb_1}\partial_{h_{\al x}}\Gamma_{- \vec {\mathbf i}_k , \vec {\mathbf j}_l}\right) .
\end{align*}

Now, we apply the edge expansions in \Cref{Oe14} repeatedly. More precisely, suppose we have a graph (such as \smash{$\cal G''_{\vec {\mathbf i}_k,\vec {\mathbf j}_l}$}) that still contains $G/\overline G$ edges attached to $x$. Assume it takes the form given in \eqref{multi setting}, with $f(G)$ expressed as  
\be\label{eq:extf}f(G)=f_{ext}(G)\wt f(G),\ee
where $f_{ext}(G)$ is the subgraph consisting of all the external solid edges, and $\wt f(G)$ is the remaining graph consisting of the solid edges between external and internal vertices in $\cal M_x\setminus \{x\}$. 
Then, we apply the edge expansion in \eqref{Oe1x} and only keep terms of the following forms:
\be\label{eq:onlygraphs}
\begin{split} 
|m|^2\sum_\al S_{x\al }G_{\al y_1} & \overline G_{\al y'_i} \frac{\cal G}{G_{x y_1} \overline G_{xy_i'}},\quad  m^2 \sum_\al S_{x\al }G_{\al y_1} G_{w_i \al}\frac{\cal G}{G_{xy_1}G_{w_i x}}  ,\\
&- m    \sum_\al S_{x\al } \frac{G_{\al y_1}\partial_{ h_{\al x}}f_{ext}(G)}{G_{x y_1}f_{ext}(G)} \cal G.
\end{split}\ee
The first two cases correspond to neutral pairings at $\al$ (recall ``neutral charge" defined in \Cref{deflvl1}), while the last case involves the pulling of some external edges.  
All the other graphs are either recollision graphs or contain light weights or internal solid edges within the molecule $\cal M_x$, so expansions involving them will only contribute to the last four terms on the RHS of \eqref{eq_Vexp}. 
Above, we have considered the edge expansion with respect to $G_{xy_1}$. One can also perform edge expansions with respect to other edges in 
$$\prod_{i=2}^{k_1}G_{x y_i}  \cdot  \prod_{i=1}^{k_2}\overline G_{x y'_i} \cdot \prod_{i=1}^{k_3} G_{ w_i x} \cdot \prod_{i=1}^{k_4}\overline G_{ w'_i x}.$$

We repeatedly apply the edge expansions and keep only the graphs as in \eqref{eq:onlygraphs} that will finally contribute to the star graphs. We continue this process until all the resulting graphs have no solid edges attached to $x$. We denote these graphs by 
\be\label{eq:G'''}
\cal G'''_{\vec {\mathbf i}_k,\vec {\mathbf j}_l}=\sum_{x} \wt\thn_{\fa x} \sum_{\omega} \cal G_{\omega}(x,\vec {\mathbf i}_k,\vec {\mathbf j}_l,\Gamma), \ee
with $\omega$ representing their labels. It is not hard to see that the edge we choose at each step of the above edge expansion process does not affect the final expression in \eqref{eq:G'''}, as it should. In fact, from \eqref{eq:onlygraphs}, we see that the RHS of \eqref{eq:G'''} should involve all pairings of the solid edges connected to $x$, including those pulled to $x$ by the partial derivatives $\partial_{h_{\al x}}$.  

%\item 

\medskip

\noindent III. {\bf $GG$ expansions}. After Step II, no edge connects to vertex $x$ in the graphs $\cal G_\omega$ in \eqref{eq:G'''}. Every internal vertex, say $\al$, in $\cal M_x\setminus\{x\}$ connects to $x$ through an $S$ edge and to external vertices through two solid edges. If these two solid edges have opposite charges, then $\al$ is locally regular and is one of the $x_k(\ii)$ or $x_k(\ff)$ vertices in \eqref{eq:Gpgamma}. However, there are also some internal vertices, say $x'$, connected with two edges of the same charge: $ G_{x' y}G_{y' x'}$ or $\overline G_{x'y} \overline G_{y'x'}$, where $y$ and $y'$ are external vertices. Then, we apply the $GG$ expansion in \Cref{T eq0} to it (with $x$ in \eqref{Oe2x} replaced by $x'$). Consider such a graph of the form $G_{x'y}   G_{y' x' }  f (G)$, where $f$ takes the form \eqref{eq:extf}. In this expansion, we only keep the terms 
\begin{align*}
- m \sum_{ \al}  S_{x'\al}G_{\al y} G_{y' x'} \wt f(G)\partial_{ h_{\al x}}f_{ext}(G),\quad  - m^3 \sum_{\al,\beta} S^{+}_{x'\al} S_{\al\beta}G_{\beta y} G_{y' \al} \wt f(G)\partial_{ h_{\beta \al}}f_{ext}(G).
\end{align*}
For each other graph, one of the following two cases holds:
\begin{itemize}
    \item it has a waved edge between $\cal M_x$ and an external vertex, and hence becomes a recollision graph;

    \item it contains light weights or internal solid edges within the molecule $\cal M_x$. 
\end{itemize}
Thus, expansions involving them will only produce graphs in the last four terms on the RHS of \eqref{eq_Vexp}. For a graph of the form $\overline G_{x'y}   \overline G_{y' x' }  f (G)$, we follow a similar expansion process by taking complex conjugates. 

We continue these $GG$ expansions until there are no same-colored pairings (i.e., $GG$ or $\bar G\bar G$ pairings). Again, we observe that the order of the $GG$ expansions (i.e., which pair of $GG$ or $\overline G\overline G$ edges we choose to expand at each step) does not affect the final result.

\medskip

After Step III, if some graphs are still not locally regular, we return to Step II and apply the edge expansion to them. By repeatedly applying Steps II and III, we obtain the star graphs in \eqref{eq_Vexp} (after renaming the vertices of the edges in $E_p\cup \bar E_p$), with $2p$ representing the external edges that have been pulled (once and only once) during the expansion process. 
As a result of Steps I--III defined above, it is easy to see that these star graphs satisfy \eqref{eq:starsize} and properties (iii.1)--(iii.3). It remains to prove the properties \eqref{eq:vertex_sum_zero_reduce}--\eqref{eq:vertex_sum_zero_diff}. 
First, the property \eqref{eq:vertex_sum_zero_reduce} follows easily from the definition of $S$. More precisely, when $\lambda=0$, we have $m(z,\lambda=0)=m_{sc}(z)$, and 
\be\label{eq:SSpm} S_{xy}(\lambda=0)=W^{-d}\mathbf 1_{[x]=[y]},\quad S^+_{xy}(\lambda=0)=\overline S^-_{xy}(\lambda=0) = [1-m_{sc}^2(z)]^{-1}W^{-d}\mathbf 1_{[x]=[y]}. \ee
Then, $\fD_\omega^{\lambda=0}(x,\vec x(\ii),\vec x(\ff))$ consists of a coefficient $f_\omega(z)$, which is a monomial of $m_{sc}(z)$, $\overline m_{sc}(z)$, $(1-m_{sc}^2(z))^{-1}$, and $(1-\overline m_{sc}^2(z))^{-1}$, and a graph (denoted by $\cal G_{\omega}$) that consists of waved edges representing $S(\lambda=0)$ and $S^\pm(\lambda=0)$ between internal vertices in $\cal M_x$. 
With \eqref{eq:SSpm}, we can write $\cal G_{\omega}$ as
\begin{align*}
    \cal G_{\omega} =&~  \frac{W^{-\# (\text{waved edges})\cdot d}W^{ (|\cal M_x| -2p -2) d}}{(1-m_{sc}^2(z))^{\# \{S^+ \text{ edges}\}} (1-\overline m_{sc}^2(z))^{\# \{S^- \text{ edges}\}}}  \prod_{k=1}^p\mathbf 1(x_k(\ii)\in [x])\cdot \prod_{k=0}^p\mathbf 1(x_k(\ff)\in [x])\\
    %=& ~ \left( \frac{m_{sc}^2(z)}{1-m_{sc}^2(z)}\right)^{\# (S^+ \text{ edges})}\left( \frac{\overline m_{sc}^2(z)}{1-\overline m_{sc}^2(z)}\right)^{\# (S^- \text{ edges})} W^{ -2p d}\mathbf 1(x_k(i),x_k(f)\in [x],k=1,\ldots, p)\\
    =&~ \frac{1}{(1-m_{sc}^2(z))^{\# \{S^+ \text{ edges}\}} (1-\overline m_{sc}^2(z))^{\# \{S^- \text{ edges}\}}}  \prod_{k=1}^p  S_{xx_k(\ii)}(\lambda=0)\cdot \prod_{k=0}^p S_{xx_k(\ff)}(\lambda=0) .
\end{align*} 
Above, $W^{ (|\cal M_x| -2p -2) d}$ accounts for the factors arising from the summation over the internal vertices in $\cal M_x\setminus \{x, x_1(\ii),\ldots, x_p(\ii), x_0(\ff),x_1(\ff),\ldots, x_p(\ff)\}$, and we have used \eqref{eq:wavednumber} in the second step. This concludes \eqref{eq:vertex_sum_zero_reduce} by setting  
$$ \Delta_\omega=f_\omega(z) \left(1-m_{sc}^2(z)\right)^{-\# \{S^+ \text{ edges}\}} \left(1-\overline m_{sc}^2(z)\right)^{-\# \{S^- \text{ edges}\}}.$$
The estimate \eqref{eq:vertex_sum_zero_diff} is a simple consequence of the fact that $m(z)-m_{sc}(z)=\OO(\lambda^2)$, 
$ S_{xy}(\lambda)-S_{xy}(\lambda=0) = \lambda^2\cdot W^{-d}\mathbf 1_{[x]\sim [y]},$ and \eqref{S+xy-0}. 

Finally, it remains to establish the molecule sum zero property \eqref{eq:vertex_sum_zero}, which is the core part of the proof. From the construction of the star graphs, we observe that $\Delta_\omega$ does not depend on the order of expansions or the choices of the subsets $E_p$ and $\overline E_p$---it depends solely on the pairing $\cal P(E_p,\overline E_p)$. To give a rigorous statement of this fact, let $E'_p=\{e'_1,\ldots, e'_p\}$ and \smash{$\overline E'_p=\{\overline e'_1,\ldots, \overline e'_p\}$} be another two subsets of $p$ external $G$ and $\overline G$ edges in $\Gamma$ (we allow for \smash{$E_p=E'_p$ and $\overline E_p=\overline E_p'$}), where
\be\label{eq:e'i} e_i'=(a_i', b_i')\in {\cal E}_G (\Gamma), \quad \text{and}\quad \overline e_i'=(\overline a_i', \overline b_i')\in {\cal E}_{\overline G} (\Gamma),\quad i=1,\ldots, p.\ee
Let $\phi:E_p\to E_p'$ and $\overline \phi:\overline E_p \to \overline E_p'$ be arbitrary bijections, i.e., there exist permutations $\mu, \overline \mu\in S_p$ such that 
$$ \phi(e_k)=(e'_{a(\mu(k))},e'_{b(\mu(k))}),\quad \overline \phi(\overline e_k)=(\overline e'_{\overline a(\overline \mu(k))},\overline e'_{\overline b(\overline \mu(k))}).$$
%Take arbitrary permutations $ \phi,\overline \phi\in S_p$. They can be regarded as bijections mapping $e_k$ and $\overline e_k$ to $e'_{\phi(k)}$ and $\overline e'_{\overline \phi(k)}$, respectively.   
%and $\overline  \phi:\overline E_p \to \overline E_p'$. Note that $\phi$ (resp.~$\overline \phi$) is essentially a permutation if $E_p=E_p'$ (resp.~$\overline E_p = \overline E_p'$). 
Then, let 
$\cal P'(E_p',\overline E_p')$ be a pairing induced by $\Phi=(\phi,\overline \phi)$ and \eqref{eq:pairingP}, i.e., 
$$\cal P'(E_p',\overline E_p')=\left\{\left(a'_{\mu(k)},\overline a'_{\pi\circ \overline \mu(k)}\right), \left(b'_{\mu(l)},\overline b'_{\sigma\circ \overline \mu(l)}\right): k\in \qqq{1,p}, \ l\in \qqq{0, p}\right\},$$
where as a convention, we let $\bar \mu(0)=0$. Under these notations, we have that 
\be\label{eq:permu_inv}
\sum_{\omega} \Delta_\omega ( \cal P(E_p,\overline E_p) )=\sum_{\omega} \Delta_\omega ( \cal P'(E_p',\overline E_p') ).
\ee

To take into account the invariance under bijections, we define the following concept of loop graphs. Given $E_p=\{e_1,\ldots, e_p\}$ and $\overline E_p=\{\overline e_1,\ldots, \overline e_p\}$ with the edges in \eqref{eq:eiei}, we cut the $2p+2$ edges in $\{(x,\fb_1), (x,\fb_2)\}\cup E_p\cup \overline E_p$ into $4p+2$ half-edges with $4p+2$ dangling ends 
\begin{equation}\label{pts}
\mathsf X_{p}:= \{  x_0(\ff), \ \bar x_0(\ff)\}\cup \{ x_k(\ii), x_k(\ff):k=1,\ldots, p\}\cup \{ \overline x_k(\ii), \overline x_k(\ff):k=1,\ldots, p \}. 
\end{equation}
More precisely, we have the following half-edges: $(x_0(\ff),b_0)$, $(\bar x_0(\ff),\bar b_0)$, $(a_k,x_k(\ii))$, $(x_k(\ff),b_k)$, $(\overline a_k,\overline x_k(\ii))$, and $(\overline x_k(\ff),\overline b_k)$ for $k=1,\ldots, p$. Then, we can encode an arbitrary pairing $\cal P(E_p,\overline E_p)$ into a pairing between these dangling ends, where a pairing between two ends, say $\al,\beta\in \mathsf X_p$, means setting $\al=\beta$. For example, a pairing $x_j(\ff)=\overline x_k(\ff)$ (resp.~$x_j(\ii)=\overline x_k(\ii)$) corresponds to $G_{x_j(\ff)b_j}\overline G_{x_j(\ff)\overline b_k}$ (resp.~$G_{a_jx_j(\ii)}\overline G_{\overline a_k x_j(\ii)}$). 
We denote by $ \mathbf \Pi_p$ the set of all possible pairings of $4p+2$ dangling ends. Specifically, every pairing $\Pi\in \mathbf \Pi_p$ takes the form $\Pi=\{\sigma_i : i=1,\ldots, 2p+1\}$ with $\sigma_i=\{\al_i,\beta_i\}$, $\al_i,\beta_i\in \mathsf X_p$, such that the following two properties hold: 
\begin{enumerate}
%\item $\Pi\in  {\bf \Pi}(p)$, $\Pi=\{\sigma_i: 1\le i\le 2p+1\}$. 
    \item the two elements of $\sigma_i$  have the same $\ii$ or $\ff$ label; % or the same $\ff$ label; 
    %two elements in \eqref{pts}, whose labels (i.e., $i$ or $f$) must be the same. 

    \item every $\sigma_i$ contains one $x$-type element (i.e., $x_0(\ff)$, $x_k(\ii)$, $x_k(\ff)$, $k=1,\ldots, p$) and one $\bar x$-type element (i.e., $\overline x_0(\ff)$, $\overline x_k(\ii)$, $\overline x_k(\ff)$, $k=1,\ldots, p$).
\end{enumerate}

Now, suppose a $\Pi\in \mathbf\Pi_p$ represents a $\cal P(E_p,\overline E_p)$ pairing. Then, we define 
\be\label{def:Pi}
\Delta(\Pi):=\sum_\omega \Delta_\omega(\cal P(E_p,\overline E_p)).
\ee
The symmetry \eqref{eq:permu_inv} implies that $\Delta(\Pi)$ depends solely on the \emph{loop structure} of $\Pi$, defined as follows. 

%As defined above $\Delta ( \Pi )$ is a function of $\Pi\in {\bf\Pi} $. But by symmetry, we can show that it only depends the structure of the graphs generated by $\Pi$. We first introduce the loop graphs, and the loop graphs generated by $\Pi$s. 

\begin{definition}[Loop graphs]\label{def_Loop} 
A graph $\Gamma$ with $2p$ vertices is called a \emph{loop graph} if it consists of $2p$ solid edges such that the following properties hold:  
 \begin{enumerate}
     %\item There are 2p $G$-type edges, and no other components
     \item $\Gamma$ is a union of disjoint polygons (or called loops). 
     \item Every vertex is connected with exactly one $G$ edge and one $\bar G$ edge.
     \item  All vertices have neutral charges (recall \Cref{deflvl1}).
     %i.e., the directions of $G$ and $\bar G$ edges match at each vertex. 
\end{enumerate}
In other words, when there is only one loop in $\Gamma$, it takes the form
\begin{equation}\label{Gloop}
    \prod_{i=1}^{2p} G^{s_i}_{x_i x_{i+1}},\quad x_{2p+1}=x_1,\quad s_i=\begin{cases}
    \emptyset,\quad i\in 2\mathbb Z+1,
    \\
    \dagger ,\quad i\in 2\mathbb Z,
\end{cases}
\end{equation}
where $x_i$, $i=1,\ldots, 2p$, are the vertices and $G^{s_i}$ represents $G$ if $s_i=\emptyset$ and $G^\dagger$ if $s_i=\dagger$. The loop graphs with more than one loops can be written as products of graphs of the form \eqref{Gloop}.
\end{definition}

\begin{definition}[Loop structure of $\Pi$]  \label{def_lgPi} 
Given the $4p+2$ vertices in \eqref{pts}, an additional vertex $x$, and a pairing $\Pi=\{\sigma_k:k=1,\ldots, 2p+1\}\in \mathbf \Pi_p$, we define the graph $\mathsf L_G(\Pi)$ generated by $\Pi$ as
$$\mathsf L_G(\Pi):= G_{xx_0(\ff)} \overline G_{x\bar x_0(\ff)} \prod_{k=1}^p \left[G_{x_k(\ii) x_k(\ff)} \overline G_{\bar x_k(\ii) \bar x_k(\ff)}\right] \cdot \prod_{k=1}^{2p+1}\mathbf 1((\sigma_k)_1=(\sigma_k)_2), $$
where $(\sigma_k)_1$ and $(\sigma_k)_2$ denote the two elements of $\sigma_k$. In other words, $\mathsf L_G(\Pi)$ is the \emph{loop graph} obtained by identifying the end points of the $2p+2$ $G$ edges according to the pairing $\Pi$.
%  $$
% G_{xx_0(\ff)},\ \  \overline G_{x\bar x_0(\ff)}, \ \ G_{x_k(\ii) x_k(\ff)} , \ \ \overline G_{\bar x_k(\ii) \bar x_k(\ff)},\quad k=1\ldots, p,
% $$
% according to the pairing $\Pi$. %Note $\mathsf L_G(\Pi)$ is a loop graph with $2p+2$ vertices. 
Then, we define the loop structure of $\Pi$ as a triple $\mathrm{Struc}(\Pi)=(k(\Pi),\ell_0(\Pi),\mathbf L(\Pi))$, where
\begin{enumerate}
     \item $k(\Pi)$ represents the number of loops in $\mathsf L_G(\Pi)$;
     \item $\ell_0(\Pi)+2$ represents the length of the loop containing $x$ (note $\ell_0(\Pi)$ only counts the edges that are not $(x,x_0(\ff))$ or $(x,\overline x_0(\ff))$);
     \item $\mathbf L(\Pi)=\{\ell_1, \ldots, \ell_{k(\Pi)-1}\}$ is the \emph{multiset} of lengths of the other loops that do not contain $x$.
 \end{enumerate}

% $x$.  Let these $4p+3$ points be the external atoms in the graph. Add the following $2p+2$ edges into this graph  
%     $$
%    G_{xx_0}, \quad \overline G_{x\bar x_0}, \quad G_{x_j(i), x_j(f)}, \quad \overline G_{\bar x_j(i), \bar x_j(f)}
%     $$
% Let  $\Pi=\{\sigma_k: 1\le k\le 2p+1\}$, where each $\sigma_i$ contains two points in \eqref{pts}.   Then we pair these $4p+3$ points w.r.t  $\Pi$, i.e.,  combine the two points in the $\sigma\in \Pi$ as one single point. It reduces the number of the atoms from $4p+3$ to $2p+2$. The graph derived is defined as $LG(\Pi)$.  (Note the structure of the external edges between $e_k$ or $\bar e_k$'s does not matter.)
\end{definition}

To give an example of loop graphs generated by pairings, we consider the paring \eqref{eq:6-loop}, which corresponds to 
\be\label{eq:6-loopPi}
\Pi_1=\{\{x_0(\ff),\bar x_1(\ff)\},\{x_2(\ii),\bar x_1(\ii)\},\{x_2(\ff),\bar x_2(\ff)\},\{x_1(\ii),\bar x_2(\ii)\},\{x_1(\ff),\bar x_0(\ff)\}\}, 
\ee
and the paring \eqref{eq:4+2-loop}, which corresponds to 
\be\label{eq:4+2-loopPi}
\Pi_2=\{\{x_0(\ff),\bar x_1(\ff)\},\{x_1(\ii),\bar x_1(\ii)\},\{x_1(\ff),\bar x_0(\ff)\},\{x_2(\ii),\bar x_2(\ii)\},\{x_2(\ff),\bar x_2(\ff)\}\}.\ee
We now draw the two loop graphs generated by these two pairings:
\be\label{eq:star2}
\parbox[c]{13cm}{\includegraphics[width=\linewidth]{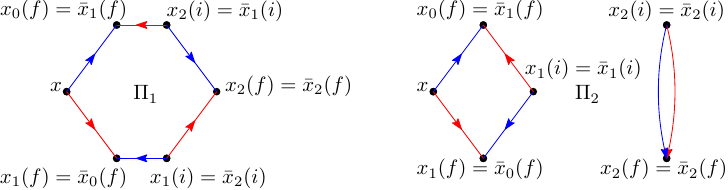}}
\ee
The pairing \eqref{eq:6-loopPi} generates a 6-edge loop with $\mathrm{Struc}(\Pi_1)=(1,4,\emptyset)$, while the pairing \eqref{eq:4+2-loopPi} generates a $4$-edge loop plus a 2-edge loop with $\mathrm{Struc}(\Pi_2)=(2,2,\{2\})$.

%From the above definition, it is easy to see that the loop structure of $\Pi$ is invariant under permutations of the edges 
 
\begin{lemma}\label{lem:aLG}
 %For $\Pi\in{\bf \Pi}_p$, the $Loop(\Pi)$ is a loop graph with $2p+2$ external atoms. Furthermore, 
$\Delta (\Pi)$ depends only on the loop structure of $\Pi$, i.e., $\Delta (\Pi)=\Delta (\Pi')$ whenever  
 $\mathrm{Struc}(\Pi)=\mathrm{Struc}(\Pi')$.
 % $\Pi$ via the loop structure of $Loop(\Pi)$, i.e., 
 % \begin{enumerate}
 %     \item $n(loop)$ and the lengths of each loops;
 %     \item the length of the loop containing atom $x$. 
 % \end{enumerate}
 % We observe that the loop-structure of $\mathsf L_G(\Pi)$ is invariant under permutations of the edges in $E_p$ and $\overline E_p$.
\end{lemma}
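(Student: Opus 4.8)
\textbf{Proof proposal for \Cref{lem:aLG}.}

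The plan is to prove the claim by carefully tracking how the coefficient $\Delta(\Pi)$ is built up through the expansion process described in Steps I--III of the proof of \Cref{thm:Vexp}, and to show that two ingredients of the construction -- the coefficient $f_\omega(z)$ attached to each deterministic graph $\fD_\omega^{\lambda=0}$ and the combinatorial count of which waved edges are $S^+/S^-$ versus $S$ -- are invariant under the bijections $\Phi=(\phi,\overline\phi)$, and hence, via \eqref{eq:permu_inv} and the definition \eqref{def:Pi}, depend only on the isomorphism class of the loop graph $\mathsf L_G(\Pi)$. The first step is to observe that the symmetry \eqref{eq:permu_inv} already tells us that $\Delta(\Pi)$ is unchanged if we relabel the external edges $E_p,\overline E_p$ by any bijections onto another pair of sets $E_p',\overline E_p'$; what remains is to see that two pairings $\Pi,\Pi'$ with the same loop structure differ exactly by such a relabeling together with a reindexing of the dangling ends that preserves the $\ii/\ff$ labels and the $x$-type/$\bar x$-type dichotomy. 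This is a purely combinatorial fact about the set $\mathbf\Pi_p$: a loop graph isomorphism (preserving the marked vertex $x$, the edge orientations/charges, and the partition into loops) can be realized by a permutation of the indices $\{1,\ldots,p\}$ acting simultaneously on the $e_i$ and on the half-edges, composed with the identification of $E_p$ with $E_p'$. I would make this precise by showing that $\mathrm{Struc}(\Pi)$ is a complete invariant for the action of $S_p\times S_p$ (acting on $G$-edge indices and $\overline G$-edge indices respectively, with the constraint $\sigma(0)\neq 0$ built in) on $\mathbf\Pi_p$ modulo the marked-vertex-preserving loop isomorphisms -- equivalently, that the orbit of $\Pi$ under this action is exactly $\{\Pi':\mathrm{Struc}(\Pi')=\mathrm{Struc}(\Pi)\}$.

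The second step is to verify that the quantity being summed, $\sum_\omega\Delta_\omega(\cal P(E_p,\overline E_p))$, is genuinely a function of the loop graph only, not of the auxiliary data used to produce it. Here I would use the structure exhibited in \eqref{eq:vertex_sum_zero_reduce}: at $\lambda=0$ each $\fD_\omega^{\lambda=0}$ is a product of $S(\lambda=0)$ factors times the scalar $\Delta_\omega=f_\omega(z)(1-m_{sc}^2)^{-\#\{S^+\}}(1-\overline m_{sc}^2)^{-\#\{S^-\}}$, where $f_\omega(z)$ is a monomial in $m_{sc},\overline m_{sc}$ coming from the weight and edge expansions in Lemmas \ref{ssl}--\ref{T eq0}. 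The key point is that the expansion rules that generate a given star graph -- which pairings at a vertex are neutral, which $S$ versus $S^+$ factor is produced -- are dictated entirely by the local charge pattern around each internal vertex of $\cal M_x$, i.e., by the incident $G$/$\overline G$ edges, and these local patterns are encoded in the loop graph $\mathsf L_G(\Pi)$ up to isomorphism. Consequently the whole \emph{multiset} $\{\Delta_\omega\}_\omega$ (and in particular its sum) is determined by $\mathsf L_G(\Pi)$, and since $\mathsf L_G(\Pi)\cong\mathsf L_G(\Pi')$ exactly when $\mathrm{Struc}(\Pi)=\mathrm{Struc}(\Pi')$ (the triple $(k,\ell_0,\mathbf L)$ records the number of loops, the length of the $x$-loop, and the lengths of the remaining loops -- a complete isomorphism invariant of a disjoint union of oriented alternating cycles with one marked cycle), we conclude $\Delta(\Pi)=\Delta(\Pi')$.

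I expect the main obstacle to be the second step: making rigorous the assertion that the set of deterministic graphs $\{\fD_\omega^{\lambda=0}\}$ produced by Steps I--III, together with their coefficients, is an invariant of the loop graph and not of the order in which edge and $GG$ expansions were carried out. The proof of \Cref{thm:Vexp} already remarks (below \eqref{eq:G'''} and after Step III) that "the order we choose at each step does not affect the final expression," and the natural route is to turn those remarks into a precise statement: one sets up a bijection between the terms generated from $(E_p,\overline E_p,\cal P)$ and those generated from $(E_p',\overline E_p',\cal P')$ by following the corresponding half-edge identifications, and checks inductively that corresponding terms carry equal coefficients (because at each expansion step the coefficient contributed -- a factor of $m$, $m^3$, $|m|^2$, or $1/(1-m_{sc}^2)$ from the relevant lemma -- depends only on the charge configuration, which is matched under the bijection). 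A clean way to organize this is to note that it suffices to treat the case where $\Pi$ and $\Pi'$ differ by a single transposition of two $G$-edge indices (or two $\overline G$-edge indices) that fixes the loop structure, since such transpositions generate the stabilizer of $\mathrm{Struc}(\Pi)$; for a transposition the invariance is immediate from \eqref{eq:permu_inv} applied with $\mu$ the transposition and $\overline\mu$ the identity, which directly gives $\Delta(\Pi)=\Delta(\Pi')$. Assembling these pieces yields the lemma.
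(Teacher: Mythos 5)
Your proposal follows essentially the same route as the paper: take the relabeling symmetry \eqref{eq:permu_inv} as the analytic input and reduce the lemma to the purely combinatorial claim that two pairings with the same loop structure are related by bijections $\phi:E_p\to E_p'$, $\overline\phi:\overline E_p\to\overline E_p'$ inducing one pairing from the other. Two remarks on where your write-up deviates from what is actually needed. First, your "Step 2" (re-verifying that the multiset $\{\Delta_\omega\}$ is independent of the expansion order and of the choice of $E_p,\overline E_p$) is exactly the content of \eqref{eq:permu_inv}, which the paper records before the lemma as a consequence of the construction in the proof of \Cref{thm:Vexp}; within the proof of \Cref{lem:aLG} it is used as given, so this is not where the lemma's proof lives. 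Second, the substance of the paper's proof is precisely the step you leave as an assertion: the explicit construction of $\phi,\overline\phi$. The paper matches the loop containing $x$ in $\mathsf L_G(\Pi)$ to the one in $\mathsf L_G(\Pi')$ and the remaining loops in pairs of equal length, fixes on each matched pair a starting vertex of the same type and an initial blue edge (the edge $(x,x_0(\ff))$ for the $x$-loops), enumerates the $G$ and $\overline G$ edges along each loop in this common order, and defines $\phi,\overline\phi$ by matching corresponding edges; this visibly induces $\cal P'(E_p',\overline E_p')$ from $\cal P(E_p,\overline E_p)$ in the sense of \eqref{eq:pairingP}, after which \eqref{eq:permu_inv} finishes. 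Your alternative shortcut via transpositions is not quite right as phrased: the claim that needs proof is that $\Pi'$ lies in the $S_p\times S_p$ orbit of $\Pi$ (equivalently, that $\mathrm{Struc}$ classes coincide with orbits), and saying that transpositions "generate the stabilizer of $\mathrm{Struc}(\Pi)$" neither states nor proves this; moreover, once orbit membership is known, no reduction to transpositions is needed, since \eqref{eq:permu_inv} already covers arbitrary bijections. So the approach is correct and matches the paper's, but the loop-by-loop bijection construction must still be written out to make it a complete proof.
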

\begin{proof}
Given $E_p$, $\overline E_p$, $\cal P(E_p,\overline E_p)$, and the set \eqref{pts}, we choose an arbitrary pairing $\Pi \in \mathbf \Pi_p$ that represents $\cal P(E_p,\overline E_p)$. Pick two other subsets of $G$ and $\overline G$ edges denoted by $E'_p=\{e'_i:i =1,\ldots, p\}$ and \smash{$\overline E'_p=\{\overline e'_i:i =1,\ldots, p\}$}, where the edges take the form \eqref{eq:e'i}. Then, we again take the set of vertices \eqref{pts} that correspond to the following half-edges: $(x_0(\ff),b_0)$, $ (\bar x_0(\ff),\bar b_0)$, $ (a_k',x_k(\ii))$, $ (x_k(\ff),b_k')$, $ (\overline a_k',\overline x_k(\ii))$, and \smash{$(\overline x_k(\ff),\overline b_k')$} for $k=1,\ldots, p.$  Let $\Pi'$ be another pairing in $\mathbf \Pi_p$ that has the same loop structure as $\Pi$. Suppose $\Pi'$ represents a pairing \smash{$\cal P'(E_p',\overline E_p')$}. Now, in view of \eqref{eq:permu_inv}, it suffices to show that there exist bijections $\phi:E_p\to E_p'$ and $\overline \phi:\overline E_p \to \overline E_p'$ that induce {$\cal P'(E_p',\overline E_p')$} from $\cal P(E_p,\overline E_p)$.   

Notice that the loops in $\mathsf L_G(\Pi)$ and $\mathsf L_G(\Pi')$ naturally give such bijections. To describe one such bijection, we first establish a 1-1 correspondence between the loops in $\mathsf L_G(\Pi)$ and those in $\mathsf L_G(\Pi')$ such that
\begin{itemize}
    \item the loop containing $x$ in $\mathsf L_G(\Pi)$, denoted by $\text{Loop}_0$, maps to the loop containing $x$ in $\mathsf L_G(\Pi')$, denoted by $\text{Loop}'_0$;
    \item a loop $\text{Loop}_i\in \mathsf L_G(\Pi)$, $i\in \qqq{1,k(\Pi)-1}$, maps to a loop $\text{Loop}'_i\in \mathsf L_G(\Pi')$ that has the same length as  $\text{Loop}_i$.
\end{itemize}
Then, in $\text{Loop}_i$ and $\text{Loop}'_i$, $i=0,\ldots, \ell_{k(\Pi)-1}$, we enumerate their edges along the polygons according to an order fixed by a starting point and an initial edge.  For $i=0$, we can choose the starting point as $x$ in both loops and the initial edge as the blue solid edge $(x,x_0(f))$; for $i\ge 1$, we select a starting point $x_{k_i}(\ii)$ (resp.~$x_{l_i}(\ii)$) and an initial blue solid edge $(x_{k_i}(\ii),x_{k_i}(\ff))$ (resp.~$(x_{l_i}(\ii),x_{l_i}(\ff))$) on $\text{Loop}_i$ (resp.~$\text{Loop}_i'$) for some $k_i$ (resp.~$l_i$). 
Suppose these edges (enumerated according to the above order) correspond to the following edges in \smash{$E_p\cup \overline E_p$ and $E_p'\cup\overline E_p'$} (recall the notation $\ell_i$, $i\in \qqq{0,  \ell_{k(\Pi)-1}}$, introduced in \Cref{def_lgPi}): 
$$\begin{cases}
    e_{\pi_i(1)}, \overline e_{\overline \pi_i(1)}, \ldots , e_{\pi_i(\ell_i/2)}, \overline e_{\overline \pi_i(\ell_i/2)}, \ & \text{ on }\text{Loop}_i\\
    e'_{\pi_i'(1)}, \overline e'_{\overline \pi_i'(1)}, \ldots, e'_{\pi_i'(\ell_i/2)}, \overline e'_{\overline \pi_i'(\ell_i/2)},\ & \text{ on }\text{Loop}'_i
\end{cases}. $$
 Here, we did not include the edges $(x,x_0(\ff))$ and $(x,\overline x_0(\ff))$ for the $i=0$ case. Additionally, $\pi_i,\ \overline \pi_i, \ \pi_i',\ \overline \pi_i'$ represent some injections from $\qqq{1,\ell_i/2}$ to $\qqq{1,p}$. 
%without including the edges $(x,x_0(\ff))$ and $(x,\overline x_0(\ff))$ for the $i=0$ case. Above,  
Now, we can define the bijections $\phi$ and $\overline \phi$ such that 
$$ \phi(e_{\pi_i(j)})=e'_{\pi_i'(j)},\quad \overline \phi(\overline e_{\overline \pi_i(j)})=\overline e'_{\overline \pi_i'(j)}, \quad \text{for}\quad i\in \qqq{0,k(\Pi)-1}, \ \ j\in \qqq{1,\ell_i/2}. $$
This concludes the proof.  
\end{proof}

Under the above notations, the sum zero property \eqref{eq:vertex_sum_zero} can be equivalently stated as follows.   
\begin{lemma} [Molecule sum zero property]\label{lem:vertex_zero}
In the setting of \Cref{thm:Vexp}, for any fixed $p\in\mathbb N$ and all pairings $\Pi\in \mathbf \Pi_p$, we have that 
\be\label{eq:vertex_renorm}
\Delta (\Pi)\equiv \Delta(\mathrm{Struc}(\Pi))=\OO(\eta). 
\ee
\end{lemma}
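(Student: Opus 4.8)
The plan is to reduce the molecule sum zero property to a statement purely about Wigner matrices, by exploiting that $\Delta(\Pi)$ has been shown (via Lemma~\ref{lem:aLG}) to depend only on the loop structure $\mathrm{Struc}(\Pi)$, and that in the $\lambda = 0$ limit the RBSO becomes an $N \times N$ GUE matrix (indeed $W = L$ in that limit, $m(z,0) = m_{sc}(z)$, and the variance profile degenerates to a constant within blocks). So first I would set up the comparison: take a genuine $N \times N$ Wigner matrix $H_W$ with resolvent $\mathcal R(z) = (H_W - z)^{-1}$ and consider the loop quantity built from $2p$ resolvent entries as in the heuristic of \Cref{sec:idea}. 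The key point is that the \emph{same} expansion machinery (the edge expansions of \Cref{Oe14}, the $GG$ expansions of \Cref{T eq0}, and the $Q$-expansions of \Cref{subsec_Qexp}) applies verbatim to $H_W$, and produces exactly the same star-graph structure with the \emph{same} coefficients $\Delta_\omega$, because those coefficients are universal polynomials in $m, \bar m, (1-m^2)^{-1}, (1-\bar m^2)^{-1}$ evaluated at $m = m_{sc}$ and are generated by the algebraic identities in Lemmas~\ref{ssl}--\ref{T eq0}, which do not see the spatial structure. Thus establishing \eqref{eq:vertex_renorm} for the RBSO is equivalent to establishing the corresponding cancellation for a single Wigner matrix.

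The second step is to prove the Wigner-side cancellation using a power-counting / Ward-identity argument, following precisely the heuristic sketched around the displayed picture \texttt{coupling2.pdf} in \Cref{sec:idea}. Fix $z = E + \ii\eta$ in the bulk with $N^{-1} \ll \eta \ll 1$. On one hand, the original loop graph $\sum G^{s_1}_{x_1 x_2} \cdots G^{s_{2p}}_{x_{2p}x_1}$ summed over all its vertices can be bounded, using Ward's identity (\Cref{lem-Ward}) repeatedly together with the local law $\|\mathcal R - m_{sc}I\|_{\max} \prec (N\eta)^{-1/2}$, by $\OO_\prec(N\eta^{-(2p-1)})$ — essentially each of the $2p-1$ ``free'' summations after fixing one vertex contributes $\eta^{-1}$ via $\sum_x |\mathcal R_{xy}|^2 = \im \mathcal R_{yy}/\eta$, modulo bookkeeping of which pairs of indices get Ward-contracted. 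On the other hand, expanding that same loop graph via Steps I--III of the $V$-expansion produces (among error terms) exactly the star graphs with the center molecule $X_{2p+2}$, and applying Ward's identity to the star graph instead gives order $\Omega_\prec(N\eta^{-2p}) \cdot v$, where $v$ is the vertex function $\sum_\omega \Delta_\omega$ evaluated at the relevant loop structure (the extra $\eta^{-1}$ comes from the additional internal vertex in the molecule). Comparing the two estimates forces $v = \sum_\omega \Delta_\omega = \OO_\prec(\eta)$; but $\sum_\omega \Delta_\omega$ is a \emph{deterministic} function of $z$ (a fixed polynomial in $m_{sc}$), independent of $N$, so a deterministic quantity that is $\OO_\prec(\eta)$ for all small $\eta$ must literally be $\OO(\eta)$ as an analytic function. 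This is the mechanism; the remaining work is to make the two matching bounds rigorous and to verify that the error graphs genuinely produced alongside the star graphs do not contaminate either side of the comparison.

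There are two structural subtleties I would address carefully. First, the Wigner loop graph may not be expanded \emph{solely} into the self-interaction term plus the single star graph of interest; the expansion produces a whole family of graphs, indexed by all pairings and all loop structures. I would handle this by induction on $p$ (or on the total number of loops and their lengths): the sum-zero properties for all $\mathrm{Struc}(\Pi')$ with fewer pulled edges, or with a strictly shorter loop containing $x$, are assumed known, and one isolates the single unknown $\Delta(\mathrm{Struc}(\Pi))$ by choosing the specific pairing $\Pi$ whose loop structure is ``maximal'' in the relevant partial order, so that all other terms on the right-hand side of the comparison are already controlled. Second, one must confirm that the self-interaction / dressed-propagator term in the expansion is exactly accounted for by the already-established $\self$ renormalization (Definitions~\ref{collection elements}) and does not secretly carry part of the vertex function — this is the point emphasized in \Cref{sec:idea} that coupling renormalization ``occurs at the level of a single molecule'' and is genuinely separate from the multi-molecule $\self$ cancellation.

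The main obstacle I expect is the bookkeeping in the two Ward-identity estimates: one must show that the naive power of $\eta^{-1}$ on the original loop side is \emph{exactly} $2p-1$ and not smaller (which would require that the loop graph is ``Ward-saturated'', i.e., every summation index can be paired to contribute a full $\eta^{-1}$), and simultaneously that the star graph yields \emph{exactly} $\eta^{-2p}$ (one more power, coming from the summation over the extra internal molecule vertex) — no more and no less. A mismatch of even one power of $\eta$ destroys the conclusion. Making this precise requires tracking, through Steps I--III, exactly how many net internal vertices are created and how the blue/black net structure (the doubly connected property, \Cref{def 2net}) guarantees the correct number of independent Ward contractions; this is where the detailed graphical analysis of \Cref{sec:Vexpansion} does the real work, and where I would invest the bulk of the argument. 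A secondary technical point is that the comparison is cleanest at $\lambda = 0$ (true Wigner), so one also needs the perturbative estimate \eqref{eq:vertex_sum_zero_diff} to transfer the $\lambda = 0$ cancellation back — but since \eqref{eq:vertex_renorm} is a statement only about $\Delta_\omega = \Delta_\omega^{\lambda=0}$ by the way the theorem is phrased, this last step is essentially free.
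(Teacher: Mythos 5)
Your proposal follows essentially the same route as the paper: the paper also transfers the coefficients $\Delta_\omega$ to GUE (Proposition~\ref{thm:Vexp_GUE}), compares the direct Ward-identity/multi-resolvent bound $\OO_\prec(\sN\eta^{-(2p+1)})$ on the loop graph $\mathsf L_G(\Pi)$ (Lemma~\ref{lem:sameindices}) with the $V$-expansion's leading star-graph term $\Delta(\Pi)\,\sN(\im m/\eta)^{2p+2}$ (Lemma~\ref{lem_Lp1_St}), and concludes $\Delta(\Pi,E)=0$ since it is an $\sN$-independent polynomial in $m_{sc}$, using induction on the loop structure (reduction to the one-loop case in Lemma~\ref{lem_F1T2}) to control the lower-order star graphs. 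The only technical refinements you would need beyond your sketch are the multi-resolvent local law input for the "Ward-saturated" bound and the factorization $\Delta(\Pi)=C_{\Pi\setminus\Pi'}\,\Delta(\Pi')$ handling the multi-loop reduction, both of which fit your plan.
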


With this lemma, we conclude \eqref{eq:vertex_sum_zero}, which completes the proof of \Cref{thm:Vexp}. 
\end{proof}

The rest of this section is devoted to the proof of \Cref{lem:vertex_zero}. As mentioned in the introduction, this property is closely related to a deep coupling renormalization mechanism in Feynman diagrams. Before giving the formal proof, we first look at two examples with $p\in\{1,2\}$ to illustrate the concept of $V$-expansions and the key molecule sum zero property. 
%\subsection{Examples of $V$-expanions and $\Delta(\Pi)$}\label{subsec_examples}
%To help the reader to understand the above statement and our proofs below, we now look at two examples.
In the first example, we consider the $p=1$ case with $4$ solid edges. 
\begin{example}\label{example_4sum0}
Consider the graph $ \cal G= T_{\fa,b_0\bar b_0}G_{a_1 b_1} \overline G_{\bar a_1\bar b_1}.$ We need to expand \eqref{eq:prior_star} with $\fb_1=b_0$, $\fb_2=\bar b_0$, and \smash{$\Gamma=G_{a_1 b_1} \overline G_{\bar a_1\bar b_1}$}. First, applying the $Q$-expansions, we get the following graphs as in  \eqref{eq:G'ij}:
 \begin{align}
	&\left( G_{x b_0} \overline {G}_{x \bar b_0} -|m|^2 T_{x,b_0\bar b_0}\right)\left(G_{a_1b_1} \frac{\overline G_{\bar a_1 x}\overline G_{x\bar b_1}}{\bar m}+\overline G_{\bar a_1\bar b_1}\frac{G_{a_1 x} G_{x b_1}}{m} - \frac{G_{a_1 x} G_{x b_1} \overline G_{\bar a_1 x}\overline G_{x\bar b_1}}{|m|^2} \right), \label{eq:Delta1}
\end{align}
where, for simplicity of presentation, we did not include the $\wt\thn_{\mathfrak{a} x}$ edge.
Next, applying the edge expansion to \eqref{eq:Delta1}, we get the following graphs that will lead to the star graphs:
%$$ \sum_x \wt \Theta_{\mathfrak{a} x} $$
%following graphs 
\begin{align*}
&  \sum_{y_1}S_{xy_1} \left[ m  (G_{y_1b_0}\overline G_{y_1\bar b_1}) \overline G_{x\bar b_0} \overline G_{\bar a_1 x} G_{a_1 b_1} + \frac{m}{\bar m} (G_{y_1b_0}G_{a_1 y_1})  \bar {G}_{x \bar b_0} G_{xb_1}\overline G_{\bar a_1 x}\overline G_{x\bar b_1} \right] \\
+ &\sum_{y_1}S_{xy_1} \left[  m  (G_{yb_0}G_{a_1 y})\overline G_{x\bar b_0} G_{x b_1}  \overline G_{\bar a_1\bar b_1}+ (G_{y_1b_0}\bar G_{y_1 \bar b_1}) \bar G_{x\bar b_0} G_{a_1 x} G_{x b_1}\bar G_{\bar a_1 x} \right]\\
-& \sum_{y_1} S_{xy_1} \left[ (G_{y_1 \bar b_0} \bar G_{y_1\bar b_1})  \overline {G}_{x \bar b_0} G_{a_1 x} G_{xb_1} \overline G_{\bar a_1 x}+ \frac{m}{\bar m} (G_{y_1 b_0}G_{a_1 y_1}) \overline {G}_{x \bar b_0} G_{x b_1} \overline G_{\bar a_1 x}\overline G_{x\bar b_1} \right].
\end{align*}
Note that the two terms with coefficient $m/\bar m$ will not contribute to star graphs since they will give rise to graphs that involve a $GG$ pairing $G_{y_1b_0}G_{a_1 y_1}$. The fourth and fifth terms cancel each other. For the rest of the two terms, applying another edge expansion, we obtain that 
\begin{align*}
& \sum_{y_1,y_2} S_{xy_1}S_{xy_2}(G_{y_1b_0}\overline G_{y_1\bar b_1})   \left[ \bar m|m|^2 (\overline G_{y_2\bar b_0} \overline G_{\bar a_1 y_2}) G_{a_1 b_1} + |m|^2(\overline G_{y_2\bar b_0}G_{y_2b_1}) \overline G_{\bar a_1 x} G_{a_1 x} \right]\\
+ & \sum_{y_1,y_2} S_{xy_1}S_{xy_2} (G_{y_1b_0}G_{a_1 y_1}) \left[  m |m|^2 (\overline G_{y_2\bar b_0} G_{y_2 b_1})  \overline G_{\bar a_1\bar b_1}+ |m|^2 (\overline G_{y_2 \bar b_0} \overline G_{\bar a_1 y_2})G_{x b_1}  \overline G_{x\bar b_1} \right].
\end{align*}
Note that the last term contains $GG$ and $\bar G\bar G$ pairings, so it will not contribute to star graphs. Finally, by applying a $GG$-expansion to the first and third terms and an edge expansion to the second term, we derive the following graphs (upon renaming some matrix indices): 
\begin{align*}
&~\bar m^2 |m|^2  \sum_{y_1,y_2,y_3} S_{xy_1}S_{xy_2}S_{y_2y_3}(G_{y_1b_0}\overline G_{y_1\bar b_1})( G_{a_1 y_2}\overline G_{\bar a_1 y_2})(  G_{y_3 b_1}\overline G_{y_3\bar b_0})  \\
+& ~\bar m^4 |m|^2  \sum_{y_1,y_2,y_3,\al } S_{xy_1}S_{x\al }S^{-}_{\al y_2}S_{y_2 y_3}(G_{y_1b_0}\overline G_{y_1\bar b_1})  (G_{a_1 y_2}\overline G_{\bar a_1 y_2}) ( G_{y_3 b_1}\overline G_{y_3\bar b_0}) \\
+ &~ m^2 |m|^2 \sum_{y_1,y_2,y_3} S_{xy_2}S_{xy_3}S_{y_1y_2} (G_{y_1b_0}\overline G_{y_1\bar b_1})(G_{a_1 y_2}\overline G_{\bar a_1y_2}) (G_{y_3 b_1} \overline G_{y_3\bar b_0} ) \\
+ &~ m^4 |m|^2 \sum_{y_1,y_2,y_3,\al} S_{xy_3}S_{x\al}S^+_{\al y_2}S_{y_1y_2} (G_{y_1b_0}\overline G_{y_1\bar b_1})(G_{a_1 y_2}\overline G_{\bar a_1y_2}) (G_{y_3 b_1}\overline G_{y_3\bar b_0}) \\
+&~|m|^4 \sum_{y_1,y_2,y_3} S_{xy_1}S_{xy_2}S_{xy_3}(G_{y_1b_0}\overline G_{y_1\bar b_1}) (G_{a_1 y_2}\overline G_{\bar a_1 y_2} ) (G_{y_3b_1}\overline G_{y_3\bar b_0}) .
\end{align*}
Note that the first four graphs are not locally regular, since they contain a non-standard neutral vertex $y_2$. Then, applying another edge expansion, we get the following star graphs:
\be\label{eq:stars4loop}\sum_{y_1,y_2,y_3}\mathfrak D(x,y_1,y_2,y_3) (G_{y_1b_0}\overline G_{y_1\bar b_1})( G_{a_1 y_2}\overline G_{\bar a_1 y_2})(  G_{y_3 b_1}\overline G_{y_3\bar b_0}).\ee
Here, $\mathfrak D(x,y_1,y_2,y_3)$ is given by 
\begin{align*}
&    \mathfrak D(x,y_1,y_2,y_3)=\bar m^2 |m|^4 \sum_\al S_{xy_1}S_{x\al}S_{\al y_2}S_{\al y_3} +\bar m^4 |m|^4  \sum_{\al,\beta } S_{xy_1}S_{x\beta }S^{-}_{\beta \al}S_{\al y_2}S_{\al y_3} \\
    &\quad + m^2 |m|^4 \sum_{\al} S_{xy_3} S_{x\al}S_{\al y_1} S_{\al y_2}+m^4 |m|^4 \sum_{\al,\beta} S_{xy_3}S_{x\beta}S^+_{\beta \al}S_{\al y_1}S_{\al y_2} +|m|^4  S_{xy_1}S_{xy_2}S_{xy_3}\\
    &=\bar m^2 |m|^4  \sum_{\al} S_{xy_1}S^{-}_{x \al}S_{\al y_3}S_{\al y_2}  +m^2 |m|^4 \sum_{\al} S_{xy_3}S^+_{x \al}S_{\al y_1}S_{\al y_2} +|m|^4  S_{xy_1}S_{xy_2}S_{xy_3},
\end{align*}
where we used $ S(1 + m^2 S^+)=S^+$ in the second step. The expression \eqref{eq:stars4loop} corresponds to the pairing 
$\cal P(E_1,\bar E_1)=\{\{b_0,\bar b_1\}, \{a_1,\bar a_1\}, \{b_1,\bar b_0\}\}$, or equivalently, $ \Pi_0=\left\{\{x_0(\ff),\bar x_1(\ff)\},\{x_1(\ii),\bar x_1(\ii)\},\{x_1(\ff),\bar x_0(\ff)\}\right\}.$ The loop graph $\mathsf L_G(\Pi_0)$ generated by $\Pi_0$ is drawn as follows, which is a subgraph of $\mathsf L_G(\Pi_2)$ in \eqref{eq:star2}:
\be\label{eq:star3}
\parbox[c]{4cm}{\includegraphics[width=\linewidth]{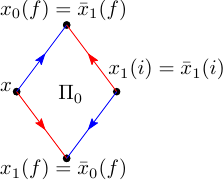}}
\ee
Note that $ \mathfrak D(x,y_1,y_2,y_3)$ gives the value of the coefficient $\Delta(\Pi_0)$: 
$$ \Delta(\Pi_0)=|m|^4 \left( 1 + \iota + \bar \iota \right),\quad \text{where}\ \ \iota:=\frac{m^2}{1-m^2}.$$
Using $|m|=1+\OO(\eta)$ when $\lambda=0$, we can derive that 
\be\label{eq:Pi4}
\iota + \bar \iota= -1+\OO(\eta) \ \Rightarrow \ \Delta(\Pi_0)=\OO(\eta) .\ee
This gives the molecule sum zero property \eqref{eq:vertex_renorm} for $\Pi_0$. 
\end{example}

In the second example, we consider the $p=2$ case with $6$ solid edges:
 
\begin{example}\label{example_6sum0}
Consider the graph
 % $$ \cal G= T_{\fa,\fb_1\bar\fb_1}\overline G_{\fb_2\fb_1}G_{\fb_2  \fb_3}\overline G_{\fb_3\fb_4} G_{\fb_4\bar \fb_1}.$$
 % For simplicity, we denote it as
 $$\cal G=T_{\fa,b_0\bar b_0}\prod_{i=1}^2 \left(G_{a_i b_i}\overline G_{\bar a_i \bar b_i}\right).$$
 % To further simplify the derivation, we assume that 
 % $$ a_1=a_2=a, \quad b_1=b_2=b,\quad \bar a_1=\bar a_2=\bar a,\quad \bar b_1=\bar b_2=\bar b.$$
We consider the two pairings in \eqref{eq:6-loop} and \eqref{eq:4+2-loop}, which correspond to $\Pi_1$ in \eqref{eq:6-loopPi} and $\Pi_2$ in \eqref{eq:4+2-loopPi}, respectively. Recall that they generate the two loop graphs in \eqref{eq:star2}.
%generate the two  loop \eqref{eq:6-loopPi}:
%\be%\label{eq:6-loopPi}
%\Pi_1=\{(x_0(f),\bar x_1(f)),(\bar x_1(i),x_2(i)),(x_2(f),\bar x_2(f)),(\bar x_2(i),x_1(i)),(x_1(f),\bar x_0(f))\}\in \mathbf \Pi_2, 
%\ee
%and a $4$-edge loop plus a $2$-edge loop \eqref{eq:4+2-loopPi}:
%\be%\label{eq:6-loop}
% \{b_0,\bar b_1\}, \{\bar a_1, a_2\}, \{b_2,\bar b_2\}, \{\bar a_2,a_1\},\{b_1,\bar b_0\}\in \cal P(E_2,\bar E_2),
%\ee
%\be%\label{eq:4+2-loop}
% \{b_0,\bar b_1\}, \{\bar a_1, a_1\},\{b_1,\bar b_0\}, \{b_2,\bar b_2\}, \{\bar a_2,a_2\}\in \cal P(E_2,\bar E_2),
%\ee
%which correspond to a $6$-edge loop \eqref{eq:6-loopPi}:
%\be%\label{eq:6-loopPi}
%\Pi_1=\{(x_0(f),\bar x_1(f)),(\bar x_1(i),x_2(i)),(x_2(f),\bar x_2(f)),(\bar x_2(i),x_1(i)),(x_1(f),\bar x_0(f))\}\in \mathbf \Pi_2, 
%\ee
%and a $4$-edge loop plus a $2$-edge loop \eqref{eq:4+2-loopPi}:
%\be%\label{eq:4+2-loopPi}
%\Pi_2=\{(x_0(f),\bar x_1(f)),(\bar x_1(i),x_1(i)),(x_1(f),\bar x_0(f)),(x_2(i),\bar x_2(i)),(\bar x_2(f),x_2(f))\}\in \mathbf \Pi_2.\ee
%{\cor Draw graphs of the above two pairings.} 

To derive the coefficients $\Delta(\Pi_1)$ and $\Delta(\Pi_2)$, we need to expand \eqref{eq:prior_star} with $\Gamma=\prod_{i=1}^2 (G_{a_i b_i}\overline G_{\bar a_i \bar b_i})$. Applying the $Q$-expansions, we get the following graphs as in  \eqref{eq:G'ij}:
 \begin{align*}
	&\left( G_{x b_0} \overline {G}_{x \bar b_0} - |m|^2T_{x,b_0\bar b_0}\right)(\Gamma_1+\Gamma_2), %. \label{eq:Delta1}
\end{align*}
where, for simplicity of presentation, we did not include the $\wt\thn_{\mathfrak{a} x}$ edge, and for $i=2-j\in \{1,2\}$, $\Gamma_{i}$ is defined as follows:  
 % $$\sum_x \Theta_{\mathfrak{a} x} Q_x\left( G_{x b_0} \overline {G}_{x \bar b_0} \right) 
 % \prod_{i=1}^2 \left(G_{a_i b_i}\overline G_{\bar a_i \bar b_i}\right).
 % $$
 % Applying the $Q$-expansion, we get that
 % $$	\sum_x \Theta_{\mathfrak{a} x} Q_x\left( G_{x b_0} \overline {G}_{x \bar b_0} \right)(\Gamma_1+\Gamma_2),$$
 \begin{align}
 	\Gamma_i=&~  m^{-1}G_{a_i x}G_{x b_i} \overline G_{\bar a_i \bar b_i}G_{a_j b_j}\overline G_{\bar a_j \bar b_j}+\bar m^{-1}G_{a_i b_i} \overline G_{\bar a_i x}\overline G_{x\bar b_i}G_{a_j b_j}\overline G_{\bar a_j \bar b_j}  \nonumber
 	\\
 	&~-|m|^{-2}G_{a_i x}G_{x b_i} \overline G_{\bar a_i x}\overline G_{x\bar b_i}G_{a_j b_j}\overline G_{\bar a_j \bar b_j}-|m|^{-2}G_{a_i x}G_{x b_i} \overline G_{\bar a_i \bar b_i}G_{a_j b_j}\overline G_{\bar a_j x}\overline G_{x \bar b_j}  \nonumber\\
 	&~ -\frac{1}{2} m^{-2}G_{a_i x}G_{x b_i} \overline G_{\bar a_i \bar b_i}G_{a_j x} G_{x b_j}\overline G_{\bar a_j \bar b_j}-\frac{1}{2}\bar m^{-2}G_{a_i b_i} \overline G_{\bar a_i x}\overline G_{x\bar b_i}G_{a_j b_j}\overline G_{\bar a_j x}\overline G_{x\bar b_j}   \nonumber\\
 	&~+ |m|^{-2}m^{-1} G_{a_i x}G_{x b_i} \overline G_{\bar a_i x}\overline G_{x\bar b_i}G_{a_j x}G_{x b_j}\overline G_{\bar a_j \bar b_j}+ |m|^{-2}\bar m^{-1}G_{a_i x}G_{x b_i} \overline G_{\bar a_i x}\overline G_{x\bar b_i}G_{a_j b_j}\overline G_{\bar a_j x}\overline G_{x\bar b_j} \nonumber\\
 	& ~-\frac{1}{2}|m|^{-4}G_{a_i x}G_{x b_i} \overline G_{\bar a_i x}\overline G_{x\bar b_i}G_{a_j x} G_{x b_j}\overline G_{\bar a_j x}\overline G_{x\bar b_j}.\label{eq:Gammai}
 \end{align}
Next, with a lengthy but straightforward calculation (see \Cref{subsec:6v}), %cite \cite{RBSO} here
we derive that  
\begin{align}
\Delta(\Pi_1) &= |m|^6\left(1+2 \iota+ 2 \bar \iota+\left|\iota\right|^2 +2\iota^2+2\bar \iota^2 + \iota^3+ \bar \iota^3\right), \quad 
 \Delta(\Pi_2)=0.\label{DeltaPi1}
\end{align}
Using the estimate \eqref{eq:Pi4}, we obtain that 
\begin{align*}
\Delta(\Pi_1) &=|m|^6 \left(\iota+\bar \iota+1\right)\left(\iota^2+\bar \iota^2 -|\iota|^2 +\iota +\bar \iota  +1\right)=\OO(\eta) .
% \\
% &= (\iota^3+2\iota^2+\iota) + (\bar \iota^3+2\bar \iota^2+\bar \iota)+\left|\iota\right|^2  +\OO(\eta)\\
% &= \iota (\iota+1)^2 + \bar \iota(\bar \iota+1)^2 +\left|\iota\right|^2 +\OO(\eta) \\
% &=\iota\bar \iota^2 +\bar \iota^2+\left|\iota\right|^2 +\OO(\eta)=|\iota|^2(\iota+\bar \iota+1)+\OO(\eta)=\OO(\eta).
\end{align*}
This gives the molecule sum zero property \eqref{eq:vertex_renorm} for $\Pi_1$ and $\Pi_2$.
\end{example}

We believe the two examples, Examples \ref{example_4sum0} and \ref{example_6sum0}, provide fairly strong evidence for the molecule sum zero property \eqref{eq:vertex_renorm}. For the rest of this section, we give a rigorous proof of this fact.

\subsection{Proof of \Cref{lem:vertex_zero}} 

%We are ready to present the proof of \Cref{lem:vertex_zero}. 
In this subsection, we reduce the problem to a special case with $k(\Pi)=1$, i.e., there is only one loop in $\Pi$. 

%Following the expansion rule, we can prove the following lemma for   $\Delta(\Pi)$. 
\begin{lemma}[Reduction to the 1-loop case]\label{lem_F1T2}
For any fixed $p\in \N$, suppose that for each $q\in \qqq{1, p-1}$ and all $\Pi\in \mathbf \Pi_q$ with $k(\Pi)=1$, we have $\Delta(\Pi)=\OO(\eta)$. 
% \be\label{eq:ind_zumzero}
% \Delta(\Pi)=\OO(\eta). 
% \ee
Then, for each $\Pi\in \mathbf \Pi_p$ with $k(\Pi)\ge 2$, we have that 
\be\label{eq:ind_zumzero}
\Delta(\Pi)=\OO(\eta). 
\ee
% Suppose that we have paritially proved lemma \ref{lem:vertex_zero} %\ref{Vexp_SZ} 
% as follows:  for   fixed $p$, 
% $$\Pi\in {\bf \Pi}(q),\quad  n_{loop}(LG (\Pi))=1, \quad 
%  q< p \implies \Delta(\Pi)=O(\eta)$$
% Then we can improve to the case that $$\Pi\in {\bf \Pi}(p),\quad  n_{loop}(LG (\Pi))> 1 \implies \Delta(\Pi)=O(\eta)$$ 
\end{lemma}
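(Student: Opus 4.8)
The plan is to reduce the $k(\Pi)\ge 2$ case to several instances of the 1-loop case, for which the molecule sum zero property is assumed. The key observation is that the coefficient $\Delta(\Pi)$ factorizes over the connected components (loops) of the loop graph $\mathsf L_G(\Pi)$. Indeed, by construction, the star graph $\fD_\omega^{\lambda=0}$ associated with a pairing $\cal P(E_p,\overline E_p)$ is built by the same $Q$-, edge-, and $GG$-expansion process applied at the single molecule $\cal M_x$; when $\lambda=0$ the waved edges are $S(\lambda=0)$ or $S^\pm(\lambda=0)$, each forcing all its endpoints into the block $[x]$. Consequently, after summing over internal vertices of $\cal M_x$, the resulting coefficient $\Delta_\omega$ is a product of contributions, one for each loop, where the loop through $x$ contributes a factor depending only on its length $\ell_0(\Pi)$, and each loop not through $x$ contributes a factor depending only on its length $\ell_i$. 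I would make this precise by tracking, through Steps I--III of the proof of \Cref{thm:Vexp}, how the expansion factorizes: a $\bar G\bar G$ or $GG$ pairing at an internal vertex that lies on a loop not containing $x$ produces (via the $GG$-expansion in \Cref{T eq0}, restricted to the terms kept in the proof) exactly the same local combinatorial structure as in a lower-order 1-loop expansion with $q = \ell_i/2$ edges.

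First, I would set up notation: write $\mathrm{Struc}(\Pi)=(k(\Pi),\ell_0(\Pi),\{\ell_1,\ldots,\ell_{k(\Pi)-1}\})$ as in \Cref{def_lgPi}, and by \Cref{lem:aLG} we may choose any convenient representative pairing with this loop structure. Then I would prove a factorization identity of the form
\be\nonumber
\Delta(\Pi)=\Delta_0(\ell_0(\Pi))\cdot\prod_{i=1}^{k(\Pi)-1}\Delta_{\mathrm{closed}}(\ell_i),
\ee
where $\Delta_0$ is the coefficient attached to a 1-loop pairing in $\mathbf\Pi_{\ell_0/2 + 1}$ (the loop through $x$ of total length $\ell_0+2$), and $\Delta_{\mathrm{closed}}(\ell_i)$ is the coefficient produced by a free-standing $\ell_i$-cycle of alternating $G$/$\bar G$ edges detached from $x$. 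The proof of this identity is purely combinatorial bookkeeping: the expansions at $\cal M_x$ never create edges between two distinct loops (a loop is by definition a maximal set of solid edges joined through neutral pairings), so the sum over labels $\omega$ and over internal vertices splits as a product. Next, I would observe that a free-standing $\ell_i$-cycle coefficient $\Delta_{\mathrm{closed}}(\ell_i)$ can itself be identified with $\Delta(\Pi')$ for a 1-loop pairing $\Pi'\in\mathbf\Pi_{q}$ with $q=\ell_i/2 - 1$, after re-attaching it to an auxiliary center; more directly, one checks that $\Delta_{\mathrm{closed}}(\ell_i)$ equals the sum over the geometric-series expansions $S(1+m^2S^+)=S^+$ of the same type computed in \Cref{example_4sum0,example_6sum0}, giving $\Delta_{\mathrm{closed}}(\ell_i)=|m|^{2}\bigl(1+\text{(lower terms in }\iota,\bar\iota)\bigr)$, which by the estimate $|m|=1+\OO(\eta)$, $\iota+\bar\iota=-1+\OO(\eta)$ (see \eqref{eq:Pi4}) equals $\OO(1)$ but need not vanish. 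However, $\Delta_0(\ell_0(\Pi))$ \emph{is} the 1-loop coefficient $\Delta(\Pi_0)$ for $\Pi_0\in\mathbf\Pi_{\ell_0/2+1}$ with $k(\Pi_0)=1$, which by hypothesis is $\OO(\eta)$. Since the closed-loop factors are each $\OO(1)$ and there are only $\OO(1)$ of them, the product is $\OO(\eta)$, giving \eqref{eq:ind_zumzero}.

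The main obstacle I anticipate is rigorously establishing the factorization identity, i.e., showing that the coefficient $\Delta_\omega$ of each star graph, and hence $\Delta(\Pi)=\sum_\omega\Delta_\omega$, genuinely decouples across loops. One must argue that in Steps I--III no expansion step ever links the internal structure attached to one loop with that of another: this requires checking that the only terms retained (those in \eqref{eq:onlygraphs} and the two $GG$-expansion terms kept) always pair edges \emph{within} the same charge-neutral cycle, and that the waved-edge chains $S, S^\pm$ generated between $x$ and the vertices $\vec x(\ii),\vec x(\ff)$ are organized per loop. There is also a subtlety in matching the free-standing closed loop with a genuine 1-loop pairing in $\mathbf\Pi_q$: a closed loop has no $(x,\fb_1)$, $(x,\fb_2)$ distinguished edges and no "$b_0$ not paired with $\bar b_0$" restriction, so one needs a small separate computation (generalizing the $|m|^6(1+2\iota+\cdots)$ pattern in \eqref{DeltaPi1}) to evaluate $\Delta_{\mathrm{closed}}(\ell_i)$ directly and confirm it is $\OO(1)$; fortunately the $\OO(1)$ bound, not a sum-zero, is all that is needed for the closed factors. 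I would handle both points by induction on $p$ exactly as the statement of \Cref{lem_F1T2} invites, peeling off one closed loop at a time.
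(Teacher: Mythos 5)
Your overall strategy is the same as the paper's: invoke \Cref{lem:aLG} to fix a convenient representative, factor $\Delta(\Pi)$ into the contribution of the loop through $x$ (a genuine 1-loop pairing to which the induction hypothesis applies) times an $\OO(1)$ coefficient carried by the remaining loops, and conclude. One minor slip first: the loop through $x$ has total length $\ell_0(\Pi)+2$, which corresponds to a 1-loop pairing in $\mathbf \Pi_{\ell_0/2}$, not $\mathbf \Pi_{\ell_0/2+1}$. With your indexing the index can equal $p$ (e.g.\ when the only other loop has length $2$), which escapes the induction hypothesis; with the correct count one always has $\ell_0/2\le p-1$ because the other loops consume at least one $G$ and one $\bar G$ edge. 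Also note the paper simply lumps all closed loops into a single coefficient $C_{\Pi\setminus\Pi'}$ and never evaluates them, so your worry about matching a free-standing closed loop with a pairing in $\mathbf \Pi_q$ dissolves: only the $\OO(1)$ bound is needed, exactly as you observe.

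The genuine gap is in your plan for proving the factorization. You propose to check that the retained expansion terms (those in \eqref{eq:onlygraphs} and the kept $GG$-expansion terms) \emph{always} pair edges within the same charge-neutral cycle, so that "no expansion step ever links" distinct loops. That is false as an unconditional statement: in Step II the edge expansion at $x$ pairs the chosen edge with any solid edge attached to $x$, irrespective of which loop of $\Pi$ its half-edge belongs to, so cross-loop pairings do occur among the retained intermediate graphs. What actually holds — and what the paper proves — is the conditional statement that any graph in which a vertex of the $x$-loop vertex set $\cal A$ is ever paired with a vertex of $\cal A^c$ can never produce the target pairing $\Pi$: if the cross pairing has opposite charges it survives unchanged into the final star graph, and if it has equal charges the subsequent $GG$-expansion pulls a fresh external edge and necessarily recreates a pairing between $\cal A$ and $\cal A^c$. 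Hence the decoupling $\mathfrak D_\omega^\lambda=\mathfrak D_\omega^\lambda(x,\cal A)\cdot\mathfrak D_\omega^\lambda(\cal A^c)$, and the identity $\Delta(\Pi)=C_{\Pi\setminus\Pi'}\cdot\Delta(\Pi')$, hold only for the sum restricted to graphs realizing $\Pi$, not term by term in the expansion. Without this conditional argument — which is precisely the obstacle you flagged — your factorization identity is unproved; with it, the rest of your argument goes through and coincides with the paper's.
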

\begin{proof}
Given a $\Pi\in \mathbf \Pi_p$ with $k(\Pi)\ge 2$, we have $\ell_0(\Pi)=2q$ for some $q\in \qqq{1, p-1}$. Let $\Pi' \in  \mathbf \Pi_{q}$ be the pairing induced by the loop in $\Pi$ that contains vertex $x$. Then, \eqref{eq:ind_zumzero} is a simple consequence of the induction hypothesis assumed in this lemma and the following fact:
\begin{equation}
    \label{comPP}
\Delta(\Pi)= C_{\Pi\setminus \Pi'}\cdot \Delta(\Pi') ,
\end{equation}
where $C_{\Pi\setminus \Pi'}$ is a deterministic coefficient of order $\OO(1)$.

Due to \Cref{lem:aLG}, to show \eqref{comPP}, we only need to consider $\Delta(\Pi)$ constructed from the $V$-expansion of \eqref{eq:regularG} for a special graph $\Gamma$ with exactly $p$ $G$ edges and $p$ $\bar G$ edges. More precisely, we take
$$E_p=\left\{e_i=(a_i,b_i):i=1,\ldots, p\right\},\quad \bar E_p=\left\{\bar e_i=(\bar a_i,\bar b_i):i=1,\ldots, p\right\},\quad \text{and}\quad \Pi=\prod_{i=1}^p (G_{a_i b_i}\overline G_{\overline a_i \overline b_i}). $$
In the following proof, we always consider the half-edges defined by $(x_0(\ff),b_0)$, $(\bar x_0(\ff),\bar b_0)$, $(a_k,x_k(\ii))$, $(x_k(\ff),b_k)$, $(\overline a_k,\overline x_k(\ii))$, and $(\overline x_k(\ff),\overline b_k)$ for $k\in \qqq{p}$. In other words, $x_0(\ff)$ and $\bar x_0(\ff)$ are the internal vertices connected to $b_0=\fb_1$ and $\bar b_0=\fb_2$, which may change from step to step during the expansion process. Similarly, the vertices $x_k(\ii)$, $x_k(\ff)$, $\bar x_k(\ii)$, and $\bar x_k(\ff)$ are the internal vertices connected with $a_k$, $b_k$, $\overline a_k$, and $\bar b_k$, respectively. Without loss of generality, we assume that 
\be\label{eq:defPi'} \Pi'=\left\{\{x_0(\ff),\bar x_1(\ff)\},\{\bar x_1(\ii),x_1(\ii)\},\{x_1(\ff),\bar x_2(\ff)\},\ldots, \{\bar x_{q}(\ii),x_{q}(\ii)\},\{x_q(\ff),\bar x_0(\ff)\}\right\},\ee
which corresponds to the external edges in $E_q=\{e_i:i=1,\ldots, q\}$, $\bar E_q=\{\bar e_i:i=1,\ldots, q\}$, and the pairing 
$$\cal P(E_q,\bar E_q)=\{\{b_0,\bar b_1\}, \{\bar a_1, a_1\}, \{b_1,\bar b_2\},\ldots, \{\bar a_q,a_q\},\{b_q,\bar b_0\} \}.$$

We now examine the expansion process more closely and focus only on the terms that lead to star graphs corresponding to the pairing $\Pi$. One observation is that at each step, the relevant graphs are entirely determined by the equivalence classes (or pairings) of the vertices. In Step I, after the $Q$-expansions, we get a sum of graphs of the form \eqref{eq:G'ij}, where we have the following equivalence class of internal vertices  
$$\{x, x_{i_1}(\ii), x_{i_1}(\ff), \ldots, x_{i_k}(\ii), x_{i_k}(\ff), \overline x_{j_1}(\ii), \overline x_{j_1}(\ff), \ldots, \overline x_{j_l}(\ii), \overline x_{j_l}(\ff)\},$$
meaning that all half-edges are attached to $x$. In Step II, we get a sum of graphs of the form \eqref{eq:G'''}, where no solid edge is attached to $x$ and all half-edges are paired. These pairs are of the form 
\be\label{pairing}\left\{x_{k_i}^{c_i}(s_i), x_{k'_i}^{c'_i}(s_i')\right\},\quad k_i,k'_i\in \qqq{0,p},\ c_i,c'_i\in \{\emptyset,-\}, \ s_i,s_i'\in \{\ii, \ff\}, \ee 
where we adopt the conventions that $x^\emptyset \equiv x$ and $x^- \equiv \bar x$, and we have $s_i=s'_i$ if $c_i\ne c'_i$, and $s_i\ne s'_i$ if $c_i= c'_i$.
%Note that at this step, the two atoms in a pair must have the same $\ii/\ff$ label, but they can be both of the same charge (corresponding to a $GG$ pairing) or of different charges (corresponding to a $G\bar G$ pairing). 
Finally, in Step III, we apply the $GG$ expansions to all $GG$ or $\bar G \bar G$ pairings. Each $GG$ expansion will pull an external edge and form pairings of the form \eqref{pairing}. We perform the $GG$ expansion repeatedly to every same-colored pairing until we have no $GG$ or $\bar G \bar G$ pairing in every resulting graph.  After Step III, if some graphs still contain non-standard neutral vertices, then we return to Step II and apply edge expansions to them. Repeating Steps II and III,  we finally get the desired star graphs that are determined by the pairings of the form \eqref{pairing}.   

Now, we look at the star graphs that are relevant to the pairing $\Pi$ and denote them as in \eqref{eq:Gpgamma}:
\begin{align}\label{eq:Gpgamma2}
    \cal G_{p,\gamma}^\star(x,\fb_1,\fb_2)=&\sum_{\omega} \sum_{x_k(\ii), x_k(\ff):k=1,\ldots, p}  \mathfrak D_\omega^{\lambda}(x,\vec x(\ii),\vec x(\ff))\cdot \cal G_{\Pi'} \cdot \cal G_{(\Pi')^c},
\end{align}
where $\cal G_{\Pi'}$ represents the product of solid edges related to vertices in $\Pi'$,  % (with $b_0=\fb_1$ and $\bar b_0=\fb_2$): 
\begin{align*}
\cal G_{\Pi'}= (G_{x_0(\ff) b_0 }\bar G_{ x_0(\ff) \bar b_{1}})(G_{x_q(\ff) b_q }\bar G_{ x_q(\ff) \bar b_{0}}) \cdot \prod_{k=1}^{q-1} (G_{x_k(\ff) b_k }\bar G_{ x_k(\ff) \bar b_{k+1}} )  \cdot \prod_{k=1}^q (G_{a_k x_k(\ii)}G_{\bar a_k x_k(\ii)}),
\end{align*}  
and $\cal G_{(\Pi')^c}$ represents the product of solid edges related to vertices in $\Pi\setminus \Pi'$. Let $\cal A$ be the set of vertices in \eqref{eq:defPi'}: 
$$\cal A:=\{x_0(\ff),\bar x_0(\ff)\}\cup \{x_{k}(\ii),x_{k}(\ff),\bar x_{k}(\ii),\bar x_{k}(\ff):k=1,\ldots, q\}.$$
We claim that during Steps II and III, in all relevant graphs that finally lead to star graphs with pairing $\Pi$, the vertices in $\cal A$ are only paired with vertices in $\cal A$, and not with any vertices in 
$$\cal A^c:=\{x_{k}(\ii),x_{k}(\ff),\bar x_{k}(\ii),\bar x_{k}(\ff):k=q+1,\ldots, p\}.$$ 

To see why this claim holds, suppose that at a certain step, a vertex $x_{i}^{c_i}(s_i)\in \cal A$ is paired with a vertex $x_{j}^{c_j}(s_j)\in \cal A^c$. If they have different charges ($c_i\ne c_j$), then this pairing $\{x_{i}^{c_i}(s_i),x_{j}^{c_j}(s_j)\}$ will not be affected in later expansions, and we cannot achieve the pairing $\Pi$ in the final star graphs. If they have the same charge ($c_i=c_j$), we need to apply a $GG$ expansion to this pairing at some stage. During the $GG$ expansion, it will pull an external edge, say $(a_k^{c_k},b_k^{c_k})$ where $c_k\in \{\emptyset,-\}$, and create two new pairings  
$$\{x_{i}^{c_i}(s_i), x_k^{c_k}(\ii)\},\ \{x_{j}^{c_j}(s_j),x_k^{c_k}(\ff)\} \quad \text{or}\quad \{x_{i}^{c_i}(s_i), x_k^{c_k}(\ff)\},\ \{x_{j}^{c_j}(s_j),x_k^{c_k}(\ii)\}.$$
Regardless of whether $a_k^{c_k},b_k^{c_k}\in \cal A$ or not, we get at least one pairing between a vertex in $\cal A$ and a vertex in $\cal A^c$. Consequently, we cannot achieve the pairing $\Pi$ in the final graphs. Hence, in Steps II and III of the expansion process, vertices in $\cal A$ (resp.~$\cal A^c$) are always paired with vertices in $\cal A$ (resp.~$\cal A^c$). This means that the expansions involving the edges in 
$$\cal B=E_q\cup \bar E_q \cup \{(x_0(\ff),b_0), (\bar x_0(\ff),\bar b_0)\}\cup\{ (a_k,x_k(\ii)), (x_k(\ff),b_k), (\overline a_k,\overline x_k(\ii)), (\overline x_k(\ff),\overline b_k): k=1,\ldots, q\},$$ 
are performed independently of the edges in 
$$\cal B^c=[(E_p\cup \bar E_p)\setminus(E_q\cup \bar E_q)]\cup \{(a_k,x_k(\ii)), (x_k(\ff),b_k), (\overline a_k,\overline x_k(\ii)),(\overline x_k(\ff),\overline b_k): k=q+1,\ldots, p\}.$$ 
As a consequence, for every star graph in \eqref{eq:Gpgamma2}, $\mathfrak D_\omega^{\lambda}(x,\vec x(\ii),\vec x(\ff))$ can be decomposed into two components involving vertices in $\cal A$ and $\cal A^c$, respectively:
$$\mathfrak D_\omega^{\lambda}(x,\vec x(\ii),\vec x(\ff))=\mathfrak D_\omega^{\lambda}(x,\cal A)\cdot \mathfrak D_\omega^{\lambda}(\cal A^c).$$
Given a fixed expression of $\mathfrak D_\omega^{\lambda}(\cal A^c)=\mathfrak D$, summing over all graphs containing such a graph, we get 
$$ \sum_\omega \mathfrak D_\omega^{\lambda}(x,\cal A)\cdot \mathfrak D_\omega^{\lambda}(\cal A^c) \mathbf 1(\mathfrak D_\omega^{\lambda}(\cal A^c)=\mathfrak D)= f^\lambda_{\mathfrak D}(x,\cal A)\cdot \mathfrak D,$$
where $f^\lambda_{\mathfrak D}(x,\cal A)$ represents a sum of graphs involving $x$ and vertices in $\cal A$. Due to our earlier claim, it does not depend on the choice of $\mathfrak D$, i.e., $f^\lambda_{\mathfrak D}(x,\cal A)=f^\lambda(x,\cal A)$. In particular, by taking $\mathfrak D$ to be an empty graph (i.e., no edge in $(E_p\cup \bar E_p)\setminus(E_q\cup \bar E_q)$ is pulled), we find that $f^\lambda(x,\cal A)$ must be equal to $\sum_\omega \Delta_\omega (E_q,\bar E_q, \cal P(E_q,\bar E_q))$. Summing over all graphs $\mathfrak D$ that form the pairing $\Pi\setminus \Pi'$ and setting $\lambda=0$, we conclude \eqref{comPP}. 
\end{proof}

Now, to show \Cref{lem:vertex_zero}, we only need to establish the following result. 
\begin{lemma}[Molecule sum zero property for 1-loop case]\label{lem_For1} 
For any fixed $p\in \N$, suppose that for each $q\in \qqq{1, p-1}$, all $\Pi\in \mathbf \Pi_q$, and $z=E+\ii \eta$ with $|E|\le 2-\kappa$ and $0<\eta\le 1$, we have $\Delta(\Pi)\equiv \Delta(\Pi,z)=\OO(\eta)$. Then, for each $\Pi\in \mathbf \Pi_p$ with $k(\Pi)=1$, we have that 
\be\label{eq:main_zumzero}
\Delta(\Pi,z)=\OO(\eta)
\ee
for all $z=E+\ii \eta$ with $|E|\le 2-\kappa$ and $0<\eta\le 1$.
% Suppose that we have partially proved lemma \ref{lem:vertex_zero} as follows:  for fixed $p$, 
% $$\Pi\in {\bf \Pi}(q),\quad   
%  q< p \implies \Delta(\Pi)=O(\eta)$$
% Then we can improve to the case that $$\Pi\in {\bf \Pi}(p),\quad  n_{loop}(LG (\Pi))=1 \implies \Delta(\Pi)=O(\eta)$$ 
\end{lemma}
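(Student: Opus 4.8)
I would argue by induction on $p$, with \Cref{lem_For1} serving as the inductive step: its hypothesis is precisely the conclusion of \Cref{lem:vertex_zero} for all smaller $q$, and combined with \Cref{lem_F1T2} it closes the induction (the base case $p=1$ is the content of \Cref{example_4sum0}, and in fact follows from the argument below with an empty induction hypothesis, since no lower-order star graphs are produced). The crucial observation — anticipated in \Cref{sec:idea} — is that the coefficients $\Delta_\omega$, hence $\Delta(\Pi)$, are \emph{universal}: by the construction of the star graphs in the proof of \Cref{thm:Vexp} and the identity \eqref{eq:vertex_sum_zero_reduce}, each $\Delta_\omega$ is a fixed polynomial in $m_{sc}(z)$, $\overline{m_{sc}(z)}$, $(1-m_{sc}^2(z))^{-1}$, $(1-\overline{m_{sc}^2(z)})^{-1}$ with $\OO(1)$ combinatorial coefficients, determined solely by the pairing (equivalently, by \Cref{lem:aLG}, by the loop structure of $\Pi$), and in particular independent of $W$, $L$, $\lambda$. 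It therefore suffices to verify \eqref{eq:main_zumzero} for one convenient ensemble, and I would take an $\wt N\times\wt N$ GUE matrix $H$ — the degenerate $\lambda=0$, single-block case of the Wegner orbital model, with $m=m_{sc}$, $S_{xy}=\wt N^{-1}$ and $S^\pm_{xy}=(1-m_{sc}^2)^{-1}\wt N^{-1}$ by \eqref{self_mWO} and \eqref{eq:SSpm}.

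\textbf{Setting up the comparison.} Fix $z=E+\ii\eta$ with $|E|\le 2-\kappa$ and $0<\eta\le 1$, and let $\wt N$ be arbitrarily large. The local expansion identities of Lemmas \ref{ssl}--\ref{T eq0} are Gaussian integration-by-parts formulas valid verbatim for this GUE with the above $m,S,S^\pm$; running the expansion procedure of \Cref{thm:Vexp} on a graph $\cal G=T_{\fa,\fb_1\fb_2}\Gamma$ then yields an identity of exactly the shape \eqref{eq_Vexp}, with the \emph{same} coefficients $\Delta_\omega$ attached to the star graphs. I would choose $\Gamma$ to be a product of $p$ $G$-edges and $p$ $\overline G$-edges, with $\fa,\fb_1,\fb_2$ glued into $\Gamma$, so that (i) after summing over all internal vertices, $\cal G$ is a single closed $G/\overline G$-loop of length $2p+2$ (possible precisely because $k(\Pi)=1$), and (ii) among the $\Gamma^\star_p$-graphs produced from $\cal G$ there is one whose pairing has loop structure $\mathrm{Struc}(\Pi)$, so it carries exactly the coefficient $\Delta(\Pi)$. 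Taking expectation of the identity and summing over the loop vertices turns it into a scalar relation between the $\eta$-orders of the various pieces.

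\textbf{Power counting.} Every term would be estimated by Ward's identity (\Cref{lem-Ward}) together with the semicircle local law $\|G-m_{sc}I\|_{\max}\prec(\wt N\eta)^{-1/2}$, $|G_{xx}-m_{sc}|\prec(\wt N\eta)^{-1}$. Collapsing the length-$(2p+2)$ loop on the left by iterated Ward identities gives a bound of order $\wt N^{a}\eta^{-(2p+1)}$ for a suitable common power $\wt N^a$. On the right of \eqref{eq_Vexp}: the diffusive term $m\wt\Theta_{\fa\fb_1}\overline G_{\fb_1\fb_2}\Gamma$ and the $(N\eta)^{-1}$-term are of the same or smaller order; the recollision graphs in $\cal R$ and the non-recollision graphs in $\cal A$ carry, by \eqref{eq:sizesR}--\eqref{eq:sizesA}, at least one extra removed free summation, i.e.\ a factor $\OO(\wt N^{-1})\le\OO(\eta)$ since $\eta\gg\wt N^{-1}$, hence are strictly subleading; the star graphs $\Gamma^\star_{p'}$ with $p'<p$ have coefficients that are $\OO(\eta)$ by the induction hypothesis (via \Cref{lem:aLG}, and \Cref{lem_F1T2} for pairings with several loops at lower order); the $Q$-graphs vanish in expectation up to one further integration by parts and are subleading; the error is $\prec W^{-D}$. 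The only potentially leading term is the $\Delta(\Pi)$-star graph: it opens a \emph{fresh} internal molecule $\cal M_x$ with an unpaired summation, so closing its loop by Ward gains one extra factor $\eta^{-1}$ relative to the collapsed $T$-variable on the left, giving order $\Delta(\Pi)\cdot\wt N^{a}\eta^{-(2p+2)}$. Since \eqref{eq_Vexp} is an identity, this term cannot exceed the left-hand side, so $\Delta(\Pi)\cdot\wt N^{a}\eta^{-(2p+2)}\lesssim \wt N^{a}\eta^{-(2p+1)}$, i.e.\ $\Delta(\Pi)\lesssim\eta$, uniformly in $\wt N$; as $\wt N$ is arbitrary and $\Delta(\Pi)$ does not depend on it, $\Delta(\Pi)=\OO(\eta)$.

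\textbf{Main obstacle.} The delicate part is the bookkeeping in the third step: in the degenerate GUE regime the usual $\heta=\blam/W^{d}$ power-counting collapses ($\blam\to\infty$ as $\lambda\to0$), so one must re-derive the scaling-size estimates directly in terms of $(\wt N\eta)^{-1}$ and Ward collapses, and then show that \emph{every} non-$\Delta(\Pi)$ term of \eqref{eq_Vexp} is smaller by at least a factor $\eta$ than the $\Delta(\Pi)$-star term. This requires in particular (a) verifying that the $\Delta(\Pi)$-star graph is the unique term that opens a new internal molecule with an unpaired free sum, so it alone gains the extra $\eta^{-1}$; (b) using the normalization \eqref{eq:vertex_sum_zero_reduce} (and the bound \eqref{eq:starsize}) to confirm that the right-hand loop sum is genuinely $\Delta(\Pi)$ times the same loop appearing on the left; and (c) controlling the lower-order star graphs uniformly via the induction hypothesis. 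The calculations in \Cref{example_4sum0} and \Cref{example_6sum0} carry out exactly this matching by hand for $p=1,2$ and serve as a reliable template for the general combinatorics.
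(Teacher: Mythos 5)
Your proposal follows essentially the same route as the paper's proof: exploit the universality of the coefficients $\Delta_\omega$ to pass to GUE, apply the GUE $V$-expansion (\Cref{thm:Vexp_GUE}) to the loop graph generated by $\Pi$, and match $\eta$-powers — the $\Delta(\Pi)$-star graph is the unique contribution of order $\sN\eta^{-(2p+2)}$, the lower-order star graphs are killed by the induction hypothesis, and the loop sum itself is $\OO_\prec(\sN\eta^{-(2p+1)})$ — exactly as in \Cref{lem_Lp1_St}. Two points where the paper is more careful than your sketch: controlling the recollision/higher-order graphs and the index-coincidence terms requires the multi-resolvent local laws and loop-length counting of \Cref{lem:multi}, \Cref{lem:sameindices}, and \Cref{lem:boundG} (plain Ward plus the entrywise law only handles the single closed loop), and because every estimate loses $\sN^\tau$ factors the paper evaluates at $\eta=\sN^{-\serr}$, $\serr<1/2$, to conclude $\Delta(\Pi,E)=0$ exactly and only then uses the polynomial dependence of $\Delta$ on $m$ to get $\OO(\eta)$ for all $\eta$ — your fixed-$\eta$, $\sN\to\infty$ step as written only yields $\OO(\eta^{1-\epsilon})$, though the polynomial-structure observation you already record closes that small gap.
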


\begin{proof} [\bf Proof of \Cref{lem:vertex_zero}]
Combining Lemmas \ref{lem_F1T2} and \ref{lem_For1}, we conclude \Cref{lem:vertex_zero} by induction.
%Then clearly the lemma \ref{Vexp_SZ} follows lemma \ref{lem_F1T2} and the  lemma \ref{lem_For1} as desired, which complete the proof of lemma \ref{Vexp_SZ}. 
\end{proof}

\subsection{$V$-expansions of GUE}
The rest of this section is devoted to proving \Cref{lem_For1} above. From the calculations in \Cref{subsec:6v}, % cite \cite{RBSO} here 
one can see that the operations leading to star graphs and the value of $\Delta(\Pi)$ are so intricate that providing a direct proof of the molecule sum zero property in \eqref{eq:main_zumzero} is nearly impossible. Instead of evaluating $\Delta(\Pi)$ directly, we will prove the molecule sum zero property via the $V$-expansions for GUE. An ingredient for this proof is the following local law established in the literature for Wigner matrices.  

\begin{theorem}[Local law of GUE]\label{thm:GUE_local}
Let $\sH$ be an $\sN\times \sN$ ($\sN:=W^d$) GUE, whose entries are independent Gaussian random variables subject to the Hermitian condition $\sH=\sH^\dag$. 
The diagonal entries of $\sH$ are distributed as ${\cal N }_{\R}(0, \sN^{-1})$, while the off-diagonal entries are distributed as ${\cal N }_{\C}(0, \sN^{-1})$. Define its Green's function as $ \sG(z):=(\sH-z)^{-1}.$
Let $\kappa,\erre \in (0,1)$ be arbitrary small constants. For any constants $\tau,D>0$, the following local laws hold: % for $z=E+\ii \eta$: 
\begin{equation}	\label{locallaw_GUE}
\P\bigg(\sup_{|E|\le 2- \kappa}\sup_{\sN^{-1+\fd}\le \eta\le 1} \max_{x,y \in \Z_\sN}|\sG_{xy} (z) -m_{sc}(z)\delta_{xy}| \le   \frac{\sN^\tau  }{\sqrt{\sN\eta}}\bigg) \ge 1- \sN^{-D}, 
\end{equation}
\begin{equation}	\label{locallaw_GUE_aver}
\P\bigg(\sup_{|E|\le 2- \kappa}\sup_{\sN^{-1+\fd}\le \eta\le 1} \max_{x,y \in \Z_\sN}\left|\frac{1}{\sN}\tr \sG (z) -m_{sc}(z) \right| \le   \frac{\sN^\tau}{\sN\eta}\bigg) \ge 1- \sN^{-D}, 
\end{equation}
as long as $\sN$ is sufficiently large.
\end{theorem}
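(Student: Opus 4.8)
Since Theorem~\ref{thm:GUE_local} is the standard Wigner/GUE local law in the bulk, the proposal is to prove it by the self-consistent (Dyson) equation method, organized so that the Gaussian structure is exploited via integration by parts exactly as in Lemmas~\ref{ssl}--\ref{T eq0} specialized to $W=L$, $\lambda=0$; alternatively one may simply invoke \cite{Semicircle, EKY_Average}, whose hypotheses are met. Throughout we use that $|E|\le 2-\kappa$ forces $|m_{sc}(z)|\le 1-c_\kappa$ for a constant $c_\kappa>0$, so that the scalar equation $m_{sc}^2+zm_{sc}+1=0$ is \emph{uniformly stable} on the whole region $\{|E|\le 2-\kappa,\ \sN^{-1+\fd}\le \eta\le 1\}$ (its linearization $1-m_{sc}^2$ is bounded away from $0$). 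The uniformity in $z$ is recovered at the end from a net of $\OO(\sN^{C})$ points together with the deterministic Lipschitz bound $\|\partial_z\sG(z)\|\le\eta^{-2}$; hence it suffices to establish \eqref{locallaw_GUE}--\eqref{locallaw_GUE_aver} for each fixed $z$ in the region.

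First I would derive the entrywise self-consistent equation. By the Schur complement formula, for each $x$,
\[
\sG_{xx}=\Big(\sH_{xx}-z-\tfrac1\sN\tr\sG^{(x)}-Z_x\Big)^{-1},\qquad Z_x:=\sum_{a,b\ne x}\sH_{xa}\sG^{(x)}_{ab}\sH_{bx}-\tfrac1\sN\tr\sG^{(x)},
\]
with $\sG^{(x)}$ the resolvent of the minor $\sH^{(x)}$. Conditioning on $\sH^{(x)}$ and using that the $x$-th row is an independent Gaussian vector, a Hanson--Wright-type estimate (equivalently, a moment expansion of $\Ex|Z_x|^{2p}$ by integration by parts) gives, with $\Lambda:=\max_x|\sG_{xx}-m_{sc}|$,
\[
|\sH_{xx}|+|Z_x|\prec \Big(\frac{\Im m_{sc}+\Lambda}{\sN\eta}\Big)^{1/2}=:\Phi,
\]
and the interlacing identity $\tfrac1\sN\tr\sG-\tfrac1\sN\tr\sG^{(x)}=\OO((\sN\eta)^{-1})$ closes the equation: $\sG_{xx}=\big(-z-m_{sc}-[\tfrac1\sN\tr\sG-m_{sc}]-\OO_\prec(\Phi)\big)^{-1}$. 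Inverting this with the uniform stability of $m_{sc}$ yields $\Lambda\lesssim |\tfrac1\sN\tr\sG-m_{sc}|+\Phi$. A fluctuation-averaging argument then upgrades this to $|\tfrac1\sN\tr\sG-m_{sc}|\prec \Lambda^2+(\sN\eta)^{-1}$, producing the self-improving inequality $\Lambda\prec\Phi\prec((\Im m_{sc}+\Lambda)/(\sN\eta))^{1/2}$, whence $\Lambda\prec(\sN\eta)^{-1/2}$ once an a priori bound $\Lambda=\oo(1)$ is available. The a priori bound is propagated by continuity in $\eta$: it holds trivially at $\eta=1$, and decreasing $\eta$ in multiplicative steps of size $1-\sN^{-\fd/10}$ transfers it down to $\eta=\sN^{-1+\fd}$ using the Lipschitz bound on $\sG$ and the dichotomy ``$\Lambda=\oo(1)$ vs.\ $\Lambda\prec(\sN\eta)^{-1/2}$''. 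The off-diagonal bound $|\sG_{xy}|\prec(\sN\eta)^{-1/2}$ for $x\ne y$ follows from the identity $\sG_{xy}=-\sG_{xx}\sum_{a\ne x}\sH_{xa}\sG^{(x)}_{ay}$ together with a conditional Gaussian large-deviation estimate (variance $\tfrac1\sN\sum_a|\sG^{(x)}_{ay}|^2=\tfrac1\sN\,\Im\sG^{(x)}_{yy}/\eta\asymp\Im m_{sc}/(\sN\eta)$ by Ward's identity). Combining the diagonal and off-diagonal estimates gives \eqref{locallaw_GUE}. Finally, \eqref{locallaw_GUE_aver} is precisely the fluctuation-averaging bound $\tfrac1\sN\sum_x(\sG_{xx}-m_{sc})\prec(\sN\eta)^{-1}$, obtained by expanding $\Ex\big|\tfrac1\sN\sum_x(\sG_{xx}-m_{sc})\big|^{2p}$ and exploiting that the $Z_x$ are $P_x$-centered, so that cross terms gain an extra factor and the naive bound is squared.

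The main obstacle is the fluctuation-averaging step, i.e.\ replacing the crude bound $|\tfrac1\sN\tr\sG-m_{sc}|\prec\Lambda+\Phi$ by $\prec\Lambda^2+(\sN\eta)^{-1}$; this is what both closes the bootstrap and delivers the optimal averaged rate \eqref{locallaw_GUE_aver}. In the Gaussian case it is carried out cleanly by repeated integration by parts in the entries of $\sH$, classifying the resulting graphs by the number of off-diagonal resolvent entries they contain (the $W=L$, $\lambda=0$ specialization of the weight/edge/$GG$ expansions already recorded in this paper), or by citing the abstract fluctuation-averaging lemma of \cite{EKY_Average}. All other ingredients---the Schur identities, the Hanson--Wright-type deviation bounds, the interlacing estimates, the $\eta$-continuity argument, and the net argument for uniformity in $z$---are standard and uniform over the stated region precisely because $|m_{sc}|$ stays bounded away from $1$ there.
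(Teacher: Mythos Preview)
Your proposal is correct and, in fact, far more detailed than what the paper does: the paper simply invokes Theorem~2.1 of \cite{ErdYauYin2012Rig} in one line. Your sketch of the Schur-complement/self-consistent-equation proof with fluctuation averaging is precisely the method underlying that cited result (and the alternatives \cite{Semicircle, EKY_Average} you mention), so there is no substantive difference in approach---you have just unpacked the black box that the paper leaves closed.
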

\begin{proof}
    This follows directly from Theorem 2.1 of \cite{ErdYauYin2012Rig}.
\end{proof}
%will estimate them with the loop and star graphs, which will be introduced in the next subsection. This proof shows the fundamental reason for this sum zero property. Sum zero property is not just a coincidence. 

Note that GUE is a special case of the Wegner orbital model with $W=L$ and $\lambda=0$. In particular, the local expansions in \Cref{sec:local} also apply to $G=\sG$ with $m=m_{sc}$, $S_{xy}\equiv \sN^{-1}$ for $x,y\in \Z_{\sN}$, and 
\be\label{T_GUE} \sT_{x,yy'}:=\sum_{\al}S_{x\al}\sG_{\al y}\bar \sG_{\al y'}. \ee
Hence, our basic strategy is to first show that the $V$-expansions for GUE generate the same coefficient $\Delta(\Pi)$ as the Wegner orbital model. We will then prove the molecule sum zero property in the context of GUE using the local law presented in \Cref{thm:GUE_local}. 
However, in constructing the $V$-expansion for GUE, we will not use the complete $T$-expansion \eqref{mlevelTgdef weak} or the $T$-expansion \eqref{mlevelTgdef} to expand $T_{\fa,\fb_1\fb_2}$, because \smash{$\wt\thn$ and $\zthn$} vanish for GUE with $S_{xy}\equiv \sN^{-1}$. Instead, we will use the $T$-expansion in \Cref{2nd3rd T} without the zero mode removed.  
Similar to \Cref{thm:Vexp}, we can also derive the $V$-expansion for GUE with the $T$-expansion \eqref{seconduniversal}, the local expansions in Lemmas \ref{ssl}--\ref{T eq0}, and the $Q$-expansions defined in \Cref{subsec_Qexp}.

\begin{remark}
    Using the Ward's identity, we can rewrite $\sT_{x,yy'}$ in \eqref{T_GUE} as 
    $\sT_{x,yy'}= (\sG_{y'y}-\bar \sG_{yy'})/(2\ii \sN \eta) .$
    However, in the derivation of the $V$-expansion, we will not use this identity.
\end{remark}

With a slight abuse of notation, for the rest of the proof, we will adopt the following notations for GUE:
$$G\equiv \sG,\ \ T\equiv \sT, \ \ m\equiv m_{sc}, \ \ S=(S_{xy}),\ \ S^{+}=(S^{-})^\dag=\frac{S}{1-m_{sc}^2 S},\ \ \thn=\frac{S}{1-|m_{sc}|^2 S}.$$
All the definitions in \Cref{sec:graph_sizes} (except \Cref{def scaling}) apply to GUE. In particular, we define molecular graphs in the same manner as in \Cref{def_poly}, even though the molecules are no longer ``local" for GUE. 
Given a normal graph $\cal G$ consisting of solid edges (for $G/\bar G$), waved edges (for $S$ and $S^\pm$), diffusive edge (for $\thn$), and a coefficient $\cof(\cal G)$, we define the scaling size of $\cal G$ as
\begin{align}
		\size(\cal G): =&~ %|\cof(\cal G)|
  ( \sN\eta)^{ -\frac{1}{2}\#\{\text{solid edges}\}}  (\sN\eta)^{-\#\{\text{diffusive edges}\}}   \sN^{\#\{\text{internal atoms}\}-\#\{ \text{waved edges}\}}. \label{eq_defsize_GUE}
	\end{align}
In the following proof, our graphs do not contain free edges. By \Cref{thm:GUE_local}, we have that for $z=E+\ii \eta$ with $|E|\le 2- \kappa$ and $\sN^{-1+\fd}\le \eta\le 1$, 
\be\label{eq:calG}|\cal G|\prec \size(\cal G).\ee
Now, we are ready to state the $V$-expansion for GUE. We will adopt the notations defined in \Cref{thm:Vexp}.

\begin{proposition}[$V$-expansions for GUE]\label{thm:Vexp_GUE}
In the setting of \Cref{thm:GUE_local}, fix any $z=E+\ii \eta$ with $|E|\le 2- \kappa$ and $\sN^{-1+\fd}\le \eta\le 1$. Suppose ${\cal G}$ is a normal graph of the form \eqref{eq:regularG}. 
Plugging the $T$-expansion \eqref{seconduniversal} into $T_{\fa,\fb_1\fb_2}$, applying the $Q$ and local expansions to the resulting graphs if necessary, we can expand \eqref{eq:regularG} as follows for any large constant $D>0$:
\begin{equation}
   \begin{split}\label{eq_Vexp_GUE}
        {\cal G}=&~m \thn_{\fa \fb_1}\overline G_{\fb_1\fb_2}  \Gamma  +\sum_{p=1}^{\ell(\Gamma)}\sum_{x}  \thn_{\fa x}
   \Gamma^{\star}_{p}(x,\fb_1,\fb_2) + \sum_x  \thn_{\fa x}\mathcal A_{x,\fb_1\fb_2} + \sum_x  \thn_{\fa x}\mathcal R_{x,\fb_1\fb_2} \\
   &~+  \mathcal Q_{\fa,\fb_1\fb_2}  + \Err_D(\fa,\fb_1,\fb_2) ,
   \end{split}
   \end{equation}
   where $\mathcal Q_{\fa,\fb_1\fb_2}$ is a sum of $\OO(1)$ many $Q$-graphs, and $\Err_D(\fa,\fb_1,\fb_2)$ is an error with $\Err_D(\fa,\fb_1,\fb_2) \prec \sN^{-D}$. The terms $\mathcal A_{x,\fb_1\fb_2}$, $\mathcal R_{x,\fb_1\fb_2}$, and $\Gamma^{\star}_{p}(x,\fb_1,\fb_2)$ are sums of $\OO(1)$ many normal graphs without $P/Q$ labels, which satisfy the following properties.  
\begin{itemize}

\item[(i)] $\mathcal R_{x,\fb_1\fb_2}$ is a sum of recollision graphs $\cal G'(x,\fb_1,\fb_2)$ with respect to the vertices in $\cal G$, whose scaling size satisfies
\be\label{eq:sizesR_GUE} \size\Big(\sum_x \thn_{\fa x}\cal G'(x,\fb_1,\fb_2)\Big) \lesssim  \size(\cal G)\cdot (\sN\eta)^{-\frac{1}{2}k_{\mathsf p}(\cal G'(x,\fb_1,\fb_2))}.\ee

\item[(ii)] $\mathcal A_{x,\fb_1\fb_2}$ is a sum of non-recollision graphs $\cal G'(x,\fb_1,\fb_2)$, whose scaling size satisfies 
\be\label{eq:sizesA_GUE}  \size\Big(\sum_x \thn_{\fa x}\cal G'(x,\fb_1,\fb_2)\Big)\lesssim \eta^{-1}\size(\cal G)\cdot (\sN\eta)^{-\frac{1}{2}[1+k_{\mathsf p}(\cal G'(x,\fb_1,\fb_2))]}.\ee

\item[(iii)] $\Gamma^{\star}_{p}(x,\fb_1,\fb_2)$ is a sum of locally regular star graphs $\cal G_{p,\gamma}^\star(x,\fb_1,\fb_2)$, where $k_{\mathsf p}(\cal G_{p,\gamma}^\star)=2p$ and its scaling size satisfies 
\be\label{eq:starsize_GUE} \size\Big(\sum_{x}  \thn_{\fa x} \cal G_{p,\gamma}^\star(x,\fb_1,\fb_2)\Big) \lesssim \eta^{-1}\size(\cal G)\cdot (\sN\eta)^{-p}.\ee
Moreover, these graphs $\cal G_{p,\gamma}^\star(x,\fb_1,\fb_2)$ takes the form in \eqref{eq:Gpgamma} with $\mathfrak D_\omega^{\lambda}(x,\vec x(\ii),\vec x(\ff))$ replaced by deterministic graphs 
$$\mathfrak D_\omega(x,\vec x(\ii),\vec x(\ff))\equiv \mathfrak D_\omega^{\lambda=0,W=L}(x,\vec x(\ii),\vec x(\ff)). $$
These graphs still satisfy \eqref{eq:wavednumber} and a similar equality to that in \eqref{eq:vertex_sum_zero_reduce}: 
\be\label{eq:vertex_sum_zero_reduce_GUE}
\fD_\omega(x,\vec x(\ii),\vec x(\ff))=\Delta_\omega \cdot \prod_{k=1}^p S_{xx_k(\ii)}\cdot \prod_{k=0}^p S_{xx_k(\ff)}=\Delta_\omega \sN^{-(2p+1)}, 
\ee
where the coefficient $\Delta_\omega$ is exactly the same as that in \eqref{eq:vertex_sum_zero_reduce}.
\end{itemize}
\end{proposition}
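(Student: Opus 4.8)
The plan is to obtain \Cref{thm:Vexp_GUE} by rerunning, almost verbatim, the expansion algorithm used in the proof of \Cref{thm:Vexp}, but starting from the $T$-expansion \eqref{seconduniversal} (in which the zero mode is \emph{not} removed) rather than the complete $T$-expansion \eqref{mlevelTgdef weak}. The key structural point is that the three-step expansion machinery---($\mathrm{I}$) $Q$-expansions (\Cref{subsec_Qexp}), ($\mathrm{II}$) edge expansions (\Cref{Oe14}), ($\mathrm{III}$) $GG$ expansions (\Cref{T eq0})---is purely algebraic and applies verbatim to $G=\sG$, with $m=m_{sc}$ and the flat variance profile $S_{xy}\equiv \sN^{-1}$. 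Thus the only things that require genuine checking are: the bookkeeping of scaling sizes (since \eqref{eq_defsize_GUE} differs from \eqref{eq_defsize}), the fact that ghost/free edges never appear (they were only needed to handle the non-expandable SPD graphs in the $L\gg W$ regime, which is absent here since effectively $L=W$), and---most importantly---that the coefficients $\Delta_\omega$ produced are \emph{identical} to those in \eqref{eq:vertex_sum_zero_reduce}.

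First I would set up the iteration exactly as in the proof of \Cref{thm:Vexp}: plug \eqref{seconduniversal} into $T_{\fa,\fb_1\fb_2}$, collect the leading diffusive term $m\thn_{\fa\fb_1}\overline G_{\fb_1\fb_2}\Gamma$, and route all $Q$-graphs, error graphs, recollision graphs, and graphs satisfying the size bounds \eqref{eq:sizesR_GUE}--\eqref{eq:sizesA_GUE} into $\mathcal Q_{\fa,\fb_1\fb_2}$, $\Err_D$, $\mathcal R_{x,\fb_1\fb_2}$, $\mathcal A_{x,\fb_1\fb_2}$ respectively. As in the Wegner case, the star graphs come only from expanding the single term $\sum_x\thn_{\fa x}Q_x(\sG_{x\fb_1}\overline\sG_{x\fb_2})\Gamma$, and the three-step procedure of Steps $\mathrm{I}$--$\mathrm{III}$ produces precisely the star-graph structure \eqref{eq:Gpgamma}. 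The scaling-size estimates \eqref{eq:sizesR_GUE}, \eqref{eq:sizesA_GUE}, \eqref{eq:starsize_GUE} are the analogues of \eqref{eq:sizesR}, \eqref{eq:sizesA}, \eqref{eq:starsize}: each pulled solid edge costs $(\sN\eta)^{-1/2}$ (the GUE counterpart of $\heta^{1/2}$), a new internal molecule costs a factor $\eta^{-1}$ coming from summing a diffusive edge $\thn$ against a $T$-variable via Ward's identity / \eqref{eq:calG}, and a recollision saves the $\eta^{-1}$. These follow by the same counting as in \cite{BandI,BandII}, now using \eqref{locallaw_GUE}--\eqref{locallaw_GUE_aver} and \eqref{eq:calG} in place of the RBSO local laws; I would simply track the exponents in \eqref{eq_defsize_GUE}.

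The crucial step is \eqref{eq:vertex_sum_zero_reduce_GUE}: that the deterministic vertex function $\fD_\omega$ for GUE equals $\Delta_\omega\prod_k S_{xx_k(\ii)}\prod_k S_{xx_k(\ff)}=\Delta_\omega\sN^{-(2p+1)}$ with \emph{the same} $\Delta_\omega$ as in the Wegner expansion at $\lambda=0$. This is where I expect the main (though conceptually modest) work: one must verify that the expansion algorithm, when applied with $S_{xy}\equiv\sN^{-1}$ globally instead of $S_{xy}=W^{-d}\mathbf 1_{[x]=[y]}$, and with $S^{\pm}_{xy}=(1-m_{sc}^2)^{-1}\sN^{-1}$ (resp.~its conjugate) instead of the block-diagonal $S^{\pm}$ from \eqref{eq:SSpm}, yields exactly the same combinatorics of graphs and the same monomial coefficients $f_\omega(z)$ in $m_{sc},\overline m_{sc},(1-m_{sc}^2)^{-1},(1-\overline m_{sc}^2)^{-1}$. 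The point is that the $\lambda=0$ block matrices $S(\lambda=0),S^\pm(\lambda=0)$ appearing in the Wegner computation factor as $\mathbf 1_{[x]=[y]}$ times a scalar, exactly as the GUE matrices do with $\mathbf 1_{[x]=[y]}$ replaced by $1$; the indicator $\mathbf 1_{[x]=[y]}$ in the Wegner-at-$\lambda=0$ case is what produced the factor $W^{(|\cal M_x|-2p-2)d}$ after summing over internal molecule vertices in the derivation of \eqref{eq:vertex_sum_zero_reduce}, and in the GUE case the same summation over $\sN=W^d$ values gives $\sN^{\#\{\text{internal vertices not in }\vec x\}}$, which is exactly absorbed the same way. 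Hence the same expansion rules generate the same labeled graphs with the same coefficients, and the identity \eqref{eq:wavednumber} (number of waved edges equals number of internal vertices in $\cal M_x\setminus\{x\}$) holds for the same reason. I would formalize this by setting up a bijection between the graphs appearing in the GUE expansion and those in the Wegner-$\lambda=0$ expansion of \Cref{thm:Vexp} that preserves edge types, vertex pairings, and coefficients, and observing that under this bijection the contraction of the flat waved edges gives \eqref{eq:vertex_sum_zero_reduce_GUE} with the same $\Delta_\omega$; this is precisely the content of the last displayed computation in the proof of \Cref{thm:Vexp} with $W$ set to $L$ and $\lambda$ to $0$. Finally, I would note that $\Err_D\prec\sN^{-D}$ follows because each expansion step strictly reduces the scaling size by at least $(\sN\eta)^{-c}$ for some fixed $c>0$ while $\eta\ge\sN^{-1+\fd}$, so after $\OO_D(1)$ rounds the residual graphs are bounded by $\sN^{-D}$ via \eqref{eq:calG}; the termination argument is identical to that in \cite[Lemma 6.8]{BandII}.
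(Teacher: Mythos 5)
Your proposal follows essentially the same route as the paper: the paper proves this proposition by observing it is a special case of \Cref{thm:Vexp} with $\lambda=0$ and $W=L$, where the only change is that \eqref{seconduniversal} replaces the complete $T$-expansion, the star graphs again arise solely from $\sum_x\thn_{\fa x}Q_x(G_{x\fb_1}\overline G_{x\fb_2})\Gamma$, and the same expansion algorithm yields the same coefficients $\Delta_\omega$. Your additional details (scaling-size bookkeeping with \eqref{eq_defsize_GUE}, absence of ghost/free edges, and the explicit matching of the flat $S,S^\pm$ with the $\lambda=0$ block matrices) are exactly the checks the paper leaves implicit, so the argument is correct.
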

\begin{proof}
This proposition is essentially a special case of \Cref{thm:Vexp} with $\lambda=0$ and $W=L$, where the only difference is that we have applied \eqref{seconduniversal} to $T_{\fa,\fb_1\fb_2}$ instead of \eqref{mlevelTgdef weak}. However, this difference does not affect the proof. In the proof of this proposition, we consider the star graphs coming from the expansions of 
\be\nonumber
\cal G_0=\sum_x \thn_{\fa x} Q_x\left(G_{x \fb_1}   \overline G_{x \fb_2}\right)\Gamma,\ee
which takes the same form as \eqref{eq:prior_star} except that $\wt\thn$ is replaced by $\thn$ here. In particular, this change does not affect the expansion process and the coefficients $\Delta_\omega$ associated with the star graphs. Hence, we omit the details of the proof. 
\end{proof}

With \Cref{thm:Vexp_GUE}, to show the molecule sum zero property \eqref{eq:main_zumzero}, it suffices to prove the molecule sum zero property using a certain $V$-expansion of GUE. Given a pairing $\Pi\in \mathbf \Pi_p$, a natural candidate is the $V$-expansion for the loop graph $\sL_G(\Pi)$ generated by $\Pi$. Without loss of generality, assume that $\Pi$ is 
\be\label{eq:1-loop_Pi}\Pi=\left\{\{x_0(\ff),\bar x_1(\ff)\},\{\bar x_1(\ii),x_1(\ii)\},\{x_1(\ff),\bar x_2(\ff)\},\ldots, \{\bar x_{p}(\ii),x_{p}(\ii)\},\{x_p(\ff),\bar x_0(\ff)\}\right\}.\ee
It generates the loop graph 
\be\label{eq:LoopG}\cal G(x_0,x_1,\ldots, x_{2p+1}):=\mathsf L_G(\Pi)= \prod_{i=0}^{2p+1} G^{c_i}_{x_i x_{i+1}}, \quad c_i=\begin{cases}
    \emptyset , \ &\text{$i$ is even}\\
    \dag,\ &\text{$i$ is odd}
\end{cases},\ee
%$$G_{xx_0(\ff)} \overline G_{x\bar x_0(\ff)} \prod_{k=1}^p \left[G_{x_k(\ii) x_k(\ff)} \overline G_{\bar x_k(\ii) \bar x_k(\ff)}\right] \cdot \prod_{k=1}^p\mathbf 1((\sigma_k)_1=(\sigma_k)_2), $$
where we have renamed the indices of $\mathsf L_G(\Pi)$ as 
$$x_0=x_{2p+2}=x,   x_1=x_0(\ff)=\bar x_1(\ff),   x_2=\bar x_1(\ii)=x_1(\ii),\ldots, x_{2p}=\bar x_p(i)=x_p(i),  x_{2p+1}=x_p(\ff)=\bar x_0(\ff).$$
%and adopted the conventions that $G^\emptyset=G$. 
First, the following lemma gives the size of $\E\cal G(x_0,x_1,\ldots, x_{2p+1})$ when we sum over the indices $x_0,x_1,\ldots, x_{2p+1}$.

\begin{lemma}\label{lem:sameindices}
In the setting of \Cref{thm:Vexp_GUE}, let $\Sigma_1,\ldots, \Sigma_q$ be disjoint subsets that form a partition of the set of indices $\{x_0, \ldots, x_{2p+1}\}$ with $1\le q\le 2p+2$. 
We have that 
\be\label{eq:boundsame}
\sum_{y_1,\ldots, y_q\in \Z_{\sN}}^\star  \cal G(x_0,x_1,\ldots, x_{2p+1}) \mathbf 1(\Sigma_i=\{y_i\}: i=1,\ldots, q) \prec \sN\eta^{-q+1},
\ee
where $\Sigma_i=\{y_i\}$ means fixing the value of the indices in subset $\Sigma_i$ to be $y_i$ and $\sum^\star$ means summation subject to the condition that $y_1,\ldots, y_q$ all take different values. 
\end{lemma}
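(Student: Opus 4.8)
The plan is to prove \eqref{eq:boundsame} by a Ward-identity peeling argument applied to the loop graph $\cal G(x_0,\ldots,x_{2p+1})$ in \eqref{eq:LoopG}, using only the entrywise local law \eqref{locallaw_GUE} and the $\ell^2$-bound that it yields through Ward's identity (\Cref{lem-Ward}) for GUE. Recall that for GUE, $|G_{xy}-m_{sc}\delta_{xy}|\prec (\sN\eta)^{-1/2}$ and $\sum_y |G_{xy}|^2 = \eta^{-1}\im G_{xx} \prec \eta^{-1}$; these are the only two inputs I will need. The key bookkeeping device is the scaling size \eqref{eq_defsize_GUE}: a loop of length $2p+2$ has $2p+2$ solid edges and, once we collapse $\{x_0,\ldots,x_{2p+1}\}$ into $q$ equivalence classes, exactly $q$ internal atoms (the $y_i$'s are all free summation indices since they take distinct values) and $0$ waved edges, so its scaling size is $(\sN\eta)^{-(p+1)}\sN^{q}$. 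Against the naive bound this is worse than the target $\sN\eta^{-q+1}$ whenever $\eta$ is small, so the point of the argument is that the loop structure forces a gain.

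\textbf{Key steps.} First I would split the solid edges of $\cal G$ into ``diagonal'' pieces ($G_{x_ix_{i+1}}$ with $x_i,x_{i+1}$ in the same $\Sigma$-class, which after the substitution $\Sigma_i=\{y_i\}$ become factors $G_{y_jy_j}=m_{sc}+\OO_\prec((\sN\eta)^{-1/2})$, hence bounded by $\OO_\prec(1)$) and ``off-diagonal'' pieces (edges running between distinct classes). The diagonal pieces are discarded at cost $\OO_\prec(1)$ each. The remaining structure is a union of closed cycles on the $q$ vertices $y_1,\ldots,y_q$, each edge of each cycle being an off-diagonal $G$ entry; here I use that in a loop every vertex has degree exactly $2$, so after contraction every $y_i$ still has even degree and the off-diagonal edges decompose into Eulerian cycles through the $y_i$'s. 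For a cycle of length $r\ge 2$ on vertices $y_{i_1},\ldots,y_{i_r}$, I sum the indices one at a time: summing $y_{i_1}$ against the two incident $G$ edges $G_{y_{i_r}y_{i_1}}G_{y_{i_1}y_{i_2}}$ gives, by Cauchy--Schwarz and Ward, $\sum_{y_{i_1}}|G_{y_{i_r}y_{i_1}}||G_{y_{i_1}y_{i_2}}|\prec \eta^{-1}$, which eliminates one free summation index at the cost of one factor $\eta^{-1}$ while producing a new ``effective'' edge between $y_{i_r}$ and $y_{i_2}$ whose sup-norm is now only $\OO_\prec(1)$ (not $(\sN\eta)^{-1/2}$). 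Iterating around the cycle, a cycle of length $r$ is bounded by $\eta^{-(r-1)}$ after summing out all but one of its vertices, or by $\sN\eta^{-(r-1)}$ if we also sum the last vertex freely over $\Z_\sN$. Carrying this out across all cycles, keeping track of which cycles share the distinguished vertex $x_0=x$, and using that the total number of off-diagonal edges is at most $2p+2$ while the total number of summation indices is $q$, one collects a factor $\sN$ from exactly one free final summation and $\eta^{-(q-1)}$ from the $q-1$ pairwise contractions, giving precisely $\sN\eta^{-q+1}$. The constraint $\sum^\star$ (distinct values) only removes diagonal terms and can be handled by inclusion--exclusion, never worsening the bound.

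\textbf{Main obstacle.} The delicate point is not any single estimate but the combinatorial bookkeeping that guarantees the exponent of $\eta$ is exactly $-(q-1)$ and not worse: I must check that after contracting $\{x_0,\ldots,x_{2p+1}\}$ to $q$ classes, the off-diagonal subgraph on $y_1,\ldots,y_q$ is always connected enough that $q-1$ Ward contractions suffice, i.e.\ that the $y_i$'s together with the off-diagonal edges form a connected multigraph (this follows because the original loop is connected and collapsing vertices preserves connectivity, but one must rule out the degenerate situation where a class $\Sigma_i$ is incident only to diagonal edges — impossible, since then $\Sigma_i$ would be a union of consecutive vertices forming a sub-loop of $\cal G$ disconnected from the rest, contradicting that $\cal G$ is a single loop). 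A secondary subtlety is that some contractions produce $G_{yy'}$ factors with $y\ne y'$ but with sup-norm $\OO_\prec(1)$ rather than $\OO_\prec((\sN\eta)^{-1/2})$; these must be tracked so as not to claim a spurious $(\sN\eta)^{-1/2}$ gain that would be needed elsewhere. Once the connectivity of the contracted off-diagonal graph is established, the estimate \eqref{eq:boundsame} follows by induction on $q$ exactly as in the proof of \Cref{w_s} and \cite[Lemma 6.10]{BandI}, and I would reference those arguments rather than repeat them. I expect this connectivity/degeneracy check to be the only place requiring genuine care.
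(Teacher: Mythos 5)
Your proposal has a genuine gap, and it is not the connectivity check you single out. The engine you describe — at each degree-two vertex, bound $\sum_{y}|G_{ay}||G_{yb}|\prec\eta^{-1}$ by Cauchy--Schwarz and Ward and replace the consumed pair by an ``effective edge'' of sup-norm $\OO_\prec(1)$ — discards exactly the factor that makes the exponent count close, namely the off-diagonal smallness of the contracted chain. Two ways to see the failure. First, applied literally vertex-by-vertex to the uncontracted loop ($q=2p+2$): each Ward contraction consumes two edges, so only $p+1$ vertices can be contracted this way and the remaining $p+1$ become isolated and must be summed at cost $\sN$ each, giving $\sN^{p+1}\eta^{-(p+1)}$, which exceeds the target $\sN\eta^{-(2p+1)}$ by $(\sN\eta)^{p}$. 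Second, in the repaired variant where each Eulerian cycle is treated as a unit (a diagonal entry of a resolvent product, $\prec\eta^{-(r-1)}$ by spectral decomposition), the count still fails as soon as the contracted multigraph has two or more vertices of degree $\ge4$: writing $F$ for this set and bounding each chain between $F$-vertices only by $\eta^{-(\ell-1)}$ (no $(\sN\eta)^{-1/2}$ gain), one gets $\sN^{|F|}\eta^{-(q-|F|)}=(\sN\eta)^{|F|-1}\cdot\sN\eta^{-(q-1)}$, off by $(\sN\eta)^{|F|-1}$. A concrete instance inside the lemma: $p=3$ with $\Sigma_1=\{x_0,x_2\}$, $\Sigma_2=\{x_4,x_6\}$ and the other four indices in singletons, so $q=6$ and the target is $\sN\eta^{-5}$; reaching it forces you to use that the two length-two chains joining the two degree-four vertices satisfy $|(GG^{\dag})_{ab}|=|G_{ab}-\bar G_{ba}|/(2\eta)\prec(\sN\eta)^{-1/2}\eta^{-1}$ for $a\ne b$, not merely $\prec\eta^{-1}$; your bookkeeping records only the latter and lands at $\sN^2\eta^{-4}=\sN\eta\cdot\sN\eta^{-5}$.

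The missing ingredient is therefore the entrywise multi-resolvent bound $(G^{c_0}\cdots G^{c_k})_{ab}=\cal M\,\delta_{ab}+\OO_\prec((\sN\eta)^{-1/2}\eta^{-k})$, including the separation of the deterministic diagonal part, which is what lets chains through degree-two vertices keep behaving like single $G$ edges after the natural $\eta$-normalization (a single Ward application gives this only for chains of length two; for longer chains it is genuinely the content of \Cref{lem:multi}, imported from \cite{CES_Multi-resol_EJP}). This is precisely how the paper argues: \Cref{lem:sameindices} is reduced, by merging the vertices in each $\Sigma_i$ and inserting $\times$-dotted edges, to \Cref{lem:boundG} for connected neutral graphs, and the proof of \Cref{lem:boundG} performs your chain contraction as a ``long solid edge reduction'' that retains $(\sN\eta)^{-1/2}$ off the diagonal and a scalar plus a light weight on the diagonal of every long edge; after iterating until every remaining vertex has degree $\ge4$, the bound $[(\sN\eta)^{-1/2}]^{\#\{\mathrm{edges}\}}\le(\sN\eta)^{-k}$ closes the count, the $\eta^{-1}$ factors coming from the absorbed degree-two vertices. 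Your connectivity observation is correct but is the easy part — it is just the connectedness hypothesis of \Cref{lem:boundG}, automatic because contracting a loop preserves connectivity. To repair your outline you must either invoke the multi-resolvent local law or reconstruct its consequences by iterating Ward at the level of resolvent identities rather than absolute values, i.e.\ essentially reproduce the paper's long-edge reduction.
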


Summing over all possible partitions of $\{x_0, \ldots, x_{2p+1}\}$, this lemma implies that
\be\label{eq:correctsizeG} \sum_{x_0,\ldots, x_{2p+1}}\E\cal G(x_0,\ldots, x_{2p+1}) \prec \sN\eta^{-(2p+1)}.\ee
On the other hand, using the $V$-expansion in \Cref{thm:Vexp_GUE}, we find that the LHS of \eqref{eq:correctsizeG} has a leading term proportional to $\Delta(\Pi)\cdot \sN\eta^{-(2p+2)}$. Thus, we must have $\Delta(\Pi)=\oo(1)$, which implies the molecule sum zero property, as we will demonstrate below.

\begin{lemma}\label{lem_Lp1_St} 
In the setting of \Cref{thm:Vexp_GUE}, suppose the assumptions of \Cref{lem_For1} hold (i.e., the induction hypothesis $\Delta(\Pi',z)=\OO(\eta)$ for all $\Pi'\in \mathbf \Pi_q$, $q\in \qqq{1, p-1}$, and $z=E+\ii \eta$ with $|E|\le 2-\kappa$ and $0<\eta\le 1$). Then, for any $z=E+\ii\eta$ with $|E|\le 2-\kappa$ and $\eta=\sN^{-\serr}$ for a constant $\serr\in (0,1/2)$, we have
\begin{equation} \label{scsgun}
\sum_{x_0,x_1,\ldots, x_{2p+1}}^{\star} \mathbb E \cal G(x_0,x_1,\ldots, x_{2p+1})  =
 \Delta(\Pi)\cdot  \sN\left(\frac{\im m}{\eta}\right)^{2p+2} 
  + \OO_\prec\left(\sN\eta^{-(2p+1)} \right)  .
\end{equation}
% \begin{equation} \label{scsgun}
% \cor \frac{1}{\sN}\sum_{x_0,x_1,\ldots, x_{2p+1}} \mathbb E \cal G(x_0,x_1,\ldots, x_{2p+1})  =
%  \Delta(\Pi)\cdot  \sN\left(\frac{\im m}{\sN\eta}\right)^{2p+2} 
%   + \OO_\prec\left((\sN\eta)^{-(2p+1)} \right)  .
% \end{equation}
%if all indices $x_1,\ldots, x_{2p+1}$ take different values. 
%for fixed $n_1$, $n_2$, $\cdots$, $n_{2p+2}$ with different values, if $n_i$'s have the differnt values, then  
% \begin{equation} \label{scsgun}
% \frac{1}{N}\sum_{x_0} \mathbb E \cal G(x_0,x_1,\ldots, x_{2p+1})  =
%  \Delta(\Pi)\cdot  N   \;\prod_{i=0}^{2p+1} 
%   \Big(\frac1N\sum_{y_i }\Theta_{y_ix_i} \Big) + \OO_\prec\left((N\eta)^{-(2p+1)} \right).
% \end{equation}
%\eqref{scsgun} holds with the particular $\Pi$ in \eqref{SP_Pi}.    
\end{lemma}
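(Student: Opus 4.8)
\textbf{Proof proposal for \Cref{lem_Lp1_St}.}

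The plan is to apply the $V$-expansion for GUE from \Cref{thm:Vexp_GUE} to the graph $\cal G = T_{x_0, x_1 x_{2p+1}} \cdot \Gamma$, where $\Gamma$ is chosen so that the full product $T_{x_0,x_1x_{2p+1}}\cdot \Gamma$, after summing over all indices, reproduces (a constant multiple of) the loop graph $\sL_G(\Pi)$. More precisely, I would first rewrite $\sum_{x_0,\ldots,x_{2p+1}}^\star \E\,\cal G(x_0,\ldots,x_{2p+1})$ as $\sum^\star \E\big[ T_{x_0, x_1 x_{2p+1}} \Gamma_0 \big] / S_{x_0 x_0}$ times an $\sN^{-1}$ factor, where $\Gamma_0$ carries the $2p$ solid edges of the loop other than the two edges meeting $x_0$; unwinding $T_{x_0,x_1x_{2p+1}} = \sum_\al S_{x_0\al} G_{\al x_1}\bar G_{\al x_{2p+1}}$ with $S_{x_0\al}\equiv \sN^{-1}$ recovers exactly the loop structure. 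Expanding $T_{x_0,x_1x_{2p+1}}$ via \eqref{eq_Vexp_GUE}, the RHS splits into: the leading term $m\thn_{x_0 x_1}\bar G_{x_1 x_{2p+1}}\Gamma_0$; the star graphs $\sum_p \sum_x \thn_{x_0 x}\Gamma^\star_p$; the non-recollision terms $\mathcal A$; the recollision terms $\mathcal R$; the $Q$-graphs $\mathcal Q$; and the negligible error $\Err_D$.

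The second step is to identify which of these terms survive after taking $\E$ and summing over all indices with the $\star$ restriction. The $Q$-graph term vanishes after taking expectation (recall $Q_x = 1 - \E_x$, so $\E\,Q_x(\cdot)=0$ when the external graph attached outside is independent of the $x$-row; here one must check the pre-factor structure, but this is the standard mechanism exploited throughout \cite{BandI,BandII,BandIII}). The error term contributes $\OO_\prec(\sN^{-D})$. For the recollision, non-recollision, and the star graphs with $p'<p$ (i.e., star graphs matching a \emph{smaller} loop, which by \Cref{lem_F1T2} and the induction hypothesis carry coefficients $\Delta(\Pi')=\OO(\eta)$), I would use the scaling-size bounds \eqref{eq:sizesR_GUE}, \eqref{eq:sizesA_GUE}, \eqref{eq:starsize_GUE} together with \eqref{eq:calG} and careful bookkeeping of the number of broken solid edges $k_{\mathsf p}$ to show each contributes $\OO_\prec(\sN\eta^{-(2p+1)})$, i.e., is lower order than the claimed leading term $\sN(\im m/\eta)^{2p+2}$ (note $\eta^{-(2p+2)}$ vs $\eta^{-(2p+1)}$ is a genuine gap since $\eta \ll 1$, and $\im m \asymp 1$ in the bulk). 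The leading term $m\thn_{x_0 x_1}\bar G_{x_1 x_{2p+1}}\Gamma_0$ also needs to be checked to be of size $\OO_\prec(\sN\eta^{-(2p+1)})$ (since $\thn_{xy}$ for GUE is $\asymp (\sN\eta)^{-1}$-type rather than producing an extra $\eta^{-1}$). The only term producing the top order $\sN\eta^{-(2p+2)}$ is the star graph $\Gamma^\star_p$ that exactly reproduces the pairing $\Pi$: substituting \eqref{eq:Gpgamma} with $\fD_\omega = \Delta_\omega \sN^{-(2p+1)}$ from \eqref{eq:vertex_sum_zero_reduce_GUE}, summing the $2p+2$ internal loop contractions each of which (via Ward's identity / the local law) yields an $\im m/\eta$ factor, and using $\sum_\omega \Delta_\omega = \Delta(\Pi)$, gives precisely $\Delta(\Pi)\cdot \sN (\im m/\eta)^{2p+2}$ up to lower-order corrections. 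Assembling all pieces yields \eqref{scsgun}.

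The main obstacle I anticipate is the careful identification of \emph{exactly} which graphs in the $V$-expansion land in the top-order bin $\sN\eta^{-(2p+2)}$ versus the subleading bin $\sN\eta^{-(2p+1)}$, and rigorously justifying that the only top-order contribution is the single star graph matching $\Pi$. This requires: (a) confirming that every star graph $\cal G^\star_{p',\gamma}$ with $p'<p$ (which would otherwise also be top-order from the $\eta$-counting in \eqref{eq:starsize_GUE}) is demoted by the induction hypothesis $\Delta(\mathrm{Struc}(\Pi'))=\OO(\eta)$ applied through \Cref{lem_F1T2}'s factorization \eqref{comPP}; (b) verifying that the combinatorial sum over internal vertices in the star graph indeed produces exactly $2p+2$ factors of $\im m/\eta$ and no more, which is where \Cref{lem:sameindices}-type estimates (or rather their sharp versions giving the leading coefficient, not just the upper bound) enter; and (c) handling the interplay with the $\star$ restriction — off-diagonal collision terms must be separated out and shown to be lower order, which is a routine but bookkeeping-heavy inclusion-exclusion. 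Modulo this, the argument is a direct consequence of \Cref{thm:Vexp_GUE} and \Cref{thm:GUE_local}.
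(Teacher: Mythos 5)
Your skeleton is the same as the paper's: sum out $x_0$ (using $S_{x_0\al}\equiv\sN^{-1}$) to form a $T$-variable with an auxiliary external index, insert the GUE $V$-expansion \eqref{eq_Vexp_GUE}, kill the $Q$-graphs by taking expectation, handle the $\star$-restriction by inclusion--exclusion via \Cref{lem:sameindices}, use the induction hypothesis for star graphs with fewer pulled pairs, and extract the main term $\Delta(\Pi)\,\sN(\im m/\eta)^{2p+2}$ from the unique $q=p$ star graph whose solid loops all have length two, via Ward's identity and \eqref{locallaw_GUE_aver}. However, two steps of your error analysis would not go through as described.

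First, you propose to bound the recollision and higher-order terms by the scaling-size estimates \eqref{eq:sizesR_GUE}--\eqref{eq:starsize_GUE} together with the entrywise bound \eqref{eq:calG} and ``bookkeeping of $k_{\mathsf p}$''. Entrywise bounds followed by trivial summation over the $2p+1$ external vertices lose too much: for a higher-order graph with $k_{\mathsf p}=0$ this route gives at best $\sN^{2p+1}\cdot\eta^{-1}(\sN\eta)^{-(p+3/2)}=\sN^{p-1/2}\eta^{-(p+5/2)}$, which exceeds the target $\eta^{-(2p+1)}$ by a positive power of $\sN$ for every $\serr<1/2$ and every $p\ge 1$. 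What is actually needed is to sum the external vertices through the connected-component estimate of \Cref{lem:boundG} (a solid component on $\ell$ vertices contributes $\sN\eta^{-(\ell-1)}$), tracking $n_a-n_W$ and $\sum_i(\ell_i-1)$; and in the hardest sub-case (non-recollision $\cal A$-graphs with $n_a=n_W+1$) the paper must run a further round of local expansions \eqref{expand GA} and a combinatorial argument (every unpulled external edge lies in a solid loop of length $\ge 4$ containing at least two pulled or new edges) to force $\sum_i(\ell_i-1)\ge 2p+2$. None of this machinery appears in your plan, and without it the error bins do not close.

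Second, you only discuss star graphs with $p'<p$ and the single one matching $\Pi$, but the dangerous terms are the $q=p$ star graphs with a \emph{different} pairing: these are not covered by the induction hypothesis (a one-loop pairing of $p$ pairs carries the coefficient $\Delta(\Pi)$ itself, the unknown quantity), so they cannot be ``demoted''. In the paper they are controlled purely by loop-length counting: a solid loop of length $\ge 4$ upgrades the bound to $\OO_\prec(\sN^{-1}\eta^{-(2p+3)})$, which is admissible relative to $\eta^{-(2p+1)}$ only because $\eta=\sN^{-\serr}\ge\sN^{-1/2}$. This is precisely where the hypothesis $\serr<1/2$ enters (it is also needed for the higher-order graphs above), and your proposal never identifies this mechanism or the role of that restriction.
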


Before proving Lemmas \ref{lem:sameindices} and \ref{lem_Lp1_St}, we first use them to complete the proof of \Cref{lem_For1}. 

\begin{proof}[\bf Proof of \Cref{lem_For1}]
Since $\Delta(\Pi)$ are fixed polynomials of $m,\, \bar m,\, (1-m^2)^{-1},\, (1-\bar m^2)^{-1}$ (with $\OO(1)$ coefficients), we have that $\Delta(\Pi)\equiv \Delta(\Pi,z)$ satisfies
\be\label{eq:Delta_eta=0}
\Delta(\Pi,z)-\Delta(\Pi,E) =\OO(\eta).
\ee
Thus, to prove \eqref{eq:main_zumzero}, it suffices to show that $\Delta(\Pi,E)=0$ for any fixed $E \in [-2+\kappa,2-\kappa]$. 

For this purpose, we take $z_0=E+\ii \eta_0$ with $\eta_0=\sN^{-\serr}$ for a constant $\serr\in (0,1/2)$. By \eqref{eq:boundsame}, the LHS of \eqref{scsgun} is bounded by \smash{$\sN\eta_0^{-(2p+1)}$}. Thus, we get from \eqref{scsgun} that 
\be\label{sum_scsgun} 
 \Delta(\Pi,z_0)\cdot  \sN\left({\im m}/{\eta_0}\right)^{2p+2} 
  + \OO_\prec\left(\sN\eta_0^{-(2p+1)} \right)\prec \sN\eta_0^{-(2p+1)},\ee
\iffalse
Then, summing the LHS of \eqref{scsgun} over $x_1,\ldots, x_{2p+1}$ and using \eqref{eq:boundsame}, we obtain that 
\be\label{sum_scsgun}\frac{1}{\sN}\sum_{x_0,x_1,\ldots, x_{2p+1}}\E\tr\left( \prod_{i=0}^{2p+1} G^{c_i}_{x_i x_{i+1}}(z_0)\right) =
 \Delta(\Pi,z_0)\cdot  \left(\frac{\im m}{\eta_0}\right)^{2p+2} 
  + \OO_\prec\left(\eta_0^{-(2p+1)} \right).\ee
Then, summing the LHS of \eqref{scsgun} over $x_1,\ldots, x_{2p+1}$ and using \eqref{eq:boundsame}, we obtain that 
\be\label{sum_scsgun}\frac{1}{\sN}\sum_{x_0,x_1,\ldots, x_{2p+1}}\E\tr\left( \prod_{i=0}^{2p+1} G^{c_i}_{x_i x_{i+1}}(z_0)\right) =
 \Delta(\Pi,z_0)\cdot  \left(\frac{\im m}{\eta_0}\right)^{2p+2} 
  + \OO_\prec\left(\eta_0^{-(2p+1)} \right).\ee
Let $\{\lambda_k\}$ and $\{\bu_k\}$ denote the eigenvalues and eigenvectors of $H$. Then, using the spectral decompositions 
$$ G(z)=\sum_{k=1}^N \frac{\bu_k\bu_k^*}{\lambda_k -z},\quad G^\dag(z)=\sum_{k=1}^N \frac{\bu_k\bu_k^*}{\lambda_k -\bar z},$$
we can bound the LHS \eqref{sum_scsgun} as 
\be\label{eq;applyWard}\frac{1}{\sN}\tr\left( \prod_{i=0}^{2p+1} G^{c_i}_{x_i x_{i+1}}(z_0)\right)=\frac{1}{\sN}\sum_{k=1}^\sN\frac{1}{|\lambda_k-z_0|^{2p+2}}\le \frac{\eta_0^{-2p}}{\sN}\sum_{k=1}^\sN\frac{1}{|\lambda_k-z_0|^{2}}= \eta_0^{-2p} \frac{\im \tr G(z_0)}{\sN\eta_0},\ee
where we used Ward's identity in the last step.  
Plugging it into \eqref{sum_scsgun} and using the local law \eqref{locallaw_GUE}, we obtain that \fi
which implies that
$ \Delta(\Pi,z_0)\prec \eta_0 =\sN^{-\serr}.$ 
Together with \eqref{eq:Delta_eta=0}, this gives that 
$\Delta(\Pi,E)=\OO_\prec(\sN^{-\serr})$. Since $\Delta(\Pi,E)$ is a constant that depends only on $E$ and $\mathrm{Struc}(\Pi)$, and $\sN$ can be arbitrarily large, we must have $\Delta(\Pi,E)=0$. Together with \eqref{eq:Delta_eta=0}, this concludes the proof. 
% with Ward's identity, after we sum up all different values of   $n_i$, $(1\le i\le 2p+2)$
%  (with simple application of inclusion-exclusion formula)  
%  $$
% \sum_{n_1, n_2, \cdots , n_{2p+2}}\prod_{i=1}^{2p+2} \left(G^{c_i}\right)_{n_in_{i+1}}
% =O\left(\frac{N }{\eta^{2p+1}} \right)
% $$
% Similar, 
% $$\frac1N\sum_{m_i }\Theta_{n_im_i}=\frac{1}{N\eta}
% $$
% Together with \eqref{scsgun}, we obtain that 
% $$
% \Delta(\Pi)\cdot  \frac{N}{\eta^{2p+2}}   = O\left(\frac{N }{\eta^{2p+1}} \right)
% $$
% which implies $\Delta(\Pi)=O(\eta)$. 
\end{proof}
%Now, it only remains to prove \eqref{scsgun}.  We list it as a lemma and prove it in the following subsections. \qed

\subsection{Proof of \Cref{lem:sameindices} and \Cref{lem_Lp1_St}} 

For the proof of Lemma \ref{lem:sameindices}, we will use the following multi-resolvent local law for Wigner matrices established in \cite{CES_Multi-resol_EJP}.

\begin{lemma}\label{lem:multi}
Under the assumptions of \Cref{thm:GUE_local}, for a fixed $k\in \N$, we denote $\cal L:=\prod_{i=0}^{k} G^{c_i}$, where $c_i\in \{ \emptyset ,\dag\}$ for $i \in \qqq{0,k}$. Then, for any $z=E+\ii \eta$ with $|E|\le 2-\kappa$ and $\sN^{-1+\fd}\le \eta\le 1$, there exists a deterministic function $\cal M(z^{c_0},\ldots, z^{c_{k}}):\C^{k+1}\to \C$ such that  
\be\label{eq:multi-local}
\left|\cal L_{xy} - \cal M(z^{c_0},\ldots, z^{c_{k}})\delta_{xy}\right|\prec (\sN\eta)^{-1/2}\eta^{-k} ,\quad \forall x,y \in \Z_{\sN}.
\ee
Here, we denote $z^{c_i}=z$ if $c_i=\emptyset$ and $z^{c_i}=\bar z$ if $c_i=\dag$, and the function $\cal M(z^{c_0},\ldots, z^{c_{k}})$ satisfies that 
\be\label{eq:boundM}
\left|\cal M(z^{c_0},\ldots, z^{c_{k}})\right| \lesssim \eta^{-k}.
\ee
As a consequence, if we have a polygon $\prod_{i=0}^{k} G^{c_i}_{y_i y_{i+1}}$ of $(k+1)$ sides with $y_{k+1}=y_0$, then
\be\label{eq:loop}
\sum_{y_0,\ldots, y_{k}\in \Z_\sN} \prod_{i=0}^{k} G^{c_i}_{y_i y_{i+1}} =\tr\left( \prod_{i=0}^{k} G^{c_i} \right) \prec \sN \eta^{-k}.\ee
\end{lemma}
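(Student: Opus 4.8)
\textbf{Proof proposal for Lemma \ref{lem:multi}.}

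The plan is to reduce everything to the multi-resolvent local law of \cite{CES_Multi-resol_EJP} and then unfold the trace via Ward's identity. First I would observe that the estimate \eqref{eq:multi-local} is precisely (a special case of) the multi-resolvent local law for Wigner matrices: for GUE, the chain $\cal L = \prod_{i=0}^k G^{c_i}$ with no deterministic matrices inserted between the resolvents concentrates around a deterministic scalar matrix $\cal M(z^{c_0},\ldots,z^{c_k})\delta_{xy}$, with fluctuation of size $(\sN\eta)^{-1/2}\eta^{-k}$. The function $\cal M$ is built recursively from $m_{sc}$ at the arguments $z^{c_0},\ldots,z^{c_k}$, and since each of the $k+1$ resolvents contributes at most one $\eta^{-1}$ factor when two consecutive spectral parameters coincide — which can happen at most $k$ times along a chain of $k+1$ resolvents that is not a full trace — one gets the bound $|\cal M|\lesssim \eta^{-k}$ in \eqref{eq:boundM}. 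I would simply cite the relevant theorem of \cite{CES_Multi-resol_EJP} for these two facts, noting that GUE trivially satisfies the required flatness/fullness hypotheses there; no new work is needed for \eqref{eq:multi-local} and \eqref{eq:boundM}.

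Next I would derive the polygon bound \eqref{eq:loop} from \eqref{eq:multi-local}. Writing $\sum_{y_0,\ldots,y_k} \prod_{i=0}^k G^{c_i}_{y_i y_{i+1}} = \tr(\prod_{i=0}^k G^{c_i})$, one splits the last resolvent off and uses the entrywise law: for fixed $y_0$, $\big(\prod_{i=0}^{k-1}G^{c_i}\big)_{y_0 y_k}$ is within $(\sN\eta)^{-1/2}\eta^{-(k-1)}$ of $\cal M(z^{c_0},\ldots,z^{c_{k-1}})\delta_{y_0 y_k}$, so $\tr(\prod_{i=0}^k G^{c_i}) = \cal M(z^{c_0},\ldots,z^{c_{k-1}})\tr G^{c_k} + (\text{error})$. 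The main term is $\OO(\eta^{-(k-1)})\cdot \OO(\sN) = \OO(\sN\eta^{-(k-1)})$, which is even better than claimed; the error term requires slightly more care because one is summing $\sN$ terms each of size $(\sN\eta)^{-1/2}\eta^{-(k-1)}$ in the diagonal direction, giving $\sN\cdot(\sN\eta)^{-1/2}\eta^{-(k-1)}$. To control this one instead keeps one resolvent in "$\ell^2\to\ell^2$" form and applies Ward: $\sum_{y_0}|\big(\prod_{i}G^{c_i}\big)_{y_0 y_k}|^2$ is controlled by a chain with one $\im G$ inserted, and iterating Ward's identity along the polygon (as in the standard reduction, see e.g. the use of Ward in \Cref{lem-Ward} and the bound \eqref{eq;applyWard}-type manipulation) yields $\tr(\prod_{i=0}^k G^{c_i})\prec \sN\eta^{-k}$. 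This is the cleanest route and avoids tracking the precise form of $\cal M$.

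The only genuinely delicate point — and what I'd flag as the main obstacle — is bookkeeping the powers of $\eta$ so that the final polygon bound is $\sN\eta^{-k}$ rather than something weaker: a naive termwise application of \eqref{eq:multi-local} loses a factor because summing the $(\sN\eta)^{-1/2}$ fluctuation over a diagonal index costs $\sN$. The resolution is to never sum a single entrywise law over a free index directly, but to pair indices using Ward's identity so that each summation over an internal vertex is absorbed into an $\im G/\eta$ factor rather than an unstructured $\sN$. Concretely, I would induct on $k$: the $k=0$ case is $\tr G \prec \sN$ (immediate from \eqref{locallaw_GUE_aver}), and the inductive step splits the polygon at a vertex, applies Cauchy–Schwarz and Ward to reduce the length by one while generating at most one extra $\eta^{-1}$. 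Everything else — the flatness of GUE, the existence and boundedness of $\cal M$ — is quotable from \cite{CES_Multi-resol_EJP} and \cite{ErdYauYin2012Rig}, so the proof is short modulo this careful $\eta$-counting.
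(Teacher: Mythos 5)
For \eqref{eq:multi-local} and \eqref{eq:boundM} you do exactly what the paper does: both are quoted from \cite{CES_Multi-resol_EJP} (its Theorem 2.5 and Lemma 2.4), and GUE satisfies the hypotheses there, so this part is fine and requires no new work.

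The divergence is in how you handle \eqref{eq:loop}, and there your diagnosis of the difficulty is off. The paper obtains \eqref{eq:loop} as an immediate consequence of \eqref{eq:multi-local}: apply the entrywise law to the \emph{full} chain $\cal L=\prod_{i=0}^{k}G^{c_i}$ at the diagonal entries and sum over $x$, which gives $\tr \cal L=\sN\,\cal M+\OO_\prec\big(\sN(\sN\eta)^{-1/2}\eta^{-k}\big)$; since $\sN\eta\ge \sN^{\fd}\ge 1$, the error term is at most $\sN\eta^{-k}$ and nothing is lost. The loss you worry about appears only because you first split off the last resolvent and then sum an off-diagonal entrywise error over \emph{two} free indices; the remedy is simply not to split. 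Your fallback Ward argument also needs repair as stated: splitting the polygon at a vertex and applying Cauchy--Schwarz does not ``reduce the length by one'' --- the first factor becomes $\tr\big(\cal L'(\cal L')^{\dag}\big)$ with $\cal L'=\prod_{i=0}^{k-1}G^{c_i}$, i.e.\ a closed chain of $2k$ resolvents, so the induction on $k$ as you describe it is ill-founded. The clean version of the Ward route (the alternative the paper alludes to) uses that all factors are resolvents of the \emph{same} matrix at $z$ or $\bar z$: by spectral decomposition, $\tr\prod_{i=0}^{k}G^{c_i}=\sum_j\prod_{i=0}^{k}(\lambda_j-z^{c_i})^{-1}$ with $|\lambda_j-z^{c_i}|=|\lambda_j-z|$, so bounding $k-1$ of the $k+1$ factors by $\eta^{-1}$ and the remaining two by $\sum_j|\lambda_j-z|^{-2}=\eta^{-1}\im\tr G(z)\prec \sN\eta^{-1}$ (Ward's identity \eqref{eq_Ward} together with \eqref{locallaw_GUE_aver}) yields $\prec\sN\eta^{-k}$ directly, with no induction. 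Alternatively, keep your Cauchy--Schwarz step but bound the long factor deterministically by $\|\prod_{i=1}^{k-1}G^{c_i}\|\le\eta^{-(k-1)}$ before invoking Ward once; either fix makes your argument complete, though both are more elaborate than the paper's one-line deduction.
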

\begin{proof}
The estimate \eqref{eq:multi-local} is an immediate consequence of \cite[Theorem 2.5]{CES_Multi-resol_EJP}, where the explicit form of the function $\cal M$ is also presented. The estimate \eqref{eq:boundM} for $\cal M$ is given in \cite[Lemma 2.4]{CES_Multi-resol_EJP}. 
% the diagonal entry $\cal L_{xx}$ can be approximated by an explicit function of $z$, as described in  \cite{CES_Multi-resol_EJP}, but we do not need to use such a fine estimate in the following proof. 
The estimate \eqref{eq:loop} is an immediate consequence of the multi-resolvent local law in \eqref{eq:multi-local}; it can also be proved directly using Ward's identity and the local law \eqref{locallaw_GUE}.
% Let $k_+:=\#\{i:c_i=\emptyset\}$, and let $\{\lambda_k\}$ and $\{\bu_k\}$ denote the eigenvalues and eigenvectors of $H$. Then, using the spectral decompositions 
% $$ G(z)=\sum_{k=1}^N \frac{\bu_k\bu_k^*}{\lambda_k -z},\quad G^\dag(z)=\sum_{k=1}^N \frac{\bu_k\bu_k^*}{\lambda_k -\bar z},$$
% we can bound the LHS of \eqref{eq:loop} as 
% \be\label{eq:loop2}
% \begin{split}
% \left|\tr\left( \prod_{i=1}^{q} G^{c_i} \right)\right|&=\left|\sum_{k=1}^\sN\frac{1}{(\lambda_k-z)^{k_+}(\lambda_k-z)^{q-k_+}}\right| \le \sum_{k=1}^\sN\frac{1}{|\lambda_k-z|^q }\\
% &\le \eta^{-q+2}\sum_{k=1}^\sN\frac{1}{|\lambda_k-z|^{2}}= \eta^{-q+2} \frac{\im \tr G(z)}{\eta},\end{split}
% \ee
% where we used Ward's identity in the last step. Applying the local law \eqref{locallaw_GUE} then concludes the proof. 
\end{proof}

With \Cref{lem:multi}, we can prove the following lemma, which implies \Cref{lem:sameindices} as a direct corollary. 
\begin{lemma}\label{lem:boundG}
Under the assumptions of \Cref{thm:GUE_local}, fix any $q\in \mathbb N$ and $z=E+\ii \eta$ with $|E|\le 2-\kappa$ and $\sN^{-1+\fd}\le \eta\le 1$. Consider a connected graph $\cal G(y_1,\ldots, y_q)$ consisting of $q$ vertices $y_1,\ldots, y_q$ and solid edges between them. Suppose that the charge of every vertex $y_i$, $i\in \qqq{1,q}$, is neutral (recall \Cref{deflvl1}) and every solid edge between two different vertices has no $\circ$ label. Then, we have that %, and every self-loop has a $\circ$ label (i.e., every weight is a light weight)
\be\label{eq:Gq}
\sum_{y_1,\ldots, y_q\in \Z_{\sN}} \cal G(y_1,\ldots, y_q) \prec \sN\eta^{-(q-1)}.  
\ee
In general, the estimate \eqref{eq:Gq} still holds if some solid edges have a $\circ$ label and if we add some dotted or $\times$-dotted edges (recall \Cref{def_graph1}) to the graph.  
\end{lemma}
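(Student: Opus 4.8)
\textbf{Proof proposal for Lemma \ref{lem:boundG}.} The plan is to prove \eqref{eq:Gq} by induction on the number of vertices $q$, peeling off one vertex at a time via a single application of the multi-resolvent local law \Cref{lem:multi} and Ward's identity, exactly as is done for Wigner loop graphs. The base case $q=1$ is covered by the local law \eqref{locallaw_GUE}: a single neutral vertex carries a $G$ and a $\bar G$ self-loop (or just $\OO(1)$-many factors bounded by $\size$), so $\sum_{y_1}\cal G(y_1)\prec \sN$, matching the right-hand side of \eqref{eq:Gq} with $q=1$. For the inductive step, I would pick a vertex $y_q$ of the connected graph $\cal G$ such that after removing $y_q$ the remaining graph $\cal G'(y_1,\ldots,y_{q-1})$ is still connected (such a vertex exists because any connected graph has a non-cut vertex, e.g.\ a leaf of a spanning tree). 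The vertex $y_q$ is neutral, so the solid edges incident to it form an even number of $G$ / $\bar G$ half-edges that organize into one or several chains passing through $y_q$; summing over $y_q$ replaces each such chain $\cdots G^{c}_{\cdot\, y_q}G^{c'}_{y_q\,\cdot}\cdots$ by a longer chain $\cdots (G^{c}G^{c'})_{\cdot\,\cdot}\cdots$, and more generally collapses all of $y_q$'s incident edges into a single multi-resolvent factor evaluated at the neighboring vertices.

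The key quantitative input is then \eqref{eq:multi-local} together with \eqref{eq:boundM}: a product of $j$ resolvent factors $G^{c_0}\cdots G^{c_{j-1}}$ summed at an internal index contributes, by Ward's identity and the local law, a factor $\sum_{y_q}\lesssim$ (either a diagonal deterministic term of size $\eta^{-(j-1)}$ times $\sum_{y_q}1$, which would be too large, or — and this is the point — the off-diagonal part is $\OO_\prec((\sN\eta)^{-1/2}\eta^{-(j-1)})$ so is negligible, while the ``diagonal'' contraction leaves a resolvent chain on fewer vertices). More carefully, the standard way to organize this is: after summing over $y_q$, I bound $\sum_{y_q}|\prod(\text{edges at }y_q)|$ using Ward's identity $\sum_{y_q}|G_{x y_q}|^2 = \eta^{-1}\im G_{xx}\prec \eta^{-1}$ for one pair of edges, and the entrywise local law \eqref{locallaw_GUE} (which gives $|G_{xy}|\prec (\sN\eta)^{-1/2}$ off-diagonal and $|G_{xx}|\prec 1$) for the remaining edges at $y_q$. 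Since $y_q$ is neutral it has at least one $G$ and at least one $\bar G$ edge, so there is always at least one pair to which Ward applies, producing the single $\eta^{-1}$; every further pair of edges at $y_q$ either closes into another Ward sum (again $\eta^{-1}$, but then also removes a vertex, improving the count) or, more simply, I just use that $\cal G'$ remains a legitimate graph of the same type on $q-1$ vertices after the contraction. The net effect of summing one vertex is a gain of exactly one power of $\eta^{-1}$ beyond what $\cal G'$ on $q-1$ vertices gives, i.e.\ $\sum_{y_1,\dots,y_q}\cal G\prec \eta^{-1}\cdot \sum_{y_1,\dots,y_{q-1}}\cal G' \prec \eta^{-1}\cdot \sN\eta^{-(q-2)} = \sN\eta^{-(q-1)}$ by the induction hypothesis, which is precisely \eqref{eq:Gq}.

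For the final sentence of the statement, I would observe that $\circ$-labels on solid edges only replace a $G$ factor by $\Gc = G-M$, which satisfies an even better bound ($\|\Gc\|_{\max}\prec(\sN\eta)^{-1/2}$ by \eqref{locallaw_GUE} and $M = m_{sc}I$), so such edges can only help; and dotted / $\times$-dotted edges merely restrict or identify summation indices, which can only decrease the left-hand side of \eqref{eq:Gq} (identifying two indices removes a free vertex and hence a factor $\sN$, which is consistent with—in fact stronger than—the stated bound after re-indexing). Hence it suffices to prove the bound without these decorations, and the induction above applies verbatim. The main obstacle I anticipate is the careful bookkeeping of \emph{which} pair of edges at the peeled vertex $y_q$ is used for the Ward identity when $y_q$ has high degree, and ensuring that the deterministic multi-resolvent pieces $\cal M(\ldots)$ produced by collapsing chains through $y_q$ do not accumulate extra powers of $\eta^{-1}$ beyond the single power claimed — this is where the precise form of \eqref{eq:boundM} and the connectedness-preserving choice of $y_q$ are essential. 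Once the one-vertex-peeling estimate is stated cleanly, the induction is routine, so I would present the peeling step as a short separate claim and then conclude by induction.
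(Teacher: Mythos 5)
There is a genuine gap, and it sits exactly at the point you flag as "bookkeeping": the inductive step $\sum\cal G\prec\eta^{-1}\sum\cal G'$ is not justified when the peeled vertex has degree $\ge 4$, and this is the only genuinely hard case. If you sum over $y_q$ exactly, a degree-$2$ vertex contracts via Ward's identity into $\eta^{-1}$ times a single new solid edge (this is fine, and it is essentially the paper's "long solid edge reduction" done one vertex at a time); but a vertex of degree $2m\ge 4$ contracts into a star-type sum $\sum_{y_q}\prod_{j=1}^{2m}G^{c_j}_{\cdot\,y_q}$, which is \emph{not} a single multi-resolvent factor and is not a graph of the admissible form on $q-1$ vertices, so the induction hypothesis does not apply to what remains. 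If instead you bound $\sum_{y_q}|\cdots|$ uniformly (Ward for one pair, entrywise law for the rest), you have decoupled $y_q$ from the rest of the graph and now need a bound on $\sum_{y_1,\ldots,y_{q-1}}|\cal G'|$, not on the signed sum; but the absolute-value version of \eqref{eq:Gq} is false in general. For instance for the $4$-cycle, $\sum_{\mathbf y}|G_{y_1y_2}\bar G_{y_3y_2}G_{y_3y_4}\bar G_{y_1y_4}|\sim \sN^2\eta^{-2}$, while the signed sum is $\tr\,(GG^*GG^*)\prec\sN\eta^{-3}$; the two differ by a factor $\sN\eta$, so an induction that at any stage replaces a subgraph by its absolute-value sum cannot recover the stated bound.

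A second, related omission is centering. The paper's proof does not peel vertices; it repeatedly collapses degree-$2$ chains into long edges, \emph{subtracts their deterministic parts} (the $\cal M$-terms of \eqref{eq:multi-local}, and $m$ for single edges and weights), tracks the resulting scalar coefficients and merged vertices (this is where the $\eta^{-(q-k)}$ prefactors come from), and only then, for the residual graphs in which every vertex has degree $\ge 4$ and every edge is centered, applies the entrywise bound $(\sN\eta)^{-1/2}$ per edge and wins by counting at least $2k$ edges. Your proposal never performs this centering, and in fact misidentifies the dangerous term: it is precisely the diagonal/deterministic contraction (e.g.\ $(G\bar G)_{yy}=\im G_{yy}/\eta$, or $G_{yy}\approx m=\OO(1)$) that is large and must be extracted and bookkept, not the off-diagonal fluctuation. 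Your treatment of the $\circ$-labels and dotted/$\times$-dotted edges is fine and matches the paper, but the core of the lemma — the minimum-degree-$4$ configurations after all degree-$2$ reductions — is exactly what your scheme does not handle.
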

\begin{proof}
First, note that due to the neutral charge condition, the degrees of the vertices in our graph are all even numbers. Next, we find all paths of solid edges taking the following form:
$$ \prod_{i=0}^k G^{c_i}_{x_i x_{i+1}},\quad \text{with}\quad x_i\in \{y_1,\ldots, y_q\}, \ c_i\in \{\emptyset,\dag\},  $$
such that each vertex $x_i$, $i\in \qqq{1,k}$, has degree 2, and the endpoints satisfy one of the following two scenarios: (1) $x_0=x_{k+1}$, or (2) $\deg(x_0)\ge 4$ and $\deg(x_{k+1})\ge 4$. Then, we treat 
$$\cal L^{\vec c}_{x_0 x_{k+1}}:=\eta^k\sum_{x_1,\ldots, x_{k}}\prod_{i=0}^k G^{c_i}_{x_i x_{i+1}}= \eta^k \left(\prod_{i=0}^k G^{c_i}\right)_{x_0 x_{k+1}},\quad \vec c=(c_0,c_1,\ldots, c_{k}),$$ 
as a ``long" solid edge of length $(k+1)$ between vertices $x_0$ and $x_{k+1}$. (The original solid edge can also be regarded as a long solid edge with $k=0$.) By \Cref{lem:multi}, we have that 
\be\label{eq:longlocal}
\mathring{\cal L}^{\vec c}_{xy}:= \cal L^{\vec c}_{xy} - \eta^k \cal M(\mathbf z^{\vec c})\delta_{xy} \prec (\sN\eta)^{-1/2},\quad \mathbf z^{\vec c}:=(z^{c_0},\ldots, z^{c_{k}}),
\ee
where $\eta^k \cal M(\mathbf z^{\vec c})$ is an order $\OO(1)$ scalar. Then, we can reduce $\sum_{y_1,\ldots, y_q}\cal G(y_1,\ldots, y_q)$ to a graph consisting of long solid edges between vertices, each of which has a degree at least 4:
$$\sum_{y_1,\ldots, y_q}\cal G(y_1,\ldots, y_q) = \eta^{-(q-\ell)}\sum_{y_1,\ldots, y_{\ell}}\cal G'(y_1,\ldots, y_\ell).$$
Here, the factor $\eta^{-(q-\ell)}$ comes from the vertices that have disappeared in this reduction, and without loss of generality, we assume that these vertices are labeled as $y_{\ell+1},\ldots, y_q$. We will refer to the above process as a ``long solid edge reduction".  

Next, in the graph $\cal G'(y_1,\ldots, y_\ell)$, we replace each weight (that is, a self-loop of a long solid edge) by an order $\OO(1)$ scalar plus a light weight; that is, we decompose each $\cal L^{\vec c}_{y_ky_k}$ of length $(k+1)$ as \be\label{eq:expand_w}
\cal L^{\vec c}_{y_ky_k}=\eta^k \cal M(\mathbf z^{\vec c}) + \zcL^{\vec c}_{y_ky_k} .\ee 
As in \Cref{def_graph1}, we use a self-loop with a $\circ$ to represent a light weight. Taking the product of all these decompositions, we can expand $\sum_{y_1,\ldots, y_{\ell}}\cal G'(y_1,\ldots, y_\ell)$ as a linear combination of $\OO(1)$ many new graphs:
$$\sum_{y_1,\ldots, y_{\ell}}\cal G'(y_1,\ldots, y_\ell) = \sum_\omega c_\omega \sum_{y_1,\ldots, y_{\ell}} \cal G_\omega(y_1,\ldots, y_\ell),$$
where $c_\omega$ are order $\OO(1)$ coefficients, and every $\cal G_\omega$ only contains light weights and solid edges between different vertices. For every new graph $\sum_{y_1,\ldots, y_{\ell}} \cal G_\omega(y_1,\ldots, y_\ell)$, we perform a long solid edge reduction again and expand each weight as in \eqref{eq:expand_w}. Repeating this process, we obtain that 
\be\label{eq:graph_red1}\sum_{y_1,\ldots, y_q}\cal G(y_1,\ldots, y_q) =\sum_{k=1}^q \eta^{-(q-k)} \sum_\omega c_{\omega,k} \sum_{y_1,\ldots, y_{k}}\cal G_{\omega,k}(y_1,\ldots, y_k),\ee
where $c_{\omega,k}$ are $\OO(1)$ coefficients, and we have renamed the remaining $k$ vertices as $y_1,\ldots, y_k$. 
When $k=1$, $\cal G_{\omega,1}(y_1)$ represents a self-loop on $y_1$, which is of order $\OO_\prec(1)$. For $k\ge 2$, each $\cal G_{\omega,k}(y_1,\ldots, y_k)$ satisfies that: (a) every weight in $\cal G_{\omega,k}$ is a light weight, and (b) every vertex in $\cal G_{\omega,k}$ has degree $\ge 4$. In \Cref{fig:graph_red}, we depict an example of the above graph reduction process. 

\begin{figure}[htb]
    \centering
\includegraphics[width=0.7\linewidth]{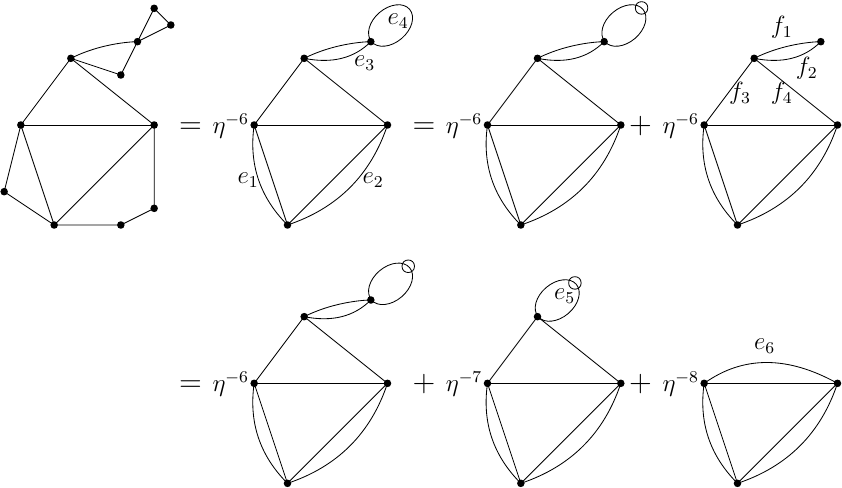}
\caption{We use undirected black solid edges to represent the blue and red solid edges in the graph. 
In the second graph, the edges $e_1$, $e_2,$ $e_3,$ $e_4$ are long solid edges of lengths $2,\ 3, \ 2$, $3$, respectively. In the second equality, we 
decompose the weight (i.e., self-loop) $e_4$ into a light weight (i.e., self-loop with a $\circ$) plus a scalar. In the latter case, we can further reduce $f_1$ and $f_2$ to a long solid edge $e_5$. The self-loop $e_5$ is then decomposed into a light weight and a scalar, which gives the second and third graphs in the third equality. In the last graph, we have also replaced $f_3$ and $f_4$ with another long solid edge $e_6$ of length 2. 
}
    \label{fig:graph_red}
\end{figure}

Now, we expand each solid edge in $\cal G_{\omega,k}$ with $k\ge 2$ as a scalar matrix plus a solid edge with $\circ$. If a solid edge is replaced by a scalar matrix, we merge their endpoints and include the scalar into the coefficient. In this way, we can expand every $\cal G_{\omega,k}$ into a linear combination of new graphs consisting of weights and solid edges with $\circ$. Again, we decompose every weight as shown in \eqref{eq:expand_w}, yielding a sum of new graphs 
\be\label{eq:graph_red2} \cal G_{\omega,k}(y_1,\ldots, y_k)=\sum_{l=1}^k \sum_{\gamma} c_{\gamma,l} \cal G_{\omega,k;\gamma,l}(y_1,\ldots, y_l),\ee
where $c_{\gamma,l}$ are $\OO(1)$ coefficients. The term $\cal G_{\omega,k;\gamma,1}(y_1)$ again represents a self-loop on $y_1$ of order $\OO_\prec(1)$, and $\cal G_{\omega,k;\gamma,l}(y_1,\ldots, y_l)$ for $l\ge 2$ consists solely of light weights and solid edges with $\circ$. For each $\cal G_{\omega,k;\gamma,l}$ with $l\ge 2$, we perform a long solid edge reduction as follows: if $y$ is a vertex with degree 2, attached with two edges $\zcL^{\vec c}_{xy}$ and $\zcL^{\vec c'}_{yw}$ with $\vec c=(c_0,c_1,\ldots, c_{r})$ and $\vec c'=(c_0',c_1',\ldots, c_{s}')$, then we perform the reduction
\begin{align*}
\sum_y \zcL^{\vec c}_{xy} \zcL^{\vec c'}_{yw}  &=\eta^{-1}\cdot  \zcL^{(\vec c,\vec c')}_{xw} -  [\eta^{s} \cal M(\mathbf z^{\vec c'})]  \zcL^{\vec c}_{xw} - [\eta^r \cal M(\mathbf z^{\vec c})] \zcL^{\vec c'}_{xw}\\
&+\eta^{-1}\cdot \eta^{r+s+1} \left[\cal M(\mathbf z^{(\vec c,\vec c')})- \cal M(\mathbf z^{\vec c})\cal M(\mathbf z^{\vec c'})\right] \delta_{xw},   
\end{align*}
i.e., we replace the two edges $(x,y),(y,w)$ with the sum of three long solid edges with $\circ$ and a scalar matrix. In the new graphs, we find another vertex with degree 2 and perform the above long solid edge reduction. Continuing this process, we can ultimately expand \eqref{eq:graph_red2} into a linear combination of graphs, each of which either has only one vertex or
consists of vertices with degrees $\ge 4$, light weights, and solid edges with $\circ$.  

In sum, we have expanded $\sum_{y_1,\ldots, y_q}\cal G(y_1,\ldots, y_q)$ as a sum of $\OO(1)$ many new graphs:
\be\label{eq:graph_red3}\sum_{y_1,\ldots, y_q}\cal G(y_1,\ldots, y_q) =\sum_{k=2}^q \eta^{-(q-k)} \sum_\mu \wt c_{\mu,k} \sum_{y_1,\ldots, y_{k}}\wt{\cal G}_{\mu,k}(y_1,\ldots, y_k)+\OO_\prec(\sN\eta^{-(q-1)}),\ee
where $\wt c_{\mu,k}$ are coefficients of order $\OO(1)$. Each $\wt{\cal G}_{\mu,k}(y_1,\ldots, y_k)$ with $k\ge 2$ consists of vertices with degrees $\ge 4$, light weights, and solid edges with $\circ$, and we have renamed the remaining $k$ vertices as $y_1,\ldots, y_k$. Now, using the local law \eqref{eq:longlocal} for long solid edges, we can bound $\wt{\cal G}_{\mu,k}(y_1,\ldots, y_k)$ as 
$$ \wt{\cal G}_{\mu,k}(y_1,\ldots, y_k) \prec \left[(\sN\eta)^{-1/2}\right]^{\frac{1}{2}\sum_{i=1}^k \deg(y_i)}\le (\sN\eta)^{-k},$$
where $\frac{1}{2}\sum_{i=1}^k \deg(y_i)$ represents the number of solid edges in $\wt{\cal G}_{\mu,k}$ and we have used that $\deg(y_i)\ge 4$ in the second step. Plugging it into \eqref{eq:graph_red3} concludes \eqref{eq:Gq}. 

In general, suppose the graph $\cal G(y_1,\ldots, y_q)$ contains some solid edges with a $\circ$ label, along with some dotted or $\times$-dotted edges. We then decompose every \smash{$\Gc$ or $\Gc^-$} edge into a solid edge plus a dotted edge (with an $\OO(1)$ coefficient), i.e., \smash{$\Gc_{y_iy_j}=G_{y_iy_j}-m\delta_{y_iy_j}$ or $\Gc^-_{y_iy_j}=\bar G_{y_iy_j}-\bar m\delta_{y_iy_j}$}.  We further decompose each $\times$-dotted edge, such as $\mathbf 1_{y_i\ne y_j}$, as $1-\mathbf 1_{y_i=y_j}$. Taking the product of all these decompositions, we can express the graph as a linear combination of new graphs consisting of dotted edges and solid edges without a $\circ$ label. In each new graph, we merge the vertices within each atom (recall \Cref{def_atom}) and get a graph consisting of solid edges only. We can check that all the resulting graphs satisfy the original setting for the graph $\cal G$ with at most $q$ vertices. This concludes the proof.
\end{proof}

%With this lemma, we first give the proof of \Cref{lem:sameindices}.
\begin{proof}[\bf Proof of \Cref{lem:sameindices}]
We first identify the indices within each subset $\Sigma_i$ and get a new graph with $q$ external vertices:
$$ \cal G(y_1,\ldots, y_q):=\cal G(x_0,x_1,\ldots, x_{2p+1}) \mathbf 1(\Sigma_i=\{y_i\}: i=1,\ldots, q) .$$
Then, we add a $\times$-dotted edge between every pair of vertices. The resulting graph satisfies the setting in \Cref{lem:boundG}, so we conclude \eqref{eq:boundsame} immediately with \eqref{eq:Gq}.
\end{proof}

Next, we prove \Cref{lem_Lp1_St} using the $V$-expansion in \eqref{eq_Vexp_GUE}. Roughly speaking, we will show that for the loop graph $\cal G$ generated by $\Pi$ as in \eqref{eq:LoopG}, the star graph with a pairing corresponding to $\Pi$ is the dominant term after taking the expectation and the summation over all vertices. This leading term consists of a centering molecule $\cal M$ that contains $x$, a $\thn$ edge between $x_0$ and $\cal M$, and $(2p+1)$ pairs of $G$ and $\bar G$ edges between $x_i$ and $\cal M$. Take the $p=2$ case and the loop graph in \eqref{eq:star2} generated by $\Pi_1$ as an example. The star graphs corresponding to the pairings \eqref{eq:6-loopPi} and \eqref{eq:4+2-loopPi} take the following form:
\be\label{eq:star4}
\parbox[c]{10cm}{\includegraphics[width=\linewidth]{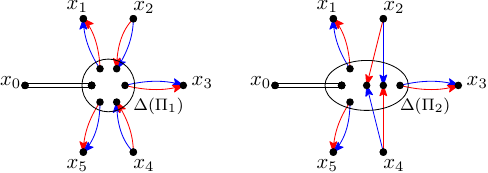}}
\ee
Here, the black circles represent the centering molecule $\cal M$ that contains $x$, and we have omitted the waved edges between $x$ and other vertices inside $\cal M$. In the star graph corresponding to $\Pi_1$ (resp.~$\Pi_2$), the subgraph induced on $\cal M$ takes the form $\Delta(\Pi_1)\sN^{-5}$ (resp.~$\Delta(\Pi_2)\sN^{-5}$). Then, we sum the two graphs in \eqref{eq:star4} over the external vertices $x_i$, $i\in \qqq{0,5}$. By Ward's identity and the local law \eqref{locallaw_GUE_aver}, the first graph in \eqref{eq:star4} yields 
$$\Delta(\Pi_1)\cdot  \frac{\sN}{1-|m|^2}\left( \frac{\im m+\OO_\prec((\sN\eta)^{-1}) }{\eta}\right)^5= \Delta(\Pi_1)\cdot  \sN\left( \frac{\im m}{\eta}\right)^6 \left[1 + \OO_\prec\left( (\sN\eta)^{-1} + \eta\right)\right], $$  
where we used that
\be\label{eq;Theta} \thn_{x_0 x} = \frac{1}{\sN}\frac{1}{1-|m|^2},\quad  \frac{|m|^2}{1-|m|^2}=\frac{\im m}{ \eta},\quad \text{and}\quad |m|=1-\OO(\eta). \ee
On the other hand, by \eqref{eq:loop}, the second graph in \eqref{eq:star4} satisfies a much better bound $\OO_\prec(\eta^{-7})$. %much smaller than the first graph in \eqref{eq:star4}. 
For the other terms in \eqref{eq_Vexp_GUE}, the first graph on the RHS contains a $\thn$ edge, whose summation over $\fa=x_0$ provides an $\eta^{-1}$ factor, and a loop graph with $(2p+1)$ vertices, whose summation provides an $\sN\eta^{-2p}$ factor. The fourth term on the RHS of \eqref{eq_Vexp_GUE} can be bounded using a similar reasoning: every recollision graph essentially consists of a $\thn$ edge and a connected graph with $(2p+1)$ ``free" vertices, which can be bounded by $\sN\eta^{-2p}$ using \Cref{lem:boundG}. Finally, the higher order graphs (i.e., the third term on the RHS of \eqref{eq_Vexp_GUE}) are small because they contain more solid edges or light weights than the star graphs.  

We now give a rigorous justification for the above argument. 
\begin{proof}[\bf Proof of \Cref{lem_Lp1_St}]
For $x_1,x_2,\ldots, x_{2p+1}$ that all take different values, we consider the graph 
%We will expand the LHS of \eqref{scsgun} using the $V$-expansions and local expansions (for GUE). We will show that
\be\label{eq:Gx1x2p}
\cal G_1(x_1 \ldots, x_{2p+1}):=\frac{1}{\sN}\sum_{x_0}  \cal G(x_0,x_1,\ldots, x_{2p+1}) =  T_{\fa,x_1x_{2p+1}} \prod_{i=1}^{2p}G^{c_i}_{x_i x_{i+1}} ,
\ee
where $\fa$ is an auxiliary external vertex introduced to form the $T$-variable. We claim that 
\be\label{scsgun2}
\sum_{x_1,x_2,\ldots, x_{2p+1}}\E\cal G_1(x_1 \ldots, x_{2p+1}) =  
 \Delta(\Pi)\cdot  \left({\im m}/{\eta}\right)^{2p+2} 
  + \OO_\prec\left(\eta^{-(2p+1)} \right)  .
\ee
Note that we can write   
\begin{align*}
  \sum_{x_0,x_1,\ldots, x_{2p+1}}^{\star} \mathbb E \cal G(x_0,x_1,\ldots, x_{2p+1})  &= \sN \sum_{x_1,x_2,\ldots, x_{2p+1}}^\star \E \cal G_1(x_1,\ldots, x_{2p+1}) \\
  &- \sum_{x_1,x_2,\ldots, x_{2p+1}}^\star \sum_{x_0\in \{x_1,\ldots, x_{2p+1}\}} \mathbb E \cal G(x_0,x_1,\ldots, x_{2p+1})   .
\end{align*}
By \Cref{lem:sameindices}, the second term on the RHS is bound by $\OO_\prec(\sN \eta^{-2p})$. Thus, \eqref{scsgun2} implies \eqref{scsgun}. 

To prove \eqref{scsgun2}, we apply the $V$-expansion in \eqref{eq_Vexp_GUE} to \eqref{eq:Gx1x2p} with $\fb_1=x_1$, $\fb_2=x_{2p+1}$, $\Gamma=\prod_{i=1}^{2p}G^{c_i}_{x_i x_{i+1}}$, and $D\ge 2p+1$, and obtain that 
\begin{align}
        {\cal G_1}(x_1,\ldots, x_{2p+1})=&~m \thn_{\fa x_1} \overline G_{x_1x_{2p+1}}  \Gamma  +\sum_{q=1}^{p}\sum_{x}  \thn_{\fa x}
   \Gamma^{\star}_{q}(x,x_1,x_{2p+1}) + \sum_x  \thn_{\fa x}\mathcal A_{x,x_1x_{2p+1}} + \sum_x  \thn_{\fa x}\mathcal R_{x,x_1x_{2p+1}} \nonumber\\
   &~+  \mathcal Q_{\fa,x_1x_{2p+1}}  + \Err_D(\fa,x_1,x_{2p+1}) .\label{eq_Vexp_GUE1}
   \end{align}
Taking the expectation and summing over the vertices $x_1,\ldots, x_{2p+1}$, the $\mathcal Q_{\fa,x_1x_{2p+1}}$ term vanishes and the $\Err_D$ term provides an error of order $\OO_\prec(\sN^{2p+1}\cdot \sN^{-D})=\OO_\prec(1)$. The first term on the RHS is bounded by 
\begin{align}
\sum_{x_1,\ldots, x_{2p+1}}^{\star} \mathbb E m \thn_{\fa x_1} \overline G_{x_1x_{2p+1}}  \prod_{i=1}^{2p}G^{c_i}_{x_i x_{i+1}} &= \frac{ \im m}{\bar m\sN\eta}\sum_{x_1,\ldots, x_{2p+1}}^{\star} \mathbb E \overline G_{x_1x_{2p+1}} \prod_{i=1}^{2p}G^{c_i}_{x_i x_{i+1}} \nonumber\\
&\prec \frac{1}{\sN\eta}\cdot \sN \eta^{-2p} =\eta^{-(2p+1)},\label{eq;Vleading}
\end{align}
where we used \eqref{eq;Theta} in the first step and \Cref{lem:sameindices} in the second step.  

\medskip
\noindent {\bf Star graphs.} Next, we consider the star graphs $\Gamma^{\star}_{q}(x,x_1,x_{2p+1})$, $1\le q\le p$. We pick a subset of edges $E_q=\{e_i=(a_i,b_i):i\in \qqq{q}\}\subset \{(x_{2i},x_{2i+1}):i\in \qqq{p}\}$ and $\bar E_q=\{\bar e_i=(\bar a_i,\bar b_i):i\in \qqq{q}\}\subset \{(x_{2i},x_{2i-1}):i\in \qqq{p}\}$ (recall \eqref{eq:LoopG}). Denote $b_0=x_1$ and $\bar b_0=x_{2p+1}$, and fix any pairing $\cal P(E_q,\bar E_q)$. Let $\cal G^\star (x,x_1,\ldots, x_{2p+1};\cal P(E_q,\bar E_q)) $ be the sum of all star graphs with the pairing $\cal P(E_q,\bar E_q)$. By (iii) of \Cref{thm:Vexp_GUE}, it writes 
\begin{align}
    \cal G^\star (x,x_1,\ldots, x_{2p+1};\cal P(E_q,\bar E_q)) =\Delta(\cal P(E_q,\bar E_q))  \sum_{\substack{y_k(\ii):k=1,\ldots, q \\ y_k(\ff):k=0,\ldots, q}}  \prod_{k=1}^q S_{xy_k(\ii)}\cdot \prod_{k=0}^q S_{xy_k(\ff)} \nonumber\\
     \times \prod_{k=1}^q \left(G_{a_k y_k(\ii)}\overline G_{\overline a_{\pi(k)} y_k(\ii)}\right) \cdot \prod_{k=0}^q \left(G_{y_k(\ff)b_k}\overline G_{\overline y_k(\ff)\bar b_{\sigma(k)} }\right) \cdot \frac{\Gamma}{\prod_{k=1}^q \left(G_{a_k b_k} \overline G_{\overline a_{k} \bar b_{k} }\right) },\label{eq:Gpgammastar}
\end{align} 
where $\pi:\{1,\ldots,q\}\to \{1,\ldots,q\}$ and $\sigma:\{0,\ldots,q\}\to \{0,\ldots,q\}$ represent bijections that correspond to the pairing $\cal P(E_q,\bar E_q)$.

Let $n_a$ denote the number of internal vertices (including $x$) and let $n_W$ denote the number of waved edges. For the graph \eqref{eq:Gpgammastar}, we have $n_a=n_W+1=2q+2$. Due to the locally standard property, all endpoints of solid edges have degree 2 and there are no light weights. Hence, the solid edges in $\cal G^\star$ must form (disjoint) loops and each loop has length $\ge 2$. We assume that there are $k$ loops, each having length $\ell_i\ge 2$, $i=1,\ldots, k$. By \Cref{lem:boundG}, the summation over the vertices of a loop of length $\ell_i$ can be bounded by $ \OO_\prec(\sN\eta^{-\ell_i+1})$. 
Thus, we can bound that 
\begin{align}
\sum_{x_1,\ldots, x_{2p+1}}^\star \sum_x \E\cal G^\star (x,x_1,\ldots, x_{2p+1};\cal P(E_q,\bar E_q)) &\prec |\Delta(\cal P(E_q,\bar E_q))|\sN^{2p+2-\sum_{i=1}^k \ell_i}\prod_{i=1}^k (\sN\eta^{-\ell_i+1})\nonumber\\
&= |\Delta(\cal P(E_q,\bar E_q))|\sN^{2p+2} (\sN\eta)^{-\sum_{i=1}^k(\ell_i-1)} .\label{eq:sizeloopsstar}
\end{align}
Note that every external vertex $x_i$, $i\in \qqq{1,2p+1}$, belongs to a loop with at least one internal vertex. As a consequence, we have that  
%With the argument in Case I above, we can obtain a similar bound as \eqref{eq:totallength-1}: 
\be\label{eq:length_i}\sum_i (\ell_i-1) \ge  2p+1. \ee
Furthermore, when $q<p$, we have known that $|\Delta(\cal P(E_q,\bar E_q))|=\OO(\eta)$ by the induction hypothesis. Plugging it into \eqref{eq:sizeloopsstar} and using \eqref{eq;Theta}, we obtain that 
\begin{align}
\sum_{x_1,\ldots, x_{2p+1}}^\star \sum_x \thn_{\fa x}\E\cal G^\star (x,x_1,\ldots, x_{2p+1};\cal P(E_q,\bar E_q)) &\prec (\sN\eta)^{-1}\cdot \eta \sN^{2p+2} (\sN\eta)^{-(2p+1)} =\eta^{-(2p+1)}. \label{eq;Vstar1}
\end{align}

It remains to consider star graphs with $q=p$. In this case, all edges in $\Gamma$ have been pulled in the $V$-expansion and we have $\sum_i \ell_i = 4p+2$. If there exists a loop, say loop $j$, with $\ell_j \ge 4$, then   
$$ \sum_{i=1}^k (\ell_i-1) \ge \sum_{i\ne j}\frac{1}{2}\ell_i + \left(\frac{1}{2}\ell_j+1\right)\ge \frac{1}{2}\sum_{i=1}^k \ell_i + 1 = 2p+2.$$
Inserting it into \eqref{eq:sizeloopsstar}, we obtain that 
\begin{align}
    \sum_{x_1,\ldots, x_{2p+1}}^\star \sum_x \thn_{\fa x} \E\cal G^\star (x,x_1,\ldots, x_{2p+1};\cal P(E_q,\bar E_q)) &\prec  (\sN\eta)^{-1}\cdot \sN^{2p+2} (\sN\eta)^{-(2p+2)} \nonumber\\
    &=\sN^{-1}\eta^{-(2p+3)}\le \eta^{-(2p+1)}, \label{eq;Vstar2}
\end{align}
since $\eta\ge \sN^{-1/2}$. We are left with the case where all loops have length 2. In this case, it is easy to see that $\cal P(E_p,\bar E_p)$ must be chosen such that every $G$ edge is paired with a $\bar G$ edge, i.e., we must have 
\begin{align}
    &~ \cal G^\star (x,x_1,\ldots, x_{2p+1};\cal P(E_p,\bar E_p)) \nonumber\\
=&~\Delta(\cal P(E_p,\bar E_p))  \sum_{\substack{y_k(\ii):k=1,\ldots, p \\ y_k(\ff):k=0,\ldots, p}}  \prod_{k=1}^p S_{xy_k(\ii)}|G_{x_{2k} y_k(\ii) }|^2\cdot \prod_{k=0}^{p} S_{xy_k(\ff)} |G_{y_k(\ff) x_{2k+1}}|^2  .\label{eq;Vstar2.5}
\end{align} 
This corresponds to the desired pairing $\Pi\in \mathbf \Pi_p$  in \eqref{eq:1-loop_Pi}. Using Ward's identity, we can rewrite \eqref{eq;Vstar2.5} as
\begin{align}
    \cal G^\star (x,x_1,\ldots, x_{2p+1};\cal P(E_p,\bar E_p)) & = \Delta(\Pi) \prod_{k=1}^p\frac{\im G_{x_{2k}x_{2k}}}{\sN\eta}\cdot \prod_{k=0}^p \frac{\im G_{x_{2k+1}x_{2k+1}}}{\sN\eta}.
    % \nonumber \\
    % & = \Delta(\Pi) \left(\frac{\im m}{\sN\eta}\right)^{2p+1} + \OO_\prec \left((\sN\eta)^{-(2p+2)}\right).
    \label{eq;Vstar2.75}
\end{align} 
Summing \eqref{eq;Vstar2.75} over the indices $x_1, \ldots, x_{2p+1}$ and using \eqref{locallaw_GUE_aver} and \eqref{eq;Theta}, we obtain that 
\begin{align}
\sum_{x_1,\ldots, x_{2p+1}}^\star \sum_x \thn_{\fa x} \E \cal G^\star (x,x_1,\ldots, x_{2p+1};\cal P(E_p,\bar E_p)) &=\frac{1}{1-|m|^2} \Delta(\Pi) \left({\im m}/{\eta}\right)^{2p+1} 
  + \OO_\prec\left(\sN^{-1}\eta^{-(2p+3)} \right)\nonumber\\
  &= \Delta(\Pi) \left({\im m}/{\eta}\right)^{2p+2} 
  + \OO_\prec\left(\eta^{-(2p+1)} \right)  .\label{eq;Vstar3}
\end{align}

\medskip
\noindent {\bf Recollision graphs.} 
A recollision graph $\cal G_R(x,x_1,\ldots, x_{2p+1})$ appears in the following two scenarios: 
\begin{itemize}
  \item[(1)] We replace a $G^{c_i}_{\al x_i}$ or $G^{c_i}_{x_i \al}$ edge by a dotted edge $m^{c_i} \delta_{\al x_i}$, where $i\in \qqq{1,2p+1}$, $c_i\in \{\emptyset,-\}$, and $\al$ is an internal vertex. 
  \item[(2)] In the $GG$ expansion \eqref{Oe2x}, we replace $G_{y' \al}G_{\al x_i}$ (resp.~$\bar G_{y' \al}\bar G_{\al x_i}$) by $m^3 S^{+}_{\al x_i}G_{y' x_i}$ (resp.~$\overline m^3 S^{-}_{\al x_i}\bar G_{y' x_i}$), where $i\in \qqq{1,2p+1}$ and $\al$ is an internal vertex.   
\end{itemize}
We denote by $\cal G_0(x,x_1,\ldots, x_{2p+1})$ the graph obtained just before one of these operations. Again, let $n_a(\cal G_R)$ and $n_W(\cal G_R)$ denote the number of internal vertices (including $x$) and the number of waved edges, respectively, in $\cal G_R$. 
We further assume that in $\cal G_R$, there are $k(\cal G_R)$ connected components (in terms of paths of solid edges) of the endpoints of solid edges, each having size $\ell_i(\cal G_R)\ge 2$, $i=1,\ldots, k(\cal G_R)$. We can also define these quantities for $\cal G_0$.
Keeping track of the $Q$ and local expansions, we can check that all our graphs without $P/Q$ labels satisfy the setting in \Cref{lem:boundG}.
Thus, the summation over all vertices of a connected component of size $\ell_i(\cal G_R)$ can be bounded by $ \OO_\prec(\sN\eta^{-\ell_i(\cal G_R)+1})$. 
 Thus, we can bound that 
\begin{align}
\sum_{x_1,\ldots, x_{2p+1}}^\star \sum_x \E\cal G_R(x,x_1,\ldots, x_{2p+1}) &\prec \sN^{2p+1+n_a(\cal G_R)-n_W(\cal G_R)-\sum_{i=1}^{k(\cal G_R)} \ell_i(\cal G_R)}\prod_{i=1}^{k(\cal G_R)}(\sN\eta^{-\ell_i(\cal G_R)+1})\nonumber\\
&= \sN^{2p+1+n_a(\cal G_R)-n_W(\cal G_R)} (\sN\eta)^{-\sum_{i=1}^{k(\cal G_R)}(\ell_i(\cal G_R)-1)} .\label{eq:sizeloops_recol}
\end{align}

Note that in our graphs, all internal vertices connect to $x$ through waved edges, implying that $n_W(\cal G_0)\ge n_a(\cal G_0) - 1$. By the argument leading to \eqref{eq:length_i}, we also have that 
$$\sum_{i=1}^{k(\cal G_0)}(\ell_i(\cal G_0)-1)\ge 2p+1.$$ 
After the operation in scenario (1) and merging vertices connected by dotted edges, we have that $n_W(\cal G_R)=n_W(\cal G_0)$, $n_a(\cal G_R)=n_a(\cal G_0)-1$, and 
\be\label{eq:klG}  \sum_{i=1}^{k(\cal G_R)}(\ell_i(\cal G_R)-1) \ge \sum_{i=1}^{k(\cal G_0)}(\ell_i(\cal G_0)-1) -1 \ge 2p,\ee
because either we have $k(\cal G_R)=k(\cal G_0)$ and $\ell_i(\cal G_R)=\ell_i(\cal G_0)-1$ for the connected component containing $x_i$, or we lose a connected component consisting of $\al$ and $x_i$ only, while all other connected components remain unchanged after operation (1).  
Applying \eqref{eq:klG} to \eqref{eq:sizeloops_recol}, we obtain that 
\begin{align}
\sum_{x_1,\ldots, x_{2p+1}}^\star \sum_x \E\cal G_R(x,x_1,\ldots, x_{2p+1}) &\prec  \sN^{2p+1} (\sN\eta)^{-2p} = \sN \eta^{-2p}.\label{eq;Vrecol0}
\end{align}
On the other hand, in our expansion strategy, we only apply the $GG$-expansion when the vertex $\al$ has degree 2. Hence, after the operation in scenario (2), the number of connected components is unchanged, so we still have \eqref{eq:klG} and that $n_W(\cal G_R)=n_W(\cal G_0)+1$, $n_a(\cal G_R)=n_a(\cal G_0)$. Then, with \eqref{eq:sizeloops_recol}, we again get the estimate \eqref{eq;Vrecol0}. 
% Third, for each recollision graph, say $\cal G'(x, x_1,\ldots, x_{2p+1})$ in $\mathcal R_{x,x_1x_{2p+1}}$, the atom $x$ belongs to an external molecule and $\cal G'(x, x_1,\ldots, x_{2p+1})$ can be regarded as a graph with at most $2p+1$ external molecules. Then, with a similar proof as that for \Cref{lem:sameindices}, we can show that 
% $$ \sum_{x_1,\ldots, x_{2p+1}}^\star \sum_x \E \cal G'(x, x_1,\ldots, x_{2p+1}) \prec \sN \eta^{-2p}.$$
% We omit the details here. Combined with \eqref{eq;Theta}, this implies that
% $$\sum_{x_1,\ldots, x_{2p+1}}^\star \sum_x \Theta_{\fa x}\E \cal G'(x, x_1,\ldots, x_{2p+1}) = \frac{\im m}{\sN \eta}\sum_{x,x_1,\ldots, x_{2p+1}} \E \cal G'(x, x_1,\ldots, x_{2p+1}) \prec  \eta^{-(2p+1)}.$$
In sum, using \eqref{eq;Vrecol0}, we obtain that 
\begin{align}
\sum_{x_1,\ldots, x_{2p+1}}^{\star} \sum_x \thn_{\fa x}\mathbb E \mathcal R_{x,x_1x_{2p+1}} \prec (\sN\eta)^{-1}\cdot  \sN \eta^{-2p} =\eta^{-(2p+1)} .\label{eq;Vrecol}
\end{align}

\medskip
\noindent {\bf Higher order graphs.} 
Finally, we consider the graphs in $\mathcal A_{x,x_1x_{2p+1}}$, which are all non-recollision graphs.
%Without loss of generality, we may assume that all these graphs are not recollision graphs, i.e., they all have one internal molecule containing the atom $x$. 
Pick one of the graphs, denoted by $\cal G_A(x, x_1,\ldots, x_{2p+1})$. 
% {\cor \begin{align}\label{eq:sizeloops}
% \sum_{y_1,\ldots, y_q}^\star \E\cal G_\gamma(y_1,\ldots, y_q) \prec \sN^q\cdot \sN^{-n_W+n_a-\sum_{i=1}^{k}\ell_i} \prod_{i=1}^k (\sN\eta^{-\ell_i+1}) \le \sN^q\cdot (\sN\eta)^{-(n_W-n_a)-\sum_{i=1}^k(\ell_i-1)}.
% \end{align}}
With a similar argument as in \eqref{eq:sizeloops_recol}, we get 
\begin{align}\label{eq:sizeloops2}
\sum_{x_1,\ldots, x_{2p+1}}^\star \sum_x \E\cal G_A(x, x_1,\ldots, x_{2p+1}) \prec  \sN^{2p+1+n_a(\cal G_A)-n_W(\cal G_A)} (\sN\eta)^{-\sum_{i=1}^{k(\cal G_A)}(\ell_i(\cal G_A)-1)}.
\end{align}
%where we denote by $n_a$ the number of internal atoms and $n_W$ the number of waved edges in $\cal G$, and we again assume that there are $k$ loops in $\cal G'$, each having length $\ell_i$, $i=1,\ldots, k$. 
Again, by the argument leading to \eqref{eq:length_i}, we have that 
$\sum_{i=1}^{k(\cal G_A)}(\ell_i(\cal G_A)-1)\ge 2p+1.$
Thus, if $n_a(\cal G_A)\le n_W(\cal G_A)$, we get from \eqref{eq:sizeloops2} and \eqref{eq;Theta} that 
\begin{align}\label{eq:sizeloops2A}
\sum_{x_1,\ldots, x_{2p+1}}^\star \sum_x \thn_{\fa x}\E\cal G_A(x, x_1,\ldots, x_{2p+1}) \prec (\sN\eta)^{-1}\cdot \sN^{2p+1} (\sN\eta)^{-(2p+1)} = \sN^{-1}\eta^{-(2p+2)}\le \eta^{-(2p+1)}.
\end{align}
It remains to deal with the case $n_a (\cal G_A)=n_{W}(\cal G_A)+1$. 

For this purpose, we perform local expansions to the graph $\cal G_A$ and obtain that
\begin{align}\label{expand GA}
\mathcal G_A(x, x_1,\ldots, x_{2p+1}) =\sum_\omega \mathcal G_{\omega}(x, x_1,\ldots, x_{2p+1}) + \PT_A + \cal E_A  + \QT_A,
\end{align}
where $\mathcal G_{\omega}$ are non-recollision locally standard graphs with coefficients of order 1, $\PT_A\equiv \PT_A(x, x_1,\ldots, x_{2p+1})$ is a sum of recollision graphs with respect to $\{x_1,\ldots, x_{2p+1}\}$, $\cal E_A\equiv\cal E_A(x, x_1,\ldots, x_{2p+1})$ is a sum of error graphs of scaling size $\OO(\sN^{-(2p+1)})$, and $\QT_A\equiv \QT_A(x, x_1,\ldots, x_{2p+1})$ is a sum of $Q$-graphs. After taking expectation and summing over all external vertices, the $Q$-graphs vanish, the error graphs are negligible, and the recollision graphs can be bounded in the same way as \eqref{eq;Vrecol}. It remains to consider the graphs $\cal G_\omega$.  If we have $n_a (\cal G_\omega)\le n_{W}(\cal G_\omega)$, then the estimate \eqref{eq:sizeloops2A} also holds for $\cal G_\omega$. Thus, we need to address the case where $n_a (\cal G_\omega)=n_{W}(\cal G_\omega)+1$. In this scenario, applying \eqref{eq:sizeloops2} to $\cal G_{\omega}$ gives that  
\begin{align}\label{eq:sizeloops2omega}
\sum_{x_1,\ldots, x_{2p+1}}^\star \sum_x \E\cal G_\omega(x, x_1,\ldots, x_{2p+1}) \prec  \sN^{2p+2} (\sN\eta)^{-\sum_{i=1}^{k(\cal G_\omega)}(\ell_i(\cal G_\omega)-1)}.
\end{align}
Similar to star graphs, the connected components in $\cal G_\omega$ must form (disjoint) loops, each of which has length $\ge 2$, due to the locally standard property. Hence, $\ell_i(\cal G_\omega)$ also represents the number of solid edges in the $i$-th loop.

By the condition \eqref{eq:sizesA_GUE} for $\cal G_A$, we have that 
$$ \size\Big(\sum_x \thn_{\fa x}\cal G_A(x, x_1,\ldots, x_{2p+1})\Big)\lesssim \eta^{-1}\size(\cal G_1)\cdot (\sN\eta)^{-\frac{1}{2}[1+k_{\mathsf p}(\cal G_A]},$$
where recall that $\cal G_1$ is defined in \eqref{eq:Gx1x2p} and $k_{\mathsf p}(\cal G_A)$ denotes the number of external solid edges in $\Gamma$ that has been pulled during the $V$-expansion. During the local expansions of $\cal G_A$, every pulling of an external solid edge in $\Gamma$ also increases the scaling size by a $(\sN\eta)^{-1/2}$ factor. Thus, the graphs $\cal G_\omega$ also satisfy that 
$$ \size\Big(\sum_x \thn_{\fa x}\cal G_\omega(x, x_1,\ldots, x_{2p+1})\Big)\lesssim \eta^{-1}\size(\cal G_1)\cdot (\sN\eta)^{-\frac{1}{2}[1+k_{\mathsf p}(\cal G_\omega]}.$$
Then, by the definition \eqref{eq_defsize_GUE}, we have 
\begin{align}
 \size\Big(\sum_x \thn_{\fa x}\cal G_\omega(x, x_1,\ldots, x_{2p+1})\Big) = \frac{1}{\sN \eta} \cdot \sN^{n_a(\cal G_\omega)-n_W(\cal G_\omega)}(\sN \eta)^{-\frac{1}{2}\sum_{i=1}^{k(\cal G_\omega)}\ell_i(\cal G_\omega)} \nonumber\\
 \lesssim \eta^{-1}\size(\cal G_1)\cdot (\sN\eta)^{-\frac{1}{2}[1+k_{\mathsf p}(\cal G_\omega)]} = \eta^{-1} (\sN \eta)^{-\frac{1}{2}(2p+2)-\frac{1}{2}[1+k_{\mathsf p}(\cal G_\omega)]}.\label{eq:sizerelation}
\end{align} 
Here, in calculating the scaling size of $\cal G_1$, we used the fact that all external solid edges in $\Gamma$ are off-diagonal, owing to our constraint that $x_1,\ldots, x_{2p+1}$ all take different values. 
%where recall that $k_{\mathsf p}(\cal G')$ denotes the number of external $G/\bar G$ edges in $\Gamma$ that has been pulled during the $V$-expansions. By our expansion procedure, we must have $n_W\ge n_a-1$. Then, we consider two cases depending on whether $n_{W}=n_a-1$, $n_W=n_a$, or $n_W\ge n_a+1$. 
Since $n_a (\cal G_\omega)=n_{W}(\cal G_\omega)+1$, equation \eqref{eq:sizerelation} gives that 
\be\label{eq:totallength} \sum_{i=1}^{k(\cal G_\omega)}\ell_i \ge 2p+3+k_{\mathsf p}(\cal G_\omega).\ee
Since each loop has at least two edges, we trivially have that $\ell_i-1\ge \ell_i/2$ for each $i\in \qqq{1,k(\cal G_\omega)}$, i.e., every edge contributes at least $1/2$ to the sum. On the other hand, we denote the subset of $2p-k_{\mathsf p}(\cal G_\omega)$ external edges that have not been pulled as $\mathsf E$. Every edge $e\in \mathsf E$ must belong to a loop, say $\cal L_e$, of length $\ge 4$. Moreover, such a loop $\cal L_e$ must contain at least two edges not in $\mathsf E$. 
To see this, we take the paths in $\cal L_e$ that consist solely of the edges in $\mathsf E$. The endpoints of these paths are all distinct external vertices, denoted $x_{k_1},\ldots, x_{k_r}$, where $r\ge 2$. Since $\cal G_\omega$ is not a recollision graph, %the edges in $\mathsf E^c$ cannot connect the vertices $x_{k_1},\ldots, x_{k_r}$. Hence, 
to connect these endpoints into a loop, we need at least $r$ edges in $\mathsf E^c$. The above observation indicates that each edge in $\mathsf E$ contributes at least 1 to $\sum_{i=1}^{k(\cal G_\omega)}(\ell_i(\cal G_\omega)-1)$. Thus, we have 
\be\nonumber 
\sum_{i=1}^{k(\cal G_\omega)}(\ell_i(\cal G_\omega)-1)\ge |\mathsf E|+ \frac{1}{2} \left(\sum_{i=1}^{k(\cal G_\omega)} \ell_i(\cal G_\omega) - |\mathsf E| \right) = \frac{1}{2}\sum_i \ell_i(\cal G_\omega) + \frac{1}{2} \left(2p-k_{\mathsf p}(\cal G_\omega)\right)\ge 2p+\frac{3}{2}, 
\ee
where we used \eqref{eq:totallength} in the last step.  Since the LHS is an integer, we indeed have that 
\be\label{eq:totallength-1}
\sum_{i=1}^{k(\cal G_\omega)}(\ell_i(\cal G_\omega)-1)\ge   2p+2. \ee
Plugging it into \eqref{eq:sizeloops2omega}, we obtain that 
\begin{align*}
\sum_{x_1,\ldots, x_{2p+1}}^\star \sum_x \E\cal G_\omega(x, x_1,\ldots, x_{2p+1}) \prec   \eta^{-(2p+2)} .
%\ \Rightarrow\ \sum_{x_1,\ldots, x_{2p+1}}^* \sum_x \Theta_{\fa x}\E\cal G'(x, x_1,\ldots, x_{2p+1}) \prec  (\sN\eta)^{-1}\cdot \sN^{2p+3}\cdot (\sN\eta)^{-(2p+2)}=\eta^{-(2p+3)}. 
\end{align*}
Since $\eta\ge \sN^{-1/2}$, we have that 
\begin{align*} 
\sum_{x_1,\ldots, x_{2p+1}}^\star \sum_x \thn_{\fa x}\E\cal G_\omega(x, x_1,\ldots, x_{2p+1}) \prec (\sN\eta)^{-1}\cdot \eta^{-(2p+2)}= \sN^{-1}\eta^{-(2p+3)} \le \eta^{-(2p+1)}.
\end{align*}

\begin{comment}

\medskip
\noindent{\bf Case II}: If $n_W \ge n_a$,  then \eqref{eq:sizerelation} gives that 
\be\label{eq:totallength2} 
\sum_{i=1}^k\ell_i + \frac{2}{1-\serr} \ge 2p+3+k_{\mathsf p}(\cal G').\ee
If we choose $\serr<1/2$ such that $2/(1-\serr)< 4$, then \eqref{eq:totallength2} gives $\sum_{i=1}^k\ell_i \ge 2p+k_{\mathsf p}(\cal G')$. Using a similar argument as above, we obtain that  
\be\label{eq:totallength-2} \sum_{i=1}^k(\ell_i-1) \ge   2p.\ee % \ \Rightarrow \ \sum_{i=1}^k(\ell_i-1) \ge   2p+1. \ee
Plugging it into \eqref{eq:sizeloops2}, we obtain that 
\begin{align*}
%\sum_{x_1,\ldots, x_{2p+1}}^* \sum_x \E\cal G'(x, x_1,\ldots, x_{2p+1}) \prec  \sN \eta^{-(2p+2)} \ \Rightarrow\ 
\sum_{x_1,\ldots, x_{2p+1}}^* \sum_x \Theta_{\fa x}\E\cal G'(x, x_1,\ldots, x_{2p+1}) \prec  (\sN\eta)^{-1}\cdot \sN^{2p+1}\cdot (\sN\eta)^{-2p} =\eta^{-(2p+1)}. 
\end{align*}

\medskip
\noindent{\bf Case III}: If $n_W \ge n_a+1$, then \eqref{eq:sizeloops2} can be bounded by 
$$\sum_{x_1,\ldots, x_{2p+1}}^* \sum_x \E\cal G'(x, x_1,\ldots, x_{2p+1}) \prec  \eta \sN^{2p+1}\cdot  (\sN\eta)^{-(n_W-n_a)-\sum_{i=1}^k(\ell_i-1)}. $$
On the other hand, similar to \eqref{eq:nw-na+loop}, we can show that $(n_W-n_a)+\sum_{i=1}^k(\ell_i-1)$ can only increase in the $V$-expansions ({\cor add more details or see later discussions in \Cref{sec:continuity}}) and is at least $2p+2-2=2p$ as the initial graph. This implies that   
\begin{align*}
%\sum_{x_1,\ldots, x_{2p+1}}^* \sum_x \E\cal G'(x, x_1,\ldots, x_{2p+1}) \prec  \sN \eta^{-(2p+2)} \ \Rightarrow\ 
\sum_{x_1,\ldots, x_{2p+1}}^* \sum_x \Theta_{\fa x}\E\cal G'(x, x_1,\ldots, x_{2p+1}) \prec  (\sN\eta)^{-1}\cdot \eta\sN^{2p+1}\cdot (\sN\eta)^{-2p} =\eta^{-2p}. 
\end{align*}

\medskip

\end{comment}

In sum, we have obtained that for $\eta\ge \sN^{-1/2}$,
\begin{align}
    \sum_{x_1,\ldots, x_{2p+1}}^\star \E \sum_x \thn_{\fa x}\mathcal A_{x,x_1x_{2p+1}} \prec \eta^{-(2p+1)} .\label{eq;Vhigher}
\end{align}
%for any constant $\serr\in (0,1/2)$. 

Combining \eqref{eq;Vleading}, \eqref{eq;Vstar1}, \eqref{eq;Vstar2}, \eqref{eq;Vstar3}, \eqref{eq;Vrecol}, and \eqref{eq;Vhigher},  we obtain that for any constant $\serr\in(0,1/2)$, \eqref{scsgun2} holds, which concludes the proof of \Cref{lem_Lp1_St}. 
\end{proof}

\section{Continuity estimate}\label{sec:continuity}

In this section, we present the proofs of the continuity estimate, \Cref{gvalue_continuity}, %and the quantum unique ergodicity, \Cref{thm:QUE}, 
for the Wegner orbital model. Our main tools are the $T$-expansion and $V$-expansion, where the molecule sum zero (or coupling renormalization) property \eqref{eq:vertex_sum_zero} plays a key role. 

%\subsection{Proof of \Cref{gvalue_continuity}}

Since $\mathrm{Tr}(\mathcal A_{\mathcal I}^{p})$ can be written into a linear combination of loop graphs with $p$ solid edges, \Cref{gvalue_continuity} follows as an immediate consequence of the next lemma. 
\begin{lemma}\label{gvalue_continuity2}
In the setting of \Cref{lem: ini bound}, suppose 
\be\label{eq:cont_ini_eta}
T_{xy}(z)\prec B_{xy} +(N\eta)^{-1},\quad \|G(z)-M(z)\|_{\max}\prec \heta^{1/2}, 
\ee
for some $z =E+ \ii \eta$ with $|E|\le  2-\kappa$ and $\eta\in [\eta_0,\blam^{-1}]$. Consider a loop graph 
\be\label{eq_pgons}
\cal G_{\bx}(z) = \prod_{i=1}^{p}G^{c_i}_{x_i x_{i+1}}(z),
\ee
where $\bx:=(x_1,\cdots, x_{p})$, $x_{p+1}\equiv x_1$, and $c_i \in \{\emptyset,\dag\}$. %with the convention $ G^{\emptyset}= G$. 
Let $\cal I\subset \Z_L^d$ be a subset with $|\cal I|\ge W^d$ and denote $K:=|\cal I|^{1/d}$. Then, for any fixed $p\in 2\N$, we have that (recall $\conc(K,\lambda,\eta)$ defined in \eqref{eq:def-conc})
	\begin{equation}\label{eq_bound_Gx}
	\begin{split}
	  &\frac{1}{|\cal I|^p}\sum_{x_i \in \cal I: i =1,\ldots, p}\E\mathcal G_{\mathbf x}(z) \prec \conc(K,\lambda,\eta)^{{p-1}} \, .
	\end{split}
	\end{equation} 
	%where $\conc(K,\lambda,\eta)$ is defined in \eqref{eq:def-conc}.
% 	\begin{equation}
% \label{eq:def-conc}
%     \conc(K,W,\eta):=\frac{W}{\lambda^2}\left[\left(\frac{1}{W^2 K^{d-2}}+ \frac{1}{N\eta}+ \frac{1}{K^{d/2}} \sqrt{\frac{1}{N\eta}\frac{L^2}{W^2}}\right)\left(\frac{1}{W^4 K^{d-4}} + \frac{1}{N\eta}\frac{L^2}{W^2}\right)\right]^{\frac{1}{2}}\,.
% \end{equation}
\end{lemma}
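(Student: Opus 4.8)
The plan is to prove \Cref{gvalue_continuity2} by a $V$-expansion of one of the resolvent edges in the loop graph $\cal G_{\bx}(z)$, exploiting the coupling renormalization (molecule sum zero) property established in \Cref{thm:Vexp}. First I would rewrite one factor $G^{c_i}_{x_i x_{i+1}}$ as part of a $T$-variable in the natural way, so that $\cal G_{\bx}$ becomes a graph of the form $T_{\fa,\fb_1\fb_2}\Gamma$ with $\Gamma = \prod_{j\ne i}G^{c_j}_{x_j x_{j+1}}$ (up to an auxiliary vertex $\fa$ that is identified back with an index of the loop). Then I would apply the $V$-expansion \eqref{eq_Vexp} to this graph, and estimate the sum over $x_1,\dots,x_p\in \cal I$ of the expectation of each term on the right-hand side. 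The $Q$-graphs vanish under expectation; the error term $\Err_D$ is negligible for $D$ large; and the leading $m\wt\thn_{\fa\fb_1}\overline G_{\fb_1\fb_2}\Gamma$ term and the deterministic term $\sum_x \wt\thn_{\fa x}\mathcal D(x,\fb_1,\fb_2)\Gamma$ can be bounded directly using \Cref{lem theta}, \Cref{dG-bd}, \Cref{no dot}, and \Cref{w_s}: each diffusive edge contributes the decay $B$, each remaining $G$ or $\bar G$ edge is bounded by its weak norm via the hypothesis \eqref{eq:cont_ini_eta}, and summation over $\cal I$ via the weak/strong norm bounds in \Cref{w_s} produces the parameter $\conc(K,\lambda,\eta)$ raised to the appropriate power.

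The central step is the treatment of the star graphs $\sum_x \wt\thn_{\fa x}\Gamma^\star_p(x,\fb_1,\fb_2)$, since a star graph $\cal G^\star_{p,\gamma}$ where $2p$ solid edges of $\Gamma$ have been pulled to a single internal molecule $\cal M_x$ has, naively, the same scaling size as the original loop graph and hence does not improve on its own. This is exactly where the coupling renormalization enters: by property (iii.3) of \Cref{thm:Vexp}, the vertex function $\fD^\lambda_\omega$ is, up to an error controlled by \eqref{eq:vertex_sum_zero_diff}, of the product form \eqref{eq:vertex_sum_zero_reduce} with coefficient $\Delta_\omega$, and \eqref{eq:vertex_sum_zero} says $\sum_\omega \Delta_\omega = \OO(\eta)$. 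Thus when one sums all star graphs corresponding to a fixed pairing $\cal P(E_p,\overline E_p)$, the leading $\lambda$-independent part collapses to an $\OO(\eta)$ factor times a product of $S$-edges, which gains an extra factor $\eta$ (hence effectively an extra $\blam\heta$ or $(N\eta)^{-1}L^2/W^2$ in scaling size) over the naive bound, while the remainder from \eqref{eq:vertex_sum_zero_diff} is suppressed by an extra power of $W^{-d}$. Either way the star contribution is brought below the target order, and I would track the bookkeeping so that, after summing over $x_1,\dots,x_p\in\cal I$ with \Cref{w_s}, each such term is $\OO(\conc^{p-1})$ as required. The recollision graphs $\mathcal R$ and higher-order graphs $\mathcal A$ in \eqref{eq_Vexp} are handled more simply: the scaling size bounds \eqref{eq:sizesR} and \eqref{eq:sizesA}, together with \Cref{no dot} and \Cref{w_s}, show they are smaller than the leading term by a positive power of $W$ (using the condition \eqref{Lcondition1} and $d\ge 7$ to absorb the $L^2/W^2$ factors coming from broken double connectivity).

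A small additional wrinkle I anticipate is that after one $V$-expansion the resulting graphs are not loop graphs anymore, so I cannot simply induct on $p$ with the same statement; instead I would formulate the induction over a slightly larger class of doubly connected graphs carrying a prescribed number of solid edges and a prescribed ``free/diffusive budget'', closed under the $V$-expansion operation, exactly as in the analogous arguments of \cite{BandII,BandIII}. The induction hypothesis feeds the sum-zero property of all lower-order $\selfs$ that is needed to invoke \Cref{thm:Vexp}, and the base case ($p$ small, or $\eta$ at the scale $\blam^{-1}$) follows from \Cref{eta1case0} and the trivial bound on short loops. I would then conclude by combining all the pieces, noting that the number of terms produced is $\OO(1)$ depending only on $p$, so no logarithmic losses accumulate beyond those already present in $B$ and $\conc$.

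The main obstacle, as in the body of the paper, is verifying that the coupling renormalization cancellation \eqref{eq:vertex_sum_zero} can actually be applied \emph{uniformly} across all the star graphs appearing in the expansion of the loop $\cal G_{\bx}$: one must check that distinct choices of which solid edges get pulled, and which vertices are identified, lead to vertex functions with the \emph{same} coefficient $\Delta_\omega$ depending only on the loop structure of the induced pairing (\Cref{lem:aLG}), so that the summation genuinely reassembles $\sum_\omega \Delta_\omega$ rather than a partial sum. Handling the $\lambda$-dependent corrections \eqref{eq:vertex_sum_zero_diff} and making sure they do not reintroduce a term of the forbidden order — especially when $\blam$ is large, i.e.\ $\lambda$ small — is the delicate quantitative point, and it is precisely controlled by the lower bound \eqref{eq:cond-lambda2} on $\lambda$ entering through the definition of $\conc$ and $\heta$.
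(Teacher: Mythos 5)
Your overall skeleton --- iterating the $V$-expansion of \Cref{thm:Vexp} on the loop graph, with the molecule sum zero property supplying a small factor each time a new internal molecule is created, and bookkeeping recollision/higher-order graphs by scaling size --- is indeed the paper's route (\Cref{strat_global_weak2} together with \Cref{mG-fe} and \Cref{lem:mG-fe-size}). The genuine gap is in your final step, where you assert that summation over $x_1,\dots,x_p\in\cal I$ ``via the weak/strong norm bounds in \Cref{w_s}'' produces $\conc(K,\lambda,\eta)^{p-1}$. The estimates in \Cref{w_s} concern sums over all of $\Z_L^d$ against $B$-factors and are blind to the scale $K=|\cal I|^{1/d}$, whereas $\conc$ is $K$-dependent. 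In the paper, $\conc^{p-1}$ emerges only after $\E\,\cal G_{\bx,\mathbf\Sigma}$ has been reduced to purely \emph{deterministic}, generalized doubly connected graphs with $\wsize\lesssim\heta^{\,q-1}$, and then from averaging the external vertices over $\cal I$: an external vertex contributes the ``good'' factor \eqref{good factor} only when its molecule carries its own redundant diffusive or free edge, and otherwise only the ``bad'' factor \eqref{bad factor}; $\conc$ is the geometric mean of the two, and the exponent $p-1$ requires property (b) of \Cref{mG-fe}, i.e.\ that at least half of the external molecules adjacent to internal molecules are special in this sense. Without this counting, each external average yields only \eqref{bad factor}, which dominates $\conc$ (since $K\ge W$), and the resulting bound is strictly weaker than \eqref{eq_bound_Gx}. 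This special-external-molecule structure is a necessary ingredient that your sketch neither identifies nor shows is preserved by the expansion.

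Two smaller inaccuracies are worth flagging. First, the remainder in \eqref{eq:vertex_sum_zero_diff} is suppressed only by $\blam^{-1}$ relative to the leading product of $S$-edges, not by $W^{-d}$ as you claim; this still closes the argument because the gain needed per newly created molecule is exactly $\blam^{-1}$, cancelling the $\blam$ from summing the $\wt\thn_{\fa x}$ edge (this is how the monotone quantity $\wsize(\cal G)\,\heta^{|E(\cal G)|/2}$ is controlled in the proof of \Cref{lem:mG-fe-size}), but a proof leaning on a $W^{-d}$ suppression would be using a false estimate. Second, once you iterate, the recollision and higher-order graphs are not disposed of by \eqref{eq:sizesR} and \eqref{eq:sizesA} alone: when an edge you have already charged in your ``budget'' is pulled, its $\heta^{1/2}$ gain is spent compensating the removal, and you need the extra $\blam^{-1}$ that recollisions provide, or a newly added intra-molecule solid edge to recharge the budget, exactly as encoded by the loop condition \eqref{property_loop} in the paper's bookkeeping. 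Your proposed induction class would have to build in these features to close.
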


\begin{proof}[\bf Proof of \Cref{gvalue_continuity}]
Since $\mathrm{Tr}(\mathcal A_{\mathcal I}^{p})$ can be written into a linear combination of $\OO(1)$ many loop graphs of the form \eqref{eq_pgons} (with the spectral parameter $z=\wt z$), we conclude \eqref{eq_bound_Gx0} by \eqref{eq_bound_Gx}.
%Then, \Cref{gvalue_continuity} is an immediate consequence of the following lemma.
\end{proof}

\subsection{Proof of \Cref{gvalue_continuity2}}	
We define the \emph{generalized doubly connected property} of graphs with external molecules, which generalizes the doubly connected property in Definition \ref{def 2net}. 

\begin{definition}[Generalized doubly connected property]\label{def 2netex}
A graph $\cal G$ with external molecules is said to be generalized doubly connected if its molecular graph satisfies the following property. There exists a collection, say $\cal B_{black}$, of $\dashed$ %(or pseudo-$\dashed$) 
edges, and another collection, say $\cal B_{blue}$, of blue solid, $\dashed$, %(or pseudo-$\dashed$), 
free, or ghost edges such that: 
\begin{itemize}
\item[(a)]  $\cal B_{black}\cap \cal B_{blue}=\emptyset$; 
\item[(b)] every internal molecule connects to external molecules through two disjoint paths: a path of edges in $\cal B_{black}$ and a path of edges in $\cal B_{blue}$. 
\end{itemize}
Simply speaking, a graph is generalized doubly connected if merging all its external molecules into a single internal molecule gives a doubly connected graph in the sense of Definition \ref{def 2net}. 
\end{definition}

As shown in \cite{BandIII}, expanding a graph without internal molecules in a proper way always generates generalized doubly connected graphs. Moreover, in these graphs, at most half of the external molecules are used in the generalized doubly connected property in the sense of property (b) of the following lemma. %for a more precise description).

\begin{lemma}\label{mG-fe}
In the setting of \Cref{gvalue_continuity2}, suppose \eqref{eq:cont_ini_eta} holds and we have a $\nonuni$ \eqref{mlevelTgdef weak}. For the graph $\mathcal G_{\mathbf x}(z)$ in \eqref{eq_pgons}, let $\Sigma_1,\ldots, \Sigma_q$ be disjoint subsets that form a partition of the set of vertices $\{x_1, \ldots, x_{p}\}$ with $1\le q\le p$. Then, we identify the vertices in $\Sigma_i$, $i\in \qqq{1,q}$, and denote the resulting graph by $ \mathcal G_{\mathbf x, \mathbf \Sigma}(z)$, where $\mathbf \Sigma=(\Sigma_1,\ldots, \Sigma_q)$ and we still denote the external vertices by $\bx=(x_1,\ldots, x_q)$ without loss of generality. Suppose $x_1,\ldots, x_q\in \Z_L^d$ all take different values. Then, we have that for any constant $D>0$,
\begin{equation}\label{EGx}
\E[\mathcal G_{\mathbf x,\mathbf \Sigma}] = \sum_\mu\mathcal G_{\mathbf x}^{(\mu)} + \OO(W^{-D})\,,
\end{equation}
where the RHS is a sum of $\OO(1)$ many \emph{deterministic} normal graphs $\mathcal G_{\mathbf x}^{(\mu)}$, satisfying the following properties: 
\begin{itemize}
\item[(a)] $\mathcal G_{\mathbf x}^{(\mu)}$ are generalized doubly connected graphs in the sense of \Cref{def 2netex}.

\item[(b)] Consider all external molecules that are neighbors of internal molecules on the molecular graph, say $\cal M_1,\ldots, \cal M_n$. We can find at least $r \ge \lceil n/2\rceil$ of them, denoted by $\{\cal M_{k_i}\}_{i=1}^r$, that are simultaneously connected with redundant $\dashed$ or free edges in the following sense: (i) every $\cal M_{k_i}$, $i \in \qqq{r}$, connects to an internal molecule through a $\dashed$ or free edge $e_{k_i}$, and (ii) after removing all these $r$ edges $e_{k_i}$, $i \in \qqq{r}$, the resulting graph is still generalized doubly connected. We will call these molecules $\cal M_{k_i}$ as special external molecules. % and these redundant $\dashed$ or free edges as special redundant edges. 

\item[(c)] %If $x_i$ and $x_j$ are connected in $\mathcal G_{\mathbf x}$, then they are also 
All vertices are connected in $\mathcal G_{\mathbf x}^{(\mu)}$ through non-ghost edges.   

% \item[(d)] Every $\mathcal G_{\mathbf x}^{(\mu)}$ satisfies (recall \eqref{eq_defsize} and \eqref{eq:def-size})
% \be\label{mG-fe-size-weak} \cor \size(\mathcal G_{\mathbf x}^{(\mu)})\le \blam^{(p-1)/2}\heta^{p-1}.\ee
%W^{-2\soe (p-t)}.$$ 
%under the definition \eqref{eq:def-size}. 
%where $t$  denotes the number of connected components in $\mathcal G_{\mathbf x}^{(\mu)}$.
%contains a subset $\cal S_\mu$ of edges connected with external atoms so that the following properties hold: (i) $\cal S_\mu$ does not contain special redundant edges; (ii) the subgraph obtained by removing the edges in $\cal S_\mu$ still satisfies properties (b) and (c); (iii) the subgraph induced on the internal atoms union the edges in $\cal S_{\mu}$ has size $\le 1$.
\end{itemize}
\end{lemma}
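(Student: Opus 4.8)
\textbf{Proof strategy for \Cref{mG-fe}.} The plan is to obtain \eqref{EGx} by iterated application of the complete $T$-expansion \eqref{mlevelTgdef weak} (i.e. \Cref{def nonuni-T}) combined with the $Q$- and local expansions of \Cref{sec:global_WO}, and then track the generalized doubly connected structure through these expansions. First I would set up the bookkeeping: after identifying the vertices in each $\Sigma_i$, the graph $\cal G_{\mathbf x, \mathbf \Sigma}$ is a union of $G$-edges forming a (possibly pinched) polygon on the external vertices $x_1, \dots, x_q$, with no internal vertices and no deterministic edges. I would first bring this graph into a form where all $G$-edges are organized into $T$-variables by applying local expansions (\Cref{lvl1 lemma}); since there are no internal vertices yet, this step only creates local structure around the external vertices. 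Then I would repeatedly pick a $T$-variable, replace it using the complete $T$-expansion \eqref{mlevelTgdef weak} (which introduces a $\wt\thn$ diffusive edge, deterministic graphs $\cal D^{(\mu)}$, a $Q$-graph part, and an error term), apply the $Q$-expansions of \Cref{subsec_Qexp} to remove $P/Q$ labels, and then local expansions again, iterating until every remaining graph is deterministic (all $G$-edges have been paired into deterministic matrices or absorbed into $\wt\thn$, $\cal D^{(\bullet)}$, waved, or dotted edges) up to negligible errors of order $\OO(W^{-D})$. Taking the expectation kills all genuine $Q$-graphs, leaving the deterministic sum $\sum_\mu \cal G_{\mathbf x}^{(\mu)}$ plus the $\OO(W^{-D})$ error, which gives \eqref{EGx}.

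For property (a), the key point is that each step of this expansion preserves the generalized doubly connected property in the sense of \Cref{def 2netex}. This follows the same mechanism as in the proof that the $T$-expansion graphs are doubly connected: the complete $T$-expansion \eqref{mlevelTgdef weak} is itself built from doubly connected pieces (by (2) and (4) of \Cref{def nonuni-T}, the $\cal D^{(\bullet)}$ and $\cal Q'$ graphs are doubly connected with ghost edges in the blue net), and substituting a doubly connected expansion into a redundant $T$-variable of a (generalized) doubly connected graph, followed by $Q$/local expansions, again yields generalized doubly connected graphs — this is exactly the content of \Cref{lem globalgood} adapted to the complete $T$-expansion, as carried out in \cite{BandIII} for the analogous statement. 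Here one merges all external molecules into a single vertex, checks ordinary double connectivity via \Cref{def 2net}, and uses that the $\wt\thn$, $\wt\thn\cal D^{(\mu)}$, and $\wt\thn\cal D^{(\nu)}$ edges count as diffusive edges that can sit in either net.

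Property (b) — that at least $\lceil n/2\rceil$ of the external molecules adjacent to internal molecules carry redundant $\dashed$ or free edges — is the main obstacle and the part requiring real work. The idea is structural: in the complete $T$-expansion, each substitution of a $T$-variable $T_{\fa,\fb_1\fb_2}$ produces a $\wt\thn_{\fa x}$ (diffusive) edge attached to the ``input'' vertex $\fa$, while the ``output'' solid edges leaving $\fb_1, \fb_2$ connect toward other molecules. I would argue by a counting/parity argument on the molecular graph: whenever the expansion creates or touches an external molecule as a neighbor of an internal molecule, it does so through a diffusive ($\wt\thn$-type) or free edge, and the blue net can be chosen so that for every such external molecule the connecting edge is either the unique blue edge reaching it (hence the external molecule is a leaf in the blue spanning tree and the edge is redundant for the black net) or can be traded against a parallel path. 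Concretely, following the argument in \cite[Section 3]{BandIII} (the analogue for random band matrices), one observes that each $T$-expansion step attaches a fresh diffusive edge to exactly one external molecule and one shows inductively that the set of external molecules that are ``used'' (non-redundantly) in the generalized doubly connected structure has size at most $\lfloor n/2 \rfloor$, because using an external molecule non-redundantly in the black net forces an accompanying blue-net path, which in turn frees up another external molecule. I would need to carefully verify that the extra ghost edges introduced by the complete $T$-expansion (which live only in the blue net) do not spoil this counting, and that merging subsets $\Sigma_i$ of external vertices — which can only decrease the number of external molecules and can only make redundancy easier — is harmless. Property (c) is then immediate: ghost edges carry no indices and by construction are introduced only as blue-net padding, so removing them leaves the graph connected through genuine (non-ghost) edges, since the black net alone already connects everything.

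\textbf{Remark on references.} Throughout, I would lean on the fact that the complete $T$-expansion and the expansion machinery here are literal analogues of those in \cite{BandI, BandII, BandIII}, so the combinatorial lemmas on preservation of (generalized) doubly connectivity and on the $\lceil n/2 \rceil$-redundancy count transfer with only notational changes; the only genuinely new input is checking compatibility with the block structure (molecules of size $W$) and with the labeled diffusive / new diffusive edge types of \Cref{def_graph comp} and \Cref{def nonuni-T}, which enter only through the already-established bound $|\wt\thn_{xy}| \prec B_{xy}$ and the doubly connected property of the $\cal D^{(\bullet)}$ graphs.
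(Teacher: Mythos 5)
Your proposal takes essentially the same route as the paper: the paper also proves \eqref{EGx} by repeatedly substituting the complete $T$-expansion (packaged there as the $V$-expansion) into a suitably chosen redundant $T$-variable, interleaving $Q$- and local expansions, stopping at deterministic, negligible, or $Q$-graphs, and then deduces properties (a)--(c) by citing Lemma 6.2 (a)--(c) of \cite{BandIII}, exactly as you defer the $\lceil n/2\rceil$-redundancy count to the band-matrix analogue. The only difference is organizational: the paper formalizes your "pick a redundant $T$-variable" step via the generalized SPD/globally standard property and a pre-deterministic order of the MIS, which is the precise bookkeeping that guarantees the expansion never gets stuck and preserves the generalized doubly connected structure.
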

\begin{proof}
Since the Wegner orbital model also belongs to the random band matrix ensemble, this lemma is an immediate consequence of Lemma 6.2 (a)--(c) in \cite{BandIII}. For the later proof of the key \Cref{lem:mG-fe-size} below, we outline the expansion strategy adopted in the proof of this lemma. 

First, corresponding to Definition \ref{def 2netex}, we introduce the following definition.

\begin{definition}\label{GSS_external}
We define the generalized SPD and globally standard properties for graphs with external molecules (recall Definitions \ref{def seqPDG} and \ref{defn gs}) as follows.
\begin{itemize}
\item [(i)] A graph $\cal G$ with external molecules is said to satisfy the {\bf generalized SPD property} if merging all external molecules of $\cal G$ into one single internal molecule yields an SPD graph. %satisfies Definition \ref{def seqPDG weak}. 

\item[(ii)] A graph $\cal G$ with external molecules is said to be {\bf generalized globally standard} if it is generalized SPD and every proper isolated subgraph with non-deterministic closure is weakly isolated.
\end{itemize}
\end{definition}

Next, we will expand $\cal G_{\bx,\mathbf\Sigma}$ by applying Strategy \ref{strat_global} with the following modifications: 
\begin{itemize}
\item we use the generalized globally standard property in Definition \ref{GSS_external};
%\item we choose the pre-deterministic order such that the blue solid edges connected with internal molecules precede the blue solid edges between external molecules;
\item for a global expansion, we use the $V$-expansion \eqref{eq_Vexp} instead of the $T$-expansion; 
\item we stop expanding a graph if it is deterministic, its weak scaling size is $\OO(W^{-D})$, or it is a $Q$-graph.
\end{itemize} 
More precisely, we will adopt the following strategy. 

\begin{strategy}%[One step of expansion]
\label{strat_global_weak2}
Given a large constant $D>0$ and a generalized globally standard graph without $P/Q$ labels, we perform one step of expansion as follows. %Let $\Iso_{k}$ be the MIS with non-deterministic closure.

\vspace{5pt}
\noindent{\bf Case 1:} Suppose we have a graph where all solid edges are between external molecules. Corresponding to Step 0 of Strategy \ref{strat_global}, we perform local expansions to get a sum of locally standard graphs such that all solid edges are between internal vertices (along with some graphs that already satisfy the stopping rules). Then, we apply \eqref{eq_Vexp} to an arbitrary $T$-variable and get a sum of generalized globally standard graphs (plus some graphs that already satisfy the stopping rules). 

\vspace{5pt}
\noindent{\bf Case 2:} If the given graph is not locally standard, we then perform local expansions on it and get a sum of generalized globally standard graphs that are locally standard. %MIS (plus some graphs that already satisfy the stopping rules). 

\vspace{5pt}
\noindent{\bf Case 3:} Given a generalized globally standard graph, we find a $T$-variable that contains the first blue solid edge in a pre-deterministic order of the MIS as in Step 2 of Strategy \ref{strat_global}, and then apply the $V$-expansion in \Cref{thm:Vexp} to it to get a sum of generalized globally standard graphs (plus some graphs that already satisfy the stopping rules). 
\end{strategy}
\noindent Applying this one step of expansion repeatedly, we can get \eqref{EGx} as shown in the proof of \cite[Lemma 6.2]{BandIII}.
\end{proof}

Now, to complete the proof of \Cref{gvalue_continuity2}, we need the following key result regarding the weak scaling size of the graphs in \eqref{EGx}.   
\begin{lemma}\label{lem:mG-fe-size}
In the setting of \Cref{mG-fe}, the weak scaling sizes (recall \eqref{eq:def-size}) of the deterministic graphs on the RHS of \eqref{EGx} satisfy that 
\be\label{mG-fe-size}
\wsize(\mathcal G_{\mathbf x}^{(\mu)})\lesssim \heta^{q-1}.
\ee
\end{lemma}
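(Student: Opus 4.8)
\textbf{Proof proposal for \Cref{lem:mG-fe-size}.}

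The plan is to induct on the expansion process described in \Cref{strat_global_weak2}, tracking the weak scaling size of each intermediate graph and showing that the bound $\wsize \lesssim \heta^{q-1}$ is preserved at every step. The initial graph $\cal G_{\bx,\mathbf\Sigma}$ is a single loop with $p$ solid edges on $q$ distinct external vertices; expanding each $G^{c_i}$ as $M^{c_i}+\Gc^{c_i}$ and using that all external vertices take different values (so only off-diagonal $M$ entries appear, each of which is $\OO_\prec(\heta^{1/2})$ by \Cref{lem:propM} and the bound $\lambda\asymp W^{-\xi}$), one checks that $\wsize(\cal G_{\bx,\mathbf\Sigma})\lesssim \heta^{p/2}$; since $q\le p$ and (being a connected loop on $q$ vertices) in fact $p\ge q$, this is $\lesssim\heta^{q-1}$ whenever $p\ge 2(q-1)$, and the borderline small-$p$ cases can be handled separately using the connectedness of the loop. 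The heart of the argument is then to verify that no step of \Cref{strat_global_weak2} increases the weak scaling size: the local expansions (Lemmas \ref{ssl}--\ref{T eq0}) never increase $\size$ and hence never increase $\wsize$; and in a $V$-expansion of a $T$-variable $T_{x,y_1y_2}$ via \eqref{eq_Vexp}, each of the output classes comes with an explicit comparison to $\wsize(\cal G)$ —the deterministic term $\wt\thn\mathcal D$ and the recollision term $\mathcal R$ satisfy \eqref{eq:sizesR}, the higher-order term $\mathcal A$ satisfies \eqref{eq:sizesA}, and the star graphs $\Gamma^\star_p$ satisfy \eqref{eq:starsize}—all of which bound the new weak scaling size by $\OO(\wsize(\cal G))$ (the $\heta$-power factors and the extra $\blam$ factors only help, since $\blam\heta = \blam^2/W^d\le W^{-\fd}$ under \eqref{eq:cond-lambda2}).

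The one genuinely delicate point, and the place where the coupling renormalization \eqref{eq:vertex_sum_zero} enters, is the treatment of the star graphs. Naively each star graph $\cal G^\star_{p,\gamma}$ produced in a $V$-expansion of a $T$-variable inside a graph $\cal G$ would contribute a factor $\blam\wsize(\cal G)\heta^p$ by \eqref{eq:starsize}, which by itself is already $\OO(\wsize(\cal G))$ and hence harmless for the induction. However, the subtlety is that in the \emph{final} deterministic graphs $\mathcal G^{(\mu)}_{\bx}$ the newly-created centering molecule $\cal M_x$ carries the vertex function $\fD^\lambda_\omega$, and we must be sure that when we sum over the label $\omega$ (i.e. over all the ways the internal waved-edge structure inside $\cal M_x$ can look) the resulting coefficient is not anomalously large. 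This is exactly what \eqref{eq:vertex_sum_zero_reduce}--\eqref{eq:vertex_sum_zero_diff} give: at $\lambda=0$ the summed coefficient $\sum_\omega\Delta_\omega = \OO(\eta)$ by the molecule sum zero property, and the $\lambda$-dependent correction is bounded by \eqref{eq:vertex_sum_zero_diff} together with $m(z)-m_{sc}(z)=\OO(\lambda^2)$ and \eqref{S+xy-0}, so in total $\sum_\omega\fD^\lambda_\omega$ picks up an extra small factor $\OO(\eta+\blam^{-1})=\OO(W^{-\fc'})$ for some $\fc'>0$ relative to a single $\fD^\lambda_\omega$. Hence the summed star graph is in fact \emph{smaller} than $\wsize(\cal G)$, which both preserves the induction and is what ultimately produces the gain in the final estimate \eqref{eq_bound_Gx}.

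The remaining bookkeeping is to translate the weak-scaling-size bound $\wsize(\mathcal G^{(\mu)}_{\bx})\lesssim\heta^{q-1}$ from the molecular structure. Here I would use that each $\mathcal G^{(\mu)}_{\bx}$ is generalized doubly connected with $q$ external vertices lying in external molecules and at least $\lceil n/2\rceil$ special external molecules attached by redundant $\dashed$/free edges (properties (a), (b), (c) of \Cref{mG-fe}): one chooses the black and blue spanning structures, sums over the internal molecules one leaf at a time (each internal molecule contributing a diffusive factor $\heta$ together with at least one further solid/free/diffusive factor by double-connectivity, exactly as in the proof of \Cref{no dot}), and then accounts for the $q$ external vertices — the $q-1$ ``non-root'' external vertices each being reached through at least one additional edge of scaling size $\OO(\heta)$ or $\OO(\heta^{1/2})$. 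Combining these contributions with the above star-graph cancellation gives the claimed bound $\wsize(\mathcal G^{(\mu)}_{\bx})\lesssim\heta^{q-1}$.

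\textbf{Main obstacle.} The principal difficulty is not the combinatorial bookkeeping but making precise that the coupling renormalization \eqref{eq:vertex_sum_zero} can be applied \emph{uniformly} at each centering molecule created during the $V$-expansions, independently of the ambient graph $\cal G$ and of which $T$-variable was expanded — i.e. that the vertex function $\fD^\lambda_\omega$ produced in context genuinely has the universal form of \Cref{thm:Vexp}(iii) so that the sum-zero coefficient $\Delta_\omega$ is the same one controlled by \Cref{lem:vertex_zero}. Once that structural universality is in hand (it is exactly the content of \eqref{eq:Gpgamma}--\eqref{eq:vertex_sum_zero}), the scaling-size bound \eqref{mG-fe-size} follows by the inductive argument sketched above.
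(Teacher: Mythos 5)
There is a genuine gap: your induction is anchored on the claim that the weak scaling size never increases along the expansion, so that the final deterministic graphs inherit a bound from the initial graph. But the initial graph does not satisfy $\wsize\lesssim\heta^{q-1}$ in the main case: when all external vertices are distinct ($q=p$), the loop $\cal G_{\bx,\mathbf\Sigma}$ has $q$ off-diagonal solid edges and hence $\wsize\asymp\heta^{q/2}$, which is far \emph{larger} than $\heta^{q-1}$ for $q\ge 3$; your condition $p\ge 2(q-1)$ excludes precisely the generic case rather than a ``borderline small-$p$'' one. Consequently, monotonicity of $\wsize$ (which is essentially what \eqref{eq:sizesR}, \eqref{eq:sizesA}, \eqref{eq:starsize} give you) cannot by itself produce \eqref{mG-fe-size}: the bound must come from a quantified \emph{gain} of $\heta^{1/2}$ per pulled external solid edge, accumulated over roughly $q-2$ edges during the whole expansion. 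The paper's proof implements exactly this by tracking the non-increasing quantity $\wsize(\cal G)\,\heta^{\frac12|E(\cal G)|}$ for an evolving subset $E(\cal G)$ of solid edges (of size $q-2$ initially, empty for deterministic graphs, subject to the loop condition \eqref{property_loop}), with a case analysis over the outputs of the $V$-expansion; this bookkeeping device is absent from your argument, and your closing paragraph in fact conflates bounding the \emph{value} of $\mathcal G^{(\mu)}_{\bx}$ by summing over internal molecules (as in \Cref{no dot}) with bounding the purely combinatorial quantity $\wsize$, and even the count you sketch there would only recover the weaker bound \eqref{mG-fe-size-weak}.

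Relatedly, your use of the coupling renormalization is directed at the wrong place. You present \eqref{eq:vertex_sum_zero}--\eqref{eq:vertex_sum_zero_diff} as ensuring the star-graph coefficient is ``not anomalously large,'' noting that even the naive bound $\blam\wsize(\cal G)\heta^{p}$ is $\OO(\wsize(\cal G))$; under the correct bookkeeping the issue is sharper: a star graph may pull all the credited edges in $E(\cal G)$ at once, and then the $\blam$ factor coming from summing the new diffusive edge $\wt\thn_{\fa x}$ would make $\wsize(\cal G_{new})\,\heta^{\frac12|E(\cal G_{new})|}$ strictly larger than $\wsize(\cal G)\,\heta^{\frac12|E(\cal G)|}$ unless it is cancelled — and the sum zero property supplies exactly the compensating $\OO(\eta+\blam^{-1})=\OO(\blam^{-1})$. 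Moreover, the same $\blam$ must be beaten in the recollision case (the $Q$-graphs in \eqref{eq:largeQW} have weak size up to $\blam\,\wsize(T)$), where no sum zero property is available; there the extra $\blam^{-1}$ has to be extracted from the recollision structure itself (a merged vertex or an $S^{\pm}$ edge replacing a solid edge), a case your proposal does not address at all. Fixing the proof therefore requires (i) replacing plain monotonicity of $\wsize$ by the weighted quantity above with its evolving edge set, and (ii) verifying the invariant separately for the leading/star/higher-order/recollision outputs, using the coupling renormalization in the star case and the recollision-induced $\blam^{-1}$ in the last case.
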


The proof of Lemma \ref{mG-fe-size} is deferred to the next subsection. Here, we discuss briefly why its proof depends crucially on the molecule sum zero property established in \eqref{eq:vertex_sum_zero}. In fact, without using the $V$-expansions and the molecule sum zero property, the argument in \cite[Section 6]{BandIII} yields a weaker condition than \eqref{mG-fe-size}:
\be\label{mG-fe-size-weak} \wsize(\mathcal G_{\mathbf x}^{(\mu)})\le \blam^{q/2-1}\heta^{q-1}.\ee
This will lead to a weaker continuity estimate than \eqref{eq_bound_Gx0} with $\conc(K,\lambda,\eta)$ replaced by 
\begin{equation}
\label{eq:def-conc_old}
\wt \conc(K,\lambda,\eta)%:=\left[\left(\frac{W}{\lambda^2}\frac{1}{W^2 K^{d-2}}+ \frac{1}{N\eta}+ \frac{1}{K^{d/2}} \sqrt{\frac{W}{\lambda^2}\frac{L^2}{W^2N\eta} }\right)\left( \frac{W^2}{\lambda^4} \frac{1}{W^4 K^{d-4}} +\frac{W}{\lambda^2} \frac{L^2}{W^2N\eta} \right)\right]^{\frac{1}{2}} 
= \blam^{1/2}\conc(K,\lambda,\eta)\,.
\end{equation}
To help readers understand the statement of \Cref{mG-fe}, the weaker bound \eqref{mG-fe-size-weak}, and how to improve from \eqref{mG-fe-size-weak} to \eqref{mG-fe-size}, we provide a heuristic discussion based on some illustrating examples. These examples describe a typical class of deterministic graphs that will naturally appear in the expansions of $\cal G_{\bx, \mathbf \Sigma}$. The leading contribution comes from the case $q=p$, i.e., all external vertices take different values. 
%The reader can also use this example to assist with the understanding of the proofs of Lemmas \ref{mGep} and \ref{gvalue_continuity} below.
The following figure shows a molecular graph with $p=2s+2$, where vertices $x_i$ denote external molecules and vertices $a_i$ denote internal molecules. 
\be\label{eq:exampletree1}
% \parbox[c]{0.30\linewidth}{\includegraphics[width=\linewidth]{star5.pdf}}
\parbox[c]{8cm}{\includegraphics[width=\linewidth]{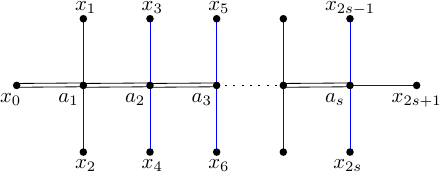}}
\ee
Here, the black double-line edges represent the diffusive edges in the black net, and the blue solid lines represent the diffusive or free edges in the blue net. 
Notice that among the two blue edges connected with an internal molecule $a_i$, $i \in \qqq{1, s-1}$, one of them is redundant for the generalized doubly connected property, while $a_s$ is connected with two redundant blue edges. Thus, we can choose $x_1, x_3 ,\ldots, x_{2s+1} $ as special external molecules among the $p=2s+2$ external molecules in the above graph so that properties (a) and (b) of \Cref{mG-fe} hold. 
To count the weak scaling size of the above graph, we assume that all molecules have no internal structures, i.e., each of them contains only one vertex. Then, in \eqref{eq:exampletree1}, there are $s$ internal vertices, $s$ diffusive edges, and $(2s+1)$ blue edges, each of which has a weak scaling size bounded by $\heta$. Thus, the size of \eqref{eq:exampletree1} is bounded by 
\smash{$\blam^{s} \heta^{2s+1}=\blam^{p/2-1}\heta^{p-1}$} as in \eqref{mG-fe-size-weak} with $q=p$.  %(It is not hard to see that even if the molecules in \eqref{eq:exampletree1} have nontrivial internal structures, this bound still holds.)

We now discuss how a graph as in \eqref{eq:exampletree1} can be generated in our expansions.  In our expansion strategy, new internal molecules are generated when we apply the $V$-expansion as in Cases 1 and 3 of \Cref{strat_global_weak2}. For example, suppose at some step, we need to expand a graph \eqref{eq:regularG} with $\fa,\fb_1,\fb_2$ being vertices in the molecules $x_0,x_1,x_2$. Then, the main issue is to deal with the graph $\cal G_0$ in \eqref{eq:prior_star}, drawn as the first graph in the following picture, where $a_1$ is the internal molecule containing vertex $x$ in \eqref{eq:prior_star}:  
\be\label{eq:exampletree_gen}
\parbox[c]{15cm}{\includegraphics[width=\linewidth]{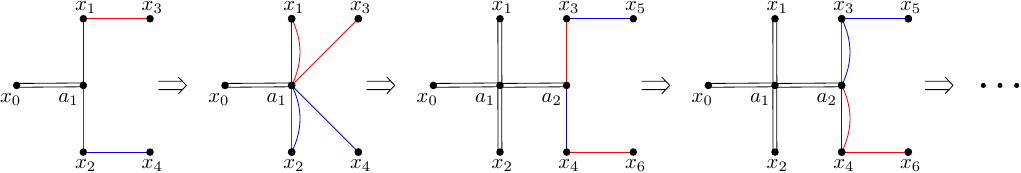}}
\ee
Here, we did not draw the full molecular graph or include the $Q$-label; instead, we have displayed only the edges relevant to the discussion below. 
Then, by applying $Q$ and local expansions, we obtain new graphs where the red solid edge $(x_1,x_3)$ and the blue solid edge $(x_2,x_4)$ are pulled to the molecule $a_1$, as shown in the second graph of \eqref{eq:exampletree_gen}. If the two solid edges between $a_1$ and $x_1$ are paired, they can become a diffusive edge after applying a further $V$-expansion. Similarly, the solid edges between $a_1$ and $x_2$ may also become a diffusive edge. Next, we apply the $V$-expansion to the $T$-variable formed by the solid edges $(a_1,x_3)$ and $(a_1,x_4)$. The main issue is to deal with the third graph of \eqref{eq:exampletree_gen} where the solid edges $(a_2,x_3)$ and $(a_2,x_4)$ have a $Q_y$ label with $y$ belonging to the molecule $a_2$. Applying $Q$ and local expansions again yields some new graphs of the form shown in the fourth graph of \eqref{eq:exampletree_gen}. Continuing this process will lead to the molecular graph depicted in  \eqref{eq:exampletree1}. 
Now, a key observation is that in this process, each time we generate a new molecule using the $V$-expansion in \Cref{thm:Vexp}, we acquire an additional small factor $\OO(\eta+\blam^{-1})=\OO(\blam^{-1})$ due to the molecule sum zero property \eqref{eq:vertex_sum_zero} and the estimate \eqref{eq:vertex_sum_zero_diff}. This means that each internal molecule is associated with a $\blam^{-1}$ factor, allowing us to cancel the $\blam^{q/2-1}$ factor in \eqref{mG-fe-size-weak} and get the improved estimate \eqref{mG-fe-size}.

In general, we believe that the \emph{leading contributions} to \eqref{EGx} should come from deterministic graphs whose molecular graphs are \emph{generalized doubly connected trees}, where each internal molecule has degree $\ge 4$, and all leaves are external molecules. Moreover, every internal molecule should be associated with a $\blam^{-1}$ factor. (However, we do not fully utilize these facts in this paper and \cite{BandIII} because proving them is quite challenging. We instead rely on two weaker properties, i.e., property (b) of \Cref{mG-fe} and the weak scaling size condition \eqref{mG-fe-size}.) 
For example, in the second graph of \eqref{eq:exampletree_gen}, if the blue edge $(a_1,x_1)$ and red edge $(a_1,x_3)$ are paired into a $T$-variable while the red edge $(a_1,x_2)$ and blue edge $(a_1,x_4)$ are paired into a $T$-variable, then expanding them may produce two branches emanating from molecule $a_1$ as shown in \Cref{fig:treeg3}. We can check that this graph is generalized doubly connected, with the dashed lines denoting the redundant edges in the blue net. Counting these dashed lines and the weak scaling size, we also see that the property (b) in \Cref{mG-fe} holds and its weak scaling size satisfies \eqref{mG-fe-size-weak}. With $V$-expansions, we can get an extra factor $\blam^{-1}$ at each internal molecule, which allows us to get \eqref{mG-fe-size}. 
 
  \begin{figure}[htbp]
\centering
\includegraphics[width=0.45\textwidth]{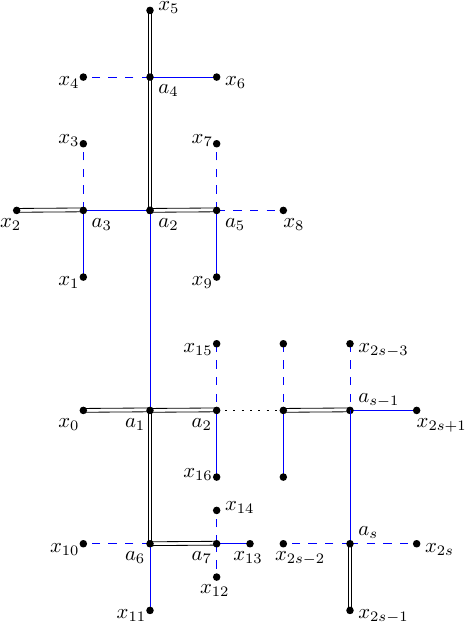} 
\caption{A tree-like molecular graph, where $x_i$'s denote external molecules and $a_i$'s denote internal molecules.  The black double-line edges represent the diffusive edges in the black net, and the blue lines represent the diffusive or free edges in the blue net. Here, the dashed lines represent the redundant edges in the generalized doubly connected property.}
\label{fig:treeg3}
\end{figure}

%For example, we may get more complicated molecule graphs from our expansions as shown in \Cref{fig:tree3} below.  {\cor (discuss how to get these graphs from expansions)}. In this graph, similar to \eqref{eq:exampletree1}, one can check that properties (a)--(c) of \Cref{mG-fe} hold and its size satisfies \eqref{mG-fe-size-weak}. With $V$-expansions, we can get an extra factor $\blam^{-1}$ at each internal molecule, which allows us to get \eqref{mG-fe-size}. 

%Combining \Cref{mG-fe} and \Cref{lem:mG-fe-size}, we can conclude \Cref{gvalue_continuity2} using a similar argument as in the proof of \cite[Lemma 3.6]{BandIII}. Here, we give an outline of the proof without presenting all the details.  

\begin{proof}[\bf Proof of \Cref{gvalue_continuity2}]
With Lemmas \ref{mG-fe} and \ref{lem:mG-fe-size}, the proof of \Cref{gvalue_continuity2} uses the same argument as shown in Section 6 of \cite{BandIII}. Hence, we only give a sketch of the proof without presenting all the details. 

By \Cref{mG-fe}, to show \eqref{eq_bound_Gx}, it suffices to prove that 
\be\label{eq_bound_Gx2}
\frac{1}{|\cal I|^p}\sum_{x_i \in \cal I: i =1,\ldots, p}\mathcal G_{\mathbf x}^{(\mu)} \prec \conc(K,\lambda,\eta)^{p-1}
\ee
for each deterministic graph $\mathcal G_{\mathbf x}^{(\mu)}$ satisfying \eqref{mG-fe-size} and properties (a)--(c) in \Cref{mG-fe}. For this purpose, we first bound \smash{$\mathcal G_{\mathbf x}^{(\mu)}$} by a new graph where the coefficient is bounded by its absolute value, the ghost and free edges remain unchanged, and the remaining graph is bounded (in the sense of absolute value) as follows: 
\begin{itemize}
%\item A ghost (resp.~free) edge is still bounded by a ghost (resp.~free) edge. 
\item[(1)] For each molecule in $\mathcal G_{\mathbf x}^{(\mu)}$, we choose a representative vertex in it, called \emph{center} of the molecule. For an external molecule containing an external vertex $x_i$, let $x_i$ be its center. If an external molecule contains multiple external vertices $x_{k_1}, \ldots, x_{k_r}$, then we choose one of them as the center. 

% for $1\le k_1\le k_2\le \ldots \le k_r\le q$, then we let $x_{k_1}$ be the center.  

\item[(2)] We define a ``pseudo-waved edge" between vertices $x$ and $y$ as a waved edge representing  the factor $\mathcal W_{xy}:=W^{-d}\mathbf 1({|x-y|\le W(\log W)^2})$. Now, we bound the waved edges within molecules as follows. First, each waved edge between vertices $x$ and $y$ is bounded by a factor $\OO(W^{-d}\mathbf 1(|x-y|\le W(\log W)^{1+\tau}))$ for a constant $\tau\in (0,1)$. By the definition of $S$ and the bound \eqref{S+xy}, this leads to an error of order $\OO(W^{-D})$ for any large constant $D>0$. 
Second, suppose $x$ and $y$ belong to a molecule $\cal M$ with center $x_0$. We can further bound the factor $W^{-d}\mathbf 1(|x-y|\le W(\log W)^{1+\tau})$ by pseudo-waved edges between them and $x_0$, i.e., $W^d\cdot \mathcal W_{xx_0}\cdot \mathcal W_{yx_0}$. Finally, if there are multiple pseudo-wave edges between vertices $x$ and $x_0$, we only keep one of them, while the rest of them provide a power of $W$ that can be included into the coefficient. 

\item[(3)] By \eqref{thetaxy}, a diffusive edge between vertices $x$ and $y$ is bounded by a ``pseudo-diffusive edge" that represents the factor $B_{xy}$. Suppose $x$ and $y$ belong to molecules $\cal M_1$ and $\cal M_2$ with centers $x_0$ and $y_0$ ($\cal M_1$ and $\cal M_2$ can be the same molecule). Then, up to an error of order $\OO(W^{-D})$, we can further bound the $B_{xy}$ edge by 
$$ (\log W)^{2d}B_{x_0 y_0}\mathbf 1(|x-x_0|\le W (\log W)^2)\mathbf 1(|y-y_0|\le W(\log W)^2).$$
For a labeled diffusive edge $\zthn_{xy}(\Sele)$ or $(\zthn\Sele_1\zthn\Sele_2\zthn \cdots \zthn \Sele_k \zthn)_{xy}$ as in \eqref{BRB}, we can bound it in a similar manner. However, in the latter case, we multiply the coefficient of the graph by the extra factor $\prod_{i=1}^k \sizeself(\Sele_i)$.
\end{itemize}
%Pseudo-waved edges give bounds for waved edges by \eqref{app compact f} and \eqref{S+xy}, and pseudo-diffusive edges give bounds for diffusive and labeled diffusive edges by \eqref{thetaxy} and \eqref{BRB}. 
%To state the main result, Lemma \ref{mGep}, of this subsection, we need to introduce a new concept of simple auxiliary graphs. 
	% \begin{definition}
	% 	A graph is a simple auxiliary graph if it only contains external atoms and the following few types of edges: %two types of silent edges:
	% 	\begin{itemize}
	% 		%\item dotted edges, which are arranged so that there is at most one dotted edge between every pair of atoms;
	% 		\item pseudo-waved edges: a waved edge between $x$ and $y$ represents a $W^{-d}\mathbf 1_{|x-y|\le W^{1+\tau}}$ factor for an arbitrary small constant $\tau>0$; % and large constant $D>0$ {\cor what $D$?}; 
	% 		\item pseudo-diffusive edges: a double-line edge between $x$ and $y$ represents a $B_{xy}$ factor; 
	% 		\item silent diffusive edges: a green double-line edge between $x$ and $y$ represents a factor
	% 		$\wt B_{xy}:= W^{-4}\langle x-y\rangle^{-(d-4)};$
	% 		\item free edges;
	% 		\item silent free edges: a green solid edge between $x$ and $y$ represents a factor 
	% 		$\frac{1}{N\eta} \frac{L^2}{W^2}.$
	% 	\end{itemize}
	% 	All these edges are counted as edges of scaling order $2$. 
	% \end{definition} 
After the above operations, we can bound $\mathcal G_{\mathbf x}^{(\mu)}$ by a new deterministic graph: 
\be\label{eq:Ggamma1}\mathcal G_{\mathbf x}^{(\mu)}\prec (\log W)^{C} {\mathcal G}_{\mathbf x}'+\OO(W^{-D}).\ee
%where $C$ is a large constant, that does not depend on $\tau$. 
Then, we sum over all internal vertices in ${\mathcal G}_{\mathbf x}'$ that are not centers of molecules and get that:
\be\label{eq:Ggamma2}{\mathcal G}_{\mathbf x}' \prec  (\log W)^{C}c''(\mathcal G_{\mathbf x}^{(\mu)})\cdot {\mathcal G}_{\mathbf x}'',\ee 
where $c''(\mathcal G_{\mathbf x}^{(\mu)})$ is a positive coefficient coming from the coefficient of $\mathcal G_{\mathbf x}^{(\mu)}$ and the above operations (1)--(3). ${\mathcal G}_{\mathbf x}''$ is a deterministic graph consisting of external vertices, internal vertices that are centers of internal molecules,  pseudo-diffusive, free, or ghost edges between them, as well as potential pseudo-waved edges between external vertices. This graph satisfies (a)--(c) of \Cref{mG-fe}. Moreover, if we define $\wsize({\mathcal G}_{\mathbf x}'')$ as (note that we do not count the factors from the ghost edges as in \eqref{eq:def-size})
\begin{align*}
    \wsize({\mathcal G}_{\mathbf x}'') =&\heta^{\#\{\text{free edges}\}+\#\{\text{pseudo-diffusive edges}\}} W^{-d (\#\{\text{pseduo-waved edges}\}-\#\{\text{internal vertices}\})}, %\left( \frac{W^2}{L^2}\right)^{\#\{\text{ghost edges}\}}
\end{align*} 
% \begin{align*}
%     \psize({\mathcal G}_{\mathbf x}'') =&\left( \frac{1}{N\eta}\frac{L^2}{W^2}\right)^{\#\{\text{free edges}\}} \left(\frac{\blam}{W^d}\right)^{\#\{\text{pseudo-diffusive edges}\}} W^{d (\#\{\text{internal atoms}\}-\#\{\text{pseduo-waved edges})\}}, %\left( \frac{W^2}{L^2}\right)^{\#\{\text{ghost edges}\}}
% \end{align*}
then it satisfies that
$$c''(\mathcal G_{\mathbf x}^{(\mu)}) \wsize({\mathcal G}_{\mathbf x}'') \prec \wsize(\mathcal G_{\mathbf x}^{(\mu)})\lesssim \heta^{q-1}. $$

Next, we bound the summations over the internal vertices in ${\mathcal G}_{\mathbf x}''$. For this purpose, we introduce two new types of edges: 
\begin{itemize}
    \item a ``silent diffusive edge" between vertices $x$ and $y$ represents a factor		
  \be\label{eq:sil_diff}\wt B_{xy}:=\frac{\blam}{W^{4}\langle x-y\rangle^{d-4}};\ee

 \item a ``silent free edge" between vertices $x$ and $y$ represents a factor 
			$\frac{1}{N\eta} \frac{L^2}{W^2}.$
\end{itemize}
Like free and ghost edges, a silent free edge is just a scalar and we introduce it to help with analyzing the structures of graphs.   
% Silent pseudo-diffusive and free edges come from summations over internal atoms in the deterministic graphs obtained from complete expansions. Essentially, they come from the following estimates:
	% $$\sum_{w}B_{xw}B_{wy}\lesssim \wt B_{xy},\quad \sum_{w}B_{xw}\frac{1}{N\eta}\lesssim \frac{1}{N\eta} \frac{L^2}{W^2}.$$
%We first sum over the internal atoms in $\mathcal G_{\mathbf x}^{(\mu)}$. At this step, the nontrivial part is summation over diffusive edges between different molecules, which we shall refer to as summation over internal molecules below. 
With the generalized doubly connected property of ${\mathcal G}_{\mathbf x}''$, we can choose a collection of blue trees such that every internal vertex connects to an external vertex through a \emph{unique path} on a blue tree. These blue trees are disjoint, each of which contains an external vertex as the root. Then, we will sum over all internal vertices \emph{from leaves of the blue trees to the roots}. In bounding every summation, we use the following two estimates with $k\ge 1$ and $r\ge 0$: 
\be\label{keyobs4.3}
\begin{split}
& \sum_{x } \prod_{j=1}^k B_{ x y_j}\cdot \prod_{s=1}^r \wt B_{ x z_s} \cdot B_{x a} \\ 
&\prec \blam \sum_{l=1}^k \prod_{j: j\ne l}B_{y_j y_{l}} \cdot \Big( \wt B_{y_l a} \prod_{s=1}^r \wt B_{z_s a}+ \wt B_{y_l a} \prod_{s=1}^r \wt B_{z_s y_l} + \sum_{t=1}^r  \wt B_{y_l z_t}  \wt B_{z_t a} \prod_{s:s\ne t} \wt B_{z_s z_t} \Big) ,
\end{split}
\ee
\be\label{keyobs4.3_add}
\sum_{x} \prod_{j=1}^k B_{ x y_j}\cdot \prod_{s=1}^r \wt B_{ x z_s} \prec \blam \frac{L^2}{W^2}\sum_{l=1}^k \prod_{j: j\ne l}B_{y_j y_{l}} \cdot \Big(  \prod_{s=1}^r \wt B_{z_s y_l} + \sum_{t=1}^r  \wt B_{y_l z_t}  \prod_{s:s\ne t} \wt B_{z_s z_t} \Big) .
\ee
The LHS of \eqref{keyobs4.3} is a star graph consisting of $k$ pseudo-diffusive edges (where at least one of them is in the black tree), a pseudo-diffusive edge in the blue tree, and $r$ silent pseudo-diffusive edges connected with $x_i$, while every graph on the RHS is a connected graph consisting of $(k-1)$ pseudo-diffusive edges and $(r+1)$ silent pseudo-diffusive edges (together with an additional $\blam$ factor). The estimate \eqref{keyobs4.3_add} can be applied to the case where the blue edge is a ghost or free edge. (Of course, there may be multiple free edges connected with $x$, in which case we can still use \eqref{keyobs4.3} and \eqref{keyobs4.3_add} by multiplying some $(N\eta)^{-1}$ factors with them.) After applying \eqref{keyobs4.3_add}, we either lose a ghost edge or change one free edge to a silent free edge (together with an additional $\blam$ factor). 

Summing over all internal vertices in ${\mathcal G}_{\mathbf x}''$ from leaves of the blue trees to the roots and applying \eqref{keyobs4.3} and \eqref{keyobs4.3_add} to bound each summation, we obtain that 
\begin{equation}\label{eq:bound_EGx}
c''(\mathcal G_{\mathbf x}^{(\mu)}){\mathcal G}_{\mathbf x}'' \prec  \sum_\gamma c_\gamma \wt {\mathcal G}_{\mathbf x}^{(\gamma)} + \OO(W^{-D})\,,
\end{equation}
where the RHS is a sum of $\OO(1)$ many deterministic graphs $\wt {\mathcal G}_{\mathbf x}^{(\gamma)}$ consisting of external vertices and pseudo-diffusive, silent diffusive, free, silent free, and pseudo-waved edges between them, and $c_\gamma$ denotes the coefficient. Moreover, these graphs and coefficients satisfy the following properties: 
\begin{itemize}
\item[(i)] Suppose that $t$ of the $q$ external vertices are centers of external molecules. Without loss of generality, assume that $x_{1},\ldots, x_{t}$ are these centers. Among these $t$ vertices, there are $k_s\ge \lceil t/2\rceil$ special vertices, each connected to a \emph{unique pseudo-diffusive/free edge}. We call these $k_s$ pseudo-diffusive/free edges associated with the $k_s$ special vertices as special edges. 

\item[(ii)] We define $\wsize(\wt {\mathcal G}_{\mathbf x}^{(\gamma)})$ as 
\begin{align}\label{eq:cgamma0}
    \wsize(\wt {\mathcal G}_{\mathbf x}^{(\gamma)}) =&\heta^{Fr+SF+PD+SD} W^{-(q-t)d}, 
\end{align}
% \begin{align*}
%     \size(\wt {\mathcal G}_{\mathbf x}^{(\gamma)}) =&\left( \frac{1}{N\eta}\frac{L^2}{W^2}\right)^{\#\{\text{free and silent free edges}\}} \left(\frac{\blam}{W^d}\right)^{\#\{\text{pseudo-diffusive and silent diffusive edges}\}} W^{-(q-r)d}, 
% \end{align*} 
where $Fr$, $SF$, $PD$, and $SD$ denote the numbers of free, silent free, pseudo-diffusive, and silent diffusive edges in \smash{$\wt {\mathcal G}_{\mathbf x}^{(\gamma)}$}. Then, for every $\gamma$, we have that 
\be\label{eq:cgamma} c_\gamma \wsize (\wt {\mathcal G}_{\mathbf x}^{(\gamma)}) \lesssim c''(\mathcal G_{\mathbf x}^{(\mu)}) \wsize({\mathcal G}_{\mathbf x}'') \prec \wsize(\mathcal G_{\mathbf x}^{(\mu)})\lesssim \heta^{q-1} .\ee

%In each connected component of $\cal G_{\mathbf x}^{(\mu,\gamma)}$, 
% 			If $\cal G_{\mathbf x}^{(\mu,\gamma)}$ contains $k \geq 2$ non-isolated atoms (i.e., atoms which have at least one neighbor), then there are at least $\lceil k/2\rceil$ special atoms such that each of them is connected with a different non-silent edge.
			
% \item[(c)] Every $\mathcal G_{\mathbf x}^{(\mu,\gamma)}$ satisfies $c_\gamma(W,L)\size(\mathcal G_{\mathbf x}^{(\mu,\gamma)})\le W^{-2\soe(p-t)}$ under the definition \eqref{eq:def-size}, where $t$ denotes the number of connected components in $\mathcal G_{\mathbf x}^{(\mu,\gamma)}$.
\end{itemize}
The proof of \eqref{eq:bound_EGx} and these two conditions is similar to that in Section 6.1 of \cite{BandIII}, so we omit the details. 

Finally, we sum over the external vertices $x_1,\ldots, x_q$ in $\wt {\mathcal G}_{\mathbf x}^{(\gamma)}$. For the $q-t$ external vertices $x_{t+1},\ldots, x_q$ that are not centers, their averages provide a $K^{-(q-t)d}$ factor. 
%\end{proof}
% \subsection{Proof of Lemma \ref{gvalue_continuity}}
% In this subsection, we complete the proof of Lemma \ref{gvalue_continuity} with Lemma \ref{mGep}. We first decompose the sum over $\bx\in \cal I^p$ into different cases according to whether every pair of atoms $x_i$ and $x_j$ take the same value or not. We fix one case and identify atoms that take the same value. This gives a connected graph with $1\le q\le p$ external atoms. Without loss of generality, we denote these external atoms by $\wt\bx= \{x_i: 1\le i \le q\}$ and the graph by $\wt {\cal G}_{\wt\bx}$. Applying Lemma \ref{mGep}, we obtain that \be\label{EGx2.5}
% \E \wt {\cal G}_{\wt\bx} \prec  \sum_{\gamma}c_\gamma(W,L) {\cal G}_{\wt\bx}^{(\gamma)}+ \OO(W^{-D})\, ,
% \ee
% where every $\mathcal G_{\wt{\mathbf x}}^{(\gamma)}$ is a connected simple auxiliary graph satisfying the properties (b) and (c) in Lemma \ref{mGep} with $t=1$. By the property (c), we have that
% \be\label{cgamma}
% c_\gamma(W,L) \le  W^{2\soe (k_\gamma-p+1)},\quad  k_\gamma:=\#\{\text{edges in } {\cal G}_{\wt\bx}^{(\gamma)}\}.
% \ee
% It remains to bound 
% \be\label{eq_reduce_to_aux}
% \frac{1}{|\cal I|^q}\sum_{x_i \in \cal I, i \in [q]} {\cal G}_{\wt\bx}^{(\gamma)},\quad [q]:=\{1,\cdots, q\}. 
% \ee
For the remaining $t$ vertices, each of them is connected with at least one silent/non-silent pseudo-diffusive or free edge. Thus, its average over $\cal I$ provides at least a factor
\be\label{bad factor}
\frac{\blam}{W^4 K^{d-4}}+ \frac{1}{N\eta}\frac{L^2}{W^2}.
\ee
Furthermore, we will see that each average over an external vertex connected with a non-silent edge contributes a better factor
\be\label{good factor}
\frac{\blam}{W^2 K^{d-2}}+ \frac{1}{N\eta} + \frac{1}{K^{d/2}} \sqrt{\frac{\blam}{N\eta}\frac{L^2}{W^2}}.
\ee
By property (i) above, there are $k_s\ge \lceil t/2\rceil$ special external vertices, each associated with a unique special non-silent edge. Therefore, on average, each external vertex $x_i$, $i\in \qqq{1,t}$, provides at least a factor $\conc(K,\lambda,\eta)$.  

To rigorously justify the above argument, we estimate the averages over the special external vertices. We use the following two estimates to bound the average over such an external vertex $x$ connected with a special diffusive edge: for $k\ge 1$ and $r\ge 0$,  
\begin{align}
& \frac{1}{|\cal I|}\sum_{x\in \cal I} B_{xy_1}^2 \prod_{j=2}^k B_{ x y_j}\cdot \prod_{s=1}^r \wt B_{ x z_s} \prec  \frac{\blam}{W^{d}}\frac{\blam}{ K^d} \sum_{l=1}^k  \sum_{t=1}^r \prod_{j\in \qqq{k}\setminus \{l\}}B_{y_j y_{l}} \cdot \wt B_{ y_1 z_t} \cdot   \prod_{s:s\ne t} \wt B_{z_s z_t}  ,\label{keyobs4}\\
& \frac{1}{|\cal I|}\sum_{x\in \cal I} \prod_{j=1}^k B_{ x y_j}\cdot \prod_{s=1}^r \wt B_{ x z_s}  \prec \frac{\blam}{W^2 K^{d-2}} \sum_{l=1}^k  \sum_{t=1}^r \prod_{j\in \qqq{k}\setminus \{l\}}B_{y_j y_{l}} \cdot \wt B_{y_l z_t} \cdot   \prod_{s:s\ne t} \wt B_{z_s z_t} .\label{keyobs4_add}
\end{align}
Here, we only include the pseudo-diffusive and silent diffusive edges connected with $x$. If there are free or silent free edges connected with $x$, we can still use \eqref{keyobs4} and \eqref{keyobs4_add} by multiplying them with the corresponding factors. We apply the above two estimates in two different cases. In the first case, suppose the special diffusive edge associated with $x$ is paired with the special diffusive edge associated with $y_1$. Then, we use \eqref{keyobs4} to bound the average over $x$ by a factor $\blam^2 W^{-d}K^{-d}$ times a sum of new graphs, each of which is still connected and has \emph{two fewer} special vertices. If the first case does not happen, then we use \eqref{keyobs4_add} to bound the average over $x$ by a factor $\blam W^{-2}K^{-(d-2)}$ times a sum of new graphs, each of which is still connected and has \emph{one fewer} special vertex. 

Second, we sum over special external vertices connected with non-silent free edges. Given such an external vertex $x$, we use (1) the trivial identity $|\cal I|^{-1}\sum_{x\in \cal I}1=1$ if $x$ is only connected to silent/non-silent free edges, (2) the estimate \eqref{keyobs4_add} if $x$ is connected to at least one pseudo-diffusive edge, or (3) the following estimate if $x$ is connected to silent diffusive edges but no pseudo-diffusive edges:
%only and silent/non-silent free edges:
\be\label{keyobs5_add}
\frac{1}{|\cal I|}\sum_{x\in \cal I} \prod_{s=1}^r \wt B_{ x z_s}  \prec \frac{\blam}{W^4 K^{d-4}}   \sum_{t=1}^r \prod_{s:s\ne t} \wt B_{z_s z_t} .
\ee
In this way, we can bound the average over $x$ by a factor $(N\eta)^{-1}$ times a sum of new graphs, each of which is still connected and has one fewer special external vertex. 

Third, after summing over all special external vertices, we then sum over the non-special external vertices one by one using $|\cal I|^{-1}\sum_{x\in \cal I}1=1$ or the estimate \eqref{keyobs5_add}. Each summation yields at least a factor \eqref{bad factor} times a sum of new connected graphs. Finally, the average over the last vertex is equal to $1$.  

Overall, after summing over all external vertices $x_1,\ldots, x_t$, we can obtain that 
\begin{align}
 \frac{1}{|\cal I|^q}\sum_{x_i \in \cal I: i \in \qqq{1,q}} \wt{\cal G}_{\bx}^{(\gamma)} 
&\prec \heta^{(Fr+SF+PD+SD)-(t-1)} \left(\frac{1}{K^d}\right)^{q-t}\left( \frac{\blam}{K^d}\right)^{k_1} \left( \frac{\blam}{W^2K^{d-2}} + \frac{1}{N\eta}\right)^{k_s-2k_1}  \nonumber\\
&\quad \times \left( \frac{\blam}{W^4K^{d-4}} + \frac{1}{N\eta}\frac{L^2}{W^2}\right)^{t-1-(k_s-k_1)}, \label{eq_reduce_to_aux2}
\end{align}
where $\lceil t/2\rceil \le k_s \le t-1$ is the number of special external vertices and $0\le k_1\le \lfloor k_s/2\rfloor$ is the number of times that \eqref{keyobs4} has been applied.  
Here, the first factor on the RHS comes from $(Fr+SF+PD+SD)-(t-1)$ silent/non-silent pseudo-diffusive and free edges that become self-loops during the summation (for example, the $\blam/W^{d}$ factor in \eqref{keyobs4} comes from a pseudo-diffusive edge that becomes a self-loop in the summation), all of which are bounded by $\heta$. The factor $K^{-d(q-t)}$ results from averaging over the $q-t$ external vertices that are not centers of molecules. The third and fourth factors come from averages over special external vertices, while the last factor on the RHS is due to averages over the remaining non-special external vertices. 
Together with \eqref{eq:cgamma0} and \eqref{eq:cgamma}, this leads to 
\begin{align*}
\frac{1}{|\cal I|^p}\sum_{x_i \in \cal I: i \in \qqq{p}}c_\gamma \wt{\mathcal G}_{\mathbf x}^{(\gamma)} &\prec \frac{1}{|\cal I|^{p-q}} \left( \frac{\blam}{K^d}\right)^{q-t+k_1} \left( \frac{\blam}{W^2K^{d-2}} + \frac{1}{N\eta}\right)^{k_s-2k_1} \left( \frac{\blam}{W^4K^{d-4}} + \frac{1}{N\eta}\frac{L^2}{W^2}\right)^{t-1-(k_s-k_1)}\\
&\prec \sum_{k \in \qqq{p}} \sum_{ \lceil p/2\rceil \le k\le p-1}  \left( \frac{\blam}{W^2K^{d-2}} + \frac{1}{N\eta}+ \frac{1}{K^{d/2}} \sqrt{\frac{\blam}{N\eta}\frac{L^2}{W^2}}\right)^{k}  \\ 
&\quad \times \left( \frac{\blam}{W^4K^{d-4}} + \frac{1}{N\eta}\frac{L^2}{W^2}\right)^{p-1-k}  \prec \conc(K,\lambda,\eta)^{p-1}.
% \\
% &\prec \left[\left( \frac{1}{W^2K^{d-2}} + \frac{1}{N\eta}+ \frac{1}{K^{d/2}} \sqrt{\frac{1}{N\eta}\frac{L^2}{W^2}}\right) \left( \frac{1}{W^4K^{d-4}} + \frac{1}{N\eta}\frac{L^2}{W^2}\right)\right]^{\frac{p-1}{2}}.
\end{align*}	
Plugging it into \eqref{eq:bound_EGx} and further into \eqref{eq:Ggamma1} and \eqref{eq:Ggamma2}, we conclude \eqref{eq_bound_Gx} since $D$ is arbitrary.
\end{proof}

\subsection{Proof of \Cref{lem:mG-fe-size}}

In this subsection, we prove \eqref{mG-fe-size} by keeping track of the change of the weak scaling size of the graphs from the expansion process. We will assign a subset of solid edges, denoted by $E(\cal G)$, to each graph $\cal G$ from the expansion. Moreover, we will see that these subsets can be chosen in a way such that the following quantity always decreases during our expansion process:
\be\label{eq:psizeq}\wsize(\cal G)\cdot \heta^{\frac{1}{2}|E(\cal G)|}.\ee
%$$\cor \psize(\cal G)\cdot \heta^{\frac{1}{2}|E(\cal G)|-\ell(E(\cal G))},$$
%where $\ell(E(\cal G))$ is an integer that depends on the edges in $E(\cal G)$ (roughly speaking, it denotes the number of \emph{loops formed by the edges in $E(\cal G)$}). 
For the initial graph $\cal G_{\bx,\mathbf \Sigma}$, we choose $E(\cal G_{\bx,\mathbf \Sigma})$ as a subset of edges between external vertices with two edges removed. 
Roughly speaking, the subset $E({\cal G})$ evolves during the expansion process as follows. If an edge in $E(\cal G)$ is ``pulled" in some sense, we remove it from $E(\cal G)$. Conversely, in the $V$-expansion \eqref{eq_Vexp}, if a graph in \smash{$\mathcal A_{x,\fb_1\fb_2}$} satisfies \eqref{eq:sizesA} but fails to satisfy a stronger bound
$$\wsize\Big(\sum_x\wt\thn_{\fa x}\cal G'(x,\fb_1,\fb_2)\Big) \lesssim \blam\wsize(\cal G)\cdot \heta^{1+\frac{1}{2}k_{\mathsf p}(\cal G'(x,\fb_1,\fb_2))},$$
then it must contain a new solid edge within the molecule containing $x$, and we add this edge to the subset $E(\cal G)$. 
For the initial graph, we select $E(\cal G_{\bx,\mathbf \Sigma})$ to be a subset consisting of $(q-2)$ of the $q$ solid edges, while for each deterministic graph, we trivially have \smash{$E(\cal G_{\bx}^{(\mu)})=\emptyset$}. This gives the inequality  
\be\label{eq:psizeqdet}\wsize(\cal G_{\bx}^{(\mu)})\lesssim \wsize(\cal G_{\bx,\mathbf \Sigma})\cdot \heta^{\frac{1}{2}|E(\cal G_{\bx,\mathbf \Sigma})|}\lesssim \heta^{q-1}.\ee
We remark that this argument is not completely rigorous---in the following proof, we will only consider the quantity \eqref{eq:psizeq} for locally standard graphs, where the solid edges form loops. In particular, we will not specify $E(\cal G_{\bx,\mathbf \Sigma})$ for the initial graph.

\begin{remark}
Given a loop graph $\cal G$ with $q$ solid edges, we choose $E(\cal G)$ to be a subset of $(q-2)$ solid edges for the following reasons. When we continuously apply the first four terms on the RHS of \eqref{mlevelTgdef weak} to each external vertex in the loop graph, every time we replace a pair of $G$ and $\overline G$ edges with a diffusive/free edge and a $G/\overline G$ edge, the size of the graph decreases by a factor \smash{$\heta^{1/2}$}. Consequently, after the first $(q-2)$ steps, the size of the graph decreases by $\heta^{(q-2)/2}=\heta^{|E(\cal G)|/2}$. For the last two edges, they can be paired and replaced by a single diffusive edge, in which case the size of the graph remains unchanged. The following illustration depicts this process for a loop graph with $q=6$ edges: 
\begin{center}
\includegraphics[width=\textwidth]{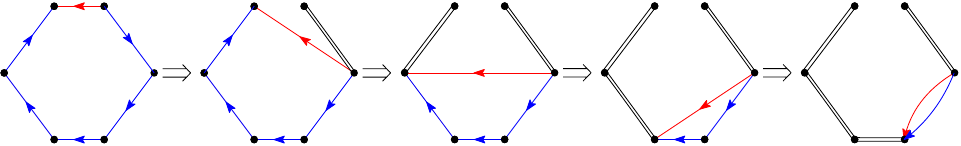}
\end{center}
Due to this observation, in the following proof, for a locally standard graph $\cal G$, we will choose $E(\cal G)$ such that 
\be\label{property_loop}
\text{each loop containing an edge in $E(\cal G)$ must include at least 2 solid edges that are not in $E(\cal G)$.} 
\ee
\end{remark}

We now look at the expansion process. We first consider the initial local expansion, where we expand the graph $\cal G_{\bx,\mathbf \Sigma}$ into a sum of $\OO(1)$ many locally regular graphs (where all solid edges are between internal vertices), $Q$-graphs, and graphs with negligible weak scaling size. %(Note that in the locally regular graphs, every external atom is only connected with deterministic edges.)
In each locally regular graph $\cal G$, we denote by $n_a$ the number of internal vertices, $n_W$ the number of waved edges, and $t\in \qqq{1,q}$ the number of external molecules. Since two external vertices belong to the same molecule if and only if they are connected through waved edges, we can derive that 
\be\label{eq_nwna} n_W- n_a = q-t.\ee
Next, we can choose $(t-1)$ solid edges in $\cal G$ that form a spanning tree of the $t$ external molecules, and we denote the subset of these edges by $E_0(\cal G)$. We assume that these edges belong to $a$ loops of solid edges. Every such loop contains at least one solid edge that does not belong to $E_0(\cal G)$. Hence, we need to remove at most $a$ solid edges from $E_0(\cal G)$ to get $E(\cal G)$ so that property \eqref{property_loop} holds. Suppose we remove $a_0\le a$ edges from $E_0(\cal G)$. Then, using \eqref{eq_nwna} and the fact that $\cal G$ contains at least $(t-1+a)$ solid edges, we obtain  
$$\wsize(\cal G)\cdot \heta^{\frac{1}{2}|E(\cal G)|} \lesssim W^{-(q-t)d} \heta^{\frac{1}{2}(t-1+a)}\cdot \heta^{\frac{1}{2}(t-1-a_0)}\le \heta^{q-1}.$$

Now, given a locally regular graph $\cal G$ from the expansions, suppose we have chosen $E(\cal G)$ such that \smash{$\wsize(\cal G)\cdot \heta^{|E(\cal G)|/2} \lesssim \heta^{q-1}$}, and property \eqref{property_loop} holds. Then, we need to expand one of the $T$-variables in $\cal G$ according to \Cref{strat_global_weak2}. Suppose this $T$-variable is $T_{\fa,\fb_1\fb_2}$. Without loss of generality, assume that the $G$ and $\bar G$ edges (denoted by $e_1$ and $\bar e_1$) forming the $T$-variable do not belong to $E(\cal G)$. (Otherwise, we can find two solid edges $e_2$ and $e_3$ that do not belong to $E(\cal G)$ and lie in the loop containing $e_1$ and $\bar e_1$. In this case, we can remove $e_1$ and $\bar e_1$ from $E(\cal G)$ and substitute them with the edges $e_2$ and $e_3$.) We then apply the $V$-expansion in \Cref{thm:Vexp} to 
$$\cal G=\sum_{\fa,\fb_1,\fb_2}T_{\fa,\fb_1\fb_2}\Gamma(\fa,\fb_1,\fb_2)$$ 
by substituting the $\nonuni$ \eqref{mlevelTgdef weak} into $T_{\fa,\fb_1\fb_2}$. We divide the discussion into the following cases.

\medskip 
\noindent{\bf Case 1}: If we have replaced $T_{\fa,\fb_1\fb_2}$ by the first four terms on the RHS of \eqref{mlevelTgdef weak}, then in each new graph, say $\cal G_{new}$, we have a new diffusive or free edge between $\fa$ and the molecule $\cal M_{1}$ or $\cal M_2$, as well as a solid edge between between $\cal M_1$ and $\cal M_2$. Consequently, $\wsize(\cal G_{new})$ is smaller than $\wsize(\cal G)$ by at least \smash{$\heta^{1/2}$}. Moreover, the original loop (denoted by $\mathrm{L}_1$) of solid edges containing $e_1$ and $\bar e_1$ in $\cal G$ becomes a new loop (denoted by $\mathrm{L}_{new}$) in $\cal G_{new}$ with one fewer solid edge. We then remove one edge (if there is one) from $E(\cal G)$ that belongs to $\mathrm{L}_{new}$, while all the other edges in $E(G)$ remain in $E(\cal G_{new})$. This gives that 
\be\label{eq:GnewleG1}\wsize(\cal G_{new}) \heta^{\frac{1}{2}|E(\cal G_{new})|}\lesssim \wsize(\cal G) \heta^{\frac{1}{2}|E(\cal G)|}.\ee
If we have replaced $T_{\fa,\fb_1\fb_2}$ by the fifth term on the RHS of \eqref{mlevelTgdef weak}, then in each new graph $\cal G_{new}$, $\wsize(\cal G_{new})$ is smaller than $\wsize(\cal G)$ by at least \smash{$\blam\heta^2$}: $\blam$ comes from the summation of $x$ over \smash{$\wt\thn_{\fa x}$}, while the factor $\heta^2$ is due to the doubly connected property of \smash{$\cal D^{(\gamma)}_{x,\fb_1\fb_2}$}. Furthermore, the loop $\mathrm L_1$ becomes a new loop $\mathrm L_{new}$ with two fewer solid edges. Then, we remove two edges (if there are two) from $E(\cal G)$ that belong to $\mathrm L_{new}$, while all the other edges in $E(G)$ remain in $E(\cal G_{new})$. This gives that 
\be\label{eq:GnewleG2}\wsize(\cal G_{new})  \heta^{\frac{1}{2}|E(\cal G_{new})|}\lesssim \blam \heta \cdot \wsize(\cal G) \heta^{\frac{1}{2}|E(\cal G)|} \le \wsize(\cal G) \heta^{\frac{1}{2}|E(\cal G)|} .\ee

\medskip 
\noindent{\bf Case 2}: We consider the star graphs in \eqref{eq_Vexp}. We take the sum of star graphs \eqref{eq:Gpgamma} that correspond to a particular pairing $\cal P(E_p,\bar E_p)$:  
$$\cal G_{new}=\sum_{\fa,\fb_1,\fb_2} \sum_x \wt\thn_{\fa x}\cal G_{p,\gamma}^\star(x,\fb_1,\fb_2).$$
If an edge in $E(\cal G)$ is pulled in the star graph, then we remove it from $E(\cal G_{new})$; otherwise, we keep it. Using the \emph{sum zero property} \eqref{eq:vertex_sum_zero} and the estimate \eqref{eq:vertex_sum_zero_diff}, the coefficient of $\cal G_{new}$ is of order 
$$\cof(\cal G_{new})=\OO(\eta+\blam^{-1})=\OO(\blam^{-1}).$$ 
Thus, $\wsize(\cal G_{new})$ satisfies that   
$$ \wsize(\cal G_{new}) \lesssim \blam \cdot |\cof(\cal G_{new})| \heta^{\frac{1}{2}(|E(\cal G)|-|E(\cal G_{new})|)}\cdot \wsize(\cal G)= \heta^{\frac{1}{2}(|E(\cal G)|-|E(\cal G_{new})|)}\cdot \wsize(\cal G) ,$$
where $\blam$ comes from the summation of the diffusive edge $\wt\thn_{\fa x}$ over $x$, while the factor $\heta^{\frac{1}{2}(|E(\cal G)|-|E(\cal G_{new})|)}$ accounts for the decrease in the weak scaling size due to the $(|E(\cal G)|-|E(\cal G_{new})|)$ edges that have been pulled in the star graphs. Finally, consider any edge $e$ in $\cal G_{new}$ and the loop containing it. Removing $e$ from the loop creates a path between the two endpoints of $e$. This path either traverses two edges that are not in $E(\cal G)$ or passes through the center molecule $\cal M_x$ through two edges that have been pulled in the star graphs. Thus, the property \eqref{property_loop} still holds for $\cal G_{new}$.

\medskip 
\noindent{\bf Case 3}:  We consider the higher order graphs in $\sum_x \wt \thn_{\fa x}\mathcal A_{x,\fb_1\fb_2}$ from \eqref{eq_Vexp}, and they are not recollision graphs. Pick one of the graphs, denoted $\cal G_{new}$. Similar to Case 2, if an edge in $E(\cal G)$ is pulled in $\cal G_{new}$, then we remove it from $E(\cal G)$; otherwise, we keep it. This process results in a subset $E'(\cal G_{new})$. If $\wsize(\cal G_{new})$ satisfies  
\be\label{eq:smallerpsize} 
\wsize(\cal G_{new})\lesssim \blam \heta\cdot \wsize(\cal G)\heta^{\frac{1}{2}(|E(\cal G)|-|E'(\cal G_{new})|)} ,
\ee
then we set $E(\cal G_{new})=E'(\cal G_{new})$ and have that 
$$ \wsize(\cal G_{new}) \heta^{\frac{1}{2}|E(\cal G_{new})|}\le \blam \heta\cdot \wsize(\cal G)\heta^{\frac{1}{2}(|E(\cal G)|-|E(\cal G_{new})|)}\cdot \heta^{\frac{1}{2}|E(\cal G_{new})|} \le \wsize(\cal G)\heta^{\frac{1}{2}|E(\cal G)|}. $$
If \eqref{eq:smallerpsize} does not hold, then we at least have 
\be\label{eq:smallerpsize2}
\wsize(\cal G_{new})\lesssim \blam \heta^{1/2}\cdot \wsize(\cal G)\heta^{\frac{1}{2}(|E(\cal G)|-|E'(\cal G_{new})|)} 
\ee
by \eqref{eq:sizesA}. This scenario can only occur when we replace $T_{\fa,\fb_1\fb_2}$ by $\sum_x \wt\thn_{\fa x}\wt{\cal Q}_{x,\fb_1\fb_2}$, where $\wt{\cal Q}_{x,\fb_1\fb_2}$ denotes the sum of the $Q$-graphs in \eqref{eq:largeQW}, because all other $Q$-graphs have $\wsize$ smaller than $\wsize(T_{\fa,\fb_1\fb_2})$ by at least $\blam \heta$. Furthermore, if $T_{\fa,\fb_1\fb_2}$ is replaced by a $Q$-graph that is not included in \eqref{eq:largeQs}, then after Step 1 of the $Q$-expansion, each resulting graph that does not satisfy \eqref{eq:smallerpsize} either has a light weight or a vertex $x$ with non-neutral charge. In this case, we must perform at least one light weight expansion or a $GG$-expansion, which decreases the $\wsize$ of the graph by at least \smash{$\heta^{1/2}$}, ensuring that the resulting graphs still satisfy \eqref{eq:smallerpsize}. 

It remains to consider the $Q$-expansions involving the terms in \eqref{eq:largeQs}. Since we are considering the non-recollision graphs, only the first two terms in \eqref{eq:largeQs} are relevant. Following the $Q$-expansion process, we observe that a non-recollision graph $\cal G_{new}$ that does not satisfy \eqref{eq:smallerpsize} must have one (and only one) of the following structures:
\begin{center}
\includegraphics[width=10cm]{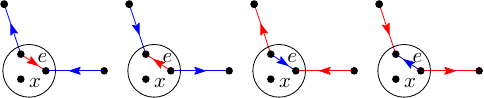}
\end{center}
In other words, there is a new solid edge $e$ within the molecule $\cal M_x$ connecting the two pulled edges of an edge in $E(\cal G)$. We then add this edge to $E'(\cal G_{new})$ and let $E(\cal G_{new})=E'(\cal G_{new})\cup\{e\}$. With \eqref{eq:smallerpsize2}, we get 
$$ \wsize(\cal G_{new}) \heta^{\frac{1}{2}|E(\cal G_{new})|}\le \blam \heta^{\frac12}\cdot \wsize(\cal G)\heta^{\frac{1}{2}(|E(\cal G)|-|E'(\cal G_{new})|)}\cdot \heta^{\frac{1}{2}|E(\cal G_{new})|} \le \wsize(\cal G)\heta^{\frac{1}{2}|E(\cal G)|}. $$
Finally, since recollisions between $\cal M_x$ and existing vertices in $\cal G$ do not occur, a similar argument as in Case 2 shows that \eqref{property_loop} holds for $\cal G_{new}$ under such a choice of $E(\cal G_{new})$.

\medskip 
\noindent{\bf Case 4}: Finally, we consider the recollision graphs in $\sum_x \wt \thn_{\fa x}\mathcal R_{x,\fb_1\fb_2}$ in \eqref{eq_Vexp}. Pick any graph $\cal G_{new}$ coming from the $Q$-expansion of a graph obtained by replacing $T_{\fa,\fb_1\fb_2}$ with a $Q$-graph in \eqref{mlevelTgdef weak}. This $Q$-graph has $\wsize$ at most $\blam \wsize(T_{\fa,\fb_1\fb_2})$. Thus, the graph before the $Q$-expansions has $\wsize$ at most $\blam\wsize(\cal G)$. We now check how the $\wsize$ of the graphs decreases with respect to the changes of $E(\cal G)$.

If an edge in $E(\cal G)$ is ``pulled" during the expansion process leading to $\cal G_{new}$, then we remove it from $E(\cal G)$; otherwise, we keep it. This process results in the subset $E(\cal G_{new})$. Here, a ``pulling" of a solid edge $e\in E(\cal G)$ refers to one of the following changes made to the edge:
\begin{itemize}
    \item[(1)] The edge becomes two edges due to the operations \eqref{resol_exp0} and \eqref{resol_reverse} in $Q$-expansions, or as a result of the partial derivatives $\partial_{\al\beta}$ in Gaussian integration by parts---these represent the ``normal pulling" of the edge. In this case, the $\wsize$ of the graph decreases by at least \smash{$\heta^{1/2}$}, compensating for the loss of the edge $e$ from $E(\cal G)$ to $E(\cal G_{new})$. 

    \item[(2)] The edge is not pulled as in case (1), but at least one of its endpoints is merged with another vertex due to the dotted edge associated with a new solid edge produced in the expansions. Denote such an endpoint by vertex $a$. We know that $a$ is standard neutral because the graph $\cal G$ is locally regular. In particular, in $\cal G$, $a$ is attached with a waved edge, the edge $e$, and another edge, say $e'$, which has an opposite charge compared to edge $e$. 
    This scenario occurs only when the edge $e'$ is pulled as in case (1) and we have replaced the new solid edge, say $e''$, attached to $a$ by a dotted edge with coefficient $m(z)$ or $\bar m(z)$. We draw the following pictures to illustrate this process:
    \begin{center}
\includegraphics[width=7cm]{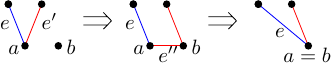}
\end{center}
Here, we did not indicate the directions of the solid edges, and we have taken $e$ to be a blue solid edge without loss of generality. 
From the first graph to the second, the edge $e'$ is pulled to a vertex $b$; from the second graph to the third, we have replaced the edge $e''$ by a dotted edge with a coefficient $\bar m(z)$ and merged the two vertices $a$ and $b$. After merging $a$ with $b$, we lose one internal vertex and one solid edge, resulting in a decrease in the $\wsize$ by \smash{$W^{-d}\heta^{-1/2}\asymp \blam^{-1}\heta^{1/2}$}. The \smash{$\heta^{1/2}$} factor compensates for the loss of the edge $e$ from $E(\cal G)$ to $E(\cal G_{new})$, and we get an additional $\blam^{-1}$ factor. 
    
\item[(3)] One solid edge (denoted by $e'$) attached to an endpoint (denoted by $a$) of the edge $e$ becomes an $S^{\pm}$ edge. This change is a result of the second term in the $GG$-expansion \eqref{Oe2x}. In this operation, we lose one solid edge while gaining an $S^{\pm}$ edge. Consequently, similar to case (2), the $\wsize$ of the graph decreases by \smash{$\blam^{-1}\heta^{1/2}$}, where the $\heta^{1/2}$ factor compensates for the loss of the edge $e$ from $E(\cal G)$ to $E(\cal G_{new})$, and we get an additional $\blam^{-1}$ factor.   

\end{itemize}

In sum, we see that if an edge in $E(G)$ has been ``pulled" according to the three cases described above, the $\wsize$ of the resulting graph decreases by at least \smash{$\heta^{1/2}$}. Furthermore, if case (2) or (3) occurs at least once, we gain an extra factor $\blam^{-1}$. Even if cases (2) and (3) do not occur for the edges in $E(\cal G)$, they must have occurred for some edges in $\cal G$ that do not belong to $E(\cal G)$, since $\cal G_{new}$ is a recollision graph. Thus, we always get an additional $\blam^{-1}$ factor, leading to the conclusion that
$$ \wsize(\cal G_{new}) \heta^{\frac{1}{2}|E(\cal G_{new})|}\le \blam \cdot \blam^{-1} \cdot \wsize(\cal G)\heta^{\frac{1}{2}(|E(\cal G)|-|E(\cal G_{new})|)}\cdot \heta^{\frac{1}{2}|E(\cal G_{new})|} = \wsize(\cal G)\heta^{\frac{1}{2}|E(\cal G)|}. $$
Finally, consider any edge $e$ in $\cal G_{new}$ and the loop containing it. In $\cal G_{new}$, one of the following cases occurs:
\begin{itemize}
    \item The loop remains untouched during the expansions and retains its original structure. 
    \item The loop contains at least two pulled edges, which do not belong to $E(\cal G_{new})$.
       
   % otherwise, the edges belong to a loop obtained by merging two endpoints of two edges, which means that there are at least two pulled edges in this loop and they do not belong to $E(\cal G_{new})$. 

\end{itemize}
Thus, the property \eqref{property_loop} remains valid for $\cal G_{new}$. 

\medskip

Applying the expansion strategy, \Cref{strat_global_weak2}, repeatedly, we finally get a sum of $\OO(1)$ many deterministic graphs \smash{$\cal G_{\bx}^{(\mu)}$} through a sequence of $V$-expansions. From the above discussions, we see that the quantity \eqref{eq:psizeq} always decreases in our expansions for carefully chosen subsets of edges $E(\cal G)$. Hence, \eqref{eq:psizeqdet} holds, which concludes the proof of \eqref{mG-fe-size}. 

% For each loop formed by edges in $E_{\cal G}$, we consider
% $$ \frac{1}{2}\ell - 1$$
% For each loop containing edges in $E_{\cal G}$ and at least two edges that are not in $E_{\cal G}$, we consider
% $$ \frac{1}{2}\ell $$

% The following factor can only decrease during the expansions: 
% $$\size(\cal G)\cdot \heta^{\frac{1}{2}n_G-n_{loop}}.$$
%$$\blam^{(p-1)/2}\heta^{p-1}(\blam\vee \heta)^{n_G-n_{loop}-p/2}.$$  

%\bibliographystyle{abbrv}
%\bibliography{band.bib}	

\appendix

\newpage

\section{Sum zero property for self-energies %: Proof of \Cref{cancellation property}
}\label{sec:sumzero}

% \subsection{Comparison with infinite space limits}

In this section, we present the proof of \Cref{cancellation property} for the Wegner orbital model. We need to give a construction of $\Sele_{k_0+1}$ and prove that it satisfies the sum zero property \eqref{3rd_property0} and the properties \eqref{eq:inf_form_Ej}--\eqref{4th_property_substrac}. Following the ideas developed in \cite{BandI}, our proof is based on a comparison between the $\pself$ \smash{$\wt\Sele_{k_0+1}$} and its infinite space limit.  
 
% In this subsection, we control the difference between a deterministic doubly connected graph and its infinite space limit. 

\begin{lemma} \label{lem V-R wt}
In the setting of \Cref{cancellation property}, fix any $z=E+\ii \eta$ with $|E|\le 2-\kappa$ and $\eta_0\le \eta \le 1$. Any deterministic doubly connected graph $\cal G\equiv \cal G(m(z),S, S^{\pm}(z), \zthn(z),\{\Sele_k(z)\}_{k=1}^{k_0})$ in $\wt\Sele_{k_0+1}$ satisfies that %for each large constant $D>0$,
%the infinite space limit of any deterministic doubly connected graph $\cal G\equiv \cal G(m(z),S, S^{\pm}(z), \thn(\{\Sele_k\}_{k=1}^{k_0},z))$ in $\Sele_{k_0+1}$ exists. Moreover, for any constant $D>0$, we have that  
%Under the setting of \Cref{lem FT0}, for any fixed $D>0$, we have that %$\Sele_n$ and $\Sele_n^{\infty}$ satisfy \eqref{4th_property0}, \eqref{4th_property0V}, \eqref{3rd_property0 diff} and
\be\label{property V-R wt0}
 \left|{\cal G}_{xy}(z,W,L)-\cal G_{xy}^\infty(E,W,\infty)\right| \le \left(\eta+t_{Th}^{-1}\right)\frac{\sizeself(\cal G)}{ \langle x-y\rangle^{d-c}}, \quad \forall \ x,y\in \Z_L^d,
\ee
 \be\label{property V-R wt}
\Big|\sum_{y\in \Z_L^d} \cal G_{xy} (z,W,L) - \sum_{y\in \Z^d}  
\cal G^{\infty}_{xy}(E, W,\infty)\Big|   \le L^c \sizeself(\cal G) \left(\eta + t_{Th}^{-1}\right), \quad \forall \ x\in [0], 
\ee
for any small constant $c>0$. 
%\left[\Wlambda\eta + \left( \frac W  L\right)^{d-2} \right] W^{-\fc_k}.
%\le \eta W^{-(n-2)d/2+\e},\quad \forall\ \eta\in[  W^{2}/L^{2-\e} ,L^{-\e}],
%for any small constant $\e>0$. Here we have abbreviated $m(E)\equiv m(E+\ii 0_+)$. 
\end{lemma}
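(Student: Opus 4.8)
The plan is to reduce the estimates \eqref{property V-R wt0} and \eqref{property V-R wt} to the analogous comparison estimates for the elementary deterministic matrices $m$, $S$, $S^{\pm}$, $\zthn$, and the lower-order self-energies $\Sele_k$, $k\le k_0$, all of which are already available as hypotheses or earlier lemmas. More precisely, a graph $\cal G$ in $\wt\Sele_{k_0+1}$ is a finite (order $\OO(1)$) sum over internal vertices of products of a bounded number of factors, each of which is an entry of one of the matrices $m(z)$ (a scalar coefficient), $S$, $S^{\pm}(z)$, $\zthn(z)$, or a labeled diffusive edge built from $\zthn$ and $\Sele_1,\dots,\Sele_{k_0}$. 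The infinite-space limit $\cal G^\infty$ is the same graphical expression with each factor replaced by its infinite-space limit (Definition \ref{def infspace}, and the iterative construction below Definition \ref{def infspace0}) and internal vertices summed over $\Z^d$ rather than $\Z_L^d$. So the differences I must control are: (i) $m^{(L)}(z)-m^{(\infty)}(E)$, bounded by \Cref{lem:estM}; (ii) $S^{\pm}_{(W,L)}(z)-S^{\pm}_{(W,\infty)}(E)$, bounded by \Cref{lem:estS+}; (iii) $\zthn^{(W,L)}(z)-\thn^{(W,\infty)}(E)$, bounded by \Cref{lem esthatTheta}; (iv) $\Sele_k(z,W,L)-\Sele_k^\infty(E,W,\infty)$, bounded by the induction hypothesis \eqref{4th_property_substrac}; plus (v) the error committed by extending the summation region from $\Z_L^d$ to $\Z^d$ (a "finite-volume" error), which is controlled using the exponential decay of waved edges and the polynomial decay $B_{xy}\asymp \blam W^{-2}\langle x-y\rangle^{-(d-2)}$ of diffusive edges together with $d\ge 7$.

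The execution would be a standard telescoping/hybrid argument on the graph. First I would fix a graph $\cal G$ and write $\cal G - \cal G^\infty$ as a telescoping sum over its factors: replace the factors one at a time, starting from the all-$(L)$ graph and ending at the all-$(\infty)$ graph, so that each term in the telescoping sum is a graph in which exactly one factor is replaced by the corresponding "difference matrix" (of types (i)--(iv) above), the factors before it are at scale $(L)$, and the factors after are at scale $(\infty)$. Each such term is again a doubly connected graph — because replacing one edge by a matrix with the same (or better) decay preserves the doubly connected structure — and the difference factor carries an extra small prefactor $\OO(\eta + \lambda^2\al(W)/L^2)$ or $\OO(\eta+t_{Th}^{-1})$ depending on the type. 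I would then invoke \Cref{dG-bd} (Corollary 6.12 of \cite{BandI}) to bound each hybrid graph by $W^{2d-4}\size(\cal G)\langle x-y\rangle^{-(2d-4)+c}$, using that $\size$ of the hybrid graph equals $\size(\cal G)$ up to the small prefactor from the difference factor; translating $W^{2d-4}\size$ into $\sizeself(\cal G) = \blam W^d\size(\cal G)$ via the relation in \Cref{collection elements} and absorbing the extra decay gives \eqref{property V-R wt0}, noting that $\lambda^2\al(W)/L^2\lesssim t_{Th}^{-1}$ by the definitions of $\al(W)$, $\blam$, and $t_{Th}$. For \eqref{property V-R wt}, I would additionally sum over $y\in[0]$ in the $(L)$-graph and over $y\in\Z^d$ in the $(\infty)$-graph; the difference splits into the telescoping part (handled as above, now summed over $y$, which costs a factor $\OO(L^c)$ from the polynomial tails since $d\ge7$ keeps everything summable) plus the pure finite-volume/periodization error of replacing $\sum_{y\in\Z_L^d}$ by $\sum_{y\in\Z^d}$ in the infinite-limit graph. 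The latter is controlled because the integrand decays like $\langle x-y\rangle^{-(2d-4)}$ off-diagonally (again \Cref{dG-bd}), so the tail beyond radius $L$ and the difference between torus and full-space sums are both $\OO(L^{-(d-4)})\cdot\sizeself(\cal G)\lesssim t_{Th}^{-1}\sizeself(\cal G)$ up to $L^c$.

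The main obstacle I expect is two-fold. First, the labeled diffusive edges: these are not single matrices but themselves finite sums of subgraphs of the form $\zthn\Sele_{i_1}\zthn\cdots\zthn\Sele_{i_k}\zthn$, so the telescoping has to be carried out one level deeper, inside each labeled edge, and I need to know that the infinite-space limit commutes with this decomposition and that the bounds \eqref{BRB_inf} hold — this is exactly why the induction hypothesis is stated with $\Sele_k^\infty$ satisfying \eqref{4th_property0} and the sum zero property \eqref{3rd_propertyinf}, and I would need to check carefully that the hybrid graphs arising in the inner telescoping still enjoy the doubly connected property with redundant diffusive edges so that \Cref{dG-bd} applies and no spurious $L^2/W^2$ factor appears. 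Second, bookkeeping the dependence of the final bound on the parameter $\sizeself(\cal G)$ rather than just $\size(\cal G)$: one must track that every hybrid graph has scaling size bounded by $\size(\cal G)$ (not larger), which uses that the difference matrices have scaling sizes no larger than the matrices they replace — true for $m$, $S^\pm$, $\zthn$ by the cited lemmas, and for $\Sele_k$ by \eqref{4th_property_substrac} — and that the extra decay in the difference matrices is genuinely an additional factor and not needed to make the original graph summable. Once these points are in place, the proof is a routine, if somewhat lengthy, telescoping argument identical in structure to the proof of the corresponding statement in \cite{BandI}, and indeed \Cref{cancellation property} will follow from \eqref{property V-R wt} by the same mechanism as in \cite[Lemma 5.8]{BandI}: the leading part of $\wt\Sele_{k_0+1}$ whose infinite-space limit already has exact sum zero is extracted as $\Sele_{k_0+1}$, and the remainder is shown to be of order $W^{-c_{k_0+1}-c_\xi}$ in scaling size.
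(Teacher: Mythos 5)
Your proposal follows essentially the same route as the paper: the paper omits the details but cites exactly the ingredients you use -- the comparison estimates \eqref{eq:S+-Sinf} and \eqref{Theta-wh}, the induction hypothesis \eqref{4th_property_substrac}, the deterministic doubly connected bound of \Cref{dG-bd}, and the doubly connected property with redundant diffusive edges -- via a factor-by-factor (telescoping) replacement as in \cite[Lemma 7.7]{BandI}. One small correction: the difference $\zthn^{(W,L)}(z)-\thn^{(W,\infty)}(E)$ in \eqref{Theta-wh} decays like $\langle x-y\rangle^{-(d-4)}$, i.e.\ \emph{worse} than a diffusive edge, so it is the redundancy of each diffusive edge (rather than a ``same or better decay'' argument) that keeps the hybrid graphs doubly connected and makes \Cref{dG-bd} applicable, a point you do eventually invoke correctly in your discussion of the obstacles.
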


\begin{proof}%[\bf Proof of \Cref{lem V-R wt}]
The proof of \Cref{lem V-R wt} is similar to those of \cite[Lemma 7.7]{BandI} and \cite[Lemmas 5.5 and 5.6]{BandIII}. It relies on the estimates \eqref{eq:S+-Sinf} and \eqref{Theta-wh}, \Cref{dG-bd}, the induction assumption \eqref{4th_property_substrac}, and the fact that $\cal G$ is doubly connected, with each diffusive edge being redundant. %Moreover, some arguments in the proof have also been revealed in the proof of \Cref{lem:thn-zthn}. 
Hence, we omit the details of the proof.
\end{proof}

By studying the infinite space limit of $\wt\Sele_{k_0+1}$, the next lemma gives a construction of $\Sele_{k_0+1}^\infty(E,W,\infty)$, which in turn leads to a construction of $\Sele_{k_0+1}(z,W,L)$. 

%for $z= E+ \ii \eta$ with $|E|\le 2- \kappa$ and $\eta \in [\eta_*, 1]$ and for any large constant $D>0$, where we used the notation $\zTheta(z,\{\Sele_j'\},\{\Sele_j\}))$ to denote the collection of all diffusive and label diffusive edges formed from the $\selfs$ $\{\Sele_j'\}_{1\le j \le k_0}$ and $\pselfs$ $\{\Sele_j\}_{1\le j \le k_1}$ with spectral parameter $z$. As a convention, we denote $\Sele_0=0.$

%First, we show that the infinite space limit of any deterministic graph can be expanded into a power series of $W^{-1}$. 

\begin{lemma}\label{lem FT0}
%Given a sequence of $\selfs$, $\Sele'_k$, $1\le k \le k_0$. Let $\cal G=\cal G(M(z),\Delta,S,S^\pm(z),\zTheta(z,\{\Sele_j'\}))$ be a deterministic doubly connected graph with (possibly) labeled diffusive edges for $z= E+ \ii \eta$ with $|E|\le 2- \kappa$ and $\eta \in [\eta_*, 1]$. 
In the setting of \Cref{cancellation property}, fix any $z=E+\ii \eta$ with $|E|\le 2-\kappa$ and $\eta_0\le \eta \le 1$. Then, any deterministic doubly connected graph $\wt{\cal G}(z,W,L)$ in $\wt\Sele_{k_0+1}$ can be expanded as 
\be\label{eq:G-expand}
\wt{\cal G}(z,W,L) = \cal G(z,W,L) + \delta \cal G(z,W,L),
\ee
such that the following properties hold.
\begin{itemize}
    \item[(a)] The infinite space limit $\cal G^\infty(E,W,\infty)$ can be written as 
   \be\label{eq:Ginf-expand}
\cal G^\infty(E,W,\infty) = \frac{\sizeself(\wt{\cal G})}{\blam} \left( \cal A_{\cal G}\otimes \mathbf E\right),
\ee
where $\cal A_{\cal G}:\ell^2(\wt\Z^d)\to \ell^2(\wt\Z^d)$ is a fixed operator that does not depend on $W$ and satisfies the properties \eqref{eq:inf_form_symm} and \eqref{eq:inf_form_decay}. Moreover, $\cal G-\cal G^\infty$ satisfies that
\be\label{G-G_property_substrac}
 \left|{\cal G}_{xy}(z,W,L)-{\cal G}^\infty_{xy}(E,W,\infty)\right| \le \left(\eta+t_{Th}^{-1}\right)\frac{\sizeself(\cal G)}{ \langle x-y\rangle^{d-c}}, \quad \forall \ x,y\in \Z_L^d,
\ee
for any small constant $c>0$. 

\item[(b)] We have \be\label{eq:remain_self_error2}
\sizeself(\delta\cal G) =\blam W^d\size(\delta\cal G) \lesssim \lambda^2 \sizeself(\cal G). 
\ee

\end{itemize}

% Then, for any fixed $D>0$, $\cal G$ can be expanded as
% $$ \cal G=\sum_{k=1}^{n}\cal G_k + \cal G_{err},$$
% where $\cal G_k$ and $\cal G_{err}$ are deterministic doubly connected graph, $W^d \size(\cal G_{err})\le W^{-D}$, and $W^d \size(\cal G_k)=W^{-\fc_k}$ for a sequence of strictly increasing constants $\fc_k$ satisfying $\fc_k - \fc_{k-1}\ge \min\{\erre,\xi,1\}$ (recall that $\xi$ is defined in \eqref{eq:small+lambda}). Moreover, the infinite space limits of $\cal G_k$ exists and satisfy that  
% \be\label{eq FT0} \sum_{\fa\in \Z^d}( \cal G_k^{\infty})_{0\fa}(E, W) =W^{-\fc_k} {\Sint}_{k}(E)  ,\ee
% where ${\Sint}_{k}$ is a constant independent of $W$. %depends on $n$, $m(E)$ and $f$, but 
\end{lemma}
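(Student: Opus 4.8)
The plan is to follow the strategy of \cite[Lemma 5.8]{BandI} (and the analogous \cite[Lemma 5.7]{BandIII}): first pass to the infinite space limit, then prove the structural decomposition \eqref{eq:Ginf-expand} there, and finally transfer everything back to $\Z_L^d$ using \Cref{lem V-R wt}. Concretely, given a deterministic doubly connected graph $\wt{\cal G}(z,W,L)$ in $\wt\Sele_{k_0+1}$, I would start from its infinite space limit $\wt{\cal G}^\infty(E,W,\infty)$, which is a graph built from $m(E)$, $S^{(W,\infty)}$, $S^{\pm}_{(W,\infty)}(E)$, $\thn^{(W,\infty)}(E)$, and the infinite space limits $\{\Sele_k^\infty(E)\}_{k=1}^{k_0}$ of the lower order self-energies. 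By the induction hypothesis of \Cref{Teq} (namely \eqref{eq:inf_form_Ej}--\eqref{eq:inf_form_zero} and \eqref{BRB_inf}), the labeled diffusive edges in $\wt{\cal G}^\infty$ still behave like genuine diffusive edges, so $\wt{\cal G}^\infty$ is a doubly connected graph on $\wt\Z^d$ in the usual sense.

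The first main step is to extract the $\otimes\mathbf E$ structure in the infinite space limit. For the Wegner orbital model, by \eqref{eq:SSLK} and \Cref{def:projlift}, each of $S$, $S^{\pm}$, $\zthn$ factorizes as (matrix on $\wt\Z_n^d$)$\,\otimes\mathbf E$, and this persists in the infinite space limit; the lower order $\Sele_k^\infty$ also factorize as $\frac{W^{-\fc_k}}{\blam}(\cal A_k\otimes\mathbf E)$ by \eqref{eq:inf_form_Ej}. Since $\wt{\cal G}^\infty$ is a deterministic graph formed entirely from these building blocks together with $\OO(1)$ coefficients that are polynomials in $m_{sc}, \bar m_{sc}, (1-m_{sc}^2)^{-1}, (1-\bar m_{sc}^2)^{-1}$, the argument in the proof of \cite[Lemma A.1]{BandI} shows that $\wt{\cal G}^\infty = \frac{\sizeself(\wt{\cal G})}{\blam}(\cal A_{\wt{\cal G}}\otimes\mathbf E)$ for a fixed operator $\cal A_{\wt{\cal G}}$ on $\wt\Z^d$ not depending on $W$; the normalization by $\sizeself(\wt{\cal G})/\blam$ is forced by the counting of waved and diffusive edges in \eqref{eq_defsize}. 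The symmetry property \eqref{eq:inf_form_symm} follows from the parity and block translation symmetries of $M$, $S$, $S^\pm$, $\zthn$, and the inductively assumed symmetries of $\cal A_k$, exactly as in the derivation of \eqref{self_sym}; the decay \eqref{eq:inf_form_decay} follows from \Cref{dG-bd} applied to the doubly connected graph $\cal A_{\wt{\cal G}}$, using that each diffusive edge in $\wt{\cal G}^\infty$ is redundant. At this stage I would simply \emph{define} $\cal G^\infty(E,W,\infty) := \wt{\cal G}^\infty(E,W,\infty)$ so that \eqref{eq:Ginf-expand} holds verbatim, and then define $\cal G(z,W,L)$ to be the graph on $\Z_L^d$ whose infinite space limit is $\cal G^\infty$ and which is obtained from $\wt{\cal G}$ by discarding precisely the genuinely subleading pieces — see the next step. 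The estimate \eqref{G-G_property_substrac} then follows from \eqref{property V-R wt0} of \Cref{lem V-R wt}.

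The second step is to identify $\delta\cal G := \wt{\cal G} - \cal G$ and prove the size bound \eqref{eq:remain_self_error2}. Here I would use that $\wt{\cal G}$ and $\cal G$ differ only through the $\OO(\lambda^2)$ discrepancies between the actual and the $\lambda=0$ building blocks: $m(z)-m_{sc}(z)=\OO(\lambda^2)$, $S_{xy}(\lambda)-S_{xy}(0)=\lambda^2 W^{-d}\mathbf 1_{[x]\sim[y]}$ (recall \eqref{eq:SWO}), $S^{\pm}_{xy}(z,\lambda)-S^{\pm}_{xy}(z,0) = \OO(\blam^{-1} W^{-d}e^{-|x-y|/CW})$ by \eqref{S+xy-0}, and the corresponding $\OO(\lambda^2)$-type bounds for $\zthn$ and for the lower order $\Sele_k$ inherited from \eqref{eq:vertex_sum_zero_diff} and the induction hypothesis. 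Expanding $\wt{\cal G}$ around the $\lambda=0$ values of all its edges and collecting the remainder, each term in $\delta\cal G$ carries at least one factor of order $\lambda^2$ relative to a building block, hence $\size(\delta\cal G)\lesssim \lambda^2\,\size(\wt{\cal G})$ which, multiplied by $\blam W^d$, gives \eqref{eq:remain_self_error2}. Since $\cal G$ is obtained from $\wt{\cal G}$ by replacing each building block with its $\lambda=0$ counterpart (after factoring out the $\otimes\mathbf E$ structure and then tensoring back), this is consistent with the definition used in the previous step. The main obstacle, as in \cite{BandI,BandIII}, is bookkeeping: one must verify that the replacements can be carried out \emph{edge by edge} while preserving the doubly connected property and the redundancy of diffusive edges at every intermediate stage, so that \Cref{dG-bd} and \Cref{lem V-R wt} remain applicable and the error estimates do not lose any powers of $L/W$ beyond what \eqref{Lcondition1} can absorb; this is precisely where the care in the proofs of \cite[Lemma 7.7]{BandI} and \cite[Lemmas 5.5, 5.6]{BandIII} is needed, and I would import those arguments essentially verbatim. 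Finally, combining \eqref{eq:Ginf-expand}, \eqref{G-G_property_substrac}, and \eqref{eq:remain_self_error2} yields \eqref{eq:G-expand} with all the stated properties; applying \eqref{property V-R wt} then gives the sum zero property needed in the subsequent construction of $\Sele_{k_0+1}$, though that last deduction belongs to the proof of \Cref{cancellation property} rather than to \Cref{lem FT0} itself.
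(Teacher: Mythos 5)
Your overall architecture (pass to the infinite space limit, use \Cref{dG-bd} and \Cref{lem V-R wt}, track $\OO(\lambda^2)$ discrepancies) is the right one, but there are two genuine problems at the core of the argument. First, your proposal is internally inconsistent and its key structural claim is false: you set $\cal G^\infty:=\wt{\cal G}^\infty$ and assert that $\wt{\cal G}^\infty$ already has the form $\frac{\sizeself(\wt{\cal G})}{\blam}(\cal A\otimes\mathbf E)$ with $\cal A$ \emph{independent of $W$}, but the infinite space limits of the building blocks are $m(E)=m_{sc}(E)+\OO(\lambda^2)$, $S_\infinf=(1+2d\lambda^2)I-\lambda^2\wt\Delta$, $S^+_\infinf=[1-m_{sc}(E)^2]^{-1}(1+\delta S^+_\infinf)$ and $\thn_\infinf=\frac{1+2d\lambda^2}{\lambda^2}\wt\Delta^{\inv}$ (cf.\ \eqref{eq:splitSSS}), all of which carry $\lambda=W^{-\xi}$ dependence beyond the overall normalization. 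The $W$-independence of $\cal A_{\cal G}$ is exactly what is needed later (to conclude $\sum_{[x]}\cal A_{k_0+1}([0],[x])=0$ from a bound $\prec W^{-\e}$), and it only holds after one first splits $\wt{\cal G}=\cal G+\delta\cal G$ \emph{in finite volume} and takes the infinite space limit of the modified graph $\cal G$; defining $\cal G^\infty$ as the limit of $\wt{\cal G}$ does not give \eqref{eq:Ginf-expand}, and moreover contradicts your own later prescription that $\cal G$ is built by replacing building blocks with $\lambda=0$ counterparts (whose limit is a different object).

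Second, the replacement rule ``replace each edge by its $\lambda=0$ counterpart, error $\OO(\lambda^2)$'' fails precisely for the diffusive edges. The diffusion coefficient of $\zthn(z,\lambda)$ is proportional to $\lambda^2$, so $\zthn$ at $\lambda=0$ degenerates to $\wt P^\perp(1-|m_{sc}|^2)^{-1}\wt P^\perp\otimes\mathbf E$, which is not diffusive at all, and $\zthn(\lambda)-\zthn(0)$ is in no sense an $\OO(\lambda^2)$ relative error (your appeal to \eqref{eq:vertex_sum_zero_diff} here is off-target: that estimate concerns the vertex functions $\fD_\omega^\lambda$ in the $V$-expansion, not $\zthn$). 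The paper's construction instead keeps the diffusive edge and only strips its $\lambda$-dependent dressing: in finite volume each $\zthn$ is replaced by $(1+2d\lambda^2)^{-1}\zthn$ (error of relative size $\lambda^2$, consistent with \eqref{eq:remain_self_error2}), and in the infinite space limit the edge becomes $\lambda^{-2}\wt\Delta^{\inv}=\blam\,\wt\Delta^{\inv}$, whose explicit $\blam$ factor is exactly what is absorbed into the prefactor $\sizeself(\wt{\cal G})/\blam$ so that the remaining operator $\cal A_{\cal G}$ is $W$-independent; $S$, $S^\pm$ and the coefficient $m$ are replaced by $I_n\otimes\mathbf E$, $(1-m_{sc}^2)^{-1}I_n\otimes\mathbf E$ (resp.\ its conjugate) and $m_{sc}$, each with $\OO(\lambda^2)$ relative errors counted as $\lambda^2W^{-d}$, $\lambda^2W^{-d}$, $\lambda^2\heta$. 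With that corrected extraction of the leading term, the rest of your plan (symmetry as below \eqref{eq:SSLK}, decay via \Cref{dG-bd}, and \eqref{G-G_property_substrac} via \Cref{lem V-R wt}) goes through as in the paper.
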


\begin{proof}%[\bf Proof of \Cref{lem FT0}]
The graph $\cal G$ consists of $m\equiv m(z,\lambda)$ and the entries of the following matrices: 
$$S , \quad S^\pm,\quad \zthn,\quad \text{and}\quad \{\Sele_{k}\}_{k=1}^{k_0}.$$
Recall that $S,$ $S^\pm$, and $\zthn$ can be written as in \eqref{eq:SSLK}, where $S_{\LK}$ is a tridiagonal matrix with  $(S_{\LK})_{[x][y]}=\delta_{[x][y]} + \lambda^2 \mathbf 1_{[x]\sim [y]}$, $S^+_{\LK} = (1-m^2 S_{\LK})^{-1},$ and $ \zthn_{\LK}$ can be expressed as 
$$ (\zthn_{\LK})_{[x][y]}=\frac{1}{1-|m|^2 S_{\LK}}- \frac{1}{n(1-|m|^2)}. $$
Then, as $L\to \infty$ and $\eta\to 0$, $m(z)$ converges to 
$$m(E)={\big(1+2d\lambda^2\big)^{-1/2}}m_{sc}\big({E}/\sqrt{1+2d\lambda^2}\big) = m_{sc}(E)+\OO(\lambda^2),$$
and the matrices $S$, $S^+$, and $\zthn$ on $\Z_L^d$ converges to the following operators on $\Z^d$:  
$$S_{\infty\to \infty}\otimes \mathbf E,\quad S^+_{\infinf}\otimes \mathbf E,\quad \thn_{\infinf}\otimes \mathbf E.$$
Here, $S_{\infty\to \infty}$, $\zthn_{\infinf}$, and $S^+_{\infinf}$ represent the following operators on $\wt\Z^d$: 
\be\label{eq:splitSSS}
\begin{split}
&    S_{\infinf} = (1+2d\lambda^2)I -  \lambda^2\wt\Delta,    
    \quad \thn_{\infinf}  = \frac{1+2d\lambda^2}{\lambda^2}{\wt\Delta}^{\inv} , \\
&     S^+_{\infinf} = \left(1-m(E)^2S_{\infinf}\right)^{-1} = \left[1-m_{sc}(E)^2\right]^{-1}\left( 1 + \delta S^+_\infinf\right),
\end{split}\ee
where we used $|m_{sc}(E)|=1$ for every $E\in [-2,2]$, $\wt\Delta$ denotes the Laplacian operator on $\wt\Z^d$:
$$\Delta_{[x][y]}=2d\delta_{[x][y]}-\lambda^2 \mathbf 1_{[x]\sim [y]}, $$
``$\inv$" stands for the pseudo-inverse, 
and the operator $\delta S^+_\infinf$ is defined as
$$\delta S^+_\infinf:=\left[m_{sc}(E_\lambda)^2 - m_{sc}(E)^2 -m_{sc}(E_\lambda)^2\frac{\lambda^2\wt\Delta}{1+2d\lambda^2} \right]\cdot S^+_\infinf,\quad E_\lambda:={E}/\sqrt{1+2d\lambda^2}.$$
Finally, recall that the infinite space limit of $\Sele_{k}$, $k\in \qqq{k_0}$, can be expressed as \eqref{eq:inf_form_Ej}. 

Due to the above observations, given $\wt{\cal G}^\infty$ written as $\wt{\cal G}^\infty=\cal B_{\wt{\cal G}}\otimes \mathbf E$ for some operator $\cal B_{\wt{\cal G}}$ on $\wt\Z^d$, we construct $\cal B_{\cal G}\otimes \mathbf E$ as follows, where $\cal B_{\cal G}$ is another operator defined on \smash{$\wt\Z^d$}: 
\begin{itemize}
    \item Replace each $S_\infinf$ in $\cal B_{\wt{\cal G}}$ by $I$.

    \item Replace each $\thn_\infinf $ in $\cal B_{\wt{\cal G}}$ by $\lambda^{-2}\wt\Delta^{\inv}=\blam \wt\Delta^{\inv}$.

    \item Replace each $S^+_\infinf$ and $S^-_\infinf$ in $\cal B_{\wt{\cal G}}$ by $(1-m_{sc}(E)^2)^{-1}I$ and $(1-\bar m_{sc}(E)^2)^{-1}I$. 

    \item Replace $m(E)$ in the coefficient by $m_{sc}(E)$. 
\end{itemize}
Correspondingly, we can define $\cal G$ from $\wt{\cal G}$ as follows: 
\begin{itemize}
    \item Replace each $S$ in $\wt{\cal G}$ by $I_n \otimes \mathbf E$ (with an error $\delta S=S-I_n \otimes \mathbf E$).

    \item Replace each $\zthn$ in $\wt{\cal G}$ by $(1+2d\lambda^2)^{-1}\zthn$ (with an error $\delta \zthn=2d\lambda^2 (1+2d\lambda^2)^{-1}\zthn$).

    \item Replace each $S^+$ and $S^-$ in $\wt{\cal G}$ by $(1-m_{sc}(z)^2)^{-1}I_n\otimes \mathbf E$ and $(1-\bar m_{sc}(z)^2)^{-1}I_n\otimes \mathbf E$ (with errors $\delta S^+=S^+-(1-m_{sc}(z)^2)^{-1}I_n\otimes \mathbf E$ and $\delta S^-=(\delta S^+)^-$). 

    \item Replace $m(z)$ in the coefficient by $m_{sc}(z)$ (with an error $m(z)-m_{sc}(z)=\OO(\lambda^2)$).   
\end{itemize}
Recall that $I_n$ and $\Delta_n$ denote the identity and Laplacian operators on $\wt\Z_n^d$, respectively. By definition, it is straightforward to check that $\cal B_{\cal G}\otimes \mathbf E$ is the infinite space limit of $\cal G$, and $\cal B_{\cal G}$ can be written as 
$$ \cal B_{\cal G} =\frac{\sizeself(\wt{\cal G})}{\blam}\cal A_{\cal G} $$
for some fixed operator $\cal A_{\cal G}$ that does not depend on $W$. 
The property \eqref{eq:inf_form_symm} for $\cal A_{\cal G}$ can be proved in the same manner as described below \eqref{eq:SSLK}, the property \eqref{eq:inf_form_decay} for $\cal A_{\cal G}$ is a consequence of \Cref{dG-bd}, and the property \eqref{G-G_property_substrac} follows from \Cref{lem V-R wt}. Finally, by the construction of $\cal G$, the scaling size of \smash{$\delta \cal G=\wt{\cal G}-\cal G$} receives an additional $\lambda^2$ factor compared to the leading term $\cal G$. Specifically, if we count the scaling sizes of $\delta S$, $\delta S^\pm$, and $\delta\zthn$ by $\lambda^2W^{-d}$, $\lambda^2W^{-d}$, and $\lambda^2\heta$, respectively, we get \eqref{eq:remain_self_error2}. This concludes the proof. 
\end{proof}

\subsection{Proof of \Cref{cancellation property}}

% Without loss of generality, we assume tht $\lambda=W^{-a_\lambda}$ for a constant $a_\lambda>0$. A critical step is to show that for a $\self$ constructed from the expansion procedure, we can expand $\sum_y \cal E_{xy}$ into a power series: there exists a sequence of $a_n>0$, $n=1,2,\ldots$, satisfying (1) there exists a constant $\mathfrak c>0$ such that $a_n-a_{n-1}\ge \mathfrak c$ and (2) 
% $$ \sum_y (S\cal E S)_{xy} = \sum_{n} C_n W^{-a_n}$$
% for a sequence of $C_n$, $n=1,2,\ldots$, that does not depend on $W$ or $L$ in the infinite space limit $L\to \infty$. Then, with the method in our band papers, we can show that $C_n=0$ by induction in $n$. 

Given any graph $\wt{\cal G}$ in $\wt\Sele_{k_0+1}$, we have obtained its leading term $\cal G$ as in \eqref{eq:G-expand}. Summing over all these graphs $\cal G$, we get the desired $\Sele_{k_0+1}$. With \Cref{lem FT0}, we see that $\Sele_{k_0+1}$ satisfies the properties \eqref{eq:inf_form_Ej}, \eqref{eq:inf_form_symm}, \eqref{eq:inf_form_decay}, and \eqref{4th_property_substrac}. We still need to prove the key sum zero properties \eqref{3rd_property0} and \eqref{eq:inf_form_zero}. By \eqref{property V-R wt}, the sum zero property \eqref{3rd_property0} is an immediate consequence of \eqref{eq:inf_form_zero}. Hence, it remains to prove that 
\be\label{eq:sumzero_k0+1}
\sum_{[x]\in \wt\Z_n^d}\cal A_{k_0+1}([0],[x]) =0.
\ee

\begin{lemma}\label{lem maincancel}
%Fix any $n\ge 3$. Suppose we have defined the $k$-th order $T$-expansions for all $2\le k \le n-1$. Suppose we have the incomplete $n$-th order $T$-expansion in Lemma \ref{incomplete Texp}. 
%Fix any $n\in \N$. Under the assumptions of Theorem \ref{main thm} and Lemma \ref{cancellation property}, 
If the setting of \Cref{cancellation property}, fix any $z = E+\ii \eta_0$ with $|E|\le 2-\kappa$ and $\eta_0=t_{Th}^{-1}$. Suppose $L$ is chosen such that 
\be\label{Lcondition} W^{c_{k_0+1} - \fc/2 + \errL} \le \frac{L^2}{W^2} \le  W^{c_{k_0+1} - \errL} \ee 
for a constant $0<\errL<\fc/4$. Recall that $\Sele_{k_0+1}'$ is obtained by replacing each $\zthn$ edge in $\Sele_{k_0+1}$ by a $\thn$ edge, as mentioned in \Cref{defn genuni}. We have that
\be\label{small SnE0}
\begin{split}
\Big| \sum_{[x]\in \wt\Z_n^d} (\Sele_{k_0+1}')^{\LK}_{[0][x]} \left(z,W, L\right) \Big|\le W^{-c_{k_0+1} -\e}/\blam 
\end{split}
\ee
for a constant $\e>0$ that depends only on $\fc$. 
\end{lemma}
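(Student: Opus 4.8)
\textbf{Proof proposal for \Cref{lem maincancel}.}
The plan is to reduce \eqref{small SnE0} to the local law for the Wegner orbital model at the specific scale $\eta_0 = t_{Th}^{-1}$, exactly in the spirit of \cite[Lemma 5.8]{BandI}. The starting observation is that the sum $\sum_{[x]} (\Sele_{k_0+1}')^{L\to n}_{[0][x]}$ is, up to the error terms already controlled in the construction of the pseudo-$T$-equation, the coefficient that measures the failure of the sum zero property of the $\pself$ $\wt\Sele_{k_0+1}$. First I would use \Cref{lem FT0} to split each graph $\wt{\cal G}$ in $\wt\Sele_{k_0+1}$ as $\cal G + \delta\cal G$, where $\delta\cal G$ carries an extra $\lambda^2$ factor (see \eqref{eq:remain_self_error2}) and is therefore negligible at the target precision, and where $\cal G$ has a well-behaved infinite-space limit. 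So it suffices to bound $\sum_{[x]}\cal G^{L\to n}_{[0][x]}(z,W,L)$, and by \Cref{lem V-R wt} (more precisely \eqref{property V-R wt}) this differs from $\sum_{[x]\in\wt\Z^d}(\cal G^{\infty})^{\infty\to\infty}_{[0][x]}(E,W,\infty)$ by at most $L^c\,\psi(\cal G)(\eta_0 + t_{Th}^{-1}) \lesssim L^c\,\psi(\cal G)\,t_{Th}^{-1}$, which under \eqref{Lcondition} is of the desired order $W^{-c_{k_0+1}-\e}/\blam$ for a suitable $\e$.

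The main content is then to show that the infinite-space limit of $\Sele_{k_0+1}$ satisfies \emph{exactly} the sum zero property \eqref{eq:sumzero_k0+1}. Here is where the argument becomes genuinely a comparison argument rather than a direct computation. The key input is Proposition \ref{locallaw-fix}(i): since by hypothesis we have already constructed a $T$-expansion up to order $\fC = c_{k_0}$, and since the inequalities \eqref{Lcondition} are designed so that \eqref{Lcondition1} holds at order $\fC$ (the $L^2/W^2 \le W^{c_{k_0+1}-\errL}$ half gives, together with $c_{k_0+1}\ge c_{k_0}+c_\xi$ and $\fc < c_\xi$, the bound $L^2/W^2 \cdot W^{-\fC-\fc}\le W^{-\errL}$), the local law \eqref{locallaw1}--\eqref{locallaw2} holds at $z = E + \ii t_{Th}^{-1}$. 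Plugging this local law into the pseudo-$T$-equation \eqref{mlevelT incomplete_1.0} at $\eta = t_{Th}^{-1}$, and using that $\eta_0 = t_{Th}^{-1}$ makes all $\thn$ and $\zthn$ edges interchangeable (\Cref{lem:thn-zthn}, which applies precisely because $\eta\ge t_{Th}^{-1}$), one derives an identity relating $\sum_x (\Sele_{k_0+1}')^{L\to n}$ to quantities that are all $\OO_\prec(t_{Th}^{-1})$-small after summation. I would then take $L\to\infty$ and $\eta_0 = t_{Th}^{-1}\to 0$: by \Cref{lem:estM}, \Cref{lem:estS+}, \Cref{lem esthatTheta}, the deterministic matrices converge to their infinite-space limits, and the constant $\sum_{[x]\in\wt\Z^d}\cal A_{k_0+1}([0],[x])$, being independent of $W$ and $L$, must equal its own limit, forcing it to be $0$ — this is the same rigidity argument used in the proof of \Cref{lem_For1} above.

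The step I expect to be the main obstacle is controlling the error graphs and the higher-order graphs in the pseudo-$T$-equation uniformly as $L\to\infty$ with $\eta = t_{Th}^{-1}(L)\to 0$: one must verify that each such graph, after summing its external index over $[0]$ and summing $[x]$ over $\wt\Z_n^d$, contributes a quantity of size $\oo(W^{-c_{k_0+1}}/\blam)$, and this requires the pseudo-scaling-size bookkeeping of \Cref{no dot} and \Cref{w_s} to survive the limit — in particular one needs the $\OO(W^{-D})$-type errors to remain $\OO(W^{-D})$ uniformly, and the $L^2/W^2$ loss incurred by any non-doubly-connected intermediate graph to be compensated by the factor $W^{-\fC-\fc}$ coming from the higher-order graphs, which is exactly what the upper bound $L^2/W^2 \le W^{c_{k_0+1}-\errL}$ in \eqref{Lcondition} guarantees. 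The lower bound $L^2/W^2 \ge W^{c_{k_0+1}-\fc/2+\errL}$ in \eqref{Lcondition}, on the other hand, is needed so that $t_{Th}^{-1} = \blam^{-1}W^2/L^2$ is small enough that the $L^c\,t_{Th}^{-1}$ error from \Cref{lem V-R wt} beats $W^{-c_{k_0+1}-\e}/\blam$. Once these two-sided constraints are reconciled — which is possible precisely because $\errL < \fc/4$ leaves a non-empty window for $L$ — the conclusion \eqref{small SnE0} follows, and with it, via \eqref{property V-R wt} and the vanishing of \eqref{eq:sumzero_k0+1}, the sum zero property \eqref{3rd_property0} for $\Sele_{k_0+1}$, completing \Cref{cancellation property}.
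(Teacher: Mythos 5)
Your overall instinct—reduce \eqref{small SnE0} to the $T$-equation plus the local law at $\eta_0=t_{Th}^{-1}$—is the right one, but the two steps that actually carry the lemma are missing or misordered. First, the detour through \Cref{lem FT0}, \Cref{lem V-R wt} and the infinite-space sum zero property \eqref{eq:sumzero_k0+1} is either redundant or circular: in the paper's logic, \eqref{eq:sumzero_k0+1} is \emph{deduced from} \eqref{small SnE0} (via \eqref{property V-R wt} and the rigidity of the $W$-independent constant $\sum_{[x]}\cal A_{k_0+1}([0],[x])$), so you cannot feed it back in as an input to the lemma; and your step 3, if it works, already produces the finite-volume bound directly, making steps 1--2 unnecessary. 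The real content, which you leave as a black box ("one derives an identity relating $\sum_x(\Sele_{k_0+1}')^{\LK}$ to quantities that are all $\OO_\prec(t_{Th}^{-1})$-small"), is the following mechanism: set $\fb_1=\fb_2=\fb$ in the $T$-equation \eqref{mlevelT incomplete_1.1}, take expectations, and sum over \emph{both} $\fa$ and $\fb$. Ward's identity evaluates $\sum_{\fa,\fb}\E T_{\fa\fb}$ and $\sum_{\fb}\E T_{x\fb}$ exactly in terms of $\im\E\qq{G}/\eta_0$, the sum zero property of the lower-order $\selfs$ $\Sele_k'$, $k\le k_0$ (the induction hypothesis) kills their contributions, the leading term cancels against the left-hand side through the identity $\im m/\eta_0=|m|^2/[1-(1+2d\lambda^2)|m|^2]$, and—crucially—the unknown coefficient $\sum_{[x]}(\Sele_{k_0+1}')^{\LK}_{[0][x]}$ appears multiplied by the amplification factor $[1-(1+2d\lambda^2)|m|^2]^{-1}\eta_0^{-1}\sim\eta_0^{-2}$, while the recollision, higher-order and error graphs are bounded (via \Cref{no dot} and the scaling-size conditions) by $W^{-\fc}/\eta_0$ and $W^{-c_{k_0+1}-\fc}/(\blam\eta_0^2)$ after normalization by $N$. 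Dividing by the amplification factor gives $\sum_{[x]}(\Sele_{k_0+1}')^{\LK}\prec W^{-c_{k_0+1}-\fc}/\blam+W^{-\fc}\eta_0$, and only then does the lower bound of \eqref{Lcondition} convert $W^{-\fc}\eta_0$ into $\blam^{-1}W^{-c_{k_0+1}-\fc/2-\errL}$. A bound of the form "everything is $\OO_\prec(t_{Th}^{-1})$ after summation" does not by itself isolate the coefficient at the required precision $W^{-c_{k_0+1}-\e}/\blam$; without the $\eta_0^{-1}$ relative amplification you would only reach $\OO(W^{-\fc})$.

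Second, your justification of the local law at $\eta_0$ is incorrect as stated. With only the $\fC=c_{k_0}$-th order $T$-expansion, condition \eqref{Lcondition1} demands $L^2/W^2\le W^{c_{k_0}+\fc-\errL}$, whereas \eqref{Lcondition} only guarantees $L^2/W^2\le W^{c_{k_0+1}-\errL}$ with $c_{k_0+1}\ge c_{k_0}+c_\xi\ge c_{k_0}+\fc$; the inequality you write goes the wrong way whenever $c_{k_0+1}>c_{k_0}+\fc$ (in particular near the upper end of the window), so \Cref{locallaw-fix} does not apply. The fix is the paper's first step: solve the pseudo-$T$-equation, absorbing the $\wt\Sele_{k_0+1}$ term (of size $\prec W^{-c_{k_0+1}}\heta$) into the higher-order graphs, to obtain a genuine $(c_{k_0+1}-\fc)$-th order $T$-expansion whose condition \eqref{Lcondition1} is exactly the upper bound in \eqref{Lcondition}; only then is the local law at $z=E+\ii t_{Th}^{-1}$ available uniformly over the window, which is what the Ward-identity computation above requires.
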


In the proof of \Cref{lem:thn-zthn} in \Cref{subsect:pfthn-zthn} below (cf.~estimate \eqref{eq:G-G'}), we will show that 
$$ \sum_y\left( \Sele_{k_0+1}-\Sele_{k_0+1}'\right)_{xy}(z,W,L) \prec \sizeself(\Sele_{k_0+1})\eta_0. $$
This estimate, together with \eqref{small SnE0}, yields that:
\be\nonumber
\Big| \sum_{[x]\in \wt\Z_n^d} (\Sele_{k_0+1})^{\LK}_{[0][x]} \left(z,W, L\right) \Big|\prec W^{-c_{k_0+1} -\e}/\blam + \psi(\Sele_{k_0+1})\eta_0. 
\ee
Next, combining this estimate with \eqref{property V-R wt}, we obtain that  
\be\label{eq:smallpsi}  W^{-d}\sum_{x\in[0]}\sum_{y\in\Z^d} (\Sele_{k_0+1}^\infty)_{xy} \left(E,W, \infty\right)  \prec W^{-c_{k_0+1} -\e}/\blam+ \psi(\Sele_{k_0+1})\eta_0 \lesssim W^{-c_{k_0+1} -\e\wedge \errL}/\blam, \ee
under the condition \eqref{Lcondition}. On the other hand, recall that $\Sele_{k_0+1}^\infty$ takes the form \eqref{eq:inf_form_Ej}, which implies that 
$$W^{-d}\sum_{x\in[0]}\sum_{y\in\Z^d} (\Sele_{k_0+1}^\infty)_{xy} \left(E,W, \infty\right) = \blam^{-1}W^{-c_{k_0+1}}\cdot \sum_{[x]\in \wt\Z_n^d}\cal A_{k_0+1}([0],[x]).$$
Together with \eqref{eq:smallpsi}, it implies that 
$$ \sum_{[x]\in \wt\Z_n^d}\cal A_{k_0+1}([0],[x])\prec W^{-\e\wedge \errL}.$$
Note the LHS is a constant that does not depend on $W$, while the RHS can be arbitrarily small as $W\to \infty$. Thus, we must have \eqref{eq:sumzero_k0+1}, which completes the proof of \Cref{cancellation property}.

\begin{proof}[\bf Proof of \Cref{lem maincancel}]
First, in the setting of \Cref{cancellation property}, we already have a $\fC$-th order $T$-expansion. Furthermore, solving the pseudo-$T$-equation we have constructed as in the argument around \eqref{eq:solve-Teq-0}, we obtain a $(c_{k_0+1}-\fc)$-th order $T$-expansion. 
Then, by \Cref{locallaw-fix}, we know that the local laws \eqref{locallaw1} and \eqref{locallaw2} hold for $z=E+\ii \eta_0$ under \eqref{Lcondition}. 

Second, in the setting of \Cref{cancellation property}, we have the following $T$-equation:
 \begin{equation}
\label{mlevelT incomplete_1.1}
\begin{split}	T_{\fa,\fb_1 \fb_2}&=  m  \thn_{\fa \fb_1}\overline G_{\fb_1\fb_2} %+ \frac{ G_{\fb_2 \fb_1} - \overline G_{\fb_1 \fb_2}}{2\ii N\eta} 
	+ \sum_x (\thn\Sele')_{\fa x}T_{x,\fb_1\fb_2}+ \sum_x (\thn\cdot  \delta\Sele'_{k_0+1})_{\fa x}T_{x,\fb_1\fb_2} \\
&+ \sum_x \thn_{\fa x} \left[\PT'_{x,\fb_1 \fb_2} +  \AT'_{x,\fb_1 \fb_2}  + \QT'_{x,\fb_1 \fb_2}  +  (\Err_{D}')_{x,\fb_1 \fb_2}\right]\, ,
	\end{split}
\end{equation}
which is the $T_{\fa,\fb_1 \fb_2}$ version of \eqref{mlevelT incomplete_1.0}. %(recall that $\Sele'$ and $\delta\Sele'_{k_0+1}$ are obtained by replacing the $\zthn$ edges in $\Sele$ and $\delta\Sele_{k_0+1}$ by $\thn$ edges). 
Now, by letting $\fb_1=\fb_2=\fb$ in equation \eqref{mlevelT incomplete_1.1} and taking the expectation of both sides, we obtain that 
\begin{equation}
\label{mlevelT incomplete_1.2}
\begin{split}	\E T_{\fa\fb}&=  m  \thn_{\fa \fb}\E \overline G_{\fb\fb} + \sum_{k=1}^{k_0}\sum_x (\thn\Sele_k')_{\fa x}\E T_{x\fb}+ \sum_x (\thn\Sele'_{k_0+1})_{\fa x}\E T_{x\fb} \\
&+ \sum_x \thn_{\fa x} \left[\E\PT'_{x,\fb \fb} +  \E\AT'_{x,\fb \fb} +  \E(\Err_{D}')_{x,\fb \fb}\right]\, ,
	\end{split}
\end{equation}
where, as discussed below \eqref{mlevelT incomplete_1.0}, we have included $\sum_x (\thn\cdot  \delta\Sele'_{k_0+1})_{\fa x}T_{x\fb}$ into the higher order graphs and still denote the resulting graphs by  $\sum_x \thn_{\fa x}  \AT'_{x,\fb \fb}$ with a slight abuse of notation.   

Next, we sum both sides of \eqref{mlevelT incomplete_1.2} over $\fa,\fb\in \Z_L^d$. Using Ward's identity \eqref{eq_Ward}, we get that 
\begin{align}\label{Rcancel10}
&\sum_{\fa,\fb} \mathbb E T_{\fa \fb} = \sum_{\fa, x}S_{\fa x}\cdot \mathbb E \sum_\fb |G_{x\fb}|^2 = (1+2d\lambda^2) \frac{ \sum_{x}\im  \left(\mathbb E G_{xx}\right)}{\eta_0}. % =L^d  \frac{\im  \left(\mathbb E G_{00}\right)}{\eta}, 
\end{align}
For the first term on the RHS of \eqref{mlevelT incomplete_1.2}, using the definition of $\thn$, we obtain that 
\begin{align}\label{Rcancel20}
 m  \sum_{\fa,\fb}\thn_{\fa\fb} \E \overline G_{\fb\fb}  = (1+2d\lambda^2) \frac{m \sum_{\fb}\E \overline G_{\fb\fb}}{1-(1+2d\lambda^2)|m|^2}.
\end{align}
%where in the last step we used $\mathbb E G_{xx}= \mathbb E G_{00}$ for all $x\in \Z_{L_n}^d$ by translational invariance.
For the fourth term on the RHS of \eqref{mlevelT incomplete_1.2}, we can prove that   
\begin{align}\label{Rcancel30} 
\sum_{\fa,\fb} \sum_x \thn_{\fa x} \E\PT'_{x,\fb\fb} \prec L^d \frac{W^{-\fc}}{\eta_0}.
\end{align}
%where $\fc$ is the constant in \eqref{eq:fc}. 
For the last two terms on the RHS of \eqref{mlevelT incomplete_1.2}, we can prove that
\begin{align}\label{Rcancel40} 
\sum_{\fa,\fb} \sum_x \thn_{\fa x}  \left[\E\AT'_{x,\fb\fb}+  \E(\Err_{D}')_{x,\fb \fb}\right]  \prec  L^d\frac{W^{-c_{k_0+1}-\fc}}{\blam\eta_0^2}.
\end{align}
The above two estimates \eqref{Rcancel30} and \eqref{Rcancel40} can be proved with the estimates in \Cref{no dot}, the local laws \eqref{locallaw1} and \eqref{locallaw2}, and the conditions \eqref{eq:smallR}, \eqref{eq:smallA} (with $\fC$ replaced by $c_{k_0+1}$), and \eqref{eq:smallRerr}. Since the proof is identical to that of Lemma 5.12 in \cite{BandI}, we omit the details. 
Finally, for the second and third terms on the RHS of \eqref{mlevelT incomplete_1.2}, we have that %for each $k\in \qqq{k_0}$,
\begin{align}
\sum_{\fa,x} \left(\thn \Sele_{k}' \right)_{\fa x} \sum_{\fb}\E T_{x \fb} &=\frac{1+2d\lambda^2}{1-(1+2d\lambda^2)|m|^2}   \sum_{\al,x} \left( \Sele_{k}'\right)_{\al x}  \sum_{\fb}\E T_{x \fb} \nonumber\\
&= \frac{(1+2d\lambda^2)\sum_x \im (\E G_{xx})}{\eta_0[1-(1+2d\lambda^2)|m|^2]}\cdot   \sum_{[x]} \left( \Sele_{k}'\right)^{\LK}_{[0][x]},\label{Rcancel21}
\end{align}
where we applied Ward's identity to $\sum_x T_{x\fb}$ and used the fact that $\sum_{[x]} \left( \Sele_{k}'\right)^{\LK}_{[\al] [x]}$ does not depend on $\al$. Applying the sum zero property \eqref{3rd_property0} to $\Sele_k'$ for $k\in \qqq{k_0}$ and using $1-(1+2d\lambda^2)|m|^2\sim \eta_0= t_{Th}^{-1}$, we get %for $4\le l \le n-1$,
\begin{align}\label{Rcancel22}
\sum_{\fa,x} \left(\thn \Sele_{k}' \right)_{\fa x} \sum_{\fb}\E T_{x \fb} \prec \frac{\sizeself(\Sele_k) \sum_x \im (\E G_{xx})}{\eta_0}   \lesssim W^{-\fc}\frac{ \sum_x \im (\E G_{xx})}{\eta_0} ,
\end{align}
where the second step follows from $\sizeself(\Sele_k)\lesssim \blam^2/W^d \le W^{-\fc}$. Now, plugging \eqref{Rcancel10}--\eqref{Rcancel22} into \eqref{mlevelT incomplete_1.2} and dividing both sides by $N=L^d$, we get
%Finally combining all the estimates \eqref{Rcancel1}, \eqref{Rcancel2}, \eqref{Rcancel3}, \eqref{Rcancel4} and \eqref{Rcancel5}, we obtain from \eqref{after1-5n} that
\begin{align}
(1+2d\lambda^2) \frac{ \im  \mathbb E \qq{G}}{\eta_0} &=  \frac{ (1+2d\lambda^2)m \mathbb E \overline {\qq{G}}}{1-(1+2d\lambda^2)|m|^2}  +  \frac{(1+2d\lambda^2)\im  \mathbb E \qq{G}}{[1-(1+2d\lambda^2)|m|^2]\eta_0}\sum_{[x]} \left( \Sele_{k_0+1}'\right)^{\LK}_{[0] [x]}\label{Rcancelsum0}\\
&+   \OO_\prec \left(  \frac{W^{-\fc}}{\eta_0} +\frac{W^{-c_{k_0+1}-\fc}}{\blam\eta_0^2} +W^{-\fc}\frac{ \im \E \qq{G} )}{\eta_0}\right) ,\nonumber
\end{align}
where $\qq{G}:=N^{-1}\tr G$. By the local law \eqref{locallaw2}, we have that
\be\label{insert Gxx}\E \qq{G} =m(z)+\OO(\heta^{1/2}) =m(z)+\OO(W^{-\fc}) .\ee 
Moreover, using the identity 
\be\nonumber \frac{|m_{sc}(z_\lambda)|^2}{1-|m_{sc}(z_\lambda)|^2}  = \frac{ \im m_{sc}(z_\lambda)}{\im z_\lambda} ,\quad z_\lambda:={z}/\sqrt{1+2d\lambda^2},\ee
we obtain that 
\be\label{insert m^2}
\frac{ \im m(z)}{\eta_0}= \frac{|m|^2}{1-(1+2d\lambda^2)|m(z)|^2}.\ee
% taking the imaginary part of the equation $z_n=-m(z_n)-m^{-1}(z_n)$, we obtain that 
% \be\label{insert m^2}\frac{|m(z_n)|^2}{1-|m(z_n)|^2}  = \frac{ \im m(z_n)}{\eta_n} .\ee 
Inserting \eqref{insert Gxx} and \eqref{insert m^2} into \eqref{Rcancelsum0}, we get that
$$ \sum_{[x]} \left( \Sele_{k_0+1}'(z)\right)^{\LK}_{[0] [x]} \prec  W^{-c_{k_0+1}-\fc}/\blam + W^{-\fc}\eta_0.$$
Together with the lower bound in condition \eqref{Lcondition}, we conclude \eqref{small SnE0} for $\e=\fc/2$. 
 \end{proof}

\section{Proof of local law: block Anderson and Anderson orbital models}\label{sec:ext}

In this section, we extend the proof of the local law for the Wegner orbital model---as developed in Sections \ref{sec:Texp}--\ref{sec:continuity} of the main text and \Cref{sec:sumzero}---to the block Anderson and Anderson orbital models.
In this setting, it is more convenient to work with the following $\cT$ variables:
\be\label{general_Tc}
\begin{split}
    & \cT_{x,yy'}:=\sum_{\al}S_{x\al}\Gc_{\al y}\Gc^-_{\al y'},\quad \cT_{yy',x}:=\sum_{\al}\Gc_{y\al}\Gc^-_{y' \al}S_{\al x},\\
    & \czT_{x,yy'}:=\sum_w P^\perp_{xw} \cT_{w,yy'}= \cT_{x,y y'} -  {N}^{-1}\sum_x \cT_{x,y y'},\quad \czT_{yy',x}:=\sum_{w}\cT_{yy',x}P^\perp_{wx}.
\end{split} 
\ee
To prepare for the proofs,  we begin by presenting the  $T$-expansions of these $T$-variables.

%Before doing so, we will first present the proof of \Cref{2ndExp0}.

%We will also use the following $T$-variable $\czT_{x,yy'}:=\sum_w P^\perp_{xw} \cT_{w,yy'}= \cT_{x,y y'} -  {N}^{-1}\sum_x \cT_{x,y y'}$ with zero mode removed.
%\begin{align}
    %\zT_{x,yy'}&:=\sum_w P^\perp_{xw}T_{w,yy'}= T_{x, y y'} - \frac{1}{N}\sum_x T_{x, y y'} = T_{x, y y'} -  \frac{G_{y' y} -\overline G_{yy'}}{2\ii N \eta},\\
    %\czT_{x,yy'}&:=\sum_w P^\perp_{xw} \cT_{w,yy'}= \cT_{x,y y'} - \frac{1}{N}\sum_x \cT_{x,y y'} .
%\end{align}  
% \begin{align}
%     \zT_{x,yy'}&:=\sum_w P^\perp_{xw}G_{wy}G_{wy'}= G_{x y}G^*_{x y'} - \frac{1}{N}\sum_x G_{x y}G^*_{x y'} = G_{x y}G^*_{x y'}-  \frac{G_{y' y} -\overline G_{yy'}}{2\ii N \eta},\\
%     \czT_{x,yy'}&:=\sum_w P^\perp_{xw} \cT_{w,yy'}= \Gc_{x y}\Gc^*_{x y'} - \frac{1}{N}\sum_x \Gc_{x y}\Gc^*_{x y'} .
% \end{align}  
%where we used Ward's identity. The variables $\zT_{yy',x}$ and 
%The variable $\czT_{yy',x}$ can be defined in a similar way. 

\subsection{Preliminary $T$-expansion}\label{sec_second_T}

In this subsection, we present the preliminary $T$-expansions for the block Anderson and Anderson orbital models. We only provide the $T$-expansions for $\cT_{\fa,\fb_1\fb_2}$ and \smash{$\czT_{\fa,\fb_1\fb_2}$}, while the corresponding $T$-expansions for $\cT_{\fb_1\fb_2,\fa}$ and \smash{$\czT_{\fb_1\fb_2,\fa}$} can be obtained by considering the transposition of indices. 
Using $G= (\lambda \Psi + V -z)^{-1}$ and the definition of $M$ in \eqref{def_G0}, we can write that 
\be\label{G-M}
M^{-1}\Gc=M^{-1}G-1=- ( V + m) G.
\ee 
Taking the partial expectation of $VG$ and applying Gaussian integration by parts to the $V$ entries, we can obtain the following counterpart of Lemmas \ref{2nd3rd T} and \ref{2ndExp}. %For the convenience of readers, we give its derivation in Appendix \ref{sec:ext}. %cite \cite{RBSO} here

\begin{lemma}\label{2ndExp0}
For the block Anderson and Anderson orbital models in \Cref{def: BM}, we have that 
\begin{equation}
	\label{eq:2ndT}
 \begin{split}
\cT_{\fa,\fb_1 \fb_2}
&	=   \sum_x (\thn M^0 S)_{\fa x }   \left(M _{x \fb_1 }\bar M_{ x \fb_2} +\Gc _{x \fb_1 }\bar M_{ x \fb_2}
	+M_{x \fb_1 }\Gc^{-} _{ x \fb_2}\right)\\
 &+\sum_x \thn_{\fa x} \left(\mathcal R^{(2)}_{x,\fb_1 \fb_2} +\mathcal A^{(>2)}_{x,\fb_1 \fb_2}  + \mathcal Q^{(2)}_{x,\fb_1 \fb_2}\right),
 \end{split}
\end{equation}
%and
\begin{equation}
	\label{eq:2nd0}
 \begin{split}
		 \czT_{\fa,\fb_1 \fb_2}
&	=   \sum_x (\zthn M^0 S)_{\fa x }   \left(M _{x \fb_1 }\bar M_{ x \fb_2} +\Gc _{x \fb_1 }\bar M_{ x \fb_2}
	+M_{x \fb_1 }\Gc^{-} _{ x \fb_2}\right)\\
 &+\sum_x \zthn_{\fa x} \left(\mathcal R^{(2)}_{x,\fb_1 \fb_2} +\mathcal A^{(>2)}_{x,\fb_1 \fb_2}  + \mathcal Q^{(2)}_{x,\fb_1 \fb_2}\right),
 \end{split}
\end{equation}
where $\mathcal R^{(2)}_{x,\fb_1 \fb_2}$, $\mathcal A^{(>2)}_{x,\fb_1 \fb_2}$, and $\mathcal Q^{(2)}_{x,\fb_1 \fb_2}$ are defined as  
\begin{align*}
\mathcal R^{(2)}_{x,\fb_1 \fb_2}:= &\sum_{y,\beta}M_{xy} S_{y \beta}\left( \Gc^-_{xy} M_{\beta \fb_1} \bar M_{\beta \fb_2} +  \Gc^-_{xy} M_{\beta \fb_1}\Gc^-_{\beta \fb_2} + \Gc^-_{xy} \Gc_{\beta \fb_1} \bar M_{\beta \fb_2} +  \Gc_{\beta\beta} M_{y \fb_1} \Gc^- _{x\fb_2} \right)\,,\\
\mathcal A^{(>2)}_{x,\fb_1 \fb_2}:= &\sum_{y,\beta} M_{xy} S_{y \beta}\left( \Gc^-_{xy} \Gc _{\beta \fb_1}\Gc^-_{\beta \fb_2} +  \Gc_{\beta\beta} \Gc _{y \fb_1} \Gc^- _{x\fb_2} \right)\,,\\
\mathcal Q^{(2)}_{x,\fb_1 \fb_2}:=&\sum_{y}M_{xy}Q_y \left[(M^{-1}\Gc)_{y \fb_1} \Gc^-_{x\fb_2}\right]- \sum_{y,\beta} M^0_{xy}S_{y \beta}  Q_y \left( G _{\beta \fb_1}G^-_{\beta \fb_2}\right)  - \sum_{y,\beta} M_{xy}S_{y \beta}  Q_y \left(\Gc_{\beta\beta} M_{y \fb_1} \Gc^- _{x\fb_2}\right) \\
&- \sum_{y,\beta} M_{xy}S_{y \beta}  Q_y \left(\Gc^-_{xy} G _{\beta \fb_1}G^-_{\beta \fb_2} +\Gc_{\beta\beta} \Gc_{y \fb_1} \Gc^- _{x\fb_2}\right),
\end{align*}
and $M^{-1}=\lambda\Psi-z-m$ by \eqref{def_G0}.
\end{lemma}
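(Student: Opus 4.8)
The plan is to mimic the derivation of \Cref{2nd3rd T} and \Cref{2ndExp} for the Wegner orbital model, adapting the algebra to the non-scalar matrix limit $M$. First I would start from the resolvent identity \eqref{G-M}, i.e.\ $M^{-1}\Gc = -(V+m)G$, and rewrite $\cT_{\fa,\fb_1\fb_2} = \sum_\al S_{\fa\al}\Gc_{\al\fb_1}\Gc^-_{\al\fb_2}$. The key move is to expand one of the two $\Gc$ factors, say $\Gc_{\al\fb_1}$, by writing $\Gc_{\al\fb_1} = \sum_x M_{\al x}(M^{-1}\Gc)_{x\fb_1} = -\sum_x M_{\al x}\big((V+m)G\big)_{x\fb_1}$. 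This produces a term linear in $V$ to which Gaussian integration by parts is applied: for the complex Gaussian entries $V_{xy}$ with variance $s_{xy}=W^{-d}\mathbf 1([x]=[y])$, the Wick/IBP formula yields $\E_{V}[V_{xy}F(V)] = \sum_{uv}s_{xy}\delta_{xv}\delta_{yu}\,\E_V[\partial_{V_{uv}}F]$, and differentiating $G$ gives $\partial_{V_{uv}}G_{ab} = -G_{au}G_{vb}$. Collecting the resulting contractions—one hitting the other resolvent factor $\Gc^-_{\al\fb_2}$, one producing a diagonal $\Gc_{\beta\beta}$ self-loop, and one producing the leading deterministic term—reproduces exactly the structure $M^0 S$ appearing in \eqref{eq:2ndT}, since $M^0_{xy}=|M_{xy}|^2$ and the variance matrix $S$ enters through the contraction. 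The $-m$ piece of $-(V+m)G$ and the use of the self-consistent equation $\frac1N\tr M = m$ (or rather its pointwise analogue encoded in $M^{-1}=\lambda\Psi-z-m$) are what make the geometric series $\thn = S/(1-M^0S) = \sum_k S(M^0S)^k$ close up; one solves the resulting linear equation in $\cT$ by inverting $(1-M^0S)$, which is where the $\thn$ prefactor in \eqref{eq:2ndT} comes from.

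Next I would sort the post-IBP terms into the four groups on the right-hand side. The leading deterministic term, after replacing $G$ by $M+\Gc$, splits into the three pieces $M_{x\fb_1}\bar M_{x\fb_2}$, $\Gc_{x\fb_1}\bar M_{x\fb_2}$, $M_{x\fb_1}\Gc^-_{x\fb_2}$ shown in \eqref{eq:2ndT}; note that unlike the Wegner case the $M$ entries are genuinely off-diagonal and non-scalar, which is precisely why the $\cT$ variables (built from $\Gc$, not $G$) are the natural objects and why the extra ``recollision'' term $\mathcal R^{(2)}$ appears—it collects the terms where a contraction forces $\al$ (or $\beta$) to recollide with $x$ through an $M$ edge, producing factors like $\Gc^-_{xy}M_{\beta\fb_1}\bar M_{\beta\fb_2}$. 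The genuinely higher-order terms, carrying two light weights or a light weight times an off-diagonal $\Gc$ product, go into $\mathcal A^{(>2)}$. The terms still carrying a $Q_y$ projection go into $\mathcal Q^{(2)}$; here one uses $Q_y + P_y = 1$ to peel off the $P_y$ part (which contributes to the leading/recollision terms) and keep the $Q_y$ part, together with the identity $(M^{-1}\Gc)_{y\fb_1} = -(V+m)_{y\cdot}G_{\cdot\fb_1}$ to present $\mathcal Q^{(2)}$ in the stated form. The $\czT$ version \eqref{eq:2nd0} then follows from \eqref{eq:2ndT} by applying $P^\perp$ on the left (equivalently subtracting the zero mode $N^{-1}\sum_x\cT_{x,\fb_1\fb_2}$), exactly as \eqref{eq:2nd} follows from \eqref{seconduniversal}, using that $\thn$ and $\zthn$ differ only by the rank-one zero-mode projection and $M^0S$ commutes with $P^\perp$ (since $S=S(0)$ or $S(\lambda)$ is translation invariant on the block lattice and $M$ is block translation invariant).

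The main obstacle, compared with the Wegner orbital computation, is bookkeeping the non-scalar, off-diagonal structure of $M$ carefully: every place where the Wegner proof uses $M=mI$ and can freely commute scalars, here one must track which index the $M$ edges connect and verify that the contractions still telescope into $\thn = S(1-M^0S)^{-1}$ rather than something messier. In particular one must check that the ``recollision'' contributions genuinely organize into the compact $\mathcal R^{(2)}$ expression and do not leak extra terms, and that the identity $\thn M^0 S$ (rather than just $\thn$) correctly prefactors the leading deterministic piece—this is a consequence of expanding $\Gc_{\al\fb_1}$ via $M M^{-1}\Gc$ and then resumming, so the extra $M^0 S$ is exactly the ``first step'' of the random walk that is not yet absorbed into $\thn$. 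Once the structure is verified, the remaining verification that each of $\mathcal R^{(2)}$, $\mathcal A^{(>2)}$, $\mathcal Q^{(2)}$ is a sum of normal graphs with the asserted form is routine graph-theoretic accounting, identical in spirit to \cite{BandI,BandIII}. I would conclude by remarking that, as in the Wegner case, one can iterate the weight-expansion (\Cref{ssl}) on the light weights $\Gc_{\beta\beta}$, $\Gc^-_{xx}$ inside $\mathcal A^{(>2)}$ to obtain the third-order preliminary $T$-expansion used as the actual starting point of the construction in \Cref{sec:constr}.
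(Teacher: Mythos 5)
Your proposal is correct and follows essentially the same route as the paper: expand one $\Gc$ factor through $M^{-1}\Gc=-(V+m)G$, apply Gaussian integration by parts in $V$, close the resulting linear equation by inverting $1-M^0S$ (which produces the $\thn M^0S$ prefactor), and obtain the $\czT$ version via the zero-mode projection $P^\perp$ using block translation invariance. The only cosmetic difference is that the paper solves the equation at the level of $\Gc_{x\fb_1}\Gc^-_{x\fb_2}$ and multiplies by $S$ (resp.~$P^\perp S$) at the end, whereas you contract with $S$ first; the two are equivalent.
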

\begin{proof}%[\bf Proof of \Cref{2ndExp0}]
%Now, we expand the term $\Gc _{xy}  \Gc^*_{xy'}$. 
Using \eqref{G-M}, we can write that
\begin{align*}  
	&~  \Gc _{x\fb_1}\Gc^{-}_{x\fb_2}- \sum_{\al}  M_{x\al}Q_\al\left[(M^{-1}\Gc)_{\al \fb_1} \Gc^{-}_{x\fb_2}  \right]  = - \sum_{\al,\beta}M_{x\al}P_\al\left(V_{\al\beta} G_{\beta \fb_1 }\Gc^{-}_{x\fb_2}  \right) - \sum_{\al,\beta}m M_{x\al}P_\al\left( G _{\al \fb_1 }\Gc^{-}_{x\fb_2} \right)  .
\end{align*}
Then, applying Gaussian integration by parts with respect to $V_{\al\beta}$, we obtain that 
\begin{align*}  
	&~  \Gc _{x\fb_1}\Gc^{-}_{x\fb_2}- \sum_{\al}  M_{x\al}Q_\al\left[(M^{-1}\Gc)_{\al \fb_1} \Gc^{-}_{x\fb_2}  \right]  \\
	=&~ \sum_{\al,\beta}M_{x\al}S_{\al\beta}P_\al\left(G_{\beta\beta}G_{\al \fb_1 }\Gc^{-}_{x\fb_2} \right) - \sum_{\al,\beta}m M_{x\al}P_\al\left( G_{\al \fb_1 }\Gc^{-}_{x\fb_2}\right) + \sum_{\al,\beta}M_{x\al}S_{\al\beta}P_\al\left(G_{\beta \fb_1 }G^-_{\beta \fb_2}G^-_{ x\al}\right) \\
	=&~\sum_{\al,\beta}|M_{x\al}|^2 S_{\al \beta} G _{\beta \fb_1}G^-_{\beta \fb_2}   +\sum_{\al,\beta}M_{x\al} S_{\al \beta} G _{\beta \fb_1}G^-_{\beta \fb_2} \Gc^-_{x\al} + \sum_{\al,\beta}  M_{x\al} S_{\al \beta}\Gc_{\beta\beta} G _{\al \fb_1} \Gc^- _{x\fb_2}
	\\ 
	&\quad- \sum_{\al,\beta}M_{x\al} S_{\al \beta} Q_\al\left(G _{\beta \fb_1}G^-_{\beta \fb_2} G^-_{x\al}\right) - \sum_{\al,\beta}  M_{x\al} S_{\al \beta}Q_\al\left(\Gc_{\beta\beta} G _{\al \fb_1} \Gc^{-}_{x\fb_2}\right)   .
\end{align*}
From this equation, we get that
\begin{align*}
	&\sum_\beta (1-M^0 S)_{x\beta}\Gc _{\beta \fb_1 }\Gc^- _{\beta \fb_2} =\sum_{\beta}(M^0 S)_{x \beta} \left( M_{\beta \fb_1}\bar M_{\beta \fb_2}+ \Gc_{\beta \fb_1}\bar M_{\beta \fb_2} + M_{\beta \fb_1}\Gc^-_{\beta \fb_2} \right) \\ 
	&\quad +\sum_{\al,\beta}M_{x\al} S_{\al \beta} \left(G _{\beta \fb_1}G^-_{\beta \fb_2} \Gc^-_{x\al}+ \Gc_{\beta\beta} G _{\al \fb_1} \Gc^- _{x\fb_2}\right)+ \sum_{\al}  M_{x\al}Q_\al\left[(M^{-1}\Gc)_{\al \fb_1} \Gc^{-}_{x\fb_2} \right] 
	\\ 
	&\quad  - \sum_{\al,\beta}  M_{x\al} S_{\al \beta}Q_\al\left(G _{\beta \fb_1}G^-_{\beta \fb_2} G^-_{x\al}+\Gc_{\beta\beta} G _{\al \fb_1} \Gc^{-}_{x\fb_2}\right) ,
\end{align*}
solving which gives
\begin{align}  
	 \Gc _{x \fb_1 } \Gc^{-}_{x \fb_2} 
	&=  \sum_{ \beta} [(1-M^0 S)^{-1}M^0 S]_{x \beta } \left(M _{\beta \fb_1 }\bar M_{ \beta \fb_2} + \Gc _{\beta \fb_1 } \bar M_{ \beta \fb_2}
	+M_{\beta \fb_1 }\Gc^{-} _{ \beta \fb_2}\right)\nonumber
	\\
 & +\sum_{y,\al,\beta}(1-M^0S)^{-1}_{x y} M_{y\al} S_{\al \beta}\left(  G _{\beta \fb_1 }G^-_{\beta \fb_2} \Gc^-_{y\al} +  \Gc_{\beta\beta} G _{\al \fb_1 } \Gc^- _{y\fb_2} \right)  + Q_{x,\fb_1 \fb_2},\label{eq:GGbar_exp}
\end{align}
where $Q_{x,\fb_1\fb_2}$ is a sum of $Q$-graphs defined as 
\begin{align*}
	Q_{x,\fb_1\fb_2}&=\sum_{y,\al}(1-M^0S)^{-1}_{xy}M_{y\al}Q_\al\bigg[(M^{-1}\Gc)_{\al \fb_1} \Gc^-_{y\fb_2}   - \sum_{\beta}  S_{\al\beta} \left(G _{\beta \fb_1}G^-_{\beta \fb_2} G^-_{y\al}+\Gc_{\beta\beta} G _{\al \fb_1} \Gc^- _{y\fb_2}\right)\bigg] .
\end{align*}
Multiplying \eqref{eq:GGbar_exp} with $S$ (resp.~$P^\perp S$) to the left, expanding $G$ as $G=\Gc+M$, and renaming the indices, we obtain the expansions \eqref{eq:2ndT} (resp.~\eqref{eq:2nd0}). 
\end{proof}

From \eqref{eq:2nd0}, we can further derive the preliminary $T$-expansion for the block Anderson/Anderson orbital model by performing further expansions with respect to \smash{$\Gc^-_{xy}$ and $\Gc_{\beta\beta}$} in \smash{$\mathcal A^{(>2)}_{x,\fb_1\fb_2}$}, which corresponds to \Cref{3rdExp} above for the Wegner orbital model.

\begin{lemma}[Preliminary $T$-expansion for $\BA$ and $\AO$]\label{2ndExp_AO}
For the block Anderson and Anderson orbital models, we have that
\begin{equation}
	\label{eq:2ndT_AO}
 \begin{split}
 \czT_{\fa,\fb_1 \fb_2}
	&=   \sum_x (\thn M^0 S)_{\fa x }   \left(M _{x \fb_1 }\bar M_{ x \fb_2} +\Gc _{x \fb_1 }\bar M_{ x \fb_2}
	+M_{x \fb_1 }\Gc^{-} _{ x \fb_2}\right) \\
 &+\sum_x \thn_{\fa x} \left(\mathcal R^{(3)}_{x,\fb_1 \fb_2} +\mathcal A^{(>3)}_{x,\fb_1 \fb_2}  + \mathcal Q^{(3)}_{x,\fb_1 \fb_2}\right),
 \end{split}
\end{equation}
%and
\begin{equation}
	\label{eq:2nd_AO}
 \begin{split}
 \czT_{\fa,\fb_1 \fb_2}
	&=   \sum_x (\zthn M^0 S)_{\fa x }   \left(M _{x \fb_1 }\bar M_{ x \fb_2} +\Gc _{x \fb_1 }\bar M_{ x \fb_2}
	+M_{x \fb_1 }\Gc^{-} _{ x \fb_2}\right) \\
 &+\sum_x \zthn_{\fa x} \left(\mathcal R^{(3)}_{x,\fb_1 \fb_2} +\mathcal A^{(>3)}_{x,\fb_1 \fb_2}  + \mathcal Q^{(3)}_{x,\fb_1 \fb_2}\right),
 \end{split}
\end{equation}
where $\mathcal R^{(3)}_{x,\fb_1 \fb_2}$, $\mathcal A^{(>3)}_{x,\fb_1 \fb_2}$, and $\mathcal Q^{(3)}_{x,\fb_1 \fb_2}$ are defined as 
\begin{align*}
\mathcal R^{(3)}_{x,\fb_1 \fb_2}&:=\mathcal R^{(2)}_{x,\fb_1 \fb_2}+\sum_{y} M_{xy} \mathcal R^{(3)}_{x,y;\fb_1\fb_2} ,\quad
\mathcal A^{(>3)}_{x,\fb_1 \fb_2}:=\sum_{y} M_{xy} \mathcal A^{(>3)}_{x,y;\fb_1\fb_2} ,\\
 \mathcal Q^{(3)}_{x,\fb_1 \fb_2}&:=\mathcal Q^{(2)}_{x,\fb_1 \fb_2}  +\sum_{y}M_{xy} \mathcal Q^{(3)}_{x,y;\fb_1\fb_2} .
\end{align*}
Here, $\mathcal R^{(3)}_{x,y;\fb_1 \fb_2}$, $\mathcal A^{(>3)}_{x,y;\fb_1 \fb_2}$, and $\mathcal Q^{(3)}_{x,y;\fb_1 \fb_2}$ are defined as 
\begin{align*}
&\mathcal R^{(3)}_{x,y;\fb_1 \fb_2}:= \sum_{ \beta , \al_1,\beta_1}S_{ y \beta }\overline M_{x\al_1} S_{\al_1\beta_1} G^-_{\beta_1 y} \left(G_{ \beta  \al_1}M_{\beta_1 \fb_1} \Gc^-_{ \beta  \fb_2}  +G^-_{ \beta  \beta_1} \Gc _{ \beta  \fb_1} \overline M_{\al_1 \fb_2}  \right)\\
&\quad + \sum_{\beta,\al_1,\beta_1} S^+_{y \beta} M_{\beta\al_1}S_{\al_1\beta_1}G_{\beta_1 \beta}\left(  G_{y\beta_1}M_{\al_1\fb_1}\Gc^- _{x\fb_2}    +  G^- _{x \al_1} \Gc _{y \fb_1}\overline M_{\beta_1 \fb_2}  \right)\\
&\quad +\sum_{\beta,\al_1,\beta_1,\al_2,\beta_2}S_{ y\beta}\overline M_{x\al_1}\overline M_{\al_1 y}S^-_{\al_1 \beta_1}\overline M_{\beta_1\al_2}S_{\al_2\beta_2}G^-_{\beta_2 \beta_1}  \left( G_{\beta \al_2}M_{\beta_2 \fb_1}\Gc^-_{\beta \fb_2} + G^-_{\beta \beta_2} \Gc _{\beta \fb_1}\overline M_{\al_2 \fb_2} \right),\\
% \end{align*}
% \begin{align*}
    &\mathcal A^{(>3)}_{x,y;\fb_1\fb_2}:= \sum_{\beta,\al_1,\beta_1} S_{ y\beta}\overline M_{x \al_1} S_{\al_1\beta_1} \left( \Gc^-_{\beta_1\beta_1} \Gc^-_{\al_1 y} \Gc _{\beta \fb_1}\Gc^-_{\beta \fb_2} +  G_{\beta_1 y}^- G_{\beta \al_1} \Gc_{\beta_1 \fb_1}\Gc^-_{\beta \fb_2}   +  G^-_{\beta_1y} G^-_{\beta \beta_1} \Gc _{\beta \fb_1}\Gc^-_{\al_1 \fb_2} \right)\\
    &\quad + \sum_{\beta,\al_1,\beta_1} S^+_{y \beta} M_{\beta\al_1}S_{\al_1\beta_1}\left(\Gc_{\al_1 \beta}\Gc_{\beta_1\beta_1}\Gc _{y \fb_1} \Gc^- _{x \fb_2} + G_{\beta_1 \beta} G_{y\beta_1}\Gc_{\al_1\fb_1} \Gc^- _{x \fb_2}   + G_{\beta_1 \beta}  G^- _{x  \al_1} \Gc _{y \fb_1}\Gc^-_{\beta_1 \fb_2}  \right)\\
    &\quad +\sum_{\beta,\al_1,\beta_1,\al_2,\beta_2}S_{ y\beta}\overline M_{x\al_1}\overline M_{\al_1 y}S^-_{\al_1 \beta_1}\overline M_{\beta_1\al_2}S_{\al_2\beta_2}   \\
 &\qquad \times \left(\Gc^-_{\beta_2\beta_2} 
 \Gc^-_{\al_2 \beta_1} \Gc _{\beta \fb_1}\Gc^-_{\beta \fb_2} + G^-_{\beta_2 \beta_1} G_{\beta \al_2}\Gc_{\beta_2 \fb_1}\Gc^-_{\beta \fb_2} + G^-_{\beta_2 \beta_1}  G^-_{\beta \beta_2} \Gc _{\beta \fb_1}\Gc^-_{\al_2 \fb_2} \right) ,\\
%  \end{align*}
% \begin{align*}
    &\mathcal Q^{(3)}_{x,y;\fb_1\fb_2}:=\sum_{\beta,\al_1} S_{y\beta}\overline M_{x\al_1}  Q_{\al_1}\left[(M^{-1}\Gc)^-_{\al_1 y}  \Gc _{\beta \fb_1}\Gc^-_{\beta \fb_2} \right] +\sum_{\beta,\al_1} S^+_{y \beta} M_{\beta\al_1} Q_{\al_1}\left[(M^{-1}\Gc)_{\al_1 \beta} \Gc _{y \fb_1} \Gc^- _{x\fb_2}\right]  \\
    &\quad +\sum_{\beta,\al_1,\beta_1,\al_2}S_{y\beta}\overline M_{x\al_1}\overline M_{\al_1y}S^-_{\al_1 \beta_1}\overline M_{\beta_1\al_2} Q_{\al_2}\left[(M^{-1}\Gc)_{\al_2 \beta_1} \Gc _{\beta \fb_1}\Gc^-_{\beta \fb_2}\right]  \\
    &\quad - \sum_{\beta,\al_1,\beta_1}S_{y\beta} \overline M_{x\al_1}\bar M_{\al_1 y} S_{\al_1\beta_1} Q_{\al_1}\left(\Gc^-_{\beta_1\beta_1}\Gc _{\beta \fb_1}\Gc^-_{\beta \fb_2}\right) -\sum_{\beta,\al_1,\beta_1}  S^+_{y \beta} M^+_{\beta\al_1}  S_{\al_1\beta_1} Q_{\al_1}\left(  \Gc_{\beta_1\beta_1} \Gc _{y \fb_1} \Gc^- _{x \fb_2}\right)\\
    &\quad  - \sum_{\beta,\al_1,\beta_1,\al_2,\beta_2}S_{y\beta}\overline M_{x\al_1}\overline M_{\al_1y}S^-_{\al_1 \beta_1} (\bar M_{\beta_1\al_2})^2 S_{\al_2\beta_2} Q_{\al_2} \left(\Gc^-_{\beta_2\beta_2} \Gc _{\beta \fb_1}\Gc^-_{\beta \fb_2} \right)\\
    &\quad  - \sum_{\beta,\al_1,\beta_1}S_{y\beta} \overline M_{x\al_1} S_{\al_1\beta_1} Q_{\al_1}\left(\Gc^-_{\beta_1\beta_1}\Gc^-_{\al_1 y} \Gc _{\beta \fb_1}\Gc^-_{\beta \fb_2}+ G^-_{\beta_1 y}  G_{\beta \al_1}G_{\beta_1 \fb_1}\Gc^-_{\beta \fb_2}   + G^-_{\beta_1 y} G^-_{\beta \beta_1} \Gc _{\beta \fb_1}G^-_{\al_1 \fb_2}  \right)\\
     &\quad -\sum_{\beta,\al_1,\beta_1}  S^+_{y \beta} M_{\beta\al_1}  S_{\al_1\beta_1} Q_{\al_1}\left(  \Gc_{\beta_1\beta_1} \Gc_{\al_1 \beta}\Gc _{y \fb_1} \Gc^- _{x \fb_2}+ G_{\beta_1\beta} G _{y \beta_1}G_{\al_1\fb_1} \Gc^- _{x \fb_2} + G_{\beta_1\beta}G^- _{x  \al_1} \Gc _{y \fb_1} G^-_{\beta_1\fb_2}\right)\\    
    &\quad - \sum_{\beta,\al_1,\beta_1,\al_2,\beta_2}S_{y\beta}\overline M_{x\al_1}\overline M_{\al_1y}S^-_{\al_1 \beta_1}\overline M_{\beta_1\al_2} S_{\al_2\beta_2} \\
    &\qquad \times Q_{\al_2}\left(\Gc^-_{\beta_2\beta_2}\Gc^-_{\al_2 \beta_1} \Gc _{\beta \fb_1}\Gc^-_{\beta \fb_2}  + G^-_{\beta_2 \beta_1}  G_{\beta \al_2}G_{\beta_2 \fb_1}\Gc^-_{\beta \fb_2}  + G^-_{\beta_2 \beta_1} G^-_{\beta \beta_2} \Gc _{\beta \fb_1}G^-_{\al_2 \fb_2} \right) .
\end{align*}
\end{lemma}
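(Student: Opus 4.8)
The plan is to derive the expansions \eqref{eq:2ndT_AO} and \eqref{eq:2nd_AO} by performing one further round of expansions on the light-weight and light-edge factors appearing in $\mathcal A^{(>2)}_{x,\fb_1 \fb_2}$ from \Cref{2ndExp0}. Recall that, after \Cref{2ndExp0}, the only term in \eqref{eq:2nd0} that is not yet of the desired (third) order is
$$\sum_x \zthn_{\fa x}\,\mathcal A^{(>2)}_{x,\fb_1 \fb_2} = \sum_x \zthn_{\fa x}\sum_{y,\beta} M_{xy} S_{y\beta}\left(\Gc^-_{xy}\,\Gc_{\beta \fb_1}\Gc^-_{\beta \fb_2} + \Gc_{\beta\beta}\,\Gc_{y\fb_1}\Gc^-_{x\fb_2}\right).$$
Each summand contains a ``light'' factor---either the light weight $\Gc_{\beta\beta}$ or the off-diagonal light edge $\Gc^-_{xy}$ (recall $y\sim x$ or $[y]\sim[x]$ since $S_{y\beta}$ and $M_{xy}$ force $x,y$ to be mesoscopically close)---whose naive scaling size $\heta^{1/2}$ must be improved by one more expansion step in order to reach order $3$. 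First I would isolate the factor $M_{xy}$ (writing $\mathcal R^{(3)}_{x,\fb_1\fb_2}=\mathcal R^{(2)}_{x,\fb_1\fb_2}+\sum_y M_{xy}\mathcal R^{(3)}_{x,y;\fb_1\fb_2}$, and similarly for $\mathcal A^{(>3)}$ and $\mathcal Q^{(3)}$), so that the task reduces to expanding the inner quantities $\Gc^-_{xy}\Gc_{\beta\fb_1}\Gc^-_{\beta\fb_2}$ and $\Gc_{\beta\beta}\Gc_{y\fb_1}\Gc^-_{x\fb_2}$.

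The key step is to expand the two light factors using the same mechanism as in the proof of \Cref{2ndExp0}: for the light weight $\Gc_{\beta\beta}$, one writes $M^{-1}\Gc=-(V+m)G$, applies $Q_\beta$ and $P_\beta$, then performs Gaussian integration by parts with respect to the $V_{\beta\cdot}$ entries (using that $S_{y\beta}$, $S^+$ entries arise from the variance profile), which produces: a new $M^0 S$-type chain, a new off-diagonal light edge $\Gc^-$ attached to $\beta$, a new $Q_\beta$-graph, and graphs where $\Gc_{\beta\beta}$ has been ``upgraded'' to two light edges. For the light edge $\Gc^-_{xy}$, one applies the analogous expansion pulling out one more vertex $\al_1$ (with $\overline M_{x\al_1}$, $S_{\al_1\beta_1}$ factors), and possibly iterates once more when an $S^-$ factor appears—this is the origin of the five-index sums $\sum_{\beta,\al_1,\beta_1,\al_2,\beta_2}$ in the statement. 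At each step I would use the defining identity $S(1+m^2S^+)=S^+$ (or its block-model analogue via \eqref{def:S+} and \eqref{def_G0}) to resum the geometric series of $S$-edges into $S^+$ edges, which is what produces the $S^+_{y\beta}$ factors. Throughout, every newly created $Q$-graph is collected into $\mathcal Q^{(3)}$, every graph with a surviving $M$ weight on a vertex attached to $\fb_2$ (or $\fb_1$) becomes a recollision graph in $\mathcal R^{(3)}$, and the genuinely higher-order graphs (two new light factors, so scaling size $\OO(\heta)$ relative to the chain) go into $\mathcal A^{(>3)}$. After all of this one obtains \eqref{eq:2ndT_AO}; equation \eqref{eq:2nd_AO} then follows verbatim by replacing $\thn$ with $\zthn=P^\perp\thn P^\perp$ everywhere, i.e.\ by left-multiplying the whole identity by $P^\perp$, exactly as in the passage from \eqref{eq:2ndT} to \eqref{eq:2nd0}.

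The main obstacle, in my view, is purely bookkeeping rather than conceptual: keeping track of the exact index structure and coefficients of the roughly a dozen resulting graphs, especially the triple-iterated terms with five summation indices $\al_1,\beta_1,\al_2,\beta_2$ and the two separate sub-cases (the $\Gc_{\beta\beta}$-expansion feeding an $S^+$ chain and the $\Gc^-_{xy}$-expansion feeding either an $S$ or $S^-$ chain). One must be careful that the charges (blue/red, i.e.\ $G$ versus $\overline G$) are assigned consistently---in particular the $\overline M_{x\al_1}$ factors and the $G^-$ edges along the $\al_1,\beta_1,\ldots$ chains come from expanding the complex-conjugated resolvent $\Gc^-_{xy}$---and that no term is double-counted when the light-weight expansion and the light-edge expansion are applied to the two summands of $\mathcal A^{(>2)}$ in parallel. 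Since the block Anderson and Anderson orbital models differ from the Wegner orbital model essentially only in the replacement of the trivial variance by $S=I_n\otimes\mathbf E$ (or $S=s_{xy}$) and in the non-scalar form of $M$, and since these local expansions are all derived from Gaussian integration by parts plus the Schur complement identity \eqref{resol_exp0}, no new analytic input is needed beyond what has already been used for the Wegner case; the verification that each collected graph lies in the claimed class ($\mathcal R$, $\mathcal A^{(>3)}$, or $\mathcal Q$) with the stated scaling-size gain is a routine application of \Cref{lem deter} and \Cref{lem:propM}.
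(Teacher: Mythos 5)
Your proposal is correct and follows essentially the same route as the paper, whose proof simply applies the basic (light-edge) expansion of \Cref{lanlw} to $\Gc^-_{xy}$ and the self-loop expansion of \Cref{lem_lweight} to $\Gc_{\beta\beta}$ and to the newly produced light weight $\Gc^-_{\beta_1\beta_1}$ from \eqref{eq:new_graph3} — exactly the further round of Gaussian-integration-by-parts expansions, with $S$-chains resummed into $S^\pm$, that you describe. The only cosmetic imprecisions are that the five-index sums arise from the self-loop expansion of that newly created light weight (which is what brings in the $S^-$ chain), rather than from iterating the light-edge expansion, and that the Schur complement identity \eqref{resol_exp0} plays no role at this stage.
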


\begin{proof}%[Proof of \Cref{2ndExp_AO}]
To get the expansions in \eqref{eq:2ndT_AO} and \eqref{eq:2nd_AO} from \eqref{eq:2ndT} and \eqref{eq:2nd0}, we first use the expansion in \Cref{lanlw} below to expand \smash{$\sum_{\beta} S_{y \beta}\Gc^-_{xy} \Gc _{\beta \fb_1}\Gc^-_{\beta \fb_2} $ in $\mathcal A^{(>2)}_{x,\fb_1 \fb_2}$} with respect to $\Gc^-_{xy}$. This gives some graphs in \smash{$\mathcal R^{(3)}_{x,\fb_1 \fb_2}$, $\mathcal A^{(>3)}_{x,\fb_1 \fb_2}$, and $\mathcal Q^{(3)}_{x,\fb_1 \fb_2}$}, along with the following graph: 
\be\label{eq:new_graph3} \sum_{\beta,\al_1,\beta_1}S_{y \beta}\bar M_{x\al_1} \bar M_{\al_1 y}S_{\al_1\beta_1} \Gc_{\beta_1\beta_1}^- \Gc _{\beta \fb_1}\Gc^-_{\beta \fb_2}.\ee
Next, we apply \Cref{lem_lweight} below to expand \eqref{eq:new_graph3} with respect to $\Gc_{\beta_1\beta_1}^-$ and expand $\sum_{\beta}S_{y \beta} \Gc_{\beta\beta} \Gc _{y \fb_1} \Gc^- _{x\fb_2}$ in \smash{$\mathcal A^{(>2)}_{x,\fb_1 \fb_2}$ with respect to $\Gc^-_{\beta\beta}$}. This leads to the expansions \eqref{eq:2ndT_AO} and \eqref{eq:2nd_AO}. \end{proof}

Although the expressions in \Cref{2ndExp_AO} are lengthy, in the proof, we will only use their scaling sizes and some ``simple graphical structures" of them. 
Take the expansion \eqref{eq:2nd_AO} as an example, we first observe that all new vertices generated in the expansion belong to the same molecule as $x$. The terms 
$$\sum_x (\zthn M^0 S)_{\fa x }   \left(M _{x \fb_1 }\bar M_{ x \fb_2} +\Gc _{x \fb_1 }\bar M_{ x \fb_2}
+M_{x \fb_1 }\Gc^{-} _{ x \fb_2}\right) +\sum_x \zthn_{\fa x} \mathcal R^{(3)}_{x,\fb_1 \fb_2} $$ 
consist of $\{\fb_1,\fb_2\}$-recollision graphs, where an internal vertex connects to $\fb_1$ or $\fb_2$ via a dotted edge (in the case of the block Anderson model) or a waved edge (in the case of the Anderson orbital model). 
Recall the scaling sizes defined in \Cref{def scaling}. It is easy to check that
$$\size \Big(\sum_x (\zthn M^0 S)_{\fa x } M _{x \fb_1 }\bar M_{ x \fb_2}\Big)\lesssim \heta= \size(\czT_{x,yy'}),$$
which means that this term is the leading term of the $\cal T$-expansion \eqref{eq:2nd_AO} in the sense of scaling size. All other recollision graphs have strictly smaller scaling sizes: 
$$ \size\bigg(\sum_x (\zthn M^0 S)_{\fa x }   \left(\Gc _{x \fb_1 }\bar M_{ x \fb_2}+M_{x \fb_1 }\Gc^{-} _{ x \fb_2}\right) +\sum_x \zthn_{\fa x} \mathcal R^{(3)}_{x,\fb_1 \fb_2}\bigg)\lesssim \heta^{1/2} \size(\czT_{x,yy'}).  $$
The graphs in $\mathcal A^{(>3)}_{x,yy'}$ are higher-order graphs in the sense that they have strictly smaller scaling sizes than the leading term: 
$$ \size\left( \mathcal A^{(>3)}_{x,yy'}\right)\le  \blam \heta^{2} \le W^{-\fd} \heta,$$
under the condition \eqref{eq:cond-lambda2}. Finally, the term $\mathcal Q^{(3)}_{x,\fb_1 \fb_2}$ consists of $Q$-graphs only. Moreover, notice that the following two terms in \smash{$\mathcal Q^{(3)}_{x,\fb_1 \fb_2}$} are the leading terms with largest scaling sizes: 
\begin{align}
&\mathcal Q^{a}_{x,\fb_1 \fb_2}:= \sum_{y}M_{xy}Q_y \left[(M^{-1}\Gc)_{y \fb_1} \Gc^-_{x\fb_2}\right]- \sum_{y,\beta}M^0_{xy}S_{y \beta}  Q_y \left( G _{\beta \fb_1}G^-_{\beta \fb_2}\right), \label{eq_largeQ} \end{align}
which have scaling size
\begin{align*}
&\size\bigg(\sum_{x}\zthn_{\fa x}\mathcal Q^{a}_{x,\fb_1 \fb_2} \bigg) \lesssim \blam \heta, \end{align*}
and the following eight terms are the ``sub-leading terms":
\begin{align}
&\mathcal Q^{b}_{x,\fb_1 \fb_2}:=-\sum_{y,\beta}M_{xy}S_{y \beta}  Q_y \left(\Gc^-_{xy} G _{\beta \fb_1}G^-_{\beta \fb_2} +\Gc_{\beta\beta} \Gc_{y \fb_1} \Gc^- _{x\fb_2}\right)\nonumber\\
&\quad +\sum_{y,\beta,\al_1} M_{xy}S_{y\beta}\overline M_{x\al_1}  Q_{\al_1}\left[(M^{-1}\Gc)^-_{\al_1 y}  \Gc _{\beta \fb_1}\Gc^-_{\beta \fb_2} \right]  +\sum_{y,\beta,\al_1}M_{xy} S^+_{y \beta} M_{\beta\al_1} Q_{\al_1}\left[(M^{-1}\Gc)_{\al_1 \beta} \Gc _{y \fb_1} \Gc^- _{x\fb_2}\right] \nonumber\\
&\quad +\sum_{y,\beta,\al_1,\beta_1,\al_2}M_{xy}S_{y\beta}\overline M_{x\al_1}\overline M_{\al_1y}S^-_{\al_1 \beta_1}\overline M_{\beta_1\al_2} Q_{\al_2}\left[(M^{-1}\Gc)_{\al_2 \beta_1} \Gc _{\beta \fb_1}\Gc^-_{\beta \fb_2}\right]\nonumber\\
&\quad - \sum_{y,\beta,\al_1,\beta_1}M_{xy}S_{y\beta} \overline M_{x\al_1}\bar M_{\al_1 y} S_{\al_1\beta_1} Q_{\al_1}\left(\Gc^-_{\beta_1\beta_1}\Gc _{\beta \fb_1}\Gc^-_{\beta \fb_2}\right) \nonumber\\
&\quad -\sum_{y,\beta,\al_1,\beta_1}M_{xy}  S^+_{y \beta} M^+_{\beta\al_1}  S_{\al_1\beta_1} Q_{\al_1}\left(  \Gc_{\beta_1\beta_1} \Gc _{y \fb_1} \Gc^- _{x \fb_2}\right)\nonumber\\
&\quad  - \sum_{y,\beta,\al_1,\beta_1,\al_2,\beta_2}M_{xy}S_{y\beta}\overline M_{x\al_1}\overline M_{\al_1y}S^-_{\al_1 \beta_1} (\bar M_{\beta_1\al_2})^2 S_{\al_2\beta_2} Q_{\al_2} \left(\Gc^-_{\beta_2\beta_2} \Gc _{\beta \fb_1}\Gc^-_{\beta \fb_2} \right) , \label{eq_largeQ2} 
\end{align}
which have scaling size 
\begin{align*}
&\size\bigg(\sum_{x}\zthn_{\fa x}\mathcal Q^{b}_{x,\fb_1 \fb_2} \bigg) \lesssim \blam \heta^{\frac3 2}.  
\end{align*}
% \begin{align}
% &\size\bigg\{\sum_{x,y,\beta}\zthn_{\fa x} M_{xy}S_{y \beta}  Q_y \left(\Gc^-_{xy} G _{\beta \fb_1}G^-_{\beta \fb_2} +\Gc_{\beta\beta} \Gc_{y \fb_1} \Gc^- _{x\fb_2}\right)\nonumber\\
% &\quad +\sum_{x,y,\beta,\al_1} \zthn_{\fa x}M_{xy}S_{y\beta}\overline M_{x\al_1}  Q_{\al_1}\left[(M^{-1}\Gc)^-_{\al_1 y}  \Gc _{\beta \fb_1}\Gc^-_{\beta \fb_2} \right] \nonumber\\
% &\quad +\sum_{x,y,\beta,\al_1}\zthn_{\fa x}M_{xy} S^+_{y \beta} M_{\beta\al_1} Q_{\al_1}\left[(M^{-1}\Gc)_{\al_1 \beta} \Gc _{y \fb_1} \Gc^- _{x\fb_2}\right] \nonumber\\
% &\quad +\sum_{x,y,\beta,\al_1,\beta_1,\al_2}\zthn_{\fa x}M_{xy}S_{y\beta}\overline M_{x\al_1}\overline M_{\al_1y}S^-_{\al_1 \beta_1}\overline M_{\beta_1\al_2} Q_{\al_2}\left[(M^{-1}\Gc)_{\al_2 \beta_1} \Gc _{\beta \fb_1}\Gc^-_{\beta \fb_2}\right]\nonumber\\
% &\quad - \sum_{x,y,\beta,\al_1,\beta_1}\zthn_{\fa x}M_{xy}S_{y\beta} \overline M_{x\al_1}\bar M_{\al_1 y} S_{\al_1\beta_1} Q_{\al_1}\left(\Gc^-_{\beta_1\beta_1}\Gc _{\beta \fb_1}\Gc^-_{\beta \fb_2}\right) \nonumber\\
% &\quad -\sum_{x,y,\beta,\al_1,\beta_1}\zthn_{\fa x}M_{xy}  S^+_{y \beta} M^+_{\beta\al_1}  S_{\al_1\beta_1} Q_{\al_1}\left(  \Gc_{\beta_1\beta_1} \Gc _{y \fb_1} \Gc^- _{x \fb_2}\right)\nonumber\\
% &\quad  - \sum_{x,y,\beta,\al_1,\beta_1,\al_2,\beta_2}\zthn_{\fa x}M_{xy}S_{y\beta}\overline M_{x\al_1}\overline M_{\al_1y}S^-_{\al_1 \beta_1} (\bar M_{\beta_1\al_2})^2 S_{\al_2\beta_2} Q_{\al_2} \left(\Gc^-_{\beta_2\beta_2} \Gc _{\beta \fb_1}\Gc^-_{\beta \fb_2} \right)\bigg\} \lesssim \blam \heta^{\frac3 2}. \label{eq_largeQ2} 
% \end{align}
All the other graphs in $\sum_x \zthn_{\fa x}\mathcal Q^{(3)}_{x,\fb_1 \fb_2}$ have scaling sizes at most $\OO(\heta^{3/2}+\blam \heta^{2})$.  

The above discussions show that after replacing $\czT_{\fa,\fb_1\fb_2}$ with the graphs on the RHS of \eqref{eq:2nd_AO}, the resulting graphs have scaling sizes of \smash{at most $\size(\czT_{\fa,\fb_1\fb_2})$}, except those obtained by replacing \smash{$\czT_{\fa,\fb_1\fb_2}$} with the $Q$-graphs in \eqref{eq_largeQ} and \eqref{eq_largeQ2}. However, after applying the $Q$-expansions (defined as in \Cref{subsec_Qexp}) to these graphs, the leading terms are still $Q$-graphs, but the non-$Q$-graphs will automatically gain at least one more solid edge, i.e., a factor of \smash{$\heta^{1/2}$}. Then, by substituting \smash{$\czT_{\fa,\fb_1\fb_2}$} with the $Q$-graphs in \eqref{eq_largeQ2} and conducting $Q$-expansions, the resulting non-$Q$-graphs will have scaling sizes of at most $\OO(\blam \heta^{2})=\OO( W^{-\fd} \heta)$. 
On the other hand, the $Q$-expansions involving the $Q$-graphs in \eqref{eq_largeQ} will lead to at least two additional solid edges in the resulting non-$Q$-graphs, as discussed below \eqref{eq:leadingQs} in the context of the Wegner orbital model. Hence, the new graphs from the $Q$ and local expansions will have scaling sizes of at most $\OO(\blam \heta^{2})=\OO( W^{-\fd} \heta)$.  
%In \Cref{sec:global_WO} of the main text, we have a similar discussion of this issue for the Wegner orbital model. 

%\end{example}
%\end{remark}

\subsection{$T$-expansion, $\pTexp$, and complete $T$-expansion}

Similar to \Cref{defn genuni,def incompgenuni,defn pseudoT} and \Cref{def nonuni-T}, we define the $T$-expansion, $T$-equation, $\pTexp$, and complete $T$-expansion for the block Anderson and Anderson orbital models as follows. 

\begin{definition} [$T$-expansion]\label{defn genuni_BA}
In the setting of \Cref{thm_locallaw}, let $z=E+\ii \eta$ with $|E|\le 2-\kappa$ and $\eta\ge \eta_0$ for some $\eta_0\in [\eta_*,1]$. For the block Anderson and Anderson orbital models, suppose we have a sequence of $\selfs$ $\cal E_k$, $k=1, \ldots, k_0$, satisfying \Cref{collection elements} with $\eta\ge \eta_0$ and the properties (i)--(iii) in \Cref{defn genuni}. Let \smash{$\Sele=\sum_{k=1}^{k_0} \Sele_k$}, and denote by $\Sele_k'$ the term obtained by replacing each \smash{$\zthn$} edge in $\Sele_k$ by a $\thn$ edge.
Then, a $T$-expansion of \smash{$\czT_{\fa,\fb_1\fb_2}$} up to order $\fC$ (with $\eta\ge \eta_0$ and $D$-th order error) is an expression of the form
\begin{equation}
	\label{mlevelTgdef_BA}
	\begin{split}
		\czT_{\fa,\fb_1 \fb_2}&=  \sum_{x} [\zthn(\Selek) M^0 S]_{\fa x}\left(M _{x \fb_1 }\bar M_{x \fb_2}
	+ \Gc _{x \fb_1 } \bar M_{ x \fb_2}	+M_{x \fb_1 }\Gc^{-} _{ x \fb_2}\right) \\
		&+ \sum_x \zthn(\Selek)_{\fa x} \left[\PT_{x,\fb_1 \fb_2} +  \AT_{x,\fb_1 \fb_2}  + \WT_{x,\fb_1 \fb_2}  + \QT_{x,\fb_1 \fb_2}  +  (\Err_{D})_{x,\fb_1 \fb_2}\right]\, ;
	\end{split}
\end{equation}
a $T$-expansion of $\cT_{\fa,\fb_1\fb_2}$ up to order $\fC$ (with $\eta\ge \eta_0\vee t_{Th}^{-1}$ and $D$-th order error) is an expression of the form
\begin{equation}
	\label{mlevelTgdef_BA_orig}
	\begin{split}
		\cT_{\fa,\fb_1 \fb_2}&=  \sum_{x} [\thn(\Selek') M^0 S]_{\fa x}\left(M _{x \fb_1 }\bar M_{x \fb_2}
	+ \Gc _{x \fb_1 } \bar M_{ x \fb_2}	+M_{x \fb_1 }\Gc^{-} _{ x \fb_2}\right) \\
		&+ \sum_x \thn(\Sele')_{\fa x} \left[\PT'_{x,\fb_1 \fb_2} +  \AT'_{x,\fb_1 \fb_2}    + \QT'_{x,\fb_1 \fb_2}  +  (\Err_{D}')_{x,\fb_1 \fb_2}\right]\, .
	\end{split}
\end{equation}
Here, $\PT',$ $  \AT'$, $ \QT'$, and $\Err_{D}'$ are obtained by replacing each $\zthn$ edge in $\PT,$ $  \AT$, $ \QT$, and $\Err_{D}$ by a $\thn$ edge. Moreover, the graphs in $\PT,$ $  \AT$, $\WT$, $ \QT$, and $\Err_{D}$ satisfy exactly the same properties as in \Cref{defn genuni}, except that the property (5) for $Q$-graphs becomes:
\begin{enumerate}
\item[(5')] For the block Anderson and Anderson orbital models, we denote the expression obtained by removing the 10 graphs in \eqref{eq_largeQ} and \eqref{eq_largeQ2} from \smash{$\cal Q_{x,\fb_1\fb_2}$ as $\cal Q'_{x,\fb_1\fb_2}$. Then, $\QT'_{\fa,\fb_1\fb_2}$} is a sum of $Q$-graphs without any free edge and satisfying \eqref{eq:smallQ}.		
\end{enumerate}
Finally, we require the graphs in $\Sele$, $\PT,$ $  \AT$, $\WT$, and $ \QT$ to satisfy the additional properties in \Cref{def genuni2}.
\end{definition}

\begin{definition}[$\incomp$]\label{def incompgenuni_BA}
In the setting of \Cref{defn genuni_BA}, a $T$-equation of $\czT_{\fa,\fb_1\fb_2}$ for the band Anderson and Anderson orbital models up to order $\fC$ (with $\eta\ge \eta_0$ and $D$-th order error) is an expression of the following form corresponding to \eqref{mlevelTgdef_BA}: 
\begin{equation}\label{mlevelT incomplete_BA}
\begin{split}
	\czT_{\fa,\fb_1\fb_2}&=  \sum_{ x} (\zthn M^0 S)_{\fa x}\left(M _{ x \fb_1}\bar M_{ x \fb_2}
	+ \Gc _{ x \fb_1} \bar M_{  x \fb_2}	+M_{ x \fb_1}\Gc^{-} _{  x \fb_2}\right) +\sum_x (\zthn \Sele)_{\fa x} \czT_{ x,\fb_1\fb_2} \\
		&+ \sum_ x \zthn_{\fa x} \left[\PT_{ x,\fb_1\fb_2} +  \AT_{ x,\fb_1\fb_2}  + \WT_{ x,\fb_1\fb_2}  + \QT_{ x,\fb_1\fb_2}  +  (\Err_{D})_{ x,\fb_1\fb_2}\right]\,,
		\end{split}
\end{equation}
where $\Sele,\, \PT,\, \AT,\, \WT,\, \QT,\, \Err_{\fC,D}$ are the same expressions as in Definition \ref{defn genuni_BA}. The $T$-equation of $\cT_{\fa,\fb_1\fb_2}$ corresponding to \eqref{mlevelTgdef_BA_orig} can be defined in a similar way. 
\end{definition}

\begin{definition} [$\PTexp$/equation]\label{defn pseudoT_BA} 
Fix constants $\fC'\ge \fC>0$ and a large constant $D>\mathfrak C'$. In the setting of \Cref{defn genuni_BA}, suppose we have a sequence of $\selfs$ $\cal E_k$, $k \in \qqq{1,k_0}$, and $\pselfs$ $\Sele_{k'}$, $k' \in \qqq{k_0+1,  k_1}$, satisfying the properties (i)--(iii) in \Cref{defn pseudoT}. Let \smash{$\Sele=\sum_{k=1}^{k_1} \Sele_k$}, and denote by $\Sele_k'$ the term obtained by replacing each \smash{$\zthn$} edge in $\Sele_k$ by a $\thn$ edge. Then, for the block Anderson and Anderson orbital models, a $\pTexp$ of \smash{$\czT_{\fa,\fb_1\fb_2}$} (resp.~$\cT_{\fa,\fb_1\fb_2}$) with real order $\fC>0$, pseudo-order $\fC'\ge \fC$, and error order $D>\fC'$ is still an expression of the form \eqref{mlevelTgdef_BA} (resp.~\eqref{mlevelTgdef_BA_orig}).
The graphs in these expansions still satisfy all the properties in \Cref{defn genuni_BA} except for the two changes described in \Cref{defn pseudoT}. 
Similarly, we can define the $\pTeq$s of real order $\fC$, pseudo-order $\fC'$, and error order $D$ for our RBSOs as in \Cref{def incompgenuni_BA}, where some $\selfs$ become $\pselfs$. 
\end{definition}

\begin{lemma}[Complete $T$-expansion]\label{def nonuni-T_BA}
Under the assumptions of Theorem \ref{thm_locallaw}, suppose the local laws in \eqref{locallaw0} hold for a fixed $z= E+\ii\eta$ with $|E|\le  2-\kappa$ and $\eta \in [\eta_0,\blam^{-1}]$ for some $W^\fd \eta_*\le \eta_0\le \blam^{-1}$. Fix any constants $D>\fC>0$ such that \eqref{Lcondition1} holds. 
Suppose that there is a $\fC$-th order $T$-expansion (with $\eta\ge \eta_0$ and $D$-th order error) satisfying Definition \ref{defn genuni_BA}. 
Then, for the block Anderson and Anderson orbital models, $T_{\fa,\fb_1 \fb_2}$ can be expanded into a sum of $\OO(1)$ many normal graphs:  
\begin{align}
T_{\fa,\fb_1 \fb_2} & = \sum_{x}S_{\fa x}\left(M_{x\fb_1}\Gc^-_{x\fb_2}+\Gc_{x\fb_1}\bar M_{x\fb_2}+M_{x\fb_1}\bar M_{x\fb_2}\right)  +  \frac{G_{\fb_2 \fb_1} - \overline G_{\fb_1 \fb_2}}{2\ii N\eta}  + \frac{1}{N}\sum_x M_{x\fb_1}\bar M_{x\fb_2} \nonumber\\
& -\frac1{N}\sum_x (M_{x\fb_1}\Gc^-_{x\fb_2}+\Gc_{x\fb_1}\bar M_{x\fb_2})+ \sum_{x}(\wt\thn M^0 S)_{\fa x}\left(M _{x \fb_1 }\bar M_{x \fb_2}+ \Gc _{x \fb_1 } \bar M_{ x \fb_2}	+M_{x \fb_1 }\Gc^{-} _{ x \fb_2}\right)  \nonumber\\ 
& +\sum_{\mu} \sum_x \wt \thn_{\fa x}\mathcal D^{(\mu)}_{x \fb_1}f_{\mu;x,\fb_1\fb_2} (G) +   \sum_\nu \sum_{x} \wt \thn_{\fa x}\mathcal D^{(\nu)}_{x \fb_2} \wt f_{\nu;x,\fb_1\fb_2}(G)\nonumber\\
& +  \sum_{\gamma} \sum_{x} \wt \thn_{\fa x}\mathcal D^{(\gamma)}_{x , \fb_1 \fb_2}g_{\gamma;x,\fb_1\fb_2}(G) + \sum_{x}\wt \thn_{\fa x} \mathcal Q_{x,\fb_1\fb_2}   + \Err_{\fa,\fb_1 \fb_2}.\label{mlevelTgdef weak_BA}
\end{align}
The graphs on the RHS of \eqref{mlevelTgdef weak_BA} also satisfy the properties (1)--(4) in \Cref{def nonuni-T}, except that the property (4) for $Q$-graphs becomes: 
\begin{enumerate}
\item[(4')] For the block Anderson and Anderson orbital models, we denote the expression obtained by removing the 10 graphs in \eqref{eq_largeQ} and \eqref{eq_largeQ2} from $\cal Q_{x,\fb_1\fb_2}$ as $\cal Q'_{x,\fb_1\fb_2}$. Then, $\QT'_{x,\fb_1\fb_2}$ is a sum of $Q$-graphs satisfying the properties (a)--(d) in \Cref{def nonuni-T}.
\end{enumerate}
\end{lemma}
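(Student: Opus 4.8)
\textbf{Proof plan for \Cref{def nonuni-T_BA}.}
The plan is to mimic the construction of the complete $T$-expansion for the Wegner orbital model (\Cref{def nonuni-T}), whose proof---carried out in \cite[Lemma 3.2]{BandIII}---applies here because, as noted below \Cref{def_atom}, the molecular graphs for the block Anderson and Anderson orbital models have the same global structure as those for the Wegner orbital model. The starting point is the relation between $T_{\fa,\fb_1\fb_2}$ and $\czT_{\fa,\fb_1\fb_2}$. Using the definitions \eqref{general_Tc} and the identity $T_{\fa,\fb_1\fb_2}=\sum_\al S_{\fa\al}G_{\al\fb_1}\overline G_{\al\fb_2}$, we expand each $G=\Gc+M$ and $\overline G=\Gc^-+\overline M$, which yields
$$
T_{\fa,\fb_1\fb_2}=\sum_x S_{\fa x}\bigl(M_{x\fb_1}\overline M_{x\fb_2}+\Gc_{x\fb_1}\overline M_{x\fb_2}+M_{x\fb_1}\Gc^-_{x\fb_2}\bigr)+\cT_{\fa,\fb_1\fb_2}.
$$
Then I would split off the zero mode of $\cT$: by the second line of \eqref{general_Tc}, $\cT_{\fa,\fb_1\fb_2}=\czT_{\fa,\fb_1\fb_2}+N^{-1}\sum_x\cT_{x,\fb_1\fb_2}$, and the zero mode $N^{-1}\sum_x\cT_{x,\fb_1\fb_2}=N^{-1}\sum_{x,\al}S_{x\al}\Gc_{\al\fb_1}\Gc^-_{\al\fb_2}$ can be rewritten using Ward's identity (\Cref{lem-Ward}) applied to $\Gc=G-M$, producing the terms $\tfrac{G_{\fb_2\fb_1}-\overline G_{\fb_1\fb_2}}{2\ii N\eta}+\tfrac1N\sum_x M_{x\fb_1}\overline M_{x\fb_2}-\tfrac1N\sum_x(M_{x\fb_1}\Gc^-_{x\fb_2}+\Gc_{x\fb_1}\overline M_{x\fb_2})$ appearing on the first two lines of \eqref{mlevelTgdef weak_BA}. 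This reduces the problem to constructing the complete expansion of $\czT_{\fa,\fb_1\fb_2}$.

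Next I would take the $\fC$-th order $T$-expansion \eqref{mlevelTgdef_BA} of $\czT_{\fa,\fb_1\fb_2}$ (which exists by hypothesis) and apply the expansion strategy of \cite[Lemma 3.2]{BandIII}: I repeatedly replace the recollision graphs in $\PT$, the higher-order graphs in $\AT$, and the free graphs in $\WT$ by further $T$-expansions, local expansions (Lemmas \ref{ssl}--\ref{T eq0}, valid for all three models), and $Q$-expansions (\Cref{subsec_Qexp}), until every resulting graph is either deterministic, a $Q$-graph, a graph with the structure of the $f$-, $\wt f$-, $g$-type products in the statement, or an error graph of order $\OO(W^{-D})$. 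The point at which one must invoke the upper bound \eqref{Lcondition1} on $L$ is the same as in the Wegner case: some of the graphs in $\AT$ and $\WT$ are only SPD (not globally standard) and hence may be non-expandable, so expanding the $T$-variables inside them can break the doubly connected property and cost an $L^2/W^2$ factor, which must be absorbed by the decrease in scaling size guaranteed by \eqref{eq:smallA}; here ghost edges are introduced to restore the (generalized) doubly connected bookkeeping, and the first line of \eqref{mlevelTgdef_BA} contributes, after replacing $\zthn(\Sele)$ by a renormalized diffusive edge $\wt\thn$ as in \Cref{def nonuni-T}, the $\sum_x(\wt\thn M^0 S)_{\fa x}(\cdots)$ term. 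The estimate $|\wt\thn_{xy}|\prec B_{xy}$ follows as in the Wegner case from \eqref{BRB} and the fact that the leftover $\pself$ with $\sizeself\prec W^{-\fC}$ plays, under \eqref{Lcondition1}, the role of a genuine $\self$ with parameter $\sizeself\cdot L^2/W^2$; the weak-scaling-size bounds $\wsize(\cal D^{(\mu)}),\wsize(\cal D^{(\nu)}),\wsize(\cal D^{(\gamma)})=\OO(W^{-d-c})$ come from \Cref{dG-bd} applied to the doubly connected deterministic graphs produced; and the structural claim on $f_{\mu}$, $\wt f_{\nu}$, $g_\gamma$ (solid edges only within and between the molecules $\cal M_1,\cal M_2$ of $\fb_1,\fb_2$, with the correct charge of the edge between $\fb_1$ and $\fb_2$) is read off from the expansion rules exactly as in the proof of \cite[Lemma 3.2]{BandIII}.

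The only genuinely new bookkeeping compared with the Wegner orbital model is that the set of ``large'' $Q$-graphs to be excised is different: instead of the three terms in \eqref{eq:largeQW}, one must exclude the $10$ terms in \eqref{eq_largeQ}--\eqref{eq_largeQ2}, as dictated by the preliminary $\cT$-expansion \Cref{2ndExp_AO}. Thus in the proof I would carry out the stopping rules with $\cal Q'_{x,\fb_1\fb_2}$ defined by removing those $10$ graphs, verify that all other $Q$-graphs produced along the way have $\wsize$ at most $\OO(W^{-c}\heta)$ after one round of $Q$/local expansions (using the discussion following \eqref{eq_largeQ2}, which shows that the $Q$-expansions involving \eqref{eq_largeQ} generate at least two new solid edges and those involving \eqref{eq_largeQ2} at least one), and confirm that the excised graphs, together with the leading diffusive term, reproduce exactly the RHS of \eqref{mlevelTgdef weak_BA}. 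I expect the main obstacle to be purely organizational: verifying that the modified definition of $\cal Q'_{x,\fb_1\fb_2}$ is consistent with the SPD/globally standard properties (\Cref{def genuni2}) throughout the iteration, and that properties (1)--(4') transfer verbatim from the Wegner case---in particular that every $Q$-graph retained in $\mathcal Q_{x,\fb_1\fb_2}$ still has its $Q$-label vertex in the MIS and the correct connectivity at $\fb_1,\fb_2$. Since all the graphical tools (local expansions, $Q$-expansions, $V$-expansions, \Cref{dG-bd}, \Cref{lem redundantagain}) are already established for all three models, no new analytic input is needed, and I would state the result by citing the Wegner construction and indicating only these modifications, omitting the routine details.
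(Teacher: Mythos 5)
Your proposal is correct and follows essentially the same route as the paper: the paper's proof simply invokes the construction of \cite[Lemma 3.2]{BandIII} with the local/$Q$-expansions of \Cref{sec:graphs}, which is exactly the reduction (splitting $T_{\fa,\fb_1\fb_2}$ into the $\czT$ part plus zero mode, inserting the $\fC$-th order $T$-expansion \eqref{mlevelTgdef_BA}, and iterating under \eqref{Lcondition1} with ghost edges) that you describe, together with the same observation that the only substantive change is the excision of the 10 leading $Q$-graphs \eqref{eq_largeQ}--\eqref{eq_largeQ2} in place of \eqref{eq:largeQW}. No further input is needed beyond the bookkeeping you already identify.
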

 
\begin{proof}
Under the condition \eqref{Lcondition1}, the construction of the complete $T$-expansion for random band matrices using the $\fC$-th order $T$-expansion has been performed in the proof of \cite[Lemma 3.2]{BandIII}. For the block Anderson and Anderson orbital models, the construction follows exactly the same strategy (based on the expansions we will describe in \Cref{sec:graphs}). We omit the details. 
\end{proof}

Similar to \Cref{completeTexp}, we can construct a sequence of $T$-expansions up to arbitrarily high order for the block Anderson and Anderson orbital models.
\begin{theorem}[Construction of $T$-expansions]   \label{completeTexp_BA} 
In the setting of \Cref{thm_locallaw}, let $ \eta_0=W^{\fd}\eta_*$. 
For any fixed constants $D>\fC>0$, we can construct $T$-expansions for $\czT_{\fa,\fb_1 \fb_2}$ and $\cT_{\fa,\fb_1 \fb_2}$ that satisfy Definition \ref{defn genuni_BA} up to order $\fC$ (with $\eta\ge \eta_0$ and $D$-th order error).
\end{theorem}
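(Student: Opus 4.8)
\textbf{Proof proposal for \Cref{completeTexp_BA}.}
The plan is to mimic the proof of \Cref{completeTexp} for the Wegner orbital model, which was obtained by combining the two-proposition scheme of \Cref{Teq} (construction of a pseudo-$T$-equation of strictly larger order) and \Cref{cancellation property} (extraction of a genuine $\self$ with the sum zero property from the leading part of the new $\pself$). Concretely, I would first establish the analogue of \Cref{Teq} for the block Anderson and Anderson orbital models: starting from a $\fC$-th order $T$-expansion satisfying \Cref{defn genuni_BA} together with the inductive assumptions (i)--(iii) on the $\selfs$ and their infinite space limits (the exact analogues of \eqref{eq:increasing_const}, \eqref{eq:inf_form_Ej}--\eqref{eq:inf_form_zero}, and \eqref{4th_property_substrac}), I would run the global expansion strategy of \Cref{strat_global}, now seeded by the preliminary $T$-expansions \eqref{eq:2ndT_AO} and \eqref{eq:2nd_AO} from \Cref{2ndExp_AO} instead of \Cref{3rdExp}. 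The local expansions (weight, edge, $GG$ expansions and $Q$-expansions) that power the strategy are available for these models; this is precisely what will be set up in the section whose content is foreshadowed by the phrase ``based on the expansions we will describe in \Cref{sec:graphs}''. The output is a $\pTeq$ of the form \eqref{mlevelT incomplete_BA} with a new $\pself$ $\wt\Sele_{k_0+1}$ of size $\lesssim W^{-c_{k_0+1}}$, exactly as in \eqref{eq:psiSele}.

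Second, I would prove the analogue of \Cref{cancellation property}: the leading part of $\wt\Sele_{k_0+1}$ forms a genuine $\self$ $\Sele_{k_0+1}$ obeying the sum zero property \eqref{3rd_property0}, with the remainder absorbed into the higher-order graphs as in \eqref{eq:remain_self_error}. For the Wegner model this is deferred to \Cref{sec:sumzero}, and the same structure applies here: one compares $\wt\Sele_{k_0+1}(z,W,L)$ with its infinite space limit using \Cref{lem V-R wt} (which holds for all three models since the key inputs \eqref{eq:S+-Sinf}, \eqref{Theta-wh}, \Cref{dG-bd} are model-independent), constructs $\Sele_{k_0+1}^\infty$ via the decomposition in \Cref{lem FT0} (its proof there is already written for the Wegner model but the splitting of $S$, $S^\pm$, $\zthn$ into a leading term plus an $\OO(\lambda^2)$-error is identical for $\BA$ and $\AO$; for these two models $M$ is non-scalar, so one uses $M=M^{\LK}\otimes I_W$ for $\AO$ and \Cref{lem:propM} for $\BA$ to control the infinite space limit), and then proves the reduced sum zero identity $\sum_{[x]}\cal A_{k_0+1}([0],[x])=0$ via \Cref{lem maincancel}. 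The latter is a computation built from Ward's identity applied to $\sum_{\fa,\fb}\E T_{\fa\fb}$ and the $T$-equation \eqref{mlevelT incomplete_1.1}, using the local laws \eqref{locallaw1}, \eqref{locallaw2} (which follow for $\BA$ and $\AO$ from \Cref{locallaw-fix} together with the continuity estimate, whose extension is the subject of the first parts of \Cref{sec:ext}). Once \Cref{Teq} and \Cref{cancellation property} are available for these models, the induction of \Cref{completeTexp}---each step increasing the order by at least $\fc$, so that at most $\lceil\fC/\fc\rceil$ iterations suffice---goes through verbatim to yield both the $\czT$-expansion \eqref{mlevelTgdef_BA} and, solving the corresponding $T$-equation, the $\cT$-expansion \eqref{mlevelTgdef_BA_orig}.

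The main obstacle, I expect, is not the abstract induction but the bookkeeping of the \emph{graphical structure} for the block Anderson model, where the notion of atoms genuinely differs from that of \cite{BandI,BandII,BandIII} because of the new dotted edges (the $M$-dotted and $\Psi^{\BA}$-dotted edges, see \Cref{def_graph1} and \Cref{def_atom}). One must verify that, after merging vertices within atoms, the atomic and molecular graphs retain the same structure as in the band matrix setting—this is asserted below \Cref{def_atom}, but when one runs the local and $Q$-expansions of \Cref{sec:local} and \Cref{subsec_Qexp} starting from the bulkier preliminary expansions of \Cref{2ndExp_AO}, one has to check that every rule preserves normality, the doubly connected property, and the SPD/globally standard properties of \Cref{def genuni2} \emph{in the presence of these extra dotted edges}, and that the scaling-size accounting of \Cref{def scaling} (which for $\AO$ uses ``big atoms'' and for $\BA$ uses the new atoms) still gives the claimed bounds \eqref{eq:smallR}--\eqref{eq:smallRerr}. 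A secondary technical point is that the $Q$-expansions involving the leading $Q$-graphs $\mathcal Q^a$ in \eqref{eq_largeQ} must be shown to produce two extra solid edges (as explained below \eqref{eq_largeQ2}), so that the resulting non-$Q$-graphs land in $\AT$ rather than violating \eqref{eq:smallA}; this is the analogue of the discussion around \eqref{eq:leadingQs} for the Wegner model and requires rechecking for the block structure. I would handle both by reducing, wherever possible, to the already-established band matrix lemmas via the ``same atomic structure'' principle, and only spelling out the genuinely new verifications for $\BA$ in detail.
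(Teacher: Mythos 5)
Your proposal follows essentially the same route as the paper: the two-step scheme (an analogue of \Cref{Teq} built from the preliminary expansions of \Cref{2ndExp_AO} via \Cref{strat_global} with the model-specific local and $Q$-expansions, followed by the sum zero property proved through the infinite-space comparison of \Cref{lem V-R wt}, \Cref{lem FT0} and the Ward-identity argument of \Cref{lem maincancel}), and then the induction of \Cref{completeTexp}. The technical caveats you flag—atom/molecule bookkeeping for $\BA$ and the extra solid edges produced by the leading $Q$-graphs in \eqref{eq_largeQ}—are precisely the points the paper verifies in \Cref{sec:graphs} and \Cref{sec:sumzero_BA}.
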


We are now ready to outline the main modifications to the proof of the local law for the Wegner orbital model presented in Sections \ref{sec:pflocal}–\ref{sec:continuity} and \Cref{sec:sumzero}. These sections have been focused on establishing the following key components for the proof: \Cref{completeTexp}, \Cref{gvalue_continuity}, and \Cref{lemma ptree}.
For the block Anderson and Anderson orbital models, the proof of \Cref{completeTexp_BA} follows closely the argument of \Cref{completeTexp}. We will detail the necessary adjustments in \Cref{sec:graphs} (regarding the expansion strategy) and \Cref{sec:sumzero_BA} (concerning the proof of the sum zero property for $\selfs$). 
The proof of the local law in \Cref{sec:pflocal} carries over almost verbatim to the block Anderson and Anderson orbital models, with the exception that we need to establish the corresponding versions of \Cref{gvalue_continuity} and \Cref{lemma ptree}. The required modifications to their proofs will be discussed in \Cref{sec:hightree} and \Cref{subsec:appdcont}, respectively.

%Now, we are prepared to outline the major modifications of the proof of the local law for the Wegner orbital model in Sections \ref{sec:pflocal}--\ref{sec:continuity} of the main text and \Cref{sec:sumzero}. More precisely, these sections have been devoted to establishing the following key ingredients---\Cref{completeTexp}, \Cref{gvalue_continuity}, and \Cref{lemma ptree} for the Wegner orbital model. 
%For the block Anderson and Anderson orbital models, the proof of \Cref{completeTexp_BA} is similar to that of \Cref{completeTexp}, and we will discuss the necessary modifications in \Cref{sec:graphs} (regarding the expansion strategy) and \Cref{sec:sumzero_BA} (concerning the proof of the sum zero property for $\selfs$).  
%The proof of local law in \Cref{sec:pflocal} carries over almost verbatim to the block Anderson and Anderson orbital models, except that we need to establish the corresponding \Cref{gvalue_continuity} and \Cref{lemma ptree}, and we will discuss the necessary modifications of their proofs in \Cref{sec:hightree} and \Cref{subsec:appdcont}, respectively. 

%In this appendix, we outline the necessary modifications needed to extend these proofs to the block Anderson and Anderson orbital models.} 

\subsection{Construction of the $T$-expansion}\label{sec:graphs}

Similar to the proof of \Cref{completeTexp}, the proof of \Cref{completeTexp_BA} is also divided into two parts. In the first part, we establish the following analogue of \Cref{Teq}, i.e., \Cref{Teq_BA}. The second part is dedicated to proving \Cref{cancellation property} in the context of the block Anderson and Anderson orbital models; this will be deferred to \Cref{sec:sumzero_BA} below. 
By combining these two parts, we can establish \Cref{completeTexp_BA} via induction, as discussed in \Cref{sec:constr}.

\begin{proposition}[Construction of the pseudo-$\incomp$] \label{Teq_BA}
Fix any large constants $D>\fC>0$ and $W^\fd \eta_*\le \eta_0 \le 1$. Suppose we have constructed a $T$-expansion up to order $\fC$ (with $\eta\ge \eta_0$ and $D$-th order error) that satisfies Definition \ref{defn genuni_BA} for a sequence of $\selfs$ $\Sele_k$, $k \in\qqq{k_0}$, satisfying \Cref{collection elements} and the properties (i)--(iii) in \Cref{Teq}. Then, for the block Anderson and Anderson orbital models, we can construct pseudo-$T$-equations of \smash{$\czT_{\fa,\fb_1\fb_2}$} and $\cT_{\fa,\fb_1\fb_2}$ with real order $\fC$, pseudo-order $\fC'$, and error order $D$ in the sense of \Cref{defn pseudoT_BA} for $\selfs$ $\cal E_k$, $k \in \qqq{k_0}$, and a $\pself$ \smash{$\wt\Sele_{k_0+1}$} satisfying \eqref{eq:psiSele}.
%$\wt\Sele_{k'}$, $k' \in \qqq{k_0+1,  k_1}$, with 
% \be\label{eq:psiSele}
% \sizeself(\wt \Sele_{k_0+1}) : =\blam W^d\size(\wt\Sele_{k_0+1}) \lesssim W^{-c_{k_0+1}},%,\quad k' \in \qqq{k_0+1,  k_1},
% \ee
% where $c_{k_0+1}$ is a constant satisfying $c_{k_0+1}\ge c_{k_0}+c_\xi$ and $\fC'$ is a constant satisfying that $\fC'\ge c_{k_0+1}+\fc$ with \be\label{eq:fc}
% \fc:=\min\{c_\xi,(d-\log_W \blam)/2\}.
% \ee
\end{proposition}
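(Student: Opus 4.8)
The plan is to follow the proof of \Cref{Teq} essentially verbatim, replacing the preliminary $T$-expansion \eqref{eq:3rd} of \Cref{3rdExp} by the preliminary $T$-expansion \eqref{eq:2nd_AO} (resp.\ \eqref{eq:2ndT_AO} for $\cT_{\fa,\fb_1\fb_2}$) of \Cref{2ndExp_AO}. Starting from \eqref{eq:2nd_AO}, the only term that must be expanded further is $\sum_x \zthn_{\fa x}\mathcal A^{(>3)}_{x,\fb_1\fb_2}$, since the leading term, the lower-order recollision graphs in $\mathcal R^{(3)}$, the $Q$-graphs in $\mathcal Q^{(3)}$, and (trivially) the error graphs already obey the stopping rules or fall into the corresponding parts $\PT,\QT,\Err_D$ of the $T$-equation. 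I would then run \Cref{strat_global} — alternating local expansions, global expansions in which a redundant $\mathcal T$-variable in the minimal isolated subgraph is replaced by the $\fC$-th order $T$-expansion \eqref{mlevelTgdef_BA} (or \eqref{mlevelTgdef_BA_orig}), together with the identity $\cT_{x,\fb_1\fb_2}=\czT_{x,\fb_1\fb_2}+(G_{\fb_2\fb_1}-\overline G_{\fb_1\fb_2})/(2\ii N\eta)$, and the $Q$-expansions — until every output graph satisfies one of the stopping rules (S1)--(S5) or has deterministic interior.

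The first step of the argument is to record the local expansion rules for the block Anderson and Anderson orbital models. Here $M$ is no longer a scalar matrix, so in place of \Cref{ssl}--\Cref{T eq0} one uses the identity $M^{-1}\Gc=-(V+m)G$ from \eqref{G-M} together with Gaussian integration by parts in $V$; the resulting weight, edge, and $GG$ expansions have exactly the same graphical form, with the scalar $m$ replaced by $M$-edges (dotted for $\BA$, $M$-waved for $\AO$) where appropriate. As explained after \Cref{def_atom} and \Cref{def_poly}, although the notions of atom (for $\BA$, due to the $\Delta$-dotted and $M$-dotted edges) and big atom (for $\AO$) differ from the random band matrix setting, the associated atomic and molecular graphs share the same structure; consequently the doubly connected, SPD, and globally standard properties, the notions of isolated and minimal isolated subgraph and of pre-deterministic order, and the preservation lemmas \Cref{lem_SPD_preserve} and \Cref{lem globalgood} all carry over without change. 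The scaling sizes \eqref{eq_defsize} (for $\BA$) and \eqref{eq_defsize_AO} (for $\AO$) are set up precisely so that the bookkeeping identities used in the Wegner proof remain valid in the presence of these new edge types.

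Termination is then argued exactly as in the Wegner case: after each round of global expansion of a globally standard graph, every new graph either already meets a stopping rule, has strictly smaller scaling size by a factor $\OO(\heta^{1/2}\vee(\blam\heta))=\OO(W^{-\fc})$, or is strictly more deterministic (one fewer blue solid edge), so by the argument of \cite[Lemma 6.8]{BandII} the process closes after $\OO((\fC/\fc)^2)$ iterations. The one point needing a separate check — already recorded below \eqref{eq_largeQ2} — is the $Q$-expansion applied to the leading $Q$-graphs \eqref{eq_largeQ} and the sub-leading $Q$-graphs \eqref{eq_largeQ2}: in the first case the non-$Q$ output acquires at least two extra solid edges and in the second at least one, so these outputs land in the higher-order part $\AT$ with scaling size $\OO(\blam\heta^2)=\OO(W^{-\fd}\heta)$. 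Output graphs that are non-expandable, or where Step 2 fails to supply a $\mathcal T$-variable, must — by the globally standard property — have deterministic interior and a standard neutral vertex joined to $\fb_1,\fb_2$, hence contribute to $\sum_x(\zthn\Sele)_{\fa x}\czT_{x,\fb_1\fb_2}$ after using $P^\perp\Sele=\Sele P^\perp$; this produces the pseudo-$T$-equation \eqref{mlevelT incomplete_BA}, and performing the same expansions as in the $\fC$-th order construction shows that all graphs of scaling size $\lesssim W^{-\fC}\heta$, together with all deterministic self-energy graphs of size $\lesssim W^{-\fC}$, coincide with those of the $\fC$-th order $T$-expansion, giving real order $\fC$, while the strict scaling-size decrease above yields pseudo-order $\fC'\ge\fC+\fc$.

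The main obstacle will be the final step: checking that $\widetilde\Sele_{k_0+1}:=\Sele-\sum_{k=1}^{k_0}\Sele_k$ is a genuine pseudo-self-energy in the sense of \Cref{collection elements} with $\sizeself(\widetilde\Sele_{k_0+1})=\blam W^d\size(\widetilde\Sele_{k_0+1})\lesssim W^{-c_{k_0+1}}$. Doubly connectedness of each deterministic graph, and redundancy of each of its diffusive edges, follow from the globally standard property built into the expansion, and the bound \eqref{4th_property0} is immediate from \Cref{dG-bd}. The delicate point, because $M$ is not scalar, is the symmetry \eqref{self_sym}: one checks that $M$ (via \eqref{def_G0} and the block-translation and parity symmetries of $\Psi^{\BA},\Psi^{\AO}$), $S$, $S^\pm$, and $\zthn$ all satisfy $A(x+W[a],y+W[a])=A(x,y)$ and $A(x,y)=A(-x,-y)$, and then transfers these symmetries to any graph built from these matrices by the argument of \cite[Lemma A.1]{BandI}. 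Finally the size bound follows by tracking that every self-energy graph arises from the expansion of $\sum_x\zthn_{\fa x}\mathcal A^{(>3)}_{x,\fb_1\fb_2}$, so that $\size(\sum_x(\zthn\cal G)_{\fa x}\cT_{x,\fb_1\fb_2})=\sizeself(\cal G)\heta\lesssim\blam\heta^2$, i.e.\ $\sizeself(\cal G)\lesssim\blam^2/W^d$; since the graphs in $\widetilde\Sele_{k_0+1}$ are exactly those not already present in $\sum_{k\le k_0}\Sele_k$, the strict scaling-size decrease in each expansion step forces $\sizeself(\widetilde\Sele_{k_0+1})\lesssim W^{-c_{k_0+1}}$ for a constant $c_{k_0+1}\ge\fc_{k_0}+c_\xi$, which completes the construction.
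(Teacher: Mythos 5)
Your proposal is correct and follows essentially the same route as the paper: it starts from the preliminary $T$-expansion of \Cref{2ndExp_AO}, replaces the Wegner local expansions by the ones generated from $M^{-1}\Gc=-(V+m)G$ and Gaussian integration by parts (the paper's \Cref{lanlw}, \Cref{lem_lweight} and the corresponding $GG$ expansion), observes that the atomic/molecular structure and hence the doubly connected, SPD and globally standard machinery carry over unchanged, and then runs \Cref{strat_global} with the same termination, $Q$-graph, and pseudo-self-energy verifications as in \Cref{Teq}. This matches the paper's argument, which likewise treats only the new local expansions in detail and otherwise defers to the Wegner orbital construction.
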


In the proof of \Cref{completeTexp_BA}, we adopt a similar expansion strategy as in \Cref{sec:constr}, with the main modifications occurring in the local expansions. Below, we present the local expansions for the block Anderson and Anderson orbital models.    

%In this subsection, we give the local expansions that will be used in the construction of the $T$-expansions for the block Anderson and Anderson orbital models. 

\begin{lemma}[Basic expansion]\label{lanlw}
For the block Anderson and Anderson orbital models in \Cref{def: BM}, given a graph $\Gamma$, we have that
\begin{align} \label{eq:BE}
 \Gc_{xy} \bigGamma =  &
   \sum_{\al,\beta}  M_{x\al} S_{\al \beta}
 \Gc_{\beta\beta} G _{\al y} \bigGamma
  - \sum_{\al,\beta}M_{x\al} S_{\al \beta} G _{\beta y} 
  \partial_{h_{ \beta\al}} \bigGamma   + Q,
 \end{align}
where $Q$ is a sum of $Q$-graphs defined as
\begin{align*}
Q=-\sum_{\al,\beta}M_{x\al}S_{\al\beta}  Q_{\al}\left[\Gc_{\beta \beta}G_{\al y}\Gamma\right] +\sum_{\al,\beta}M_{x\al}S_{\al\beta} Q_{\al}\left[G_{\beta y}\partial_{h_{\beta\al}}\Gamma\right]+ \sum_{\al}M_{x\al}Q_\al\left[(M^{-1}\Gc)_{\al y} \bigGamma\right].
\end{align*}
\end{lemma}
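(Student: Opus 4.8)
\textbf{Proof strategy for Lemma~\ref{lanlw}.} The plan is to mirror the derivation of the weight and edge expansions for the Wegner orbital model (Lemmas~\ref{ssl}--\ref{T eq0}), but with the key structural simplification that for the block Anderson and Anderson orbital models the only randomness is in the diagonal block potential $V$, so all Gaussian integration by parts is carried out against the entries $V_{\al\beta}$ with covariance $S_{\al\beta}=s_{\al\beta}$. First I would start from the algebraic identity \eqref{G-M}, namely $M^{-1}\Gc = -(V+m)G$, which is the analogue of the resolvent identity used in the band matrix case. Multiplying on the left by $M$ gives $\Gc_{xy} = -\sum_{\al}M_{x\al}\big[(V+m)G\big]_{\al y}$, so that
\be
\Gc_{xy}\Gamma = -\sum_\al M_{x\al}\,m\,G_{\al y}\Gamma - \sum_{\al,\beta}M_{x\al}V_{\al\beta}G_{\beta y}\Gamma .
\ee
The term with the explicit $m$ will be absorbed later; the main work is to expand the term containing $V_{\al\beta}$.

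\medskip
Next I would write $\sum_{\al,\beta}M_{x\al}V_{\al\beta}G_{\beta y}\Gamma = \sum_\al M_{x\al}\,P_\al\!\big[\sum_\beta V_{\al\beta}G_{\beta y}\Gamma\big] + \sum_\al M_{x\al}\,Q_\al\!\big[\sum_\beta V_{\al\beta}G_{\beta y}\Gamma\big]$, where $P_\al=\E_\al$ and $Q_\al=1-\E_\al$ as in the main text. The $Q_\al$ part contributes directly to the $Q$-graph collection $Q$ in the statement (after recombining with the $m$-term, which also carries a $Q_\al$ once we note $(M^{-1}\Gc)_{\al y}=-[(V+m)G]_{\al y}$, giving the $\sum_\al M_{x\al}Q_\al[(M^{-1}\Gc)_{\al y}\Gamma]$ summand). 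For the $P_\al$ part I would apply the Gaussian integration-by-parts formula: for a Gaussian family with $\E[V_{\al\beta}f]=\sum_{\al',\beta'}\E[V_{\al\beta}V_{\al'\beta'}]\,\E[\partial_{V_{\al'\beta'}}f]$ and $\E[V_{\al\beta}V_{\al'\beta'}]=S_{\al\beta}\delta_{\al\al'}\delta_{\beta\beta'}$ (up to the Hermitian symmetry, which only produces an irrelevant $W^{-d}$-order correction absorbed into $Q$-graphs or lower-order terms), one gets
\be
P_\al\Big[\sum_\beta V_{\al\beta}G_{\beta y}\Gamma\Big] = \sum_\beta S_{\al\beta}\,P_\al\big[\partial_{h_{\al\beta}}(G_{\beta y}\Gamma)\big].
\ee
Using $\partial_{h_{\al\beta}}G_{\beta y} = -G_{\beta\al}G_{\beta y}$ (more precisely $-G_{\beta\beta}G_{\al y}$ after symmetrizing, or the standard $\partial_{h_{\al\beta}}G_{xy}=-G_{x\beta}G_{\al y}$ with the appropriate index bookkeeping) produces the term $-\sum_{\al,\beta}M_{x\al}S_{\al\beta}G_{\beta\beta}G_{\al y}\Gamma$, and $\partial_{h_{\al\beta}}\Gamma$ produces $-\sum_{\al,\beta}M_{x\al}S_{\al\beta}G_{\beta y}\partial_{h_{\beta\al}}\Gamma$. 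Replacing $P_\al=1-Q_\al$ once more generates the remaining two $Q$-graph terms in $Q$, namely $-\sum_{\al,\beta}M_{x\al}S_{\al\beta}Q_\al[\Gc_{\beta\beta}G_{\al y}\Gamma]$ and $+\sum_{\al,\beta}M_{x\al}S_{\al\beta}Q_\al[G_{\beta y}\partial_{h_{\beta\al}}\Gamma]$, and also lets one replace $G_{\beta\beta}$ by $\Gc_{\beta\beta}$ in the first main term because the $m\cdot(\text{stuff})$ piece cancels against the explicit $m$-term from \eqref{G-M} (this is the analogue of the identity $S(1+m^2S^+)=S^+$ cancellation used elsewhere; here it is the simpler cancellation $-m\sum_\al M_{x\al}G_{\al y} + m\sum_{\al,\beta}M_{x\al}S_{\al\beta}G_{\al y}m = $ absorbed, since $\sum_\beta S_{\al\beta}=W^{-d}\cdot W^d=1$ within a block makes $M^{-1}+m=\lambda\Psi-z$ and one reassembles $M$). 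Collecting the three non-$Q$ pieces gives exactly the two displayed main terms of \eqref{eq:BE} plus the $Q$-graph $Q$ as defined.

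\medskip
\textbf{Main obstacle.} The routine part is the differentiation and relabeling; the step requiring care is verifying that the explicit $m$-term from \eqref{G-M} exactly reassembles with the $P_\al$-expansion so that only $\Gc_{\beta\beta}$ (a light weight) appears in the first main term rather than the full $G_{\beta\beta}$, and that no spurious leading-order term survives — i.e., the self-consistency cancellation $M^{-1}=\lambda\Psi-z-m$ together with $\sum_\beta S_{\al\beta}=1$ is what makes the expansion ``close'' at the right order. I would also need to be slightly careful about the Hermitian-symmetry contribution $V_{\al\beta}=\overline{V_{\beta\al}}$ in the Gaussian integration by parts, which produces an extra term $\sum_\beta S_{\al\beta}\,P_\al[\partial_{\overline{h_{\al\beta}}}(\cdots)]$; for the block potential this only affects indices within a single $W^d$-block and, as in \cite{BandI}, can be shown to contribute either to the $Q$-graphs or to terms of strictly smaller scaling size, so it does not alter the stated form of \eqref{eq:BE}. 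Since this lemma is the block-potential analogue of \cite[Lemma~3.5]{BandI} and \cite[Lemma 3.10]{BandI}, the bookkeeping follows those references almost verbatim once the covariance structure is substituted, so I would present the computation concisely and refer to those works for the detailed combinatorics.
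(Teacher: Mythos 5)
Your proposal is correct and is essentially the paper's own proof: the paper expands $(M^{-1}\Gc)_{xy}\Gamma$ using \eqref{G-M}, splits into $P$/$Q$ parts, integrates by parts against the $V$ entries, and multiplies by $M$ at the end, whereas you multiply by $M$ first and work with $P_\al,Q_\al$ — the same computation in a different order, landing on exactly the three main terms and the three $Q$-graphs of \eqref{eq:BE}.

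One clarification you should incorporate: \eqref{eq:BE} is an \emph{exact} identity, and there is no extra Hermitian-symmetry term to ``absorb into $Q$-graphs or lower-order terms.'' For the complex Gaussian block potential one has $\E[V_{\al\beta}^2]=0$ for $\al\neq\beta$, so the integration by parts reads $P_\al\big[V_{\al\beta}f\big]=S_{\al\beta}P_\al\big[\partial_{h_{\beta\al}}f\big]$ with the derivative taken in the conjugate entry $h_{\beta\al}=\overline{V_{\al\beta}}$ (this is where the Hermitian constraint enters, and it is also why the correct index in your derivative computation is $\partial_{h_{\beta\al}}G_{\beta y}=-G_{\beta\beta}G_{\al y}$, not $\partial_{h_{\al\beta}}$); the cancellation of the explicit $m$-term uses only $M_{\beta\beta}=m$ and $\sum_\beta S_{\al\beta}=1$. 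If you literally discarded a correction by a scaling-size argument you would only obtain \eqref{eq:BE} up to errors, which is weaker than the lemma — but no such correction exists, so the careful version of your argument gives the exact statement.
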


\begin{proof}%[Proof of Lemma \ref{lanlw}]
With \eqref{G-M}, we can write that 
\begin{align*}
(M^{-1}\Gc)_{xy} \bigGamma& = -  [( V+ m) G]_{xy}\Gamma   = - \sum_{\al }P_{x}\left[ V_{x\al}G_{\al y}\Gamma\right]  -m P_x\left(G_{xy}\Gamma\right) + Q_x\left[(M^{-1}\Gc)_{xy} \bigGamma\right] \\
&=  \sum_{\al} S_{x\al} P_{x}\left[ G_{\al\al}G_{x y}\Gamma\right] -\sum_{\al}S_{x\al} P_{x}\left[G_{\al y}\partial_{h_{\al x}}\Gamma \right] - m  P_{x}\left( G_{xy}\Gamma \right) +Q_x\left[(M^{-1}\Gc)_{xy} \bigGamma\right]\\
&= \sum_{\al}S_{x\al} P_{x}\left[\Gc_{\al\al}G_{xy}\Gamma\right] -\sum_{\al}S_{x\al} P_{x}\left[G_{\al y}\partial_{h_{\al x}}\Gamma\right]+Q_x\left[(M^{-1}\Gc)_{xy} \bigGamma\right].
\end{align*}
Multiplying both sides with $M$, we obtain that 
\begin{align*}
 \Gc_{xy} \bigGamma& = \sum_{\al,\beta}M_{x\al} S_{\al\beta} \Gc_{\beta\beta}G_{\al y}\Gamma  -\sum_{\al,\beta}M_{x\al} S_{\al\beta}G_{\beta y}\partial_{h_{\beta \al}}\Gamma - \sum_{\al,\beta}M_{x\al}S_{\al\beta}Q_{\al}\left[ \Gc_{\beta\beta}G_{\al y}\Gamma\right] \\
 &  + \sum_{\al,\beta}M_{x\al}S_{\al\beta} Q_{\al}\left[G_{\beta y}\partial_{h_{\beta \al}}\Gamma\right]+\sum_{\al}M_{x\al}Q_\al\left[(M^{-1}\Gc)_{\al y} \bigGamma\right],
\end{align*}
which is the expansion \eqref{eq:BE}.
%\begin{align}
%\left( M^{-1}G-I\right)_{xy}  \cal F(G)=_E  &-\left(  m G_{x y}
%+  \sum_{\alpha}   h_{x \alpha} G_{\alpha y}\right)\cdot  \cal F(G) 
%\\\nonumber
%=_E  &    \sum_{\alpha} s_{x \alpha} (G -M)_{\alpha \alpha}   G _{x y} \, \cal F(G) 
%-\sum_{\alpha} S_{x \alpha}  G _{\al y}  \left( \partial_{h_{ \al x}} \right)\cal F(G) 
% \end{align}
\end{proof}

Take the block Anderson model as an example, we discuss the purpose of this basic expansion; a similar discussion applies to the Anderson orbital model. The expansion \eqref{eq:BE} introduces two new vertices $\al$, which is in the same atom as $x$, and $\beta$, which is in the same molecule as $x$. %Then, we expand every solid $G$ edge in the graphs on the RHS of \eqref{eq:BE} as \smash{$G_{xy}=\Gc_{xy}+M_{xy}$}. It is easy to
%Moreover, we can check that the scaling size of each resulting graph is at most $\size(\Gc_{xy} \bigGamma)$. 
%(expand every $G$ into $M+\Gc$) has the same or higher order as the graph in the l.h.s. (Note, if $M_{\beta y}$ appears, then $\beta\sim y \hbox{ in the same atom}$. )
% \be\label{asxs}
% \hbox{ Order (graph in r.h.s.)}\;  \ge  \hbox{ Order (graph in l.h.s.)}
% \ee
Moreover, we can classify the non-$Q$ graphs on the RHS of \eqref{eq:BE} into three cases.  
 \begin{itemize}
 	\item  Graphs with much smaller scaling sizes.
 	\item  Graphs with the same order of scaling size but strictly fewer $\Gc$ edges (i.e., the graph is more deterministic). There are two cases: 
 	\begin{itemize}
 		\item If the derivative $\partial_{h_{\beta\al}}$ acts on a $\Gc_{y'x'}$ edge in $\bigGamma$, then the relevant graph is 
 		$$ 
 		\sum_{\al,\beta}M_{x\al} S_{\al \beta} M _{\beta y} 
 		M_{y'\beta}M_{\al x'}\frac{\bigGamma}{\Gc _{y'x'}}
 		. 
 		$$
 		Note that for this graph to be non-negligible, $x$ and $x'$ must belong to the same atom, $y$ and $y'$ must belong to the same atom, and $x$ and $y$ must belong to the same molecule. 
 		
 		\item  If the derivative $\partial_{h_{\beta\al}}$ acts on a $\Gc_{x'y'}^-$ edge in $\bigGamma$, then the relevant graph is 
 		$$  \sum_{\al,\beta}M_{x\al} S_{\al \beta} 
 		M _{\beta y} 
 		\overline M_{ \beta y'}\overline M_{ x'\al} \frac{ \bigGamma }{\Gc_{ x'y'}^-}.
 		$$ 
 		Again, for this graph to be non-negligible, $x$ and $x'$ must belong to the same atom, $y$ and $y'$ must belong to the same atom, and $x$ and $y$ must belong to the same molecule. 
 	\end{itemize} 
 	\item  Graphs with the same order of scaling size and the same number of $\Gc$ edges. There are three cases:
 	\begin{itemize}
 		\item There is a new self-loop: 
 		$$  \sum_{\al,\beta}  M_{x\al} S_{\al \beta}
 		\Gc_{\beta\beta} M _{\al y} \cdot \bigGamma.$$
 		
 		\item If the derivative $\partial_{h_{\beta\al}}$ acts on a $\Gc_{y' x'}$ edge in $\bigGamma$, then we get a pair of edges of the same color: 
 		$$\sum_{\al,\beta}M_{x\al} S_{\al \beta} \Gc_{\beta y} 
 		\Gc_{y'\beta}M_{\al x'}\frac{\bigGamma}{\Gc_{y'x'}}.$$
 		
 		\item  If the derivative $\partial_{h_{\beta\al}}$ acts on a $\Gc_{x'y'}^-$ edge in $\bigGamma$, then we get a pair of edges of different colors: 
 		$$\sum_{\al,\beta}M_{x\al} S_{\al \beta}
 		\Gc_{\beta y} 
 		\Gc^{-}_{ \beta y'}
 		M^-_{ x'\al} \frac{\bigGamma}{\Gc^{-} _{ x'y'}}.
 		$$
 	\end{itemize} 
 	Given a vertex or atom, denote its degree of $\Gc$ edges by \smash{$deg_{\Gc}(\cdot)$}.  In all these three cases, the atom containing $\beta$ is a new atom and has degree 2.
 	On the other hand, the degree of \smash{$\Gc$} on the atom containing $x$ and $x'$ is reduced by 2. 
\end{itemize} 
From the above discussion, we see that performing the basic expansion repeatedly yields a sum of graphs satisfying one of the following conditions:
 \begin{itemize}
 	\item it has a sufficiently small scaling size;
 	
 	\item it is a $Q$-graph;
 	
 	\item it is a recollision graph with respect to some subset of vertices;
 	
 	\item it is deterministic;
 	
 	\item every atom $\bx$ and vertex $x$ satisfy that
 	\be\label{degA}
 	deg_{\Gc}(\bx)\in \{0,2 \},\quad deg_{\Gc}(x)\in \{0,2 \}.
 	\ee
 \end{itemize}

% \subsection{Local expansions}
 
Next, we describe the self-loop expansion (corresponding to the weight expansion in \Cref{ssl} for the Wegner orbital model).

\begin{lemma} [Self-loop expansion]\label{lem_lweight}
For the block Anderson and Anderson orbital models in \Cref{def: BM}, given a graph $\Gamma$, we have that
\begin{align} \label{eq:LW}
\Gc _{xx} \Gamma  
= &\sum_{y}(1+M^+S^+)_{xy}
 \Big(   \sum_{\al,\beta}  M_{y\al} S_{\al \beta} \Gc_{ \al y} 
 \Gc_{\beta\beta}  \bigGamma
  - \sum_{\al,\beta}M_{y\al} S_{\al \beta} G _{\beta y} 
   \partial_{h_{ \beta\al}} \bigGamma\Big) + Q_1,
 \end{align}
where $Q_1$ is a sum of $Q$-graphs defined as 
\begin{align*} 
Q_1&=\sum_{y,\al}(1+M^+S^+)_{xy} M_{y\al}Q_\al\left[(M^{-1}\Gc)_{\al y}  \Gamma \right] - \sum_{y,\al ,\beta}(1+M^+S^+)_{xy} M_{y\al} S_{\al \beta} Q_\al\left[
\Gc_{\beta\beta} G_{\al y} \Gamma\right] 
\\
&+ \sum_{y,\al ,\beta}(1+M^+S^+)_{xy} M_{y\al}S_{\al \beta}  Q_\al\left[G _{\beta y} 
\partial_{h_{ \beta\al}} \Gamma\right].
\end{align*}
% \begin{align*} 
% Q_1&=\sum_{y,\al}(1+M^+S^+)_{xy} M_{y\al}Q_\al\left[(M^{-1}\Gc)_{\al y}  \Gamma \right]  - \sum_{y,\al ,\beta}(1+M^+S^+)_{xy}  M^+_{y\al}  S_{\al \beta}  Q_\al\left[\Gc_{\beta\beta}  \Gamma \right]
% \\
% &- \sum_{y,\al ,\beta}(1+M^+S^+)_{xy} M_{y\al} S_{\al \beta} Q_\al\left[
% \Gc_{\beta\beta} \Gc _{\al y} \Gamma\right] + \sum_{y,\al ,\beta}(1+M^+S^+)_{xy} M_{y\al}S_{\al \beta}  Q_\al\left[G _{\beta y} 
% \partial_{h_{ \beta\al}} \Gamma\right].
% \end{align*}
\end{lemma}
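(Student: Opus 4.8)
The plan is to obtain \eqref{eq:LW} from a single application of the basic expansion \eqref{eq:BE}, followed by an algebraic resummation, exactly in the spirit of the weight expansion \eqref{Owx} for the Wegner orbital model. First I would apply \eqref{eq:BE} to the factor $\Gc_{yy}$ (i.e.\ with the second index also taken to be $y$), which gives
\[
\Gc_{yy}\Gamma=\sum_{\al,\beta}M_{y\al}S_{\al\beta}\Gc_{\beta\beta}G_{\al y}\Gamma-\sum_{\al,\beta}M_{y\al}S_{\al\beta}G_{\beta y}\partial_{h_{\beta\al}}\Gamma+Q_y,
\]
where $Q_y$ denotes the $Q$-graph term produced by \eqref{eq:BE} with both labels equal to $y$. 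Then I would split $G_{\al y}=\Gc_{\al y}+M_{\al y}$ in the first sum. Using that $M$ is complex symmetric for both the block Anderson and Anderson orbital models (since $\Psi^{\BA}$ and $\Psi^{\AO}$ are real symmetric), together with the definition $M^+_{xy}=(M_{xy})^2$, the term carrying $M_{\al y}$ collapses to $\sum_{\beta}(M^+S)_{y\beta}\Gc_{\beta\beta}\Gamma$, which has the same structure as the left-hand side. Abbreviating
\[
\Phi_y:=\sum_{\al,\beta}M_{y\al}S_{\al\beta}\Gc_{\al y}\Gc_{\beta\beta}\Gamma-\sum_{\al,\beta}M_{y\al}S_{\al\beta}G_{\beta y}\partial_{h_{\beta\al}}\Gamma,
\]
this yields the self-consistent identity $\Gc_{yy}\Gamma=\sum_{\beta}(M^+S)_{y\beta}\Gc_{\beta\beta}\Gamma+\Phi_y+Q_y$.

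Next I would regard $(\Gc_{yy}\Gamma)_y$ as a vector indexed by $y$ and solve the identity above: it reads $(1-M^+S)(\Gc_{\cdot\cdot}\Gamma)=\Phi+Q$, hence $\Gc_{\cdot\cdot}\Gamma=(1-M^+S)^{-1}(\Phi+Q)$. The invertibility of $1-M^+S$ in the bulk regime together with the matrix identity $(1-M^+S)^{-1}=1+M^+S^+$ (for $S^+$ as in \eqref{def:S+}) are already part of the standard deterministic toolbox of the paper---the analogous identity $S(1+m^2S^+)=S^+$ is used in the proof of \Cref{3rdExp}---so this step is essentially bookkeeping. Reading off the $xy$-component then gives
\[
\Gc_{xx}\Gamma=\sum_{y}(1+M^+S^+)_{xy}\Phi_y+\sum_{y}(1+M^+S^+)_{xy}Q_y,
\]
which is precisely \eqref{eq:LW} once one sets $Q_1:=\sum_{y}(1+M^+S^+)_{xy}Q_y$.

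Finally I would verify the bookkeeping of $Q_1$. Since $(1+M^+S^+)_{xy}$ is a deterministic scalar, $\sum_{y}(1+M^+S^+)_{xy}Q_y$ is again a sum of $Q$-graphs, and substituting the explicit $Q$-part of \eqref{eq:BE} for $Q_y$ reproduces verbatim the three terms displayed for $Q_1$ in the statement of the lemma. I do not anticipate any genuine obstacle: the only points requiring mild care are keeping $G_{\al y}$ versus $\Gc_{\al y}$ (and $G_{\beta y}$ versus $\Gc_{\beta y}$) straight when separating $\Phi_y$ from the self-loop term, and noting that the resummation is legitimate, which it is by the same argument that justifies \Cref{ssl} for the Wegner orbital model.
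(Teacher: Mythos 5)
Your proposal is correct and takes essentially the same route as the paper: the paper's proof performs the Gaussian integration by parts for $\Gc_{xx}\Gamma$ directly (the same computation underlying \eqref{eq:BE}), splits $G_{\al x}=\Gc_{\al x}+M_{\al x}$ to produce $M_{x\al}M_{\al x}=M^+_{x\al}$, and solves $\sum_\al(1-M^+S)_{x\al}\Gc_{\al\al}\Gamma=\cdots$ via $(1-M^+S)^{-1}=1+M^+S^+$, exactly as you do. Your only deviation is invoking Lemma~\ref{lanlw} at coincident indices rather than repeating the cumulant expansion, which is a harmless modularization, and your bookkeeping of $Q_1$ reproduces the paper's $Q$-graphs verbatim.
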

\begin{proof}
Using \eqref{G-M}, we can write that
\begin{align*}  
 \Gc_{xx}\Gamma=&\sum_{\al}M_{x\al}(M^{-1}\Gc) _{\al x}  \Gamma     =  - \sum_{\al,\beta}M_{x\al}P_\al \left[  V_{\al \beta}G_{\beta x}\Gamma\right] - m \sum_\al M_{x\al}P_\al (G_{\al x}\Gamma)  + \sum_{\al}M_{x\al}Q_\al\left[(M^{-1}\Gc )_{\al x}  \Gamma \right]   \\
=&
\sum_{\al,\beta} M_{x\al} S_{\al \beta} P_\al \left[
\Gc_{\beta\beta} G _{\al x} \Gamma \right]
- \sum_{\al,\beta}M_{x\al} S_{\al \beta} P_\al \left[G _{\beta x} 
\partial_{h_{ \beta\al}} \Gamma\right] + \sum_{\al}M_{x\al}Q_\al\left[(M^{-1}\Gc )_{\al x}  \Gamma \right] \\ 
= & \sum_{\al, \beta}  M_{x\al}  M_{ \al x}S_{\al \beta} P_\al \left[
\Gc_{\beta\beta}  \Gamma \right]
+ \sum_{\al ,\beta}   M_{x\al}S_{\al \beta} P_\al\left[ 
\Gc_{\beta\beta} \Gc _{\al x} \Gamma\right] 
- \sum_{\al,\beta} M_{x\al} S_{\al \beta} P_\al \left[ G _{\beta x} 
\partial_{h_{ \beta\al}} \Gamma\right]\\
&+ \sum_{\al}M_{x\al}Q_\al\left[(M^{-1}\Gc )_{\al x}  \Gamma \right].
 \end{align*}
This gives
\begin{align*} 
\sum_\al & (1-M^+ S)_{x\al}\Gc _{\al\al}  \Gamma = \sum_{\al,\beta} M_{x\al} S_{\al \beta}
\Gc_{\beta\beta} \Gc _{\al x} \Gamma - \sum_{\al,\beta}  M_{x\al} S_{\al \beta} G _{\beta x} \partial_{h_{ \beta\al}} \Gamma + \sum_{\al}M_{x\al}Q_\al\left[(M^{-1}\Gc )_{\al x}  \Gamma \right]  \\
& - \sum_{\al ,\beta}  M_{x\al} M_{ \al x} S_{\al \beta} Q_\al \left[
	\Gc_{\beta\beta}  \Gamma \right]
	- \sum_{\al ,\beta}  M_{x\al} S_{\al \beta} Q_\al\left[
	\Gc_{\beta\beta} \Gc _{\al x} \Gamma\right]  + \sum_{\al,\beta} M_{x\al} S_{\al \beta}Q_\al\left[ G _{\beta x} 
	\partial_{h_{ \beta\al}} \Gamma\right].
\end{align*}
Solving this equation, we get \eqref{eq:LW}.  \end{proof}

Again, taking the block Anderson model as an example, we continue the discussion following \eqref{degA}. 
If \smash{$\Gc _{xx}\bigGamma$} satisfies \eqref{degA}, then all non-$Q$ graphs on the RHS of \eqref{eq:LW} have strictly smaller scaling sizes than \smash{$\Gc _{xx}\bigGamma$}.  To see this, note that the first term on the RHS of \eqref{eq:LW} has a much smaller scaling size. For the second term on the RHS, suppose the derivative $\partial_{h_{\beta\al}}$ acts on a \smash{$\Gc_{y'x'}$} edge in $\bigGamma$. Then, we have that 
\be\label{eq:afterloop}
\sum_{ \al,\beta} M_{x\al} S_{\al \beta} G_{\beta x} 
G_{y'\beta}G_{\al x'}\frac{\bigGamma}{\Gc _{y'x'}} + \sum_{y,\al,\beta} (M^+S^+)_{xy}M_{y\al} S_{\al \beta} G_{\beta y} 
G_{y'\beta}G_{\al x'}\frac{\bigGamma}{\Gc _{y'x'}}
. 
\ee
%We focus on the first term, while the second term and be dealt with in the same way. 
We expand the $G$ entries in these terms as $G=\Gc+M$ and observe the following four cases: 
\begin{itemize}
	\item If $G_{\beta x} G_{y'\beta}G_{\al x'}$ is replaced by a product of three $\Gc$ edges in the first term, then the resulting graph has a much smaller scaling size than \smash{$\Gc_{xx} \Gc_{y'x'}$}. A similar argument applies to the second term. 
	
	\item If $G_{\beta x} G_{y'\beta}G_{\al x'}$ is replaced by a product of three $M$ edges in the first term, then we have
	\begin{align*}
		\sum_{ \al,\beta} M_{x\al} S_{\al \beta} M_{\beta x} 
		M_{y'\beta}M_{\al x'}\frac{\bigGamma}{\Gc _{y'x'}}.
	\end{align*}
For this graph to be non-negligible, $x'$ must be in the same atom as $x$, which contradicts \eqref{degA}. On the other hand, if we replace $G_{\beta y} G_{y'\beta}G_{\al x'} $ by three $M$ edges, we obtain
$$\sum_{y,\al,\beta} (M^+S^+)_{xy}M_{y\al} S_{\al \beta} M_{\beta y} 
M_{y'\beta}M_{\al x'}\frac{\bigGamma}{\Gc _{y'x'}}.$$
Then, $x'$, $y$, $y'$, $\al$, $\beta$ are all in the same atom, so this graph has a much smaller scaling size than \smash{$\Gc_{xx} \Gc_{y'x'}$} due to the two waved edges $(M^+S^+)_{xy}S_{\al \beta}$.

\item Suppose $G_{\beta x} G_{y'\beta}G_{\al x'}$ is replaced by one $\Gc$ edge and two $M$ edges in the first term of \eqref{eq:afterloop}. For example, in the graph
$$\sum_{ \al,\beta} M_{x\al} S_{\al \beta} \Gc_{\beta x} 
M_{y'\beta}M_{\al x'}\frac{\bigGamma}{\Gc _{y'x'}},$$
$x$ and $x'$ are in the same atom, which contradicts \eqref{degA}. All other cases and the second term of \eqref{eq:afterloop} can be handled similarly.

\item Suppose $G_{\beta x} G_{y'\beta}G_{\al x'}$ is replaced by a product of two $\Gc$ edges and one $M$ edge in the first term of \eqref{eq:afterloop}. It is evident that the terms 
$$\sum_{ \al,\beta} M_{x\al} S_{\al \beta} M_{\beta x} 
\Gc_{y'\beta}\Gc_{\al x'}\frac{\bigGamma}{\Gc _{y'x'}},\quad \sum_{ \al,\beta} M_{x\al} S_{\al \beta} \Gc_{\beta x} 
M_{y'\beta}\Gc_{\al x'}\frac{\bigGamma}{\Gc _{y'x'}} $$
have much smaller scaling sizes than $\Gc_{xx} \Gc_{y'x'}$ because of the extra $S_{\al \beta}$ edge. Similar arguments apply to the second term. Next, for the graph
$$ 
\sum_{ \al,\beta} M_{x\al} S_{\al \beta} \Gc_{\beta x} 
\Gc_{y'\beta}M_{\al x'}\frac{\bigGamma}{\Gc _{y'x'}}
$$
to be non-negligible, $x$ and $x'$ must be in the same atom, which contradicts \eqref{degA}. Finally, since $y$ is in the same atom as $x'$ in the graph 
$$\sum_{y,\al,\beta} (M^+S^+)_{xy}M_{y\al} S_{\al \beta} \Gc_{\beta y} 
\Gc_{y'\beta}M_{\al x'}\frac{\bigGamma}{\Gc _{y'x'}},$$
it has a much smaller scaling size than $\Gc_{xx} \Gc_{y'x'}$ because of the extra $(M^+S^+)_{xy}$ edge.
\end{itemize}
If the derivative $\partial_{h_{\beta\al}}$ has acted on a $\Gc_{x'y'}^-$ edge in $\bigGamma$, the arguments are similar.

For a pair of edges of the same color, we have the following $GG$ expansion (which corresponds to the $GG$-expansion in \Cref{T eq0} for the Wegner orbital model).

\begin{lemma}[$GG$ expansion]
For the block Anderson and Anderson orbital models in \Cref{def: BM}, given a graph $\Gamma$, we have that 
\begin{align} 
&\Gc _{y'x}\Gc _{xy}  \bigGamma 
= \sum_\beta S^+ _{x\beta } M _{\beta y}M_{y'\beta}\bigGamma
+ \sum_{ \beta}S^+_{x\beta}\left(\Gc _{\beta y}M_{y'\beta}+M_{\beta y}\Gc _{y'\beta}\right)
   \bigGamma \label{GGGamma}
 \\ & +\sum_{z,\al,\beta}\left(1+M^+S^+\right)_{xz}M_{z\al} S_{\al \beta}\left(   
 \Gc_{\beta\beta} G _{\al y} \Gc _{y'z}\bigGamma
 + \Gc _{\al z}G _{\beta y}G_{y'\beta}
   \bigGamma -  G _{\beta y} \Gc _{y'z}
     \partial_{h_{ \beta\al}}   \bigGamma\right) + Q_2,\nonumber
 \end{align}
where $Q_2$ is a sum of $Q$-graphs defined as 
 \begin{align*}
	Q_2&=\sum_{z,\al}\left(1+M^+S^+\right)_{xz}M_{z\al}Q_\al\left[(M^{-1}\Gc)_{\al y} \Gc_{y'z} \bigGamma\right]  - \sum_{z,\al,\beta}\left(1+M^+S^+\right)_{xz} M_{z\al} S_{\al \beta}Q_\al\left(\Gc_{\beta\beta} G _{\al y} \Gc _{y'z}\bigGamma\right)\\
	& - \sum_{z,\al,\beta}\left(1+M^+S^+\right)_{xz}M_{z\al} S_{\al \beta} Q_\al\left(G _{\beta y}G_{y'\beta} G _{\al z}\bigGamma\right) + \sum_{z,\al,\beta}\left(1+M^+S^+\right)_{xz}M_{z\al} S_{\al \beta} Q_\al\left(G _{\beta y} \Gc _{y'z}  \partial_{h_{ \beta\al}}  \bigGamma\right).
\end{align*}
\end{lemma}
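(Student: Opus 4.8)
The plan is to prove \eqref{GGGamma} in exactly the way one proves the basic expansion \Cref{lanlw} and the self-loop expansion \Cref{lem_lweight}: treat $x$ as the pivot vertex, use the defining identity \eqref{G-M} in the form $\Gc_{xy}=\sum_\al M_{x\al}(M^{-1}\Gc)_{\al y}$ with $(M^{-1}\Gc)_{\al y}=-[(V+m)G]_{\al y}$, and apply Gaussian integration by parts in the entries of $V$. First I would write $\Gc_{y'x}\Gc_{xy}\bigGamma=\sum_\al M_{x\al}(M^{-1}\Gc)_{\al y}\,(\Gc_{y'x}\bigGamma)$ and, for each $\al$, split $(M^{-1}\Gc)_{\al y}(\Gc_{y'x}\bigGamma)=-[(V+m)G]_{\al y}(\Gc_{y'x}\bigGamma)$ into its $P_\al$ and $Q_\al$ parts; the $Q_\al$ part is moved directly into $Q_2$ (this is the first term of the stated $Q_2$). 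On the $P_\al$ part I would integrate by parts in $\sum_\beta V_{\al\beta}G_{\beta y}(\Gc_{y'x}\bigGamma)$, using $\partial_{h_{\beta\al}}G_{pq}=-G_{p\beta}G_{\al q}$. The derivative hits three factors: (i) $G_{\beta y}$, producing $-G_{\beta\beta}G_{\al y}$, which combines with the residual $-mG_{\al y}$ term through $\sum_\beta S_{\al\beta}=1$ to form a light weight $\Gc_{\beta\beta}$; (ii) the companion edge $\Gc_{y'x}$, producing $-G_{y'\beta}G_{\al x}$; (iii) the external graph $\bigGamma$, producing the $\partial_{h_{\beta\al}}\bigGamma$ term. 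Collecting these, moving the new $Q_\al$-pieces into $Q_2$, then multiplying by $M_{x\al}$ and summing over $\al$ gives an expansion of $\Gc_{y'x}\Gc_{xy}\bigGamma$ into a light-weight term $\sum_{\al,\beta}M_{x\al}S_{\al\beta}\Gc_{\beta\beta}G_{\al y}\Gc_{y'x}\bigGamma$, a $\partial_h\bigGamma$ term, and the term $\sum_{\al,\beta}M_{x\al}S_{\al\beta}G_{\beta y}G_{y'\beta}G_{\al x}\bigGamma$ coming from (ii).

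The second step is to process the term from (ii). I would expand $G_{\al x}=\Gc_{\al x}+M_{\al x}$: the $\Gc_{\al x}$ piece is retained unchanged (it becomes the $\Gc_{\al z}G_{\beta y}G_{y'\beta}$ term on the right-hand side), while the $M_{\al x}$ piece produces $\sum_\al (M_{x\al})^2 S_{\al\beta}=(M^+S)_{x\beta}$ multiplying $G_{\beta y}G_{y'\beta}\bigGamma$. Writing $G_{\beta y}G_{y'\beta}\bigGamma=\Gc_{\beta y}\Gc_{y'\beta}\bigGamma+(\Gc_{\beta y}M_{y'\beta}+M_{\beta y}\Gc_{y'\beta}+M_{\beta y}M_{y'\beta})\bigGamma$ and observing that $\Gc_{\beta y}\Gc_{y'\beta}\bigGamma$ is the object we started from, but with pivot $\beta$, one obtains a self-consistent linear equation $[(1-M^+S)\Phi]_x=R_x$ with $\Phi_x:=\Gc_{y'x}\Gc_{xy}\bigGamma$ and $R_x$ explicit. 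Inverting $1-M^+S$ via $(1-M^+S)^{-1}=1+M^+S^+$, together with the related identities $S(1+M^+S^+)=S^+$ and $(1+M^+S^+)M^+S=M^+S^+$ (all consequences of the definition $S^+=S[1-M^+S]^{-1}$ in \eqref{def:S+}), and noting that every term of $R_x$ — including its $Q_\al$-pieces — is prepended by the factor $(1+M^+S^+)_{xz}$, produces \eqref{GGGamma} with the $S^+_{x\beta}$ and $(1+M^+S^+)_{xz}$ coefficients and the stated form of $Q_2$.

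Since Gaussian integration by parts is an exact operation, the whole statement is an algebraic identity and no probabilistic estimates enter; the inputs are only \eqref{G-M}, the resolvent derivative formula, the normalization $\sum_\beta S_{\al\beta}=1$ (valid for the block Anderson and Anderson orbital models), and the algebra of $S^+$, $M^+$ and $(1-M^+S)^{-1}$. The one conceptually new point compared with \Cref{lanlw,lem_lweight} — and the part requiring the most attention — is the contribution of the derivative acting on the companion edge $\Gc_{y'x}$: after the replacement $G_{\al x}\to M_{\al x}$ this feeds back into $\Phi$ and is what forces the resummation of the $(1-M^+S)^{-1}$ series (it is the direct analogue of the light-weight self-consistency $\Gc_{\beta\beta}\rightsquigarrow\Gc_{xx}$ that drives \Cref{lem_lweight}). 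The remaining effort is pure bookkeeping: checking that the light-weight combinations close correctly, that the factor $G_{\al z}$ in the $Q$-graphs is kept un-expanded (unlike in the non-$Q$ part, where it is split as $\Gc_{\al z}+M_{\al z}$), and that every $Q$-graph generated along the way is accounted for in $Q_2$ with no term dropped.
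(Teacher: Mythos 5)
Your proposal is correct and is essentially the paper's own proof: the paper likewise writes $\Gc_{y'x}\Gc_{xy}\bigGamma=\sum_\al M_{x\al}(M^{-1}\Gc)_{\al y}\Gc_{y'x}\bigGamma$, applies Gaussian integration by parts in $V_{\al\beta}$ (the derivative hitting $G_{\beta y}$, the companion edge $\Gc_{y'x}$, and $\bigGamma$), splits $G_{\al x}=\Gc_{\al x}+M_{\al x}$ and $G_{y'\beta}G_{\beta y}=(\Gc+M)(\Gc+M)$ to arrive at the self-consistent relation $\sum_\beta(1-M^+S)_{x\beta}\Gc_{y'\beta}\Gc_{\beta y}\bigGamma=R_x$, and then inverts $1-M^+S$, collecting every $Q_\al$-piece into $Q_2$ exactly as you describe. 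One minor caveat: that inversion turns the $(M^+S)_{x\beta}$ coefficients of the $M$-substituted terms into $(M^+S^+)_{x\beta}$ (this is precisely your identity $(1+M^+S^+)M^+S=M^+S^+$), so your claim that this literally reproduces the displayed $S^+_{x\beta}$ coefficients inherits the same imprecision as the stated formula rather than reflecting any gap in your argument.
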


\begin{proof}
	Using \eqref{G-M}, we can write that
\begin{align*}  
& \Gc _{y'x}\Gc _{xy} \bigGamma = \sum_{\al}  M_{x\al}(M^{-1}\Gc)_{\al y} \Gc_{y'x} \bigGamma \\
& = - \sum_{\al,\beta}M_{x\al}P_\al\left(h_{\al\beta} G_{\beta y }\Gc_{y'x} \bigGamma\right) - \sum_{\al,\beta}m M_{x\al}P_\al\left( G _{\al y }\Gc_{y'x} \bigGamma\right) + \sum_{\al}  M_{x\al}Q_\al\left[(M^{-1}\Gc)_{\al y} \Gc_{y'x} \bigGamma\right] \\
& = \sum_{\al,\beta}M_{x\al}S_{\al\beta}P_\al\left(G_{\beta\beta}G_{\al y }\Gc_{y'x} \bigGamma\right) - \sum_{\al,\beta}m M_{x\al}P_\al\left( G_{\al y }\Gc_{y'x} \bigGamma\right) + \sum_{\al,\beta}M_{x\al}P_\al\left(S_{\al\beta}G_{\beta y }G_{y'\beta}G_{\al x}\bigGamma\right) \\
&\quad - \sum_{\al,\beta}M_{x\al}P_\al\left(S_{\al\beta}G_{\beta y }\Gc_{y'x} \partial_{h_{\beta\al}}\bigGamma\right) + \sum_{\al}  M_{x\al}Q_\al\left[(M^{-1}\Gc)_{\al y} \Gc_{y'x} \bigGamma\right] \\
&=\sum_{\al,\beta}M_{x\al} M_{\al x}S_{\al \beta} G_{y'\beta} G _{\beta y} \bigGamma +\sum_{\al,\beta}M_{x\al} S_{\al \beta} G_{y'\beta} G _{\beta y}\Gc_{\al x}\bigGamma + \sum_{\al,\beta}  M_{x\al} S_{\al \beta}\Gc_{\beta\beta} G _{\al y} \Gc _{y'x}\bigGamma
\\ 
&\quad - \sum_{\al,\beta}M_{x\al} S_{\al \beta} G _{\beta y} \Gc _{y'x}  \partial_{h_{ \beta\al}}  \bigGamma + \sum_{\al}  M_{x\al}Q_\al\left[(M^{-1}\Gc)_{\al y} \Gc_{y'x} \bigGamma\right]  - \sum_{\al,\beta}  M_{x\al} S_{\al \beta}Q_\al\left(\Gc_{\beta\beta} G _{\al y} \Gc _{y'x}\bigGamma\right)\\
&\quad - \sum_{\al,\beta}M_{x\al} S_{\al \beta} Q_\al\left(G _{\beta y}G_{y'\beta} G _{\al x}\bigGamma\right)+ \sum_{\al,\beta}M_{x\al} S_{\al \beta} Q_\al\left(G _{\beta y} \Gc _{y'x}  \partial_{h_{ \beta\al}}  \bigGamma\right).
\end{align*}
From this equation, we get 
\begin{align*}
	&\sum_\beta (1-M^+S)_{x\beta}\Gc _{y'\beta}\Gc _{\beta y } \bigGamma =\sum_{\beta}(M^+S)_{x \beta} \left(\Gc_{y'\beta} M_{\beta y}+ M_{y'\beta} \Gc_{\beta y}+ M_{y'\beta} M_{\beta y}\right) \bigGamma \\ 
	&\quad +\sum_{\al,\beta}M_{x\al} S_{\al \beta} G_{y'\beta} G _{\beta y}\Gc_{\al x}\bigGamma + \sum_{\al,\beta}  M_{x\al} S_{\al \beta}\Gc_{\beta\beta} G _{\al y} \Gc _{y'x}\bigGamma - \sum_{\al,\beta}M_{x\al} S_{\al \beta} G _{\beta y} \Gc _{y'x}  \partial_{h_{ \beta\al}}  \bigGamma
	\\ 
	&\quad  + \sum_{\al}  M_{x\al}Q_\al\left[(M^{-1}\Gc)_{\al y} \Gc_{y'x} \bigGamma\right]  - \sum_{\al,\beta}  M_{x\al} S_{\al \beta}Q_\al\left(\Gc_{\beta\beta} G _{\al y} \Gc _{y'x}\bigGamma\right)\\
&\quad - \sum_{\al,\beta}M_{x\al} S_{\al \beta} Q_\al\left(G _{\beta y}G_{y'\beta} G _{\al x}\bigGamma\right)+ \sum_{\al,\beta}M_{x\al} S_{\al \beta} Q_\al\left(G _{\beta y} \Gc _{y'x}  \partial_{h_{ \beta\al}}  \bigGamma\right),
\end{align*}
solving which gives \eqref{GGGamma}. \end{proof}

Again, taking the block Anderson model as an example, with a discussion similar to that below \eqref{eq:afterloop}, we can show that if \smash{$\Gc _{y'x}\Gc _{xy}  \bigGamma$} is a graph satisfying \eqref{degA}, then, except for the first term, all other non-$Q$ graphs on the RHS of \eqref{GGGamma} have much smaller scaling sizes than \smash{$\size(\Gc _{y'x}\Gc _{xy}  \bigGamma)$}. 
%Then   \begin{align} \nonumber
%	\sum_\beta\left(\delta_{x\beta}-T^+_{x\beta}\right)\Gc _{y'\beta}\Gc _{\beta y } 
%	\bigGamma =_E  &
%	\sum_{\al,\beta}  M_{x\al} S_{\al \beta}
%	\Gc_{\beta\beta} G _{\al y} \Gc _{y'x}\bigGamma
%	+
%	\sum_{\al,\beta}M_{x\al} S_{\al \beta}  \Gc _{\al x}G _{\beta y}G_{y'\beta}
%	\bigGamma
%	\\\nonumber
%	& +\sum_{ \beta}T^+_{x\beta}\left(\Gc _{\beta y}M_{y'\beta}+M_{\beta y}\Gc _{y'\beta}+M _{\beta y}M_{y'\beta}\right)
%	\bigGamma
%	\\\nonumber
%	&- \sum_{\al,\beta}M_{x\al} S_{\al \beta} G _{\beta y} \Gc _{y'x}
%	\left( \partial_{h_{ \beta\al}} \right)  \bigGamma
%\end{align}

With the above discussions, we can see that performing the local expansions on an arbitrary normal graph repeatedly yields a sum of $\OO(1)$ many locally standard graphs. This allows us to establish a result similar to that in \Cref{lvl1 lemma} for the block Anderson model. For the Anderson orbital model, the discussion is very similar (and actually a bit simpler). 

After defining the local expansions, the global expansion strategy is the same as that for the Wegner orbital model. First, as mentioned below \Cref{def_atom}, under our notation for atoms and molecules, the graphs for the block Anderson and Anderson orbital models have the same atomic and molecular structures as those for the Wegner orbital model (or random band matrices). Thus, the same \Cref{strat_global} applies to the block Anderson and Anderson orbital models. Specifically, every input graph is globally standard, and we find a \smash{$\cT_{x,y_1y_2}$ or $\cT_{y_1y_2,x}$} (resp.~\smash{$\czT_{x,y_1y_2}$ or $\czT_{y_1y_2,x}$}) variable that contains the first blue solid edge in a pre-deterministic order of the MIS. We then expand it with \eqref{mlevelTgdef_BA_orig} (resp.~\eqref{mlevelTgdef_BA}), and apply the $Q$-expansions if necessary. The $Q$-expansions can be defined in the same way as that in \Cref{subsec_Qexp}, while the only difference is that we will replace the expansions in \Cref{lemm:removewe} with the local expansions defined in \Cref{lanlw} and \Cref{lem_lweight}.    

Now, with a similar argument as in \Cref{sec:constr}, we can establish \Cref{Teq_BA} for the block Anderson and Anderson orbital models. Since the modifications to the argument are minor, we will omit the full details here for the sake of simplicity in presentation.

\subsection{High-moment estimates of $T$-variables}\label{sec:hightree}

We now present the proof of \Cref{lemma ptree} for the block Anderson and Anderson orbital models, which is similar to that for the Wegner orbital model in \Cref{sec:ptree}. 
It is also based on the $p$-th moment estimate of $T_{xy}$ in \eqref{locallawptree}.  
Using the definitions in \eqref{general_Tc} and $\sum_x S_{xy}=1$, we can write that
\begin{align}
T_{\fa\fb}&=\czT_{\fa\fb}+\sum_{x}S_{\fa x}\left(M_{x\fb}\Gc^-_{x\fb}+\Gc_{x\fb}\bar M_{x\fb}+M^0_{x\fb}\right)+{N}^{-1}\sum_x |\Gc_{x\fb}|^2\nonumber\\
&=\czT_{\fa\fb}+\sum_{x}S_{\fa x}\left(M_{x\fb}\Gc^-_{x\fb}+\Gc_{x\fb}\bar M_{x\fb}+M^0_{x\fb}\right) + \frac{\im G_{\fb\fb}}{N\eta} \nonumber\\
&\quad + \frac{\im m}{N(\eta+\im m)} -\frac1{N}\sum_x (M_{x\fb}\Gc^-_{x\fb}+\Gc_{x\fb}\bar M_{x\fb}) ,\label{ETab}
\end{align}
where we used \eqref{L2M} and Ward's identity \eqref{eq_Ward} in the second step.
Thus, we can write the LHS of \eqref{locallawptree} as
\begin{align*}
\E T_{\fa\fb}^p &= \E T_{\fa\fb}^{p-1}\Big( \czT_{\fa\fb}+\sum_{x}S_{\fa x}\left(M_{x\fb}\Gc^-_{x\fb}+\Gc_{x\fb}\bar M_{x\fb}+M^0_{x\fb}\right) \Big) \\
&+\E T_{\fa\fb}^{p-1}\left( \frac{\im G_{\fb\fb}}{N\eta} + \frac{\im m}{N(\eta+\im m)}-\frac1{N}\sum_x (M_{x\fb}\Gc^-_{x\fb}+\Gc_{x\fb}\bar M_{x\fb})\right).
\end{align*}
%where we have expanded $T_{\fa\fb}$ using \eqref{ETab}. 
Then, we expand $\czT_{\fa\fb}$ with the $T$-expansion \eqref{mlevelTgdef_BA} and write:
\begin{align}
\E T_{\fa\fb}^p &= \E T_{\fa\fb}^{p-1}\sum_{x}S_{\fa x}\left(M_{x\fb}\Gc^-_{x\fb}+\Gc_{x\fb}\bar M_{x\fb}+M^0_{x\fb}\right)  \nonumber \\
& +\E T_{\fa\fb}^{p-1}\left( \frac{\im G_{\fb\fb}}{N\eta} + \frac{\im m}{N(\eta+\im m)} -\frac1{N}\sum_x (M_{x\fb}\Gc^-_{x\fb}+\Gc_{x\fb}\bar M_{x\fb})\right)\nonumber\\
&+ \E T_{\fa\fb}^{p-1} \sum_{x} [\zthn(\Selek) M^0 S]_{\fa x}\left(M _{x \fb_1 }\bar M_{x \fb_2}+ \Gc _{x \fb_1 } \bar M_{ x \fb_2}	+M_{x \fb_1 }\Gc^{-} _{ x \fb_2}\right)  \nonumber\\
&+\E T_{\fa\fb}^{p-1} \sum_x \zthn(\Selek)_{\fa x} \left[\PT_{x,\fb_1 \fb_2} +  \AT_{x,\fb_1 \fb_2}  + \WT_{x,\fb_1 \fb_2}  + \QT_{x,\fb_1 \fb_2}  +  (\Err_{D})_{x,\fb_1 \fb_2}\right]\, . \label{eq:Tree_BA}
\end{align} 
Using \eqref{BRB}, the condition \eqref{eq:cond-ewb}, and the estimates of $M$ in \Cref{lem:propM}, we obtain that
\begin{align*}
&\sum_{x}S_{\fa x}\left(M_{x\fb}\Gc^-_{x\fb}+\Gc_{x\fb}\bar M_{x\fb}+M^0_{x\fb}\right)+\frac{\im G_{\fb\fb}}{N\eta} +\frac{\im m}{N(\eta+\im m)}-\frac1{N}\sum_x (M_{x\fb}\Gc^-_{x\fb}+\Gc_{x\fb}\bar M_{x\fb})\\
&+ \sum_{x} [\zthn(\Selek) M^0 S]_{\fa x}\left(M _{x \fb_1 }\bar M_{x \fb_2}+ \Gc _{x \fb_1 } \bar M_{ x \fb_2}	+M_{x \fb_1 }\Gc^{-} _{ x \fb_2}\right)\prec B_{\fa\fb}+\frac{1}{N\eta}.
\end{align*}
Finally, for the terms in line \eqref{eq:Tree_BA}, they can be bounded in the same way as \eqref{estimates_ptree}--\eqref{estimates_ptreeQ} and we omit the details of the proof. This leads to the estimate \eqref{locallawptree} and completes the proof of \Cref{lemma ptree} for the block Anderson and Anderson orbital models.

%We first notice that $\Delta(x-y)\equiv \Delta_{xy}$ and $M(x-y)\equiv M_{xy}$ are symmetric and translation invariant, while $S(x-y):=s_{xy}$ is symmetric and invariant under translation by integer multiples of blocks, $W\cdot  [x]$ for $[x]\in \wt \Z_n^d$. As a consequence, $S^\pm$ and $\zTheta$ are also symmetric and invariant under translation by integer multiples of blocks. Since each $\pself$ $\cal E_k$ is a linear combination of deterministic graphs consisting of matrices $P^\perp$,  $\Delta$, $M$, $S$,  $S^\pm$, and $\zTheta,$ it follows from a similar argument as in the proof of \cite[Lemma A.1]{BandI} that $\cal E_k$ satisfies the property \eqref{self_sym}. 

\subsection{Continuity estimate}\label{subsec:appdcont}
%: Proof of \Cref{gvalue_continuity}

The proof of \Cref{gvalue_continuity} for the block Anderson and Anderson orbital models is nearly identical to that in Sections \ref{sec:Vexpansion} and \ref{sec:continuity}, with one major modification regarding the $V$-expansion, \Cref{thm:Vexp}. For the block Anderson and Anderson orbital models, by using the complete $T$-expansion \eqref{mlevelTgdef weak_BA} and performing $Q$-expansions, we can obtain a similar $V$-expansion as in \eqref{eq_Vexp} (with some obvious modifications to the first two terms on the RHS). The same proof used for \Cref{thm:Vexp} shows that all properties (i)--(iii) in \Cref{thm:Vexp} still hold. In particular, the star graphs comes from replacing $T_{\fa,\fb_1\fb_2}$ with \smash{$\sum_x  \wt\thn_{\fa x} \mathcal Q^{a}_{x,\fb_1 \fb_2}$}, where $ \mathcal Q^{a}$ was defined in \eqref{eq_largeQ}, and applying the $Q$-expansions. The key molecule sum zero property \eqref{eq:vertex_sum_zero} can be proved in exactly the same way via loop graphs and the $V$-expansion of GUE. The only change in the proof lies in the proof of property \eqref{eq:vertex_sum_zero_diff}.

We first consider the Anderson orbital model. In this case, the variance matrix $S$ does not depend on $\lambda$. Moreover, we have the estimate \eqref{S+xy-0} for $S^\pm(\lambda)-S^\pm(\lambda=0)$ and $m(z)-m_{sc}(z)=\OO(\lambda^2)$ by \eqref{eq:expandm}. Thus, replacing $m(z)$ in the coefficient and the waved edges in $\fD_\omega^{\lambda}(x,\vec x(\ii),\vec x(\ff))$ with their $\lambda=0$ versions introduces an extra $\lambda^2=\blam^{-1}$ factor. The main issue arises from possible $\lambda\Psi$ and $M$ edges in $\fD_\omega^{\lambda}$. Note that replacing each $M(\lambda)$ edge with an $M(0)$ edge may only yield a $\lambda$ factor due to the off-diagonal entries of $M(\lambda)$, specifically: $M_{[x][y]}(\lambda)-M_{[x][y]}(\lambda=0)=M_{[x][y]}(\lambda)$ with $[x]\sim [y]$. To address this issue, it suffices to prove that 
\be\label{eq:even_degree}
\text{every graph in $\fD_\omega^{\lambda}$ must contain an even number of off-diagonal $\lambda\Psi$ and $M$ edges}. 
\ee
 For this purpose, we define the total degree $\deg_{\#}$ of a vertex $x$ in a graph (which may contain $G$ edges) as:
\begin{align}
\deg_{\#}(x)=\#\{\text{solid, $\Psi$, and $M$ edges}\} +2 \#\{\text{$S$ and $S^\pm$ edges}\}.
\end{align}
From the local expansions (including the $Q$-expansions) defined in \Cref{sec:graphs}, it is easy to observe that the total degrees of the internal vertices exhibit parity symmetry, meaning that $\deg_{\#}(x)$ for each internal vertex $x$ in our graphs must remain even during the expansions. This fact immediately leads to \eqref{eq:even_degree}, since $\fD_\omega^{\lambda}$ does not contain any solid edge.

For the block Anderson model, the proof of \eqref{eq:vertex_sum_zero_diff} is similar, with the following minor change: given $z=E+\ii\eta$, we choose $z_\lambda=E_\lambda+\ii\eta$ with $E_\lambda=-2\re m(E)$. In other words, $E_\lambda$ is chosen such that $\re m_{sc}(E_\lambda) = \re m(E)$, which implies $m_{sc}(z_\lambda) = m(z)+\OO(\eta)$. 
We already know that the molecule sum zero property \eqref{eq:vertex_sum_zero} holds at $z_\lambda$. Then, instead of \eqref{eq:vertex_sum_zero_diff}, it suffices to show that 
\be\label{eq:vertex_sum_zero_diff2}
\size\left[\fD_\omega^{\lambda}(x,\vec x(\ii),\vec x(\ff);z)- \fD_\omega^{\lambda=0}(x,\vec x(\ii),\vec x(\ff);z_\lambda)\right] \lesssim \blam^{-1} W^{-(2p+1)d} ,
\ee
where we have indicated the dependence of these two terms on the spectral parameter. In this case, we have $m(z)-m_{sc}(z)=\OO(\eta)=\OO(\blam^{-1})$ and the estimate \eqref{S+xy-0} for $S^\pm(\lambda)-S^\pm(\lambda=0)$. Moreover, the same argument as above shows that \eqref{eq:even_degree} holds, leading to an extra $\lambda^2$ factor. To improve this factor to $\blam^{-1}$, we notice that the $\lambda\Psi$ or $M$ edges are always associated with local block averages. By the definition of $\Psi$ and the estimate \eqref{Mbound}, we have
$$ \frac{1}{W^d}\sum_{x\in [0]}|\lambda \Psi_{xy}|^2=\OO\left( {\lambda^2}/{W}\right),\quad  \frac{1}{W^d}\sum_{x\in [0]}| M_{xy}|^2=\OO\left({\lambda^2}/{W}\right),\quad \forall y \in \Z_L^d.$$
With these bounds and the Cauchy-Schwarz inequality, we find that every off-diagonal $\lambda\Psi$ or $M$ edge essentially contributes an addition $\OO(\blam^{-1/2})$ factor; we omit the details since the argument is straightforward. This concludes the proof of \eqref{eq:vertex_sum_zero_diff2}.

\subsection{Sum zero property for self-energies} \label{sec:sumzero_BA}
%: Proof of \Cref{cancellation property}

Finally, as the last piece of the proof of \Cref{completeTexp_BA}, we need to establish \Cref{cancellation property} for the block Anderson and Anderson orbital models, particularly the sum zero property for the $\self$ $\Sele_{k_0+1}$. The only change in the argument from \Cref{sec:sumzero} lies in the proof of \Cref{lem FT0}. 
For this proof, we need to derive the forms of $S_\infinf$, $S_\infinf^+$, and $\thn_{\infinf}$ as in \eqref{eq:splitSSS} and extract their leading terms expressed as $W$-dependent coefficients multiplied by $W$-independent operators. 
We write $S$, $S^\pm$ and $\thn$ as tensor products 
$$ S=I_{n} \otimes \bE ,\quad \zthn=\left(\wt P^\perp\frac{1}{1-\KM}\wt P^\perp\right)\otimes \bE , \quad \Sp=\frac{1}{1-\KMp}\otimes \bE .$$
The infinite space limit of $S$ is $S_\infinf=I\otimes \mathbf E$, where $I$ is the identity operator, so we do not change $S$ edges from $\cal G$ to $\wt{\cal G}$. For $\KMp$, using \eqref{eq:expandm} and \Cref{lem:propM}, we can write  
$$\KMp = m_{sc}^2(z)+\delta \KMp(z),$$
where $\delta\KMp(z)$ is an error matrix of order $\|\delta\KMp(z)\|=\OO(\lambda^2)$. Thus, similar to the proof of \Cref{lem FT0}, we can replace each $S^+$ or $S^-$ in \smash{$\wt{\cal G}$} with $(1-m_{sc}(z)^2)^{-1}I_n\otimes \mathbf E$ or $(1-\bar m_{sc}(z)^2)^{-1}I_n\otimes \mathbf E$ when defining $\cal G$. 
It remains to deal with $(1-\KM)^{-1}$.

For $k\in \N$, define a sequence of $ \Z_n^d \times  \Z_n^d$ matrices $\J_k$ and integers $d_k$ as 
\be\label{eq:Jks}(\J_k)_{xy} = \mathbf 1_{|x-y|=k} ,\quad  d_k:=|\{x\in  \Z_n^d: \|x\|_1=k\}|.\ee
In particular, when $k=1$, we have $ d_1=2d$ and $\J_1=2dI_n-\Delta_n$. Using \eqref{eq:expandm} and the expansion \eqref{eq:expandM}, we can obtain an expansion of \smash{$\KM$} in terms of $\blam^{-1}$ as
\begin{align}
% \KMp =& \left[m_{sc}^2+a_1^{(0)}\lambda^2 + \cdots + \OO(W^{-D})\right] + \frac{\lambda^2}{W}\left[a_1^{(1)}+ \cdots + \OO(W^{-D})\right]J_1 \nonumber\\
% &+  \left(\frac{\lambda^2}{W}\right)^2\left[a_1^{(2)}+ \cdots + \OO(W^{-D})\right]J_2+\cdots, \label{eq:M+LK}\\
\KM =& %\left[|m_{sc}(z)|^2+a_1(z,W,L)\lambda^2 + b_1(z,W,L) \lambda^4 + b_1(z,W,L) \lambda^4 \right]
m_0(z,\lambda, W)+ \blam^{-1}\left[|m_{sc}(z)|^2+ \e_1(z,\lambda, W)\right]J_1 + \sum_{k=2}^{(n-1)/2}\e_k(z)\blam^{-k}J_k \, , \label{eq:M0LK}
% &+  \left(\frac{\lambda^2}{W}\right)^2\left[b_1^{(2)}+ \cdots + \OO(W^{-D})\right]J_2+\cdots,\label{eq:M0LK}
\end{align}
where $\e_1(z,\lambda, W)$ is a function of order $\OO(\lambda^2)$, and $\e_k(z)$ are non-negative functions of order $\OO(1)$.     
% With this expansion, we can easily get an expansion of $(1-\KMp)^{-1}$:
% \begin{align*}
% \frac{1}{1-\KMp} =&\left\{ \left(1- m_{sc}^2\right)-\left(a_1^{(0)}\lambda^2 + \cdots\right) - \frac{\lambda^2}{W}\left(a_1^{(1)}+ \cdots \right)J_1 -  \left(\frac{\lambda^2}{W}\right)^2\left(a_1^{(2)}+ \cdots  \right)J_2+\cdots\right\}^{-1}.
% \end{align*}
% The main challenge is to get an expansion of $(1-\KM)^{-1}$. 
Recall that by \eqref{L2M}, we have 
$$ \sum_{[y]}(\KM)_{[x][y]}= 1-\frac{\eta}{\im m + \eta}.$$ 
Then, with \eqref{eq:M0LK}, we can write: 
\begin{align*}
1-\KM &= \frac{\eta}{\im m + \eta} +\sum_{[y]}(\KM)_{[x][y]} - \KM\\
&=\frac{\eta}{\im m + \eta} + \blam^{-1}\left[|m_{sc}(z)|^2+\e_1(z,\lambda,W)\right]\Delta_n + \sum_{k=2}^{(n-1)/2}\e_k(z)\blam^{-k}(d_k I_n-J_k).
\end{align*}
Thus, in constructing $\cal G$, we replace each $\zthn$ in $\wt{\cal G}$ by the leading term 
$$\mathring\Theta(z,W,L):=\blam \wt P^\perp\left[\frac{\blam\eta}{\im m + \eta} +  |m_{sc}(z)|^2\Delta_n\right]^{-1}\wt P^\perp \otimes \mathbf E .$$ 
As $L\to \infty$ and $\eta\to 0$, we know that $\Theta(z,W,L)$ converges to the operator $\blam \wt \Delta^{\inv}$.

% Our basic idea is to expand $(1-\KM)^{-1}$ using 
% $$\frac{\lambda^2}{W} b_1^{(1)}(I-J_1)=\frac{\lambda^2}{W} b_1^{(1)}\frac{\Delta}{d}.$$ 
% First, we have a leading term 
% $$ \left( \frac{\eta}{\im m + \eta} + \frac{\lambda^2}{W} b_1^{(1)}  (I-J_1)\right)^{-1}\to \frac{W}{\lambda^2}(I-J_1)^{\inv}$$
% as $\eta\to 0$, where $\inv$ stands for the pseudo-inverse. 

Finally, we can check that the $\zthn-\mathring\Theta$ indeed introduces an error. We can express this as:
\begin{align*}
 \zthn-\mathring\Theta = -\blam^{-1} \zthn \left[\wt P^\perp \bigg(\e_1(z,\lambda,W)\Delta_n + \sum_{k=2}^{(n-1)/2}\e_k(z)\blam^{-k+1}(d_k I_n-J_k) \bigg)\wt P^\perp\otimes \mathbf E \right]   \mathring\Theta \, .
\end{align*}
With the same argument as in Section \ref{sec:Fourier} below, we can show, via Fourier series, that   
$$\zthn_{xy}-\mathring\Theta_{xy} \prec \lambda^2 B_{xy},\quad \forall x,y\in \Z_L^d.$$
Hence, $\zthn-\mathring\Theta$ behaves like a diffusive edge,  while introducing an addition $\lambda^2$ factor to the graph. 

% The expansions with respect to the higher-order terms related to $I-J_1$ do not cause any trouble. Next, we need to deal with the terms
% $$\frac{W}{\lambda^2}(I-J_1)^{\inv}\cdot \left(\frac{\lambda^2}{W}\right)^{k-1} (I-J_k) (I-J_1)^{\inv},\quad k=2,3,\ldots$$
% It is not hard to see that $(I-J_1)^{\inv} (I-J_k) (I-J_1)^{\inv}$ is indeed well-defined. In fact, on the finite lattice $\wt \Z_n^d$, we have 
% $$ [(I-J_1)^{\inv} (I-J_k) (I-J_1)^{\inv}]_{xy} =\frac{1}{K^d}\sum_{\mathbf p\in \frac{2\pi}{K}\Z_K^d} \frac{d_k^{-1}\sum_{\mathbf k \in \Z^d: \|\mathbf k\|_1=k}[1-\cos(\mathbf p \cdot \mathbf k )]}{\left[d^{-1}\sum_{i=1}^d (1-\cos p_i)\right]^2} e^{\ii \mathbf p \cdot (x-y)}.$$
% As $K\to \infty,$ this converges to
% $$ \frac{1}{(2\pi)^d} \int_{(-\pi,\pi]^d} \frac{d_k^{-1}\sum_{\mathbf k \in \Z^d: \|\mathbf k\|_1=k}[1-\cos(\mathbf p \cdot \mathbf k )]}{\left[d^{-1}\sum_{i=1}^d (1-\cos p_i)\right]^2} e^{\ii \mathbf p \cdot (x-y)} \dd\mathbf p.$$
% With Fourier transform techniques, we can show that the above expression has decay $|x-y|^{-(d-2)}$ ({\cor we decompose the integral into a part with $|\mathbf p|\le \e/|x-y|$, which we can estimate directly, and a part with $|\mathbf p|> \e/|x-y|$, for which we can use integration by parts arguments}).

% \begin{proof}[\bf Proof of Theorem \ref{completeTexp_BA} for $\BA$ and $\AO$] \cor 
% The proof of \Cref{completeTexp_BA} is the same as that for \Cref{completeTexp} for the Wegner orbital model (see the derivation around \eqref{mlevelT incomplete_1.0}) given Propositions \ref{Teq_BA} and \ref{cancellation property}.
% \end{proof}

\section{Proof of quantum diffusion}\label{sec:pf_Qdiff}

The proof of \Cref{thm_diffu} is based on the $T$-expansion constructed in \Cref{completeTexp} (for the Wegner orbital model) or \Cref{completeTexp_BA} (for the block Anderson and Anderson orbital models), the local laws in \Cref{thm_locallaw}, and the estimate \eqref{bound 2net1 strong} in \Cref{no dot}. 
%\begin{proof}[\bf Proof of \Cref{thm_diffu}] 

For the Wegner orbital model, we have a $T$-expansion \eqref{mlevelTgdef} up to arbitrarily high order $\fC$. Setting $\fb_1=\fb_2=\fb$ in \eqref{mlevelTgdef}, taking expectation, and using the identity \eqref{eq:T-cT}, we get that
\begin{align}
		\E T_{\fa \fb}&=  m  \zthn_{\fa \fb}(\Sele)\E \overline G_{\fb\fb} + \frac{ \im \E G_{\fb \fb}}{N\eta} + \sum_x \zthn_{\fa x} (\Selek)\E\left[\PT_{x,\fb \fb} +  \AT_{x,\fb \fb}  + \WT_{x,\fb \fb}  +  (\Err_{D})_{x,\fb \fb}\right]\nonumber\\
  &= \zthn_{\fa \fb}(\Sele)  \left[|m|^2 +m(\E\bar G_{\fb\fb}-\bar m)\right] + \sum_x \zthn_{\fa x} (\Selek)\E \PT_{x,\fb \fb}  + \frac{ \im \E G_{\fb\fb} + \sum_x \zthn_{\fa x} (\Selek)\E  \WT'_{x,\fb \fb}}{N\eta}  \nonumber\\
  &\quad  + \sum_x \zthn_{\fa x} (\Selek)\E  \left[ \AT_{x,\fb \fb} +(\Err_{D})_{x,\fb \fb} \right]  \label{mlevelTgdef_QD} \, ,
\end{align}
where we recall that $\WT'_{x,\fb \fb}$ is obtained by removing the free edge from $\WT_{x,\fb \fb}$. By definition, we have \smash{$\zthn (\Sele)S(0)=\zthn (\Sele)$}, where recall that $S(0)$ was defined in \eqref{eq:SBA}. 
Then, we define $\cal G$ as \be\label{eq:calGxb}\cal G_{x\fb} %=\sum_y S(0)_{xy}\left[\E \PT_{y,\fb \fb} + m(\E\bar G_{\fb\fb}-\bar m) \delta_{x\fb}\right] 
:= \sum_y S(0)_{xy} \E \PT_{y,\fb \fb} + m(\E\bar G_{\fb\fb}-\bar m) S(0)_{x\fb} .\ee 
For the bound \eqref{eq:boundGaly}, we first have that 
\be\label{eq:EG-m}m(\E\bar G_{\fb\fb}-\bar m) S(0)_{x\fb} \prec \heta^{1/2}  S(0)_{x\fb} \prec \heta^{1/2} \exp\left( -|x-\fb|/W\right)\ee
by the local law \eqref{locallawmax} for $\bar G_{\fb\fb}-\bar m$. Next, we notice that the graphs in $\PT_{x,\fb \fb}$ are doubly connected since they are $\fb$-recollision graphs so that $\fb$ belongs to the same molecule as some internal vertex. Then, using \eqref{bound 2net1 strong} (recall that $\peta$ is equivalent to $\heta$ when $\eta\ge \eta_*$), we obtain that 
\be\label{eq:ERx} \sum_y S(0)_{xy}\E \PT_{y,\fb \fb} \prec \sum_y S(0)_{xy} \size(\PT_{y,\fb \fb}) \frac{W^{d-2}}{\langle y-\fb\rangle^{d-2}} \frac{\cal A_{y\fb}}{\heta^{1/2}},\ee
where $\cal A_{xy}$ are non-negative random variables satisfying $\|\cal A(z)\|_{s}\prec 1$, and 
$$ \size(\PT_{y,\fb \fb})\le \blam^{-1}\size\Big(\sum_{x} \zthn_{\fa x}(\Selek)\PT_{x,\fb_1\fb_2}\Big) \lesssim \blam^{-1}W^{-\fd}\heta $$
by the condition \eqref{eq:smallR}. Plugging it into \eqref{eq:ERx} and using the definition \eqref{def_strongA} for $\|\cal A(z)\|_s$, we get that 
\begin{align}
\sum_y S(0)_{xy}&\E \PT_{y,\fb \fb}  \prec \frac{W^{-\fd}\heta^{1/2}}{\blam}   \frac{W^{d-2}}{\langle x-\fb\rangle^{d-2}} \E\sum_y S(0)_{xy}\cal A_{y\fb}\prec \frac{W^{-\fd}\heta^{1/2}}{\blam}   \frac{W^{d-2}}{\langle x-\fb\rangle^{d-2}}  \left(B_{x\fb}+\frac{1}{N\eta}\right)^{1/2}  \label{eq:ES0R} \\
    &\le  \frac{W^{-\fd}\heta^{1/2}}{\blam} \cdot  \frac{W^{d}}{\langle x-\fb\rangle^{d}}  \left(B_{x\fb}\frac{\langle x-\fb\rangle^{4}}{W^4}+\frac{1}{N\eta}\frac{L^4}{W^4}\right)^{1/2} \le  \frac{W^{-\fd}\heta}{\blam W^{-d}} \cdot  \frac{1}{\langle x-\fb\rangle^{d}}  =\frac{W^{-\fd}}{\langle x-\fb\rangle^{d}}\nonumber
\end{align}
for $\eta\ge W^\fd\eta_*$. Together with \eqref{eq:EG-m}, this gives the estimate \eqref{eq:boundGaly} for $\cal G$ defined in \eqref{eq:calGxb}. 

Next, we prove that 
\be\label{eq:ThetaW}
\sum_x \zthn_{\fa x} (\Selek) \WT'_{x,\fb \fb} \prec \size\Big(\sum_{x} \zthn_{\fa x}(\Selek)\WT'_{x,\fb_1\fb_2}\Big) \lesssim  W^{-\fd} \, . % W^{-\fc_1}
\ee
%for a constant $\fc_1>0$. 
Consider any graph $\cal G'_{x\fb}$ in $\WT'_{x,\fb \fb}$. If it is a $\fb$-recollision graph, then we can bound it in the same way as the graphs in $\PT_{y,\fb \fb}$. Otherwise, it can be expressed as
\be \label{Ahoforms_add}
\cal G'_{x\fb} = \sum_{y, y'}(\cal G_0)_{xyy'} G_{y\fb}\overline G_{y'\fb}, \quad  \quad  \cal G'_{x\fb}= \sum_{y} (\cal G_0)_{xy}\Theta_{y\fb},\ee
or in other forms obtained by setting some indices among $x,y,y'$ to be equal to each other.
By property (7) of Definition \ref{defn genuni}, the graph $\cal G_0$ is doubly connected (with $x,y,y'$ regarded as internal vertices). Without loss of generality, it suffices to consider the first form in \eqref{Ahoforms_add} with $y=y'$, i.e., we will show that for $\cal G'_{x\fb}= \sum_{y} (\cal G_0)_{xy}|G_{y\fb}|^2$, 
\be\label{eq:ThetaW2}
\sum_{x,y} \zthn(\Sele)_{\fa x}  \E \left[(\cal G_0^{{\mathrm{abs}}})_{xy}|G_{y\fb}|^2\right] \prec \size\Big(\sum_{x} \zthn_{\fa x}(\Selek)\cal G'_{x\fb}\Big)\, .
\ee
The second expression in \eqref{Ahoforms_add} is easier to estimate, while the first expression can be bounded by combining \eqref{eq:ThetaW2} with the Cauchy-Schwarz inequality. To prove \eqref{eq:ThetaW2}, we use \eqref{BRB} for \smash{$\zthn(\Sele)$} and \eqref{bound 2net1 strong} for $(\cal G_0^{{\mathrm{abs}}})_{xy}$ to get that
\begin{align*}
&\sum_{x,y} \zthn(\Sele)_{\fa x}  \E \left[(\cal G_0^{{\mathrm{abs}}})_{xy}|G_{y\fb}|^2\right]=\sum_{\al, x,y} \zthn(\Sele)_{\fa \al}S(0)_{\al x}  \E \left[(\cal G_0^{{\mathrm{abs}}})_{xy}|G_{y\fb}|^2\right] \\
&\qquad  \prec   \sum_{\al, x,y}B_{\fa \al}S(0)_{\al x} {\size\left[(\cal G_0)_{xy}\right]} \frac{W^{d-2}}{\langle x-y\rangle^{d-2}} \heta^{-1/2} \E  \left[\cal A_{xy}  \left|G_{y\fb}\right|^2\right] \\
&\qquad \prec  \frac{a_W}{\blam\heta^{3/2}}\sum_{\al,y}B_{\fa \al} \frac{1}{W^2\qq{\al-y}^{d-2}} \E  \sum_{x}S(0)_{\al x}\cal A_{xy}|G_{y\fb}|^2\\
&\qquad \prec  \frac{a_W}{\blam\heta}\sum_{\al,y}B_{\fa \al} \frac{1}{\qq{\al-y}^{d}} \E |G_{y\fb}|^2 \prec \frac{a_W}{\blam\heta}\sum_{y}B_{\fa y} \E|G_{y\fb}|^2\\
&\qquad \lesssim \frac{a_W}{\blam\heta}\sum_{[y_0]\in \wt\Z_n^d}B_{\fa y_0} \E  \sum_{y\in [y_0]} |G_{y\fb}|^2\lesssim \frac{a_W}{\blam\heta}\sum_{y\in \Z_L^d}B_{\fa y} \E T_{y\fb} \\
&\qquad \prec  \frac{a_W}{\blam\heta} \sum_{y\in \wt\Z_n^d}B_{\fa y} \left(B_{y\fb}+\frac{1}{N\eta}\right) \prec \frac{a_W}{\heta} \left( \frac{\blam}{W^d} + \frac{1}{N\eta}\frac{L^2}{W^2}\right)\lesssim a_W,
% &\qquad  \prec  W^{-d/2}W^{-(M-2)d/2}\sum_{x,y}B_{\fa x}B_{x y}^{3/2} B^{1/2}_{y\fb}\prec W^{-(M+1)d/2},
\end{align*}
where we abbreviated $a_W:=\size(\sum_{x} \zthn_{\fa x}(\Selek)\cal G'_{x\fb})$. In the third step, we used that $\size\left[(\cal G_0)_{xy}\right]= a_W/\blam^2$ and $\qq{\al-y}\prec \qq{x-y}$ for $\al$ and $x$ in the same molecule;   
%where the factor $\blam^{-2}$ is due to the removed diffusive edge $\zthn(\Sele)$ and solid edges $|G_{y\fb}|^2$, and the change of two internal vertices $x,y$ to external ones; 
in the fourth step, we applied a similar argument as in \eqref{eq:ES0R} to get that 
$$\frac{1}{W^2\qq{\al-y}^{d-2}}  \sum_{x}S(0)_{\al x}\cal A_{xy} \prec \frac{1}{W^2\qq{\al-y}^{d-2}}\left(B_{\al y}+\frac{1}{N\eta}\right)^{1/2}  \lesssim \frac{\heta^{1/2}}{\qq{\al-y}^{d}};$$
in the fifth step, we used that $\sum_\al B_{\fa \al} \qq{\al-y}^{-d}\prec B_{\fa y}$; in the sixth step, we decomposed the sum according to the blocks \smash{$[y_0]\in \wt\Z_n^d$}, where $y_0$ denotes their centers, and used that $B_{\fa y_0}\asymp B_{\fa y}$ for $y\in [y_0]$; in the eighth step, we applied the local law \eqref{locallaw} for $T_{x\fb}$. This concludes \eqref{eq:ThetaW2}, thereby completing the proof of \eqref{eq:ThetaW}.

Finally, with the estimate \eqref{eq:ThetaW2}, using the size conditions \eqref{eq:smallA} and \eqref{eq:smallRerr}, we also obtain that 
\begin{align*}
    \sum_x \zthn_{\fa x} (\Selek)\E  \left[ \AT_{x,\fb \fb} +(\Err_{D})_{x,\fb \fb} \right] \prec W^{-\fC}.
\end{align*}
Thus, this term is an error as long as we take $\fC>D$. This concludes \eqref{eq:Qdiff} for the Wegner orbital model. 

The proof of \eqref{eq:Qdiff} is similar for the block Anderson and Anderson orbital models except that we use the expansion \eqref{mlevelTgdef_BA} and the identity \eqref{ETab} instead. 
%Recall the identity \eqref{ETab}. 
% {\cob Using the definitions in \eqref{general_Tc} and $\sum_x S_{xy}=1$, we can write that
% \begin{align}
% T_{\fa\fb}&=\czT_{\fa\fb}+\sum_{x}S_{\fa x}\left(M_{x\fb}\Gc^-_{x\fb}+\Gc_{x\fb}\bar M_{x\fb}+M^0_{x\fb}\right)+{N}^{-1}\sum_x |\Gc_{x\fb}|^2\nonumber\\
% &=\czT_{\fa\fb}+\sum_{x}S_{\fa x}\left(M_{x\fb}\Gc^-_{x\fb}+\Gc_{x\fb}\bar M_{x\fb}+M^0_{x\fb}\right) + \frac{\im G_{\fb\fb}}{N\eta} \nonumber\\
% &\quad + \frac{\im m}{N(\eta+\im m)} -\frac1{N}\sum_x (M_{x\fb}\Gc^-_{x\fb}+\Gc_{x\fb}\bar M_{x\fb}) ,\label{ETab}
% \end{align}
% where we used \eqref{L2M} and Ward's identity \eqref{eq_Ward} in the second step.} 
By \Cref{lem:propM}, we have that 
$\sum_x|M_{x\fb}|=\OO(1)$. Combined with the local law \eqref{locallawmax} for $\Gc$, it implies that 
$$\frac1{N}\sum_x (M_{x\fb}\Gc^-_{x\fb}+\Gc_{x\fb}\bar M_{x\fb}) \prec  {\heta^{1/2}}/{N}.$$
Thus, the expectation of the last three terms on the RHS of \eqref{ETab} gives
$$\frac{\im \E G_{\fb\fb}}{N\eta} + \frac{\im m}{N(\eta+\im m)} -\frac1{N}\sum_x \E (M_{x\fb}\Gc^-_{x\fb}+\Gc_{x\fb}\bar M_{x\fb}) =\frac{\im m + \OO_\prec(\eta+\heta^{1/2}) }{N\eta}. $$
If we let $\wt {\cal G}_{\fa \fb}:=\sum_{x}S_{\fa x}\E(M_{x\fb}\Gc^-_{x\fb}+\Gc_{x\fb}\bar M_{x\fb})$, then the entries of $\wt {\cal G}$ satisfies \eqref{eq:boundGalywt} by the local law \eqref{locallawmax}. Next, we take the expectation of \eqref{mlevelTgdef_BA} with $\fb_1=\fb_2=\fb$ and get that  
\begin{align}
\E\czT_{\fa\fb} + (SM^0)_{\fa\fb}&=  [\zthn(\Selek) M^0 SM^0 + SM^0]_{\fa \fb} + \sum_{x} [\zthn(\Selek) M^0 S]_{\fa x}\E\left(\Gc _{x \fb } \bar M_{ x \fb}	+M_{x \fb }\Gc^{-} _{ x \fb}\right) \nonumber\\
&+ \sum_x \zthn_{\fa x} (\Selek)\E\left[\PT_{x,\fb \fb} +  \AT_{x,\fb \fb}  + \WT_{x,\fb \fb}  +  (\Err_{D})_{x,\fb \fb}\right]\, .\label{EzTab}
\end{align}
The first term on the RHS can be written as 
\begin{align*}
\zthn(\Selek) M^0 SM^0 + SM^0 &= \frac{1}{1-\zthn \Sele}\left(\zthn  M^0S + S\right) M^0 - \left(\frac{1}{1-\zthn \Sele}\zthn \Sele\right) SM^0 \\
&= \zthn(\Selek) M^0 -  \zthn (\Sele) \Sele SM^0 .\end{align*}
Using the condition \eqref{4th_property0} for the $\self$ $\Sele$ and \Cref{lem:propM}, we obtain that 
$$(\Sele SM^0)_{xy} \prec \frac{\sizeself(\Selek)}{\blam} \frac1{\qq{x-y}^{d}}.$$
Hence, the term $- \Sele SM^0$ can be included in $\cal G$. The second term on the RHS of \eqref{EzTab} also provides a term  $\sum_\al (M^0S)_{x\al}\E(\Gc_{\al y}\bar M_{\al y}+M_{\al y}\Gc^-_{\al y}) $
to $\cal G_{xy}$, which can be bounded as  
$$\sum_\al (M^0S)_{x\al}\E\left(\Gc_{\al y}\bar M_{\al y}+M_{\al y}\Gc^-_{\al y}\right) \prec \frac{\heta^{1/2}}{W^{d}}\exp\left(-\frac{|x-y|}{CW}\right)$$
for a constant $C>0$ by \Cref{lem:propM} and the local law \eqref{locallawmax}. 
Finally, the terms 
$\E\PT_{x,\fb \fb},$ $\E\AT_{x,\fb \fb}$, $ \E\WT_{x,\fb \fb} $, and $\E(\Err_{D})_{x,\fb \fb}$ can be handled in the same manner as in the case of Wegner orbital model, so we omit the details. This concludes the proof of \eqref{eq:Qdiff} for the block Anderson and Anderson orbital models.
%\end{proof}

\section{Proof of quantum unique ergodicity}\label{sec:QUE}

Denote $\mathcal A : =\im G= ( G  - G^* )/(2\ii)$. We observe that the quantity of interest $\sum_{x \in I_N} (N|u_\alpha(x)|^2 -1)$ can be controlled by $\tr(\mathcal A \Pi \mathcal A \Pi)$, where $\Pi$ is a diagonal matrix with diagonal entries 
\be\label{eq:defPi}\Pi_{xx}=(N/|I_N|)\mathbf 1_{x\in I_N} - 1.\ee 
The next lemma follows immediately from the spectral decomposition of $\cal A$.

\begin{lemma}[Lemma 4.15 of \cite{BandIII}]
\label{uab-tr}
Let $z = E + \ii\eta$ and $\Pi = \mathrm{diag}((\Pi_x)_{x \in \Z^d_L})$ be an arbitrary real diagonal matrix. Then, for any $l\ge \eta$,
\begin{align}
	& {\sum_{\alpha, \beta: |\lambda_\alpha - E| \leq l,|\lambda_\beta - E| \leq l}}|\gE{\mathbf u_\alpha,\Pi \mathbf u_\beta}|^2  \leq \frac{4l^4}{\eta^2} \tr\left[\mathcal A(z) \Pi \mathcal A(z) \Pi\right]  .\label{eq:uab-tr-1}
	% \\	
	% \label{eq:uab-tr-2} &\sum_{\alpha, \beta: |\lambda_\alpha - E| \leq l,|\lambda_\beta - E| \leq l}|\gE{\mathbf u_\alpha,\Pi \bar {\mathbf u}_\beta}|^2  \leq \frac{4l^4}{\eta^2} \tr\left[\mathcal A(z) \Pi \overline {\mathcal A}(z) \Pi\right]\,.
\end{align}
\end{lemma}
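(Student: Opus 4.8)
The plan is to derive \eqref{eq:uab-tr-1} from the spectral decomposition of the resolvent, which reduces the inequality to a one-line computation together with a trivial weight estimate. First I would write $G(z)=\sum_k(\lambda_k-z)^{-1}\mathbf u_k\mathbf u_k^*$ for $z=E+\ii\eta$, so that
$$\mathcal A(z)=\im G(z)=\frac{G(z)-G(z)^*}{2\ii}=\sum_k w_k\,\mathbf u_k\mathbf u_k^*,\qquad w_k:=\frac{\eta}{(\lambda_k-E)^2+\eta^2}>0.$$
Since $\{\mathbf u_k\}$ is an orthonormal basis and $\Pi$ is real (hence Hermitian), expanding the trace of the product gives
$$\tr\big[\mathcal A(z)\,\Pi\,\mathcal A(z)\,\Pi\big]=\sum_{k,k'}w_k w_{k'}\,\tr\big[\mathbf u_k\mathbf u_k^*\Pi\mathbf u_{k'}\mathbf u_{k'}^*\Pi\big]=\sum_{k,k'}w_k w_{k'}\,\big|\gE{\mathbf u_k,\Pi\mathbf u_{k'}}\big|^2,$$
where I used $\tr[\mathbf u_k\mathbf u_k^*\Pi\mathbf u_{k'}\mathbf u_{k'}^*\Pi]=(\mathbf u_k^*\Pi\mathbf u_{k'})(\mathbf u_{k'}^*\Pi\mathbf u_k)=|\gE{\mathbf u_k,\Pi\mathbf u_{k'}}|^2$. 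The key structural point is that every summand on the right-hand side is non-negative.

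Next I would restrict the double sum to the indices $\alpha,\beta$ with $|\lambda_\alpha-E|\le l$ and $|\lambda_\beta-E|\le l$; by non-negativity of the terms this can only decrease the sum. For such indices, the hypothesis $\eta\le l$ gives $(\lambda_\alpha-E)^2+\eta^2\le l^2+\eta^2\le 2l^2$, hence $w_\alpha\ge\eta/(2l^2)$, and likewise $w_\beta\ge\eta/(2l^2)$. Therefore
$$\tr\big[\mathcal A(z)\,\Pi\,\mathcal A(z)\,\Pi\big]\ge\sum_{\alpha,\beta:\,|\lambda_\alpha-E|\le l,\,|\lambda_\beta-E|\le l}w_\alpha w_\beta\,\big|\gE{\mathbf u_\alpha,\Pi\mathbf u_\beta}\big|^2\ge\frac{\eta^2}{4l^4}\sum_{\alpha,\beta:\,|\lambda_\alpha-E|\le l,\,|\lambda_\beta-E|\le l}\big|\gE{\mathbf u_\alpha,\Pi\mathbf u_\beta}\big|^2,$$
and rearranging this inequality is exactly \eqref{eq:uab-tr-1}.

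There is no genuine obstacle here: the argument is a spectral computation plus the elementary lower bound $w_k\ge\eta/(2l^2)$ on the Poisson-kernel weights inside the window $|\lambda-E|\le l$. The only points that need a moment's attention are that $\Pi$ being Hermitian is precisely what turns the trace into the manifestly non-negative expression $|\gE{\mathbf u_\alpha,\Pi\mathbf u_\beta}|^2$ (so that discarding the off-window terms is legitimate), and the bookkeeping of the constant $4$, which arises from applying $l^2+\eta^2\le 2l^2$ to both factors $w_\alpha$ and $w_\beta$.
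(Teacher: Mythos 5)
Your proof is correct and is exactly the argument the paper has in mind: the statement is cited from \cite{BandIII} and the text notes it "follows immediately from the spectral decomposition of $\mathcal A$", which is precisely your computation with the Poisson weights $w_k=\eta/((\lambda_k-E)^2+\eta^2)$, the positivity of the terms $|\gE{\mathbf u_\alpha,\Pi\mathbf u_\beta}|^2$, and the bound $w_\alpha\ge\eta/(2l^2)$ on the window $|\lambda_\alpha-E|\le l$ (using $\eta\le l$), yielding the constant $4l^4/\eta^2$.
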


Note the matrix $\Pi$ defined through \eqref{eq:defPi} has $\tr \Pi=0$. Then,  \Cref{thm:QUE} follows from the next lemma on high-moment bounds of $\tr\left(\mathcal A \Pi \mathcal A \Pi\right)$.

\begin{lemma}
\label{trABAB}
In the setting of \Cref{thm:QUE}, fix any $z=E+\ii \eta$ with $|E|\le 2-\kappa$ and $W^\fd \eta_*\le \eta \le 1$. Consider an arbitrary real diagonal matrix $\Pi$ such that $\tr{\Pi} = 0$ and $\Pi_{xx}$ is constant on each block, i.e., $\Pi_{xx}=\Pi_{yy}$ if $[x]=[y]$. Then, for any fixed $p \in \N$, we have that
	\begin{equation}
	\label{eq:trABAB}
		\E\left[|\tr\left[\mathcal A(z) \Pi \mathcal A(z) \Pi\right]|^{2p}\right] \prec \Big(\sum_y |\Pi_{yy}|\Big)^{2p}  \Big(\max_x \sum_{y}B_{xy}|\Pi_{yy}|\Big)^{2p} \,.
	\end{equation}
%The same bound holds for $\E\left[|\tr{\mathcal A(z) \Pi \overline {\mathcal A}(z) \Pi}|^{2p}\right]$.
\end{lemma}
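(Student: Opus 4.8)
The plan is to expand $\tr[\mathcal A(z)\Pi\mathcal A(z)\Pi]$ in terms of $T$-variables and use the $V$-expansion together with the molecule sum zero property, exactly as in the continuity estimate. First I would write $\mathcal A = \frac{1}{2\ii}(G-G^\dagger)$ and expand $\tr[\mathcal A\Pi\mathcal A\Pi]$ into a linear combination of $\OO(1)$ many terms of the form $\sum_{x,y}\Pi_{xx}\Pi_{yy}G^{s_1}_{xy}G^{s_2}_{yx}$ with $s_1,s_2\in\{\emptyset,\dagger\}$. Then the $2p$-th moment $\E|\tr[\mathcal A\Pi\mathcal A\Pi]|^{2p}$ becomes a sum of graphs consisting of $4p$ external vertices (the summation indices $x_1,y_1,\ldots,x_{2p},y_{2p}$ weighted by $\Pi$) joined pairwise by solid edges into $2p$ two-cycles. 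Since $\Pi_{xx}$ is constant on blocks, I can split the $x,y$ sums into sums over blocks $[x]\in\wt\Z_n^d$ (carrying the factor $\Pi$) and internal sums within each block, so that the structure matches a loop/cycle graph of exactly the type handled by \Cref{mG-fe} and \Cref{lem:mG-fe-size}.

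The key technical step is to extract, from each factor $\sum_{x}\Pi_{xx}(\cdot)$, the resolvent structure that makes a $T$-expansion applicable. Concretely, for a factor $\sum_x \Pi_{xx} G_{xy}G^-_{yx}$ I would multiply and divide by $S$ using that $\Pi$ is block-constant, turning it (up to the leading diagonal $M$ contribution and the block-average structure) into $T$-type sums $\sum_\al S_{x\al}G_{\al y}\bar G_{\al y}$ weighted by $\Pi$; the off-diagonal $\Gc$ parts are controlled by the local law \eqref{locallawmax}. After this preparation, I would apply \Cref{mG-fe} to the resulting deterministic expansion of $\E[\cdots]$ after identifying equalities among the $4p$ external block-indices, obtaining generalized doubly connected graphs whose weak scaling sizes are bounded by $\heta^{q-1}$ via \Cref{lem:mG-fe-size} (here $q$ is the number of distinct external block indices). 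The crucial input is again the coupling/vertex renormalization \eqref{eq:vertex_sum_zero}, which via \Cref{lem:mG-fe-size} gives the sharp weak scaling size bound without a spurious $\blam^{q/2-1}$ factor.

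Then I would sum over the external block indices weighted by $|\Pi_{yy}|$, using the same estimates \eqref{keyobs4.3}, \eqref{keyobs4.3_add}, \eqref{keyobs4}, \eqref{keyobs4_add} as in the proof of \Cref{gvalue_continuity2} but with the averages over $\cal I$ replaced by weighted sums $\sum_y|\Pi_{yy}|(\cdot)$ and $\max_x\sum_y B_{xy}|\Pi_{yy}|(\cdot)$. By property (b) of \Cref{mG-fe}, at least $\lceil n/2\rceil$ of the $n$ "near-boundary" external molecules are connected to redundant diffusive/free edges; each such molecule, when its sum against $|\Pi_{yy}|$ is carried out against a $B$-edge, contributes a factor $\max_x\sum_y B_{xy}|\Pi_{yy}|$, while the remaining external molecules each contribute a factor $\sum_y|\Pi_{yy}|$ (using $\sum_y |\Pi_{yy}| \cdot 1$ trivially, or a $B$-edge when available). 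Since there are $2p$ pairs $(x_i,y_i)$ and each pair is tied together by a two-cycle, a careful bookkeeping of which external vertices absorb a $B$-factor yields precisely $p$ factors of $\max_x\sum_y B_{xy}|\Pi_{yy}|$ and $p$ factors of $\sum_y|\Pi_{yy}|$ — after accounting for the $\heta^{q-1}$ weak scaling size and the condition \eqref{eq:cond-lambda2}, all the "extra" silent/diffusive self-loops are bounded by $1$. This gives \eqref{eq:trABAB}.

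The main obstacle I anticipate is the bookkeeping in the last step: showing that the weighted sums distribute so that exactly $p$ of the $2p$ external-vertex summations pick up the good factor $\max_x\sum_y B_{xy}|\Pi_{yy}|$ while the rest pick up $\sum_y|\Pi_{yy}|$, uniformly over all generalized doubly connected graph topologies produced by the $V$-expansion. This requires matching the pairing structure of the $2p$ two-cycles with the $\ge\lceil n/2\rceil$ redundant-edge guarantee of \Cref{mG-fe}(b), and verifying that the $\Pi$-weighted analogues of \eqref{keyobs4}--\eqref{keyobs5_add} hold — these should follow from $\sum_y B_{xy}|\Pi_{yy}| \le \max_x \sum_y B_{xy}|\Pi_{yy}|$ and $\sum_y |\Pi_{yy}| \wt B_{xy} \lesssim \blam W^{-4} K^{d-4}\cdots$, but the combinatorics of ensuring no double-counting is the delicate point. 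Everything else is a direct transcription of the arguments in \Cref{sec:continuity}, with $\cal I$-averages replaced by $\Pi$-weighted sums, so I would organize the proof to reuse \Cref{mG-fe}, \Cref{lem:mG-fe-size}, and \Cref{w_s} verbatim.
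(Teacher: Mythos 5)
There is a genuine gap: your argument never uses the hypothesis $\tr\Pi=0$, and the bound \eqref{eq:trABAB} is false without it in the regime $W^{\fd}\eta_*\le \eta\ll t_{Th}^{-1}$ covered by the lemma (take $\Pi=I$ and use Ward's identity: $\tr(\mathcal A^2)\asymp N\im m/\eta$, which exceeds the right-hand side once $\eta^{-1}\gg \blam L^2/W^2$). In the paper's proof the zero-trace condition enters at a precise and indispensable point: after applying the multi-component analogue of \Cref{mG-fe} (namely \Cref{mGep}) and reducing to the weighted sums \eqref{eq:sum_QUE}--\eqref{eq:sum_QUE2}, one must discard every deterministic graph in which some external vertex is \emph{isolated} from the rest except through free (zero-mode) edges --- these arise, e.g., from the $(G_{\fb_2\fb_1}-\overline G_{\fb_1\fb_2})/(2\ii N\eta)$ term of the complete $T$-expansion, carry no $B$-decay attached to that vertex, and would therefore only produce the bad factor $\sum_y|\Pi_{yy}|\cdot(N\eta)^{-1}$ rather than $\max_x\sum_y B_{xy}|\Pi_{yy}|$. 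The paper kills exactly these graphs by block translation invariance of the graphs and of $\Pi$ together with $\sum_{w}\Pi_{ww}=0$ (property \eqref{eq:keyobs_noniso}); this is stated to be the only place the zero-trace condition is used. Your bookkeeping in the last step, which tries to distribute $B$-factors among external vertices using only \Cref{mG-fe}(b), cannot recover this cancellation, so the argument as written does not close.

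Two further, smaller issues. First, \Cref{lem:mG-fe-size} gives $\wsize\lesssim\heta^{q-1}$ only for the single loop graph; here the graph is a disjoint union of $2p$ two-cycles, and the correct bound (the paper's \Cref{mGep}) is $\heta^{q-t}$ with $t$ the number of connected components, so your quoted $\heta^{q-1}$ is not available and the deficit $\heta^{-(t-1)}$ must be absorbed by the per-component structure of the weighted sums --- this is part of what the transcription of the arguments from \cite{BandIII} handles. Second, the target has $2p$ factors of $\sum_y|\Pi_{yy}|$ and $2p$ factors of $\max_x\sum_y B_{xy}|\Pi_{yy}|$ (there are $4p$ external summations), not $p$ of each, so the pairing count in your final step should be revisited accordingly.
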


\begin{proof}[\bf Proof of Theorem \ref{thm:QUE}]
For $\Pi$ defined in \eqref{eq:defPi}, we have $\tr{\Pi}= 0$, $\Pi_{xx}$ is constant on each block, and 
\begin{equation}\label{QUE-id}
\begin{split}
	\left|\gE{\mathbf u_\alpha,\Pi \mathbf u_\alpha}\right|^2 &= \Big(\sum_x \Pi_{xx} |u_\alpha(x)|^2 \Big)^2 =  \bigg(\frac{1}{|I_N|}\sum_{x \in I_N} (N|u_\alpha(x)|^2 -1)\bigg)^2 \,.
\end{split}
\end{equation}
Next, taking $z=E+\ii \eta$ with $\eta= N^\fd \eta_*$ and applying Markov's inequality to \eqref{eq:trABAB}, we obtain that
	\begin{align*}
	\tr\left[\mathcal A(z) \Pi \mathcal A(z) \Pi\right] & \prec \blam N  \bigg( \frac{|I_N|^{2/d}}{W^2} \frac{N}{|I_N|} + \frac{L^2}{W^2}  \bigg) \lesssim \frac{\blam N^2}{ W^{2}|I_N|^{1-2/d}}\,.\end{align*}
Therefore, it follows from \eqref{eq:uab-tr-1} that %for $\delta_0\in (0,1)$,
\begin{align}\label{eq:supque} 
\sup_{\alpha: |\lambda_\alpha - E| \leq \eta}|\gE{\mathbf u_\alpha,\Pi \mathbf u_\alpha}|^2  & \prec \eta^2 \tr(\mathcal A \Pi \mathcal A \Pi) \prec \frac{\blam (N\eta)^2}{W^{2}|I_N|^{1-2/d}}\lesssim \frac{L^{10}W^{2d-12+2\fd}}{\blam|I_N|^{1-2/d}} \,,\\
 \label{eq:avque}	\frac{1}{N\eta}\sum_{\alpha: |\lambda_\alpha - E| \leq \eta}|\gE{\mathbf u_\alpha,\Pi \mathbf u_\alpha}|^2 & \prec \frac{\eta}{N} \tr(\mathcal A \Pi \mathcal A \Pi) \prec \frac{\blam N\eta}{ W^{2}|I_N|^{1-2/d}}\lesssim \frac{L^5 W^{d-7+\fd}}{|I_N|^{1-2/d}}  \,.
\end{align}
% \begin{align} \cor
% \sup_{\alpha: |\lambda_\alpha - E| \leq \eta}|\gE{\mathbf u_\alpha,\Pi \mathbf u_\alpha}|^2  & \prec \eta^2 \tr(\mathcal A \Pi \mathcal A \Pi) \prec \frac{\blam (N\eta)^2}{W^{2}|I_N|^{1-2/d}} \lesssim \frac{W^{2\fd}}{|I_N|^{1-2/d}}\cdot \blam^3 \frac{L^{10}}{W^{12}}\,,\\
% \frac{1}{N\eta}\sum_{\alpha: |\lambda_\alpha - E| \leq \eta}|\gE{\mathbf u_\alpha,\Pi \mathbf u_\alpha}|^2 & \prec \frac{\eta}{N} \tr(\mathcal A \Pi \mathcal A \Pi) \prec \frac{\blam N\eta}{ W^{2}|I_N|^{1-2/d}} \lesssim \frac{W^{\fd}}{|I_N|^{1-2/d}}\cdot \blam^2 \frac{L^{5}}{W^7}  \,.
% \end{align}
Combining \eqref{eq:supque} with the condition \eqref{eq:cond_ell1}, we conclude \eqref{eq:que} since $\fd$ can be arbitrarily small.
Moreover, from \eqref{eq:avque}, we obtain that
$$\frac{1}{N}\sum_{\alpha: |\lambda_\alpha | \leq 2-\kappa}|\gE{\mathbf u_\alpha,\Pi \mathbf u_\alpha}|^2  \prec \frac{L^5 W^{d-7+\fd}}{|I_N|^{1-2/d}} \,,$$
which, together with Markov's inequality, implies that
\begin{equation}
 \frac{1}{N}\,\left|\left\{ \alpha: |\lambda_\alpha| \leq 2 - \kappa,  |\gE{\mathbf u_\alpha,\Pi \mathbf u_\alpha}|\geq \epsilon \right\}\right| \prec  \frac{\epsilon^{-2} L^5 W^{d-7+\fd}}{|I_N|^{1-2/d}} .
\end{equation}
Combining this bound with \eqref{QUE-id}, with a union bound over $I_N \in \mathcal I$, we conclude \eqref{eq:weakque} under \eqref{eq:cond_ell2}.
\end{proof}

For the proof of \Cref{trABAB}, note that $|\tr(\mathcal A \Pi \mathcal A \Pi)|^{2p}$ is a sum of expressions of the form \be\label{eq:exp_GPi}
\sum_{\mathbf s}c(\mathbf s)\sum_{\mathbf x,\mathbf y}\mathcal G_{\mathbf x,\mathbf y}^{\mathbf s} \prod_{i=1}^{2p}\Pi_{x_i x_i}\Pi_{y_i y_i} ,\ee 
where $\mathbf s=(s_1,\ldots, s_{4p})\in \{\emptyset,\dag\}^{4p}$, $c(\{s_i\})$ denotes a deterministic coefficient of order $\OO(1)$, and $\mathcal G_{\mathbf x,\mathbf y}$  are graphs of the form
\begin{equation}\label{eq:multiG}
	\mathcal G^{\mathbf s} _{\mathbf x,\mathbf y} = \prod_{i = 1}^{2p} G^{(s_{2i-1})}_{x_i y_i} G^{(s_{2i})}_{y_i x_i}. 
\end{equation}
Now, to show \eqref{eq:trABAB}, it suffices to prove the following counterpart of Lemmas \ref{mG-fe} and \ref{lem:mG-fe-size} for
$\E \mathcal G^{\mathbf s} _{\mathbf x,\mathbf y}$, where the loop graph $\cal G_{\mathbf x}$ is replaced by a graph $\mathcal G^{\mathbf s} _{\mathbf x,\mathbf y}$ with $2p$ connected components.

\begin{lemma}\label{mGep}
%Suppose the setting of \Cref{mG-fe} holds. 
In the setting of \Cref{gvalue_continuity2}, suppose \eqref{eq:cont_ini_eta} holds and we have a $\nonuni$ \eqref{mlevelTgdef weak} (for the Wegner orbital model) or \eqref{mlevelTgdef weak_BA} (for the block Anderson and Anderson orbital models). 
For the graph in \eqref{eq:multiG}, let $\Sigma_1,\ldots, \Sigma_q$ be disjoint subsets that form a partition of the set of vertices $\{x_1, \ldots, x_{2p},y_1,\ldots, y_{2p}\}$ with $1\le q\le 4p$. We identify the indices in $\Sigma_i$, $i\in \qqq{1,q}$, and denote the resulting graph by \smash{$ \mathcal G^{\mathbf s}_{\mathbf w, \mathbf \Sigma}(z)$}, where $\mathbf \Sigma=(\Sigma_1,\ldots, \Sigma_q)$ and we denote the external vertices by $\bw=(w_1,\ldots, w_q)$. 
%Under the setting of Lemma \ref{def nonuni-T}, suppose we have a $\nonuni$ \eqref{mlevelTgdef weak} for a fixed $z= E+\ii\eta$ with $|E|\le  2-\kappa$ and $\eta \in [N^\fd \eta_*,1]$. Let $\mathcal G_{\mathbf x}(z)$ be a graph consisting of external atoms $\bx=(x_1,\cdots, x_p)$, all taking different values, and (non-ghost and non-silent) edges between them. 
Then, for any constant $D>0$, we have that
\begin{equation}\label{EGx_QUE}
\E[\mathcal G^{\mathbf s}_{\mathbf w, \mathbf \Sigma}] = \sum_\mu\mathcal G_{\mathbf w}^{(\mu)} + \OO(W^{-D})\,,
\end{equation}
where the RHS is a sum of $\OO(1)$ many deterministic normal graphs $\mathcal G_{\mathbf w}^{(\mu)}$ with internal vertices and without silent edges such that the following properties hold. 

% if $w_i$ and $w_j$ are connected in $\mathcal G^{\mathbf s}_{\mathbf w, \mathbf \Sigma}(z)$, then they are also connected in $\mathcal G_{\mathbf w}^{(\mu)}$ through non-ghost edges. 
% Furthermore, every $\mathcal G_{\mathbf w}^{(\mu)}$ is bounded by a sum of $\OO(1)$ many simple auxiliary graphs: 
% \begin{equation}\label{EGx2}
% |\mathcal G_{\mathbf x}^{(\mu)}|\prec \sum_{\gamma}c_\gamma(W,L) \mathcal G_{\mathbf x}^{(\mu,\gamma)}+ W^{-D}\, ,
% \end{equation}
% where $c_\gamma(W,L)$ are positive $(W,L)$-dependent coefficients. These simple auxiliary graphs satisfy the following properties.
\begin{itemize}
\item[(a)]  $\mathcal G_{\mathbf w}^{(\mu)}$ satisfies the properties (a) and (b) in \Cref{mG-fe} for $\mathcal G_{\mathbf x}^{(\mu)}$.

\item[(b)] If $w_i$ and $w_j$ are connected in $\mathcal G^{\mathbf s}_{\mathbf w, \mathbf \Sigma}(z)$, then they are also connected in $\mathcal G_{\mathbf w}^{(\mu)}$ through non-ghost edges. 

%If $x_i$ and $x_j$ are connected in $\mathcal G_{\mathbf x}^{(\mu)}$, then they are also connected in $\mathcal G_{\mathbf x}^{(\mu,\gamma)}$. Furthermore, if $x_i$ and $x_j$ are connected in {$\mathcal G_{\mathbf x}^{(\mu)}$} without using free or ghost edges, then they are also connected in \smash{$\mathcal G_{\mathbf x}^{(\mu,\gamma)}$} without using free or ghost edges.
% \item among all atoms of $\cal G_{\mathbf x}^{(\mu,\gamma)}$ with non-zero degree (say, there are $k$ of them), at least half of them (i.e., $\ge \lceil k/2\rceil$ atoms) are connected with at least one non-silent edge.
%\item[(b)] If $\cal G_{\mathbf x}^{(\mu,\gamma)}$ contains $k \geq 2$ non-isolated atoms (i.e., atoms which have at least one neighbor), then there are at least $\lceil k/2\rceil$ special atoms such that each of them is connected with a different non-silent edge.
\item[(c)] The weak scaling size of $\mathcal G_{\mathbf w}^{(\mu)}$ satisfies that 
\be\label{mG-fe-size-w}
\wsize(\mathcal G_{\mathbf w}^{(\mu)})\lesssim \heta^{q-t},
\ee %$c_\gamma(W,L)\size(\mathcal G_{\mathbf x}^{(\mu,\gamma)})\le W^{-2\soe(p-t)}$ under the definition \eqref{eq:def-size}, 
where $t$ denotes the number of connected components in $\mathcal G_{\mathbf w}^{(\mu)}$.
\end{itemize}
% The above statements also hold with $c_\gamma(W)\size(\mathcal G_{\mathbf x}^{(\mu,\gamma)})\le W^{-(p-t)d}$ under the condition \eqref{Lcondition0} and the definition \eqref{eq:def-size2} if $\eta \ge W^{2}/L^2 $. % for a constant $\e>0$. 
\end{lemma}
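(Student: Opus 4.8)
\textbf{Proof proposal for Lemma \ref{mGep}.}

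The plan is to adapt the proof of Lemmas \ref{mG-fe} and \ref{lem:mG-fe-size} almost verbatim, replacing the single loop graph $\cal G_{\bx}$ by the graph $\cal G^{\mathbf s}_{\bw,\mathbf\Sigma}$, which is a product of $2p$ loops (each of length $2$ before the identification $\mathbf\Sigma$). First I would run the expansion machinery of \Cref{strat_global_weak2}: starting from $\cal G^{\mathbf s}_{\bw,\mathbf\Sigma}$ (a graph all of whose solid edges are between external molecules), apply local expansions to reach locally standard graphs, then repeatedly apply the $V$-expansion \eqref{eq_Vexp} to $T$-variables containing the first blue solid edge in a pre-deterministic order of the MIS, continuing until all outputs are deterministic, of negligible weak scaling size, or $Q$-graphs (the $Q$-graphs vanish upon taking expectation). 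This yields \eqref{EGx_QUE}; properties (a) and (b) follow exactly as properties (a)--(c) of \Cref{mG-fe}, since the generalized SPD/globally standard property of \Cref{GSS_external} and \Cref{def 2netex} are preserved by local and $V$-expansions, and connectedness of $w_i,w_j$ through non-ghost edges is maintained by exactly the argument behind property (c) of \Cref{mG-fe}.

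The core of the proof is the weak scaling size bound \eqref{mG-fe-size-w}, which I would establish by the edge-tracking argument from the proof of \Cref{lem:mG-fe-size}: assign to each locally standard graph $\cal G$ a subset $E(\cal G)$ of its solid edges so that the quantity $\wsize(\cal G)\cdot\heta^{\frac12|E(\cal G)|}$ decreases along the expansion, with property \eqref{property_loop} maintained. For the initial graph, note that $\cal G^{\mathbf s}_{\bw,\mathbf\Sigma}$ has $2p$ connected components, which after the first local expansion become loops; if the graph has $t$ external molecules (here $t$ is the final number of connected components, which only decreases from $2p$ under identification and merging), then $n_W-n_a=q-t$ by the same molecule-counting identity \eqref{eq_nwna}, and one chooses $(t-\text{something})$ spanning-tree edges appropriately, removing edges lying in loops to restore \eqref{property_loop}; the bookkeeping gives $\wsize(\cal G_{\text{init}})\cdot\heta^{\frac12|E|}\lesssim W^{-(q-t)d}\heta^{\ldots}\le\heta^{q-t}$. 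The four cases (local expansion, star graphs, higher-order graphs, recollision graphs) are then handled exactly as in Cases 1--4 of the proof of \Cref{lem:mG-fe-size}: the decisive input is the molecule sum zero property \eqref{eq:vertex_sum_zero} together with \eqref{eq:vertex_sum_zero_diff}, which supplies the extra $\OO(\eta+\blam^{-1})=\OO(\blam^{-1})$ factor each time a new internal molecule is created, cancelling the $\blam^{(\cdot)/2}$ that would otherwise appear.

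The main obstacle I anticipate is bookkeeping the exponent $q-t$ correctly when $\cal G^{\mathbf s}_{\bw,\mathbf\Sigma}$ has several connected components rather than one. In \Cref{lem:mG-fe-size} the single loop produces the clean exponent $q-1$; here one must track, component by component, how identifications in $\mathbf\Sigma$ merge loops and how the spanning-tree/edge-subset $E(\cal G)$ must be chosen across components so that property \eqref{property_loop} and the inequality $\wsize(\cal G)\heta^{\frac12|E(\cal G)|}\lesssim\heta^{q-t}$ survive every step — in particular in the recollision Case 4, where merging $\cal M_x$ with an existing vertex may merge two components and thereby change $t$. I expect this is genuinely routine once the invariant is stated correctly (the key observation being that $t$ in \eqref{mG-fe-size-w} is the number of components of the \emph{final} deterministic graph, and components only get merged, never split, during the expansion), but formulating the precise induction hypothesis for the multi-component setting is where the care is needed. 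Everything else — the expansion strategy, the preservation of generalized globally standard structure, and the sum-zero cancellation — transfers directly from \Cref{sec:continuity} and \Cref{sec:Vexpansion} without modification.
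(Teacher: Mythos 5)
Your proposal follows essentially the same route as the paper: expand $\mathcal G^{\mathbf s}_{\mathbf w,\mathbf\Sigma}$ via \Cref{strat_global_weak2}, obtain properties (a) and (b) from the same sources as in \Cref{mG-fe}, and derive \eqref{mG-fe-size-w} by the edge-tracking argument of \Cref{lem:mG-fe-size}. The multi-component bookkeeping you worry about is handled in the paper simply by applying the single-component bound of \Cref{lem:mG-fe-size} to each connected component of the final deterministic graph separately (a component with $k$ external vertices contributes $\OO(\heta^{k-1})$, and summing the exponents over the $t$ components gives $q-t$), so your global invariant is not needed, but your argument is consistent with this.
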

\begin{proof} 
The proof of this lemma follows the same approach as that for \Cref{mG-fe}: we will expand the graph $\mathcal G^{\mathbf s}_{\mathbf w, \mathbf \Sigma}$ by applying \Cref{strat_global_weak2} repeatedly. Then, the resulting deterministic graphs \smash{$\mathcal G_{\mathbf w}^{(\mu)}$} will satisfy the above properties (a) and (b) due to Lemma 6.2 (a)--(c) in \cite{BandIII}. The weak scaling size condition \eqref{mG-fe-size-w} can be proved in the same way as \Cref{lem:mG-fe-size}. In fact, the proof of \Cref{lem:mG-fe-size} demonstrates that the weak scaling size of each connected component in \smash{$\mathcal G_{\mathbf w}^{(\mu)}$} with $k$ external vertices can be bounded by $\OO(\heta^{k-1})$. Thus, the total weak scaling size of \smash{$\mathcal G_{\mathbf w}^{(\mu)}$} is at most $\OO(\heta^{q-t})$. We omit the details of the proof.
%To understand \eqref{mG-fe-size-w}, we first notice that the original graph satisfies this bound. In fact, if all vertices $x_1, \ldots, x_{2p},y_1,\ldots, y_{2p}$ take different values, then there are $q=4p$ off-diagonal solid edges in the graph and $t=2p$ connected components, and the weak scaling size satisfies  $$\wsize\left(\mathcal G^{\mathbf s}_{\mathbf w, \mathbf \Sigma}\right)\lesssim \heta^{2p} = \heta^{q-t}.$$
\end{proof}

Finally, we outline the proof of \Cref{trABAB} by using \Cref{mGep}.

\begin{proof}[\bf Proof of \Cref{trABAB}]
Under the assumptions of Theorem \ref{thm:QUE}, we know that the local law \eqref{eq:cont_ini_eta} holds by Theorem \ref{thm_locallaw}. Moreover, in the proof of Theorem \ref{thm_locallaw}, we have constructed a sequence of $T$-expansions up to arbitrarily high order $\fC$ by \Cref{completeTexp}. We can choose $\fC$ large enough so that \eqref{Lcondition1} holds. Then, we have a complete $T$-expansion by Lemma \ref{def nonuni-T}. Hence, the setting of \Cref{mGep} is satisfied, which gives the expansion \eqref{EGx_QUE}. It remains to prove that 
\be\label{eq:sum_QUE}
\sum^\star_{w_1,\ldots, w_q\in \Z_L^d} \mathcal G_{\mathbf w}^{(\mu)}\mathbf 1(\Sigma_i = w_i: i=1,\ldots, q)\prod_{i=1}^{2p}\Pi_{x_ix_i} \Pi_{y_i y_i} \prec  \Big(\sum_y |\Pi_{yy}|\Big)^{2p}  \Big(\max_x \sum_{y}B_{xy}|\Pi_{yy}|\Big)^{2p}, 
\ee
where recall that $\sum^\star$ means summation subject to the condition that $w_1,\ldots, w_q$ all take distinct values: 
$$ \sum^\star_{w_1,\ldots, w_q}=\sum_{w_1,\ldots, w_q} \prod_{i\ne j \in \qqq{q}}\mathbf 1(w_i\ne w_j). $$
We can further remove the $\star$ from the LHS of \eqref{eq:sum_QUE} by expanding each $\times$-dotted edge as $\mathbf 1(w_i\ne w_j)=1-\mathbf 1(w_i=w_j)$. Taking the product of all these decompositions, we can write the graph as a linear combination of new graphs containing dotted edges. Then, in each new graph, we merge the vertices connected through dotted edges and rename them as $w_1,\ldots, w_\ell$, where $1\le \ell \le q$ denotes the number of external vertices in the graph. In this way, we can rewrite the LHS of \eqref{eq:sum_QUE} as  
\be\nonumber
\sum_{\ell=1}^q \sum_\gamma \sum_{\mathbf w=(w_1,\ldots, w_\ell)\in (\Z_L^d)^\ell } \wt{\mathcal G}_{\mathbf w}^{(\gamma)}\mathbf 1(\wt\Sigma_i = w_i: i=1,\ldots, \ell)\prod_{i=1}^{2p}\Pi_{x_ix_i} \Pi_{y_i y_i} , 
\ee
where $\gamma$ labels the new graphs $\wt{\mathcal G}_{\mathbf w}^{(\gamma)}$ and $\wt\Sigma_i$ for $i\in \qqq{\ell}$ denote the new partitions. Thus, to prove \eqref{eq:sum_QUE}, it suffices to show that 
\be\label{eq:sum_QUE2}
\sum_{w_1,\ldots, w_\ell\in \Z_L^d } \wt{\mathcal G}_{\mathbf w}^{(\gamma)}\mathbf 1(\wt\Sigma_i = w_i: i=1,\ldots, \ell)\prod_{i=1}^{2p}\Pi_{x_ix_i} \Pi_{y_i y_i} \prec  \Big(\sum_y |\Pi_{yy}|\Big)^{2p}  \Big(\max_x \sum_{y}B_{xy}|\Pi_{yy}|\Big)^{2p}.  
\ee
Note these new graphs $\wt{\mathcal G}_{\mathbf w}^{(\gamma)}$ also satisfy the properties (a)--(c) in \Cref{mGep} for $\mathcal G_{\mathbf w}^{(\mu)}$. Here, the condition \eqref{mG-fe-size-w} should become  
\be\label{mG-fe-size-w2}
\wsize(\wt{\mathcal G}_{\mathbf w}^{(\gamma)})\lesssim \heta^{\ell-t'},
\ee
where $t'$ denotes the number of connected components in $\wt{\mathcal G}_{\mathbf w}^{(\gamma)}$. To understand why \eqref{mG-fe-size-w2} holds, we observe that whenever the count of connected components decreases by 1, it indicates the merging of external vertices connected by dotted edges, resulting in a reduction of external vertices by at least 1.

%To see why \eqref{mG-fe-size-w2} holds, we notice that from $\mathcal G_{\mathbf w}^{(\mu)}$ to $\wt{\mathcal G}_{\mathbf w}^{(\gamma)}$, the number of connected components is decreased by $t-t'$. Then, we must have merged different external vertices at least by $t-t'$ many times, so the number of external vertices is decreased at least by $t-t'$.
%and Lemma \ref{mGep} hold, so we can use \Cref{trABAB} in the following proof. By Lemma  \ref{mGep}, we have that
% \begin{equation*}
% 	\E[\mathcal G_{\mathbf x,\mathbf y}] =  \sum_{\mu} \mathcal G^{(\mu)}_{\mathbf x,\mathbf y} + \OO(W^{-D}),
% \end{equation*}
% where $\mathcal G^{(\mu)}_{\mathbf x,\mathbf y}$ are deterministic graphs as in \eqref{EGx}. This implies that
% \begin{equation}\label{GGmugamma}
% 	\E[|\tr{\mathcal A \Pi \mathcal A \Pi}|^{2p}] = \sum_{\mu} \sum_{\mathbf x,\mathbf y}\mathcal G^{(\mu)}_{\mathbf x,\mathbf y} \prod_{i}\Pi_{x_i} \Pi_{y_i} +\OO(W^{-D})\,.
% \end{equation}

Next, a key observation is that it suffices to consider $\wt{\mathcal G}_{\mathbf w}^{(\gamma)}$ in which 
\be\label{eq:keyobs_noniso}
\text{none of the $\ell$ external vertices is isolated if we do not include free edges into the edge set}. 
\ee
Otherwise, suppose the external vertex $w_1$ is isolated without loss of generality. Since both our graphs and the matrix $\Pi$ are translation invariant on the block level,  we know that \smash{$\sum_{w_1\in [x]}\wt{\mathcal G}_{\mathbf w}^{(\gamma)}$} does not depend on $[x]\in \wt \Z_n^d$, which implies that
$$\sum_{w_1}\wt{\mathcal G}_{\mathbf w}^{(\gamma)}\Pi_{w_1w_1} = W^{-d}\sum_{w_1\in [0]}\wt{\mathcal G}_{\mathbf w}^{(\gamma)} \cdot \sum_{w_1} \Pi_{w_1w_1} = 0. $$
%As a consequence of this observation, we know that if we do not include free edges in the edge set, the graph contains at most $\ell/2$ connected components. 
We remark that this is the only place where the zero trace condition for $\Pi$ is used. 

It remains to prove that \eqref{eq:sum_QUE2} holds for graphs $\wt{\mathcal G}_{\mathbf w}^{(\gamma)}$ that satisfy properties (a)--(c) in \Cref{mGep} and condition \eqref{eq:keyobs_noniso}. This proof can be completed using the arguments from the proof of Lemma 3.4 in \cite{BandIII}, followed by those in the proof of Proposition 4.16 in the same work. Since those proofs apply verbatim to our case, we omit the details here. This concludes \eqref{eq:sum_QUE2} and hence completes the proof of \Cref{trABAB}.
% $\ell/2$ connected components when we do not include free edges into the edge set; otherwise the graph vanishes. To see this, suppose there are at least $2p+1$ such connected components. Then, there must be a connected component that contains only one external atom. Without loss of generality, suppose this atom is $x_1$. Since our graphs are translation invariant, we know that $\mathcal G^{(\mu)}_{\mathbf x,\mathbf y}$ does not depend on $x_1$. Hence, 
% $\sum_{x_1}\mathcal G^{(\mu)}_{\mathbf x,\mathbf y} \Pi_{x_1} = \mathcal G^{(\mu)}_{\mathbf x,\mathbf y}\sum_{x_1} \Pi_{x_1} = 0. $
\end{proof}

\section{Proof of some deterministic estimates}\label{appd_det}

%This section presents the proofs of the lemmas in \Cref{sec_diffusive}. 

\subsection{Proof of \Cref{lem theta}}\label{sec:Fourier}
% Hence, we know that the series in the definition of $\Theta$ converges. Furthermore, using the simple identity $S^2=S$, we can write the series expansion of $\Theta$ as
% \be\label{exp_Theta}
% \Theta_{xy}=\sum_{k=1}^\infty  \left({M}^0 S\right)^k_{xy}=M^0 S \sum_{k=0}^\infty (SM^0S)^k.
% \ee
% Define the $W^d\times W^d$ matrix $\bE$ with $\bE_{ij}=W^{-d}.$ Using the definition of $S$, we can express $SM^0S$ as
% \be\label{eq:SSM}  SM^0S=\bE \otimes \KM,
% \ee
%We only prove the estimate for $\zthn$, and the estimate on $\zb$ is a simple consequence of \eqref{thetaxy}. 

The proof of \eqref{thetaxy} follows from an analysis of the Fourier series representation of $\zthn$. For the block Anderson and Anderson orbital models, we have the representation \eqref{exp_Theta3}, while for the Wegner orbital model, we have 
\be\label{exp_Theta4}
\zthn=\wt P^\perp \left[1-|m|^2S^{\LK}(\lambda)\right]^{-1} \wt P^\perp\otimes\bE= \wt P^\perp \left[1-|m|^2 - |m|^2\lambda^2(2dI_n - \Delta_n)\right]^{-1} \wt P^\perp\otimes\bE.
\ee
We introduce the notion $F_n=\KM$ for the block Anderson and Anderson orbital models and $F_n=|m|^2S^{\LK}(\lambda)$ for the Wegner orbital model. We claim that for all $[x],[y]\in \wt\Z_n^d$, 
\be\label{est_M0LN0}
\left|[\wt P^\perp (1-F_n)^{-1} \wt P^\perp]_{[x][y]}\right|\lesssim  \frac{\blam\log n}{\langle [x]-[y]\rangle^{d-2}} ,
\ee
and when $|[x]-[y]|\ge W^\tau \ell_{\lambda,\eta}$, %$\eta^{-1/2}\lambda W^{-1/2+\tau}+W^\tau$,
\be\label{est_M0LN0_small}
\left|(1-F_n)^{-1}_{[x][y]}\right|\le  \langle [x]-[y]\rangle^{-D}.
\ee
%for any constants $\tau, D>0$.
Inserting these two bounds into \eqref{exp_Theta3} or \eqref{exp_Theta4}, we conclude \eqref{thetaxy} and \eqref{thetaxy2}. 

The rest of the proof is devoted to showing \eqref{est_M0LN0} and \eqref{est_M0LN0_small}. By translation invariance, it suffices to choose $[y]=0$. For simplicity, in the following proof, we denote the lattice \smash{$\Zn$} and its vertices $[x]$ by $\Z_n^d$ and $x$ instead. Recall $J_k$ and $d_k$ defined in \eqref{eq:Jks}. By \eqref{eq:KM}, \eqref{eq:KM2}, and \eqref{exp_Theta4}, for any fixed integer $K\in \N$, we can expand $F_n$ as
\begin{align}
F_n =  a_0 + \sum_{k\ge 1}\blam^{-k} a_k \J_k, %  +  \blam^{-2} a_2 \J_2+\cdots +\blam^{-K} a_K \J_K +\sum_{k\ge d}\lambda^{ckW} a_k \J_k,
\label{eq:M0Ln}
\end{align}
% \begin{align}
% \KM =& a_0 + \frac{\lambda^2}{W} a_1 \J_1  +  \left(\frac{\lambda^2}{W}\right)^2 a_2 \J_2+\cdots +\left(\frac{\lambda^2}{W}\right)^d a_d \J_d +\sum_{k\ge d}\lambda^{ckW} a_k \J_k,\label{eq:M0Ln}
% \end{align}
where the coefficients $a_k(z,W,L)\ge 0$ are all of order $\OO(C^k)$ for a constant $C>0$ and $a_1\gtrsim 1$ by \eqref{eq:KM1}. (Note for the Wegner orbital model, we have $a_0=a_1=|m|^2$ and $a_k=0$ for $k\ge 2$.) From \eqref{L2M2} and \eqref{eq:a_sum}, we get that for the block Anderson and Anderson orbital models,
\be\label{eq:a0}
1-a_0=  \frac{\eta}{\im m + \eta} + \sum_{k\ge 1}  d_k  a_k\blam^{-k} , %- \sum_{k > d}  d_k  a_k\lambda^{ckW},
\ee
and for the Wegner orbital model,
\be\label{eq:a0WO}
1-a_0=  \frac{\eta}{\eta + (1+2d\lambda^2)\im m(z)} + 2d\lambda^2  |m|^2.
%\frac{\im m}{\im m + \eta} - \sum_{k= 1}^d  d_k  a_k\left(\frac{\lambda^2}{W}\right)^k - \sum_{k > d}  d_k  a_k\lambda^{ckW}.
\ee
For simplicity of notations, in the following proof, we focus on the harder cases---the block Anderson and Anderson orbital models, where $F_n=\KM$ and $1-a_0$ satisfies \eqref{eq:a0}. The proof for the Wegner orbital model is just a special case, with $a_k=0$ for $k\ge 2$.  

With \eqref{eq:M0Ln} and \eqref{eq:a0}, we can write that 
\begin{align*}
\frac{1}{1-\KM} =&\left\{ \frac{\eta}{\im m + \eta} +  a_1\blam^{-1} \Delta^{(n)} + \sum_{k\ge 2} a_k \blam^{-k} (  d_k-J_k) \right\}^{-1}. % + \sum_{k> d}\lambda^{ckW}a_k( d_k-\J_k).
\end{align*}
% \begin{align*}
% \frac{1}{1-\KM} =&\left\{ \frac{\eta}{\im m + \eta} +  \frac{\lambda^2}{W}a_1 \Delta^{(n)} + \sum_{k=2}^d  \left(\frac{\lambda^2}{W}\right)^k a_k(  d_k-\J_k)  + \sum_{k> d}\lambda^{ckW}a_k( d_k-\J_k)\right\}^{-1}.
% \end{align*}
Note that the eigenvectors of $d_k-\J_k$ are given by the plane waves
$$v_{\mathbf p}:=n^{-d/2}\left(e^{\ii \bp\cdot x}:x\in  \Z_n^d\right), \quad \bp\in  \T_n^d:= \left(\frac{2\pi }{n}  \Z_n\right)^d,$$
with the corresponding eigenvalue 
\be\label{eq_lambdak}\lambda_k({\bp})=\sum_{\mathbf k \in  \Z_n^d:\|\mathbf k\|_1=k}(1-\cos(\bp\cdot \mathbf k))\ge 0. 
\ee
Thus, we can write that 
\be\label{eq:PKMP}
\left[\wt P^\perp (1-\KM)^{-1} \wt P^\perp\right]_{x0} = \frac1{n^d}\sum_{\bp\in  \T_n^d\setminus \{0\}} f_\eta (\bp)e^{\ii \bp\cdot x},
\ee
where $f_\eta (\bp)$ is defined as
\be\label{eq:fetap}
 f_\eta (\bp)\equiv  f_{\eta,W,L} (\bp):=\left\{\frac{\eta}{\im m + \eta} + a_1 \blam^{-1}\lambda_1(\bp) + \sum_{k\ge 2} a_k\blam^{-k}\lambda_k(\bp)  \right\}^{-1}.
\ee
% \be\label{eq:fetap}
%  f_\eta (\bp)\equiv  f_{\eta,W,L} (\bp):=\left\{\frac{\eta}{\im m + \eta} + \frac{\lambda^2}{W}   a_1 \lambda_1(\bp) + \sum_{k=2}^d  \left(\frac{\lambda^2}{W}\right)^k a_k\lambda_k(\bp)  + \sum_{k>d}\lambda^{ckW}a_k\lambda_k(\bp)\right\}^{-1}.
% \ee
To show \eqref{est_M0LN0}, it remains to bound this Fourier series as 
\be\label{est_M0LN}
 \frac1{n^d}\sum_{\bp\in  \T_n^d\setminus \{0\}} f_\eta (\bp)e^{\ii \bp\cdot x}\lesssim  \frac{\blam\log n}{\langle x\rangle^{d-2}} .
\ee

First, notice that $ \lambda_1({\bp})\gtrsim |\bp|^2$ and $\lambda_k(\bp)\lesssim k^3|\bp|^2$, from which we obtain that 
\be\label{eq:feta_est} f_\eta(\bp) \lesssim \left(\eta+\blam^{-1}|\bp|^2\right)^{-1}.\ee
Thus, when $x=0$, we have that 
$$\frac1{n^d}\sum_{\bp\in \T_n^d\setminus \{0\}} |f_\eta (\bp)|\lesssim  \frac{\blam}{n^d}\sum_{\bp\in \T_n^d\setminus \{0\}} \frac{1}{|\bp|^2}\lesssim \blam.$$
Next, we consider the case $x\ne 0$. Without loss of generality, suppose $|x_1| =\max_{i=1}^d |x_i| \ge 1$. Then, we can write that 
\be\label{eq:feta-A}  \frac1{n^d}\sum_{\bp\in \T_n^d\setminus \{0\}} f_\eta (\bp)e^{\ii \bp\cdot x}=\frac{1}{n^d}\sum_{p_1\in \T_n\setminus\{0\}} f_\eta(p_1,0)e^{\ii p_1 x_1} + \frac{1}{n^{d-1}}\sum_{\bq\in \T_n^{d-1}\setminus\{0\}} e^{\ii \bq \cdot \wh x^{(1)} }A(x_1,\bq),\ee
where we denote $\bq:=(p_2, \ldots, p_d) $, $\wh x^{(1)}:=(x_2,\cdots, x_n)$, and
\be\label{eq:Ax1q}
A(x_1 ,\bq):= \frac{1}{n}\sum_{p_1 \in  \T_n  } f_\eta (p_1,\bq) e^{\ii p_1 x_1} .\ee
Using \eqref{eq:feta_est}, we can bound the first term on the RHS of \eqref{eq:feta-A} as 
\begin{align*}
    \frac{1}{n^d}\sum_{p_1\in \T_n\setminus\{0\}}| f_\eta(p_1,0)| \lesssim \frac{1}{n^d}\sum_{p_1\in \T_n\setminus\{0\}} \frac{\blam}{|p_1|^2} \lesssim \frac{\blam}{n^{d-2}}\lesssim \frac{\blam}{\langle x\rangle^{d-2}}.
\end{align*}
It remains to control the second term on the right-hand side of \eqref{eq:feta-A}.
Denote $n_\pm:= \pm (n-1)/{2}$. 
Using summation by parts, we can write $A(x_1,\bq)$ as
$$A(x_1 ,\bq)=\frac{1}{n}\sum_{p_1 \in  \T_n}  S(x_1,p_1) \left[ f_\eta(p_1,\bq) - f_\eta\left(p_1+\frac{2\pi}{n},\bq\right)\right]  ,$$
where we used the periodic boundary condition $f_\eta\left( 2\pi (n_+ + 1)/{n},\bq\right)=f_\eta\left( {2\pi}n_-/{n},\bq\right)$, and $S(x_1,p_1)$ is a partial sum defined as
$$ S(x_1 ,p_1)=\sum_{k= n_- }^{\frac{n}{2\pi}p_1}\exp\left( \ii \frac{2\pi k}{n} x_1 \right)=\frac{\exp\left( \ii n_-\delta_n x_1\right)- \exp\left( \ii \left(p_1+\delta_n\right)x_1\right)}{1-\exp\left(\ii \delta_n x_1\right)}, \quad \delta_n:=\frac{2\pi}{n}.$$
Applying this formula to \eqref{eq:Ax1q} and using the periodic boundary condition, we obtain that  
\begin{align}
A(x_1,\bq)&=\frac{1}{n}\sum_{p_1 \in \T_n  }  \frac{f_\eta(p_1,\bq) -f_\eta\left(p_1-\delta_{n},\bq\right)}{1- \exp\left(\ii  \delta_{n} x_1\right)}e^{\ii  p_1  x_1 }  . \label{sum_parts1}
% &=A_1(x_1,\bq) + \frac{1}{n}\frac{e^{\ii \delta_n x_1}f_\eta(\delta_n,\bq) - f_\eta\left(-\delta_n,\bq\right)}{1- \exp\left(\ii  \delta_{n} x_1\right)}\nonumber\\
% &=A_1(x_1,\bq) + \frac{1}{n}\frac{f_\eta(\delta_n,\bq) - f_\eta\left(-\delta_n,\bq\right)}{1- \exp\left(\ii  \delta_{n} x_1\right)}-\frac{f_\eta(\delta_n,\bq)}{n},\nonumber
\end{align}
% where we denote
% $$A_1(x_1,\bq):=\frac{1}{n}\sum_{p_1 \in \wt\T_n  \setminus\{0,\delta_n\} }  \exp\left( \ii  p_1  x_1 \right)\frac{f_\eta(p_1,\bq) -f_\eta\left(p_1-\delta_{n},\bq\right)}{1- \exp\left(\ii  \delta_{n} x_1\right)}$$
%which is good enough for the bound \eqref{est_M0LN} since we always have $|x|\le n$. 
By the definition of $f_\eta$ in \eqref{eq:fetap}, there exists a constant $C>0$ such that for $\bq \in \T_n^{d-1}  \setminus\{0\}$, 
\begin{align}
\left|\frac{f_\eta(p_1,\bq) -f_\eta\left(p_1-\delta_{n},\bq\right)}{1- \exp\left(\ii  \delta_{n} x_1\right)}\right| &\lesssim \frac{f_\eta(p_1,\bq)f_\eta(p_1-\delta_n,\bq)}{|x_1|\delta_n} \sum_{k\ge 1} \left(\frac{C}{\blam}\right)^{k}\sum_{\mathbf k \in \Z_n^d:\|\mathbf k\|_1=k}|\cos(\bp\cdot \bk -k_1\delta_n)-\cos(\bp\cdot \bk)|  \nonumber\\
%& + \frac{f_\eta(p_1,\bq)f_\eta(p_1-\delta_n,\bq)}{|x_1|\delta_n}\sum_{k>d} \lambda^{ckW}  \sum_{\mathbf k \in \Z_n^d:|\mathbf k|=k}|\cos(\bp\cdot \bk -k_1\delta_n)-\cos(\bp\cdot \bk)| \nonumber\\
&\lesssim \left(\eta+\blam^{-1}|\bp|^2\right)^{-2}  \frac{\delta_n^2 + |\mathbf p|\delta_n}{\blam |x_1|\delta_n} \lesssim  \frac{\blam}{|\bp|^3 |x_1|}, \label{eq:bdd_1stdiff}
\end{align}
% \begin{align}
% \left|\frac{f_\eta(p_1,\bq) -f_\eta\left(p_1-\delta_{n},\bq\right)}{1- \exp\left(\ii  \delta_{n} x_1\right)}\right| &\lesssim \frac{f_\eta(p_1,\bq)f_\eta(p_1-\delta_n,\bq)}{|x_1|\delta_n} \sum_{k=1}^d \blam^{-k}\sum_{\mathbf k \in \Z_n^d:|\mathbf k|=k}|\cos(\bp\cdot \bk -k_1\delta_n)-\cos(\bp\cdot \bk)|  \nonumber\\
% & + \frac{f_\eta(p_1,\bq)f_\eta(p_1-\delta_n,\bq)}{|x_1|\delta_n}\sum_{k>d} \lambda^{ckW}  \sum_{\mathbf k \in \Z_n^d:|\mathbf k|=k}|\cos(\bp\cdot \bk -k_1\delta_n)-\cos(\bp\cdot \bk)| \nonumber\\
% &\lesssim \left(\eta+\frac{\lambda^2}{W}|\bp|^2\right)^{-2} \frac{\lambda^2}{W}\frac{\delta_n^2 + |\mathbf p|\delta_n}{|x_1|\delta_n} \lesssim \frac{W}{\lambda^2}\frac{1}{|\bp|^3 |x_1|}, \label{eq:bdd_1stdiff}
% \end{align}
where in the second step we used the estimate \eqref{eq:feta_est}, $|p_1-\delta_n|\gtrsim |p_1|$ for $p_1 \in \T_n  \setminus\{\delta_n\}$, $|p_1-\delta_n|\le  |p_1| \le |\mathbf q|$ for $p_1=\delta_n$, and 
\begin{align*}
|\cos(\bp\cdot \bk - k_1\delta_n)-\cos(\bp\cdot \bk)| &\le |\cos(\bp\cdot \bk)| (1-\cos(k_1\delta_n)) + |\sin(\bp\cdot \bk)| |\sin(k_1\delta_n)| \\
&\lesssim (k_1\delta_n)^2 + |\bp\cdot \bk| |k_1\delta_n| .    
\end{align*}
Applying \eqref{eq:bdd_1stdiff} to \eqref{sum_parts1}, we immediately obtain that 
$$ |A(x_1 ,\bq)|\lesssim\frac{1}{n}\sum_{p_1 \in \T_n  }  \frac{\blam}{|\bp|^3 |x_1|}\lesssim \frac{\blam}{|x_1|}\frac{1}{|\bq|^2}, $$
with which we can bound the second term on the RHS of \eqref{eq:feta-A} as 
$$ \frac{1}{n^{d-1}}\sum_{\bq\in  \T_n^{d-1}\setminus\{0\}} e^{\ii \bq \cdot \wh x^{(1)} }A(x_1,\bq)\lesssim \frac{1}{n^{d-1}}\sum_{\bq\in \T_n^{d-1}\setminus\{0\}} \frac{\blam}{ |x_1|}\frac{1}{|\bq|^2} \lesssim \frac{\blam}{|x_1|}. $$

To improve the above estimate to \eqref{est_M0LN}, we only need to perform the summation by parts argument to \eqref{sum_parts1} again. For example, applying one more summation by parts to \eqref{sum_parts1}, we obtain that 
\begin{align*}
A(x_1,\bq) &=\frac{1}{n}\sum_{p_1 \in \T_n  }  \exp\left( \ii  p_1  x_1 \right)\frac{f_\eta(p_1+\delta_n,\bq) -2f_\eta\left(p_1,\bq\right)+ f_\eta\left(p_1-\delta_{n},\bq\right)}{\exp(-\ii\delta_n x_1) [1- \exp\left(\ii  \delta_{n} x_1\right)]^2} ,
\end{align*}
which involves the second-order finite difference of $f_\eta(\cdot, \bq)$. Using a similar argument as above, we can estimate $A(x_1,\bq)$ as 
$$ |A(x_1 ,\bq)|\lesssim  \frac{\blam}{|x_1|^2}\frac{1}{|\bq|^3} . $$
with which we can bound the second term on the RHS of \eqref{eq:feta-A} as 
\begin{align*}
    \frac{1}{n^{d-1}}\sum_{\bq\in \T_n^{d-1}\setminus\{0\}} e^{\ii \bq \cdot \wh x^{(1)} }A(x_1,\bq) \lesssim   \frac{\blam}{|x_1|^2} .
\end{align*}
Continuing the above arguments, after performing $(d-2)$ summations by parts, we obtain that 
$$ |A(x_1 ,\bq)|\lesssim  \frac{\blam}{|x_1|^{d-2}}\frac{1}{|\bq|^{d-1}} , $$
with which we get that 
\begin{align*}
    \frac{1}{n^{d-1}}\sum_{\bq\in \T_n^{d-1}\setminus\{0\}} e^{\ii \bq \cdot \wh x^{(1)} }A(x_1,\bq) \lesssim  \frac{1}{n^{d-1}}\sum_{\bq\in  \T_n^{d-1}\setminus\{0\}} \frac{\blam}{|x_1|^{d-2}}\frac{1}{|\bq|^{d-1}} \lesssim \frac{\blam\log n}{|x_1|^{d-2}}.
\end{align*}
This concludes \eqref{est_M0LN} (recall that we have assumed $|x_1|=\|x \|_\infty \gtrsim |x|$). % when $|x|<\eta^{-1/2}\lambda W^{-1/2+\tau}$. 

Finally, to show \eqref{est_M0LN0_small}, we again assume that $\|x \|_\infty = |x_1|$ without loss of generality. Then, we need to prove that 
\be\label{est_M0LN_small}
 \frac1{n^d}\sum_{\bp\in  \T_n^d} f_\eta (\bp)e^{\ii \bp\cdot x}=\frac{1}{n^{d-1}}\sum_{\bq\in \T_n^{d-1} } e^{\ii \bq \cdot \wh x^{(1)} }A(x_1,\bq) \lesssim  \langle x\rangle^{-D}  
\ee
when $|x_1|\gtrsim W^\tau \ell_{\lambda,\eta}$. We can apply a similar summation by parts argument as above, with the only difference being that we need to account for the $\eta$ term when bounding the finite differences of $f_\eta(\cdot, \bq)$. For example, we would write \eqref{eq:bdd_1stdiff} as\begin{align}
\left|\frac{f_\eta(p_1,\bq) -f_\eta\left(p_1-\delta_{n},\bq\right)}{1- \exp\left(\ii  \delta_{n} x_1\right)}\right| \lesssim \left(\blam\eta+|\bp|^2\right)^{-2} \frac{\blam |\mathbf p| }{|x_1| }  \le \left(\sqrt{\blam \eta} +|\bp|\right)^{-3} \frac{\blam}{|x_1|}  . \label{eq:bdd_1stdiffnew}
%\\ %\lesssim \left(\eta+\frac{\lambda^2}{W}|\bp|^2\right)^{-2} \frac{\lambda^2}{W}\frac{\delta_n^2 + |\mathbf p|\delta_n}{|x_1|\delta_n} 
%\lesssim \frac{W}{\lambda^2}\frac{\mathbf 1(|\mathbf p|\ge \ell^{-1}) }{|\bp|^3 |x_1|} + \frac{\lambda^2 |\mathbf p|\mathbf 1(|\mathbf p|\le \ell^{-1})}{W\eta^2 |x_1|}, \label{eq:bdd_1stdiffnew}
\end{align} 
After applying summation by parts for $k$ times, we get that 
$$  A(x_1,\bq)\lesssim \frac{1}{n}\sum_{ p_1\in \T_n} \left( \sqrt{\blam \eta}+|\bp|\right)^{-(k+2)} \frac{\blam}{|x_1|^k} , 
$$
which leads to the following estimation for any fixed $k>d-2$, 
\begin{align*}
\frac{1}{n^{d-1}}\sum_{\bq\in  \T_n^{d-1}} e^{\ii \bq \cdot \wh x^{(1)} }A(x_1,\bq) &\lesssim \frac{1}{n^{d}}\sum_{\bp\in \T_n^{d}} \left( \sqrt{\blam \eta}+|\bp|\right)^{-(k+2)} \frac{\blam}{|x_1|^k}  \lesssim \frac{\blam}{ |x_1|^k}  [\blam\eta]^{-(k+2)/2}.
\end{align*} 
Using that $\ell_{\lambda,\eta} \le |x_1|^{1-\e} \le n$ for a small constant $\e>0$ and $|x_1|\gtrsim W^\tau$, by choosing $k$ sufficiently large depending on $\e$, $\tau$ and $D$, we can bound the RHS by $|x|^{-D}$. This concludes \eqref{est_M0LN_small}.
%\end{proof}

\subsection{Proof of \Cref{lem:label_diffusive}}

Using equations \eqref{eq:diffzthn-thn1} and \eqref{eq:diffzthn-thn2}, we can show that replacing \smash{$\zthn$} with $\thn$ leads to negligible errors in the bounds \eqref{thetaxy_renorm0} and \eqref{thetaxy_renorm} when $\eta\ge t_{Th}^{-1}$. It remains to prove \eqref{thetaxy_renorm0} and \eqref{thetaxy_renorm} for \smash{$\zthn$}. The estimate \eqref{thetaxy_renorm0} follows directly from \eqref{thetaxy} and \eqref{self_decay}.  
A key to the proof of \eqref{thetaxy_renorm} is the following lemma. 
 
\begin{lemma}\label{lem cancelTheta}
Under the setting of Lemma \ref{lem:label_diffusive}, let $g:\wt\Z_n^d \to \R$ be a symmetric function (i.e., $g(x)=g(-x)$) supported on a box $\cal B_K:=\llbracket -K,K\rrbracket^d$ of scale $K\ge 1$. Assume that $g$ satisfies the sum zero property $\sum_{x}g(x)=0.$ Then, for $x_0\in \Z_n^d$ such that $|x_0| \ge 2K$, we have that for any constants $\tau,D>0$, 
$$\Big|\sum_{x\in \wt\Z_n^d}[\wt P^\perp (1-F_n)^{-1} \wt P^\perp]_{0x} g(x-x_0)\Big| \prec  \sum_{x\in \cal B_K}\frac{x^2}{|x_0|^2}|g(x)| \left(    \frac{\blam\mathbf 1_{|x_0|\le W^\tau \ell_{\lambda, \eta}}}{\langle x_0\rangle^{d-2}} + |x_0|^{-D}\mathbf 1_{|x_0|> W^\tau \ell_{\lambda, \eta}}\right),$$
where we recall that $F_n$ was defined below \eqref{exp_Theta4}.
% Here we recall that $B_{xy}$ is defined in \eqref{defnBxy}.
%$$\Big|\sum_{x\in \wt\Z_n^d}[\wt P^\perp (1-\KM)^{-1} \wt P^\perp]_{0x} g(x-x_0)\Big| \lesssim  \sum_{x\in \cal B_K}\frac{x^2}{|x_0|^2}|g(x)| \cdot \left(    B_{0 x_0}\log n\cdot \mathbf 1_{|x_0|\le   \eta^{-1/2}\lambda^{-1}W^{1/2+\tau}}   +  |x_0|^{-D} \right),$$
\end{lemma}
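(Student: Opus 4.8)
\textbf{Proof proposal for Lemma \ref{lem cancelTheta}.} The plan is to work in Fourier space, building on the representation \eqref{eq:PKMP}--\eqref{eq:fetap} derived in the proof of \Cref{lem theta}. Writing $\wh g(\mathbf p) = \sum_{x\in\cal B_K} g(x) e^{-\ii \mathbf p\cdot x}$ for the (finite) Fourier transform of $g$, and using that $\sum_x g(x)=0$ together with the symmetry $g(x)=g(-x)$, we get $\wh g(0)=0$, $\nabla\wh g(0)=0$, and the Taylor bound $|\wh g(\mathbf p)| \lesssim |\mathbf p|^2 \sum_{x\in\cal B_K} x^2 |g(x)|$ uniformly in $\mathbf p$. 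The quantity to estimate is then
\be\nonumber
\sum_{x}[\wt P^\perp (1-F_n)^{-1} \wt P^\perp]_{0x} g(x-x_0) = \frac{1}{n^d}\sum_{\mathbf p\in\T_n^d\setminus\{0\}} f_\eta(\mathbf p)\, \wh g(\mathbf p)\, e^{\ii \mathbf p\cdot x_0},
\ee
so I have reduced the problem to bounding a Fourier series whose symbol is $f_\eta(\mathbf p)\wh g(\mathbf p)$, a function that, thanks to the extra $|\mathbf p|^2$ from $\wh g$, is \emph{bounded} near $\mathbf p=0$ rather than singular like $f_\eta$ alone.

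The second step is to run exactly the summation-by-parts machinery already used in the proof of \eqref{est_M0LN} and \eqref{est_M0LN_small}. Assuming WLOG $\|x_0\|_\infty = |x_{0,1}| \gtrsim |x_0|$, I peel off the $p_1$-variable, use the discrete Abel summation identity to express the $p_1$-sum in terms of finite differences of $f_\eta(\cdot,\mathbf q)\wh g(\cdot,\mathbf q)$, and iterate. The key input is a bound on the $k$-th order finite difference of the product symbol: by the Leibniz rule for finite differences, combined with the estimate $|\Delta^{(j)}_{p_1} f_\eta| \lesssim (\sqrt{\blam\eta}+|\mathbf p|)^{-(j+2)}\blam\, \delta_n^{j-1}$ of the type established in \eqref{eq:bdd_1stdiffnew}, and the smoothness bound $|\Delta^{(j)}_{p_1}\wh g|\lesssim \delta_n^{j}\sum_{x}|x|^j|g(x)| \lesssim \delta_n^j K^{j-2}\sum_x x^2|g(x)|$ (valid since $g$ is supported on $\cal B_K$, so the $j$-th difference picks up at most $K^j$ but we only need $K^{j-2}$ after using $|\wh g|\lesssim |\mathbf p|^2\sum x^2|g|$ for the zeroth-order factor), one obtains an overall gain of $(K/|x_{0,1}|)^k$ versus the bare $f_\eta$ estimate. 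Since $|x_0|\ge 2K$, the ratio $K/|x_0|\le 1/2$, and after summing the geometric-type series over $\mathbf q$ one is left precisely with $\sum_{x\in\cal B_K}\frac{x^2}{|x_0|^2}|g(x)|$ times the diffusive kernel $\blam\langle x_0\rangle^{-(d-2)}$ in the regime $|x_0|\le W^\tau\ell_{\lambda,\eta}$, and with the superpolynomially small tail $|x_0|^{-D}$ in the regime $|x_0|> W^\tau\ell_{\lambda,\eta}$ by taking the number of summations by parts large. The factor $x^2/|x_0|^2$ rather than $K^2/|x_0|^2$ comes from keeping the $\sum_x x^2|g(x)|$ weight intact throughout (i.e.\ not bounding $|x|\le K$ prematurely), exactly as stated.

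The third, more delicate point is handling the contributions of the higher harmonics $\lambda_k(\mathbf p)$ in $f_\eta$ for the block Anderson and Anderson orbital models (where $a_k\ne 0$ for $k\ge 2$), and handling the $\wt P^\perp$ projections, i.e.\ the removal of the $\mathbf p=0$ mode. For the former, the bounds $\lambda_k(\mathbf p)\asymp |\mathbf p|^2$ near zero and $a_k=\OO(C^k)$ with geometric decay ensure that $f_\eta$ and its finite differences satisfy the same estimates as in the single-harmonic (Wegner) case, as was already exploited in \Cref{sec:Fourier}; I would simply cite those bounds. For the $\wt P^\perp$ issue: restricting the Fourier sum to $\mathbf p\ne 0$ is harmless precisely because $\wh g(0)=0$, so the $\mathbf p=0$ term would vanish anyway; this is where the sum zero hypothesis on $g$ is used a second time. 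I also need $|x_0|\ge 2K$ to guarantee that the phase $e^{\ii\mathbf p\cdot x_0}$ genuinely oscillates on the scale of the support of the difference operators, which is what makes summation by parts effective.

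The main obstacle I anticipate is bookkeeping the combinatorics of the Leibniz rule for finite differences of the product $f_\eta\cdot\wh g$ — one must carefully track which factor absorbs the $|\mathbf p|^2$ decay and which absorbs the $K^j$ (resp.\ $\delta_n^j$) from differentiating $\wh g$, so that the final power counting yields the clean $x^2/|x_0|^2$ prefactor and not, say, $K^2/|x_0|^2$ or an extra logarithm. This is routine but error-prone. Everything else is a direct adaptation of the Fourier-analytic estimates already carried out in \Cref{sec:Fourier}, so I would present the proof by reducing to those estimates and only spelling out the new Leibniz-rule step in detail.
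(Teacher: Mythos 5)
Your proposal is correct and follows essentially the same route as the paper: both pass to the Fourier representation \eqref{eq:PKMP}, exploit the symmetry and sum-zero property of $g$ to gain a quadratic factor (the paper via a physical-space decomposition of $g$ into quadrupoles $\pm a,\pm y_a$ with $|y_a|\le|a|$ and the second-difference bound \eqref{est_second_diff:B}, you via the multiplier bound $|\wh g(\mathbf p)|\lesssim |\mathbf p|^2\sum_x x^2|g(x)|$), and then conclude by the same iterated summation-by-parts argument of \Cref{sec:Fourier}, with $d$ iterations in the regime $|x_0|\le W^\tau\ell_{\lambda,\eta}$ and arbitrarily many in the complementary regime. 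The Leibniz-rule bookkeeping you flag as the main obstacle is equally implicit in the paper's version, where the differenced symbol is likewise the product of $f_\eta$ with a cosine difference supported at scale $K\le |x_0|/2$, so your variant introduces no new difficulty.
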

\begin{proof} 
For convenience, we denote $\rB:=\wt P^\perp (1-F_n)^{-1} \wt P^\perp$.  Since $g(\cdot)$ is a symmetric function and $\sum_x g(x)=0$, we can write that
\be\label{decomposezero} \sum_{x}\rB_{0 x} g(x-x_0) =\sum_{a\in\mathfrak A} g(a) \left(  \rB_{0,x_0+a}+ \rB_{0,x_0-a}- \rB_{0,x_0+y_a} -\rB_{0,x_0-y_a}\right),\ee
where $\mathfrak A$ is a subset of $\cal B_K$, and $y_a\in \cal B_K$ depends on $a$ and satisfies $|y_a|\le |a|$. To conclude the proof, it suffices to show that 
\be\label{est_second_diff:B}
\left|\rB_{0,x_0+a}+ \rB_{0,x_0-a}- \rB_{0,x_0+y_a} -\rB_{0,x_0-y_a}\right| \prec \frac{|a|^2}{|x_0|^2}  \left(    \frac{ \blam\mathbf 1_{|x_0|\le W^\tau \ell_{\lambda, \eta}}}{\langle x_0\rangle^{d-2}}  + |x_0|^{-D}\mathbf 1_{|x_0|> W^\tau \ell_{\lambda, \eta}} \right).
\ee
The proof of this estimate is similar to that for \eqref{est_M0LN} and \eqref{est_M0LN_small}. For example, for the block Anderson and Anderson orbital models, we can use \eqref{eq:PKMP} to express:
    \begin{align*}
        \rB_{0,x_0+a}+ \rB_{0,x_0-a}- \rB_{0,x_0+y_a} -\rB_{0,x_0-y_a} = \frac1{n^d}\sum_{\bp\in \T_n^d\setminus \{0\}} f_\eta (\bp)e^{\ii \bp\cdot x_0} \left[2\cos(\bp\cdot a)-2\cos(\bp\cdot y_a)\right].
    \end{align*}
Then, similar to the proof below \eqref{est_M0LN} (resp.~\eqref{est_M0LN_small}), we can apply the summation by parts argument for $d$ times (resp.~$k$ times, for a large enough $k$) to conclude \eqref{est_second_diff:B}. We omit the details. 
\end{proof}

We now complete the proof of \eqref{thetaxy_renorm} with this lemma. Using \eqref{exp_Theta3} or \eqref{exp_Theta4}, we find that $(\zthn S\cal E S)_{[x][y]}=W^{-d} (\rB \wE)_{[x][y]}$. Hence, to show \eqref{thetaxy_renorm}, it suffices to prove that for any $x\in \Z_n^d$,
\be\label{thetaxy_renorm2}
 \sum_{\al\in \Z_n^d} \rB_{0\al} \Sele_{\LK}(\al,x) \prec \frac{1}{\langle x\rangle^{d}}\left(  \psi_0+\frac{\psi}{\langle x\rangle^{2}}  \right)\min\left(\eta^{-1},\blam\langle x\rangle^{2}\right).
\ee
Here, we again abbreviate $\Zn$ and its vertices $[x]$ by $\Z_n^d$ and $x$, and by translation invariance, we have chosen $y=0$. For simplicity of presentation, we will slightly abuse the notation and denote $\Sele_{\LK}$ by $\Sele$. 
%the vertices of $\wt\Z_n^d$ by $x$ instead of $[x]$ in the following proof.
To prove \eqref{thetaxy_renorm2}, we decompose the sum over $\al$ according to the dyadic scales $ \cal I_{\ell}:=\{\al \in \Z_n^d: K_{\ell-1} \le |\al-x| \le K_{\ell}\}$, where $K_\ell$ are defined by
\be\label{defn Kn}
K_\ell:= 2^\ell  \ \ \text{for}\ \ 1\le \ell \le \log_2 n -1, \ \ \text{and}\ \  K_{0}:=0. 
\ee
 %$K_n:= 2^n W$ for $1\le n \le \log_2 (L/W) +1$ and $K_{0}:=0$.
% We claim that 
%\begin{align*}
%A_{\oplus y} := \sum_{x}\Theta_{\oplus x} S^\Delta_{xy} \prec \frac{1}{W^d\langle r\rangle^d }.
%\end{align*} 
If $K_\ell\ge \qq{x}/10$, then we have that  
\be\label{boundIfar} \Big| \sum_{\al\in \cal I_{\ell}}\rB_{ 0 \al} \wE_{\al x}\Big| \le \sum_{\al\in \cal I_{\ell}} \left|\rB_{ 0 \al}\right| \cdot \max_{\al\in \cal I_\ell} \left|\wE_{\al x}\right| \prec \min\left(\eta^{-1},\blam K_\ell^2\right)  \frac{\psi}{  K_{\ell}^{d+2}}\lesssim  \frac{\psi}{\langle x\rangle^{d+2}}\min\left(\eta^{-1},\blam\langle x\rangle^{2}\right),\ee
where in the second step we used \eqref{self_decay} and the following estimate for any $y\in \Z_n^d$ and $1\le K \le n$: 
\be\label{eq:sum_B}\sum_{\al:|\al-y|\le K}\rB_{ 0 \al}\prec \min\left(\eta^{-1},\blam K^2\right),\ee
which is a simple consequence of \eqref{est_M0LN0} and \eqref{est_M0LN0_small}.
% $$\sum_{\al:|\al-y|\le K}\rB_{ 0 \al}\prec \sum_{\al\in \cal I_{\ell}} \left[\frac{W}{\lambda^2\langle \al\rangle^{d-2}}\mathbf 1_{|\al|\le W^\tau \ell_{\lambda,\eta}} + \left(\langle \al\rangle^{-D}+ \frac{1}{n^d\eta}\right)\mathbf 1_{|\al|> W^\tau \ell_{\lambda,\eta}}\right] $$ 
% for arbitrary small $\tau>0$. 

It remains to bound the sum
$$\sum_{\al\in \cal I_{near}}\rB_{0 \al} \wE_{\al x},\quad \cal I_{near}:=\bigcup_{\ell: K_\ell < \langle x\rangle/10} \cal I_\ell .$$
In order for $\cal I_{near}$ to be nonempty, we assume that $\langle x\rangle > 10$. 
Using \eqref{self_decay} and \eqref{self_zero}, we obtain that 
\be\label{mean Snear}\sum_{\al\in \cal I_{near}} \wE_{\al x}=\sum_\al \wE_{\al x} - \sum_{\al\notin \cal I_{near}} \wE_{\al x} \lesssim \psi_0 + \frac{\psi}{\langle x\rangle^2}.\ee
Then, we write $ \wE_{\al x} = \overline R + \mathring R_{\al x}$ for $\al \in \cal I_{near}$, where $\overline R:= \sum_{\al\in \cal I_{near}}\wE_{\al x}/|\cal I_{near}|$ is the average of $\wE_{\al x}$ over $\cal I_{near}$.  By \eqref{self_decay} and \eqref{mean Snear}, we have that 
\be\label{bound barR} |\overline R| \lesssim \frac{\psi_0}{\langle x\rangle ^{d}}+\frac{ \psi}{\langle x\rangle ^{d+2}},\quad | \mathring R_{\al x} | \lesssim  \frac{\psi}{ \langle \al-x\rangle^{d+2}} + |\overline R| .\ee
Thus, we obtain that 
\begin{align}
\Big| \sum_{\al \in \cal I_{near}}\rB_{  0 \al} \overline R  \Big|&\prec  \left( \frac{\psi_0}{\langle x\rangle ^{d}}+\frac{ \psi}{\langle x\rangle ^{d+2}}\right)\sum_{\al\in \cal I_{near}}\rB_{0\al} \prec  \frac{1}{\langle x\rangle ^{d}}\left( {\psi_0}+\frac{ \psi}{\langle x\rangle ^{2}}\right) \min\left(\eta^{-1},\blam\langle x\rangle^2\right),\label{Rnear_sum1}
%\nonumber\\&\lesssim   \left( \frac{W^{(d+3)\e}W^2}{W^{(l-1)d}r^{d+2}}+\frac{ \eta W^{(d+1)\e}}{W^{(l-1)d}r^d}\right) \min\left\{W^\e\frac{W^{-2\e}r^2}{W^2},\eta^{-1}\right\} \lesssim  \frac{W^{(d+2)\e}}{W^{(l-1)d} r^{d}}
\end{align}
where in the second step we used \eqref{eq:sum_B}. 
Finally, we use Lemma \ref{lem cancelTheta} to bound the sum over $\mathring R$ as: 
\begin{align}
\Big| \sum_{\al\in \cal I_{near}}\rB_{0 \al} \mathring R_{\al x} \Big|& \prec  \left(\sum_{\al\in \cal I_{near}}\frac{|\al-x|^2}{\langle x\rangle^2}|\mathring R_{\al x}|\right) \left(    \frac{ \blam\mathbf 1_{|x|\le W^\tau \ell_{\lambda, \eta}}}{\langle x\rangle^{d-2}} + |x|^{-D}\mathbf 1_{|x|> W^\tau \ell_{\lambda, \eta}}\right)\nonumber\\
 & \prec \left(\sum_{\al\in \cal I_{near}}\frac{\psi}{\langle x\rangle^2\langle\al-x\rangle^d } + \langle x\rangle^d |\overline R|\right) \left(    \frac{ \blam\mathbf 1_{|x|\le W^\tau \ell_{\lambda, \eta}}}{\langle x\rangle^{d-2}} + |x|^{-D}\mathbf 1_{|x|> W^\tau \ell_{\lambda, \eta}}\right) \nonumber\\
& \prec \frac{1}{\langle x\rangle^d}\left(\psi_0+\frac{\psi}{\langle x\rangle^2 } \right) \left( \blam \langle x\rangle^2 \mathbf 1_{|x|\le   W^{\tau}\ell_{\lambda,\eta}}    + (W^\tau \ell_{\lambda,\eta})^{-(D-d)}\right) \nonumber\\
&\lesssim\frac{W^{2\tau}}{\langle x\rangle ^{d}}\left( {\psi_0}+\frac{ \psi}{\langle x\rangle ^{2}}\right) \min\left(\eta^{-1},\blam\langle x\rangle^2\right),\label{Rnear_sum2}
\end{align}
where in the second and third steps we used \eqref{bound barR}. 
% \begin{align}
% \Big| \sum_{\al\in \cal I_{near}}\rB_{0 \al} \mathring R_{\al x} \Big|& \prec  \left(\sum_{\al\in \cal I_{near}}\frac{|\al-x|^2}{\langle x\rangle^2}|\mathring R_{\al x}|\right) \left( \frac{W}{\lambda^2 \langle x\rangle^{d-2}}\mathbf 1_{|x|\le \eta^{-1/2}\lambda W^{-1/2+\tau}} + \langle x\rangle^{-D}\right) \nonumber\\
%  & \prec \left(\sum_{\al\in \cal I_{near}}\frac{\psi}{\langle x\rangle^2\langle\al-x\rangle^d } + \langle x\rangle^d |\overline R|\right) \left( \frac{W}{\lambda^2 \langle x\rangle^{d-2}}\mathbf 1_{|x|\le \eta^{-1/2}\lambda W^{-1/2+\tau}} + \langle x\rangle^{-D}\right) \nonumber\\
% & \prec \frac{1}{\langle x\rangle^d}\left(\psi_0+\frac{\psi}{\langle x\rangle^2 } \right) \left( \frac{W}{\lambda^2} \langle x\rangle^2 \mathbf 1_{|x|\le   \eta^{-1/2}\lambda W^{-1/2+\tau}}    + W^{-D+d}\right) \nonumber\\
% &\lesssim\frac{W^{2\tau}}{\langle x\rangle ^{d}}\left( {\psi_0}+\frac{ \psi}{\langle x\rangle ^{2}}\right) \min\left(\eta^{-1},\frac{W}{\lambda^2}\langle x\rangle^2\right),\label{Rnear_sum2}
% \end{align}
% and in the last step we used %\eqref{thetaxy}: for any constants $\tau,D>0$,
% $$\frac{\eta W^{-\e}}{W^{(l-1)d}}\frac{W^\e}{W^2r^{d-2}}\mathbf 1_{r\le   \eta^{-1/2}W^{1+\e}}  \le  \frac{W^{2\e}}{W^{(l-1)d} r^{d}}.$$
%for small enough $\tau$.  
Combining \eqref{boundIfar}, \eqref{Rnear_sum1} and \eqref{Rnear_sum2}, we conclude \eqref{thetaxy_renorm} since $\tau$ is arbitrary. 
% From \eqref{redundant again}, we can obtain \eqref{BRB} easily by using the following simple facts: if $f_1$, $f_2$ and $g$ are functions on $\Z_L^d\times \Z_L^d$ satisfying that
% $$|f_1(x,y)| \le \langle x-y\rangle^{-d}, \quad |f_2(x,y)| \le \langle x-y\rangle^{-d},\quad |g(x,y)| \le W^{-2}\langle x-y\rangle^{-d+2},$$
% then we have
% $$\sum_\al \left|f_1(x,\al)f_2(\al,y)\right|\lesssim \langle x-y\rangle^{-d},\quad \sum_\al \left|f_1(x,\al)g(\al,y)\right|\lesssim W^{-2} \langle x-y\rangle^{-d+2}. $$
%Finally, \eqref{intro_redagain} follows from \eqref{BRB} directly by definition \eqref{chain S2k}.
%\end{proof}

\subsection{Proof of \Cref{lem deter}} 
% Using the simple identity $S^2=S$, we can write the series expansion of $S^+$ as
% \be \nonumber
% S^+_{xy}=\sum_{k=1}^\infty  \left({M}^+ S\right)^k_{xy}=M^+ S \sum_{k=0}^\infty (SM^+S)^k,
% \ee
% Using the definition of $S$, we can express $SM^+S$ as $
% SM^+S=\bE \otimes \KMp,
% $
% where 
%Let $\KMp$ be the projection of $M^+$ to $\Zn$ in the sense of \Cref{def:projlift}. 
Similar to \eqref{exp_Theta3} and \eqref{exp_Theta4}, we can write $S^+$ as 
\be \label{eq:S+}
S^+ %=M^+ \cdot \bE\otimes \left[\sum_{k=0}^\infty (\KMp)^k\right]
=(1-F_n^+)^{-1}\otimes \bE.
\ee
where $F_n^+:=\KMp$ for the block Anderson and Anderson orbital models and $F_n^+:=m^2 S(\lambda)_{\LK}$ for the Wegner orbital model. With \eqref{self_mWO}, \eqref{eq:expandm} (with $m_{2k+1}=0$), \eqref{Mbound}, and \eqref{Mbound_AO}, we can show that 
\be\label{eq:KMp}
(F_n^+)_{[x][x]}= m_{sc}(z)^2 +\OO(\lambda^2),\quad \text{and}\quad 
(F_n^+)_{[x][y]} \le (C/\blam)^{|[x]-[y]|}.
% ,& \ \text{if}\ |[x]-[y]|= k,\  1\le k \le d,\\
%  \lambda^{cW|[x]-[y]| },& \ \text{if}\ |[x]-[y]|>d.
% \end{cases}
\ee
% $(\KMp)_{[x][x]}= m_{sc}^2 +\OO(\lambda^2)$ and 
% Furthermore, it satisfies a similar bound as $\KM$ in \eqref{eq:KM}:
% \be\label{eq:KMp}
% (\KMp)_{[x][x]}= m_{sc}^2 +\OO(\lambda^2)
% (\KMp)_{[x][y]} \lesssim \begin{cases}\left( {\lambda^2}/{W}\right)^k,& \ \text{if}\ |[x]-[y]|= k,\  1\le k \le d,\\
%  \lambda^{cW|[x]-[y]| },& \ \text{if}\ |[x]-[y]|>d.
% \end{cases}
% \ee
% defined on the renormalized lattice $\wt \Z_n^d$ with entries
% \be \nonumber
% (\KMp)_{[x][y]}=\frac{1}{W^d}\sum_{\al\in [x],\beta\in [y]} M^+_{\al\beta}.
% \ee
% Next, we study the properties of $\KMp$. {\cob With \eqref{Mbound}, we obtain that if $[x]=[y]$,
% \be\label{eq:KMp0} (\KMp)_{[x][x]}= W^{-d}\left[ m^2 W^d + \OO \left(d \lambda^2 W^d + d^2\lambda^4 W^d +  \cdots\right)\right] = m^2 +\OO(\lambda^2),\ee
% if $\|[x]-[y]\|_1=1$,  
% $$ (\KMp)_{[x][y]}\lesssim W^{-d}\left( \lambda^2 W^{d-1} + d \lambda^4 W^{d-1} + d^2 \lambda^6 W^{d-1} + \cdots \right) \lesssim {\lambda^2}/{W},
% $$
% and if $\|[x]-[y]\|_1=2$,  
% $$ (\KM)_{[x][y]}\lesssim \frac{1}{W^d}\left( \lambda^4 W^{d-2} + d \lambda^6 W^{d-2} +d^2 \lambda^8 W^{d-2}+ \cdots \right) \lesssim \frac{\lambda^4}{W^2}.$$
% }
% Continuing the above estimates, we obtain that 
%Similar to \eqref{eq:KM}, we have
% and 
% $$(\KM)_{[x][y]} \lesssim \left( C\lambda^2\right)^W,\quad \|[x]-[y]\|_1>d.$$
% A more careful argument shows that the lower bound also holds:
% $$(\KM)_{[x][y]} \gtrsim \left( \frac{\lambda^2}{W}\right)^k,\quad \|[x]-[y]\|_1= k,\quad 0\le k \le d.$$
Thus, we can expand $(1-F_n^+)^{-1}$ as 
\be\label{eq:expMLn}(1-F_n^+)^{-1} =\sum_{k=0}^{\infty} \left[1-(F_n^+)_{[0][0]}\right]^{-(k+1)}\left[F_n^+ -(F_n^+)_{[0][0]}\right]^{k}.
\ee
Using this expansion, with the estimate \eqref{eq:KMp} and the fact $|1-m_{sc}(z)^2|\gtrsim 1$, we can derive that 
$$ \left|(1-F_n^+)^{-1}_{[x][y]}\right| \lesssim \exp\left(-c|[x]-[y]|\right)$$
for a constant $c>0$. Plugging it into \eqref{eq:S+}, we conclude \eqref{S+xy}. The estimate \eqref{S+xy-0} is also an easy consequence of \eqref{eq:KMp}, \eqref{eq:expMLn}, \eqref{Mbound}, and \eqref{Mbound_AO}.

\subsection{Proof of \Cref{lem:estM}}
%\begin{proof}[Proof of \Cref{lem:estM}]
The estimate \eqref{eq:mL_minf} for the Wegner orbital model follows easily from \eqref{self_mWO}. 
For the block Anderson model, recall that $m(z)$ satisfies  equation \eqref{self_m}, which can be rewritten as follows using Fourier series:
\be\label{eq:mL} m^{(L)}(z) = \frac{1}{N}\sum_{\bp\in \T_L^d} \frac{1}{\lambda e(\bp) - z - m^{(L)}(z)},\ee
where we abbreviate $e(\bp):=\sum_{i=1}^d 2\cos p_i$. Letting $L\to\infty$, we see that $m^{(\infty)}(E)$ satisfies the equation
\be\label{eq:minf}
m^{(\infty)}(E) = \frac{1}{(2\pi)^d}\int_{\bp\in [-\pi,\pi]^d}\frac{\dd \bp}{\lambda e(\bp) - E - m^{(\infty)}(E)} .
\ee
By comparing the sum in \eqref{eq:mL} and the integral in \eqref{eq:minf}, we obtain that for any $w\in \C$ with $w=m_{sc}(z)+\oo(1)$, 
$$\frac{1}{(2\pi)^d}\int_{\bp\in [-\pi,\pi]^d}\frac{\dd \bp}{\lambda e(\bp) - E - w}-\frac{1}{N}\sum_{\bp\in \T_L^d} \frac{1}{\lambda e(\bp) - z - w} \lesssim \lambda^2/L^{2}.$$ 
Then, using the stability of the self-consistent equation \eqref{eq:mL}, we can derive that
\be\label{eq:mL-minf_appd}m^{(L)}(z)-m^{(\infty)}(E)=\OO(\lambda^2/L^2).\ee 
The existence of $M^{(\infty)}_{xy}$ follows from the observation that   
$$M^{(L)}_{xy}(z) = \frac{1}{N}\sum_{\bp\in \T_L^d} \frac{e^{\ii \bp\cdot (x-y)}}{\lambda e(\bp) - z - m^{(L)}(z)}\to M^{(\infty)}_{xy}(E)=\frac{1}{(2\pi)^d}\int_{\bp\in [-\pi,\pi]^d}\frac{e^{\ii \bp\cdot (x-y)}\dd \bp}{\lambda e(\bp) - E - m^{(\infty)}(E)}$$
as $L\to \infty$. Then, we can prove the estimate \eqref{eq:ML-Minf} by showing that for any $L'>L$ and $x,y \in \Z_L^d$,
$$|M^{(L)}_{xy}(z)-M^{(L')}_{xy}(E)|\lesssim \left(\eta + \lambda^2 L^{-2} \right)(C\lambda)^{|x-y|}+(C\lambda)^{L/2}.$$
This follows easily from the expansion \eqref{eq:expandM} and the estimate \eqref{eq:mL-minf_appd}.

%Using the stability of the semi-circle law equation and the self-consistent equations for the free convolution

% {\cor 
% Next, using Fourier series, we can write $M^{(L)}$ and $M^{(\infty)}$ as  
% \be\label{eq:ML}
% M^{(L)}_{xy}(z) = \frac{1}{N}\sum_{\bp\in \T_L^d} \frac{e^{\ii \bp\cdot (x-y)}}{\lambda e(\bp) - z - m^{(L)}(z)},\quad  M^{(\infty)}_{xy}(E)=\frac{1}{(2\pi)^d}\int_{\bp\in [-\pi,\pi]^d}\frac{e^{\ii \bp\cdot (x-y)}\dd \bp}{\lambda e(\bp) - E - m^{(\infty)}(E)}.\ee
% We can prove the estimate \eqref{eq:ML-Minf} by showing that for any $L'>L$ and $x ,y\in \Z_L^d$,
% $$|M^{(L)}_{xy}(z)-M^{(L')}_{xy}(E)|\lesssim \left(\eta + \lambda^2 L^{-2}\right)(C\lambda)^{|x-y|}+(C\lambda)^{L/2}.$$
% This can be proved easily by comparing the two expressions in \eqref{eq:ML} and using the estimate \eqref{eq:mL-minf_appd}. 
% }

For the Anderson orbital model, the proofs follow the same structure as those for the block Anderson model. The only modification is that we use Fourier series on $\wt \Z_n^d$ instead, rewriting \eqref{self_m} and \eqref{def_G0} as
\be\label{eq:AO_M}
\frac{1}{n^d}\tr   \frac{1}{\lambda (2d-\Delta_n) -z- m(z)} = m(z),\quad 
M(z):= \frac{1}{\lambda (2d-\Delta_n) -z- m(z)}\otimes I_W.
\ee
We omit the details.

\subsection{Proof of \Cref{lem:estS+}}
Recall that $S^+$ can be expressed as in \eqref{eq:S+}, where $F^+_n$ is determined by $M^{(L)}$. Using \Cref{lem:estM} and the expansion \eqref{eq:expMLn}, it is straightforward to verify (details omitted) that 
% We have a similar expansion as in \eqref{eq:expMLn}:
% \be\label{eq:expMLn2}
% S_{W,L}^+ =\sum_{k=0}^{\infty} S^{(W,L)}\left[\left(1-(m^{(L)})^2S^{(W,L)}\right)^{-1}\left(M^+_L-(m^{(L)})^2\right)S^{(W,L)}\right]^k\left(1-(m^{(L)})^2S^{(W,L)}\right)^{-1}.
% \ee
% Then, using \Cref{lem:estM} and this expansion, we can easily show that  
$(S_\infty^{+})_{xy}(E)$ exists and for any $L'>L$, $x\in [0]$ and $y\in \Z_L^d$, 
$$\left|(S^+_{W,L})_{xy}(z)-(S^+_{W,L'})_{xy}(E)\right|\lesssim \frac{\eta+\lambda^2\al(W)/ L^{2}}{W^{d}}e^{-{|[x]-[y]|_n}/{C}} + e^{-n/C}.$$
Taking $L'\to \infty$ concludes \eqref{eq:S+-Sinf}.

\subsection{Proof of \Cref{lem esthatTheta}}
Since all the arguments in the proof are basic, we will outline the proof without providing all the details. We first claim the following result.
%for definiteness of notation, we only consider the case of block Anderson and Anderson orbital models, while the proof for the Wegner orbital model is easier. 

\begin{lemma}\label{lem:estM_appd}
 In the setting of \Cref{lem:estM}, for any $K\in \N$, there exist two sequences of analytic functions \smash{$\{m_i^{(L)}:i=1,2,\ldots,K\}$ and $\{m_i^{(\infty)}:i=1,2,\ldots,K\}$} on the upper half complex plane such that the following estimates hold: 
\begin{align}
&m^{(L)}(z)=m_{sc}(z)+\sum_{i=1}^K m_{2i}^{(L)}(z)\lambda^{2i} + \OO(\lambda^{2K+2}),\label{eq:exp_mm0}\\
&m^{(\infty)}(E)=m_{sc}(E)+\sum_{i=1}^K m_{2i}^{(\infty)}(E)\lambda^{2i} + \OO(\lambda^{2K+2}),\label{eq:exp_mm}\\
& |m_2^{(L)}(z)-m_2^{(\infty)}(E)|\lesssim \eta,\quad \max_{i=2}^K|m_{2i}^{(L)}(z)-m_{2i}^{(\infty)}(E)|\lesssim \eta + \al(W) /L^{2}.\label{eq:mL-minf}
\end{align}
\end{lemma}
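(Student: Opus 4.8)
The plan is to derive all three statements from the self-consistent equations together with the stability estimate that holds in the bulk. First I would establish that $m^{(L)}(z,\lambda)$ and $m^{(\infty)}(E,\lambda)$ extend to analytic functions of $\lambda$ on a fixed disc $\{|\lambda|\le r_0\}$, with $r_0>0$ and all implicit constants uniform in $L\ge W\ge W_0$, $|E|\le 2-\kappa$ and $0<\eta\le 1$. This comes from the analytic implicit function theorem applied to \eqref{eq:mL}, \eqref{eq:minf} and the Anderson-orbital analogue obtained from \eqref{eq:AO_M}: writing each equation as $F(m,\lambda)=0$, one has $\partial_m F\big|_{\lambda=0}=m_{sc}^2-1$, and $|m_{sc}(z)^2-1|\gtrsim 1$ uniformly for $|E|\le 2-\kappa$; by continuity $\partial_m F$ stays bounded away from $0$ on a fixed disc, so the solution branch through $m_{sc}$ is analytic there. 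Taylor-expanding to order $2K+1$ then gives \eqref{eq:exp_mm0} and \eqref{eq:exp_mm} with remainder $\OO(\lambda^{2K+2})$; the odd-order coefficients vanish because the odd Fourier moments $\tfrac1N\sum_{\bp}e(\bp)^{2j+1}$ and $\int e(\bp)^{2j+1}\,\dd\bp$ vanish by parity (for $2j+1$ below the linear size, which is the relevant regime once $L$ is large).

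Second I would obtain the sharp bound on the leading coefficient. Reading off the $\lambda^2$-term of the self-consistent equation gives the closed form
\[
m_2^{(L)}(z)=\frac{m_{sc}(z)^3}{1-m_{sc}(z)^2}\cdot\frac1N\tr(\Psi^2),\qquad m_2^{(\infty)}(E)=\frac{m_{sc}(E)^3}{1-m_{sc}(E)^2}\cdot c_\infty ,
\]
where $c_\infty$ is the infinite-volume limit of $\tfrac1N\tr(\Psi^2)$. Since $\tfrac1N\tr(\Psi^2)$ is an exact combinatorial constant in the relevant regime of large $L$ (equal to $2d$ for both the block Anderson and Anderson orbital models), $c_\infty=\tfrac1N\tr(\Psi^2)$, and then the Lipschitz continuity $|m_{sc}(z)-m_{sc}(E)|\lesssim\eta$ in the bulk together with $|1-m_{sc}^2|\gtrsim 1$ yields $|m_2^{(L)}(z)-m_2^{(\infty)}(E)|\lesssim\eta$.

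For the higher coefficients $i\ge 2$ I would use the Cauchy formula for Taylor coefficients on the circle $|\lambda|=r$ with a fixed constant $r\in(0,r_0]$,
\[
m_{2i}^{(L)}(z)-m_{2i}^{(\infty)}(E)=\frac{1}{2\pi\ii}\oint_{|\lambda|=r}\frac{m^{(L)}(z,\lambda)-m^{(\infty)}(E,\lambda)}{\lambda^{2i+1}}\,\dd\lambda ,
\]
and bound the numerator on the contour by $C(\eta+|\lambda|^2\al(W)/L^2)$. For this one invokes \eqref{eq:mL_minf}, whose proof — comparing the Riemann sum in \eqref{eq:mL} with the integral in \eqref{eq:minf} and using stability — remains valid for small complex $\lambda$, not just $\lambda=W^{-\xi}$. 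This gives $|m_{2i}^{(L)}(z)-m_{2i}^{(\infty)}(E)|\le C(\eta\, r^{-2i}+\al(W)L^{-2}r^{-2i+2})\lesssim\eta+\al(W)/L^2$, since $r$ is an absolute constant and $i\le K$ is bounded, which is \eqref{eq:mL-minf}. Note the same argument would only give $\eta+\al(W)/L^2$ for $i=1$ as well; the point of the previous paragraph is precisely to sharpen this to $\OO(\eta)$ for $m_2$.

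The main obstacle I anticipate is the uniform analyticity in the first step — specifically, verifying that the stability bound $|\partial_m F|\gtrsim 1$ and the comparison estimate \eqref{eq:mL_minf} persist for complex $\lambda$ on a disc whose radius does not shrink with $L$ or $W$, so that the Cauchy estimates in the last step lose no $L$- or $W$-dependent factor. Once that is in place the remaining ingredients are routine: the explicit formula for $m_2$ and the identity $\tfrac1N\tr(\Psi^2)=2d$, with only the (harmless, asymptotic) exclusion of very small $n=L/W$ needed to guarantee the $L$-independence of the low-order traces of $\Psi$ and the vanishing of the odd-order coefficients.
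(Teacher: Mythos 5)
Your proposal is correct, but it follows a genuinely different route from the paper's. The paper works directly with the asymptotic expansion of the self-consistent equation: it observes that each coefficient $m_{2i}^{(L)}(z)$ is a polynomial in the normalized traces $I_k^{(L)}=\frac1N\tr(\Psi^k)$ with coefficients that are explicit analytic functions of $m_{sc}(z)$, so the finite/infinite-volume comparison reduces to $|I_k^{(L)}-I_k^{(\infty)}|\lesssim L^{-2}$ (resp.\ $n^{-2}=W^2/L^2$ for the Anderson orbital model), obtained by comparing the Fourier sum of $e(\bp)^k$ with its integral, plus $|m_{sc}(z)-m_{sc}(E)|\lesssim\eta$; the sharper $i=1$ bound comes from the explicit formula for $m_2$, exactly as in your second step. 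You instead run a softer complex-analytic argument: uniform-in-$L$ analyticity of $m^{(L)}(z,\cdot)$ and $m^{(\infty)}(E,\cdot)$ on a $\lambda$-disc of fixed radius via the implicit function theorem, followed by Cauchy's formula, with the contour bound supplied by a complex-$\lambda$ extension of \Cref{lem:estM}. Both routes rest on the same two inputs (bulk stability $|1-m_{sc}^2|\gtrsim1$ and the sum-versus-integral comparison); yours avoids tracking the combinatorial structure of the higher coefficients, at the price of the extra checks you yourself flag — that the radius can be taken as a constant $r(\kappa,d)$ small enough that $|\lambda e(\bp)|$ stays below $\im m_{sc}$ so the stability factor persists for complex $\lambda$ uniformly in $L$, and that the analytic branch through $m_{sc}$ at $\lambda=0$ agrees with the paper's $m^{(\infty)}(E,\lambda)$ for small real $\lambda$; these are routine under the bulk assumption. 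Two small remarks: on a contour of constant radius you only need the weaker bound $\OO(\eta+\al(W)/L^2)$, since the $|\lambda|^2$ prefactor buys nothing when $r\asymp1$ (and your separate treatment of $m_2$ already secures the $\OO(\eta)$ bound there); and the parity argument killing the odd coefficients requires $2K+1$ to be smaller than the linear size ($L$, or $n$ for the Anderson orbital model), which holds for large $L$ as you note.
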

\begin{proof}
The estimates \eqref{eq:exp_mm0}--\eqref{eq:mL-minf} for the Wegner orbital model follow easily from the Taylor expansion of  \eqref{self_mWO}. For the block Anderson and Anderson orbital models, the estimate \eqref{eq:exp_mm0} follows from an asymptotic expansion of the self-consistent equation \eqref{self_m} in terms of $\lambda$. 
It remains to show that \smash{$m_{2i}^{(\infty)}(E)=\lim_{L\to\infty}m_{2i}^{(L)}(E+\ii 0_+)$} exists and satisfies the estimate \eqref{eq:mL-minf}.
By \eqref{eq_m1}, we have that 
$$m_2^{(L)}(z)=\frac{2dm_{sc}^2(z)}{1-m_{sc}^2(z)},$$ 
so $m_2^{(\infty)}(E)$ clearly exists and
 %$$m_1^{(L)}(z)=\frac{2dm_{sc}^2(z)}{1-m_{sc}^2(z)},\quad m_1^{(\infty)}(E)=\frac{2dm_{sc}^2(E)}{1-m_{sc}^2(E)},$$
% which gives that 
$|m_2^{(L)}(z)-m_2^{(\infty)}(E)|\lesssim \eta$. In general, for $2\le i \le K$, we can derive that every $m_{2i}^{(L)}(z)$ is a polynomial of $$I_{k}^{(L)}:=\frac{1}{N}\tr (\Psi^k) ,\quad 2\le k\le K,$$ with coefficients being analytic functions of $m_{sc}(z)$. For the block Anderson model, we can express these functions as the following Fourier series:  
\be\label{eq:IkL_inf} I_{k}^{(L)} = \frac{1}{N}\sum_{\bp\in \T_L^d}e(\bp)^k \stackrel{L\to \infty}{\longrightarrow} I_{k}^{(\infty)}:=\frac{1}{(2\pi)^d}\int_{\bp\in [-\pi,\pi]^d}e(\bp)^k\dd \mathbf p \, ,
\ee
where we recall that $e(\bp)$ is defined as $e(\bp):=\sum_{i=1}^d 2\cos p_i$. Therefore, $m_{2i}^{(\infty)}(E)$ in \eqref{eq:exp_mm} exists and is  obtained by replacing $m_{sc}(z)$ and \smash{$I_{k}^{(L)}$} in the polynomial representation of \smash{$m_{2i}^{(L)}(z)$} by $m_{sc}(E)$ and \smash{$I_{k}^{(\infty)}$}, respectively. 
Furthermore, by comparing the sum and the integral in \eqref{eq:IkL_inf}, we find that 
$$|I_{k}^{(L)}-I_{k}^{(\infty)}|\lesssim L^{-2},$$ 
which implies \eqref{eq:mL-minf}. The proof for the Anderson orbital model is similar by using the Fourier series representation of \eqref{eq:AO_M}.
\end{proof}

As in the proof of \Cref{lem theta}, for the sake of clarity, we will focus on the case of block Anderson and Anderson orbital models, noting that the proof for the Wegner orbital model is simpler. 
For block Anderson and Anderson orbital models, $\zthn^{(W,L)}(z)$ takes the form \eqref{exp_Theta3}, where $\wt P^\perp (1-\KM(z))^{-1} \wt P^\perp$ can be written as \eqref{eq:PKMP}. Moreover, $f_{\eta,W,L} (\bp)$ in \eqref{eq:fetap} writes
\be\label{eq:fetap2}
f_{\eta,W,L} (\bp)=\bigg\{\frac{\eta}{\im m^{(L)} + \eta} + a_1^{(W,L)}(z) \blam^{-1}\lambda_1(\bp) + \sum_{k\ge 2} a_k^{(W,L)}(z)\blam^{-k} \lambda_k(\bp) \bigg\}^{-1},
\ee
% \be\label{eq:fetap2}
% \bigg\{\frac{\eta}{\im m^{(L)} + \eta} + a_1^{(W,L)}(z) \blam^{-1}\lambda_1(\bp) + \sum_{k=2}^d  \left(\frac{\lambda^2}{W}\right)^k a_k^{(W,L)}(z)\blam^{-k} \lambda_k(\bp)  + \sum_{k=d+1}^{(n-1)/2}\lambda^{ckW}a_k^{(W,L)}(z)\lambda_k^{(n)}(\bp)\bigg\}^{-1},
% \ee
where $\lambda_k(\bp)\equiv \lambda_k(\bp,n)$ was defined in \eqref{eq_lambdak} and there is an constant $C>0$ depending only on $\kappa$ such that 
\be\label{eq:absak}
|a_k^{(W,L)}(z)|\le C^k \quad \text{for all $k$}. 
\ee
%is on the boundary of $\qqq{-(n-1)/2,(n-1)/2}^d$. 
By \Cref{lem:estM_appd}, it is straightforward to check by definition that $a_k^{(W,L)}(z)$ converges as $L \to \infty$ and $\eta\to 0$ (with $W$ fixed) for every fixed $k\in \N$. Together with \eqref{eq:absak}, we see that for sufficiently large $W$, $f_{\eta,W,L} (\bp)$ converges to the limit 
$$f_{0,W,\infty} (\bp)=\bigg\{\sum_{k=1}^\infty a_k^{(W,\infty)}\blam^{-k}\lambda_k(\mathbf p,\infty)\bigg\}^{-1}$$ 
uniformly in $\mathbf p \in [-\pi,\pi]^d$, where for all $k$,
\be\label{eq:absak2}
|a_k^{(W,\infty)}(z)|\le C^k, \quad  \lambda_k({\bp},\infty)=\sum_{\mathbf k \in  \Z^d:|\mathbf k|=k}(1-\cos(\bp\cdot \mathbf k)). 
\ee
Then, we can derive that for any $x\in \Z^d$, %as $n\to \infty$, 
\be\label{eq:I0W1}I_{0,W,L}(x):=\frac{1}{n^{d}}\sum_{\bp\in  \T_n^d\setminus \{0\}} f_{0,W,L} (\bp)e^{\ii \bp\cdot x}\xrightarrow{n \to \infty} I_{0,W,\infty}(x):=\frac{1}{(2\pi)^d}\int_{[-\pi,\pi]^d}f_{0,W,\infty}(\bp)e^{\ii \bp\cdot x}\dd \bp.
\ee
This proves the existence of $\zthn^{(W,\infty)}_{xy}$ for any $x\in [0]$ and $y\in \Z^d$ since $\zthn^{(W,\infty)}_{xy} = W^{-d}I_{0,W,\infty}([y])$. 

Now, to show \eqref{Theta-wh1}, it suffices to prove that  
\be\label{eq:I0W}
I_{0,W,\infty}(x)\lesssim  \frac{\blam\log(|x|+2)}{(|x|+1)^{d-2}},\quad \forall x\in \Z^d.
\ee
At $x=0$, we trivially have $I_{0,W,\infty}(0)=\OO(1)$. For $x\in \Z^d\setminus \{0\}$, suppose $|x_1| =\max_{i=1}^d |x_i| \ge 1$ without loss of generality. 
Applying integration by parts with respect to $p_1$ for $(d-3)$ times, we get that 
$$I_{0,W,\infty}(x):=\frac{1}{(2\pi)^d}\left(\frac{\ii}{x_1}\right)^{d-3}\int_{ [-\pi,\pi]^d}\left[\partial_{p_1}^{d-3}f_{0,W,\infty}(\bp)\right]e^{\ii \bp\cdot x}\dd \bp=I_{in}+I_{out}.$$
Then, we divide this expression into two parts $I_{in}$ and $I_{out}$ according to whether $|p_1| \le |x_1|^{-1}$ or not. For $I_{out}$,  applying another integration by part with respect to $p_1$, we get that  
$$ I_{out} \lesssim \frac{\blam}{|x_1|^{d-2}}\left[\int_{[-\pi,\pi]^{d-1}} \frac{\dd \bq}{|x_1|^{-(d-1)}+|\bq|^{d-1}} + \int_{  [-\pi,\pi]^{d}} \frac{\mathbf 1(|p_1|\ge |x_1|^{-1}) \dd \bp}{|x_1|^{-d}+|\bq|^d}\right] \lesssim \frac{\blam(1+\log |x_1|)}{ |x_1|^{d-2}},$$
where $\bq:=(p_2, \ldots, p_d) $, $\wh x^{(1)}:=(x_2,\cdots, x_n)$, and we have used that $\partial_{p_1}^{k}f_{0,W,\infty}(\bp)\lesssim \blam |\bp|^{-(k+2)}$ for any fixed $k\in \N$. For $I_{in}$, we bound it directly as  
$$ I_{in}\lesssim \frac{\blam}{|x_1|^{d-3}}\left[\int_{|p_1|\le |x_1|^{-1},|\bq|\le |x_1|^{-1}} \frac{\dd \bp}{|\bp|^{d-1}}+ \int_{|p_1|\le |x_1|^{-1},|\bq|> |x_1|^{-1}} \frac{\dd \bp}{|\bp|^{d-1}}\right]\lesssim \frac{\blam(1+\log |x_1|)}{|x_1|^{d-2}}.$$
Combining the above two estimates yields \eqref{eq:I0W}.

To show \eqref{Theta-wh}, for $x\in \Z_n^d$, we need to bound 
\begin{align*}
    &I_{0,W,\infty}(x)-I_{\eta,W,L}(x) = I_1+I_2+I_3+I_4,  
\end{align*}
where the four terms are defined as follows:  
\begin{align*}
    &  I_1:=\frac{1}{(2\pi)^d}\sum_{\bp_0\in  \T_n^d\setminus \{0\}} e^{\ii \bp_0\cdot x} \int_{\bp\in \cal O_n(p_0)}\left[\wt f_{0,W,\infty} (\bp)e^{\ii (\bp-\bp_0)\cdot x}- \wt f_{0,W,\infty} (\bp_0)\right]\dd \bp, \\
    & I_2:= \frac{1}{(2\pi)^d}\int_{\bp\in [-\pi,\pi]^d\setminus \cal O_n(0)}\left[f_{0,W,\infty}(\bp)-\wt f_{0,W,\infty}(\bp)\right]e^{\ii \bp\cdot x}\dd \bp , \\
    & I_3:= \frac{1}{n^{d}}\sum_{\bp_0\in  \T_n^d\setminus \{0\}} \left[\wt f_{0,W,\infty} (\bp_0)-f_{\eta,W,L} (\bp_0)\right]e^{\ii \bp_0\cdot x} ,\\
    & I_4:=\frac{1}{(2\pi)^d}\int_{\bp\in \cal O_n(0)}f_{0,W,\infty} (\bp)e^{\ii \bp\cdot x}\dd \bp.
\end{align*}
Here, $\cal O_n(\bp_0)$ denotes the box centered at $\bp_0$ and with side length $2\pi/n$, and $\wt f_{0,W,\infty} (\bp)$ is defined as 
\be\label{eq:fetap3}
\wt f_{0,W,\infty} (\bp):=\bigg\{   a_1^{(W,\infty)}(E)\blam^{-1} \lambda_1(\bp,\infty) + \sum_{k=2}^K  a_k^{(W,\infty)}(E)\blam^{-k}\lambda_k(\bp,\infty)\bigg\}^{-1}
\ee
for a large integer $K\ge 2$. First, it is trivial to see that 
\be\label{eq:I4}|I_4|\lesssim \blam n^{-(d-2)}.\ee 
For $I_2$, there exists a constant $C>0$ such that 
$$f_{0,W,\infty}(\bp)-\wt f_{0,W,\infty}(\bp)\le (C/\blam)^{K-1} |\bp|^{-2}.$$ 
With an integration by parts argument as that in the proof of \Cref{lem theta}, we obtain that 
\be\label{eq:I2}|I_2|\lesssim (C/\blam)^{K-1} (|x|+1)^{-(d-2)}\log n .\ee
%as long as we take $K=C\log n$ for a large constant $C>1$. 
Using \Cref{lem:estM_appd}, we can check that 
$$|a_k^{(W,\infty)}(E) -a_k^{(W,L)}(z) |\lesssim \eta + \lambda^2 \al(W)/L^{2},\quad 1\le k \le K.$$
With this estimate, we can get that  
$$\wt f_{0,W,\infty}(\bp_0) - f_{\eta,W,L}(\bp_0) \lesssim \blam^2\frac{\eta}{|\bp_0|^4} + \blam\frac{\lambda^2 \al(W)/L^{2}}{|\bp_0|^2}.$$
%$$f_{0,W,\infty}(\bp)-\wt f_{0,W,\infty}(\bp) \lesssim \left(\Wlambda\right)^2\frac{\eta}{|\bp_0|^4} + \Wlambda\frac{\lambda^2 L^{-2}+ \lambda^{-D}}{|\bp_0|^2}.$$
Again, using a similar summation by parts argument, we can obtain that
\be\label{eq:I3}|I_3|\lesssim  \blam^2 \frac{\eta\log n}{ (|x|+1)^{d-4}}
+ \blam\frac{ (\lambda^2 \al(w)/L^{2}) \log n}{ (|x|+1)^{d-2}} .\ee
%+ \left(\Wlambda\eta + \lambda^2 L^{-2}+ \lambda^{-D} \right)\frac{W\log n}{\lambda^2 (|x|+1)^{d-4}} .$$
Finally, for $I_1$, each integral inside the summation writes  
$$g(\bp_0):= \int_{\bp\in \cal O_n(p_0)}\left[\wt f_{0,W,\infty} (\bp)e^{\ii (\bp-\bp_0)\cdot x}- \wt f_{0,W,\infty} (\bp_0)-\left(\nabla_{\bp}\wt f_{0,W,\infty}(\bp_0)+\ii x\right)\cdot (\bp-\bp_0)\right]\dd\bp.$$
It is straightforward to check that 
$$ g(\bp_0) \lesssim \frac{\blam}{n^{d+2}} \left(\frac{|x|^2}{|\bp_0|^2}+\frac{1}{|\bp_0|^4}\right).$$
Again, applying the summation by parts argument to $I_1$, we obtain that
\be\label{eq:I1} I_1 \lesssim \frac{\blam n^{-2}\log n}{(|x|+1)^{d-4}}.\ee
Combining the above estimates \eqref{eq:I4}--\eqref{eq:I1}, we obtain that 
\begin{align*}
    &I_{0,W,\infty}(x)-I_{\eta,W,L}(x) \lesssim \left(\blam\eta + n^{-2}\right)\frac{\blam\log n}{ (|x|+1)^{d-4}}.  
\end{align*}
This concludes \eqref{Theta-wh} together with the fact $\zthn^{(W,\infty)}_{xy}(E)-\zthn^{(W,L)}_{xy}(z)= W^{-d} [I_{0,W,\infty}([y])-I_{\eta,W,L}([y])] $.
% As $K\to \infty,$ this converges to
% $$ \frac{1}{(2\pi)^d} \int_{(-\pi,\pi]^d} \frac{d_k^{-1}\sum_{\mathbf k \in \Z^d: \|\mathbf k\|_1=k}[1-\cos(\mathbf p \cdot \mathbf k )]}{\left[d^{-1}\sum_{i=1}^d (1-\cos p_i)\right]^2} e^{\ii \mathbf p \cdot (x-y)} \dd\mathbf p.$$
% With Fourier transform techniques, we can show that the above expression has decay $|x-y|^{-(d-2)}$ ({\cor we decompose the integral into a part with $|\mathbf p|\le \e/|x-y|$, which we can estimate directly, and a part with $|\mathbf p|> \e/|x-y|$, for which we can use integration by parts arguments}).
%\end{proof}

\subsection{Proof of \Cref{lem redundantagain2}}
To show \eqref{redundant again2}, it suffices to prove that (recall $I_{0,W,\infty}$ defined in \eqref{eq:I0W1})
$$ \sum_{x\in \Z^d}I_{0,W,\infty}(x)\Sele({x,y}) \lesssim \blam \frac{\psi  \log (|y|+2)}{(|y|+1)^d},\quad \forall y \in \Z^d.$$
With \eqref{self_sym1}, \eqref{self_decay}, and \eqref{weaz}, we can establish this estimate with very similar arguments to those used in the proofs of \Cref{lem:label_diffusive} and \Cref{lem cancelTheta}. Therefore, we omit the details. 
% \cor Similar to the discussion below \eqref{eq_rewriteE}, the Fourier transform of $\Sele^{\infinf}$ satisfies $\wh \Sele^{\infinf}(\bp)=\oo(|\bp|^2)$. Then, $(1-  M^0_{\infinf}-\Sele^{\infinf})^{-1}_{0x}$ can be written into a similar form as $I_{0,W,\infty}(x)$ in \eqref{eq:I0W1}, but with $f_{0,W,\infty}(\bp)$ replaced by a function $[(f_{0,W,\infty}(\bp))^{-1}-\wh \Sele^{\infinf}(\bp)]^{-1}$ renormalized by the $\self$. \nc
%Finally, applying the integration by parts argument in the proof of \eqref{eq:I0W} concludes \eqref{redundant again}. We omit the details. 
%\end{proof}
\subsection{Proof of \Cref{lem redundantagain}}\label{sec:pf_redundantagain}
%\begin{proof}
From the properties \eqref{self_sym}, \eqref{4th_property0}, and \eqref{3rd_property0}, we see that \eqref{self_sym1}--\eqref{self_zero} hold with $\psi=\sizeself(\Sele_i)/\blam$ and $\psi_0=\sizeself(\Sele_i)\left(  \eta  + t_{Th}^{-1}  \right)$ for $i=0,1,\ldots,k$. Hence, by \eqref{thetaxy_renorm}, we have that 
\be\label{eq:ThetaE:proj} (\zthn^{\LK} \cal E_{\LK})_{[x][y]} \prec \psi(\Sele)\qq{[x]-[y]}^{-d},\quad \text{for}\  \ \Sele\in \{\Sele_0,\Sele_1,\ldots,\Sele_k\}.\ee
By property (iii) of \Cref{collection elements}, we have $\zthn \Sele = (\zthn^{\LK} \cal E_{\LK})\otimes \mathbf E$ for $\WO$, which implies that
\be\label{eq:ThetaE:proj2}  \zthn\Sele_1\zthn \cdots \zthn \Sele_k \zthn = \left(\zthn^{\LK}\Sele_1^{\LK}\zthn^{\LK} \cdots \zthn^{\LK}\Sele_k^{\LK}\zthn^{\LK} \right)\otimes \mathbf E.\ee
On the other hand, using the identity $S(0)^2=S(0)$, we get that for $\BA$ and $\AO$,
% $$ (\zthn S\Sele_i S)_{xy} \prec {\sizeself(\Sele_i)}{\langle x-y \rangle^{-d}}.$$
% Furthermore, for the three models, we can write that 
$$\zthn\Sele_1\zthn\Sele_2\zthn \cdots \zthn \Sele_k \zthn = \zthn [S(0)\Sele_1 S(0)]\zthn [S(0)\Sele_2 S(0)]\zthn \cdots \zthn  [S(0)\Sele_k S(0)] \zthn ,$$
which also gives \eqref{eq:ThetaE:proj2}. 
With \eqref{thetaxy}, \eqref{eq:ThetaE:proj} and \eqref{eq:ThetaE:proj2}, we readily conclude the second estimate in \eqref{BRB}. 
To show the first estimate in \eqref{BRB}, we write that 
$$ \zthn(\Sele)=\left[(1-\zthn^{\LK} \cal E_{\LK})^{-1}\zthn^{\LK}\right]\otimes \mathbf E.$$
Notice that \eqref{eq:ThetaE:proj} implies
\be\label{eq:ThetaE:linf}\|\zthn^{\LK} \cal E_{\LK}\|_{\ell^\infty(\wt\Z_n^d)\to \ell^\infty(\wt\Z_n^d)}\prec \psi(\Sele) \le W^{-\fd}. \ee
Then, we use the following expansion of $\zthn(\Sele)$ for any fixed $K\in \N$:
\be\label{eq:TaylorSele}\zthn(\Sele)= \frac{1}{1-(\zthn\Sele)^{K+1}}
\sum_{k=0}^K (\zthn\Sele)^k \zthn.
\ee
Using \eqref{thetaxy}, \eqref{eq:ThetaE:proj}, and \eqref{eq:ThetaE:linf}, we get from this expansion that 
$$ \zthn_{xy}(\Sele) = \sum_{k=0}^K [(\zthn\Sele)^k \zthn]_{xy} + \OO_\prec (W^{-K\fd}) \prec B_{xy},$$
as long as we take $K$ sufficiently large so that $W^{-K\fd} \le \blam/(W^2L^{d-2})$. 

Finally, recall that when $\eta\ge t_{Th}^{-1}$, the bounds \eqref{thetaxy} and \eqref{thetaxy_renorm} still hold if we replaced $\zthn$ with $\thn$. Then, using a similar argument as above, we conclude \eqref{BRB} for $\thn(\Sele_0)$ and the labeled diffusive edge in \eqref{eq:labeldiff}. 
% \be\label{eq:labeldiff}
% \thn^{s_1}\Sele_1\thn^{s_2}\Sele_2\thn^{s_3} \cdots \thn^{s_k} \Sele_k \thn^{s_{k+1}},\quad (s_1,\ldots, s_{k+1})\in \{\emptyset,\circ\}^{k+1},\ee where $\thn^{\emptyset}$ and $\thn^{\circ}$ represent $\thn$ and $\zthn$, respectively. 
% \be\label{eq:TyalorSele2}\zthn_{xy}(\Sele)= 
% \sum_{k=0}^K (\zthn\Sele)^k \zthn  + (\zthn\Sele)^{K+1}\zthn(\Sele) =W^{-d}\sum_{k=0}^K [(\zthn^{\LK}\Sele_{\LK})^k\zthn^{\LK}]_{[x][y]} + \OO([W\lambda^{-2}\sizeself(\Sele)]^{K+1})
% \ee
% (note that $\sizeself(\Sele)\le \lambda^{-4} W^{-(d-2)}\le W^{-d/5}$ under \eqref{eq:cond-lambda2}).
% Using the above two equations, we readily conclude the second estimate in \eqref{BRB}. The first estimate in \eqref{BRB} then follows from the Taylor expansion $\zthn(\Sele)=\sum_{k=0}^K (\zthn\Sele)^k \zthn + \OO([W\lambda^{-2}\sizeself(\Sele)]^{K+1})$ for any fixed $K\in \N$ (note that $\sizeself(\Sele)\le \lambda^{-4} W^{-(d-2)}\le W^{-d/5}$ under \eqref{eq:cond-lambda2}). 
% This is an immediate consequence of \Cref{lem:label_diffusive}. ... Applying this estimate to the Taylor expansion of \eqref{eq_rewriteE}, we readily obtain the following key estimate: 
% \begin{equation}\label{eq:bdd_thetaSele}
%     \zthn_{xy}(\Sele)\prec B_{xy}  ,\quad .
% \end{equation}
%The estimate \eqref{BRB} was proved in Lemma 6.2 of \cite{BandI} with $\eta\ge W^2/L^2$ and $\Theta^{\circ}$ replaced by $\Theta$. But, as we remarked below \eqref{Japanesebracket}, the same proof also works for our setting by using $|m(z)|\le 1$ and the spectral gap of $S^\circ$ at $1$. So we omit the details. 
%\end{proof}

\subsection{Proof of \Cref{lem:thn-zthn}}\label{subsect:pfthn-zthn}
%\begin{proof}[Proof of \Cref{lem:thn-zthn}]
Suppose we have replaced a $\zthn_{\al\beta}$ edge with a $\thn_{\al\beta}$ edge in a graph $\cal G$ of $\Sele$ (the proof is the same if we have replaced $\thn_{\al\beta}$ with \smash{$\zthn_{\al\beta}$}). Denoting the new graph by $\cal G'$, we can write $\Sele^{new}=\Sele-\cal G+ \cal G'$. It is easy to see that (i) and (iii) of \Cref{collection elements} still hold for $\Sele^{new}$. Both $\thn_{xy}$ and \smash{$\zthn_{xy}$} satisfy \eqref{self_sym}, so the new term $\Sele^{new}$ still satisfies \eqref{self_sym} (recall the explanation below \eqref{eq:sizeselfG}). Furthermore, by \Cref{dG-bd}, $\Sele^{new}$ still satisfies \eqref{4th_property0} with $\sizeself(\Sele^{new})=\sizeself(\Sele)$. It remains to prove the property \eqref{3rd_property0} for $\Sele^{new}$, which can be reduced to showing 
\be\label{eq:G-G'}
\sum_{x} \left(\cal G_{xy}-\cal G'_{xy}\right) \prec  \sizeself(\Sele) \eta \quad \forall \eta\ge t_{Th}^{-1},\ y\in [0].
\ee

First, we assume that $\cal G$ contains no labeled diffusive edge. Let $\cal G_{xy;\al\beta}$ be the graph obtained by setting $\al,\beta$ to be external vertices and picking out the \smash{$\zthn_{\al\beta}$} edge, and let \smash{$\wt{\cal G}_{xy}$} be a graph obtained by setting $\al,\beta$ to be internal vertices again in $\cal G_{xy;\al\beta}$. By (i) of \cref{collection elements},  \smash{$\wt{\cal G}_{xy}$} is still a doubly connected graph since \smash{$\zthn_{\al\beta}$} is redundant. Recalling \eqref{eq:diffzthn-thn1} or \eqref{eq:diffzthn-thn2}, we denote 
$$\al_\eta:=\zthn_{\al\beta}(z)-\thn_{\al\beta}(z) \lesssim (N\eta)^{-1}.$$
Then, applying \Cref{dG-bd} to $|\wt{\cal G}_{\al\beta}|$, we get that
\begin{align}\label{eq:E-E'}
\cal G_{xy}-\cal G'_{xy} &=\al_\eta \wt{\cal G}_{xy} \prec \al_\eta \frac{W^{2d-4}\size(\wt{\cal G}_{xy})}{\qq{x-y}^{2d-4}}  \le \frac{\sizeself(\Sele)W^d}{\blam^2 N\eta} \frac{W^{d-4}}{\qq{x-y}^{2d-4}} \le  \sizeself(\Sele)\eta\cdot \frac{W^{d-4}}{\qq{x-y}^{2d-4}},
\end{align}
where we used $\size(\wt{\cal G}_{\al\beta})=\size({\cal G}_{xy})/\heta$ and $\sizeself(\Sele)\ge \blam W^d\size({\cal G}_{xy})$ in the second step. Summing the above equation over $x$ conclude \eqref{eq:G-G'}, 
% \begin{align*}
% \sum_x\left(\cal G_{xy}-\cal G'_{xy}\right) \prec %\sizeself(\Sele) \frac{W^d }{\blam^2 N\eta}\le 
% \sizeself(\Sele)\eta.
% \end{align*}
%where in the first step we used the block translation symmetry of $\cal G$ and $\cal G'$ due to \eqref{self_sym}, in the second step we used \eqref{eq:diffzthn-thn1} or \eqref{eq:diffzthn-thn2}, in the third step we applied \Cref{dG-bd} to $|\wt{\cal G}_{\al\beta}|$, and in the fifth step we used $\size(\wt{\cal G}_{\al\beta})=\size({\cal G}_{xy})/\heta$. These two equations conclude \eqref{eq:G-G'} and 
which also shows that $\Sele^{new}$ is indeed a $\self$. In addition, by \eqref{eq:E-E'}, $\Sele-\Sele^{new}$ satisfies that
\be\label{eq:induc_E}
(\Sele-\Sele^{new})_{xy} \prec \frac{\sizeself(\Sele)W^d}{\blam^2 N\eta} \cdot \frac{W^{d-4}}{\qq{x-y}^{2d-4}}.
\ee

Second, suppose $\Sele$ contains labeled diffusive edges formed from $\selfs$, say $\Sele_1,\ldots, \Sele_k$. As an induction hypothesis, suppose we have proved that replacing an arbitrary  \smash{$\zthn$} edge in $\Sele_i$ by a $\thn$ edge yields an expression in $\Sele_i^{new}$ that is also a $\self$ and satisfies that (owing to \eqref{eq:induc_E})
\be\label{eq:induc_EEE}
(\Sele_i)_{xy}-(\Sele_i^{new})_{xy} \prec \frac{\sizeself(\Sele_i)W^d}{\blam^2 N\eta}\cdot \frac{W^{d-4}}{\qq{x-y}^{2d-4}},\quad \forall i\in \qqq{k}, \ x,y \in \Z_L^d. 
\ee
If we have modified a $\zthn_{\al\beta}$ edge that does not belong to any labeled diffusive edge, then the proof of \eqref{eq:G-G'} and \eqref{eq:induc_E} is exactly the same as above. Otherwise, assume that \smash{$\zthn_{\al\beta}$} belongs to an $(s,\Sele_i)$-labeled diffusive edge for $s\in \{\emptyset,\circ\}$ and $i\in \qqq{k}$, or an $[s_1,\Sele_{i_1},s_2,\Sele_{i_2}, \ldots, s_\ell,\Sele_{i_\ell},s_{\ell+1}]$-labeled diffusive edge for $(s_1,\ldots, s_{\ell+1})\in \{\emptyset,\circ\}^{\ell+1}$ and $  (i_1,\ldots, i_{\ell})\in \qqq{k}^{\ell}$, denoted by $\Theta_{ab}$. Then, using \eqref{eq:induc_EEE}, \Cref{lem:label_diffusive}, and the arguments in the proof of \Cref{lem redundantagain}, we can show that: for the $(s,\Sele_i)$-labeled diffusive edge $\Theta_{ab}$, the new labeled diffusive edge $\Theta_{ab}'$ satisfies  
\be\nonumber
\max_{a,b}\left|\Theta_{ab}-\Theta_{ab}'\right| \prec (N\eta)^{-1}; 
\ee
for the $[s_1,\Sele_{i_1},s_2,\Sele_{i_2}, \ldots, s_\ell,\Sele_{i_\ell},s_{\ell+1}]$-labeled diffusive edge $\Theta_{ab}$, the new labeled diffusive edge $\Theta_{ab}'$ satisfies  
\be\nonumber
\max_{a,b}\left|\Theta_{ab}-\Theta_{ab}'\right| \prec \frac{1}{N\eta}\prod_{j=1}^\ell \sizeself(\Sele_{i_j})\, . 
\ee
Since the proof of these two estimates is straightforward, we omit the details. Then, with the same argument as above, we obtain that $\cal G-\cal G'$ satisfies \eqref{eq:E-E'}, which further implies the estimate \eqref{eq:G-G'}. By induction, we conclude the proof of this lemma.  
%\end{proof} 

\section{Proof of \Cref{lem G<T}}\label{appd:MDE}

For the Wegner orbital model, the estimate \eqref{diagG largedev} has been proved in Lemma 5.3 of \cite{delocal}. 
It remains to consider the block Anderson and Anderson orbital models. We will use the simplified notation of generalized matrix entries: given a matrix $\cal A$ and any vectors $\mathbf u,\bv$, we denote $ \cal A_{\mathbf u\mathbf v}:= \bu^* \cal A\bv$ and $\cal A_{x\mathbf v}:= \mathbf e_x^* \cal A\bv$, where $\mathbf e_x$ is the standard basis unit vector along the $x$-th direction. 

We will use an argument based on the matrix Dyson equation, as developed in \cite{He2018}. We define
$$ \Pi(G):=I+zG+\cal S(G) G- AG,$$
where $\cal S(G)$ is a diagonal matrix defined as $\cal S(G)_{xy}:=\delta_{xy}\sum_{\al}S_{x\al}G_{\al\al}$ and we abbreviated $A= \lambda\Psi$ for $\Psi\in \{\Psi^{\BA},\Psi^{\AO}\}$. For any $p\in \N$, we bound the moments
$\E |\Pi(G)_{\bv \bw}|^{2p},$ %\quad \E |\langle B\Pi(G)\rangle|^{2p},$$
where $\bv,\bw\in \C^N$ are deterministic unit vectors. %$B$ is a deterministic matrix with $\|B\|=\OO(1)$, and we have used the simplified notation $ \langle \cdot \rangle :=  N^{-1}\tr (\cdot).$ 
With $ I+zG-AG=VG$ and Gaussian integration by parts, we obtain that 
\begin{align*} \E |\Pi(G)_{\bv \bw}|^{2p}&= \E\left[\bv^* VG \bw\Pi(G)_{\bv \bw}^{p-1} \overline \Pi(G)_{\bv \bw}^{p}\right]+ \E  \left[\bv^*\cal S(G)G \bw\Pi(G)_{\bv \bw}^{p-1} \overline \Pi(G)_{\bv \bw}^{p}\right]\\
    &=\sum_{x,\al}\E\left[\overline v(x)V_{x\al}G_{\al \bw} \Pi(G)_{\bv \bw}^{p-1} \overline \Pi(G)_{\bv \bw}^{p}\right] + \E  \left[\bv^*\cal S(G)G \bw\Pi(G)_{\bv \bw}^{p-1} \overline \Pi(G)_{\bv \bw}^{p}\right]\\
    %&= - \sum_{x,\al}\left[\overline v(x) s_{x\al}G_{\al \al} G_{x\bw} \Pi(G)_{\bv \bw}^{p-1} \overline \Pi(G)_{\bv \bw}^{p}\right]+\sum_{x,\al}\left[\overline v(x)G_{\al \bw} \partial_{h_{\al x}}\left(\Pi(G)_{\bv \bw}^{p-1} \overline \Pi(G)_{\bv \bw}^{p}\right)\right]  + \E  \left[\bv^*\cal S(G)G \bw\Pi(G)_{\bv \bw}^{p-1} \overline \Pi(G)_{\bv \bw}^{p}\right]\\
    &= \sum_{x,\al}\E\left[\overline v(x)S_{x\al}G_{\al \bw} \partial_{h_{\al x}}\left(\Pi(G)_{\bv \bw}^{p-1} \overline \Pi(G)_{\bv \bw}^{p}\right)\right].
\end{align*}
To simplify notations, we drop the complex conjugates of $\Pi(G)$, which play no role in the following proof, and estimate 
\begin{align}\label{eq:simple_cumu}
    &\sum_{x,\al}\E\left[\overline v(x)S_{x\al}G_{\al \bw} \partial_{h_{\al x}}\left(\Pi(G)_{\bv \bw}^{2p-1}  \right)\right]=(2p-1)\sum_{x,\al} \E\left[\overline v(x)S_{x\al}G_{\al \bw} \left(\partial_{h_{\al x}}\Pi(G)_{\bv \bw}\right)\Pi(G)_{\bv \bw}^{2p-2}\right] .
 \end{align}
Direct calculations yield that 
 $$\partial_{h_{\al x}}\Pi(G)_{\bv \bw}=-\Pi(G)_{\bv\al} G_{x\bw} + \bar v(\al) G_{x\bw} - \sum_y \overline{v}(y) \sum_\beta S_{y\beta}G_{\beta \al}G_{x\beta} G_{y\bw}.$$
Thus, to control \eqref{eq:simple_cumu}, we need to bound
    \begin{align}
   %&\sum_{x,\al} \E\left[\overline v(x)s_{x\al}G_{\al \bw} \left(\partial_{h_{\al x}}\Pi(G)_{\bv \bw}\right)\Pi(G)_{\bv \bw}^{2p-2}\right]\nonumber\\
   %=&- 
   \sum_{x,\al}\E \bigg[\overline v(x)S_{x\al} G_{\al \bw} \bigg( \Pi(G)_{\bv\al} G_{x\bw} -\bar v(\al) G_{x\bw} + \sum_y \overline{v}(y) \sum_\beta S_{y\beta}G_{\beta \al}G_{x\beta} G_{y\bw}\bigg)  \Pi(G)_{\bv \bw}^{2p-2}\bigg].\label{eq:MDE_three}
\end{align}

For the proof of \Cref{lem G<T}, it suffices to take $\bv=\mathbf e_{x_0}$ and $\bw=\mathbf e_{y_0}$ for some $x_0,y_0\in \Z_L^d$. We assume that $\|\Pi(G)\|_{\max}\prec \Psi$ for some deterministic parameter $\Psi>0$ (noting that a priori, this estimate holds for $\Psi=\eta^{-2}$). Then, using \eqref{initialGT} and Cauchy-Schwarz inequality, we can bound the three terms in \eqref{eq:MDE_three} as 
\begin{align*}
\sum_{\al}\E \left|S_{x_0\al} G_{\al y_0} G_{x_0y_0}  \Pi(G)_{x_0\al}  \Pi(G)_{x_0y_0}^{2p-2}\right| & \prec \Psi \sum_{\al} \E \left|S_{x_0\al} G_{\al y_0}\right|\cdot \left| \Pi(G)_{x_0y_0}^{2p-2}\right| 
 %&\prec \Psi \sum_{\al} \E \bigg(\sum_{\al}s_{x_0\al} \left|G_{\al y_0}\right|^2\bigg)^{1/2}\cdot \left| \Pi(G)_{x_0y_0}^{2p-2}\right| \\
 \prec \Psi\Phi \cdot \E \left| \Pi(G)_{x_0y_0}\right|^{2p-2},\\
\E \left|S_{x_0x_0} (G_{x_0y_0})^2 \Pi(G)_{x_0y_0}^{2p-2}\right| & \prec W^{-d}  \E \left| \Pi(G)_{x_0y_0}\right|^{2p-2},\\
\sum_{\al,\beta}\E \left|S_{x_0\al} S_{x_0\beta} G_{\al y_0}  G_{\beta \al}G_{x_0\beta} G_{x_0y_0}  \Pi(G)_{x_0y_0}^{2p-2}\right| &\prec \Phi^2  \sum_{\al}\E \left|S_{x_0\al}  G_{\al y_0}  \Pi(G)_{x_0y_0}^{2p-2}\right| \prec \Phi^3 \E \left| \Pi(G)_{x_0y_0}\right|^{2p-2}.
\end{align*}
Applying these estimates to \eqref{eq:MDE_three}, we obtain that 
$$ \E |\Pi(G)_{x_0y_0}|^{2p} \prec \left(\Psi \Phi + W^{-d} +\Phi^3\right)\E \left| \Pi(G)_{x_0y_0}\right|^{2p-2}.$$
Using H{\"o}lder's and Young's inequalities, we obtain from the above estimate that
$$ \E |\Pi(G)_{x_0y_0}|^{2p} \prec \left(\Psi \Phi + W^{-d} +\Phi^3\right)^p $$ 
for any fixed $p\in\N$. Then, applying Markov's inequality and taking a union bound over all $x_0,y_0\in \Z_L^d$, we get a self-improving estimate:
$$\|\Pi(G)\|_{\max}\prec \Psi \ \Rightarrow \ \|\Pi(G)\|_{\max} \prec \left(\Psi \Phi\right)^{1/2} + W^{-d/2} +\Phi^{3/2}.$$
Starting from $\Psi=\eta^{-2}$, iterating this estimate for $\OO(1)$ many times, we obtain that 
\be\label{eq:max_Pi}
\|\Pi(G)\|_{\max} \prec \Phi.
\ee

Next, with \eqref{def_G0} and the definition of $\Pi(G)$, we write that 
\be\label{eq:G-M} %M(z)=R_{m(z)},\quad 
G - M= - M\left[ \Pi(G) -(\cal S(G)-m)G\right] . 
\ee
%By \eqref{Mbound}, we have that $\max_{x}\sum_y |M_{xy}|\lesssim 1$. 
With the simple fact $\|M\|_{\ell^\infty(\Z_L^d) \to \ell^\infty(\Z_L^d)}=\max_{x}\sum_y |M_{xy}|\lesssim 1$ due to \eqref{Mbound} and \eqref{Mbound_AO},
applying \eqref{initialGT} and \eqref{eq:max_Pi} to \eqref{eq:G-M}, we obtain that
\begin{align}
    G_{xy} - M_{xy}&= \OO_\prec(\Phi) + [M  (\cal S(G)-m)(G-M)]_{xy}+  [M(\cal S(G)-m)M]_{xy} \nonumber\\
    &= \OO_\prec(\Phi + W^{-\delta_0} \|\cal S(G)-m\|_{\max})+  [M(\cal S(G)-m)M]_{xy}, \quad \forall x,y\in\Z_L^d.\label{eq:MDE}
\end{align}
%where in the second step we used the first condition in \eqref{initialGT}. 
Applying $\cal S$ to the above equation, we obtain that
\begin{align*}
    \cal S(G)_{xx}-m =\sum_\al S_{x\al}(G-M)_{\al\al} =\OO_\prec(\Phi + W^{-\delta_0} \|\cal S(G)-m\|_{\max})+  \sum_{\al,\beta}S_{x\al}(M_{\al \beta})^2(\cal S(G)-m)_{\beta\beta}.
\end{align*} 
This shows that the vector ${\Lambda}:=(\cal S(G)_{xx}-m)_{x\in \Z_L^d}\in \C^N$ satisfies
$$ (1-SM^+){\Lambda} =\OO_\prec(\Phi + W^{-\delta_0} \|\Lambda\|_{\max}).$$
Using $\|(1-SM^+)^{-1}\|_{\ell^\infty(\Z_L^d) \to \ell^\infty(\Z_L^d)}\lesssim 1$ (because $(1-SM^+)^{-1}=1+S^+M^+$ and $\|S^+M^+\|_{\ell^\infty \to \ell^\infty}\lesssim 1$ by \eqref{Mbound}, \eqref{Mbound_AO}, and \eqref{S+xy}), we obtain from the above equation that 
$$ \|\Lambda\|_{\max} \prec \Phi + W^{-\delta_0} \|\Lambda\|_{\max} \ \Rightarrow \ \|\Lambda\|_{\max} \prec \Phi.$$
Plugging it back into \eqref{eq:MDE}, we conclude \eqref{diagG largedev}.

\section{Examples of self-energy and vertex renormalizations}\label{sec:examples}

\subsection{Example of self-energy renormalization}

To help the reader understand the sum zero property \eqref{4th_property0}, we provide examples of $\selfs$ for the block Anderson model, whose $T$-expansion takes the most intricate form among the three models. 
As discussed in \cite[Section 4]{BandI}, the fourth order $\self$ (corresponding to the $T$-expansion up to order $2d$ under the notations of \Cref{defn genuni_BA}) vanishes for random band matrices in the Gaussian case with zero fourth cumulant, while the next order---the sixth order $\self$---is too complicated to be calculated explicitly. In fact, this situation also occurs for our RBSO in \Cref{def: BM}. 
To provide an explicit example that illustrates the sum zero property of $\selfs$, we consider a more general distribution for the entries of $V$ that has a \emph{non-zero fourth cumulant} and a \emph{symmetric distribution} (ensuring that the third cumulant vanishes). Define the following notations:
$$
 % t_0:=\sum_y |M_{xy}|^2,\quad t_+:=\sum_y M_{xy}M_{yx},\quad 
 \kappa_4:=W^{2d}\E \left|h_{11}\right|^4 -2,\quad S^{(4)}_{xy}:=\kappa_4\cdot (S_{xy})^2 = \kappa_4 W^{-d}S_{xy}.
$$
We will see that the ``fourth order" $\self$ $\Sele$ (with $\psi(\Sele)=\OO(\blam/W^d)$) for the $T$-expansion \eqref{mlevelTgdef_BA} of the block Anderson model is proportional to $\kappa_4$.  
This setting extends beyond that of \Cref{def: BM}, but as mentioned in \Cref{rmk:key}, we expect all our results to extend universally to other distributions satisfying certain tail conditions. Thus, this example will both illustrate the ``magic" cancellation in the $\self$ and serve as a useful example to help the reader understand the structures of $\selfs$. We also remark that since the example is not used in the main proof, we will not provide the full details of the derivation. 

%For simplicity, we assume the distribution of the matrix entries are symmetric, so that the third cumulants of the matrix entries vanish. 

First, we can replace the Gaussian integration by parts argument in the proof of Lemma \ref{2ndExp0} with a more general cumulant expansion formula. For example, we can use the following form of complex cumulant expansion formula stated in \cite[Lemma 7.1]{He_Mesoscopic}: 
%	\begin{lemma}(Complex cumulant expansion) \label{lem:complex_cumu}
let $h$ be a complex random variable for which all moments exist. The $(p,q)$-cumulant of $h$ is defined as
$$
\mathcal{C}^{(p,q)}(h)\deq (-\ii)^{p+q} \cdot \left(\frac{\partial^{p+q}}{\partial {s^p} \partial {t^q}} \log \E e^{\mathrm{i}sh+\mathrm{i}t\bar{h}}\right) \bigg{|}_{s=t=0}\,.
$$
Let $f:\mathbb C^2 \to \C$ be a smooth function, and we denote its holomorphic  derivatives by
$$f^{(p,q)}(z_1,z_2)\deq \frac{\partial^{p+q}}{\partial z_1^p \partial z_2^q} f(z_1,z_2)\,.$$ 
Then, for any fixed $l \in \N$, we have
\begin{equation} \label{5.16}
\E f(h,\bar{h})\bar{h}=\sum\limits_{p+q=0}^l \frac{1}{p!\,q!}\mathcal{C}^{(p,q+1)}(h)\E f^{(p,q)}(h,\bar{h}) + R_{l+1}\,,
\end{equation}
given that all integrals in (\ref{5.16}) exist. Here, $R_{l+1}$ is the remainder term depending on $f$ and $h$, which satisfies an error bound that is not important for our derivations here. 
Then, applying \eqref{5.16} to the derivation of \eqref{2ndExp0}, we obtain the following extension of \eqref{eq:2nd0}:
\begin{align}  
	 \czT_{\fa,\fb_1\fb_2}&=  \sum_x (\zthn M^0 S)_{\fa x }   \left(M _{x \fb_1 }\bar M_{ x \fb_2} +\Gc _{x \fb_1 }\bar M_{ x \fb_2}
	+M_{x \fb_1 }\Gc^{-} _{ x \fb_2}\right) \nonumber
	\\ 
	&  +\sum_{x,y,\beta} \zthn_{\fa x}  M_{xy} S_{y \beta}\left( \Gc^-_{xy} \Gc _{\beta \fb_1}\Gc^-_{\beta \fb_2} +  \Gc_{\beta\beta} \Gc _{y \fb_1} \Gc^- _{x\fb_2} \right)   \label{eq:self4-1}\\
	& - \frac{1}{2}\sum_{x,y,\beta} \zthn_{\fa x} M_{xy} S^{(4)}_{ y \beta}\partial_{h_{\beta y}}^2 \partial_{h_{ y\beta}} \left(  G _{\beta \fb_1}  \Gc^-_{x\fb_2} \right)+ \cal R_{\fa,\fb_1\fb_2}+ {\cal A}_{\fa,\fb_1\fb_2}+ \cal Q_{\fa,\fb_1\fb_2} . \label{eq:self4-2}
\end{align}
% {\cob \begin{align}  
% 	 \czT_{x,yy'}&=   \sum_\al (\zTheta \zS)_{x\al} M _{\al y}M^*_{ \al y'} 
% 	+ \sum_{ \al} (\zTheta \zS) _{x\al} \left(M^*_{ \al y'}\Gc _{\al y}
% 	+M_{\al y}\Gc^{*} _{ \al y'}\right) \nonumber
% 	\\ 
% 	&\quad  +\sum_{z,\al,\beta} \zTheta_{xz} M_{z\al} S_{\al \beta}\left(  G _{\beta y}G^*_{\beta y'} \Gc^*_{z\al} +  \Gc_{\beta\beta} G _{\al y} \Gc^* _{zy'} \right)  \label{eq:self4-1}\\
% 	&\quad - \frac{1}{2}\sum_{z,\al,\beta} \zTheta_{xz} M_{z\al} S^{(4)}_{\al \beta}\partial_{h_{\beta\al}}^2 \partial_{h_{\al\beta}} \left(  G _{\beta y}  \Gc^*_{zy'} \right)+  {\cal A}+ Q , \label{eq:self4-2}
% \end{align}}
Here, $\cal R_{\fa,\fb_1\fb_2}$ and $\cal Q_{\fa,\fb_1\fb_2}$ represent $\{\fb_1,\fb_2\}$-recollision graphs and $Q$-graphs, respectively, and 
$\cal A_{\fa,\fb_1\fb_2}$ is a sum of higher order graphs that come from five or higher order cumulant terms. Their exact forms are irrelevant for the $\self$ we will derive. 

% and for any $\tau>0$, we have the estimate
% 		$$
% 			\begin{aligned}
% 			R_{l+1}=&\, \OO(1)\cdot \E \big|h^{l+2}\mathbf{1}_{\{|h|>N^{\tau-1/2}\}}\big|\cdot \max\limits_{p+q=l+1}\big\| f^{(p,q)}(z,\bar{z})\big\|_{\infty} \\
% 			+&\,\OO(1) \cdot \E |h|^{l+2} \cdot \max\limits_{p+q=l+1}\big\| f^{(p,q)}(z,\bar{z})\cdot \mathbf{1}_{\{|z|\le N^{\tau-1/2}\}}\big\|_{\infty}\,.
% 			\end{aligned}
% 		$$

%\begin{align*}  
%	 \Gc _{xy} \Gc^{*}_{xy'} 
%	&=   \sum_\beta \Theta _{x\beta } M _{\beta y}M^*_{ \beta y'} 
%	+ \sum_{ \beta} \Theta _{x\beta } \left(M^*_{ \beta y'}\Gc _{\beta y}
%	+M_{\beta y}\Gc^{*} _{ \beta y'}\right)
%	\\\nonumber
%	&\quad  +\sum_{z,\al,\beta}\left(I+\Theta\right)_{xz} M_{z\al} S_{\al \beta}\left(  G _{\beta y}G^*_{\beta y'} \Gc^*_{z\al} +  \Gc_{\beta\beta} G _{\al y} \Gc^* _{xy'} \right)  + Q \\
%	&=
%\end{align*}

It remains to expand the two sums in \eqref{eq:self4-1} and \eqref{eq:self4-2}. We first consider the term in \eqref{eq:self4-2}:
\begin{align}  
&\quad \, - \partial_{h_{ \beta y}}^2  \partial_{h_{  y\beta}} \big( G _{\beta \fb_1}\Gc^-_{x \fb_2}\big) = 
 \partial_{h_{ \beta y}} \partial_{h_{  y\beta}}  \left(
  G _{\beta \beta}G_{ y \fb_1}\Gc^-_{x \fb_2}
  +
  G _{\beta \fb_1}G^-_{x  y }G^-_{\beta \fb_2}
  \right)\nonumber\\
 & =- \partial_{h_{ \beta y}} \left(G _{\beta y}G_{\beta \beta}G_{ y \fb_1}\Gc^-_{x \fb_2}
+ G _{\beta \beta}G_{yy}G_{\beta \fb_1}\Gc^-_{x \fb_2}
+ G _{\beta \beta}G_{y \fb_1}G^-_{x \beta}G^-_{y \fb_2}\right) 
  \nonumber \\\nonumber   &
  \quad\, - \partial_{h_{ \beta y}} \left(  G _{\beta  y}G_{\beta \fb_1}G^-_{x y }G^-_{\beta \fb_2}
   +  G _{\beta \fb_1}G^-_{x\beta}G^-_{ y y}G^-_{\beta \fb_2}
     +  G _{\beta \fb_1}G^-_{x y }G^-_{\beta\beta}G^-_{ y \fb_2} \right) \nonumber\\
    &=2m^3 \Gc_{y \fb_1}\Gc^-_{x\fb_2} + 2(m^2 +\overline m^2) \overline{M}_{xy}\Gc_{\beta \fb_1}\Gc^-_{\beta \fb_2} + 2|m|^2 \overline{M}_{xy}\Gc_{y \fb_1}\Gc^-_{y\fb_2}  + \text{(others)}.\label{eq:doublepartial}
   \end{align} 
Hereafter, ``(others)" denotes certain $\{\fb_1,\fb_2\}$-recollision, higher order, or $Q$-graphs, whose exact expressions are irrelevant to the derivation of the $\self$ and will change from one equation to another.  
Applying \eqref{5.16} to the proof of \Cref{lanlw}, we obtain the following extension of \eqref{eq:BE} with fourth-order cumulants: 
%to the first term in \eqref{eq:sum_zero0} (but with extra terms corresponding to higher-order cumulants), we get that
 \begin{align}\label{eq:BE_fourth}
    \Gc_{xy}\Gamma=\sum_{\al,\beta}  M_{x\al} \left(
  S_{\al \beta}\Gc_{\beta\beta} G _{\al y} \bigGamma
  -   S_{\al \beta}G _{\beta y} 
  \partial_{h_{ \beta\al}} \bigGamma - \frac{1}{2} S_{\al\beta}^{(4)} \partial_{h_{\beta\al}}^2 \partial_{h_{\al\beta}}(G_{\beta y}\Gamma) \right) +\text{(others)}.
\end{align}
Then, we apply the expansion \eqref{eq:BE_fourth} to the three terms on the RHS of \eqref{eq:doublepartial} and get that 
%(but with extra terms corresponding to higher-order cumulants) and get that 
\begin{align*}
    \Gc_{y \fb_1}\Gc^-_{y \fb_2}&=\sum_{\gamma,\gamma'}|M_{y\gamma}|^2 S_{\gamma \gamma'}\Gc_{\gamma' \fb_1}\Gc^-_{\gamma' \fb_2}+\text{(others)}    =\sum_{\gamma}M^0_{y\gamma}\cT_{\gamma,\fb_1\fb_2}+\text{(others)},\\
    \Gc_{\beta \fb_1}\Gc^-_{\beta \fb_2}&=\sum_{\gamma}M^0_{\beta\gamma}  \cT_{\gamma,\fb_1\fb_2}+\text{(others)},\\
    \Gc_{y \fb_1}\Gc^-_{x\fb_2} &= \sum_{\gamma,\gamma'} M_{y\gamma}\overline M_{x\gamma} S_{\gamma\gamma'} \Gc_{\gamma' \fb_1 }\Gc^-_{\gamma' \fb_2}+\text{(others)} = \sum_{\gamma} M_{y\gamma}\overline M_{x\gamma}\cT_{\gamma,\fb_1 \fb_2}+\text{(others)},
\end{align*}
where we used the definition of $\cT$ in \eqref{general_Tc}. 
% {\cor \begin{align*}
%     \Gc_{\al \fb_1}\Gc^*_{\al \fb_2}&=\sum_{\gamma,\gamma'}|M_{\al\gamma}|^2 S_{\gamma \gamma'}\Gc_{\gamma' \fb_1}\Gc^-_{\gamma' \fb_2}+\text{(recollision, higher order, $Q$)}\\
%     &=\sum_{\gamma,\gamma'}|M_{\al\gamma}|^2 S_{\gamma \gamma'} \zT_{\gamma',\fb_1\fb_2}+\text{(recollision, higher order, $Q$, or free)},\\
%     \Gc_{\beta \fb_1}\Gc^*_{\beta \fb_2}&=\sum_{\gamma,\gamma'}|M_{\beta\gamma}|^2 S_{\gamma \gamma'} \zT_{\gamma',\fb_1\fb_2}+\text{(recollision, higher order, $Q$, or free)},\\
%     \Gc_{\alpha \fb_1}\Gc^*_{z\fb_2} &= \sum_{\gamma,\gamma'} M_{\al\gamma}\overline M_{z\gamma} S_{\gamma\gamma'} \Gc_{\gamma' \fb_1 }\Gc^-_{\gamma' \fb_2}+\text{(recollision, higher order, $Q$)}\\
%     &= \sum_{\gamma,\gamma'} M_{\al\gamma}\overline M_{z\gamma} S_{\gamma\gamma'} \zT_{\gamma',\fb_1 \fb_2}+\text{(recollision, higher order, $Q$, or free)}.
% \end{align*}}
In sum, we see that the expansions of \eqref{eq:self4-2} lead to the following expression that contains the $\selfs$ we are interested in: 
\begin{align}
     &\sum_{x,y,\beta,\gamma} \zthn_{\fa x} M_{x y} S^{(4)}_{y \beta}\left(m^3M_{y\gamma}\overline M_{x\gamma}  + 2(\re m^2)  \overline{M}_{xy}M_{\beta\gamma}^0 + |m|^2 \overline{M}_{xy}M_{y\gamma}^0 \right) \cT_{\gamma,\fb_1\fb_2} \nonumber\\
     =&\sum_{x,y,\beta,\gamma} \zthn_{\fa x} S^{(4)}_{y \beta}\left(m^3 M_{xy} M_{y\gamma}\overline M_{x\gamma}  + 2(\re m^2) {M}_{xy}^0M_{\beta\gamma}^0 + |m|^2 {M}_{xy}^0M_{y\gamma}^0 \right) \cT_{\gamma,\fb_1\fb_2}. \label{eq:4self1} %\nonumber
\end{align}
%where we used that $\zthn S=\zthn$ in the second step. {\cor here}
% {\cor \begin{align}
%      &\sum_{z,\al,\beta,\gamma,\gamma'} \zTheta_{xz} M_{z\al} S^{(4)}_{\al \beta}\left(m^3M_{\al\gamma}\overline M_{z\gamma}  + (m^2 +\overline m^2) \overline{M}_{z\al}|M_{\beta\gamma}|^2 + |m|^2 \overline{M}_{z\al}|M_{\al\gamma}|^2 \right)S_{\gamma\gamma'} \zT_{\gamma',y y'} \label{eq:4self1}\\
%      =&\sum_{z,z',\al,\beta,\gamma,\gamma'} \zTheta_{xz'} S_{z'z} S^{(4)}_{\al \beta}\left(m^3 M_{z\al} M_{\al\gamma}\overline M_{z\gamma}  + (m^2 +\overline m^2) |{M}_{z\al}|^2|M_{\beta\gamma}|^2 + |m|^2 |{M}_{z\al}|^2|M_{\al\gamma}|^2 \right)S_{\gamma\gamma'} \zT_{\gamma',y y'},\nonumber
% \end{align}}
% {\cob \begin{align} &- \sum_{\al,\beta}M_{z\al} \left(S_{\al \beta}\right)^2  \left( \partial_{h_{ \beta\al}} \right) ^2 \left( \partial_{h_{ \al\beta}} \right)
%   \big( G _{\beta y}G^*_{zy'}\big)
%  \\\nonumber
%  & =\sum_{\al,\beta}M_{z\al} \left(S_{\al \beta}\right)^2 
%  \Big(2m^3 M_{\alpha \gamma}M^*_{z\gamma}+2|m|^2M^*_{z\al}|M_{\al \gamma}|^2 
%   +\left(2m^2+2\bar m^2
%   \right)M^*_{z\al}|M_{\beta \gamma}|^2\Big)S_{\gamma\gamma'}G_{\gamma' y}G^*_{\gamma' y'}
%   \end{align} 
  
%   }

Next, we expand the terms in \eqref{eq:self4-1}. 
% \begin{align}
% &\sum_{z,\al,\beta} \zTheta_{xz} M_{z\al} S_{\al \beta}\left(  G _{\beta y}G^*_{\beta y'} \Gc^*_{z\al} +  \Gc_{\beta\beta} G _{\al y} \Gc^* _{zy'} \right) \nonumber\\
% =&\sum_{z,\al,\beta} \zTheta_{xz} M_{z\al} S_{\al \beta} \Gc _{\beta y}\Gc^*_{\beta y'} \Gc^*_{z\al}  + \sum_{z,\al,\beta} \zTheta_{xz} M_{z\al} S_{\al \beta}  \Gc_{\beta\beta} \Gc_{\al y} \Gc^* _{zy'} \label{eq:sum_zero0} \\
% +& \sum_{z,\al,\beta} \zTheta_{xz} M_{z\al} S_{\al \beta}\left(  M_{\beta y}\Gc^*_{\beta y'} \Gc^*_{z\al}+ \Gc _{\beta y}\overline M_{\beta y'} \Gc^*_{z\al} + M_{\beta y}\overline M_{\beta y'} \Gc^*_{z\al} +  \Gc_{\beta\beta} M_{\al y} \Gc^* _{zy'} \right) ,\nonumber
% \end{align}
% where the last sum on the RHS can be included in recollision graphs. It remains to expand the first term on the RHS. For this purpose, we develop the following expansion. 
%Replacing the Gaussian integration by parts argument in the proof of Lemma \ref{lem_lweight} with more general cumulant expansions, we get an extension of \eqref{eq:LW}:
In the proof of \Cref{lem_lweight}, by replacing the Gaussian integration by parts with the cumulant expansion formula \eqref{5.16}, we can derive the following extension of \eqref{eq:LW} with fourth-order cumulants: 
\begin{align} 
\Gc _{xx} \Gamma  
= &~\sum_{y,\al,\beta}(1+M^+S^+)_{xy}M_{y\al}
 \left(  S_{\al \beta} \Gc_{ \al y} 
 \Gc_{\beta\beta}  \bigGamma
  -  S_{\al \beta} G _{\beta y} 
   \partial_{h_{ \beta\al}} \bigGamma - \frac{1}{2} S_{\al\beta}^{(4)} \partial_{h_{\beta\al}}^2 \partial_{h_{\al\beta}}(G_{\beta y}\Gamma)\right) \nonumber\\
   &~+\text{(others)}.\label{eq:LW_BA} 
 \end{align}
Applying it to the term $\sum_{x,y,\beta} \zthn_{\fa x}  M_{xy} S_{y \beta} \Gc_{\beta\beta} \Gc _{y \fb_1} \Gc^- _{x\fb_2} $ in \eqref{eq:self4-1}, we get that
%Applying it to the second term in \eqref{eq:sum_zero0}, we get 
\begin{align}
    \Gc_{\beta\beta} \Gc_{y \fb_1} \Gc^- _{x\fb_2} =&~ - \sum_{\gamma,\gamma_1,\gamma_2}(1+M^+S^+)_{\beta \gamma}M_{\gamma \gamma_1} S_{\gamma_1\gamma_2} G _{\gamma_2 \gamma}  
   \partial_{h_{ \gamma_2 \gamma_1}} \left(\Gc_{y \fb_1} \Gc^- _{x\fb_2}\right)  \nonumber\\
   &~  - \frac 12\sum_{\gamma,\gamma_1,\gamma_2}(1+M^+S^+)_{\beta \gamma}M_{\gamma \gamma_1} S^{(4)}_{\gamma_1\gamma_2} \partial^2_{h_{ \gamma_2 \gamma_1}}\partial_{h_{ \gamma_1 \gamma_2}} \left(G _{\gamma_2 \gamma}  \Gc_{y \fb_1} \Gc^- _{x\fb_2}\right) +\text{(others)} \nonumber\\
   =&~ - \frac 12\sum_{\gamma,\gamma_1,\gamma_2}(1+M^+S^+)_{\beta \gamma}M_{\gamma \gamma_1} S^{(4)}_{\gamma_1\gamma_2} \partial^2_{h_{ \gamma_2 \gamma_1}}\partial_{h_{ \gamma_1 \gamma_2}} \left(G _{\gamma_2 \gamma}  \Gc_{y \fb_1} \Gc^- _{x\fb_2}\right) +\text{(others)} \nonumber\\
   =&~  \sum_{\gamma,\gamma_1,\gamma_2}(1+M^+S^+)_{\beta \gamma}M_{\gamma \gamma_1} S^{(4)}_{\gamma_1\gamma_2}  m^3 M_{\gamma_1\gamma}\Gc_{y \fb_1} \Gc^- _{x\fb_2}  +\text{(others)} \nonumber\\
   =&~  \sum_{ \gamma_1,\gamma_2} (M^+S^+)_{\beta \gamma_1} S^{(4)}_{\gamma_1\gamma_2}  m^3 \Gc_{y \fb_1} \Gc^- _{x\fb_2}  +\text{(others)},\label{eq:LW_BA2} 
%   \partial^2_{h_{ \gamma_2 \gamma_1}}\partial_{h_{ \gamma_1 \gamma_2}} \left(G _{\gamma_2 \gamma}  \Gc_{\al y} \Gc^* _{zy'}\right)
\end{align}
where in the last step we used that $(1+M^+S^+)M^+S = M^+ S^+$ and $S^{(4)}S=S^{(4)}$. %$\sum_{\gamma_2}S^{(4)}_{\gamma_1\gamma_2}$ does not depend on $\gamma_1$ and $\sum_{\gamma_1}S_{x\gamma_1}=1$.  
Then, we apply \eqref{eq:BE_fourth} to $\Gc_{y \fb_1} \Gc^- _{x\fb_2}$ and obtain that 
\begin{align}
    \Gc_{\beta\beta} \Gc_{y \fb_1} \Gc^- _{x\fb_2}&=  \sum_{ \gamma_1,\gamma_2,\gamma,\gamma'} (M^+S^+)_{\beta \gamma_1} S^{(4)}_{\gamma_1\gamma_2}  m^3 M_{y\gamma}\overline M_{x\gamma}S_{\gamma\gamma'} \Gc_{\gamma' \fb_1} \Gc^- _{\gamma' \fb_2}  +\text{(others)}\nonumber\\
    &=  \sum_{ \gamma_1,\gamma_2,\gamma}  m^3(M^+S^+)_{\beta\gamma_1}S^{(4)}_{\gamma_1 \gamma_2}  M_{y\gamma}\overline M_{x\gamma} \cT_{\gamma, \fb_1 \fb_2}  +\text{(others)}.\label{eq:4self2.1}
\end{align}

Finally, we expand $\sum_{x,y,\beta} \zthn_{\fa x}  M_{xy} S_{y \beta}\Gc^-_{xy} \Gc _{\beta \fb_1}\Gc^-_{\beta \fb_2}$ in \eqref{eq:self4-1}. Applying \eqref{eq:BE_fourth} to it, we obtain that 
%  \begin{align*}
%     \Gc_{xx'}\Gamma=\sum_{\al,\beta}  M_{x\al} \left(
%   S_{\al \beta}\Gc_{\beta\beta} G _{\al x'} \bigGamma
%   -   S_{\al \beta}G _{\beta x'} 
%   \partial_{h_{ \beta\al}} \bigGamma - \frac{1}{2} S_{\al\beta}^{(4)} \partial_{h_{\beta\al}}^2 \partial_{h_{\al\beta}}(G_{\beta x'}\Gamma) \right) +\text{(higher order, $Q$)}.
% \end{align*}
% Applying it to the first term in \eqref{eq:sum_zero0}, we get
\begin{align}
    \Gc^-_{xy} \Gc _{\beta \fb_1}\Gc^-_{\beta \fb_2}  &=\sum_{\gamma,\gamma'}   \overline M_{x\gamma}S_{\gamma\gamma'}\left(G^-_{\gamma y} \Gc^-_{\gamma'\gamma'}\Gc _{\beta \fb_1}\Gc^-_{\beta \fb_2} + G^-_{\gamma'y} G_{\beta \gamma}G_{\gamma' \fb_1}\Gc^-_{\beta \fb_2} + G^-_{\gamma'y} \Gc_{\beta \fb_1}G^-_{\beta \gamma'}G^-_{\gamma \fb_2} \right)\nonumber\\
    &\quad - \frac 12\sum_{ \gamma,\gamma'}  \overline M_{x\gamma} S^{(4)}_{\gamma\gamma'}\partial_{h_{\gamma\gamma'}}^2 \partial_{h_{\gamma'\gamma}} \left(G^-_{\gamma'y}  \Gc _{\beta \fb_1}\Gc^-_{\beta \fb_2} \right) +\text{(others)}\nonumber\\
    &=\sum_{\gamma,\gamma'}   \overline M_{x\gamma}\overline M_{\gamma  y}  S_{\gamma\gamma'} \Gc^-_{\gamma'\gamma'}\Gc _{\beta \fb_1}\Gc^-_{\beta \fb_2}    + \sum_{ \gamma,\gamma'}  \overline m^3 \overline M_{x\gamma}\overline M_{\gamma y} S^{(4)}_{\gamma\gamma'} \Gc_{\beta \fb_1}\Gc^-_{\beta \fb_2} +\text{(others)}.\label{eq:4thselfderivation}
\end{align}
% {\cor \begin{align*}
%     &\Gc^-_{xy} \Gc _{\beta \fb_1}\Gc^-_{\beta \fb_2}  =\sum_{\gamma,\gamma'}   \overline M_{x\gamma}S_{\gamma\gamma'}\left(G^-_{\gamma y} \Gc^-_{\gamma'\gamma'}\Gc _{\beta \fb_1}\Gc^-_{\beta \fb_2} + G^-_{\gamma'y} G_{\beta \gamma}G_{\gamma' \fb_1}\Gc^-_{\beta \fb_2} + G^-_{\gamma'y} \Gc_{\beta \fb_1}G^-_{\beta \gamma'}G^-_{\gamma \fb_2} \right)\\
%     &\quad - \frac 12\sum_{ \gamma,\gamma'}  \overline M_{x\gamma} S^{(4)}_{\gamma\gamma'}\partial_{h_{\gamma\gamma'}}^2 \partial_{h_{\gamma'\gamma}} \left(G^-_{\gamma'y}  \Gc _{\beta \fb_1}\Gc^-_{\beta \fb_2} \right) +\text{(recollision, higher order, $Q$)}\\
%     &=\sum_{\gamma,\gamma'}   \overline M_{x\gamma}\overline M_{\gamma  y}  S_{\gamma\gamma'} \Gc^-_{\gamma'\gamma'}\Gc _{\beta \fb_1}\Gc^-_{\beta \fb_2}    + \sum_{ \gamma,\gamma'}  \overline m^3 \overline M_{x\gamma}\overline M_{\gamma y} S^{(4)}_{\gamma\gamma'} \Gc_{\beta \fb_1}\Gc^-_{\beta \fb_2} +\text{(recollision, higher order, $Q$)}.
% \end{align*}}
%With a similar argument as above for $\Gc_{\beta\beta}$, we can expand the first term on the RHS as
Applying the expansion \eqref{eq:LW_BA} again to the first term on the RHS, with a similar calculation as in \eqref{eq:LW_BA2}, we obtain that 
\begin{align*}
    \Gc^-_{\gamma'\gamma'}\Gc _{\beta \fb_1}\Gc^-_{\beta \fb_2}= \sum_{\gamma_1,\gamma_2} (M^+S^+)^-_{\gamma' \gamma_1}S^{(4)}_{\gamma_1\gamma_2} \overline m^3 \Gc _{\beta \fb_1}\Gc^-_{\beta \fb_2}+\text{(others)}.
\end{align*}
Plugging it into the equation \eqref{eq:4thselfderivation}, we get that
\begin{align}
    &\Gc^-_{xy} \Gc _{\beta \fb_1}\Gc^-_{\beta \fb_2}  =\sum_{\gamma,\gamma_1}  \overline m^3   \overline M_{x\gamma}\overline M_{\gamma y} [(1+S (M^+S^+)^-)S^{(4)}]_{\gamma\gamma_1}  \Gc _{\beta \fb_1}\Gc^*_{\beta \fb_2}  +\text{(others)}\nonumber\\
    &=\sum_{\gamma,\gamma_1}  \overline m^3   \overline M_{x\gamma}\overline M_{\gamma y} [(S+S(M^+ S^+)^-)S^{(4)}]_{\gamma\gamma_1} \Gc _{\beta \fb_1}\Gc^-_{\beta \fb_2}  +\text{(others)}\nonumber\\
    &=\sum_{\gamma,\gamma_1}  \overline m^3   \overline M_{x\gamma}\overline M_{\gamma y} (S^-S^{(4)})_{\gamma\gamma_1} \Gc _{\beta \fb_1}\Gc^-_{\beta \fb_2} +\text{(others)},\label{eq:4self2.2} 
\end{align}
% {\cor \begin{align}
%     &\Gc^-_{xy} \Gc _{\beta \fb_1}\Gc^-_{\beta \fb_2}  =\sum_{\gamma,\gamma'}  \overline m^3   \overline M_{x\gamma}\overline M_{\gamma y} [(1+S (M^+S^+)^-)S^{(4)}]_{\gamma\gamma'}  \Gc _{\beta \fb_1}\Gc^*_{\beta \fb_2}  +\text{(others)}\nonumber\\
%     &=\sum_{\gamma_1,\gamma_2,\gamma,\gamma'}  \overline m^3   \overline M_{x\gamma_1}\overline M_{\gamma_1 y} [(S+(SM^+ S^+)^-)S^{(4)}]_{\gamma_1\gamma_2} |M_{\beta \gamma}|^2 S_{\gamma\gamma'} \Gc _{\gamma' \fb_1}\Gc^*_{\gamma' \fb_2}  +\text{(others)}\nonumber\\
%     &=\sum_{\gamma_1,\gamma_2,\gamma,\gamma'}  \overline m^3   \overline M_{x\gamma_1}\overline M_{\gamma_1 y} [S(1+ S^-)S^{(4)}]_{\gamma_1\gamma_2} |M_{\beta \gamma}|^2 S_{\gamma\gamma'} \zT_{\gamma',\fb_1 \fb_2} +\text{(others)},\label{eq:4self2.2} 
% \end{align}}
where we used that $SS^{(4)}=S^{(4)}$ and  $S+S(M^+S^+)^- = S^-$ in the derivation. Combining \eqref{eq:4self2.1} and \eqref{eq:4self2.2}, we see that the expansions of \eqref{eq:self4-1} lead to the following expression that contains the $\selfs$ we are interested in: 
\begin{align}
    &~\sum_{x,y,\beta,\gamma_1,\gamma_2,\gamma} \zthn_{\fa x} M_{xy} S_{y \beta} m^3 (M^+S^+)_{\beta \gamma_1}S^{(4)}_{\gamma_1 \gamma_2}  M_{y\gamma}\overline M_{x\gamma} \cT_{\gamma, \fb_1 \fb_2} \nonumber\\
    + &~\sum_{x,y,\beta,\gamma_1} \zthn_{\fa x}  M_{xy} S_{y \beta}\overline m^3   \overline M_{x\gamma}\overline M_{\gamma y} (S^-S^{(4)})_{\gamma\gamma_1} \Gc _{\beta \fb_1}\Gc^-_{\beta \fb_2}\nonumber\\
    =&~\sum_{x,y,\beta,\gamma_1,\gamma_2,\gamma} \zthn_{\fa x} M_{xy} S_{y \beta} \left[m^3 (M^+S^+)_{\beta \gamma_1}S^{(4)}_{\gamma_1 \gamma_2}  M_{y\gamma}\overline M_{x\gamma} + \overline m^3   \overline M_{x\gamma_1}\overline M_{\gamma_1 y} (S^-S^{(4)})_{\gamma_1\gamma_2} \delta_{\beta\gamma}\right] \cT_{\gamma, \fb_1 \fb_2} ,\label{eq:4self2}
\end{align}
where we also used that $\sum_\beta S_{y \beta} \Gc _{\beta \fb_1}\Gc^-_{\beta \fb_2} = \sum_{\beta}S_{y\beta}\cT_{\beta,\fb_1\fb_2}$ due to the identity $S=S^2$.

Now, combining \eqref{eq:4self1} and \eqref{eq:4self2}, we obtain the following $\self$ term in the $T$-expansion of $\czT_{\fa,\fb_1\fb_2}$:
% \begin{align}  
% 	 \zT_{x,yy'}&=   \sum_\al (\zTheta \zS)_{x\al} M _{\al y}M^*_{ \al y'} 
% 	 +\sum_{z,z',\gamma,\gamma'} \zTheta_{xz'}S_{z'z} \cal E^{(4)}_{z\gamma} S_{\gamma \gamma'}\zT_{\gamma',yy'} + \cal R +  {\cal A} + \cal W+ Q , \\
%   &=   \sum_\al (\zTheta \zS)_{x\al} M _{\al y}M^*_{ \al y'} 
% 	 +\sum_{z,z',\gamma,\gamma'} \zTheta_{xz'}S_{z'z} \cal E^{(4)}_{z\gamma} S_{\gamma \gamma'}\zT_{\gamma',yy'} + \cal R +  {\cal A} + \cal W+ Q , \nonumber
% \end{align}
$$\sum_{x,y,\beta,\gamma} \zthn_{\fa x} \Sele^{(4)}_{x\gamma}\zT_{\gamma',\fb_1\fb_2},$$
where $\Sele^{(4)}$ is defined as
\begin{align*}
\Sele^{(4)}_{x\gamma}&=\sum_{y,\beta,\gamma} S^{(4)}_{y \beta}\left(m^3 M_{xy} M_{y\gamma}\overline M_{x\gamma}  + 2(\re m^2) {M}_{xy}^0M_{\beta\gamma}^0 + |m|^2 {M}_{xy}^0M_{y\gamma}^0 \right) \\
&+\sum_{y,\beta,\gamma_1,\gamma_2,\gamma} M_{xy} S_{y \beta} \left[m^3 (M^+S^+)_{\beta \gamma_1}S^{(4)}_{\gamma_1 \gamma_2}  M_{y\gamma}\overline M_{x\gamma} + \overline m^3   \overline M_{x\gamma_1}\overline M_{\gamma_1 y} (S^-S^{(4)})_{\gamma_1\gamma_2} \delta_{\beta\gamma}\right].
\end{align*}
% where $\cal E^{(4)}$ is defined as
% \begin{align*}
%      \cal E^{(4)}_{z\gamma}=&\sum_{\al,\beta} S^{(4)}_{\al \beta}\left(m^3 M_{z\al} M_{\al\gamma}\overline M_{z\gamma}  + (m^2 +\overline m^2) |{M}_{z\al}|^2|M_{\beta\gamma}|^2 + |m|^2 |{M}_{z\al}|^2|M_{\al\gamma}|^2 \right)\\
%      +&\sum_{\al,\beta,\gamma_1,\gamma_2}  M_{z\al} S_{\al \beta} \left(S^+_{\beta \gamma_1} S^{(4)}_{\gamma_1\gamma_2}  m^3 M_{\al\gamma}\overline M_{z\gamma}  + \overline m^3   \overline M_{z\gamma_1}\overline M_{\gamma_1 \al} [S(1+ S^-)S^{(4)}]_{\gamma_1\gamma_2} |M_{\beta \gamma}|^2\right).
% \end{align*}
We now check the sum zero property of this $\self$. 
Denote 
$$ t_0:=\sum_y M_{xy}^0,\quad t_+:=\sum_y M_{xy}^+,\quad t_-:=\overline t_+,\quad A_4:=\sum_{ \beta} S^{(4)}_{xy} =\kappa_4 W^{-d}.$$
Using these notations and that $\sum_{y} S_{xy}=1$, we can write the entries of $\cal E^{(4)}$ as
\begin{align*}
     \cal E^{(4)}_{xy}=&~ A_4 m^3 (M^2)_{x y}\overline M_{xy}  +2(\re m^2) (M^0S^{(4)}M^0)_{xy} + A_4 |m|^2 (M^0)^2_{xy} \\
     &~+A_4 m^3\frac{t_+}{1-t_+}   (M^2)_{x y}\overline M_{x y}  + A_4 \frac{\overline m^3 }{1-t_-}\sum_{\al}    (\overline M^2)_{x \al} M_{x\al} (SM^0)_{\al y}\\
     =&~ \frac{A_4 m^3}{1-t_+} (M^2)_{x y}\overline M_{xy} +  \frac{A_4\overline m^3  }{1-t_-}\sum_{\al}   (\overline M^2)_{x \al} M_{x\al} S_{\al y} + 2(\re m^2) (M^0S^{(4)}M^0)_{xy} + A_4 |m|^2 (M^0)^2_{xy} .
\end{align*}
Summing over $t$, we get 
\begin{align}\label{eq:sumE4}
     \sum_y \cal E^{(4)}_{xy}&= \frac{A_4 m^3}{1-t_+} \sum_y (M^2)_{x y}\overline M_{xy} + \frac{A_4 \overline m^3}{1-t_-}\sum_{\al}   (\overline M^2)_{xy} M_{xy} +2 A_4 (\re m^2)t_0^2   + A_4 |m|^2 t_0^2 .
\end{align} 
By \eqref{L2M}, we have 
$$t_0= \frac{\im m}{\eta +\im m} =1+\OO(\eta).$$
Using Ward's identity \eqref{eq_Ward}, we get that 
$$ \sum_{y} (M^2)_{x y}\overline M_{x y} = \left(M^2M^\dag \right)_{xx} =\frac{(M^2)_{xx}-(M M^\dag)_{xx}}{2\ii (\eta + \im m)}=   \frac{t_+ - t_0}{2\ii (\eta + \im m)}.
$$
Plugging these two identities into \eqref{eq:sumE4} yields that 
\begin{align*}
     \sum_y \cal E^{(4)}_{xy}&= \frac{A_4 m^3}{1-t_+} \frac{t_+-t_0}{2\ii (\eta +\im m)}+ \frac{A_4 \overline m^3}{1-t_-} \frac{t_--t_0}{-2\ii (\eta +\im m)} t_0 + 2 A_4 (\re m^2)t_0^2   + A_4 |m|^2 t_0^2 \\
     &=A_4\left[-  \frac{m^3-\overline m^3}{2\ii \im m} + 2\re m^2 + |m|^2 +\OO(\eta)\right]=\OO(\eta W^{-d}).
\end{align*}
This shows the desired sum zero property for $\Sele^{(4)}$.

\subsection{Example of 10-vertex renormalization}\label{subsec:6v}

In this subsection, we derive the values of $\Delta(\Pi_1)$ and $\Delta(\Pi_2)$ given in \eqref{DeltaPi1} (for the Wegner orbital model). 
We divide the derivation into 5 cases according to the five lines on the RHS of \eqref{eq:Gammai}. We will discard all terms that do not lead to the pairings \eqref{eq:6-loop} and \eqref{eq:4+2-loop}. 

\medskip
\noindent{\bf Case 1:} Corresponding to the last term on the RHS of \eqref{eq:Gammai}, we need to expand 
$$\left( G_{x b_0} \overline {G}_{x \bar b_0} -|m|^2 T_{x,b_0\bar b_0}\right) |m|^{-4}G_{a_1 x}G_{x b_1} \overline G_{\bar a_1 x}\overline G_{x\bar b_1}G_{a_2 x} G_{x b_2}\overline G_{\bar a_2 x}\overline G_{x\bar b_2}.$$
Applying the edge expansion with respect to $G_{x b_0}$, only the following graph is relevant for the pairings in \eqref{eq:6-loop} and \eqref{eq:4+2-loop}:
\begin{align*}
   &- |m|^{-2} \sum_y S_{xy} (G_{y b_0}\overline G_{y\bar b_1}) \overline {G}_{x \bar b_0} G_{a_1 x}G_{x b_1} \overline G_{\bar a_1 x}G_{a_2 x} G_{x b_2}\overline G_{\bar a_2 x}\overline G_{x\bar b_2} .
   %\\   &- |m|^{-2} \sum_y s_{xy} (G_{y b_0}\overline G_{y\bar b_2}) \overline {G}_{x \bar b_0} G_{a_1 x}G_{x b_1} \overline G_{\bar a_1 x}\overline G_{x\bar b_1} G_{a_2 x} G_{x b_2}\overline G_{\bar a_2 x}.
\end{align*}
%For the pairings $\Pi_1$ and $\Pi_2$, only the first term is relevant:
Next, applying the edge expansion with respect to $\overline {G}_{x \bar b_0}$, only the following graph is relevant for \eqref{eq:6-loop} and \eqref{eq:4+2-loop}:
\begin{align*}
   &- \sum_{y_1,y_2} S_{xy_1}S_{xy_2} (G_{y_1 b_0}\overline G_{y_1\bar b_1}) (\overline {G}_{y_2 \bar b_0}G_{y_2 b_1}) G_{a_1 x} \overline G_{\bar a_1 x}G_{a_2 x} G_{x b_2}\overline G_{\bar a_2 x}\overline G_{x\bar b_2}.
   %\\   & - \sum_{y_1,y_2} s_{xy_1}s_{xy_2} (G_{y_1 b_0}\overline G_{y_1\bar b_1}) (\overline {G}_{y_2 \bar b_0}G_{y_2 b_2}) G_{a_1 x}G_{x b_1} \overline G_{\bar a_1 x}G_{a_2 x} \overline G_{\bar a_2 x}\overline G_{x\bar b_2} .
   % \\   & - \sum_{y_1,y_2} s_{xy_1}s_{xy_2} (G_{y_1 b_0}\overline G_{y_1\bar b_j}) (\overline {G}_{y_2 \bar b_0} G_{y_2 b_i}) G_{a_i x} \overline G_{\bar a_i x}\overline G_{x\bar b_i} G_{a_j x} G_{x b_j}\overline G_{\bar a_j x}\\
   % &- \sum_{y_1,y_2} s_{xy_1}s_{xy_2} (G_{y_1 b_0}\overline G_{y_1\bar b_j}) (\overline {G}_{y_2 \bar b_0}G_{y_2 b_j}) G_{a_i x}G_{x b_i} \overline G_{\bar a_i x}\overline G_{x\bar b_i} G_{a_j x} \overline G_{\bar a_j x}.
\end{align*}
Continuing the edge expansions, we obtain that 
\begin{align*}
   &- |m|^6\sum_{y_1,y_2,y_3,y_4,y_5} S_{xy_1}S_{xy_2}S_{xy_3} S_{xy_4} S_{xy_5}(G_{y_1 b_0}\overline G_{y_1\bar b_1}) (\overline {G}_{y_2 \bar b_0}G_{y_2 b_1}) (G_{a_1 y_3} \overline G_{\bar a_1 y_3}) (G_{a_2 y_4}\overline G_{\bar a_2 y_4}) (G_{y_5 b_2}\overline G_{y_5\bar b_2})\\
   & -|m|^6 \sum_{y_1,y_2,y_3,y_4,y_5} S_{xy_1}S_{xy_2}S_{xy_3} S_{xy_4} S_{xy_5} (G_{y_1 b_0}\overline G_{y_1\bar b_1}) (\overline {G}_{y_2 \bar b_0}G_{y_2 b_1}) (G_{a_1 y_3}\overline G_{\bar a_2 y_3}) (\overline G_{\bar a_1 y_4}G_{a_2 y_4})(G_{y_5 b_2}\overline G_{y_5\bar b_2}).
\end{align*}
The first and second terms provide a factor $-|m|^6 $ to $\Delta(\Pi_2)$ and $\Delta(\Pi_1)$, respectively:
\be\label{Delta1}
\Delta_1(\Pi_1)=-|m|^6,\quad \Delta_1(\Pi_2)=-|m|^6.
\ee

\medskip
\noindent{\bf Case 2:} Corresponding to the fourth line on the RHS \eqref{eq:Gammai}, we need to expand 
\begin{align}
&\sum_{i=1}^2\left( G_{x b_0} \overline {G}_{x \bar b_0} -|m|^2T_{x,b_0\bar b_0}\right)\cdot |m|^{-2}m^{-1} G_{a_i x}G_{x b_i} \overline G_{\bar a_i x}\overline G_{x\bar b_i}G_{a_j x}G_{x b_j}\overline G_{\bar a_j \bar b_j} \label{eq:case21}\\
+&\sum_{i=1}^2\left( G_{x b_0} \overline {G}_{x \bar b_0} -|m|^2T_{x,b_0\bar b_0}\right)\cdot |m|^{-2}\bar m^{-1}G_{a_i x}G_{x b_i} \overline G_{\bar a_i x}\overline G_{x\bar b_i}G_{a_j b_j}\overline G_{\bar a_j x}\overline G_{x\bar b_j}  .  \label{eq:case22}
\end{align}
We will focus on the contribution from the term \eqref{eq:case21} to $\Delta(\Pi_1)$ and $\Delta(\Pi_2)$, while the contribution from the term \eqref{eq:case22} can be obtained by taking the complex conjugate. By applying edge expansions to \eqref{eq:case21} with respect to $G_{x b_0}$, we obtain the following (without the summation notation over $i$):
\begin{align*}
    &\bar m^{-1}\sum_{y_1}S_{xy_1}(G_{a_i y_1}G_{y_1 b_0}) \overline {G}_{x \bar b_0}  G_{x b_i} \overline G_{\bar a_i x}\overline G_{x\bar b_i}G_{a_j x}G_{x b_j}\overline G_{\bar a_j \bar b_j} \\
    + &m^{-1}\sum_{y_1}S_{xy_1}(G_{y_1 b_0}\overline G_{y_1\bar b_i}) \overline {G}_{x \bar b_0}  G_{a_i x}G_{x b_i} \overline G_{\bar a_i x}G_{a_j x}G_{x b_j}\overline G_{\bar a_j \bar b_j} \\
    +&\bar m^{-1}\sum_{y_1}S_{xy_1}(G_{a_j y_1}G_{y_1 b_0}) \overline {G}_{x \bar b_0} G_{a_i x} G_{x b_i} \overline G_{\bar a_i x}\overline G_{x\bar b_i}G_{x b_j}\overline G_{\bar a_j \bar b_j}\\
    + &|m|^{-2}\sum_{y_1}S_{xy_1}(G_{y_1 b_0}\overline G_{y_1 \bar b_j}) \overline {G}_{x \bar b_0}  G_{a_i x}G_{x b_i} \overline G_{\bar a_i x}\overline G_{x\bar b_i}G_{a_j x}G_{x b_j}\overline G_{\bar a_j x} .
\end{align*}
Continuing the edge expansion with respect to $\overline {G}_{x \bar b_0}$, we get the following relevant graphs:
\begin{align*}
    &m\sum_{y_1,y_2}S_{xy_1}S_{xy_2}(G_{a_i y_1}G_{y_1 b_0}) (\overline {G}_{y_2 \bar b_0}  G_{y_2 b_i}) \overline G_{\bar a_i x}\overline G_{x\bar b_i}G_{a_j x}G_{x b_j}\overline G_{\bar a_j \bar b_j} \\
    +&m\sum_{y_1,y_2}S_{xy_1}S_{xy_2}(G_{a_i y_1}G_{y_1 b_0}) (\overline {G}_{y_2 \bar b_0}  G_{y_2 b_j}) G_{x b_i} \overline G_{\bar a_i x}\overline G_{x\bar b_i}G_{a_j x}\overline G_{\bar a_j \bar b_j}\\
    % +&\sum_{y_1}s_{xy_1}s_{xy_2}(G_{a_i y_1}G_{y_1 b_0}) (\overline G_{\bar a_j y_2}\overline {G}_{y_2 \bar b_0} ) G_{x b_i} \overline G_{\bar a_i x}\overline G_{x\bar b_i}G_{a_j x}G_{x b_j}\overline G_{x \bar b_j} \\
    +& \bar m\sum_{y_1,y_2}S_{xy_1}S_{xy_2}(G_{y_1 b_0}\overline G_{y_1\bar b_i}) (\overline {G}_{y_2 \bar b_0} G_{y_2 b_i}) G_{a_i x} \overline G_{\bar a_i x}G_{a_j x}G_{x b_j}\overline G_{\bar a_j \bar b_j} \\
    +&(\bar m^2/m)\sum_{y_1,y_2}S_{xy_1}S_{xy_2}(G_{y_1 b_0}\overline G_{y_1\bar b_i}) (\overline G_{\bar a_i y_2}\overline {G}_{y_2 \bar b_0}) G_{a_i x}G_{x b_i} G_{a_j x}G_{x b_j}\overline G_{\bar a_j \bar b_j}\\
    +& \bar m\sum_{y_1,y_2}S_{xy_1}S_{xy_2}(G_{y_1 b_0}\overline G_{y_1\bar b_i}) (\overline {G}_{y_2 \bar b_0} G_{y_2 b_j}) G_{a_i x}G_{x b_i} \overline G_{\bar a_i x}G_{a_j x}\overline G_{\bar a_j \bar b_j} \\
    +& m \sum_{y_1,y_2}S_{xy_1}S_{xy_2}(G_{a_j y_1}G_{y_1 b_0}) (\overline {G}_{y_2 \bar b_0}G_{y_2 b_i}) G_{a_i x}  \overline G_{\bar a_i x}\overline G_{x\bar b_i}G_{x b_j}\overline G_{\bar a_j \bar b_j}\\
    +& m \sum_{y_1,y_2}S_{xy_1}S_{xy_2}(G_{a_j y_1}G_{y_1 b_0}) (\overline {G}_{y_2 \bar b_0}G_{y_2 b_j}) G_{a_i x}G_{x b_i}  \overline G_{\bar a_i x}\overline G_{x\bar b_i}\overline G_{\bar a_j \bar b_j}\\
    + &\sum_{y_1,y_2}S_{xy_1}S_{xy_2}(G_{y_1 b_0}\overline G_{y_1 \bar b_j}) (\overline {G}_{y_2 \bar b_0}G_{y_2 b_i})  G_{a_i x} \overline G_{\bar a_i x}\overline G_{x\bar b_i}G_{a_j x}G_{x b_j}\overline G_{\bar a_j x} \\
    + &\sum_{y_1,y_2}S_{xy_1}S_{xy_2}(G_{y_1 b_0}\overline G_{y_1 \bar b_j}) (\overline {G}_{y_2 \bar b_0} G_{y_2 b_j}) G_{a_i x}G_{x b_i} \overline G_{\bar a_i x}\overline G_{x\bar b_i}G_{a_j x}\overline G_{\bar a_j x} .
\end{align*}
Continuing the edge expansion, we obtain that 
\begin{align}
    &  m|m|^4\sum_{y_1,y_2,y_3,y_4}S_{xy_1}S_{xy_2}S_{xy_3}S_{xy_4}(G_{a_i y_1}G_{y_1 b_0}) (\overline {G}_{y_2 \bar b_0}  G_{y_2 b_i}) (\overline G_{\bar a_i y_3}G_{a_j y_3})(\overline G_{y_4\bar b_i}G_{y_4 b_j})\overline G_{\bar a_j \bar b_j} \label{eq:case211}\\
    +& m|m|^4\sum_{y_1,y_2,y_3,y_4}S_{xy_1}S_{xy_2}S_{xy_3}S_{xy_4}(G_{a_i y_1}G_{y_1 b_0}) (\overline {G}_{y_2 \bar b_0}  G_{y_2 b_j})( G_{y_3 b_i}\overline G_{y_3\bar b_i}) (\overline G_{\bar a_i y_4}G_{a_j y_4})\overline G_{\bar a_j \bar b_j}\label{eq:case212}\\
    +&  m|m|^4\sum_{y_1,y_2,y_3,y_4}S_{xy_1}S_{xy_2}S_{xy_3}S_{xy_4}(G_{y_1 b_0}\overline G_{y_1\bar b_i}) (\overline {G}_{y_2 \bar b_0} G_{y_2 b_i}) (G_{a_i y_3} \overline G_{\bar a_i y_3})(G_{a_j y_4}G_{y_4 b_j})\overline G_{\bar a_j \bar b_j} \nonumber\\
    +&|m|^6\sum_{y_1,y_2,y_3,y_4,y_5}S_{xy_1}S_{xy_2}S_{xy_3}S_{xy_4}S_{xy_5}(G_{y_1 b_0}\overline G_{y_1\bar b_i}) (\overline {G}_{y_2 \bar b_0} G_{y_2 b_i}) (G_{a_i y_3} \overline G_{\bar a_i y_3})(G_{a_j y_4}\overline G_{\bar a_j y_4})(G_{y_5 b_j}\overline G_{y_5 \bar b_j}) \nonumber\\
    +&m|m|^4\sum_{y_1,y_2,y_3,y_4}S_{xy_1}S_{xy_2}S_{xy_3}S_{xy_4}(G_{y_1 b_0}\overline G_{y_1\bar b_i}) (\overline {G}_{y_2 \bar b_0} G_{y_2 b_i}) (G_{a_i y_3}G_{y_3 b_j}) (\overline G_{\bar a_i y_4}G_{a_j y_4})\overline G_{\bar a_j \bar b_j} \nonumber\\
    +&|m|^6\sum_{y_1,y_2,y_3,y_4,y_5}S_{xy_1}S_{xy_2}S_{xy_3}S_{xy_4}S_{xy_5}(G_{y_1 b_0}\overline G_{y_1\bar b_i}) (\overline {G}_{y_2 \bar b_0} G_{y_2 b_i}) (G_{a_i y_3}\overline G_{\bar a_j y_3}) (\overline G_{\bar a_i y_4}G_{a_j y_4})(G_{y_5 b_j}\overline G_{y_5 \bar b_j}) \nonumber\\
    +&m|m|^4\sum_{y_1,y_2,y_3,y_4}S_{xy_1}S_{xy_2}S_{xy_3}S_{xy_4}(G_{y_1 b_0}\overline G_{y_1\bar b_i}) (\overline {G}_{y_2 \bar b_0} G_{y_2 b_j}) (G_{a_i y_3}\overline G_{\bar a_i y_3}) (G_{y_4 b_i} G_{a_j y_4})\overline G_{\bar a_j \bar b_j} \nonumber\\
    +& |m|^6\sum_{y_1,y_2,y_3,y_4,y_5}S_{xy_1}S_{xy_2}S_{xy_3}S_{xy_4}S_{xy_5}(G_{y_1 b_0}\overline G_{y_1\bar b_i}) (\overline {G}_{y_2 \bar b_0} G_{y_2 b_j}) (G_{a_i y_3}\overline G_{\bar a_i y_3}) (G_{y_4 b_i}\overline G_{y_4\bar b_j}) (G_{a_j y_5}\overline G_{\bar a_j y_5}) \nonumber\\
      +&  m|m|^4\sum_{y_1,y_2,y_3,y_4}S_{xy_1}S_{xy_2}S_{xy_3}S_{xy_4}(G_{y_1 b_0}\overline G_{y_1\bar b_i}) (\overline {G}_{y_2 \bar b_0} G_{y_2 b_j}) (G_{a_i y_3}G_{y_3 b_i}) (\overline G_{\bar a_i y_4}G_{a_j y_4})\overline G_{\bar a_j \bar b_j} \nonumber\\
      +& |m|^6\sum_{y_1,y_2,y_3,y_4,y_5}S_{xy_1}S_{xy_2}S_{xy_3}S_{xy_4}S_{xy_5}(G_{y_1 b_0}\overline G_{y_1\bar b_i}) (\overline {G}_{y_2 \bar b_0} G_{y_2 b_j}) (\overline G_{\bar a_j y_3}G_{a_i y_3})(\overline G_{\bar a_i y_4}G_{a_j y_4})(G_{y_5 b_i} \overline G_{y_5 \bar b_j}) \nonumber\\
    +&  m|m|^4 \sum_{y_1,y_2,y_3,y_4}S_{xy_1}S_{xy_2}S_{xy_3}S_{xy_4}(G_{a_j y_1}G_{y_1 b_0}) (\overline {G}_{y_2 \bar b_0}G_{y_2 b_i}) (G_{a_i y_3}  \overline G_{\bar a_i y_3})(\overline G_{y_4\bar b_i}G_{y_4 b_j})\overline G_{\bar a_j \bar b_j}\nonumber\\
    +& m|m|^4 \sum_{y_1,y_2,y_3,y_4}S_{xy_1}S_{xy_2}S_{xy_3}S_{xy_4}(G_{a_j y_1}G_{y_1 b_0}) (\overline {G}_{y_2 \bar b_0}G_{y_2 b_j}) (G_{a_i y_3}\overline G_{\bar a_i y_3})(G_{y_4 b_i}  \overline G_{y_4\bar b_i})\overline G_{\bar a_j \bar b_j}\nonumber\\
    + & |m|^6 \sum_{y_1,y_2,y_3,y_4,y_5}S_{xy_1}S_{xy_2}S_{xy_3}S_{xy_4}S_{xy_5}(G_{y_1 b_0}\overline G_{y_1 \bar b_j}) (\overline {G}_{y_2 \bar b_0}G_{y_2 b_i})  (G_{a_i y_3} \overline G_{\bar a_i y_3}) (\overline G_{y_4\bar b_i}G_{y_4 b_j})(G_{a_j y_5}\overline G_{\bar a_j y_5}) \nonumber\\
    + & |m|^6 \sum_{y_1,y_2,y_3,y_4,y_5}S_{xy_1}S_{xy_2}S_{xy_3}S_{xy_4}S_{xy_5}(G_{y_1 b_0}\overline G_{y_1 \bar b_j}) (\overline {G}_{y_2 \bar b_0}G_{y_2 b_i})  (G_{a_i y_3}\overline G_{\bar a_j y_3}) (\overline G_{\bar a_i y_4}G_{a_j y_4})(\overline G_{y_5\bar b_i}G_{y_5 b_j}) \nonumber\\
    + &|m|^6 \sum_{y_1,y_2,y_3,y_4,y_5}S_{xy_1}S_{xy_2}S_{xy_3}S_{xy_4}S_{xy_5}(G_{y_1 b_0}\overline G_{y_1 \bar b_j}) (\overline {G}_{y_2 \bar b_0} G_{y_2 b_j}) (G_{a_i y_3}\overline G_{\bar a_i y_3})(G_{y_4 b_i} \overline G_{y_4\bar b_i})(G_{a_j y_5}\overline G_{\bar a_j y_5}) \nonumber\\
    + &|m|^6 \sum_{y_1,y_2,y_3,y_4,y_5}S_{xy_1}S_{xy_2}S_{xy_3}S_{xy_4}S_{xy_5}(G_{y_1 b_0}\overline G_{y_1 \bar b_j}) (\overline {G}_{y_2 \bar b_0} G_{y_2 b_j}) (G_{a_i y_3}\overline G_{\bar a_j y_3} )(G_{y_4 b_i} \overline G_{y_4\bar b_i})(\overline G_{\bar a_i y_5}G_{a_j y_5}).\nonumber
\end{align}
For the 8 terms with coefficient $|m|^6$, 4 of them involve pairings $\{b_0,\bar b_i\}, \{\bar b_0,b_j\}$ with $i=2-j\in \{1,2\}$. They are not relevant for \eqref{eq:6-loop} and \eqref{eq:4+2-loop}. For the remaining 4 terms that involve pairings  $\{b_0,\bar b_i\}, \{\bar b_0,b_i\}$ with $i=1$ or $\{b_0,\bar b_j\}, \{\bar b_0,b_j\}$ with $j=1$ provide a factor $2|m|^6$ to $\Delta(\Pi_1)$ and a factor $2|m|^6$ to $\Delta(\Pi_2)$. 
For the 8 terms with coefficient $m|m|^4$, we need to apply $GG$-expansions to them. For example, the term in \eqref{eq:case211} gives that 
 \begin{align*}
    & m^2|m|^4\sum_{y_1,y_2,y_3,y_4,y_5}S_{xy_1}S_{xy_2}S_{xy_3}S_{xy_4}S_{y_1 y_5}(\overline G_{y_5 \bar b_j} G_{y_5 b_0})(\overline {G}_{y_2 \bar b_0}  G_{y_2 b_i}) (\overline G_{\bar a_j y_1}G_{a_i y_1}) (\overline G_{\bar a_i y_3}G_{a_j y_3})(\overline G_{y_4\bar b_i}G_{y_4 b_j})\\
    +& m^4|m|^4\sum_{y_1,y_2,y_3,y_4,y_5,y_6}S_{xy_1}S_{xy_2}S_{xy_3}S_{xy_4}S^+_{y_1y_5}S_{y_5y_6}(\overline G_{y_6 \bar b_j}G_{y_6 b_0}) (\overline {G}_{y_2 \bar b_0}  G_{y_2 b_i}) (\overline G_{\bar a_i y_3}G_{a_j y_3})(\overline G_{y_4\bar b_i}G_{y_4 b_j})(\overline G_{\bar a_j y_5}G_{a_i y_5}).
\end{align*}
They are not relevant for \eqref{eq:6-loop} and \eqref{eq:4+2-loop}, since they lead to parings $\{b_0,\bar b_i\}, \{\bar b_0,b_j\}$ with $i=2-j\in \{1,2\}$. The $GG$ expansion of \eqref{eq:case212} gives that 
\begin{align*}
&m^2 |m|^4\sum_{y_1,y_2,y_3,y_4,y_5}S_{xy_1}S_{xy_2}S_{xy_3}S_{xy_4}S_{y_1y_5}(\overline {G}_{y_2 \bar b_0}  G_{y_2 b_j})(\overline G_{y_5 \bar b_j}G_{y_5 b_0})(G_{a_i y_1}\overline G_{\bar a_j y_1}) (\overline G_{\bar a_i y_4}G_{a_j y_4}) ( G_{y_3 b_i}\overline G_{y_3\bar b_i}) \\
+&m^4 |m|^4\sum_{y_1,y_2,y_3,y_4,y_5,y_6}S_{xy_1}S_{xy_2}S_{xy_3}S_{xy_4}S_{y_1y_5}S^+_{y_5y_6}(\overline {G}_{y_2 \bar b_0}  G_{y_2 b_j})(\overline G_{y_6\bar b_j} G_{y_6 b_0})(G_{a_i y_5}\overline G_{\bar a_j y_5})  (\overline G_{\bar a_i y_4}G_{a_j y_4})( G_{y_3 b_i}\overline G_{y_3\bar b_i}).
\end{align*}
Performing another edge expansion at the vertices $y_1$ and $y_5$, respectively, we get two graphs that contribute a factor 
$$m^2|m|^6\left(1+\frac{m^2}{1-m^2}\right)=|m|^6\iota$$
to $\Delta(\Pi_1)$. For the rest of the 6 terms with coefficient $m|m|^4$, after applying $GG$-expansions to them, we see that two of them are not relevant for \eqref{eq:6-loop} and \eqref{eq:4+2-loop}; one of them contributes a factor $|m|^6\iota$ to $\Delta(\Pi_1)$; two of them contribute a factor $|m|^6\iota$ to $\Delta(\Pi_2)$. 
% The rest of the 7 terms can be handled in the same way:
% \begin{align*}
%     &m|m|^4\sum_{y_1,y_2,y_3,y_4}s_{xy_1}s_{xy_2}s_{xy_3}s_{xy_4}(G_{a_i y_1}G_{y_1 b_0}) (\overline {G}_{y_2 \bar b_0}  G_{y_2 b_j})( G_{y_3 b_i}\overline G_{y_3\bar b_i}) (\overline G_{\bar a_i y_4}G_{a_j y_4})\overline G_{\bar a_j \bar b_j}\  (\Pi_1: \frac{m^2|m|^4}{1-m^2})\\
%     +&m|m|^4\sum_{y_1,y_2,y_3,y_4}s_{xy_1}s_{xy_2}s_{xy_3}s_{xy_4}(G_{y_1 b_0}\overline G_{y_1\bar b_i}) (\overline {G}_{y_2 \bar b_0} G_{y_2 b_i}) (G_{a_i y_3} \overline G_{\bar a_i y_3})(G_{a_j y_4}G_{y_4 b_j})\overline G_{\bar a_j \bar b_j}\  (\Pi_2: \frac{m^2|m|^4}{1-m^2}) \\
%     +& m|m|^4\sum_{y_1,y_2,y_3,y_4}s_{xy_1}s_{xy_2}s_{xy_3}s_{xy_4}(G_{y_1 b_0}\overline G_{y_1\bar b_i}) (\overline {G}_{y_2 \bar b_0} G_{y_2 b_i}) (G_{a_i y_3}G_{y_3 b_j}) (\overline G_{\bar a_i y_4}G_{a_j y_4})\overline G_{\bar a_j \bar b_j} \  (\Pi_1: \frac{m^2|m|^4}{1-m^2}) \\
%     +& m|m|^4 \sum_{y_1,y_2,y_3,y_4}s_{xy_1}s_{xy_2}s_{xy_3}s_{xy_4}(G_{a_j y_1}G_{y_1 b_0}) (\overline {G}_{y_2 \bar b_0}G_{y_2 b_j}) (G_{a_i y_3}\overline G_{\bar a_i y_3})(G_{y_4 b_i}  \overline G_{y_4\bar b_i})\overline G_{\bar a_j \bar b_j}.\  (\Pi_2: \frac{m^2|m|^4}{1-m^2}) .
% \end{align*}
Together with \eqref{Delta1}, the above calculations show that the first two cases give 
\be\label{Delta2}
\Delta_2(\Pi_1)=\Delta_2(\Pi_2)=-|m|^6+\left(2|m|^6+ 2|m|^6\iota+c.c.\right)=|m|^6\left(3 + 2\iota+ 2\bar \iota\right),
\ee
where $c.c.$ denotes the complex conjugate of the terms in the bracket. 
\medskip
\noindent{\bf Case 3:} Corresponding to the third line on the RHS of \eqref{eq:Gammai}, we need to expand
\begin{align}
&    - \left( G_{x b_0} \overline {G}_{x \bar b_0} -|m|^2T_{x,b_0\bar b_0}\right)\cdot m^{-2}G_{a_1 x}G_{x b_1} G_{a_2 x} G_{x b_2}\overline G_{\bar a_1 \bar b_1}\overline G_{\bar a_2 \bar b_2} \label{eq:case31}\\
&    - \left( G_{x b_0} \overline {G}_{x \bar b_0} -|m|^2T_{x,b_0\bar b_0}\right)\cdot\bar m^{-2}G_{a_1 b_1} \overline G_{\bar a_1 x}\overline G_{x\bar b_1}G_{a_2 b_2}\overline G_{\bar a_2 x}\overline G_{x\bar b_2}. \label{eq:case32}
\end{align}
We will focus on the contribution from the term \eqref{eq:case31} to $\Delta(\Pi_1)$ and $\Delta(\Pi_2)$, while the contribution from the term \eqref{eq:case32} can be obtained by taking the complex conjugate. Applying edge expansions to \eqref{eq:case31} with respect to $G_{x b_0}$, we obtain that 
\begin{align*}
&    -\sum_{y_1} S_{xy_1}  (G_{y_1 b_0}G_{a_1 y_1}) \overline {G}_{x \bar b_0}  G_{x b_1} G_{a_2 x} G_{x b_2}\overline G_{\bar a_1 \bar b_1}\overline G_{\bar a_2 \bar b_2}\\
&    -\sum_{y_1} S_{xy_1}  (G_{y_1 b_0}G_{a_2 y_1})  \overline {G}_{x \bar b_0}  G_{a_1 x}G_{x b_1} G_{x b_2}\overline G_{\bar a_1 \bar b_1}\overline G_{\bar a_2 \bar b_2}\\
&    -m^{-1}\sum_{y_1} S_{xy_1} ( G_{y_1 b_0}\overline G_{y_1 \bar b_1}) \overline {G}_{x \bar b_0}  G_{a_1 x}G_{x b_1} G_{a_2 x} G_{x b_2}\overline G_{\bar a_1 x}\overline G_{\bar a_2 \bar b_2}\\
&    -m^{-1}\sum_{y_1} S_{xy_1} ( G_{y_1 b_0}\overline G_{y_1 \bar b_2}) \overline {G}_{x \bar b_0}  G_{a_1 x}G_{x b_1} G_{a_2 x} G_{x b_2}\overline G_{\bar a_2 x}\overline G_{\bar a_1 \bar b_1}.
\end{align*}
The last term involves pairing $(b_0,\bar b_2)$, and hence is not relevant for \eqref{eq:6-loop} and \eqref{eq:4+2-loop}. For the first three terms, applying the edge expansions and keeping only the relevant graphs for the pairings \eqref{eq:6-loop} and \eqref{eq:4+2-loop}, we obtain that 
\begin{align*}
&   -|m|^2\sum_{y_1,y_2} S_{xy_1} S_{xy_2} (G_{y_1 b_0}G_{a_1 y_1}) (\overline {G}_{y_2 \bar b_0}  G_{y_2 b_1}) G_{a_2 x} G_{x b_2}\overline G_{\bar a_1 \bar b_1}\overline G_{\bar a_2 \bar b_2}\\
%&   -\bar m\sum_{y_1,y_2} s_{xy_1} s_{xy_2} (G_{y_1 b_0}G_{a_1 y_1}) (\overline {G}_{y_2 \bar b_0}\overline G_{\bar a_1 y_2})  G_{x b_1} G_{a_2 x} G_{x b_2}\overline G_{x \bar b_1}\overline G_{\bar a_2 \bar b_2}\\
&   -|m|^2\sum_{y_1,y_2} S_{xy_1} S_{xy_2} (G_{y_1 b_0}G_{a_2 y_1})  (\overline {G}_{y_2 \bar b_0}G_{y_2 b_1} )  G_{a_1 x}G_{x b_2}\overline G_{\bar a_1 \bar b_1}\overline G_{\bar a_2 \bar b_2}\\
& -\bar m\sum_{y_1,y_2} S_{xy_1} S_{xy_2} ( G_{y_1 b_0}\overline G_{y_1 \bar b_1}) (\overline {G}_{y_2 \bar b_0}G_{y_2 b_1})  G_{a_1 x} G_{a_2 x} G_{x b_2}\overline G_{\bar a_1 x}\overline G_{\bar a_2 \bar b_2}.
%&-(\bar m^2/m)\sum_{y_1,y_2} s_{xy_1} s_{xy_2} ( G_{y_1 b_0}\overline G_{y_1 \bar b_1}) (\overline {G}_{y_2 \bar b_0}\overline G_{\bar a_1 y_2})  G_{a_1 x}G_{x b_1} G_{a_2 x} G_{x b_2}\overline G_{\bar a_2 \bar b_2}\\
\end{align*}
Continuing the edge expansions, we get the following relevant graphs:
\begin{align}
&   -m^2|m|^2\sum_{y_1,y_2,y_3} S_{xy_1} S_{xy_2}S_{xy_3} (G_{y_1 b_0}G_{a_1 y_1}) (\overline {G}_{y_2 \bar b_0}  G_{y_2 b_1}) ( G_{y_3 b_2}G_{a_2 y_3})\overline G_{\bar a_1 \bar b_1}\overline G_{\bar a_2 \bar b_2}\label{eq:case311}\\
&   -m|m|^4\sum_{y_1,y_2,y_3,y_4} S_{xy_1} S_{xy_2}S_{xy_3}S_{xy_4} (G_{y_1 b_0}G_{a_1 y_1}) (\overline {G}_{y_2 \bar b_0}  G_{y_2 b_1}) (G_{a_2 y_3}\overline G_{\bar a_1y_3} ) (G_{y_4 b_2}\overline G_{y_4 \bar b_1})\overline G_{\bar a_2 \bar b_2}\label{eq:case312}\\
&   -m|m|^4\sum_{y_1,y_2,y_3,y_4} S_{xy_1} S_{xy_2}S_{xy_3}S_{xy_4} (G_{y_1 b_0}G_{a_1 y_1}) (\overline {G}_{y_2 \bar b_0}  G_{y_2 b_1}) (G_{a_2 y_3}\overline G_{\bar a_2 y_3}) (G_{y_4 b_2}\overline G_{y_4 \bar b_2})\overline G_{\bar a_1 \bar b_1}\label{eq:case313}\\
&  -m^2|m|^2\sum_{y_1,y_2,y_3} S_{xy_1} S_{xy_2}S_{xy_3} (G_{y_1 b_0}G_{a_2 y_1})  (\overline {G}_{y_2 \bar b_0}G_{y_2 b_1} )  (G_{a_1 y_3}G_{y_3 b_2})\overline G_{\bar a_1 \bar b_1}\overline G_{\bar a_2 \bar b_2}\label{eq:case314}\\
&  -m|m|^4\sum_{y_1,y_2,y_3,y_4} S_{xy_1} S_{xy_2}S_{xy_3}S_{xy_4} (G_{y_1 b_0}G_{a_2 y_1})  (\overline {G}_{y_2 \bar b_0}G_{y_2 b_1} ) ( G_{a_1 y_3}\overline G_{\bar a_1 y_3} )(G_{y_4 b_2}\overline G_{y_4 \bar b_1})\overline G_{\bar a_2 \bar b_2}\label{eq:case315}\\
&  -m|m|^4\sum_{y_1,y_2,y_3,y_4} S_{xy_1} S_{xy_2}S_{xy_3}S_{xy_4} (G_{y_1 b_0}G_{a_2 y_1})  (\overline {G}_{y_2 \bar b_0}G_{y_2 b_1} ) ( G_{a_1 y_3}\overline G_{\bar a_2 y_3} )(G_{y_4 b_2}\overline G_{y_4 \bar b_2})\overline G_{\bar a_1 \bar b_1} \label{eq:case316}\\
& - m|m|^4\sum_{y_1,y_2,y_3,y_4} S_{xy_1} S_{xy_2}S_{xy_3}S_{xy_4} ( G_{y_1 b_0}\overline G_{y_1 \bar b_1}) (\overline {G}_{y_2 \bar b_0}G_{y_2 b_1})  (G_{a_1 y_3}G_{y_3 b_2}) (G_{a_2 y_4} \overline G_{\bar a_1 y_4}) \overline G_{\bar a_2 \bar b_2}\label{eq:case317}\\
& -\bar m|m|^2\sum_{y_1,y_2,y_3} S_{xy_1} S_{xy_2}S_{xy_3} ( G_{y_1 b_0}\overline G_{y_1 \bar b_1}) (\overline {G}_{y_2 \bar b_0}G_{y_2 b_1})  (G_{a_1 y_3} \overline G_{\bar a_1 y_3})G_{a_2 x} G_{x b_2}\overline G_{\bar a_2 \bar b_2}\label{eq:case319}\\
&-|m|^2\sum_{y_1,y_2,y_3} S_{xy_1} S_{xy_2}S_{xy_3} ( G_{y_1 b_0}\overline G_{y_1 \bar b_1}) (\overline {G}_{y_2 \bar b_0}G_{y_2 b_1}) ( G_{a_1 y_3} \overline G_{\bar a_2 y_3})G_{a_2 x} G_{x b_2}\overline G_{\bar a_1 x}\overline G_{x \bar b_2}.\label{eq:case318}
\end{align}
Applying the edge and $GG$ expansions to them, we obtain their respective contributions to $\Delta(\Pi_1)$ and $\Delta(\Pi_2)$ as follows: \eqref{eq:case311} contributes $-|m|^6 \iota^2$  to $\Delta(\Pi_2)$; \eqref{eq:case312}'s contribution is 0 since it involves a $(b_0,\bar b_2)$ pairing; \eqref{eq:case313} contributes $-|m|^6 \iota$  to $\Delta(\Pi_2)$; \eqref{eq:case314} contributes $-|m|^6 \iota^2$  to $\Delta(\Pi_1)$; \eqref{eq:case315}'s contribution is 0 since it involves a $(b_0,\bar b_2)$ pairing; \eqref{eq:case316} contributes $-|m|^6 \iota$  to $\Delta(\Pi_1)$; \eqref{eq:case317} contributes $-|m|^6 \iota$  to $\Delta(\Pi_1)$; \eqref{eq:case318} contributes $-|m|^6$  to $\Delta(\Pi_1)$. For \eqref{eq:case319}, applying an edge expansion with respect to $G_{a_2 x}$ gives 
\begin{align*}
&- m|m|^4\sum_{y_1,y_2,y_3,y_4} S_{xy_1} S_{xy_2}S_{xy_3}S_{xy_4} ( G_{y_1 b_0}\overline G_{y_1 \bar b_1}) (\overline {G}_{y_2 \bar b_0}G_{y_2 b_1})  (G_{a_1 y_3} \overline G_{\bar a_1 y_3})(G_{a_2 y_4} G_{y_4 b_2})\overline G_{\bar a_2 \bar b_2} \\
&-|m|^6\sum_{y_1,y_2,y_3,y_4,y_5} S_{xy_1} S_{xy_2}S_{xy_3}S_{xy_4}S_{xy_5} ( G_{y_1 b_0}\overline G_{y_1 \bar b_1}) (\overline {G}_{y_2 \bar b_0}G_{y_2 b_1})  (G_{a_1 y_3} \overline G_{\bar a_1 y_3})(G_{a_2 y_4}\overline G_{\bar a_2 y_4}) (G_{y_5 b_2}\overline G_{y_5 \bar b_2}) .
\end{align*}
The first term contributes $-|m|^6 \iota$  to $\Delta(\Pi_2)$ after a $GG$-expansion, and the second term contributes $-|m|^6$  to $\Delta(\Pi_2)$. 
Together with \eqref{Delta2}, the above calculations show that the first three cases give 
\begin{align}
\Delta_3(\Pi_1)=\Delta_3(\Pi_2)&= |m|^6\left(3 + 2\iota+ 2\bar \iota \right) - |m|^6 \left(1+ 2\iota+\iota^2 +c.c.\right) = |m|^6\left(1-\iota^2-\bar \iota^2\right). \label{Delta3}
\end{align}

% \begin{align}
% \Delta_3(\Pi_1)=\Delta_3(\Pi_2)&= |m|^4\left(3|m|^2 + \frac{2m^2}{1-m^2}+ \frac{2\bar m^2}{1-\bar m^2}\right) - \left(|m|^2\left(\frac{m^2}{1-m^2}\right)^2+ 2\frac{m^2|m|^4}{1-m^2}+|m|^6+c.c.\right)\nonumber\\
% &= |m|^2\left(|m|^4-\left(\frac{m^2}{1-m^2}\right)^2-\left(\frac{\bar m^2}{1-\bar m^2}\right)^2\right), \label{Delta3}
% \end{align}
% where $c.c.$ comes from the contribution $-Q_x( G_{x b_0} \overline {G}_{x \bar b_0} )\bar m^{-2}G_{a_1 b_1} \overline G_{\bar a_1 x}\overline G_{x\bar b_1}G_{a_2 b_2}\overline G_{\bar a_2 x}\overline G_{x\bar b_2}$. 

\medskip
\noindent{\bf Case 4:} Corresponding to the second line on the RHS of \eqref{eq:Gammai}, we need to expand
\begin{align}
&    - \sum_{i=1}^2 \left[\left( G_{x b_0} \overline {G}_{x \bar b_0} -|m|^2T_{x,b_0\bar b_0}\right)\cdot |m|^{-2}G_{a_i x}G_{x b_i} \overline G_{\bar a_i x}\overline G_{x\bar b_i}G_{a_j b_j}\overline G_{\bar a_j \bar b_j} + (\bar a_i,\bar b_i)\leftrightarrow (\bar a_j,\bar b_j)\right] \label{eq:case41},
\end{align}
where $(\bar a_i,\bar b_i)\leftrightarrow (\bar a_j,\bar b_j)$ denotes an expression that is obtained by switching $(\bar a_i,\bar b_i)$ and $(\bar a_j,\bar b_j)$ in all the expressions before it. First, applying edge expansions with respect to $G_{xb_0}$, we obtain that 
% $$-|m|^{-2}G_{a_i x}G_{x b_i} \overline G_{\bar a_i x}\overline G_{x\bar b_i}G_{a_j b_j}\overline G_{\bar a_j \bar b_j}-|m|^{-2}G_{a_i x}G_{x b_i} \overline G_{\bar a_i \bar b_i}G_{a_j b_j}\overline G_{\bar a_j x}\overline G_{x \bar b_j}  $$
% $-Q_x( G_{x b_0} \overline {G}_{x \bar b_0})|m|^{-2}G_{a_i x}G_{x b_i} G_{a_j b_j} \left(\overline G_{\bar a_i x}\overline G_{x\bar b_i}\overline G_{\bar a_j \bar b_j}+\overline G_{\bar a_j x}\overline G_{x \bar b_j}  \overline G_{\bar a_i \bar b_i}\right)$:
\begin{align*}
    &-(m/\bar m) \sum_{y_1}S_{xy_1}( G_{y_1 b_0} G_{a_i y_1})\overline {G}_{x \bar b_0}G_{x b_i} \overline G_{\bar a_i x}\overline G_{x\bar b_i}G_{a_j b_j}\overline G_{\bar a_j \bar b_j}\\
    &-\sum_{y_1}S_{xy_1}( G_{y_1 b_0} \overline G_{y_1\bar b_i})\overline {G}_{x \bar b_0}G_{a_i x}G_{x b_i} \overline G_{\bar a_i x}G_{a_j b_j}\overline G_{\bar a_j \bar b_j}\\
    &-\bar m^{-1}\sum_{y_1}S_{xy_1}( G_{y_1 b_0} G_{a_j y_1})\overline {G}_{x \bar b_0}G_{a_i x}G_{x b_i} \overline G_{\bar a_i x}\overline G_{x\bar b_i}G_{x b_j}\overline G_{\bar a_j \bar b_j}\\
    &-\bar m^{-1}\sum_{y_1}S_{xy_1}( G_{y_1 b_0} \overline G_{y_1 \bar b_j})\overline {G}_{x \bar b_0}G_{a_i x}G_{x b_i} \overline G_{\bar a_i x}\overline G_{x\bar b_i}\overline G_{\bar a_j x}G_{a_j b_j} + (\bar a_i,\bar b_i)\leftrightarrow (\bar a_j,\bar b_j). 
\end{align*}
Next, applying the edge expansions with respect to $\bar G_{x\bar b_0}$, we get the following graphs:
\begin{align*}
&-m^2\sum_{y_1,y_2}S_{xy_1}S_{xy_2}( G_{y_1 b_0} G_{a_i y_1})(\overline {G}_{y_2 \bar b_0}G_{y_2 b_i}) \overline G_{\bar a_i x}\overline G_{x\bar b_i}G_{a_j b_j}\overline G_{\bar a_j \bar b_j}\\
&-|m|^{2}\sum_{y_1,y_2}S_{xy_1}S_{xy_2}( G_{y_1 b_0} G_{a_i y_1})(\overline {G}_{y_2 \bar b_0} \overline G_{\bar a_i y_2})G_{x b_i}\overline G_{x\bar b_i}G_{a_j b_j}\overline G_{\bar a_j \bar b_j}\\
&-m\sum_{y_1,y_2}S_{xy_1}S_{xy_2}( G_{y_1 b_0} G_{a_i y_1})(\overline {G}_{y_2 \bar b_0}G_{y_2 b_j})G_{x b_i} \overline G_{\bar a_i x}\overline G_{x\bar b_i}G_{a_j x}\overline G_{\bar a_j \bar b_j}\\
&-|m|^2\sum_{y_1,y_2}S_{xy_1}S_{xy_2}( G_{y_1 b_0} \overline G_{y_1\bar b_i})(\overline {G}_{y_2 \bar b_0}G_{y_2 b_i})G_{a_i x} \overline G_{\bar a_i x}G_{a_j b_j}\overline G_{\bar a_j \bar b_j}\\
&-\bar m^2\sum_{y_1,y_2}S_{xy_1}S_{xy_2}( G_{y_1 b_0} \overline G_{y_1\bar b_i})(\overline {G}_{y_2 \bar b_0}\overline G_{\bar a_i y_2})G_{a_i x}G_{x b_i} G_{a_j b_j}\overline G_{\bar a_j \bar b_j}\\
&-\bar m\sum_{y_1,y_2}S_{xy_1}S_{xy_2}( G_{y_1 b_0} \overline G_{y_1\bar b_i})(\overline {G}_{y_2 \bar b_0}G_{y_2 b_j})G_{a_i x}G_{x b_i} \overline G_{\bar a_i x}G_{a_j x}\overline G_{\bar a_j \bar b_j}\\
&-\bar m\sum_{y_1,y_2}S_{xy_1}S_{xy_2}( G_{y_1 b_0} \overline G_{y_1\bar b_i})(\overline {G}_{y_2 \bar b_0}\overline G_{\bar a_jy_2})G_{a_i x}G_{x b_i} \overline G_{\bar a_i x}\overline G_{x\bar b_j}G_{a_j b_j}\\
&-m\sum_{y_1,y_2}S_{xy_1}S_{xy_2}( G_{y_1 b_0} G_{a_j y_1})(\overline {G}_{y_2 \bar b_0}G_{y_2 b_i})G_{a_i x} \overline G_{\bar a_i x}\overline G_{x\bar b_i}G_{x b_j}\overline G_{\bar a_j \bar b_j}\\
&-m\sum_{y_1,y_2}S_{xy_1}S_{xy_2}( G_{y_1 b_0} G_{a_j y_1})(\overline {G}_{y_2 \bar b_0}G_{y_2 b_j})G_{a_i x}G_{x b_i} \overline G_{\bar a_i x}\overline G_{x\bar b_i}\overline G_{\bar a_j \bar b_j}\\
&-m\sum_{y_1,y_2}S_{xy_1}S_{xy_2}( G_{y_1 b_0} \overline G_{y_1 \bar b_j})(\overline {G}_{y_2 \bar b_0}G_{y_2 b_i})G_{a_i x} \overline G_{\bar a_i x}\overline G_{x\bar b_i}\overline G_{\bar a_j x}G_{a_j b_j} \\
&-\bar m\sum_{y_1,y_2}S_{xy_1}S_{xy_2}( G_{y_1 b_0} \overline G_{y_1 \bar b_j})(\overline {G}_{y_2 \bar b_0}\overline G_{\bar a_i y_2})G_{a_i x}G_{x b_i} \overline G_{x\bar b_i}\overline G_{\bar a_j x}G_{a_j b_j} \\
&-\bar m\sum_{y_1,y_2}S_{xy_1}S_{xy_2}( G_{y_1 b_0} \overline G_{y_1 \bar b_j})(\overline {G}_{y_2 \bar b_0}\overline G_{\bar a_j y_2})G_{a_i x}G_{x b_i} \overline G_{\bar a_i x}\overline G_{x\bar b_i}G_{a_j b_j} \\
&-\sum_{y_1,y_2}S_{xy_1}S_{xy_2}( G_{y_1 b_0} \overline G_{y_1 \bar b_j})(\overline {G}_{y_2 \bar b_0}G_{y_2 b_j})G_{a_i x}G_{x b_i} \overline G_{\bar a_i x}\overline G_{x\bar b_i}\overline G_{\bar a_j x}G_{a_j x} + (\bar a_i,\bar b_i)\leftrightarrow (\bar a_j,\bar b_j).
\end{align*}
Continuing the edge expansions, we get the following graphs that are relevant for \eqref{eq:6-loop} and \eqref{eq:4+2-loop}:
\begin{align}
&-|m|^4\sum_{y_1,y_2,y_3}S_{xy_1}S_{xy_2}S_{xy_3}( G_{y_1 b_0} G_{a_i y_1})(\overline {G}_{y_2 \bar b_0}G_{y_2 b_i}) (\overline G_{\bar a_i y_3}\overline G_{y_3\bar b_i})G_{a_j b_j}\overline G_{\bar a_j \bar b_j} \label{eq:case411}\\
&-m |m|^4\sum_{y_1,y_2,y_3,y_4}S_{xy_1}S_{xy_2}S_{xy_3}S_{xy_4}( G_{y_1 b_0} G_{a_i y_1})(\overline {G}_{y_2 \bar b_0}G_{y_2 b_i}) (\overline G_{\bar a_i y_3}G_{a_j y_3})(\overline G_{y_4\bar b_i}G_{y_4 b_j})\overline G_{\bar a_j \bar b_j} \label{eq:case412}\\
& -|m|^{4}\sum_{y_1,y_2,y_3}S_{xy_1}S_{xy_2}S_{xy_3}( G_{y_1 b_0} G_{a_i y_1})(\overline {G}_{y_2 \bar b_0} \overline G_{\bar a_i y_2})(G_{y_3 b_i}\overline G_{y_3\bar b_i})G_{a_j b_j}\overline G_{\bar a_j \bar b_j}\label{eq:case413}\\
& -m|m|^4\sum_{y_1,y_2,y_3,y_4}S_{xy_1}S_{xy_2}S_{xy_3}S_{xy_4}( G_{y_1 b_0} G_{a_i y_1})(\overline {G}_{y_2 \bar b_0}G_{y_2 b_j})(G_{y_3 b_i}\overline G_{y_3\bar b_i} )(\overline G_{\bar a_i y_4}G_{a_j y_4})\overline G_{\bar a_j \bar b_j} \label{eq:case414}\\
&-|m|^4\sum_{y_1,y_2,y_3}S_{xy_1}S_{xy_2}S_{xy_3}( G_{y_1 b_0} \overline G_{y_1\bar b_i})(\overline {G}_{y_2 \bar b_0}G_{y_2 b_i})(G_{a_i y_3} \overline G_{\bar a_i y_3})G_{a_j b_j}\overline G_{\bar a_j \bar b_j} \label{eq:case415} \\
& -m|m|^4\sum_{y_1,y_2,y_3,y_4}S_{xy_1}S_{xy_2}S_{xy_3}S_{xy_4}( G_{y_1 b_0} \overline G_{y_1\bar b_i})(\overline {G}_{y_2 \bar b_0}G_{y_2 b_i})(G_{a_i y_3}G_{y_3 b_j}) (\overline G_{\bar a_i y_4}G_{a_j y_4})\overline G_{\bar a_j \bar b_j} \label{eq:case416} \\
%&  -m|m|^2\sum_{y_1,y_2,y_3}s_{xy_1}s_{xy_2}s_{xy_3}( G_{y_1 b_0} \overline G_{y_1\bar b_i})(\overline {G}_{y_2 \bar b_0}G_{y_2 b_i})(G_{a_i y_3}\overline G_{\bar a_j y_3} ) \overline G_{\bar a_i x}\overline G_{x \bar b_j}G_{a_j b_j}\\
&-\bar m|m|^4\sum_{y_1,y_2,y_3,y_4}S_{xy_1}S_{xy_2}S_{xy_3}S_{xy_4}( G_{y_1 b_0} \overline G_{y_1\bar b_i})(\overline {G}_{y_2 \bar b_0}G_{y_2 b_i})(G_{a_i y_3}\overline G_{\bar a_j y_3} ) (\overline G_{\bar a_i y_4}\overline G_{y_4 \bar b_j})G_{a_j b_j}  \label{eq:case417}\\
& - |m|^4\sum_{y_1,y_2,y_3,y_4}S_{xy_1}S_{xy_2}S_{xy_3}S_{xy_4}( G_{y_1 b_0} \overline G_{y_1\bar b_i})(\overline {G}_{y_2 \bar b_0}G_{y_2 b_i})(G_{a_i y_3}\overline G_{\bar a_j y_3} ) (\overline G_{\bar a_i y_4}G_{a_jy_4})\overline G_{x \bar b_j}G_{x b_j}\label{eq:case418}\\
&-|m|^4\sum_{y_1,y_2,y_3}S_{xy_1}S_{xy_2}S_{xy_3}( G_{y_1 b_0} \overline G_{y_1\bar b_i})(\overline {G}_{y_2 \bar b_0}\overline G_{\bar a_i y_2})(G_{a_i y_3}G_{y_3 b_i}) G_{a_j b_j}\overline G_{\bar a_j \bar b_j} \label{eq:case419}\\
& -\bar m |m|^4\sum_{y_1,y_2,y_3,y_4}S_{xy_1}S_{xy_2}S_{xy_3}S_{xy_4}( G_{y_1 b_0} \overline G_{y_1\bar b_i})(\overline {G}_{y_2 \bar b_0}\overline G_{\bar a_i y_2})(G_{a_i y_3}\overline G_{\bar a_j y_3})(G_{y_4 b_i}\overline G_{y_4 \bar b_j}) G_{a_j b_j} \label{eq:case4110} \\
&- m|m|^4\sum_{y_1,y_2,y_3,y_4}S_{xy_1}S_{xy_2}S_{xy_3}S_{xy_4}( G_{y_1 b_0} \overline G_{y_1\bar b_i})(\overline {G}_{y_2 \bar b_0}G_{y_2 b_j})(G_{a_i y_3}G_{y_3 b_i}) (\overline G_{\bar a_i y_4}G_{a_j y_4})\overline G_{\bar a_j \bar b_j}\label{eq:case4111}\\
&-|m|^6\sum_{\substack{y_1,y_2,y_3\\ y_4,y_5}}S_{xy_1}S_{xy_2}S_{xy_3}S_{xy_4}S_{xy_5}( G_{y_1 b_0} \overline G_{y_1\bar b_i})(\overline {G}_{y_2 \bar b_0}G_{y_2 b_j})(G_{a_i y_3}\overline G_{\bar a_jy_3})(G_{y_4 b_i} \overline G_{y_4 \bar b_j})\overline G_{\bar a_i y_5}G_{a_j y_5} \label{eq:case4112}\\
%&\cob -\bar m|m|^2\sum_{y_1,y_2,y_3}s_{xy_1}s_{xy_2}s_{xy_3}( G_{y_1 b_0} \overline G_{y_1\bar b_i})(\overline {G}_{y_2 \bar b_0}G_{y_2 b_j})(G_{a_i y_3}\overline G_{\bar a_i y_3})G_{x b_i} G_{a_j x}\overline G_{\bar a_j \bar b_j}\\
& - m|m|^4\sum_{y_1,y_2,y_3,y_4}S_{xy_4}S_{xy_1}S_{xy_2}S_{xy_3}( G_{y_1 b_0} \overline G_{y_1\bar b_i})(\overline {G}_{y_2 \bar b_0}G_{y_2 b_j})(G_{a_i y_3}\overline G_{\bar a_i y_3})(G_{y_4 b_i} G_{a_j y_4})\overline G_{\bar a_j \bar b_j}\label{eq:case4113}\\
& - |m|^6\sum_{\substack{y_1,y_2,y_3\\ y_4,y_5}}S_{xy_1}S_{xy_2}S_{xy_3}S_{xy_4}S_{xy_5}( G_{y_1 b_0} \overline G_{y_1\bar b_i})(\overline {G}_{y_2 \bar b_0}G_{y_2 b_j})(G_{a_i y_3}\overline G_{\bar a_i y_3})(G_{y_4 b_i}\overline G_{y_4 \bar b_j}) G_{a_j y_5}\overline G_{\bar a_j y_5} \label{eq:case4114}\\
&-\bar m|m|^4\sum_{y_1,y_2,y_3,y_4}S_{xy_1}S_{xy_2}S_{xy_3}S_{xy_4}( G_{y_1 b_0} \overline G_{y_1\bar b_i})(\overline {G}_{y_2 \bar b_0}\overline G_{\bar a_jy_2})(G_{a_i y_3}\overline G_{\bar a_i y_3})(G_{y_4 b_i} \overline G_{y_4\bar b_j})G_{a_j b_j}\label{eq:case4115}\\
&-m|m|^4\sum_{y_1,y_2,y_3,y_4}S_{xy_1}S_{xy_2}S_{xy_3}S_{xy_4}( G_{y_1 b_0} G_{a_j y_1})(\overline {G}_{y_2 \bar b_0}G_{y_2 b_i})(G_{a_i y_3} \overline G_{\bar a_i y_3})(\overline G_{y_4\bar b_i}G_{y_4 b_j})\overline G_{\bar a_j \bar b_j}\label{eq:case4116}\\
& -m|m|^4\sum_{y_1,y_2,y_3,y_4}S_{xy_1}S_{xy_2}S_{xy_3}S_{xy_4}( G_{y_1 b_0} G_{a_j y_1})(\overline {G}_{y_2 \bar b_0}G_{y_2 b_j})(G_{a_i y_3}\overline G_{\bar a_i y_3})(G_{y_4 b_i} \overline G_{y_4\bar b_i})\overline G_{\bar a_j \bar b_j} \label{eq:case4117}\\
%&\cob -m|m|^2\sum_{y_1,y_2,y_3}s_{xy_1}s_{xy_2}s_{xy_3}( G_{y_1 b_0} \overline G_{y_1 \bar b_j})(\overline {G}_{y_2 \bar b_0}G_{y_2 b_i})(G_{a_i y_3} \overline G_{\bar a_i y_3})\overline G_{x\bar b_i}\overline G_{\bar a_j x}G_{a_j b_j} \\
& -\bar m |m|^4\sum_{y_1,y_2,y_3,y_4}S_{xy_1}S_{xy_2}S_{xy_3}S_{xy_4}( G_{y_1 b_0} \overline G_{y_1 \bar b_j})(\overline {G}_{y_2 \bar b_0}G_{y_2 b_i})(G_{a_i y_3} \overline G_{\bar a_i y_3})(\overline G_{y_4\bar b_i}\overline G_{\bar a_j y_4})G_{a_j b_j} \label{eq:case4118}\\
& - |m|^6\sum_{\substack{y_1,y_2,y_3\\ y_4,y_5}}S_{xy_1}S_{xy_2}S_{xy_3}S_{xy_4}S_{xy_5}( G_{y_1 b_0} \overline G_{y_1 \bar b_j})(\overline {G}_{y_2 \bar b_0}G_{y_2 b_i})(G_{a_i y_3} \overline G_{\bar a_i y_3})(\overline G_{y_4\bar b_i}G_{y_4 b_j})\overline G_{\bar a_j y_5}G_{a_jy_5} \label{eq:case4119}\\
%&\cob -m|m|^2\sum_{y_1,y_2,y_3}s_{xy_1}s_{xy_2}s_{xy_3}( G_{y_1 b_0} \overline G_{y_1 \bar b_j})(\overline {G}_{y_2 \bar b_0}G_{y_2 b_i})(G_{a_i y_3} \overline G_{\bar a_j y_3})\overline G_{\bar a_i x}\overline G_{x\bar b_i}G_{a_j b_j} \\
& -\bar m|m|^4\sum_{y_1,y_2,y_3,y_4}S_{xy_1}S_{xy_2}S_{xy_3}S_{xy_4}( G_{y_1 b_0} \overline G_{y_1 \bar b_j})(\overline {G}_{y_2 \bar b_0}G_{y_2 b_i})(G_{a_i y_3} \overline G_{\bar a_j y_3})(\overline G_{\bar a_i y_4}\overline G_{y_4\bar b_i})G_{a_j b_j} \label{eq:case4120} \\
& - |m|^6\sum_{\substack{y_1,y_2,y_3\\ y_4,y_5}}S_{xy_1}S_{xy_2}S_{xy_3}S_{xy_4}S_{xy_5}( G_{y_1 b_0} \overline G_{y_1 \bar b_j})(\overline {G}_{y_2 \bar b_0}G_{y_2 b_i})(G_{a_i y_3} \overline G_{\bar a_j y_3})(\overline G_{\bar a_i y_4}G_{a_j y_4})\overline G_{y_5\bar b_i}G_{y_5 b_j}  \label{eq:case4121}\\
&-\bar m|m|^4\sum_{y_1,y_2,y_3,y_4}S_{xy_1}S_{xy_2}S_{xy_3}S_{xy_4}( G_{y_1 b_0} \overline G_{y_1 \bar b_j})(\overline {G}_{y_2 \bar b_0}\overline G_{\bar a_i y_2})(G_{a_i y_3}\overline G_{\bar a_j y_3})(G_{y_4 b_i} \overline G_{y_4\bar b_i})G_{a_j b_j} \label{eq:case4122} \\
& -\bar m|m|^4\sum_{y_1,y_2,y_3,y_4}S_{xy_1}S_{xy_2}S_{xy_3}S_{xy_4}( G_{y_1 b_0} \overline G_{y_1 \bar b_j})(\overline {G}_{y_2 \bar b_0}\overline G_{\bar a_j y_2})(G_{a_i y_3}\overline G_{\bar a_i y_3})(G_{y_4 b_i} \overline G_{y_4\bar b_i})G_{a_j b_j} \label{eq:case4123}\\
&-|m|^6\sum_{\substack{y_1,y_2,y_3\\ y_4,y_5}}S_{xy_1}S_{xy_2}S_{xy_3}S_{xy_4}S_{xy_5}( G_{y_1 b_0} \overline G_{y_1 \bar b_j})(\overline {G}_{y_2 \bar b_0}G_{y_2 b_j})(G_{a_i y_3}\overline G_{\bar a_i y_3})(G_{y_4 b_i} \overline G_{y_4\bar b_i})\overline G_{\bar a_j y_5}G_{a_j y_5} \label{eq:case4124} \\
& -|m|^6\sum_{\substack{y_1,y_2,y_3\\ y_4,y_5}}S_{xy_1}S_{xy_2}S_{xy_3}S_{xy_4}S_{xy_5} ( G_{y_1 b_0} \overline G_{y_1 \bar b_j})(\overline {G}_{y_2 \bar b_0}G_{y_2 b_j})(G_{a_i y_3}\overline G_{\bar a_j y_3})(G_{y_4 b_i}\overline G_{y_4\bar b_i}) \overline G_{\bar a_i y_5}G_{a_j y_5} \label{eq:case4125} \\
&+  (\bar a_i,\bar b_i)\leftrightarrow (\bar a_j,\bar b_j).\nonumber
\end{align}
Applying $GG$ expansions to these graphs, we obtain their respective contributions to $\Delta(\Pi_1)$ and $\Delta(\Pi_2)$: (1) \eqref{eq:case411}'s contribution is 0 (since it involves  $(b_0,\bar b_j), (\bar b_0,b_i)$ pairings), but its $ (\bar a_i,\bar b_i)\leftrightarrow (\bar a_j,\bar b_j)$ counterpart contributes $-|m|^6 |\iota|^2$ to $\Delta(\Pi_2)$; (2) \eqref{eq:case412}'s contribution is 0, but its $ (\bar a_i,\bar b_i)\leftrightarrow (\bar a_j,\bar b_j)$ counterpart contributes $-|m|^6 \iota$ to $\Delta(\Pi_2)$; (3) \eqref{eq:case413}
contributes $-|m|^6 |\iota|^2$ to $\Delta(\Pi_1)$, and its $ (\bar a_i,\bar b_i)\leftrightarrow (\bar a_j,\bar b_j)$ counterpart's contribution is 0; (4) \eqref{eq:case414}
contributes $-|m|^6 \iota$ to $\Delta(\Pi_1)$, and its $ (\bar a_i,\bar b_i)\leftrightarrow (\bar a_j,\bar b_j)$ counterpart's contribution is 0; (5) \eqref{eq:case415} is a term corresponding to the $p=1$ case and does not contribute to $\Delta(\Pi_1)$ and $\Delta(\Pi_2)$; (6) \eqref{eq:case416}
contributes $-|m|^6 \iota$ to $\Delta(\Pi_1)$, and its $ (\bar a_i,\bar b_i)\leftrightarrow (\bar a_j,\bar b_j)$ counterpart's contribution is 0; (7) \eqref{eq:case417}
contributes $-|m|^6 \bar \iota$ to $\Delta(\Pi_1)$, and its $ (\bar a_i,\bar b_i)\leftrightarrow (\bar a_j,\bar b_j)$ counterpart's contribution is 0; (8) \eqref{eq:case418}
contributes $-|m|^6$ to $\Delta(\Pi_1)$, and its $ (\bar a_i,\bar b_i)\leftrightarrow (\bar a_j,\bar b_j)$ counterpart's contribution is 0; (9) \eqref{eq:case419}'s contribution is 0, but its $ (\bar a_i,\bar b_i)\leftrightarrow (\bar a_j,\bar b_j)$ counterpart contributes $-|m|^6 |\iota|^2$ to $\Delta(\Pi_2)$;  (10) \eqref{eq:case4110}'s contribution is 0, but its $ (\bar a_i,\bar b_i)\leftrightarrow (\bar a_j,\bar b_j)$ counterpart contributes $-|m|^6 \bar \iota $ to $\Delta(\Pi_2)$; (11) \eqref{eq:case4111}'s contribution is 0, but its $ (\bar a_i,\bar b_i)\leftrightarrow (\bar a_j,\bar b_j)$ counterpart contributes $-|m|^6 \iota $ to $\Delta(\Pi_2)$; (12) \eqref{eq:case4112}'s contribution is 0, but its $ (\bar a_i,\bar b_i)\leftrightarrow (\bar a_j,\bar b_j)$ counterpart contributes $-|m|^6 $ to $\Delta(\Pi_2)$; (13) \eqref{eq:case4113}'s contribution is 0, but its $ (\bar a_i,\bar b_i)\leftrightarrow (\bar a_j,\bar b_j)$ counterpart contributes $-|m|^6\iota $ to $\Delta(\Pi_1)$; (14) \eqref{eq:case4114}'s contribution is 0, but its $ (\bar a_i,\bar b_i)\leftrightarrow (\bar a_j,\bar b_j)$ counterpart contributes $-|m|^6$ to $\Delta(\Pi_1)$; (15) \eqref{eq:case4115}'s contribution is 0, but its $ (\bar a_i,\bar b_i)\leftrightarrow (\bar a_j,\bar b_j)$ counterpart contributes $-|m|^6\bar \iota $ to $\Delta(\Pi_1)$; (16) \eqref{eq:case4116}'s contribution is 0, but its $ (\bar a_i,\bar b_i)\leftrightarrow (\bar a_j,\bar b_j)$ counterpart contributes $-|m|^6 \iota $ to $\Delta(\Pi_1)$; (17) \eqref{eq:case4117} contributes $-|m|^6 \iota$ to $\Delta(\Pi_2)$, and its $ (\bar a_i,\bar b_i)\leftrightarrow (\bar a_j,\bar b_j)$ counterpart's contribution is 0; (18) \eqref{eq:case4118}'s contribution is 0, but its $ (\bar a_i,\bar b_i)\leftrightarrow (\bar a_j,\bar b_j)$ counterpart contributes $-|m|^6 \bar \iota $ to $\Delta(\Pi_1)$; (19) \eqref{eq:case4119}'s contribution is 0, but its $ (\bar a_i,\bar b_i)\leftrightarrow (\bar a_j,\bar b_j)$ counterpart contributes $-|m|^6$ to $\Delta(\Pi_1)$; (20) \eqref{eq:case4120}'s contribution is 0, but its $ (\bar a_i,\bar b_i)\leftrightarrow (\bar a_j,\bar b_j)$ counterpart contributes $-|m|^6\bar \iota$ to $\Delta(\Pi_2)$; (21) \eqref{eq:case4121}'s contribution is 0, but its $ (\bar a_i,\bar b_i)\leftrightarrow (\bar a_j,\bar b_j)$ counterpart contributes $-|m|^6$ to $\Delta(\Pi_2)$;  (22) \eqref{eq:case4122} contributes $-|m|^6 \bar \iota$ to $\Delta(\Pi_1)$, and its $ (\bar a_i,\bar b_i)\leftrightarrow (\bar a_j,\bar b_j)$ counterpart's contribution is 0; (23) \eqref{eq:case4123} contributes $-|m|^6 \bar \iota$ to $\Delta(\Pi_2)$, and its $ (\bar a_i,\bar b_i)\leftrightarrow (\bar a_j,\bar b_j)$ counterpart's contribution is 0; (24) \eqref{eq:case4124} contributes $-|m|^6$ to $\Delta(\Pi_2)$, and its $ (\bar a_i,\bar b_i)\leftrightarrow (\bar a_j,\bar b_j)$ counterpart's contribution is 0; (25) \eqref{eq:case4125} contributes $-|m|^6$ to $\Delta(\Pi_1)$, and its $ (\bar a_i,\bar b_i)\leftrightarrow (\bar a_j,\bar b_j)$ counterpart's contribution is 0. 

Together with \eqref{Delta3}, the above calculations show that the first four cases give the following contributions to $\Delta(\Pi_1)$ and $\Delta(\Pi_2)$: 
\begin{align}
&\Delta_4(\Pi_1)= |m|^6\left(1-\iota^2-\bar \iota^2\right)-|m|^6\left(4+4\iota + 4\bar \iota+ |\iota|^2 \right)=-|m|^6\left(3+4\iota + 4\bar \iota+ |\iota|^2 +\iota^2 +\bar \iota^2 \right), \label{Delta41}\\
&\Delta_4(\Pi_2)= |m|^6\left(1-\iota^2-\bar \iota^2\right) -|m|^6\left(3+ 3\iota+ 3\bar \iota+2|\iota|^2\right)=-|m|^6\left(2+ 3\iota+ 3\bar \iota+2|\iota|^2+\iota^2+\bar \iota^2\right). \label{Delta42}
\end{align}

\medskip
 \noindent{\bf Case 5:} Corresponding to the first line on the RHS of \eqref{eq:Gammai}, we need to expand
\begin{align}
&   \sum_{i=1}^2\left( G_{x b_0} \overline {G}_{x \bar b_0} -|m|^2 T_{x,b_0\bar b_0}\right)\cdot m^{-1}G_{a_i x}G_{x b_i} \overline G_{\bar a_i \bar b_i}G_{a_j b_j}\overline G_{\bar a_j \bar b_j} \label{eq:case51}\\
+&   \sum_{i=1}^2 \left( G_{x b_0} \overline {G}_{x \bar b_0} -|m|^2 T_{x,b_0\bar b_0}\right)\cdot\bar m^{-1}G_{a_i b_i} \overline G_{\bar a_i x}\overline G_{x\bar b_i}G_{a_j b_j}\overline G_{\bar a_j \bar b_j}. \label{eq:case52}
\end{align}
We will focus on the contribution from the term \eqref{eq:case51} to $\Delta(\Pi_1)$ and $\Delta(\Pi_2)$, while the contribution from the term \eqref{eq:case52} can be obtained by taking the complex conjugate. Applying edge expansions to \eqref{eq:case51} with respect to $G_{x b_0}$, we obtain the following relevant graphs (without the summation notation over $i$): 
%$Q_x( G_{x b_0} \overline {G}_{x \bar b_0} )m^{-1}G_{a_i x}G_{x b_i} \overline G_{\bar a_i \bar b_i}G_{a_j b_j}\overline G_{\bar a_j \bar b_j}$. Applying the multi-edge expansions, we obtain the leading relevant terms as
\begin{align*}
	& m\sum_{y_1} S_{xy_1}(G_{y_1 b_0}G_{a_i y_1}) (\overline {G}_{x \bar b_0}G_{x b_i}) \overline G_{\bar a_i \bar b_i}G_{a_j b_j}\overline G_{\bar a_j \bar b_j} + \sum_{y_1} S_{xy_1}(G_{y_1 b_0}\overline G_{y_1 \bar b_i})( \overline {G}_{x \bar b_0} G_{a_i x}G_{x b_i} \overline G_{\bar a_i x})G_{a_j b_j}\overline G_{\bar a_j \bar b_j}   \\
 	+& \sum_{y_1} S_{xy_1}(G_{y_1 b_0}G_{a_j y_1} ) (\overline {G}_{x \bar b_0} G_{a_i x}G_{x b_i}  G_{xb_j})\overline G_{\bar a_i \bar b_i}\overline G_{\bar a_j \bar b_j} + \sum_{y_1} S_{xy_1}(G_{y_1 b_0}\overline G_{y_1 \bar b_j}) (\overline {G}_{x \bar b_0} G_{a_i x}G_{x b_i}\overline G_{\bar a_j x}) \overline G_{\bar a_i \bar b_i}G_{a_j b_j}  .
\end{align*}
Next, applying edge expansions with respect to $\overline {G}_{x \bar b_0}$, we get the following graphs: 
\begin{align}
 &~  m|m|^2\sum_{y_1,y_2} S_{xy_1}S_{xy_2}(G_{y_1 b_0}G_{a_i y_1}) (\overline {G}_{y_2 \bar b_0}G_{y_2 b_i}) \overline G_{\bar a_i \bar b_i}G_{a_j b_j}\overline G_{\bar a_j \bar b_j} \label{eq:case510}\\
 +&~  |m|^2\sum_{y_1,y_2} S_{xy_1}S_{xy_2}(G_{y_1 b_0}G_{a_i y_1}) (\overline {G}_{y_2 \bar b_0}\overline G_{\bar a_i y_2})G_{x b_i} \overline G_{x \bar b_i}G_{a_j b_j}\overline G_{\bar a_j \bar b_j} \nonumber\\
 +&~  |m|^2\sum_{y_1,y_2} S_{xy_1}S_{xy_2}(G_{y_1 b_0}G_{a_i y_1}) (\overline {G}_{y_2 \bar b_0}G_{y_2 b_j})G_{x b_i}G_{a_j x} \overline G_{\bar a_i \bar b_i}\overline G_{\bar a_j \bar b_j} \nonumber\\
 +&~  |m|^2\sum_{y_1,y_2} S_{xy_1}S_{xy_2}(G_{y_1 b_0}G_{a_i y_1}) (\overline {G}_{y_2 \bar b_0}\overline G_{\bar a_j y_2}) G_{x b_i}\overline G_{x\bar b_j} \overline G_{\bar a_i \bar b_i}G_{a_j b_j}\nonumber\\
 +&~  |m|^2\sum_{y_1,y_2} S_{xy_1}S_{xy_2}(G_{y_1 b_0}\overline G_{y_1 \bar b_i})( \overline {G}_{y_2 \bar b_0}G_{y_2 b_i}) G_{a_i x} \overline G_{\bar a_i x}G_{a_j b_j}\overline G_{\bar a_j \bar b_j}  \nonumber \\
 +&~ \bar m^2 \sum_{y_1,y_2} S_{xy_1}S_{xy_2}(G_{y_1 b_0}\overline G_{y_1 \bar b_i})( \overline {G}_{y_2 \bar b_0}\overline G_{\bar a_i y_2}) G_{a_i x}G_{x b_i} G_{a_j b_j}\overline G_{\bar a_j \bar b_j}  \nonumber \\
% &+\cob  m\sum_{y_1,y_2} s_{xy_1}s_{xy_2}(G_{y_1 b_0}\overline G_{y_1 \bar b_i})( \overline {G}_{y_2 \bar b_0}G_{y_2 b_j}) G_{a_i x}G_{x b_i} \overline G_{\bar a_i x}G_{a_j x}\overline G_{\bar a_j \bar b_j}   \\
+&~\bar m \sum_{y_1,y_2} S_{xy_1}S_{xy_2}(G_{y_1 b_0}\overline G_{y_1 \bar b_i})( \overline {G}_{y_2 \bar b_0}\overline G_{\bar a_j y_2}) G_{a_i x}G_{x b_i} \overline G_{\bar a_i x}\overline G_{x\bar b_j} G_{a_j b_j} \nonumber \\
 +&~ |m|^2 \sum_{y_1,y_2} S_{xy_1}S_{xy_2}(G_{y_1 b_0}G_{a_j y_1} ) (\overline {G}_{y_2 \bar b_0}G_{y_2 b_i}) G_{a_i x}  G_{xb_j}\overline G_{\bar a_i \bar b_i}\overline G_{\bar a_j \bar b_j} \nonumber\\
 +&~  |m|^2\sum_{y_1,y_2} S_{xy_1}S_{xy_2}(G_{y_1 b_0}G_{a_j y_1} ) (\overline {G}_{y_2  \bar b_0} G_{y_2 b_j}) G_{a_i x}G_{x b_i}  \overline G_{\bar a_i \bar b_i}\overline G_{\bar a_j \bar b_j} \nonumber\\
+&~|m|^2 \sum_{y_1,y_2} S_{xy_1}S_{xy_2}(G_{y_1 b_0}\overline G_{y_1 \bar b_j}) (\overline {G}_{y_2 \bar b_0}G_{y_2 b_i}) G_{a_i x}\overline G_{\bar a_j x} \overline G_{\bar a_i \bar b_i}G_{a_j b_j} \nonumber \\
+&~\bar m^2 \sum_{y_1,y_2} S_{xy_1}S_{xy_2}(G_{y_1 b_0}\overline G_{y_1 \bar b_j}) (\overline {G}_{y_2 \bar b_0}\overline G_{\bar a_j y_2}) G_{a_i x}G_{x b_i} \overline G_{\bar a_i \bar b_i}G_{a_j b_j} \nonumber \\
+&~\bar m \sum_{y_1,y_2} S_{xy_1}S_{xy_2}(G_{y_1 b_0}\overline G_{y_1 \bar b_j}) (\overline {G}_{y_2 \bar b_0}\overline G_{\bar a_i y_2}) G_{a_i x}G_{x b_i}\overline G_{\bar a_j x} \overline G_{x \bar b_i}G_{a_j b_j}\nonumber  \\
+&~\bar m \sum_{y_1,y_2} S_{xy_1}S_{xy_2}(G_{y_1 b_0}\overline G_{y_1 \bar b_j}) (\overline {G}_{y_2 \bar b_0}G_{y_2 b_j}) G_{a_i x}G_{x b_i}\overline G_{\bar a_j x}G_{a_j x} \overline G_{\bar a_i \bar b_i}.  \nonumber
\end{align}
Applying the $GG$-expansion to \eqref{eq:case510}, we get some graphs corresponding to the $p=1$ case and two graphs
\begin{align*}
%&m^2|m|^2\sum_{y_1,y_2,y_3} s_{xy_1}s_{xy_2}s_{y_1y_3}(\overline G_{\bar a_i y_1}G_{a_i y_1}) (\overline {G}_{y_2 \bar b_0}G_{y_2 b_i})(G_{y_3 b_0}\overline G_{y_3 \bar b_i}) G_{a_j b_j}\overline G_{\bar a_j \bar b_j} \label{eq:case501}\\
%&+ m^2|m|^2\sum_{y_1,y_2,y_3,y_4} s_{xy_1}s_{xy_2}S^+_{y_1y_3}s_{y_3y_4} (\overline {G}_{y_2 \bar b_0}G_{y_2 b_i})(G_{a_i y_3}\overline G_{\bar a_i y_3})(G_{y_4 b_0}\overline G_{y_4 \bar b_i}) G_{a_j b_j}\overline G_{\bar a_j \bar b_j}  \label{eq:case502} \\ 
 &~m^2|m|^2\sum_{y_1,y_2,y_3} S_{xy_1}S_{xy_2}S_{y_1y_3} (G_{a_i y_1}G_{y_1 b_j}) (\overline {G}_{y_2 \bar b_0}G_{y_2 b_i}) (G_{y_3 b_0}G_{a_j y_3})\overline G_{\bar a_i \bar b_i}\overline G_{\bar a_j \bar b_j} \\ %\ \left[\Pi_1:|m|^2\left(\frac{m^2}{1-m^2}\right)^2\right]
+&~ m^4|m|^2\sum_{y_1,y_2,y_3,y_4} S_{xy_1}S_{xy_2}S^+_{y_1y_3}S_{y_3y_4} (\overline {G}_{y_2 \bar b_0}G_{y_2 b_i})(G_{a_i y_3}G_{y_3 b_j}) (G_{y_4 b_0}G_{a_j y_4}) \overline G_{\bar a_i \bar b_i}\overline G_{\bar a_j \bar b_j}, %\ \left[\Pi_1:|m|^2\left(\frac{m^2}{1-m^2}\right)^3\right]\\
\end{align*}
that contribute $|m|^6 \iota^2+|m|^6 \iota^3$ to $\Delta(\Pi_1)$. For the remaining graphs, applying the edge expansions, we get the following graphs that are relevant for \eqref{eq:6-loop} and \eqref{eq:4+2-loop}: 
\begin{align}
 & \quad\,  |m|^4\sum_{y_1,y_2,y_3} S_{xy_1}S_{xy_2}S_{xy_3}(G_{y_1 b_0}G_{a_i y_1}) (\overline {G}_{y_2 \bar b_0}\overline G_{\bar a_i y_2})(G_{y_3 b_i} \overline G_{y_3 \bar b_i})G_{a_j b_j}\overline G_{\bar a_j \bar b_j} \label{eq:case511} \\
 &+ m^2 |m|^2\sum_{y_1,y_2,y_3} S_{xy_1}S_{xy_2}S_{xy_3}(G_{y_1 b_0}G_{a_i y_1}) (\overline {G}_{y_2 \bar b_0}G_{y_2 b_j})(G_{y_3 b_i}G_{a_j y_3}) \overline G_{\bar a_i \bar b_i}\overline G_{\bar a_j \bar b_j} \label{eq:case512}\\
 & +  m|m|^4\sum_{y_1,y_2,y_3,y_4} S_{xy_1}S_{xy_2}S_{xy_3}S_{xy_4}(G_{y_1 b_0}G_{a_i y_1}) (\overline {G}_{y_2 \bar b_0}G_{y_2 b_j})(G_{y_3 b_i}\overline G_{y_3 \bar b_i})(G_{a_j y_4} \overline G_{\bar a_i y_4})\overline G_{\bar a_j \bar b_j} \label{eq:case513} \\
 & +  m|m|^4\sum_{y_1,y_2,y_3,y_4} S_{xy_1}S_{xy_2}S_{xy_3}S_{xy_4}(G_{y_1 b_0}G_{a_i y_1}) (\overline {G}_{y_2 \bar b_0}G_{y_2 b_j})(G_{y_3 b_i}\overline G_{y_3 \bar b_j})(G_{a_j y_4} \overline G_{\bar a_j y_4})\overline G_{\bar a_i \bar b_i} \label{eq:case514}\\
 &+  |m|^4\sum_{y_1,y_2,y_3} S_{xy_1}S_{xy_2}S_{xy_3}(G_{y_1 b_0}G_{a_i y_1}) (\overline {G}_{y_2 \bar b_0}\overline G_{\bar a_j y_2}) (G_{y_3 b_i}\overline G_{y_3\bar b_j}) \overline G_{\bar a_i \bar b_i}G_{a_j b_j} \label{eq:case515} \\
  &+  |m|^4\sum_{y_1,y_2,y_3} S_{xy_1}S_{xy_2}S_{xy_3}(G_{y_1 b_0}\overline G_{y_1 \bar b_i})( \overline {G}_{y_2 \bar b_0}G_{y_2 b_i}) (G_{a_i y_3} \overline G_{\bar a_i y_3})G_{a_j b_j}\overline G_{\bar a_j \bar b_j}  \label{eq:case516}  \\
   &+ m |m|^4\sum_{y_1,y_2,y_3,y_4} S_{xy_1}S_{xy_2}S_{xy_3}S_{xy_4}(G_{y_1 b_0}\overline G_{y_1 \bar b_i})( \overline {G}_{y_2 \bar b_0}G_{y_2 b_i}) (G_{a_i y_3}G_{y_3 b_j} )(\overline G_{\bar a_i y_4}G_{a_j y_4})\overline G_{\bar a_j \bar b_j}\label{eq:case517}  \\
%&\cob + m |m|^2\sum_{y_1,y_2,y_3} s_{xy_1}s_{xy_2}s_{xy_3}(G_{y_1 b_0}\overline G_{y_1 \bar b_i})( \overline {G}_{y_2 \bar b_0}G_{y_2 b_i}) (G_{a_i y_3}\overline G_{\bar a_j y_3}) \overline G_{\bar a_i x}\overline G_{x \bar b_j} G_{a_j b_j} \ \left[\Pi_*: \right] \\
&+ \bar m |m|^4\sum_{y_1,y_2,y_3,y_4} S_{xy_1}S_{xy_2}S_{xy_3}S_{xy_4}(G_{y_1 b_0}\overline G_{y_1 \bar b_i})( \overline {G}_{y_2 \bar b_0}G_{y_2 b_i}) (G_{a_i y_3}\overline G_{\bar a_j y_3}) (\overline G_{\bar a_i y_4}\overline G_{y_4 \bar b_j}) G_{a_j b_j} \label{eq:case518}  \\
&+ |m|^4\sum_{y_1,y_2,y_3,y_4} S_{xy_1}S_{xy_2}S_{xy_3}S_{xy_4}(G_{y_1 b_0}\overline G_{y_1 \bar b_i})( \overline {G}_{y_2 \bar b_0}G_{y_2 b_i}) (G_{a_i y_3}\overline G_{\bar a_j y_3}) (\overline G_{\bar a_i y_4}G_{a_j y_4})\overline G_{x \bar b_j} G_{x b_j} \label{eq:case519}\\
 &+ |m|^4 \sum_{y_1,y_2,y_3} S_{xy_1}S_{xy_2}S_{xy_3}(G_{y_1 b_0}\overline G_{y_1 \bar b_i})( \overline {G}_{y_2 \bar b_0}\overline G_{\bar a_i y_2}) (G_{a_i y_3}G_{y_3 b_i}) G_{a_j b_j}\overline G_{\bar a_j \bar b_j}  \label{eq:case5110}\\
 &+ \bar m|m|^4 \sum_{y_1,y_2,y_3,y_4} S_{xy_1}S_{xy_2}S_{xy_3}S_{xy_4}(G_{y_1 b_0}\overline G_{y_1 \bar b_i})( \overline {G}_{y_2 \bar b_0}\overline G_{\bar a_i y_2}) (G_{a_i y_3}\overline G_{\bar a_j y_3})(G_{y_4 b_i}\overline G_{y_4 \bar b_j})  G_{a_j b_j} \label{eq:case5111}\\
 &+\bar m |m|^4\sum_{y_1,y_2,y_3,y_4} S_{xy_1}S_{xy_2}S_{xy_4}S_{xy_4}(G_{y_1 b_0}\overline G_{y_1 \bar b_i})( \overline {G}_{y_2 \bar b_0}\overline G_{\bar a_j y_2}) (G_{a_i y_3}\overline G_{\bar a_i y_3})(G_{y_4 b_i} \overline G_{y_4\bar b_j}) G_{a_j b_j}  \label{eq:case5112}\\
 &+ m^2|m|^2 \sum_{y_1,y_2,y_3} S_{xy_1}S_{xy_2}S_{xy_3}(G_{y_1 b_0}G_{a_j y_1} ) (\overline {G}_{y_2 \bar b_0}G_{y_2 b_i}) (G_{a_i y_3}  G_{y_3b_j})\overline G_{\bar a_i \bar b_i}\overline G_{\bar a_j \bar b_j} \label{eq:case5113} \\
%&\cob + m|m|^4 \sum_{y_1,y_2,y_3,y_4} s_{xy_1}s_{xy_2}s_{xy_3}s_{xy_4}(G_{y_1 b_0}G_{a_j y_1} ) (\overline {G}_{y_2 \bar b_0}G_{y_2 b_i}) (G_{a_i y_3}\overline G_{\bar a_i y_3})  (G_{y_4b_j}\overline G_{y_4 \bar b_i})\overline G_{\bar a_j \bar b_j} \\
& +m |m|^4 \sum_{y_1,y_2,y_3,y_4} S_{xy_1}S_{xy_2}S_{xy_3}S_{xy_4}(G_{y_1 b_0}G_{a_j y_1} ) (\overline {G}_{y_2 \bar b_0}G_{y_2 b_i}) (G_{a_i y_3}\overline G_{\bar a_jy_3})  (G_{y_4b_j}\overline G_{y_4 \bar b_j}) \overline G_{\bar a_i \bar b_i} \label{eq:case5114} \\
& +  m^2|m|^2\sum_{y_1,y_2,y_3} S_{xy_1}S_{xy_2}S_{xy_3}(G_{y_1 b_0}G_{a_j y_1} ) (\overline {G}_{y_2  \bar b_0} G_{y_2 b_j}) (G_{a_i y_3}G_{y_3 b_i} ) \overline G_{\bar a_i \bar b_i}\overline G_{\bar a_j \bar b_j} \label{eq:case5115} \\
&+ m |m|^4\sum_{y_1,y_2,y_3,y_4} S_{xy_1}S_{xy_2}S_{xy_3}S_{xy_4}(G_{y_1 b_0}G_{a_j y_1} ) (\overline {G}_{y_2  \bar b_0} G_{y_2 b_j}) (G_{a_i y_3} \overline G_{\bar a_i y_3})(G_{y_4 b_i}  \overline G_{y_4 \bar b_i})\overline G_{\bar a_j \bar b_j} \label{eq:case5116}\\
& + m |m|^2\sum_{y_1,y_2,y_3} S_{xy_1}S_{xy_2}S_{xy_3}(G_{y_1 b_0}G_{a_j y_1} ) (\overline {G}_{y_2  \bar b_0} G_{y_2 b_j}) (G_{a_i y_3} \overline G_{\bar a_j y_3})G_{x b_i}  \overline G_{x \bar b_j}\overline G_{\bar a_i \bar b_i}  \label{eq:case5117}\\
&+|m|^2 \sum_{y_1,y_2,y_3} S_{xy_1}S_{xy_2}S_{xy_3}(G_{y_1 b_0}\overline G_{y_1 \bar b_j}) (\overline {G}_{y_2 \bar b_0}G_{y_2 b_i}) (G_{a_i y_3}\overline G_{\bar a_iy_3})\overline G_{\bar a_j x} \overline G_{x\bar b_i}G_{a_j b_j}  \label{eq:case5118}\\
&+|m|^4 \sum_{y_1,y_2,y_3,y_4} S_{xy_1}S_{xy_2}S_{xy_3}S_{xy_4}(G_{y_1 b_0}\overline G_{y_1 \bar b_j}) (\overline {G}_{y_2 \bar b_0}G_{y_2 b_i}) (G_{a_i y_3}G_{y_3 b_j}) (\overline G_{\bar a_j y_4} G_{a_j y_4})\overline G_{\bar a_i \bar b_i}  \label{eq:case5119}\\
&+|m|^4 \sum_{y_1,y_2,y_3} S_{xy_1}S_{xy_2}S_{xy_3}(G_{y_1 b_0}\overline G_{y_1 \bar b_j}) (\overline {G}_{y_2 \bar b_0}\overline G_{\bar a_j y_2}) (G_{a_i y_3}G_{y_3 b_i}) \overline G_{\bar a_i \bar b_i}G_{a_j b_j} \label{eq:case5120} \\
&+ \bar m|m|^4 \sum_{y_1,y_2,y_3,y_4} S_{xy_1}S_{xy_2}S_{xy_3}S_{xy_4}(G_{y_1 b_0}\overline G_{y_1 \bar b_j}) (\overline {G}_{y_2 \bar b_0}\overline G_{\bar a_j y_2}) (G_{a_i y_3}\overline G_{\bar a_i y_3})(G_{y_4 b_i} \overline G_{y_4\bar b_i})G_{a_j b_j} \label{eq:case5121}  \\
&+\bar m |m|^4 \sum_{y_1,y_2,y_3,y_4} S_{xy_1}S_{xy_2}S_{xy_3}S_{xy_4}(G_{y_1 b_0}\overline G_{y_1 \bar b_j}) (\overline {G}_{y_2 \bar b_0}\overline G_{\bar a_i y_2}) (G_{a_i y_3}\overline G_{\bar a_j y_3}) (G_{y_4 b_i}\overline G_{y_4 \bar b_i})G_{a_j b_j}  \label{eq:case5122}\\
&+m|m|^4 \sum_{y_1,y_2,y_3,y_4} S_{xy_1}S_{xy_2}S_{xy_3}S_{xy_4}(G_{y_1 b_0}\overline G_{y_1 \bar b_j}) (\overline {G}_{y_2 \bar b_0}G_{y_2 b_j}) (G_{a_i y_3}G_{y_3 b_i})(\overline G_{\bar a_j y_4}G_{a_j y_4}) \overline G_{\bar a_i \bar b_i}  \label{eq:case5123}\\
&+|m|^2 \sum_{y_1,y_2,y_3} S_{xy_1}S_{xy_2}S_{xy_3}(G_{y_1 b_0}\overline G_{y_1 \bar b_j}) (\overline {G}_{y_2 \bar b_0}G_{y_2 b_j}) (G_{a_i y_3}\overline G_{\bar a_iy_3})G_{x b_i}\overline G_{\bar a_j x} \overline G_{x \bar b_i}G_{a_j x} \label{eq:case5124}\\
% \end{align*}
% \begin{align*}
% &\cob +\bar m |m|^2\sum_{y_1,y_2,y_3} s_{xy_1}s_{xy_2}s_{xy_3}(G_{y_1 b_0}\overline G_{y_1 \bar b_j}) (\overline {G}_{y_2 \bar b_0}G_{y_2 b_j})( G_{a_i y_3}\overline G_{\bar a_j y_3})G_{x b_i}G_{a_j x} \overline G_{\bar a_i \bar b_i}\\
& +m |m|^4\sum_{y_1,y_2,y_3,y_4} S_{xy_1}S_{xy_2}S_{xy_3}S_{xy_4}(G_{y_1 b_0}\overline G_{y_1 \bar b_j}) (\overline {G}_{y_2 \bar b_0}G_{y_2 b_j})( G_{a_i y_3}\overline G_{\bar a_j y_3})(G_{y_4 b_i}G_{a_j y_4}) \overline G_{\bar a_i \bar b_i} \label{eq:case5125} \\
& + |m|^4\sum_{y_1,y_2,y_3,y_4} S_{xy_1}S_{xy_2}S_{xy_3}S_{xy_4}(G_{y_1 b_0}\overline G_{y_1 \bar b_j}) (\overline {G}_{y_2 \bar b_0}G_{y_2 b_j})( G_{a_i y_3}\overline G_{\bar a_j y_3})(G_{y_4 b_i} \overline G_{y_4 \bar b_i})G_{a_j x}\overline G_{\bar a_i x}.\label{eq:case5126} 
\end{align}
Applying $GG$ expansions to these graphs, we obtain their respective contributions to $\Delta(\Pi_1)$ and $\Delta(\Pi_2)$: 
(1) \eqref{eq:case511} contributes $|m|^6|\iota|^2$ to $\Delta(\Pi_1)$; (2) \eqref{eq:case512} contributes $|m|^6\iota^2 $ to $\Delta(\Pi_1)$; (3) \eqref{eq:case513} contributes $|m|^6\iota$ to $\Delta(\Pi_1)$; (4) \eqref{eq:case514} and \eqref{eq:case515} both involve $(b_0,\bar b_i), (\bar b_0,b_j)$ pairings, so their contributions are 0; (5) \eqref{eq:case516} is a term corresponding to the $p=1$ case and does not contribute to $\Delta(\Pi_1)$ and $\Delta(\Pi_2)$; (6) \eqref{eq:case517} contributes $|m|^6\iota$ to $\Delta(\Pi_1)$; (7) \eqref{eq:case518} contributes $|m|^6\bar \iota$ to $\Delta(\Pi_1)$; (8) \eqref{eq:case519} contributes $|m|^6 $ to $\Delta(\Pi_1)$; (9) \eqref{eq:case5110}--\eqref{eq:case5112} all involve $(b_0,\bar b_i), (\bar b_0,b_j)$ pairings, so their contributions are 0; (10) \eqref{eq:case5113} contributes $|m|^6 \iota^2$ to $\Delta(\Pi_1)$; (11) \eqref{eq:case5114} contributes $|m|^6 \iota$ to $\Delta(\Pi_1)$; (12) \eqref{eq:case5115} contributes $|m|^6 \iota^2$ to $\Delta(\Pi_2)$; (13) \eqref{eq:case5116} contributes $|m|^6 \iota$ to $\Delta(\Pi_2)$; (14) \eqref{eq:case5117}--\eqref{eq:case5119} involve $(b_0,\bar b_i), (\bar b_0,b_j)$ or $(b_0,\bar b_j), (\bar b_0,b_i)$ pairings, so their contributions are 0; (15) \eqref{eq:case5120} contributes $|m|^6 |\iota|^2$ to $\Delta(\Pi_2)$; (16) \eqref{eq:case5121} contributes $|m|^6 \bar \iota$ to $\Delta(\Pi_2)$; (17) \eqref{eq:case5122} contributes $|m|^6 \bar \iota$ to $\Delta(\Pi_1)$; (18) \eqref{eq:case5123} contributes $|m|^6 \iota$ to $\Delta(\Pi_2)$; (19) \eqref{eq:case5124} contributes $|m|^6$ to $\Delta(\Pi_2)$; (20) \eqref{eq:case5125} contributes $|m|^6 \iota$ to $\Delta(\Pi_1)$;  (21) \eqref{eq:case5126} contributes $|m|^6$ to $\Delta(\Pi_1)$. 

Together with \eqref{Delta41} and \eqref{Delta42}, the above calculations show that $\Delta(\Pi_1)$ and $\Delta(\Pi_2)$ are equal to 
\begin{align}
\Delta(\Pi_1) &= -|m|^6\left(3+4\iota + 4\bar \iota+ |\iota|^2 +\iota^2 +\bar \iota^2 \right)+|m|^6\left( 2+4\iota +2\bar \iota + |\iota|^2+ 3\iota^2 +\iota^3+c.c.\right) \nonumber\\
&= |m|^6\left(1+2 \iota+ 2 \bar \iota+\left|\iota\right|^2 +2\iota^2+2\bar \iota^2 + \iota^3+ \bar \iota^3\right), \label{Delta51}\\
 \Delta(\Pi_2)&=-|m|^6\left(2+ 3\iota+ 3\bar \iota+2|\iota|^2+\iota^2+\bar \iota^2\right)+|m|^6\left(1+2\iota +\bar \iota +|\iota|^2 + \iota^2+c.c. \right)=0.
\label{Delta52}
\end{align}
This concludes the derivation of \eqref{DeltaPi1}.

\end{document}